\documentclass{article}
\usepackage{arxiv}
\usepackage[nottoc]{tocbibind}
\newcommand{\Ra}{\Rightarrow}
\newcommand{\p}{\mathfrak{p}}
\newcommand{\m}{\mathfrak{m}}
\newcommand{\R}{\mathbb{R}}
\newcommand{\N}{\mathbb{N}}
\usepackage{latexsym, amssymb, amsfonts,amsmath}
\usepackage{amsthm}
\usepackage{graphicx}
\bibliographystyle{unsrt}
\usepackage[all,knot,arc,import,poly]{xy}
\usepackage[utf8]{inputenc} 
\usepackage[T1]{fontenc}    
\usepackage{hyperref}       
\usepackage{url}            
\usepackage{booktabs}       
\usepackage{amsfonts}       
\usepackage{nicefrac}       
\usepackage{microtype}      
\usepackage{lipsum}
\usepackage{url}
\usepackage{tikz}
\usetikzlibrary{positioning, calc, arrows, decorations.markings, decorations.pathreplacing}
\usepackage{latexsym, amssymb, amsfonts,amsmath}
\usepackage{amsthm}
\newtheorem{theorem}{Theorem}[section]
\newtheorem{lemma}[theorem]{Lemma}
\newtheorem{fact}[theorem]{Fact}
\newtheorem{definition}[theorem]{Definition}
\newtheorem{proposition}[theorem]{Proposition}
\newtheorem{example}[theorem]{Example}
\newtheorem{remark}[theorem]{Remark}
\newtheorem{corollary}[theorem]{Corollary}
\newcommand{\ci}{\mathcal{C}^\infty}
\title{Topics on Smooth Commutative Algebra}
\author{ Jean Cequeira Berni\thanks{\texttt{www.ime.usp.br/$\sim$jeancb/}} \\
  Department of Mathematics\\
  Institute of Mathematics and Statistics\\
  University of S\~{a}o Paulo\\
  \texttt{jeancb@ime.usp.br} \\
   \And
 Hugo Luiz Mariano\thanks{\texttt{www.ime.usp.br/$\sim$hugomar/}} \\
  Department of Mathematics\\
  Institute of Mathematics and Statistics\\
  University of S\~{a}o Paulo\\
  \texttt{hugomar@ime.usp.br} \\
}

\begin{document}
\maketitle

\begin{abstract}
We present, in the same vein as in \cite{moerdijk1986rings} and \cite{rings2}, some results of the so-called ``Smooth (or $\mathcal{C}^{\infty}$) Commutative Algebra'', a version of Commutative Algebra of $\mathcal{C}^{\infty}-$rings instead of ordinary commutative unital rings, looking for similar results to those one finds in the latter, and expanding some others presented in \cite{moerdijk1986rings}. In order to study this ``bridge'', we give an explicit description of an adjunction between the categories $\mathcal{C}^{\infty}{\rm \bf Rng}$ and ${\rm \bf CRing}$. We present and prove many properties of the analog of the radical of an ideal of a ring (namely, the $\mathcal{C}^{\infty}-$radical of an ideal), saturation (which we define as ``smooth saturation'', inspired by \cite{dieudonne11ega}), rings of fractions ($\mathcal{C}^{\infty}-$rings of fractions, defined first by I. Moerdijk and G. Reyes  in \cite{moerdijk1986rings}), local rings (local $\mathcal{C}^{\infty}-$rings), reduced rings ($\mathcal{C}^{\infty}-$reduced $\mathcal{C}^{\infty}-$rings) and others. We also state and prove new results, such as   \textit{ad hoc} ``\textbf{Separation Theorems}'' - similar to the ones we find in Commutative Algebra, and a stronger version (\textbf{Theorem \ref{38}}) of the \textbf{Theorem 1.4} of \cite{moerdijk1986rings}, characterizing every $\mathcal{C}^{\infty}-$ring of fractions.We describe the fundamental concepts of Order Theory for $\mathcal{C}^{\infty}-$rings, proving that every $\mathcal{C}^{\infty}-$ring is semi-real, and we  prove an important result on the strong interplay between the smooth Zariski spectrum  and the real smooth spectrum of a $\mathcal{C}^{\infty}-$ring.
\end{abstract}

\keywords{$\mathcal{C}^{\infty}-$rings \and Smooth Commutative Algebra \and $\infty-$radical ideals \and Real Spectrum}

\section*{Introduction}

\hspace{0.5cm}In a recent paper, \cite{joyce2010algebraic}, D. Joyce presents the foundations of a version of Algebraic Geometry in which rings or algebras are replaced by $\mathcal{C}^{\infty}-$rings. In this work we explore the ``Commutative Algebraic'' aspects of these rings, emphasizing their similarities and their differences.\\

This work may be regarded as a continuation of the paper \cite{cerqueira2019universal}, in the sense that we keep exploring the algebraic aspects of $\mathcal{C}^{\infty}-$rings, except that here we are concerned with their ``Commutative Algebraic'' aspects. We study some concepts first presented  by I. Moerdijk, N. van Qu\^{e} and G. Reyes in \cite{moerdijk1986rings} and \cite{rings2} (such as the $\mathcal{C}^{\infty}-$radical of an ideal, the $\mathcal{C}^{\infty}-$ring of fractions among others) and define others, expanding some of their results and proving various properties of these objects, comparing them to their Commutative Algebraic parallels. We also introduce some other similar concepts, as the ``smooth saturation'' of a $\mathcal{C}^{\infty}-$ring, which allows us to state a theorem that is similar to the Separation Theorems one finds in Commutative Algebra.\\

We point towards many similarities between Smooth Commutative Algebra and Ordinary Commutative Algebra (for example, in both theories we have a commutativity between ``taking the ring of fractions'' and ``taking quotients''), stressing also some of their differences (for example, the existence of a continuous bijection between the $\mathcal{C}^{\infty}-$version of the prime spectrum and the $\mathcal{C}^{\infty}-$version of the real spectrum - which in general is not the case in Commutative Algebra). \\

In \cite{borisov2018beyond}, Kremnizer and Borisov give a detailed account of six notions of radicals of an ideal of a $\mathcal{C}^{\infty}-$ring, among which we find the $\infty-$radical of an ideal of a $\mathcal{C}^{\infty}-$ring. Here we focus on this concept, that we call `` the $\mathcal{C}^{\infty}-$radical'' of an ideal. This concept first appeared in \cite{moerdijk1986rings}, and it is proper to the theory of $\mathcal{C}^{\infty}-$rings, carrying some differences with respect to the usual notion of radical in ordinary Commutative Algebra - which only makes use of powers of elements. In order to illustrate this difference, we present an example of an ideal of a $\mathcal{C}^{\infty}-$ring which is radical in the ordinary sense (since it is prime) but that is not $\mathcal{C}^{\infty}-$radical (see \textbf{Remark \ref{jurupari}}). \\

The difference between the notions of ``radical'' and ``$\mathcal{C}^{\infty}-$radical'' ideals brings us, alone,  a whole new study of some important concepts, such as $\mathcal{C}^{\infty}-$re\-duced $\mathcal{C}^{\infty}-$rings, the ``smooth Zariski spectrum'' and so on. We are going to see, for example, that the smooth version of the Zariski spectrum presents some crucial differences when compared to the ordinary Zariski spectrum, in its topological as well as in its functorial (and sheaf-theoretic) features.\\

\textbf{Overview of the Paper:}  We begin the first section by giving some preliminaries on the main subject on which we build this version of Commutative Algebra, the category $\mathcal{C}^{\infty}-$rings, presenting some definitions and some of their fundamental constructions (the main definitions can be found in \cite{moerdijk2013models}, and a more detailed account of  $\mathcal{C}^{\infty}-$rings can be found, for example, in \cite{cerqueira2019universal}).\\

We dedicate the  second section to the explicit description of the adjunction between the category of $\mathcal{C}^{\infty}-$rings and the category of ordinary commutative rings (\textbf{Theorem \ref{Madruga}}), which allows us to transport many concepts from one context to the other.\\

In the third section we by recalling the ordinary Commutative Algebraic construction of the ring of fractions and the concept of saturation, motivating the introduction of their $\mathcal{C}^{\infty}$ parallels. In order to prove the existence of the $\mathcal{C}^{\infty}-$ring of fractions (\textbf{Theorem \ref{conc}}) we make use of the notion of the $\mathcal{C}^{\infty}-$ring of $\mathcal{C}^{\infty}-$polynomials, studied in \cite{cerqueira2019universal}, and then we define the concept of ``smooth saturation'' (\textbf{Definition \ref{satlisa}} of \textbf{Section \ref{saty}}), pointing some of its relationships with the ordinary concept of saturation (\textbf{Theorem \ref{lara}}). We state and prove various results about this concept.\\

We prove a result which characterizes any $\mathcal{C}^{\infty}-$ring of fractions (\textbf{Theorem \ref{38}}). We describe some properties of the process of ``taking the $\mathcal{C}^{\infty}-$ring of fractions'' defining a special category of pairs and a specific functor (\textbf{Theorem \ref{otalfuntor}} of \textbf{Section \ref{cop}}). We prove that the processes such as  ``taking the $\mathcal{C}^{\infty}-$coproduct'' and ``taking the $\mathcal{C}^{\infty}-$ring of fractions'', ``taking quotients'' and ``taking the $\mathcal{C}^{\infty}-$ring of fractions''  commute (\textbf{Theorem \ref{comfraccop}} and \textbf{Corollary \ref{Jeq}}, respectivelly), and we present a result which establishes a connection between the $\mathcal{C}^{\infty}-$ring of fractions and directed colimits. \\

In \textbf{Section \ref{dccr}}  we present distinguished classes of $\mathcal{C}^{\infty}-$rings (i.e., $\mathcal{C}^{\infty}-$rings which satisfy some further axioms), as $\mathcal{C}^{\infty}-$fields, $\mathcal{C}^{\infty}-$domains and local $\mathcal{C}^{\infty}-$\-rings. We prove some results connecting the filter of closed subsets of $\mathbb{R}^n$ and the set of $\mathcal{C}^{\infty}-$radical ideals of $\mathcal{C}^{\infty}(\mathbb{R}^n)$ (in fact, a Galois connection [\textbf{Proposition \ref{pseu}}]), as well as various results about them - including the fact that the $\mathcal{C}^{\infty}-$radical of any ideal is again an ideal (\textbf{Proposition \ref{49}}). The authors know of no such proof in the current literature. We make a study of $\mathcal{C}^{\infty}-$reduced $\mathcal{C}^{\infty}-$rings, proving many of their properties, and we state and prove a similar version of the Separation Theorems (\textbf{Theorem \ref{TS}}) one finds in ordinary Commutative Algebra.\\

In \textbf{Section \ref{sheaves}} we make a comprehensive analysis of the $\mathcal{C}^{\infty}$ version of the Zariski spectrum, describing its topology (the ``smooth Zariski topology'', in \textbf{Definition \ref{zahma}}) with detail, in order to clearly state and prove some sheaf-theoretic results. We prove that the Zariski $\mathcal{C}^{\infty}-$spectrum functor (${\rm Spec}^{\infty}: \mathcal{C}^{\infty}{\rm \bf Rng} \to {\rm \bf Top}$) takes finite products to finite coproducts (\textbf{Theorem \ref{esteaca}}). We also present, with details, the construction of a presheaf on the basis (of the $\mathcal{C}^{\infty}-$Zariski topology) of $\mathcal{C}^{\infty}-$rings whose fibers are local $\mathcal{C}^{\infty}-$rings.\\

In \textbf{Section \ref{ot}} we give a survey of some order-theoretic aspects of $\mathcal{C}^{\infty}-$rings, defining fundamental concepts as the ``real $\mathcal{C}^{\infty}-$spectrum'' of a $\mathcal{C}^{\infty}-$ring and its topology (the ``Harrison smooth topology''), proving that every $\mathcal{C}^{\infty}-$ring is semi-real (\textbf{Proposition \ref{doria}}) and we prove an important result which establishes a spectral bijection from the real $\mathcal{C}^{\infty}-$spectrum of a $\mathcal{C}^{\infty}-$ring to its smooth Zariski spectrum (\textbf{Theorem \ref{exv}}).\\

We finish this work with \textbf{Section \ref{versus}}, where we make a brief comparison between the ordinary Commutative Algebraic concepts with the ones presented in this paper.

\section{Preliminaries on $\ci-$Rings}

\hspace{0.5cm}Loosely speaking, $\mathcal{C}^{\infty}-$rings are $\mathbb{R}-$algebras where the composition of smooth functions is defined, satisfying all the equations that hold between these functions. Perhaps the most ``famous''  examples of $\mathcal{C}^{\infty}-$rings are rings of the form $\mathcal{C}^{\infty}(M)$,  where $M$ is a smooth manifold, rings of germs of differentiable functions (used in the Theory of Singularities), the ring of dual numbers, $\mathbb{R}[\varepsilon] \cong \dfrac{\mathbb{R}[X]}{\langle X^2\rangle}$ and Weil algebras, in general.\\

In order to formulate and study the concept of $\mathcal{C}^{\infty}-$ring, we are going to use a first order language $\mathcal{L}$ with a denumerable set of variables (${\rm \bf Var}(\mathcal{L}) = \{ x_1, x_2, \cdots, x_n, \cdots\}$) whose nonlogical symbols are the symbols of $\mathcal{C}^{\infty}-$functions from $\mathbb{R}^m$ to $\mathbb{R}^n$, with $m,n \in \mathbb{N}$, \textit{i.e.}, the non-logical symbols consist only of function symbols, described as follows:\\

For each $n \in \mathbb{N}$, the $n-$ary \textbf{function symbols} of the set $\mathcal{C}^{\infty}(\mathbb{R}^n, \mathbb{R})$, \textit{i.e.}, $\mathcal{F}_{(n)} = \{ f^{(n)} | f \in \mathcal{C}^{\infty}(\mathbb{R}^n, \mathbb{R})\}$. So the set of function symbols of our language is given by:
      $$\mathcal{F} = \bigcup_{n \in \mathbb{N}} \mathcal{F}_{(n)} = \bigcup_{n \in \mathbb{N}} \mathcal{C}^{\infty}(\mathbb{R}^n)$$
      Note that our set of constants is $\mathbb{R}$, since it can be identified with the set of all $0-$ary function symbols, \textit{i.e.}, ${\rm \bf Const}(\mathcal{L}) = \mathcal{F}_{(0)} = \mathcal{C}^{\infty}(\mathbb{R}^0) \cong \mathcal{C}^{\infty}(\{ *\}) \cong \mathbb{R}$.\\

The terms of this language are defined, in the usual way, as the smallest set which comprises the individual variables, constant symbols and $n-$ary function symbols followed by $n$ terms ($n \in \mathbb{N}$).\\

Apart from the functorial definition we gave in the introduction, we have many equivalent descriptions. We focus, first, in the following description of a $\mathcal{C}^{\infty}-$ring in ${\rm \bf Set}$.\\

\begin{definition}\label{cabala} A \textbf{$\mathcal{C}^{\infty}-$structure} on a set $A$ is a pair $ \mathfrak{A} =(A,\Phi)$, where:

$$\begin{array}{cccc}
\Phi: & \bigcup_{n \in \mathbb{N}} \mathcal{C}^{\infty}(\mathbb{R}^n, \mathbb{R})& \rightarrow & \bigcup_{n \in \mathbb{N}} {\rm Func}\,(A^n; A)\\
      & (f: \mathbb{R}^n \stackrel{\mathcal{C}^{\infty}}{\to} \mathbb{R}) & \mapsto & \Phi(f) := (f^{A}: A^n \to A)
\end{array},$$

that is, $\Phi$ interprets the \textbf{symbols}\footnote{here considered simply as syntactic symbols rather than functions.} of all smooth real functions of $n$ variables as $n-$ary function symbols on $A$.
\end{definition}

We call a $\mathcal{C}^{\infty}-$struture $\mathfrak{A} = (A, \Phi)$ a \textbf{$\mathcal{C}^{\infty}-$ring} if it preserves  projections and all equations between smooth functions. We have the following:

\begin{definition}\label{CravoeCanela}Let $\mathfrak{A}=(A,\Phi)$ be a $\mathcal{C}^{\infty}-$structure. We say that $\mathfrak{A}$ (or, when there is no danger of confusion, $A$) is a \textbf{$\mathcal{C}^{\infty}-$ring} if the following is true:\\

$\bullet$ Given any $n,k \in \mathbb{N}$ and any projection $p_k: \mathbb{R}^n \to \mathbb{R}$, we have:

$$\mathfrak{A} \models (\forall x_1)\cdots (\forall x_n)(p_k(x_1, \cdots, x_n)=x_k)$$

$\bullet$ For every $f, g_1, \cdots g_n \in \mathcal{C}^{\infty}(\mathbb{R}^m, \mathbb{R})$ with $m,n \in \mathbb{N}$, and every $h \in \mathcal{C}^{\infty}(\mathbb{R}^n, \mathbb{R})$ such that $f = h \circ (g_1, \cdots, g_n)$, one has:
$$\mathfrak{A} \models (\forall x_1)\cdots (\forall x_m)(f(x_1, \cdots, x_m)=h(g(x_1, \cdots, x_m), \cdots, g_n(x_1, \cdots, x_m)))$$
\end{definition}

\begin{definition}Let $(A, \Phi)$ and $(B,\Psi)$ be two $\mathcal{C}^{\infty}-$rings. A function $\varphi: A \to B$ is called a \textbf{morphism of $\mathcal{C}^{\infty}-$rings} or \textbf{$\mathcal{C}^{\infty}$-homomorphism} if for any $n \in \mathbb{N}$ and any $f: \mathbb{R}^n \stackrel{\mathcal{C}^{\infty}}{\to} \mathbb{R}$ the following diagram commutes:
$$\xymatrixcolsep{5pc}\xymatrix{
A^n \ar[d]_{\Phi(f)}\ar[r]^{\varphi^{(n)}} & B^n \ar[d]^{\Psi(f)}\\
A \ar[r]^{\varphi^{}} & B
}$$
 \textit{i.e.}, $\Psi(f) \circ \varphi^{(n)} = \varphi^{} \circ \Phi(f)$.
\end{definition}

\begin{remark}
Observe that $\mathcal{C}^{\infty}-$structures, together with their morphisms compose a category, that we denote by $\mathcal{C}^{\infty}{\rm \bf Str}$, and that $\mathcal{C}^{\infty}-$rings, together with all the $\mathcal{C}^{\infty}-$homomorphisms between $\mathcal{C}^{\infty}-$rings compose a full subcategory of $\mathcal{C}^{\infty}{\rm \bf Rng}$. In particular, since $\mathcal{C}^{\infty}{\rm \bf Rng}$ is a ``variety of algebras'' (it is a class of $\mathcal{C}^{\infty}-$structures which satisfy a given set of equations), it is closed under substructures, homomorphic images and products, by \textbf{Birkhoff's HSP Theorem}. Moreover:

$\bullet$ $\mathcal{C}^{\infty}{\rm \bf Rng}$ is a concrete category and the forgetful functor, $ U :  \mathcal{C}^{\infty}{\rm \bf Rng}  \to Set$ creates directed inductive colimits. Since $\mathcal{C}^{\infty}{\rm \bf Rng}$ is a variety of algebras, it has all (small) limits and (small) colimits. In particular, it has binary coproducts, that is, given any two $\mathcal{C}^{\infty}-$rings $A$ and $B$, we have their coproduct $A \stackrel{\iota_A}{\rightarrow} A\otimes_{\infty} \stackrel{\iota_B}{\leftarrow} B$;

$\bullet$ Each set $X$ freely generates a $C^\infty$-ring, $L(X)$, as follows:\\
-  for any finite set $X'$ with $\sharp X' = n$ we have $ L(X')= \mathcal{C}^{\infty}(\mathbb{R}^{X'}) \cong \mathcal{C}^\infty(\mathbb{R}^n, \mathbb{R})$ is the free $C^\infty$-ring on $n$ generators, $n \in \mathbb{N}$;\\
- for a general set, $X$, we take $L(X) = \mathcal{C}^{\infty}(\mathbb{R}^X):= \varinjlim_{X' \subseteq_{\rm fin} X} \mathcal{C}^{\infty}(\mathbb{R}^{X'})$;

$\bullet$ Given any $\mathcal{C}^{\infty}-$ring $A$ and a set, $X$, we can freely adjoin the set $X$ of variables to $A$ with the following construction: $A\{ X\}:= A \otimes_{\infty} L(X)$. The elements of $A\{ X\}$ are usually called $\mathcal{C}^{\infty}-$polynomials;

$\bullet$ The congruences of $\mathcal{C}^{\infty}-$rings are classified by their ``ring-theoretical'' ideals;

$\bullet$ Every $\mathcal{C}^{\infty}-$ring is the homomorphic image of some free $\mathcal{C}^{\infty}-$ring determined by some set, being isomorphic to the quotient of a free $\mathcal{C}^{\infty}-$ring by some ideal.\\

\end{remark}

Within the category of $\mathcal{C}^{\infty}-$rings, we have two special subcategories, that we define in the sequel.\\

\begin{definition}A $\mathcal{C}^{\infty}-$ring $A$ is \textbf{finitely generated} whenever there is some $n \in \mathbb{N}$ and some ideal $I \subseteq \mathcal{C}^{\infty}(\mathbb{R}^n)$ such that $A \cong \dfrac{\mathcal{C}^{\infty}(\mathbb{R}^{n})}{I}$. The category of all finitely generated $\mathcal{C}^{\infty}-$rings is denoted by $\mathcal{C}^{\infty}{\rm \bf Rng}_{\rm fg}$.
\end{definition}

\begin{definition}
A $\mathcal{C}^{\infty}-$ring is \textbf{finitely presented} whenever there is some $n \in \mathbb{N}$ and some \underline{finitely generated ideal} $I \subseteq \mathcal{C}^{\infty}(\mathbb{R}^n)$ such that $A \cong \dfrac{\mathcal{C}^{\infty}(\mathbb{R}^{n})}{I}$.\\

Whenever $A$ is a finitely presented $\mathcal{C}^{\infty}-$ring, there is some $n \in \mathbb{N}$ and some $f_1, \cdots, f_k \in \mathcal{C}^{\infty}(\mathbb{R}^n)$ such that:\\

$$ A = \dfrac{\mathcal{C}^{\infty}(\mathbb{R}^n)}{\langle f_1, \cdots, f_k\rangle}$$

The category of all finitely presented $\mathcal{C}^{\infty}-$rings is denoted by $\mathcal{C}^{\infty}{\rm \bf Rng}_{\rm fp}$
\end{definition}

\begin{remark}
The categories $\mathcal{C}^{\infty}{\rm \bf Rng}_{{\rm fg}}$ and $\mathcal{C}^{\infty}{\rm \bf Rng}_{{\rm fp}}$ are closed under initial objects, binary coproducts and binary coequalizers. Thus, they are finitely co-complete categories, that is, they have all finite colimits (for a proof of this fact we refer to the chapter 1 of \cite{berni2018some}).\\

Since $\mathcal{C}^{\infty}{\rm \bf Rng}_{{\rm fp}}$ has all finite colimits, it follows that $\mathcal{C}^{\infty}{\rm \bf Rng}_{{\rm fp}}^{{\rm op}}$ has all finite limits.
\end{remark}

\begin{remark}
An $\mathbb{R}-$algebra $A$ in a category with finite limits, $\mathcal{C}$, may be regarded as a finite product preserving functor from the category ${\rm \bf Pol}$, whose objects are given by ${\rm Obj}\,({\rm \bf Pol}) =\{ \mathbb{R}^n | n \in \mathbb{N}\}$, and whose morphisms are given by polynomial functions between them, ${\rm Mor}\,({\rm \bf Pol}) = \{ \mathbb{R}^m \stackrel{p}{\rightarrow} \mathbb{R}^n | m,n \in \mathbb{N}, p \,\, {\rm polynomial}\}$, to $\mathcal{C}$, that is:

$$A: {\rm \bf Pol} \rightarrow \mathcal{C}.$$

In this sense, an $\mathbb{R}-$algebra $A$ is a functor which interprets all polynomial maps $p: \mathbb{R}^m \to \mathbb{R}^n$, for $m,n \in \mathbb{N}$. More precisely, the categories of $\mathbb{R}-$algebras as defined by the ``Universal Algebra approach'' and by the ``Functorial sense'' provide equivalent categories.

In this vein, one may define a $\mathcal{C}^{\infty}-$ring as a finite product preserving functor from the category ${\cal C}^{\infty}$, whose objects are given by ${\rm Obj}\,(\mathcal{C}^{\infty}) = \{ \mathbb{R}^n | n \in \mathbb{N}\}$ and whose morphisms are given by $\mathcal{C}^{\infty}-$functions between them, ${\rm Mor}\,(\mathcal{C}^{\infty}) = \{ \mathbb{R}^m \stackrel{f}{\rightarrow} \mathbb{R}^n | m,n \in \mathbb{N}, f {\rm smooth}\,\, {\rm function}\}$, \textit{i.e.},\\

$$A: \mathcal{C}^{\infty} \rightarrow \mathcal{C}.$$
\end{remark}



We write ``$A$'' instead of $(A,\Phi)$, partly to simplify the notation and partly because the $\mathcal{C}^{\infty}-$structure will not play an important role as it did in the previous paper (\cite{cerqueira2019universal}).\\

\section{An adjunction between $\mathcal{C}^{\infty}{\rm \bf Rng}$ and ${\rm \bf CRing}$}

We have already seen in \cite{cerqueira2019universal}, that the forgetful functor $U': \mathcal{C}^{\infty}{\rm \bf Rng} \rightarrow {\rm \bf Set}$ has a left adjoint, $L$, and it is a well known fact that the forgetful functor $U'': {\rm \bf CRing} \rightarrow {\rm \bf Set}$ has a left adjoint, so both $\mathcal{C}^{\infty}{\rm \bf Rng}$ and  ${\rm \bf CRing}$  are concrete categories. Considering the forgetful functor $\widetilde{U}: \mathcal{C}^{\infty}{\rm \bf Rng} \to {\rm \bf CRing}$, we have the following diagram:

$$\xymatrixcolsep{5pc}\xymatrix{
\mathcal{C}^{\infty}{\rm \bf Rng} \ar[rr]^{\widetilde{U}} \ar@<1ex>[dr]^{U'}& & {\rm \bf CRing} \ar@<1ex>[dl]^{U''}\\
 & \ar[ul]^{L'} {\rm \bf Set} \ar[ur]^{L''}&
}$$

We would like to obtain a left-adjoint to the functor $\widetilde{U}$, which makes the following diagram commutative:

$$\xymatrixcolsep{5pc}\xymatrix{
\mathcal{C}^{\infty}{\rm \bf Rng} \ar@<0.8ex>[rr]^{\widetilde{U}} \ar@<1ex>[dr]^{U'}& & {\rm \bf CRing} \ar@<1ex>[dl]^{U''} \ar@<1ex>[ll]^{\widetilde{L}}\\
 & \ar[ul]^{L'} {\rm \bf Set} \ar[ur]^{L''}&
}$$

In this section we describe a construction of such a left adjoint.\\

\begin{theorem}\label{Madruga}The forgetful functor:
$$\widetilde{U}: \,\mathcal{C}^{\infty}{\rm \bf Rng} \rightarrow {\rm \bf CRing}$$
has a left adjoint.
\end{theorem}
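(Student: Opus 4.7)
The plan is to construct $\widetilde{L}$ explicitly by translating the canonical presentation of a commutative ring as a quotient of a free commutative ring into a quotient of a free $\mathcal{C}^{\infty}-$ring. Given $R \in {\rm \bf CRing}$, let $X$ be its underlying set and $\pi_R: \mathbb{Z}[X] \twoheadrightarrow R$ the canonical ring surjection (sending the free generator $x_r$ to $r$), with kernel $J_R$. Since polynomials are smooth, there is a canonical ring homomorphism $\iota_X: \mathbb{Z}[X] \to \mathcal{C}^{\infty}(\mathbb{R}^X)$ sending $x_r$ to the $r$-th coordinate projection. I would then set
$$\widetilde{L}(R) \;:=\; \mathcal{C}^{\infty}(\mathbb{R}^X) \big/ \langle \iota_X(J_R)\rangle,$$
where $\langle \iota_X(J_R)\rangle$ denotes the ring-theoretical ideal generated by the image of $J_R$; this quotient is a $\mathcal{C}^{\infty}-$ring because, as recorded above, ring-theoretical ideals classify $\mathcal{C}^{\infty}-$congruences.

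I would verify functoriality by sending a ring map $f: R \to S$ to the $\mathcal{C}^{\infty}-$homomorphism induced by the universal property of the free $\mathcal{C}^{\infty}-$ring: writing $X, Y$ for the underlying sets of $R, S$, the set map underlying $f$ extends to $\mathcal{C}^{\infty}(\mathbb{R}^X) \to \mathcal{C}^{\infty}(\mathbb{R}^Y)$, and because $f$ preserves ring identities, this map sends $\iota_X(J_R)$ into $\iota_Y(J_S)$, hence descends to the quotients. The unit of the adjunction would be $\eta_R: R \to \widetilde{U}(\widetilde{L}(R))$, defined by $r \mapsto \overline{x_r}$ (the class of the $r$-th coordinate projection). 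This is a well-defined ring homomorphism because the defining relations $x_{r+s} - x_r - x_s,\; x_{rs} - x_r \cdot x_s,\; x_0,\; x_1-1$ all lie in $J_R$ and thus vanish modulo $\langle \iota_X(J_R)\rangle$.

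The core step is establishing the bijection
$$\mathrm{Hom}_{\mathcal{C}^{\infty}{\rm \bf Rng}}(\widetilde{L}(R), A) \;\cong\; \mathrm{Hom}_{{\rm \bf CRing}}(R, \widetilde{U}(A)).$$
Given $\phi: R \to \widetilde{U}(A)$, the universal property of the free $\mathcal{C}^{\infty}-$ring produces a unique $\mathcal{C}^{\infty}-$homomorphism $\Phi: \mathcal{C}^{\infty}(\mathbb{R}^X) \to A$ with $\Phi(x_r) = \phi(r)$ for each $r \in X$. The step I expect to require the most care — the main obstacle — is verifying that $\Phi$ annihilates $\langle \iota_X(J_R)\rangle$, so that it descends to $\widetilde{L}(R)$. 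This amounts to showing $\Phi\circ\iota_X = \phi\circ\pi_R$ as ring maps $\mathbb{Z}[X] \to A$, and the delicate point is the observation — essentially built into the definition of $\widetilde{U}$ — that the $\mathcal{C}^{\infty}-$interpretation in $A$ of any polynomial smooth function coincides with the corresponding polynomial ring operation on $\widetilde{U}(A)$. Granting this, both sides are ring homomorphisms that agree on the generators $x_r$, hence coincide, and therefore vanish on $J_R$. The inverse of the bijection is restriction along $\eta_R$, and naturality in both arguments is routine; the diagram $\widetilde{U}\circ\widetilde{L}\cong L''\circ (\text{underlying set})$ predicted by the commutative square in the excerpt is then immediate from the construction.
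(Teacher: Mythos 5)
Your proposal is correct and follows essentially the same route as the paper: both constructions take the canonical presentation $R \cong \mathbb{Z}[X]/J_R$ over the underlying set $X$, map it through the canonical ring homomorphism $\mathbb{Z}[X] \to \widetilde{U}(\mathcal{C}^{\infty}(\mathbb{R}^X))$ (your $\iota_X$ is the paper's $\gamma_X$), define $\widetilde{L}(R)$ as $\mathcal{C}^{\infty}(\mathbb{R}^X)$ modulo the ideal generated by the image of $J_R$, and verify the hom-set bijection by combining the universal properties of free objects and quotients. The only cosmetic difference is that the paper treats free rings and quotients as separate stages and then invokes a pointwise-free-object theorem, whereas you verify functoriality and the adjunction bijection directly; your identification of the key point — that the $\mathcal{C}^{\infty}$-interpretation of polynomial functions in $A$ agrees with the ring operations of $\widetilde{U}(A)$ — is exactly the fact the paper's argument also rests on.
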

\begin{proof} First we note that both ${\rm \bf CRing}$ and $\mathcal{C}^{\infty}{\rm \bf Rng}$ are concrete categories and the forgetful functors:
$$U': \mathcal{C}^{\infty}{\rm \bf Rng} \rightarrow {\rm \bf Set}$$
and
$$U'': {\rm \bf CRing} \rightarrow {\rm \bf Set}$$
have left adjoints,  $L': {\rm \bf Set} \to \mathcal{C}^{\infty}{\rm \bf Rng}$ (where, for a given set $E$, we have $L'(E)=\mathcal{C}^{\infty}(\mathbb{R}^{E})$) and $L'': {\rm \bf Set} \to {\rm \bf CRing}$ (where, for a given set $E$, we have $L''(E)=\mathbb{Z}[E]$) so we have, for each $\mathcal{C}^{\infty}-$ring $B$, the  following natural bijection:

$$\mu_{E,B}: {\rm \bf Set}\,(E, U'(B)) \stackrel{\cong}{\rightarrow} \mathcal{C}^{\infty}{\rm \bf Rng}\,(\mathcal{C}^{\infty}(\mathbb{R}^{E}),B) $$

defined as follows:

Given a function $f: E \to U'(B)$, by the universal property of $\imath_E': E \rightarrow U'(\mathcal{C}^{\infty}(\mathbb{R}^{E}))$, there is a unique $\mathcal{C}^{\infty}-$homomorphism $f': \mathcal{C}^{\infty}(\mathbb{R}^{E}) \rightarrow B$, such that the following diagram commutes:

$$\xymatrixcolsep{5pc}\xymatrix{
E \ar[r]^{\imath_E'} \ar[dr]_{f} & U'(\mathcal{C}^{\infty}(\mathbb{R}^{E})) \ar[d]^{U'(f')}\\
 & U'(B)}$$

We define $\mu_{E,B}(f):=f'$, and it is easy to see that it is natural and that the map which takes any $\mathcal{C}^{\infty}-$homomorphism $g: \mathcal{C}^{\infty}(\mathbb{R}^{E}) \rightarrow B$ to the function $U'(g)\circ \imath_E': E \rightarrow U'(B)$ is its inverse, \textit{i.e.}, $\mu_{E,B}^{-1}(g)=U'(g)\circ \imath_E'$.\\

We also have, for any commutative unital ring $R$, a natural bijection:

$$\nu_{E,R}: {\rm \bf Set}\,(E, U''(R)) \stackrel{\cong}{\rightarrow} {\rm \bf CRing}\,(\mathbb{Z}[E],R),$$

which is defined in a similar way, as follows: given a function $f: E \to U''(R)$, by the universal property of $\imath_E'': E \to U''(\mathbb{Z}[E])$, there is a unique homomorphism of rings $f'': \mathbb{Z}[E] \to R$ such that the following diagram commutes:

$$\xymatrixcolsep{5pc}\xymatrix{
E \ar[r]^{\imath_E''} \ar[dr]_{f} & U''(\mathbb{Z}[E]) \ar[d]^{U'(f'')}\\
 & U''(R)}.$$

We define $\nu_{E,R}(f) = f''$, and it is easy to see that it is natural and that the map which takes any commutative unital rings homomorphism $g: \mathbb{Z}[E] \rightarrow R$ to the function $U''(g)\circ \imath_E'': E \rightarrow U''(R)$ is its inverse, \textit{i.e.}, $\nu_{E,R}^{-1}(g)=U''(g)\circ \imath_E''$.\\

Note that the following diagram commutes:

$$\xymatrixcolsep{5pc}\xymatrix{
\mathcal{C}^{\infty}{\rm \bf Rng} \ar[r]^{\widetilde{U}} \ar[dr]_{U'} & {\rm \bf CRing} \ar[d]^{U''}\\
 & {\rm \bf Set}
},$$

that is, $U' = U'' \circ \widetilde{U}$, and we have, for any set $E$ and any $\mathcal{C}^{\infty}-$ring $B$:

$${\rm \bf Set}\,(E,U'(B)) = {\rm \bf Set}\,(E, U''(\widetilde{U}(B))),$$

and we get the following:

\begin{equation*}
  \mathcal{C}^{\infty}{\rm \bf Rng}\,(\mathcal{C}^{\infty}(\mathbb{R}^{E}),B) \stackrel{\mu_{E,B}^{-1}}{\rightarrow} {\rm \bf Set}\,(E, U'(B)) = {\rm \bf Set}\,(E, U''(\widetilde{U}(B))) \stackrel{\nu_{E, \widetilde{U}(B)}}{\rightarrow}  {\rm \bf CRing}\,(\mathbb{Z}[E], \widetilde{U}(B))
\end{equation*}

so, composing these natural bijections yields the natural bijection:

$$\varphi_{\mathbb{Z}[E],B}: \mathcal{C}^{\infty}{\rm \bf Rng}\,(\mathcal{C}^{\infty}(\mathbb{R}^{E}),B) \stackrel{\nu_{E, \widetilde{U}(B)}\circ \mu_{E,B}^{-1}}{\longrightarrow} {\rm \bf CRing}\,(\mathbb{Z}[E], \widetilde{U}(B)).$$

In the case that $\widetilde{U}$ has a left adjoint, $\widetilde{L}$, we must have $L'' \cong \widetilde{L}\circ L'$ (cf. \textbf{Corollary 1}, p. 85 of \cite{mac2013categories}). This suggests us that, in order to compose the left adjoint  we must define a function $\widetilde{L}_0$  on the free objects of ${\rm \bf CRing}$ as follows:\\

$$\widetilde{L}_0(\mathbb{Z}[E]):= \mathcal{C}^{\infty}(\mathbb{R}^{E}).$$

In order to define the left adjoint $\widetilde{L}$, we are going to use \textbf{Theorem 9}, p. 116 of \cite{arbib1975arrows}, that is, we are going to show that $\widetilde{U}: \mathcal{C}^{\infty}{\rm \bf Rng} \to {\rm \bf CRings}$ is a functor with the property that to every commutative unital ring, $R$ there corresponds a free $\mathcal{C}^{\infty}-$ring, call it $(\widetilde{L}_0(R), \gamma_R: R \to \widetilde{U}(\widetilde{L}_0(R)))$. Given any commutative unital rings homomorphism $f: R \to R'$, we define $\widetilde{L}_1(f): \widetilde{L}_0(R) \rightarrow \widetilde{L}_0(R')$ through the universal property of $\gamma_R$: given the map $\gamma_{R'} \circ f: R \rightarrow \widetilde{U}(\widetilde{L}_0(R''))$, there is a unique $\mathcal{C}^{\infty}-$homomorphism $\widetilde{L}_1(f): \widetilde{L}_0(R) \to \widetilde{L}_0(R')$ such that the following diagram commutes:

$$\xymatrixcolsep{5pc}\xymatrix{
R \ar[r]^{\gamma_R} \ar[dr]_{\gamma_{R'}\circ f} & \widetilde{U}(\widetilde{L}_0(R)) \ar[d]^{\widetilde{U}(\widetilde{L}_1(f))}\\
 & \widetilde{U}(\widetilde{L}_0(R'))}$$

We begin with the definition of the free $\mathcal{C}^{\infty}-$rings corresponding to the free commutative unital rings. So, we start by commutative rings of the form $\mathbb{Z}[E]$ for some set $E$.\\

Given any set $E$ and given the map $\imath_E': E \to U''(\widetilde{U}(\mathcal{C}^{\infty}(\mathbb{R}^{E})))$, by the universal property of $\imath_E'': E \to U''(\mathbb{Z}[E])$, there is a unique commutative unital rings homomorphism:

$$\gamma_E: \mathbb{Z}[E] \rightarrow \widetilde{U}(\mathcal{C}^{\infty}(\mathbb{R}^{E}))$$

such that the following triangle commutes:

$$\xymatrixcolsep{5pc}\xymatrix{
E \ar[r]^{\imath_E''} \ar[dr]_{\imath_E'} & U''(\mathbb{Z}[E]) \ar[d]^{U''(\gamma_E)}\\
 & U''(\widetilde{U}(\mathcal{C}^{\infty}(\mathbb{R}^{E})))}.$$

Note that $\gamma_{E}$ has the required universal property. In fact, for any $\mathcal{C}^{\infty}-$ring $A$, one has the following map:

$$\begin{array}{cccc}
    \widehat{\gamma}_{\mathbb{Z}[E],A}: & \mathcal{C}^{\infty}{\rm \bf Rng}\,(\mathcal{C}^{\infty}(\mathbb{R}^{E}),A) & \rightarrow & {\rm \bf CRing}\,(\mathbb{Z}[E], \widetilde{U}(A)) \\
     & f & \mapsto & \widetilde{U}(f)\circ \gamma_E
  \end{array}$$

which is a natural bijection since the following diagram commutes:

$$\xymatrixcolsep{5pc}\xymatrix{
\mathcal{C}^{\infty}{\rm \bf Rng}\,(\mathcal{C}^{\infty}(\mathbb{R}^{E}),A) \ar[r]^{\widehat{\gamma}_{\mathbb{Z}[E],A}} \ar[dr]_{\mu_{E,A}^{-1}} & {\rm \bf CRing}\,(\mathbb{Z}[E], \widetilde{U}(A)) \ar[d]^{\nu_{E, \widetilde{U}(A)}^{-1}}\\
 & {\rm \bf Set}\,(E, U'(A))
},$$

In fact, for every $f \in \mathcal{C}^{\infty}{\rm \bf Rng}\,(\mathcal{C}^{\infty}(\mathbb{R}^{E}),A)$ we have:

\begin{multline*}
(\nu_{E, \widetilde{U}(A)}^{-1}\circ \widehat{\gamma}_{\mathbb{Z}[E],A})(f) = \nu_{E, \widetilde{U}(A)}^{-1}(\widehat{\gamma}_{\mathbb{Z}[E],A}(f)) = \nu_{E, \widetilde{U}(A)}^{-1}(\widetilde{U}(f)\circ \gamma_E) = \\
= U''(\widetilde{U}(f)\circ \gamma_E)\circ \imath_E'' = (U''(\widetilde{U}(f))\circ U''(\gamma_E))\circ \imath_{E''} = U'(f)\circ (U''(\gamma_E)\circ \imath_E'')=\\
= U'(f)\circ \imath_E' = \mu_{E,A}^{-1}(f)
\end{multline*}

and $\widehat{\gamma}_{\mathbb{Z}[E],A}$ is a natural bijection.\\

Now we are going to extend this definition to any commutative unital ring.\\

Given any commutative unital ring of the form $\dfrac{\mathbb{Z}[E]}{I}$, we define $\widehat{I} = \langle \gamma_E[I]\rangle$, the ideal of the $\mathcal{C}^{\infty}-$ring $\mathcal{C}^{\infty}(\mathbb{R}^{E})$ generated by $\gamma_E[I]$, and we define $\gamma_{E,I}$ as the unique commutative unital ring homomorphism such that the following diagram commutes:

\begin{equation*}\xymatrixcolsep{5pc}\xymatrix{
\mathbb{Z}[E] \ar[r]^{\gamma_E} \ar@{->>}[d]_{q_I} & U''(\mathcal{C}^{\infty}(\mathbb{R}^{E})) \ar@{->>}[d]^{q_{\widehat{I}}}\\
\dfrac{\mathbb{Z}[E]}{I} \ar@{-->}[r]^{\gamma_{E,I}} & U''\left( \dfrac{\mathcal{C}^{\infty}(\mathbb{R}^{E})}{\widehat{I}}\right)}
\end{equation*}

where $q_I$ and $q_{\widehat{I}}$ are the canonical quotient homomorphisms.\\

Thus we define:

$$\widetilde{L}_0\left(\dfrac{\mathbb{Z}[E]}{I}\right):= \dfrac{\mathcal{C}^{\infty}(\mathbb{R}^{E})}{\widehat{I}}.$$

Now we are going to prove that the map:

$$\gamma_{E,I}: \dfrac{\mathbb{Z}[E]}{I} \rightarrow \widetilde{U}\left( \dfrac{\mathcal{C}^{\infty}(\mathbb{R}^{E})}{\widehat{I}}\right) $$

has the universal property required, by showing that $\widehat{\gamma}_{E,I}$ is a composition of bijections.\\

Consider the sets:

$$\mathcal{A} = \{ \xymatrix{\mathcal{C}^{\infty}(\mathbb{R}^{E}) \ar[r]^{\varphi} & B} | \varphi[\widehat{I}] = \{ 0\} \} \subseteq \mathcal{C}^{\infty}{\rm \bf Rng}(\mathcal{C}^{\infty}(\mathbb{R}^{E}), B)$$

and:
$$\mathcal{B}= \{ \xymatrix{\mathbb{Z}[E] \ar[r]^{\psi} & \widetilde{U}(B)} | \psi[I] = \{ 0\} \} \subseteq {\rm \bf CRing}(\mathbb{Z}[E], \widetilde{U}(B)),$$

and take:
$$\widehat{\gamma}_E\displaystyle\upharpoonright_{\mathcal{A}} : \mathcal{A} \to \mathcal{B}.$$

Since $\widehat{\gamma}_E\displaystyle\upharpoonright_{\mathcal{A}}[\mathcal{A}] = \mathcal{B}$, we have a bijection between $\mathcal{A}$ and $\mathcal{B}$.

By the \textbf{First Isomorphism Theorem}, however, for every map $\xymatrix{\mathcal{C}^{\infty}(\mathbb{R}^E) \ar[r]^{\varphi} & B}$ such that $\widehat{I} \subseteq \ker \varphi$, i.e., for every element of $\mathcal{A}$ there is a unique element of $\mathcal{C}^{\infty}{\rm \bf Rng}\left( \dfrac{\mathcal{C}^{\infty}(\R^E)}{\widehat{I}},B \right)$, say $\overline{\varphi} : \dfrac{\mathcal{C}^{\infty}(\R^E)}{\widehat{I}} \to B$ such that the following diagram:
  $$\xymatrix{
  \mathcal{C}^{\infty}(\R^E) \ar[d]_{q_{\widehat{I}}} \ar[r]^{\varphi} & B \\
  \dfrac{\mathcal{C}^{\infty}(\R^E)}{\widehat{I}} \ar[ur]^{\overline{\varphi}}
  }$$
commutes, so there is a bijection between these classes. Let $\alpha: \mathcal{A} \to \mathcal{C}^{\infty}{\rm \bf Rng}\left( \dfrac{\mathcal{C}^{\infty}(\R^E)}{\widehat{I}},B \right)$ be such a bijection.\\

By the \textbf{Fundamental Homomorphism Theorem} there is a natural bijection between $\mathcal{B}$ and ${\rm \bf CRing}\left( \dfrac{\mathbb{Z}[E]}{I}, \widetilde{U}(B) \right)$ such that for any $\psi \in \mathcal{B}$ there is a unique $\overline{\psi}$ such that the following diagram:
$$\xymatrix{
\mathbb{Z}[E] \ar[d]^{q_{I}} \ar[r]^{\psi} & \widetilde{U}(B)\\
\dfrac{\mathbb{Z}[E]}{I} \ar[ur]^{\overline{\psi}}
}$$
commutes. Let $\beta: \mathcal{B} \to {\rm \bf CRing}\left( \dfrac{\mathbb{Z}[E]}{I}, \widetilde{U}(B) \right)$ be such a bijection.\\

The map $\widehat{\gamma}_{E,I}:= \beta \circ (\widetilde{\gamma}_{E}\displaystyle\upharpoonright_{\mathcal{A}} ) \circ \alpha^{-1}$ is a natural bijection, since it is a composition of natural bijections:

$$\xymatrixcolsep{5pc}\xymatrix{
\mathcal{A} \ar[r]^{\widehat{\gamma}_{E}\displaystyle\upharpoonright_{\mathcal{A}}} & \mathcal{B} \ar[d]^{\beta}\\
\mathcal{C}^{\infty}{\rm \bf Rng}\left( \dfrac{\mathcal{C}^{\infty}(\mathbb{R}^{E})}{\widehat{I}}, B\right) \ar@{-->}[r]_{\widehat{\gamma}_{E,I}} \ar[u]_{\alpha^{-1}} & {\rm \bf CRing}\left( \dfrac{\mathbb{Z}[E]}{I}, \widetilde{U}(B)\right)}.$$

Now, given any commutative unital ring $R$, we have the counity of the adjunction $L'' \dashv U''$ at $R$:

$$\varepsilon_R'' : L''(U''(R)) \twoheadrightarrow R$$

so we have the isomorphism:

$$\overline{\varepsilon_R''}: \dfrac{\mathbb{Z}[U''(R)]}{\ker \varepsilon_R''} \rightarrow R,$$

which provides us the canonical presentation of $R$ by relations and generators:

$$R \cong \dfrac{\mathbb{Z}[U''(R)]}{\ker \varepsilon_R''},$$

so we define:

$$\widetilde{L}_0(R) = \widetilde{L}_0 \left( \dfrac{\mathbb{Z}[U''(R)]}{\ker \varepsilon_R''}\right) = \dfrac{\mathcal{C}^{\infty}(\mathbb{R}^{U''(R)})}{\widehat{\ker \varepsilon_R''}}$$

together with the universal map:

$$\gamma_R = \gamma_{U'(R), \ker \varepsilon_R''} \circ \overline{\varepsilon_R''}^{-1}: R \rightarrow \widetilde{U}\left( \dfrac{\mathcal{C}^{\infty}(\mathbb{R}^{U''(R)})}{\widehat{\ker \varepsilon_R''}}\right)$$

Applying the \textbf{Theorem 9} (p. 116 of \cite{arbib1975arrows}), it follows that there is a unique way (up to natural isomorphism) to define a left-adjoint, $\widetilde{L}$ to $\widetilde{U}$.

\end{proof}

\begin{remark}Since $\widetilde{L}$, described above, is left-adjoint, it preserves all colimits, and in particular we have:

$$\widetilde{L}\left( \dfrac{\mathbb{Z}[E]}{I}\right) \cong \dfrac{\widetilde{L}(\mathbb{Z}[E])}{\langle \widetilde{\gamma_E}[I]\rangle}$$

Note that $\widetilde{L}$ takes finitely generated (presented) commutative unital rings to finitely generated (presented) $\mathcal{C}^{\infty}-$rings.\\

In fact, let $B = \dfrac{\mathbb{Z}[x_1, \cdots, x_n]}{\langle p_1, \cdots, p_k\rangle}$, with $k,n \in \mathbb{N}$, be a finitely presented commutative unital ring, so it can be expressed by the coequalizer:

$$\xymatrix{\mathbb{Z}[x_1, \cdots, x_k] \ar@<1ex>[r]^{0} \ar@<-1ex>[r]_{P} & \mathbb{Z}[x_1, \cdots, x_n] \ar@{->>}[r] & B},$$

where for every $i \in \{1, \cdots, k \}$, $P(x_i)=p_i$ and $0(x_i)=0$.\\

Since $\widetilde{L}$ preserves free objects and colimits, it follows that $\widetilde{L}(B)$ is given by the coequalizer:

$$\xymatrix{\mathcal{C}^{\infty}(\mathbb{R}^k) \ar@<1ex>[r]^{\widetilde{P}} \ar@<-1ex>[r]_{0} & \mathcal{C}^{\infty}(\mathbb{R}^n) \ar@{->>}[r] & \widetilde{L}(B)\cong \dfrac{\mathcal{C}^{\infty}(\mathbb{R}^n)}{\langle \widetilde{\gamma_E}(p_1), \cdots, \widetilde{\gamma_E}(p_k) \rangle}}$$
\end{remark}

\begin{remark}Since $\mathbb{R} \cong \mathcal{C}^{\infty}(\{ *\})$ is the initial object in $\mathcal{C}^{\infty}{\rm \bf Rng}$, we can consider $\mathcal{C}^{\infty}-$rings as $\mathbb{R}-$algebras. In fact, we have a forgetful functor:

$$\mathcal{U}: \mathcal{C}^{\infty}{\rm \bf Rng} \rightarrow \mathbb{R}-{\rm \bf Alg}.$$

The functor $\mathcal{U}$ also has a left adjoint, whose construction is basically the same as the given in the previous theorem.
\end{remark}

\section{Smooth Ring of Fractions}

\hspace{0.5cm} We begin by giving a description of the fundamental concept of ``smooth ring of fractions'', presenting a slight modification of the axioms given in \cite{moerdijk1986rings}. In order to show that the $\mathcal{C}^{\infty}-$ring of fractions exists in the category of $\mathcal{C}^{\infty}-$rings, we use the $\mathcal{C}^{\infty}-$ring of $\mathcal{C}^{\infty}-$polynomials.\\

We turn to the discussion of how to obtain the ring of fractions of a $\mathcal{C}^{\infty}-$ring $A$ with respect to some of its subsets, $S \subseteq U(A, \Phi)$. Recall that for a commutative ring $(R, +_R, \cdot_R, -_R, 1_R)$ and a multiplicative subset $S \subseteq R$, the ring of fractions is obtained by defining an equivalence relation on $R \times S$, $\equiv$, given by:
$$((r_1,s_1) \sim (r_2, s_2)) \iff (\exists u \in S)(u \cdot (r_1 \cdot s_2 - r_2 \cdot s_1)=0).$$

We denote
$$S^{-1}R \stackrel{\cdot}{=} \dfrac{R \times S}{\sim} = \{ [(r,s)] \stackrel{\cdot}{=} \frac{r}{s} | (r,s) \in R \times S\},$$

and we put a ring structure on it by defining the addition and the multiplication of these fractions in the same way as in Elementary Algebra, that is:
$$\begin{array}{cccc}
    + : & S^{-1}R \times S^{-1}R & \rightarrow & S^{-1}R \\
     & ([(r_1,s_1)], [(r_2,s_2)]) & \mapsto & [(r_1 \cdot_R s_2 +_R r_2 \cdot_R s_1, s_1 \cdot_R s_2)]
  \end{array}$$

$$\begin{array}{cccc}
    \cdot : & S^{-1}R \times S^{-1}R & \rightarrow & S^{-1}R \\
     & ([(r_1,s_1)], [(r_2,s_2)]) & \mapsto & [(r_1 \cdot_R r_2, s_1 \cdot_R s_2)]
  \end{array}$$

$$\begin{array}{cccc}
    - : &  S^{-1}R & \rightarrow & S^{-1}R \\
     & [(r_1,s_1)] & \mapsto & [(-_R r_1, s_1)]
  \end{array}$$

Together with the element $[(1_R,1_R)]$, $(S^{-1}R, +_R, \cdot_R, [(1_R,1_R)])$ is a commutative unital ring. We also have a canonical ring homomorphism that ``embeds'' $R$ into $S^{-1}R$ in such a way that the image of every element of $S$ is invertible, namely:

$$\begin{array}{cccc}
    \eta_S : & R & \rightarrow & S^{-1}R \\
     & r & \mapsto & [(r,1)]
  \end{array}$$

We can prove that $S^{-1}R$ has the following universal property (cf. \textbf{Proposition 3.1} of \cite{atiyah1969introduction}):

\begin{proposition}\label{Nab}Given a ring homomorphism $g: R \to B$  such that $(\forall s \in S)(g(s) \in B^{\times})$, there is a unique ring homomorphism $\widetilde{g}: S^{-1}R \to B$ such that the following triangle commutes:

$$\xymatrixcolsep{5pc}\xymatrix{
A \ar[r]^{\eta_S} \ar[dr]_{g} & S^{-1}R \ar[d]^{\exists ! \widetilde{g}}\\
   & B}$$
\end{proposition}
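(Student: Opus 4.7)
The plan is to construct $\widetilde{g}$ explicitly by the only formula compatible with the universal requirement, then verify well-definedness, the ring homomorphism axioms, commutativity of the triangle, and uniqueness. Concretely, I would define
\[
\widetilde{g}\!\left(\left[(r,s)\right]\right) := g(r)\cdot g(s)^{-1},
\]
which makes sense because $g(s) \in B^{\times}$ for every $s \in S$ by hypothesis. The uniqueness claim will in fact dictate this formula: any ring homomorphism $h: S^{-1}R \to B$ satisfying $h\circ \eta_S = g$ must send $[(1_R,s)] = \eta_S(s)^{-1}$ to $g(s)^{-1}$, and hence $[(r,s)] = \eta_S(r)\cdot \eta_S(s)^{-1}$ to $g(r)\cdot g(s)^{-1}$.

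The main obstacle, and really the only nontrivial step, is checking that $\widetilde{g}$ is well-defined on equivalence classes. Suppose $(r_1,s_1)\sim (r_2,s_2)$, so there exists $u \in S$ with $u\cdot(r_1 s_2 - r_2 s_1) = 0$ in $R$. Applying $g$ yields $g(u)\cdot\bigl(g(r_1)g(s_2) - g(r_2)g(s_1)\bigr) = 0$ in $B$. Now $g(u)$ is a unit in $B$, so multiplying by $g(u)^{-1}$ gives $g(r_1)g(s_2) = g(r_2)g(s_1)$. Multiplying both sides by $g(s_1)^{-1}g(s_2)^{-1}$, which exists since $s_1,s_2\in S$, produces $g(r_1)g(s_1)^{-1} = g(r_2)g(s_2)^{-1}$, exactly $\widetilde{g}([(r_1,s_1)]) = \widetilde{g}([(r_2,s_2)])$. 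This step is where the ``auxiliary $u$'' in the equivalence relation is indispensable, since in general $R$ may have $S$-torsion that gets killed only after applying $g$.

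Once well-definedness is in hand, the ring homomorphism axioms reduce to elementary manipulations: for addition one computes
\[
\widetilde{g}([(r_1 s_2 + r_2 s_1, s_1 s_2)]) = \bigl(g(r_1)g(s_2) + g(r_2)g(s_1)\bigr)\cdot g(s_1)^{-1}g(s_2)^{-1},
\]
which clearly equals $g(r_1)g(s_1)^{-1} + g(r_2)g(s_2)^{-1}$; multiplicativity and the unit axiom are similarly direct, using that $B$ is commutative so that the inverses $g(s)^{-1}$ commute with everything. Commutativity of the triangle is immediate from $\widetilde{g}(\eta_S(r)) = \widetilde{g}([(r,1_R)]) = g(r)\cdot g(1_R)^{-1} = g(r)$. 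Uniqueness follows from the observation above that any candidate $h$ is forced to agree with $\widetilde{g}$ on every class $[(r,s)] = \eta_S(r)\cdot \eta_S(s)^{-1}$ by the homomorphism property and the fact that $\eta_S[S]\subseteq (S^{-1}R)^{\times}$.
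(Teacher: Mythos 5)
Your proof is correct: the explicit formula $\widetilde{g}([(r,s)]) = g(r)\cdot g(s)^{-1}$, the well-definedness check via the auxiliary $u \in S$, and the forced-uniqueness argument are exactly the standard proof of the universal property of localization. The paper itself offers no proof of this proposition — it simply cites \textbf{Proposition 3.1} of Atiyah--Macdonald — and your argument is the one given there, so there is nothing to compare beyond noting that you have supplied the omitted details correctly.
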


In order to extend the notion of the ring of fractions to the category $\mathcal{C}^{\infty}{\rm \bf Rng}$ we make use of the universal property described in \textbf{Proposition \ref{Nab}}, as we can see in the following:\\

\begin{definition}\label{Alem}Let $A$ be a $\mathcal{C}^{\infty}-$ring and $S \subseteq A$ one of its subsets. The \index{$\mathcal{C}^{\infty}-$ring of fractions}$\mathcal{C}^{\infty}-$\textbf{ring of fractions} of $A$ with respect to $S$ is a $\mathcal{C}^{\infty}-$ring together with a $\mathcal{C}^{\infty}-$homo\-morphism $\eta_S: A \to A\{ S^{-1}\}$ with the following properties:
\begin{itemize}
  \item[(1)]{$(\forall s \in S)(\eta_S(s) \in (A\{ S^{-1}\})^{\times})$}
  \item[(2)]{If $\varphi: A \to B$ is any $\mathcal{C}^{\infty}-$homomorphism such that for every $s \in S$ we have $\varphi(s) \in B^{\times}$, then there is a unique $\mathcal{C}^{\infty}-$homomorphism $\widetilde{\varphi}: A\{ S^{-1}\} \to B$ such that the following triangle commutes:
      $$\xymatrixcolsep{5pc}\xymatrix{
      A \ar[r]^{\eta_S} \ar[rd]^{\varphi} & A\{ S^{-1}\} \ar[d]^{\widetilde{\varphi}}\\
        & B}$$}
\end{itemize}

By this universal property, the $\mathcal{C}^{\infty}-$ring of fractions is unique, up to (unique) isomorphisms.
\end{definition}

Now we prove the existence of such a $\mathcal{C}^{\infty}-$ring of fractions by constructing it. Recall that we can only make use of the constructions available within the category $\mathcal{C}^{\infty}{\rm \bf Rng}$, such as the free $\mathcal{C}^{\infty}-$ring on a set of generators, their coproduct, their quotients and others described in \textbf{Chapter 1}.\\

Recall that given any set $S$, the  $\mathcal{C}^{\infty}-$ring of ``smooth polynomials'' in the set $S$ of variables over a $\mathcal{C}^{\infty}-$ring $A$ is obtained as follows:\\

We consider $\mathcal{C}^{\infty}(\mathbb{R}^S)$,  the free $\mathcal{C}^{\infty}-$ring on the set $S$ of generators, together with its canonical map, $\jmath_S: S \to \mathcal{C}^{\infty}(\mathbb{R}^S)$. If we denote by:

$$\xymatrixcolsep{5pc}\xymatrix{
A \ar[dr]^{\iota_A} & \\
   & A \otimes_{\infty} \mathcal{C}^{\infty}(\mathbb{R}^S)\\
\mathcal{C}^{\infty}(\mathbb{R}^S) \ar[ur]_{\iota_{\mathcal{C}^{\infty}(\mathbb{R}^S)}}
}$$

the coproduct of $A$ and $\mathcal{C}^{\infty}(\mathbb{R}^S)$, we define:
$$x_s := \iota_{\mathcal{C}^{\infty}(\mathbb{R}^S)}(\jmath_S(s)).$$

By definition, we have:

$$A\{ x_s | s \in S \} := A \otimes_{\infty} \mathcal{C}^{\infty}(\mathbb{R}^S).$$

so we can consider the quotient:

$$\dfrac{A\{x_s | s \in S \}}{\langle \{ x_s \cdot \iota_A(s) - 1 | s \in S \}\rangle},$$

where $\langle \{ x_s \cdot \iota_A(s) - 1 | s \in S\}\rangle$ is the ideal of $A$ generated by $\{ x_s \cdot \iota_A(s) - 1 | s \in S\}$. \\

In these settings we can formulate the following:\\

\begin{theorem}\label{conc}Let $A$ be a $\mathcal{C}^{\infty}-$ring and $S \subseteq A$ be any of its subsets. The \textbf{$\mathcal{C}^{\infty}-$ring of fractions of $A$ with respect to $S$} is given concretely by the $\mathcal{C}^{\infty}-$ring:
$$A\{ S^{-1}\}:= \dfrac{A\{ x_s | s \in S \}}{\langle \{x_s \cdot \iota_A(s) - 1 | s \in S \}\rangle}$$
together with the $\mathcal{C}^{\infty}-$homomorphism:
$$\eta_S:= q \circ \iota_A : A \to A\{ S^{-1}\},$$
where $q: A\{ x_s | s \in S\} \to \dfrac{A\{x_s | s \in S \}}{\langle \{ x_s \cdot \iota_A(s) - 1 | s \in S\}\rangle}$ is the canonical quotient map and $\iota_A: A \to A\{ x_s | s \in S\} = A \otimes_{\infty} \mathcal{C}^{\infty}(\mathbb{R}^S)$ is the canonical coproduct homomorphism corresponding to $A$.
\end{theorem}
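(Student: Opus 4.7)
The plan is to verify the two clauses of Definition \ref{Alem} directly against the explicit construction $A\{S^{-1}\} = A\{x_s \mid s \in S\}/\langle\{x_s \cdot \iota_A(s) - 1 \mid s \in S\}\rangle$, using nothing more than the universal properties of the free $\mathcal{C}^{\infty}$-ring on $S$, the binary coproduct $A \otimes_{\infty} \mathcal{C}^{\infty}(\mathbb{R}^{S})$, and the quotient by an ideal.

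For the invertibility clause, I would observe that by construction the class of $x_s \cdot \iota_A(s) - 1$ is zero in $A\{S^{-1}\}$, so $q(x_s) \cdot q(\iota_A(s)) = q(1) = 1$. Since $\eta_S(s) = q(\iota_A(s))$, the element $q(x_s)$ is an explicit (two-sided, as the ring is commutative) inverse of $\eta_S(s)$, proving $\eta_S(s) \in (A\{S^{-1}\})^{\times}$ for every $s \in S$.

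For the universal property, suppose $\varphi : A \to B$ is a $\mathcal{C}^{\infty}$-homomorphism with $\varphi(s) \in B^{\times}$ for all $s \in S$. I would build the factorization in three clean stages. First, by the universal property of the free $\mathcal{C}^{\infty}$-ring $\mathcal{C}^{\infty}(\mathbb{R}^{S})$, define $\psi : \mathcal{C}^{\infty}(\mathbb{R}^{S}) \to B$ as the unique $\mathcal{C}^{\infty}$-homomorphism with $\psi \circ \jmath_S : s \mapsto \varphi(s)^{-1}$. Second, by the universal property of the coproduct, the pair $(\varphi, \psi)$ induces a unique $\mathcal{C}^{\infty}$-homomorphism $\Phi : A\{x_s \mid s \in S\} \to B$ with $\Phi \circ \iota_A = \varphi$ and $\Phi \circ \iota_{\mathcal{C}^{\infty}(\mathbb{R}^{S})} = \psi$; in particular $\Phi(x_s) = \varphi(s)^{-1}$. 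Third, compute on each generator of the relation ideal: $\Phi(x_s \cdot \iota_A(s) - 1) = \varphi(s)^{-1} \cdot \varphi(s) - 1 = 0$. Because congruences of $\mathcal{C}^{\infty}$-rings are classified by ring-theoretic ideals (recalled in the preliminaries), $\Phi$ factors uniquely through $q$ as a $\mathcal{C}^{\infty}$-homomorphism $\widetilde{\varphi} : A\{S^{-1}\} \to B$, and then $\widetilde{\varphi} \circ \eta_S = \widetilde{\varphi} \circ q \circ \iota_A = \Phi \circ \iota_A = \varphi$.

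For uniqueness of $\widetilde{\varphi}$, I would argue that any such homomorphism is forced on a generating set: on elements of the form $\eta_S(a) = q(\iota_A(a))$ it must take the value $\varphi(a)$, and on $q(x_s)$ — which equals $\eta_S(s)^{-1}$ by the first clause — it must take the value $\varphi(s)^{-1}$, since inverses in $B$ are unique. These two families together generate $A\{S^{-1}\}$ as a $\mathcal{C}^{\infty}$-ring (the image of $\iota_A$ together with the images of the $x_s$ already generate $A\{x_s \mid s \in S\}$, and $q$ is surjective), so a $\mathcal{C}^{\infty}$-homomorphism is determined by its values there. There is no real obstacle in this proof; the only point where one must be a bit careful is the verification that $\Phi$ annihilates the ideal generators, which hinges on the deliberate choice $\psi(\jmath_S(s)) = \varphi(s)^{-1}$ that made the construction work in the first place.
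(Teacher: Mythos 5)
Your proof is correct and follows essentially the same route as the paper's: the same three-stage factorization through the free $\mathcal{C}^{\infty}$-ring on $S$, the coproduct, and the quotient, with the same key choice of sending $\jmath_S(s)$ to $\varphi(s)^{-1}$. If anything, your uniqueness argument (forcing the values of $\widetilde{\varphi}$ on the generators $q(\iota_A(a))$ and $q(x_s)$) is spelled out more explicitly than the paper's appeal to "the very construction of $\overline{g}$."
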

\begin{proof}
We are going to show that $q \circ \iota_A: A \rightarrow \dfrac{A\{ x_s | s \in S \}}{\langle \{x_s \cdot s - 1 \}\rangle}$ satisfies the universal property described in the \textbf{Definition \ref{Alem}}.\\

First we prove that for every $s \in S$, $(q \circ \imath_A)(s) \in \left( \dfrac{A\{ x_s | s \in S\}}{\langle \{ x_s \cdot s - 1 | s \in S \}\rangle}\right)^{\times}$.\\

Given $s \in S$, we have that $x_s + \langle \{ x_s \cdot \iota_A(s) - 1 | s \in S \}\rangle$ is the multiplicative inverse of $(q \circ \imath_A)(s) = q(\iota_A(s)) = \iota_A(s) + \langle \{ x_s \cdot \iota_A(s) - 1 | s \in S \}\rangle$, since $x_s \cdot \iota_A(s) - 1 \in \langle \{ x_s \cdot \iota_A(s) - 1 | s \in S \}\rangle$. Thus $\eta_S(s) = (q \circ \imath_A)(s) \in \left( \dfrac{A\{ x_s | s \in S\}}{\langle \{ x_s \cdot \iota_A(s) - 1 | s \in S \}\rangle}\right)^{\times}$.\\

Now we need to check if the $\mathcal{C}^{\infty}-$homomorphism $q \circ \imath_A : A \to \dfrac{A\{ x_s | s \in S\}}{\langle \{ x_s \cdot s - 1 | s \in S \}\rangle}$ satisfies the universal property described in the \textbf{Definition \ref{Alem}}. \\

Let  $B$ be a $\mathcal{C}^{\infty}-$ring and $g: A \to B$ be a $\mathcal{C}^{\infty}-$rings homomorphism such that for every $s \in S$, $g(s) \in B^{\times}$.\\

By the universal property of the free $\mathcal{C}^{\infty}-$ring with the set $S$ of generators, given the restriction of $g: A \to B$ to the function $g\upharpoonright_{S}: S \to B$ and considering the map:
$$\begin{array}{cccc}
    g^{-1}_{S}: & S & \rightarrow & B \\
     & s & \mapsto & g(s)^{-1}
  \end{array}$$
there is a unique $\mathcal{C}^{\infty}-$rings homomorphism $\widetilde{g^{-1}_{S}}: \mathcal{C}^{\infty}(\mathbb{R}^S) \to B$ such that the following diagram commutes:

$$\xymatrixcolsep{5pc}\xymatrix{
S \ar[r]^{\jmath_S} \ar[dr]_{g^{-1}_{S}} & \mathcal{C}^{\infty}(\mathbb{R}^S) \ar[d]^{\exists ! \widetilde{g^{-1}_{S}}} \\
   & B
},$$

that is, $\widetilde{g^{-1}_{S}} \circ \jmath_S = g^{-1}_S $, or equivalently $(\forall s \in S)(g^{-1}_{S}(\jmath_S(s)) = g(s)^{-1})$.\\

By the universal property of the coproduct, given the diagram:

$$\xymatrixcolsep{5pc}\xymatrix{
A \ar[dr]^{g} & \\
     & B \\
\mathcal{C}^{\infty}(\mathbb{R}^S) \ar[ur]_{\widetilde{g^{-1}_S}}
}$$

there is a unique $\mathcal{C}^{\infty}-$rings homomorphism, $\widetilde{g}: A \otimes_{\infty} \mathcal{C}^{\infty}(\mathbb{R}^S) \to B$ such that the following diagram commutes:

$$\xymatrixcolsep{5pc}\xymatrix{
A \ar[dr]^{\iota_A} \ar@/^2pc/[rrd]^{g} & & \\
     & A \otimes_{\infty} \mathcal{C}^{\infty}(\mathbb{R}^S) \ar[r]^{\exists ! \widetilde{g}} & B \\
\mathcal{C}^{\infty}(\mathbb{R}^S) \ar[ur]_{\iota_{\mathcal{C}^{\infty}(\mathbb{R}^S)}} \ar@/_2pc/[rru]_{\widetilde{g^{-1}_S}} & &
},$$

that is,

$$\widetilde{g} \circ \iota_{\mathcal{C}^{\infty}(\mathbb{R}^S)} = \widetilde{g}\upharpoonright_S$$
and
\begin{equation}\label{magic}
\widetilde{g} \circ \iota_A = g
\end{equation}

Note that such a $\widetilde{g}$ satisfies:

$$(\forall s \in S)(\widetilde{g}(x_s) = g(s)^{-1}),$$

so for every $s \in S$, $\widetilde{g}(x_s\cdot \iota_A(s) - 1) = \widetilde{g}(x_s)\cdot \widetilde{g}(\iota_A(s)) - \widetilde{g}(1) = g(s)^{-1}\cdot g(s) - 1_B = 0$ and $\langle \{ x_s \cdot \iota_A(s) - 1 | s \in S \}\rangle \subseteq \ker \widetilde{g}$.\\

Let $q: A \otimes_{\infty} \mathcal{C}^{\infty}(\mathbb{R}^S) \to \dfrac{A \otimes_{\infty} \mathcal{C}^{\infty}(\mathbb{R}^S)}{\langle \{ x_s \cdot \iota_A(s) - 1 | s \in S \}\rangle}$ be the canonical quotient map. Since $\langle \{ x_s \cdot \iota_A(s) - 1 | s \in S\}\rangle \subseteq \ker\widetilde{g}$, by the \textbf{Theorem of the Homomorphism}, given the map $\widetilde{g}: A \otimes_{\infty}\mathcal{C}^{\infty}(\mathbb{R}^S) \to B$, there is a unique $\overline{g}: \dfrac{A \otimes_{\infty} \mathcal{C}^{\infty}(\mathbb{R}^S)}{\langle \{ x_s \cdot \iota_A(s) - 1 | s \in S \}\rangle} \to B$ such that the following diagram commutes:

$$\xymatrixcolsep{5pc}\xymatrix{
A \otimes_{\infty} \mathcal{C}^{\infty}(\mathbb{R}^S) \ar[r]^{\widetilde{g}} \ar[d]_{q} & B \\
\dfrac{A \otimes_{\infty} \mathcal{C}^{\infty}(\mathbb{R}^S)}{\langle \{ x_s \cdot \iota_A(s) - 1 | s \in S \}\rangle} \ar[ur]_{\overline{g}} &
},$$

that is, such that:

\begin{equation}\label{pacoca}
\overline{g} \circ q = \widetilde{g}.
\end{equation}

Note that since $\overline{g}\circ (q \circ \iota_A) = (\overline{g}\circ q) \circ \iota_A \stackrel{\eqref{pacoca}}{=} \widetilde{g} \circ \iota_A \stackrel{\eqref{magic}}{=} g$, the following diagram commutes:

$$\xymatrixcolsep{5pc}\xymatrix{
A \ar[r]^{q \circ \iota_A} \ar[dr]_{g} & \dfrac{A \otimes_{\infty} \mathcal{C}^{\infty}(\mathbb{R}^S)}{\langle \{ x_s \cdot s - 1 | s \in S \}\rangle} \ar[d]^{\overline{g}}\\
 & B}$$

Moreover, the very construction of $\overline{g}$ tells us that it is the unique $\mathcal{C}^{\infty}-$homo\-morphism that makes the above diagram commute.\\

Thus, the pair
\begin{equation*}\left( \dfrac{A\{ x_s | s \in S\}}{\langle \{ x_s \cdot \iota_A(s) - 1 | s \in S \}\rangle}, q \circ \iota_A : A \to \dfrac{A \otimes_{\infty} \mathcal{C}^{\infty}(\mathbb{R}^S)}{\langle \{ x_s \cdot \iota_A(s) - 1 | s \in S \}\rangle} \right)
\end{equation*}
is the $\mathcal{C}^{\infty}-$ring of fractions of $A$ with respect to $S$, up to isomorphism.
\end{proof}

\begin{remark}As we shall see later on, in \textbf{Proposition \ref{fact}}, the universal map $\eta_S: A \to A\{ S^{-1}\}$ is an epimorphism in $\mathcal{C}^{\infty}{\rm \bf Rng}$, even though it is not always a surjective map.
\end{remark}

\subsection{Smooth Saturation}\label{saty}

Within the theory of Commutative Algebra we have the following concept, namely the concept of ``saturation'' of a set:\\

\begin{definition}Let $A$ be any commutative ring with unity and $S \subseteq A$ any of its subsets. Denoting by $\langle S \rangle$ the multiplicative submonoid of $A$ generated by $S$, we define \textbf{the saturation of} $S$ as follows:
$$S^{{\rm sat}} = \{ a \in A | (\exists d \in A)( a \cdot d \in \langle S \rangle) \}.$$

In other words, the saturation of a set $S$ is the set of all divisors of  elements in $\langle S \rangle$.
\end{definition}

One easily sees that the saturation of a subset $S$ of a commutative ring $A$ is equal to the pre-image of the invertible elements of $A[S^{-1}]$ by the canonical map $\eta_S: A \to A\{ S^{-1}\}$, \textit{i.e.}, $S^{{\rm sat}} = \eta_S^{\dashv}[(A[S^{-1}])^{\times}]$. \\

We are going to use this characterization in order to define the \textit{smooth} saturation of a subset $S$ of a $\mathcal{C}^{\infty}-$ring $A$, that we are going to denote by $S^{\infty-{\rm sat}}$. First we need the following:

\begin{proposition}\label{fati}Let $(A,\Phi)$ be a $\mathcal{C}^{\infty}-$ring and $S \subseteq A$ be any subset. If both $(F, \sigma)$ and $(F', \sigma')$ satisfy the \textbf{Definition \ref{Alem}}, then $\sigma^{\dashv}[F^{\times}] = {\sigma'}^{\dashv}[{F'}^{\times}].$
\end{proposition}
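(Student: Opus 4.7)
The plan is to exploit the uniqueness part of the universal property in Definition \ref{Alem} to produce a canonical isomorphism between the two candidate rings of fractions, and then observe that isomorphisms of $\mathcal{C}^{\infty}$-rings preserve and reflect invertibility.

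First I would apply the universal property of $(F,\sigma)$ to the homomorphism $\sigma': A \to F'$: since $(F',\sigma')$ satisfies clause (1) of Definition \ref{Alem}, every element of $\sigma'[S]$ is invertible in $F'$, so there is a unique $\mathcal{C}^{\infty}$-homomorphism $\varphi: F \to F'$ with $\varphi \circ \sigma = \sigma'$. Symmetrically, applying the universal property of $(F',\sigma')$ to $\sigma$ yields a unique $\psi: F' \to F$ with $\psi \circ \sigma' = \sigma$. Then $\psi \circ \varphi: F \to F$ and $\mathrm{id}_F: F \to F$ both make the triangle
\[
\xymatrix{A \ar[r]^{\sigma} \ar[dr]_{\sigma} & F \ar[d]\\ & F}
\]
commute, so by the uniqueness clause of the universal property (applied with $B = F$ and $\varphi = \sigma$) we get $\psi \circ \varphi = \mathrm{id}_F$; analogously $\varphi \circ \psi = \mathrm{id}_{F'}$. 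Hence $\varphi$ is an isomorphism of $\mathcal{C}^{\infty}$-rings.

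Next I would use that any $\mathcal{C}^{\infty}$-ring isomorphism, being in particular a ring isomorphism, restricts to a bijection between groups of units: $\varphi[F^{\times}] = (F')^{\times}$ and $\psi[(F')^{\times}] = F^{\times}$. Combining this with the identity $\varphi \circ \sigma = \sigma'$, for any $a \in A$ we have $\sigma(a) \in F^{\times}$ if and only if $\varphi(\sigma(a)) = \sigma'(a) \in (F')^{\times}$. Therefore
\[
\sigma^{\dashv}[F^{\times}] \;=\; \{a \in A \mid \sigma(a) \in F^{\times}\} \;=\; \{a \in A \mid \sigma'(a) \in (F')^{\times}\} \;=\; {\sigma'}^{\dashv}[(F')^{\times}],
\]
which is exactly the stated equality.

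There is essentially no obstacle here: the universal property does all the work, and the only subtle point is appealing carefully to \emph{uniqueness} in Definition \ref{Alem} twice — once to build the comparison maps $\varphi,\psi$ and once to conclude that they are mutually inverse. The verification that an isomorphism preserves units is immediate from the fact that $\varphi(1) = 1$ and $\varphi(xy) = \varphi(x)\varphi(y)$, so I would not belabor it in the written proof.
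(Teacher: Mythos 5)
Your proof is correct and follows the same route as the paper: both construct the canonical isomorphism $\varphi: F \to F'$ from the universal property and then use that an isomorphism preserves and reflects units to identify the two preimages. You merely spell out the standard uniqueness argument that the paper leaves implicit.
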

\begin{proof}
Since both $(F,\sigma)$ and $(F',\sigma')$ have the universal property described in the \textbf{Definition \ref{Alem}}, there is a unique $\mathcal{C}^{\infty}-$isomorphism $\varphi: F \to F'$, so the following triangle commutes:
$$\xymatrixcolsep{5pc}\xymatrix{
 & F \ar[dd]^{\varphi}_{\cong}\\
A \ar[ur]^{\sigma} \ar[dr]_{\sigma'} & \\
  & F'
}$$

Because $\varphi: F \to F'$ is a $\mathcal{C}^{\infty}-$isomorphism, $\varphi^{\dashv}[{F'}^{\times}] = F^{\times}$, so:
$${\sigma'}^{\dashv}[{F'}^{\times}] = \sigma^{\dashv}[\varphi^{\dashv}[{F'}^{\times}]] = \sigma^{\dashv}[F^{\times}].$$
\end{proof}

Now we give the following:

\begin{definition}\label{satlisa}Let $A$ be a $\mathcal{C}^{\infty}-$ring, $S \subseteq A$ and $(F,\sigma)$ be a ring of fractions of $A$ with respect to $S$. The \index{smooth saturation}\textbf{smooth saturation} of $S$ in $A$ is:
$$S^{\infty-{\rm sat}}:= \{ a \in A | \sigma(a) \in F^{\times}\}.$$
\end{definition}

In virtue of the \textbf{Proposition \ref{fati}}, the set $S^{\infty-{\rm sat}}$ does not depend on any particular choice of the representation of the ring of fractions, rather it depends only on $A$ and $S$.\\

\begin{remark}Since for every $s \in S$, $\eta_S(s) \in (A\{ S^{-1}\})^{\times}$, from now on we are going to use the more suggestive \underline{notation}:

$$\left( \forall s \in S\right)\left(\dfrac{1}{\eta_S(s)} \stackrel{\cdot}{=} \eta_S(s)^{-1}\right),$$

and for any $a \in A$ and $s \in S$ we are going to denote:

$$\dfrac{\eta_S(a)}{\eta_S(s)} \stackrel{\cdot}{=} \eta_S(a)\cdot{\eta_S(s)^{-1}}.$$
\end{remark}

\begin{definition}Let $A$ be a $\mathcal{C}^{\infty}-$ring and let $S \subseteq A^{\times}$ be any subset. The \textbf{smooth saturation of $S$} is $S^{{\rm sat}-\infty} = {\eta_S}^{\dashv}[A\{ S^{-1}\}^{\times}]$, where $\eta_S\,: A \to A\{ S^{-1}\}$ is the canonical map of the ring of fractions of $A$ with respect to $S$.
\end{definition}

\begin{remark}\label{bols}Let $A$ be a $\mathcal{C}^{\infty}-$ring, $S \subseteq A$ and consider the forgetful functor:
$$\begin{array}{cccc}
    \mathcal{U}: & \mathcal{C}^{\infty}{\rm \bf Rng} & \to & \mathbb{R}-{\rm \bf Alg} \\
     & A & \mapsto & \mathcal{U}(A)\\
     & A \stackrel{f}{\to} B & \mapsto & \mathcal{U}(A) \stackrel{\mathcal{U}(f)}{\to} \mathcal{U}(B)\\
  \end{array}$$
We have always:
$$S^{{\rm sat}} \subseteq S^{\infty-{\rm sat}}$$
\end{remark}


\begin{theorem}\label{lara}Let $A$ be a $\mathcal{C}^{\infty}-$ring, $S \subseteq A$ and consider the forgetful functor:
$$\begin{array}{cccc}
    \mathcal{U}: & \mathcal{C}^{\infty}{\rm \bf Rng} & \to & \mathbb{R}-{\rm \bf Alg} \\
     & A & \mapsto & \mathcal{U}(A)\\
     & A \stackrel{f}{\to} B & \mapsto & \mathcal{U}(A) \stackrel{\mathcal{U}(f)}{\to} \mathcal{U}(B)\\
  \end{array}$$
Since $\eta_S^{\infty}: A \to A\{ S^{-1}\}$ is such that $\eta_S^{\infty}[S] \subseteq (A\{ S^{-1}\})^{\times}$, then $(\eta_S^{\infty})[S] \subseteq (\mathcal{U}(A\{ S^{\-1}\}))^{\times}$, so by the universal property of the ring of fractions:
$$\eta_S : \mathcal{U}(A) \to \mathcal{U}(A)[S^{-1}],$$
there is a unique $\R-$algebras homomorphism, ${\rm Can}: \mathcal{U}(A)[S^{-1}] \to \mathcal{U}(A\{ S^{-1}\})$ such that the following diagram commutes:
$$\xymatrix{
\mathcal{U}(A) \ar[r]^{\eta_S} \ar[dr]_{\mathcal{U}(\eta_S^{\infty})} & \mathcal{U}(A)[S^{-1}] \ar@{.>}[d]^{{\rm Can}}\\
  & \mathcal{U}(A\{ S^{-1}\})
}$$
The following assertions are equivalent:
\begin{itemize}
  \item[(1)]{$S^{{\rm sat}} = S^{\infty-{\rm sat}}$;}
  \item[(2)]{${\rm Can}$ is an isomorphism of $\mathbb{R}-$algebra.}
\end{itemize}
\end{theorem}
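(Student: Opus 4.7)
The plan is to prove the two implications separately, with $(2) \Rightarrow (1)$ being essentially formal and $(1) \Rightarrow (2)$ requiring more work.

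For $(2) \Rightarrow (1)$, a direct diagram chase suffices. The commutativity $\mathcal{U}(\eta_S^\infty) = \mathrm{Can}\circ\eta_S$ implies
$$ S^{\infty\text{-sat}} = (\mathcal{U}(\eta_S^\infty))^{\dashv}[(A\{S^{-1}\})^\times] = \eta_S^{\dashv}\bigl[\mathrm{Can}^{\dashv}[(A\{S^{-1}\})^\times]\bigr]. $$
Assuming $\mathrm{Can}$ is an $\mathbb{R}$-algebra isomorphism, it restricts to a bijection between unit groups, so $\mathrm{Can}^{\dashv}[(A\{S^{-1}\})^\times] = (A[S^{-1}])^\times$, yielding $S^{\infty\text{-sat}} = \eta_S^{\dashv}[(A[S^{-1}])^\times] = S^{\text{sat}}$. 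In view of \textbf{Remark \ref{bols}}, only one inclusion actually needs to be argued.

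For $(1) \Rightarrow (2)$, assume $S^{\text{sat}} = S^{\infty\text{-sat}}$. The plan is to establish bijectivity of $\mathrm{Can}$ by exploiting the saturation equality on both kernels and images. For injectivity, any $x \in \ker(\mathrm{Can})$ can be written $x = \eta_S(a)\eta_S(s)^{-1}$, and clearing the unit $\eta_S^\infty(s)$ reduces the problem to showing that $\eta_S^\infty(a) = 0$ implies $\eta_S(a) = 0$ in $A[S^{-1}]$. The approach would be to apply the saturation equality to translates of the form $s - ab$ (each of which lies in $S^{\infty\text{-sat}}$ since $\eta_S^\infty(s - ab) = \eta_S^\infty(s)$ is a unit), producing enough units $\eta_S(s - ab)$ in $A[S^{-1}]$ to force $\eta_S(a)$ to vanish. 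For surjectivity, the goal is to show every element of $\mathcal{U}(A\{S^{-1}\})$ can be expressed in the form $\eta_S^\infty(a)\eta_S^\infty(s)^{-1}$. A natural strategy is to equip $A[S^{-1}]$ with a $\mathcal{C}^\infty$-structure extending its $\mathbb{R}$-algebra structure, using the saturation hypothesis to make the assignment of smooth operations on fractions well-defined, and then to invoke the universal property of \textbf{Definition \ref{Alem}} to obtain an inverse $\mathcal{C}^\infty$-homomorphism $A\{S^{-1}\} \to A[S^{-1}]$.

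The hard part will be the surjectivity portion of $(1) \Rightarrow (2)$: the $\mathcal{C}^\infty$-ring $A\{S^{-1}\}$ is generated by $\eta_S^\infty[A]$ together with $\{\eta_S^\infty(s)^{-1} : s\in S\}$ only under the full $\mathcal{C}^\infty$-structure, not merely under ring operations, so a priori $\mathrm{Can}$ could be injective without being surjective (as happens for $A = \mathcal{C}^\infty(\mathbb{R})$ with $S = \{x\}$, where $\sin(1/x)$ belongs to $A\{S^{-1}\}$ but not to the ring-theoretic image of $\mathrm{Can}$, and accordingly the saturation equality fails). The hypothesis $S^{\text{sat}} = S^{\infty\text{-sat}}$ must force smooth expressions in these generators to collapse into rational ones, and I expect the resolution to rely on combining the universal property of $A\{S^{-1}\}$ with the explicit characterization of $\mathcal{C}^\infty$-rings of fractions to be given in \textbf{Theorem \ref{38}}.
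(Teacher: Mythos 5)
Your $(2)\Rightarrow(1)$ half is exactly the paper's argument and is fine. For $(1)\Rightarrow(2)$ your instinct to lean on the characterization of $\mathcal{C}^{\infty}$-rings of fractions (\textbf{Theorem \ref{38}}) is the right one, but both concrete mechanisms you sketch are off. The injectivity step fails as stated: knowing that $\eta_S(s)-\eta_S(a)\eta_S(b)$ is a unit of $A[S^{-1}]$ for every $b$ does \emph{not} force $\eta_S(a)=0$ (a unit minus a nilpotent is still a unit, so a nonzero nilpotent $\eta_S(a)$ would pass your test). The correct step is item (ii) of \textbf{Theorem \ref{38}}: $\eta_S^{\infty}(a)=0$ yields $c\in S^{\infty-{\rm sat}}=S^{{\rm sat}}$ with $c\cdot a=0$; since $c\in S^{{\rm sat}}$ there is $d$ with $cd\in\langle S\rangle$, and $(cd)\cdot a=0$ kills $\eta_S(a)$ in $A[S^{-1}]$. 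For surjectivity there is no need (and no evident way) to equip $A[S^{-1}]$ with a $\mathcal{C}^{\infty}$-structure: item (i) of \textbf{Theorem \ref{38}} already says that every element of $A\{S^{-1}\}$ equals $\eta_S^{\infty}(a)\cdot\eta_S^{\infty}(b)^{-1}$ with $b\in S^{\infty-{\rm sat}}$, and under hypothesis (1) this $b$ lies in $S^{{\rm sat}}$, so $\eta_S(b)$ is already invertible in $A[S^{-1}]$ and the element is ${\rm Can}\bigl(\eta_S(a)\eta_S(b)^{-1}\bigr)$. Your $\sin(1/x)$ example is consistent with this but does not threaten the implication, precisely because the saturation equality fails there.

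The paper packages both halves in one stroke: conditions (i) and (ii) of \textbf{Theorem \ref{38}}, with $S^{\infty-{\rm sat}}$ replaced by $S^{{\rm sat}}$ via hypothesis (1), are exactly the hypotheses of \textbf{Theorem \ref{37}}, the if-and-only-if characterization of the ordinary localization. Hence $\mathcal{U}(\eta_S^{\infty}):\mathcal{U}(A)\to\mathcal{U}(A\{S^{-1}\})$ is itself (isomorphic to) the localization $\eta_S:\mathcal{U}(A)\to\mathcal{U}(A)[S^{-1}]$, and since ${\rm Can}$ is the unique map making the triangle commute, it must be that isomorphism. You should restructure your $(1)\Rightarrow(2)$ argument along these lines; as written, the injectivity step contains a genuine logical gap and the surjectivity step rests on a construction that is not available.
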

\begin{proof}
Ad (2) $\to$ (1). Since the above diagram commutes, we have:
$$S^{\infty-{\rm sat}} = {\eta_S^{\infty}}^{\dashv}[(A\{S^{-1}\})^{\times}] = \eta_S^{\dashv}[{\rm Can}^{\dashv}[(A\{S^{-1}\})^{\times}]] \stackrel{(2)}{=} \eta_S^{\dashv}[(A[S^{-1}])^{\times}] = S^{{\rm sat}}$$

Ad (1) $\to$ (2): We already know that ${\rm Can}: \mathcal{U}(A)[S^{-1}] \to \mathcal{U}(A\{S^{-1}\})$ is an $\R-$algebras morphism. Note that since $S^{{\rm sat}} = S^{\infty-{\rm sat}}$, the morphism:
$$\mathcal{U}(\eta_S^{\infty}): \mathcal{U}(A) \to \mathcal{U}(A\{ S^{-1}\})$$
is such that:
\begin{itemize}
  \item[(i)]{$(\forall \varphi' \in \mathcal{U}(A)[S^{-1}])(\exists a \in U(A))(\exists b \in S^{{\rm sat}})(\mathcal{U}(\eta_S^{\infty})(b) \cdot \varphi' = \mathcal{U}(\eta_S^{\infty})(a))$;}
  \item[(ii)]{$(\forall a \in U(A))(\mathcal{U}(\eta_S^{\infty})(a)=0 \to (\exists \lambda' \in S^{{\rm sat}})(\lambda' \cdot a = 0))$;}
  \item[(iii)]{$S \subseteq S^{{\rm sat}}$.}
\end{itemize}
which are precisely the hypotheses of \textbf{Theorem \ref{37}}, so $\mathcal{U}(\eta_S^{\infty}): \mathcal{U}(A) \to \mathcal{U}(A\{ S^{-1}\})$ is isomorphic to the localization. This fact implies that since ${\rm Can}$ is the only ring homomorphism which makes the diagram commute, ${\rm Can}$ must be the unique isomorphism between $\mathcal{U}(A)[S^{-1}]$ and $\mathcal{U}(A\{ S^{-1}\})$
\end{proof}

In what follows we give some properties relating the inclusion relation among the subsets of a $\mathcal{C}^{\infty}-$ring and their smooth saturations.\\

\begin{proposition}Let $A$ be a $\mathcal{C}^{\infty}-$ring and $T, S \subseteq A$ be any of its subsets. Then:
\begin{itemize}
  \item[(i)]{$A^{\times} \subseteq S^{\infty-{\rm sat}}$}
  \item[(ii)]{$S \subseteq S^{\infty-{\rm sat}}$}
  \item[(iii)]{$S \subseteq T$ implies $S^{\infty-{\rm sat}} \subseteq T^{\infty-{\rm sat}}$}
  \item[(iv)]{$S^{\infty-{\rm sat}} = \langle S \rangle^{\infty-{\rm sat}}$, where $\langle S \rangle$ is the submonoid generated by $S$.}
\end{itemize}
\end{proposition}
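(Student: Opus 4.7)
The plan is to prove all four clauses by direct appeal to the characterization $S^{\infty-{\rm sat}} = \eta_S^{\dashv}[(A\{S^{-1}\})^{\times}]$ of Definition \ref{satlisa}, together with the universal property of $\eta_S$ recorded in Definition \ref{Alem}. None of the clauses should require substantial work beyond unpacking these two items.

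Clauses (i) and (ii) will be immediate. Clause (ii) is essentially item (1) of Definition \ref{Alem} rewritten in terms of preimages: for $s \in S$, the element $\eta_S(s)$ is a unit, so $s \in \eta_S^{\dashv}[(A\{S^{-1}\})^{\times}] = S^{\infty-{\rm sat}}$. Clause (i) follows because any $\mathcal{C}^{\infty}$-homomorphism is in particular a unital ring homomorphism and hence carries units to units, so $a \in A^{\times}$ forces $\eta_S(a) \in (A\{S^{-1}\})^{\times}$.

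For (iii), I will produce a comparison morphism $\widetilde{\eta_T}: A\{S^{-1}\} \to A\{T^{-1}\}$ via the universal property: since $S \subseteq T$, the map $\eta_T: A \to A\{T^{-1}\}$ already inverts every element of $S$, so Definition \ref{Alem}(2) supplies a unique $\widetilde{\eta_T}$ with $\widetilde{\eta_T} \circ \eta_S = \eta_T$. If $a \in S^{\infty-{\rm sat}}$, then $\eta_T(a) = \widetilde{\eta_T}(\eta_S(a))$ is the image of a unit under a $\mathcal{C}^{\infty}$-homomorphism, hence a unit, so $a \in T^{\infty-{\rm sat}}$.

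The only clause requiring more than a single observation is (iv), and this will be the main focus. One containment, $S^{\infty-{\rm sat}} \subseteq \langle S \rangle^{\infty-{\rm sat}}$, is free from (iii) applied to $S \subseteq \langle S \rangle$. For the reverse, the key remark is that $\eta_S^{\dashv}[(A\{S^{-1}\})^{\times}]$ is a multiplicative submonoid of $A$ containing $S$, hence contains $\langle S \rangle$, so $\eta_S$ inverts every element of $\langle S \rangle$. Applying Definition \ref{Alem}(2) with $\langle S \rangle$ in place of $S$ and with target $B = A\{S^{-1}\}$ then yields a $\mathcal{C}^{\infty}$-homomorphism $\psi: A\{\langle S \rangle^{-1}\} \to A\{S^{-1}\}$ satisfying $\psi \circ \eta_{\langle S \rangle} = \eta_S$, from which $\langle S \rangle^{\infty-{\rm sat}} \subseteq S^{\infty-{\rm sat}}$ follows exactly as in (iii). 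I do not anticipate any real obstacle; the whole proposition amounts to careful bookkeeping of the universal property of the $\mathcal{C}^{\infty}$-ring of fractions.
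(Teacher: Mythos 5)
Your proposal is correct and follows essentially the same route as the paper: (i) and (ii) by unit-preservation and the defining property of $\eta_S$, and (iii)--(iv) by manufacturing comparison morphisms from the universal property of the $\mathcal{C}^{\infty}$-ring of fractions. The only (minor) difference is in (iv), where the paper constructs both comparison maps and verifies they are mutually inverse isomorphisms before invoking the well-definedness of the saturation, whereas you need only the single morphism $\psi: A\{\langle S\rangle^{-1}\} \to A\{S^{-1}\}$ and the ``image of a unit is a unit'' argument --- a slight streamlining, not a different method.
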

\begin{proof}
\begin{itemize}
  \item{ Ad  (i): Since $\eta_S : A \to A\{ S^{-1} \}$ is a $\mathcal{C}^{\infty}-$ring homomorphism, it preserves invertible elements, so $\eta_S[A^{\times}] \subseteq (A\{ S^{-1}\})^{\times}$, hence $A^{\times} \subseteq \eta_S^{\dashv}[(A\{ S^{-1}\})^{\times}]$.}
  \item{Ad (ii): It is immediate that $S \subseteq S^{\infty-{\rm sat}}$, since by the very definition of $\eta_S$, $(\forall a \in S)(\eta_S(a) \in (A\{ S^{-1}\})^{\times})$.}
  \item{Ad (iii): Since $S \subseteq T$, $\eta_T[S] \subseteq \eta_T[T] \subseteq (A\{ T^{-1}\})^{\times}$ so, by the universal property of $\eta_S : A \to A\{ S^{-1}\}$ there exists a unique  $\mu_{ST}: A\{ S^{-1}\} \to A\{ T^{-1}\}$ such that the following diagram commutes:
      $$\xymatrix{
      A \ar[r]^{\eta_S} \ar[rd]_{\eta_T} & A\{ S^{-1}\} \ar[d]^{\exists ! \mu_{ST}}\\
                                         & A\{ T^{-1}\}
      }$$
      Hence, $T^{\infty-{\rm sat}} = \eta_T^{\dashv}[(A\{ T^{-1}\})^{\times}] = (\mu_{ST} \circ \eta_S)^{\dashv}[(A\{ T^{-1}\})^{\times}] = \eta_S^{\dashv}[\mu_{ST}^{\dashv}[(A\{ T^{-1}\})^{\times}]]$. Now, since:
      $$\mu_{ST}^{\dashv}[(A\{ T^{-1}\})^{\times}] \supseteq A\{ S^{-1} \}^{\times},$$
      $$T^{\infty-{\rm sat}} = \eta_{S}^{\dashv}[\mu_{ST}^{\dashv}[ A\{ T^{-1}\}] \supseteq \eta_S^{\dashv}[(A\{ S^{-1}\})^{\times}] = S^{\infty-{\rm sat}}.$$
            }
  \item{Ad (iv):  Since $S \subseteq \langle S \rangle$, by the preceding item, $S^{\infty-{\rm sat}} \subseteq \langle S \rangle^{\infty-{\rm sat}}$. We need to show that $\langle S \rangle^{\infty-{\rm sat}} \subseteq S^{\infty-{\rm sat}}$.\\

      We already know that, since $S \subseteq \langle S \rangle$, there exists a unique $\mathcal{C}^{\infty}-$morphism $\mu_{S \langle S \rangle}$ such that the following diagram commutes:
      $$\xymatrix{
      A \ar[r]^{\eta_S} \ar[dr]_{\eta_{\langle S \rangle}} & A\{ S^{-1}\} \ar[d]^{\mu_{S \langle S \rangle}}\\
                                                           & A\{\langle S \rangle^{-1} \}
      }$$

      Given $x \in \langle S \rangle$, there exists some finite subset $S' \subseteq S$ such that $x = \prod S'$. Now, $\eta_S(x) = \prod \eta_S[S']$. Since $S'$ is a finite subset of $S$ we have $\eta_S[S'] \subseteq (A\{ S^{-1} \})^{\times}$. By the universal property of $\eta_{\langle S \rangle}: A \to A\{ \langle S \rangle^{-1} \}$, there exists a unique $\nu_{\langle S \rangle S}: A\{ \langle S \rangle^{-1}\} \to A\{ S^{-1} \}$ such that the following diagram commutes:
      $$\xymatrix{
      A \ar[r]^{\eta_{\langle S \rangle}} \ar[dr]_{\eta_S} & A\{ \langle S \rangle^{-1} \} \ar[d]^{\nu_{\langle S \rangle S}}\\
                & A\{ S^{-1} \}
      }$$
      Now,
      $$\xymatrix{
         & A \{\langle S \rangle^{-1}\} \ar[d]_{\nu_{\langle S \rangle S}} \ar@/^2pc/[dd]^{{\rm id}_{A\{ \langle S \rangle^{-1} \}} }\\
      A \ar@/^/[ur]^{\eta_{\langle S \rangle}} \ar[r]^{\eta_S} \ar@/_/[dr]_{\eta_{\langle S \rangle}} & A\{ S^{-1}\} \ar[d]_{\mu_{S \langle S \rangle}}\\
           & A \{ \langle S \rangle^{-1} \}
      }$$
      It follows that $\mu_{S\langle S \rangle} \circ \nu_{\langle S \rangle S} = {\rm id}_{A\{ \langle A \rangle^{-1} \}}$. Also, we have the following diagram:
      $$\xymatrix{
         & A \{ S^{-1}\} \ar[d]_{\mu_{S \langle S \rangle}} \ar@/^2pc/[dd]^{{\rm id}_{A\{ S^{-1} \}} }\\
      A \ar@/^/[ur]^{\eta_{S}} \ar[r]^{\eta_{\langle S \rangle}} \ar@/_/[dr]_{\eta_{S}} & A\{ \langle S \rangle^{-1}\} \ar[d]_{\nu_{\langle S \rangle S}}\\
           & A \{ S^{-1} \}
      }$$
      so $\nu_{\langle S \rangle S} \circ \mu_{S \langle S \rangle} = {\rm id}_{A\{ S^{-1}\}}$, and $\mu_{S \langle S \rangle} : A\{ S^{-1}\} \to A \{ \langle S \rangle^{-1} \}$ and $\nu_{\langle S \rangle S}: A\{ \langle S \rangle^{-1}\} \to A\{ S^{-1}\}$ are inverse isomorphisms to each other. We conclude, from \textbf{Proposition \ref{Alem}}, that:
      $$S^{\infty-{\rm sat}} = \langle S \rangle^{\infty-{\rm sat}}.$$
       }
\end{itemize}
\end{proof}

Some necessary and sufficient conditions for the $\mathcal{C}^{\infty}-$homomorphism $\eta_S : A \to A\{ S^{-1}\}$ be a $\mathcal{C}^{\infty}-$iso\-morphism is given below:\\

\begin{proposition}Let $A$ be a $\mathcal{C}^{\infty}-$ring and $S \subseteq A$ any of its subsets. The following assertions are equivalent:
\begin{itemize}
  \item[(i)]{$\eta_S : A \to A\{ S^{-1}\}$ is an isomorphism;}
  \item[(ii)]{$S^{\infty-{\rm sat}} \subseteq A^{\times}$;}
  \item[(iii)]{$S^{\infty-{\rm sat}} = A^{\times}$}
\end{itemize}
\end{proposition}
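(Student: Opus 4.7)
The plan is to verify the chain (i)$\Rightarrow$(iii)$\Rightarrow$(ii)$\Rightarrow$(i), noting first that the preceding proposition (item (i)) already gives $A^{\times}\subseteq S^{\infty-{\rm sat}}$ unconditionally. In particular (ii) and (iii) are tautologically equivalent, so the content lies entirely in connecting the invertibility-preserving map $\eta_S$ with the statement ``nothing new becomes invertible.''

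For (i)$\Rightarrow$(iii), I would use that any $\mathcal{C}^{\infty}$-isomorphism reflects as well as preserves invertible elements: if $\eta_S$ is an isomorphism, then $\eta_S^{\dashv}[(A\{S^{-1}\})^{\times}]=A^{\times}$ by transporting the group of units along the inverse of $\eta_S$. By the very definition of smooth saturation this reads $S^{\infty-{\rm sat}}=A^{\times}$, which is (iii). Combined with the trivial implication (iii)$\Rightarrow$(ii), only the converse (ii)$\Rightarrow$(i) remains.

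For (ii)$\Rightarrow$(i) (the only non-formal step), the key remark is that (ii) forces $S\subseteq S^{\infty-{\rm sat}}\subseteq A^{\times}$, where the first inclusion is item (ii) of the preceding proposition. Thus the identity $\mathcal{C}^{\infty}$-homomorphism ${\rm id}_A\colon A\to A$ sends every element of $S$ to an invertible of $A$, so the universal property of $\eta_S\colon A\to A\{S^{-1}\}$ (\textbf{Definition \ref{Alem}}) produces a unique $\mathcal{C}^{\infty}$-homomorphism $\widetilde{{\rm id}_A}\colon A\{S^{-1}\}\to A$ with $\widetilde{{\rm id}_A}\circ\eta_S={\rm id}_A$. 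To check that $\widetilde{{\rm id}_A}$ is a two-sided inverse for $\eta_S$, I would invoke the uniqueness part of the universal property applied to $\eta_S$ itself: both $\eta_S\circ\widetilde{{\rm id}_A}$ and ${\rm id}_{A\{S^{-1}\}}$ are $\mathcal{C}^{\infty}$-homomorphisms $A\{S^{-1}\}\to A\{S^{-1}\}$ which, when pre-composed with $\eta_S$, yield $\eta_S$; uniqueness then forces $\eta_S\circ\widetilde{{\rm id}_A}={\rm id}_{A\{S^{-1}\}}$.

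I do not expect any genuine obstacle: the argument is a clean application of the universal property, identical in spirit to the ordinary-ring situation. The only conceptual point worth flagging explicitly is the passage $S^{\infty-{\rm sat}}\subseteq A^{\times}\Rightarrow S\subseteq A^{\times}$, which legitimates using ${\rm id}_A$ in the universal property; without that inclusion one could not even produce the candidate inverse $\widetilde{{\rm id}_A}$.
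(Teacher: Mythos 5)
Your proof is correct and follows the same universal-property strategy as the paper; the implications (i)$\Rightarrow$(iii) and (iii)$\Leftrightarrow$(ii) are handled identically. The one place you diverge is the closing implication: the paper proves (iii)$\Rightarrow$(i) by showing that ${\rm id}_A$ itself satisfies the universal property of the ring of fractions with respect to $A^{\times}$, concluding that $\eta_{A^{\times}}$ is an isomorphism, and then tacitly relying on the identification $A\{S^{-1}\}\cong A\{(A^{\times})^{-1}\}$ (which follows from $S^{\infty-{\rm sat}}=(A^{\times})^{\infty-{\rm sat}}$ and \textbf{Corollary \ref{cem}}) to transfer the conclusion back to $\eta_S$. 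You instead prove (ii)$\Rightarrow$(i) by constructing the inverse of $\eta_S$ directly: since $S\subseteq S^{\infty-{\rm sat}}\subseteq A^{\times}$, the identity factors through $\eta_S$, and the uniqueness clause of the universal property forces the factorization to be a two-sided inverse. Your version is marginally more self-contained, as it avoids the unstated transfer step; both arguments are otherwise equivalent in substance.
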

\begin{proof}

Ad (i) $\to$ (iii): Since $\eta_S$ is an isomorphism, both $\eta_S^{-1}$ and $\eta_S$ preserve the invertible elements, so $A^{\times} = \eta_S^{-1}[(A\{S^{-1}\})^{\times}]=\eta_S^{\dashv}[(A\{ S^{-1}\})^{\times}] = S^{{\rm sat}}$.\\

Ad (ii) $\leftrightarrow$ (iii): Since we always have $A^{\times} \subseteq S^{\infty-{\rm sat}}$, by (ii) we conclude that $A^{\times} = S^{\infty-{\rm sat}}$.

Ad (iii) $\to$ (i): Suppose that $S^{\infty-{\rm sat}} = A^{\times}$. We need to show $\eta_{A^{\times}} : A \to A\{ (A^{\times})^{-1}) \}$ is an isomorphism.\\

First we note that ${\rm id}_A : A \to A$ has the universal property of the ring of fractions of $A$ with respect to $A^{\times}$. Indeed, given any $\psi: A \to B$ such that $\psi[A^{\times}] \subseteq B^{\times}$ (i.e., any $\psi$ which is a $\mathcal{C}^{\infty}-$homomorphism), there exists a unique $\mathcal{C}^{\infty}-$homomorphism from $A$ to $B$, namely $\psi: A \to B$, such that the following diagram commutes:
$$\xymatrix{
A \ar[r]^{{\rm id}_A} \ar[rd]_{\psi} & A \ar[d]_{\psi}\\
           & B
}$$
It follows that $({\rm id}_A\, : A \to A) \cong (\eta_{A^{\times}} : A \to  A\{ (A^{\times})^{-1} \})$, since both satisfy the same universal property. Thus, $\eta_{A^{\times}}$ is the composition of a $\mathcal{C}^{\infty}-$isomorphism with ${\rm id}_A$, hence it is a $\mathcal{C}^{\infty}-$isomorphism.


Moreover, by \textbf{Proposition \ref{Alem}}, we conclude that $A^{\times} = \eta_{A^{\times}}[(A\{ (A^{\times})^{-1}\})^{\times}] = (A^{\times})^{\infty-{\rm sat}}$.
\end{proof}

Next we prove that the smooth saturation of the smooth saturation of a set is again the smooth saturation of this set.\\

\begin{proposition}\label{satsatsat}Let $A$ be a $\mathcal{C}^{\infty}-$ring and $S \subseteq A$ be any of its subsets. Then
$$(S^{\infty-{\rm sat}})^{\infty-{\rm sat}} = S^{\infty-{\rm sat}}$$
\end{proposition}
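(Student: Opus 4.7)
The plan is to reduce the equality to an application of \textbf{Proposition \ref{fati}} by exhibiting the original ring of fractions $(A\{S^{-1}\}, \eta_S)$ as \emph{itself} a $\mathcal{C}^{\infty}$-ring of fractions of $A$ with respect to the larger set $S^{\infty-\mathrm{sat}}$. Once that is established, the two presentations compute the same saturation.

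First I would dispose of the inclusion $S^{\infty-\mathrm{sat}} \subseteq (S^{\infty-\mathrm{sat}})^{\infty-\mathrm{sat}}$, which follows immediately from monotonicity (item (iii) of the preceding proposition) together with $S \subseteq S^{\infty-\mathrm{sat}}$ (item (ii)). The substantive content is the reverse inclusion.

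For the reverse inclusion I would verify that the pair $(A\{S^{-1}\}, \eta_S)$ satisfies \textbf{Definition \ref{Alem}} with $S$ replaced by $S^{\infty-\mathrm{sat}}$. Condition (1) of that definition is built into the definition of $S^{\infty-\mathrm{sat}}$: every $a \in S^{\infty-\mathrm{sat}}$ satisfies $\eta_S(a) \in (A\{S^{-1}\})^{\times}$ by construction. For condition (2), let $\varphi\colon A \to B$ be any $\mathcal{C}^{\infty}$-homomorphism such that $\varphi[S^{\infty-\mathrm{sat}}] \subseteq B^{\times}$; since $S \subseteq S^{\infty-\mathrm{sat}}$ we have $\varphi[S] \subseteq B^{\times}$, so the universal property of $\eta_S \colon A \to A\{S^{-1}\}$ produces a unique $\tilde\varphi \colon A\{S^{-1}\} \to B$ with $\tilde\varphi \circ \eta_S = \varphi$. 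This is exactly condition (2) for $S^{\infty-\mathrm{sat}}$.

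Hence $(A\{S^{-1}\}, \eta_S)$ is a $\mathcal{C}^{\infty}$-ring of fractions of $A$ with respect to $S^{\infty-\mathrm{sat}}$. By \textbf{Proposition \ref{fati}}, the set $\eta_S^{\dashv}[(A\{S^{-1}\})^{\times}]$ depends only on $A$ and $S^{\infty-\mathrm{sat}}$, and therefore coincides with $(S^{\infty-\mathrm{sat}})^{\infty-\mathrm{sat}}$. But by definition $\eta_S^{\dashv}[(A\{S^{-1}\})^{\times}] = S^{\infty-\mathrm{sat}}$, so the two sides agree. The only conceptual step likely to require care is the universal-property verification above; it is essentially free from the inclusion $S \subseteq S^{\infty-\mathrm{sat}}$, so no genuine obstacle is expected.
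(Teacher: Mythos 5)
Your proof is correct and rests on the same two observations as the paper's: that $\eta_S$ already inverts every element of $S^{\infty-{\rm sat}}$ (by definition of the saturation) and that $S \subseteq S^{\infty-{\rm sat}}$, combined with \textbf{Proposition \ref{fati}}. The only difference is packaging — you verify directly that $(A\{S^{-1}\}, \eta_S)$ satisfies \textbf{Definition \ref{Alem}} for $S^{\infty-{\rm sat}}$, whereas the paper constructs the two comparison maps $\mu$ and $\nu$ and checks they are mutually inverse; this is a minor streamlining of the same argument.
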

\begin{proof}
Since $S \subseteq S^{\infty-{\rm sat}}$, there exists a unique morphism $\mu_{SS^{\infty-{\rm sat}}}: A\{ S^{-1}\} \to A\{ (S^{\infty-{\rm sat}})^{-1} \}$ such that the following diagram commutes:
$$\xymatrix{
A \ar[r]^{\eta_S} \ar[dr]_{\eta_{S^{\infty-{\rm sat}}}} & A\{ S^{-1}\} \ar[d]^{\mu_{SS^{\infty-{\rm sat}}}}\\
            & A\{(S^{\infty-{\rm sat}})^{-1} \}
}$$

Now, $\eta_S[S^{\infty-{\rm sat}}] \subseteq (A\{ S^{-1}\})^{\times}$ by the very definition of $S^{\infty-{\rm sat}}$, so, by the universal property of $\eta_{S^{\infty-{\rm sat}}}: A \to A\{ (S^{\infty-{\rm sat}})^{-1} \}$, there exist a unique $\nu : A\{ (S^{\infty-{\rm sat}})^{-1}\} \to A\{ S^{-1}\}$ such that the following diagram commutes:

$$\xymatrix{
A \ar[dr]^{\eta_S} \ar[r]_{\eta_{S^{\infty-{\rm sat}}}} & A\{(S^{\infty-{\rm sat}})^{-1}\}  \ar[d]^{\nu}\\
            & A\{ S^{-1}\}
}$$

We have, then, the following commuting diagrams:

$$\begin{array}{cc}
\xymatrix{
          & A\{(S^{\infty-{\rm sat}})^{-1}\} \ar[d]_{\nu} \ar@/^3pc/[dd]^{{\rm id}_{A\{ (S^{\infty-{\rm sat}})^{-1} \}}} \\
A \ar[ur]^{\eta_{S^{\infty-{\rm sat}}}} \ar[r]^{\eta_S} \ar[dr]_{\eta_{S^{\infty-{\rm sat}}}} & A\{ S^{-1}\} \ar[d]_{\mu_{SS^{\infty-{\rm sat}}}}\\
          & A\{ (S^{\infty-{\rm sat}})^{-1} \}} & \xymatrix{
          & A\{S^{-1}\} \ar[d]_{\mu_{SS^{\infty-{\rm sat}}}} \ar@/^3pc/[dd]^{{\rm id}_{A\{ S^{-1} \}}} \\
A \ar[ur]^{\eta_{S}} \ar[r]^{\eta_{S^{\infty-{\rm sat}}}} \ar[dr]_{\eta_{S}} & A\{ (S^{\infty-{\rm sat}})^{-1}\} \ar[d]_{\nu}\\
          & A\{ S^{-1} \}}
          \end{array}$$

So $(\mu_{SS^{\infty-{\rm sat}}})^{-1} = \nu$, and $\mu_{SS^{\infty-{\rm sat}}}$ is an isomorphism. Hence $A\{ S^{-1}\} \cong A\{ (S^{\infty-{\rm sat}})^{-1}\}$, so by \textbf{Proposition \ref{Alem}},

$$(S^{\infty-{\rm sat}})^{\infty-{\rm sat}} = S^{\infty-{\rm sat}}$$
\end{proof}

\begin{proposition}\label{fact} Let $A$ be a $\mathcal{C}^{\infty}-$ring, $S \subseteq A$ be any of its subsets, and $\eta_S : A \to A\{ S^{-1}\}$ be the canonical morphism of the ring of fractions. If $g,h: A\{ S^{-1}\} \to B$ are two morphisms such that $g \circ \eta_S = h \circ \eta_S$ then $g=h$. In other words, $\eta_S : A \to A\{ S^{-1}\}$ is an epimorphism.
\end{proposition}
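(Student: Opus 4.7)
The plan is to apply the universal property of the $\mathcal{C}^{\infty}$-ring of fractions (\textbf{Definition \ref{Alem}}) directly. Given two morphisms $g, h: A\{S^{-1}\} \to B$ with $g \circ \eta_S = h \circ \eta_S$, I would let $\varphi := g \circ \eta_S = h \circ \eta_S : A \to B$ and observe that $\varphi$ sends elements of $S$ into $B^{\times}$. This is immediate: for any $s \in S$, $\eta_S(s) \in (A\{S^{-1}\})^{\times}$ by condition (1) of the definition, and any $\mathcal{C}^{\infty}$-homomorphism carries units to units, hence $\varphi(s) = g(\eta_S(s)) \in B^{\times}$.

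Now by condition (2) of \textbf{Definition \ref{Alem}}, there exists a \emph{unique} $\mathcal{C}^{\infty}$-homomorphism $\widetilde{\varphi}: A\{S^{-1}\} \to B$ making the triangle
$$\xymatrixcolsep{5pc}\xymatrix{
A \ar[r]^{\eta_S} \ar[rd]_{\varphi} & A\{S^{-1}\} \ar[d]^{\widetilde{\varphi}}\\
 & B
}$$
commute. Both $g$ and $h$ fit this diagram by construction of $\varphi$, so uniqueness forces $g = \widetilde{\varphi} = h$. This shows $\eta_S$ is an epimorphism in $\mathcal{C}^{\infty}{\rm \bf Rng}$.

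There is no real obstacle here; the argument is a one-line consequence of the uniqueness clause in the universal property, together with the fact that $\mathcal{C}^{\infty}$-homomorphisms preserve invertible elements. The only subtle point worth emphasizing is that we must verify $\varphi$ indeed satisfies the hypothesis ``$\varphi(s) \in B^{\times}$ for all $s \in S$'' before invoking the universal property, which is why we need $g$ (or $h$) to be a $\mathcal{C}^{\infty}$-homomorphism and not merely a set-theoretic map. Note that this epimorphism need not be surjective, since elements of $A\{S^{-1}\}$ of the form $\eta_S(s)^{-1}$ need not lie in the image of $\eta_S$, which is exactly the content of the remark following \textbf{Theorem \ref{conc}}.
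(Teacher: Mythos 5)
Your argument is correct and is essentially the paper's own proof: both set $\varphi = g\circ\eta_S = h\circ\eta_S$, check $\varphi[S]\subseteq B^{\times}$, and invoke the uniqueness clause of the universal property in \textbf{Definition \ref{Alem}} to force $g=h$. No issues.
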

\begin{proof}
Note that since $g \circ \eta_S$ is such that $(g \circ \eta_S)[S] \subseteq B^{\times}$, there exists a unique morphism $\widetilde{t}: A\{ S^{-1}\} \to B$ such that $\widetilde{t} \circ \eta_S = g \circ \eta_S$. By hypothesis we have $h \circ \eta_S = g \circ \eta_S$, so  $g$ has the property which determines $\widetilde{t}$. Hence $g = \widetilde{t} = h$\\
\end{proof}

\begin{proposition}\label{zero}Let $A$ be a $\mathcal{C}^{\infty}-$ring and $S,T \subseteq A$ two of its subsets. The following assertions are equivalent:
\begin{itemize}
  \item[(i)]{$S^{\infty-{\rm sat}} \subseteq T^{\infty-{\rm sat}}$}
  \item[(ii)]{There is a unique morphism $\mu: A\{ S^{-1}\} \to A\{ T^{-1}\}$ such that the following diagram commutes:
      $$\xymatrix{
      A \ar[dr]^{\eta_T}\ar[r]^{\eta_S} & A\{ S^{-1}\} \ar[d]^{\mu}\\
          & A\{ T^{-1}\}
      }$$}
\end{itemize}
\end{proposition}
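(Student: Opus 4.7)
The plan is to prove both implications using the universal property of the $\mathcal{C}^{\infty}$-ring of fractions together with the fact that $\mathcal{C}^{\infty}$-ring homomorphisms preserve invertible elements.

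For the implication (ii) $\Rightarrow$ (i), I will start with $a \in S^{\infty-\mathrm{sat}}$, so that $\eta_S(a) \in (A\{S^{-1}\})^\times$. Applying the morphism $\mu$ (which necessarily preserves units), I get $\mu(\eta_S(a)) \in (A\{T^{-1}\})^\times$. Since the diagram commutes, $\mu(\eta_S(a)) = \eta_T(a)$, and hence $\eta_T(a) \in (A\{T^{-1}\})^\times$, i.e., $a \in T^{\infty-\mathrm{sat}}$.

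For the implication (i) $\Rightarrow$ (ii), the key observation is that $S \subseteq S^{\infty-\mathrm{sat}}$ by item (ii) of the previous Proposition, so by hypothesis $S \subseteq T^{\infty-\mathrm{sat}} = \eta_T^{\dashv}[(A\{T^{-1}\})^\times]$. This means $\eta_T[S] \subseteq (A\{T^{-1}\})^\times$, so the $\mathcal{C}^{\infty}$-homomorphism $\eta_T : A \to A\{T^{-1}\}$ inverts every element of $S$. The universal property of the $\mathcal{C}^{\infty}$-ring of fractions (Definition \ref{Alem}) applied to $\eta_T$ then yields a unique $\mathcal{C}^{\infty}$-homomorphism $\mu : A\{S^{-1}\} \to A\{T^{-1}\}$ with $\mu \circ \eta_S = \eta_T$.

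Uniqueness of $\mu$ in (ii) can be stated either as built into the universal property above, or deduced from Proposition \ref{fact}, which asserts that $\eta_S$ is an epimorphism (so any two morphisms agreeing after pre-composition with $\eta_S$ must coincide). I do not anticipate any serious obstacle: the entire argument is a direct unpacking of the definition of smooth saturation combined with the universal property already established for $\eta_S$, plus the trivial fact that ring-type morphisms preserve units.
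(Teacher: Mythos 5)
Your proof is correct, and it is both simpler and more complete than the paper's. For (i)~$\Rightarrow$~(ii) the paper takes a longer route: it invokes \textbf{Proposition \ref{satsatsat}} to obtain isomorphisms $A\{S^{-1}\} \cong A\{(S^{\infty-\rm sat})^{-1}\}$ and $A\{T^{-1}\} \cong A\{(T^{\infty-\rm sat})^{-1}\}$, builds a map between the two saturated localizations from the inclusion $S^{\infty-\rm sat} \subseteq T^{\infty-\rm sat}$, defines $\mu$ as a composite of three arrows, and then proves uniqueness separately via the epimorphism property of $\eta_S$ (\textbf{Proposition \ref{fact}}). You instead observe that $S \subseteq S^{\infty-\rm sat} \subseteq T^{\infty-\rm sat}$ forces $\eta_T[S] \subseteq (A\{T^{-1}\})^{\times}$, so the universal property of $\eta_S$ (Definition \ref{Alem}) delivers existence \emph{and} uniqueness of $\mu$ in a single step --- no detour through the saturations is needed. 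Moreover, the paper's written proof only treats the direction (i)~$\to$~(ii); the converse is silently assumed later (e.g., in \textbf{Corollary \ref{cem}}), whereas you give the short unit-preservation argument explicitly. The only thing the paper's longer route ``buys'' is that it exhibits $\mu$ as conjugate to the canonical map between saturated localizations, but that is not needed for the statement itself.
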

\begin{proof}
Ad (i) $\to$ (ii): Since $S^{\infty-{\rm sat}} \subseteq T^{\infty-{\rm sat}}$, by the universal property of $\eta_{S^{\infty-{\rm sat}}}: A \to A\{ (S^{\infty-{\rm sat}})^{-1} \}$ there exists a unique morphism $\mu_{S^{\infty-{\rm sat}}T^{\infty-{\rm sat}}}: A\{ (S^{\infty-{\rm sat}})^{-1}\} \to A\{(T^{\infty-{\rm sat}})^{-1} \}$ such that:

$$\xymatrix{ & A\{(S^{\infty-{\rm sat}})^{-1} \} \ar[dd]^{\mu_{S^{\infty-{\rm sat}}T^{\infty-{\rm sat}}}} \\
A \ar[ur]^{\eta_{S^{\infty-{\rm sat}}}} \ar[dr]^{\eta_{T^{\infty-{\rm sat}}}} &   \\
      & A\{ (T^{\infty-{\rm sat}})^{-1} \} }$$
 commutes.\\

  Since $S \subseteq S^{\infty-{\rm sat}}$ and $T \subseteq T^{\infty-{\rm sat}}$, there are unique isomorphisms $\mu_{SS^{\infty-{\rm sat}}}: A\{ S^{-1}\} \to A\{ {S^{\infty-{\rm sat}}}^{-1}\}$, $\mu_{TT^{\infty-{\rm sat}}}: A\{ T^{-1}\} \to A\{ {T^{\infty-{\rm sat}}}^{-1}\}$ such that the following diagram commutes:

$$\xymatrix{
  & A\{ S^{-1}\} \ar[r]^{\mu_{SS^{\infty-{\rm sat}}}}  & A\{ {S^{\infty-{\rm sat}}}^{-1} \} \ar[dd]^{\mu_{S^{\infty-{\rm sat}}T^{\infty-{\rm sat}}}} \\
A \ar@/^5pc/[urr]^{\eta_{S^{\infty-{\rm sat}}}} \ar@/_5pc/[drr]^{\eta_{T^{\infty-{\rm sat}}}} \ar[ur]^{\eta_S} \ar[dr]_{\eta_T} &      &     \\
       & A\{ T^{-1}\} \ar[r]^{\mu_{TT^{\infty-{\rm sat}}}} & A\{ {T^{\infty-{\rm sat}}}^{-1}\}}$$

Since the upper and the lower triangles commute, we have:

$$\mu_{S^{\infty-{\rm sat}}T^{\infty-{\rm sat}}} \circ \eta_{S^{\infty-{\rm sat}}} = \eta_{T^{\infty-{\rm sat}}}.$$

We define $\mu := (\mu_{TT^{\infty-{\rm sat}}})^{-1}\circ \mu_{S^{\infty-{\rm sat}}T^{\infty-{\rm sat}}} \circ \mu_{SS^{\infty-{\rm sat}}}$. We claim that $\eta_T = \mu \circ \eta_S$.\\

On the one hand we have $\mu_{TT^{\infty-{\rm sat}}} \circ \eta_T = \eta_{T^{\infty-{\rm sat}}}$, and by the very definition of $\mu$:

$$(\mu_{TT^{\infty-{\rm sat}}} \circ \mu)\circ \eta_S = (\mu_{S^{\infty-{\rm sat}}T^{\infty-{\rm sat}}} \circ \mu_{SS^{\infty-{\rm sat}}}) \circ \eta_S$$

We also have:
$$\mu_{S^{\infty-{\rm sat}}T^{\infty-{\rm sat}}} \circ (\mu_{SS^{\infty-{\rm sat}}} \circ \eta_S) = \mu_{TT^{\infty-{\rm sat}}} \circ \eta_T = \eta_{T^{\infty-{\rm sat}}}$$
hence:

$$\mu_{TT^{\infty-{\rm sat}}} \circ \eta_T = \eta_{T^{\infty-{\rm sat}}} = \mu_{TT^{\infty-{\rm sat}}}\circ (\mu \circ \eta_S),$$
and since $\mu_{TT^{\infty-{\rm sat}}}$ is an isomorphism, it follows that:
$$\eta_T = \mu \circ \eta_S.$$

In order to show that $\mu$ is unique, let $\nu: A\{S^{-1}\} \to A\{ T^{-1}\}$ be a homomorphism such that the following diagram commutes:

$$\xymatrix{
  & A\{ S^{-1}\} \ar[dd]^{\nu}\\
A \ar[ur]^{\eta_S} \ar[dr]_{\eta_T} & \\
   & A\{ T^{-1}\}
}$$

By composing both sides of the equation:
$$\nu \circ \eta_S = \eta_T$$
with $\mu_{TT^{\infty-{\rm sat}}}$ we get:
$$\mu_{TT^{\infty-{\rm sat}}} \circ \nu \circ \eta_S = \mu_{TT^{\infty-{\rm sat}}} \circ \eta_T.$$

We know that $\mu_{TT^{\infty-{\rm sat}}}\circ \eta_T = \eta_{T^{\infty-{\rm sat}}}$, so
$$\eta_{T^{\infty-{\rm sat}}} = \mu_{S^{\infty-{\rm sat}}T^{\infty-{\rm sat}}}\circ \eta_{S^{{\rm sat}}} = \mu_{S^{\infty-{\rm sat}}T^{{\rm sat}}} \circ \mu_{SS^{\infty-{\rm sat}}} \circ \eta_S,$$
so
$$\mu_{TT^{\infty-{\rm sat}}} \circ \nu \circ \eta_S = \mu_{S^{\infty-{\rm sat}}T^{\infty-{\rm sat}}} \circ \mu_{SS^{\infty-{\rm sat}}} \circ \eta_S$$

By \textbf{Proposition \ref{fact}}, we conclude:
$$\mu_{TT^{\infty-{\rm sat}}} \circ \nu = \mu_{S^{\infty-{\rm sat}}T^{\infty-{\rm sat}}} \circ \mu_{SS^{\infty-{\rm sat}}}.$$

Composing both sides of the above equation with $(\mu_{TT^{\infty-{\rm sat}}})^{-1}$ by the left we get:
$$\nu = (\mu_{TT^{\infty-{\rm sat}}})^{-1}\circ \mu_{S^{\infty-{\rm sat}}T^{{\rm sat}}} \circ \mu_{SS^{\infty-{\rm sat}}} = \mu$$

\end{proof}

\begin{corollary}\label{cem}The following assertions are equivalent:
\begin{itemize}
  \item[(i)]{$S^{\infty-{\rm sat}} = T^{\infty-{\rm sat}}$}
  \item[(ii)]{There is an isomorphism $\mu: A\{ S^{-1}\} \to A\{ T^{-1}\}$ such that the following diagram commutes:
      $$\xymatrix{
      A \ar[r]^{\eta_S} \ar[dr]^{\eta_T} & A\{ S^{-1}\} \ar[d]^{\mu}\\
         & A\{ T^{-1}\}
      }$$}
  \item[(iii)]{There is a unique isomorphism $\mu: A\{ S^{-1}\} \to A\{ T^{-1}\}$ such that the following diagram commutes:
      $$\xymatrix{
      A \ar[r]^{\eta_S} \ar[dr]^{\eta_T} & A\{ S^{-1}\} \ar[d]^{\mu}\\
         & A\{ T^{-1}\}
      }$$}
\end{itemize}
\end{corollary}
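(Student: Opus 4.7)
The plan is to prove the cycle (i) $\Rightarrow$ (iii) $\Rightarrow$ (ii) $\Rightarrow$ (i), leveraging \textbf{Proposition \ref{zero}} together with the epimorphism property of $\eta_S$ established in \textbf{Proposition \ref{fact}}.

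For (i) $\Rightarrow$ (iii), I would apply \textbf{Proposition \ref{zero}} twice: since $S^{\infty-{\rm sat}} \subseteq T^{\infty-{\rm sat}}$, there is a unique $\mu: A\{S^{-1}\} \to A\{T^{-1}\}$ with $\mu \circ \eta_S = \eta_T$, and since $T^{\infty-{\rm sat}} \subseteq S^{\infty-{\rm sat}}$, there is a unique $\nu: A\{T^{-1}\} \to A\{S^{-1}\}$ with $\nu \circ \eta_T = \eta_S$. Then the compositions satisfy $(\nu \circ \mu)\circ \eta_S = \eta_S = {\rm id}_{A\{S^{-1}\}} \circ \eta_S$ and $(\mu \circ \nu)\circ \eta_T = \eta_T = {\rm id}_{A\{T^{-1}\}} \circ \eta_T$. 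By \textbf{Proposition \ref{fact}}, both $\eta_S$ and $\eta_T$ are epimorphisms, so $\nu \circ \mu = {\rm id}_{A\{S^{-1}\}}$ and $\mu \circ \nu = {\rm id}_{A\{T^{-1}\}}$. Hence $\mu$ is an isomorphism, and its uniqueness already comes from \textbf{Proposition \ref{zero}}.

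The implication (iii) $\Rightarrow$ (ii) is immediate: an asserted unique isomorphism is, in particular, an isomorphism.

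The final implication (ii) $\Rightarrow$ (i) follows from an argument in the spirit of \textbf{Proposition \ref{fati}}: given an isomorphism $\mu: A\{S^{-1}\} \to A\{T^{-1}\}$ with $\mu \circ \eta_S = \eta_T$, the fact that $\mu$ is a $\mathcal{C}^{\infty}-$isomorphism implies $\mu^{\dashv}[(A\{T^{-1}\})^{\times}] = (A\{S^{-1}\})^{\times}$, so
\[
T^{\infty-{\rm sat}} = \eta_T^{\dashv}[(A\{T^{-1}\})^{\times}] = \eta_S^{\dashv}[\mu^{\dashv}[(A\{T^{-1}\})^{\times}]] = \eta_S^{\dashv}[(A\{S^{-1}\})^{\times}] = S^{\infty-{\rm sat}}.
\]
The only mildly delicate step is the one using epimorphisms in (i) $\Rightarrow$ (iii); everything else is bookkeeping on universal properties already prepared in the previous propositions, so no genuine obstacle is expected.
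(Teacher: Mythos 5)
Your proposal is correct, and the main implication (i) $\Rightarrow$ (iii) is argued exactly as in the paper: obtain the two unique morphisms from \textbf{Proposition \ref{zero}}, then cancel against the epimorphisms $\eta_S$ and $\eta_T$ from \textbf{Proposition \ref{fact}} to see they are mutually inverse. Where you differ is in (ii) $\Rightarrow$ (i): the paper simply invokes the (ii) $\to$ (i) direction of \textbf{Proposition \ref{zero}} (a direction whose proof is not actually written out there), whereas you argue directly that an isomorphism reflects units, so $T^{\infty-{\rm sat}} = \eta_T^{\dashv}[(A\{T^{-1}\})^{\times}] = \eta_S^{\dashv}[\mu^{\dashv}[(A\{T^{-1}\})^{\times}]] = \eta_S^{\dashv}[(A\{S^{-1}\})^{\times}] = S^{\infty-{\rm sat}}$, in the spirit of \textbf{Proposition \ref{fati}}. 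Your version is self-contained and in fact patches the dangling reference; both arguments are equally short, so nothing is lost.
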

\begin{proof}
Ad (i) $\to$ (iii): Since $S^{\infty-{\rm sat}} = T^{\infty-{\rm sat}}$, we have both $S^{\infty-{\rm sat}} \subseteq T^{\infty-{\rm sat}}$, so by \textbf{Proposition \ref{zero}} there is a unique isomorphism $\nu: A\{ S^{-1}\} \to A\{ T^{-1}\}$ such that the following diagram commutes

$$\xymatrix{
      A \ar[r]^{\eta_S} \ar[dr]^{\eta_T} & A\{ S^{-1}\} \ar[d]^{\nu}\\
         & A\{ T^{-1}\}
      }$$

and $ T^{\infty-{\rm sat}} \subseteq S^{\infty-{\rm sat}}$ so by \textbf{Proposition \ref{zero}} there is a unique homomorphism $\psi: A\{ S^{-1}\} \to A\{ T^{-1}\}$ such that the following diagram commutes

$$\xymatrix{
      A \ar[r]^{\eta_T} \ar[dr]^{\eta_S} & A\{ T^{-1}\} \ar[d]^{\psi}\\
         & A\{ S^{-1}\}
      }.$$

We have the following commutative diagrams:

$$\xymatrix{
    & A\{ S^{-1}\} \ar[d]^{\nu} \ar@/^3pc/[dd]^{{\rm id}_{A\{ S^{-1}\}}}\\
A \ar[ur]^{\eta_S} \ar[r]^{\eta_T} \ar[dr]^{\eta_S} & A\{ T^{-1}\} \ar[d]^{\psi}\\
   & A\{ S^{-1}\}
}$$

$$\xymatrix{
    & A\{ T^{-1}\} \ar[d]^{\psi} \ar@/^3pc/[dd]^{{\rm id}_{A\{ T^{-1}\}}}\\
A \ar[ur]^{\eta_T} \ar[r]^{\eta_S} \ar[dr]^{\eta_T} & A\{S^{-1}\} \ar[d]^{\nu}\\
   & A\{ T^{-1}\}
}$$

It follows that since $\eta_T$ and $\eta_S$ are epimorphisms and  we have:
$$\nu \circ \psi \circ \eta_T = {\rm id}_{A\{ T^{-1}\}} \circ \eta_T \Rightarrow \nu \circ \psi = {\rm id}_{A\{ T^{-1}\}}$$
$$\psi \circ \nu \circ \eta_S = {\rm id}_{A\{ S^{-1}\}} \circ \eta_S \Rightarrow \psi \circ \nu = {\rm id}_{A\{S^{-1}\}}$$
so $\nu$ and $\psi$ are inverse isomorphisms to each other. We can take, then $\mu = \nu$.\\

Ad (ii) $\to$ (i). By \textbf{Proposition \ref{zero}}, since $\mu \circ \eta_S = \eta_T$ we have $S^{\infty-{\rm sat}} \subseteq T^{\infty-{\rm sat}}$ and since $\mu^{-1} \circ \eta_T = \eta_S $ we have $T^{\infty-{\rm sat}} \subseteq S^{\infty-{\rm sat}}$. Hence $S^{\infty-{\rm sat}} = T^{\infty-{\rm sat}}$.\\
\end{proof}

\begin{proposition}Let $A$ be a $\mathcal{C}^{\infty}-$ring and $S,T$ two of its subsets such that $S \subseteq T$. The following assertions are equivalent:
\begin{itemize}
  \item[(i)]{$\mu_{ST}: A\{ S^{-1}\} \to A\{ T^{-1}\}$ is an isomorphism;}
  \item[(ii)]{$S \subseteq T \subseteq S^{\infty-{\rm sat}}$;}
  \item[(iii)]{$T^{\infty-{\rm sat}} = S^{\infty-{\rm sat}}$.}
\end{itemize}
\end{proposition}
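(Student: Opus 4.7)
The plan is to prove the cycle (i)$\Leftrightarrow$(iii) directly from \textbf{Corollary \ref{cem}}, and then establish (ii)$\Leftrightarrow$(iii) using two facts already in hand: the monotonicity of smooth saturation under inclusion (item (iii) of the monotonicity proposition) and the idempotence $(S^{\infty-{\rm sat}})^{\infty-{\rm sat}} = S^{\infty-{\rm sat}}$ from \textbf{Proposition \ref{satsatsat}}.

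For the equivalence (i)$\Leftrightarrow$(iii), I would simply observe that the morphism $\mu_{ST}: A\{S^{-1}\} \to A\{T^{-1}\}$ is, by construction, the unique $\mathcal{C}^{\infty}$-homomorphism fitting into the commutative triangle with $\eta_S$ and $\eta_T$ (obtained via the universal property of $\eta_S$ after verifying $\eta_T[S] \subseteq (A\{T^{-1}\})^{\times}$ using $S\subseteq T$). Hence $\mu_{ST}$ being an isomorphism is exactly condition (ii) of \textbf{Corollary \ref{cem}}, which the corollary asserts to be equivalent to $S^{\infty-{\rm sat}} = T^{\infty-{\rm sat}}$.

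For (iii)$\Rightarrow$(ii), the inclusion $S\subseteq T$ is assumed, so only $T \subseteq S^{\infty-{\rm sat}}$ needs verification: from item (ii) of the monotonicity proposition we have $T \subseteq T^{\infty-{\rm sat}}$, and by (iii) this equals $S^{\infty-{\rm sat}}$. Conversely, for (ii)$\Rightarrow$(iii), the chain $S \subseteq T \subseteq S^{\infty-{\rm sat}}$ gives, by monotonicity applied twice, $S^{\infty-{\rm sat}} \subseteq T^{\infty-{\rm sat}} \subseteq (S^{\infty-{\rm sat}})^{\infty-{\rm sat}}$, and \textbf{Proposition \ref{satsatsat}} collapses the last term back to $S^{\infty-{\rm sat}}$, forcing equality throughout.

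I do not anticipate any real obstacle here: every ingredient (the functorial comparison map $\mu_{ST}$, monotonicity, idempotence of smooth saturation, and the characterization of equality of saturations in \textbf{Corollary \ref{cem}}) has already been proved. The only minor bookkeeping step is making explicit that the canonical morphism $\mu_{ST}$ produced from $S\subseteq T$ coincides with the morphism $\mu$ appearing in the statement of \textbf{Corollary \ref{cem}}, which is immediate from uniqueness in the universal property of $\eta_S$.
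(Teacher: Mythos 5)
Your proposal is correct and follows essentially the same route as the paper: (ii)$\Rightarrow$(iii) by monotonicity plus idempotence of smooth saturation, (iii)$\Rightarrow$(ii) from $T\subseteq T^{\infty-{\rm sat}}=S^{\infty-{\rm sat}}$, and (i)$\Leftrightarrow$(iii) by appeal to \textbf{Corollary \ref{cem}}. Your explicit remark identifying $\mu_{ST}$ with the morphism $\mu$ of the corollary via uniqueness in the universal property is a small but worthwhile clarification that the paper leaves implicit.
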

\begin{proof}
Ad (ii) $\to$ (iii): Since $S \subseteq T$ we have $S^{\infty-{\rm sat}} \subseteq T^{\infty-{\rm sat}}$, and since $T \subseteq S^{\infty-{\rm sat}}$ we have $T^{\infty-{\rm sat}} \subseteq (S^{\infty-{\rm sat}})^{\infty-{\rm sat}} = S^{\infty-{\rm sat}}$. Hence:
$$S^{\infty-{\rm sat}} \subseteq T^{\infty-{\rm sat}} \subseteq S^{\infty-{\rm sat}}$$
and
$$S^{\infty-{\rm sat}} = T^{\infty-{\rm sat}}.$$

Ad (iii) $\to$ (ii): We always have $T \subseteq T^{{\rm sat}}$, and since $T^{{\rm sat}} = S^{{\rm sat}}$, it follows that $T \subseteq S^{{\rm sat}}$.\\

(i) $\leftrightarrow$ (iii) was established in \textbf{Corollary \ref{cem}}.\\

\end{proof}

\begin{proposition}\label{marina}Let $A$ be a $\mathcal{C}^{\infty}-$ring and $S \subseteq A$. Whenever $\{ S_i\}_{i \in I}$ is a directed system such that:
$$S = \bigcup_{i \in I} S_i$$
we have:
$$S^{\infty-{\rm sat}} = \left( \bigcup_{i \in I} S_i\right)^{\infty-{\rm sat}} = \bigcup_{i \in I} {S_i}^{\infty-{\rm sat}}$$
\end{proposition}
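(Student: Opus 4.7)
The inclusion $\bigcup_{i \in I} S_i^{\infty-{\rm sat}} \subseteq S^{\infty-{\rm sat}}$ is immediate: for each $i \in I$ we have $S_i \subseteq S$, so by part (iii) of the preceding proposition $S_i^{\infty-{\rm sat}} \subseteq S^{\infty-{\rm sat}}$, and one takes the union. The substance lies in the reverse inclusion, and the plan is to realize $A\{S^{-1}\}$ as the directed colimit $\varinjlim_{i \in I} A\{S_i^{-1}\}$ in $\mathcal{C}^{\infty}{\rm \bf Rng}$ and then to exploit the fact (noted in the preliminaries) that the forgetful functor $U' : \mathcal{C}^{\infty}{\rm \bf Rng} \to {\rm \bf Set}$ creates directed colimits.

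Whenever $i \leq j$, \textbf{Proposition \ref{zero}} applied to $S_i \subseteq S_j$ supplies a unique transition morphism $\mu_{ij} : A\{S_i^{-1}\} \to A\{S_j^{-1}\}$ with $\mu_{ij} \circ \eta_{S_i} = \eta_{S_j}$, and these transitions compose correctly by uniqueness. The resulting directed system has $A$ as a cocone vertex via the $\eta_{S_i}$, which induces a canonical morphism $\eta : A \to C$ into the colimit $C := \varinjlim_{i \in I} A\{S_i^{-1}\}$ with structural morphisms $\sigma_i : A\{S_i^{-1}\} \to C$. To see that $(C, \eta)$ enjoys the universal property of \textbf{Definition \ref{Alem}}, observe that every $s \in S$ lies in some $S_{i_0}$, so $\eta(s) = \sigma_{i_0}(\eta_{S_{i_0}}(s))$ is the image of a unit, hence a unit in $C$; conversely, any $\mathcal{C}^{\infty}$-homomorphism $g : A \to B$ inverting $S$ inverts each $S_i$, so it extends uniquely through each $\eta_{S_i}$ to $g_i : A\{S_i^{-1}\} \to B$, and the uniqueness half of the universal property forces the $g_i$ to be compatible with the $\mu_{ij}$, thereby inducing a unique $\overline{g} : C \to B$ with $\overline{g} \circ \eta = g$. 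This identifies $C$ with $A\{S^{-1}\}$.

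With this identification secured, fix $a \in S^{\infty-{\rm sat}}$ and let $b \in A\{S^{-1}\} \cong C$ be the inverse of $\eta_S(a)$. Because $U'$ creates directed colimits, the underlying set of $C$ is the set-theoretic directed colimit of the $A\{S_i^{-1}\}$, so $b = \sigma_i(b_i)$ for some $i \in I$ and some $b_i \in A\{S_i^{-1}\}$. The equality $\sigma_i(b_i \cdot \eta_{S_i}(a)) = b \cdot \eta_S(a) = 1 = \sigma_i(1)$ in $C$, together with the concrete description of directed colimits in ${\rm \bf Set}$, furnishes an index $j \geq i$ for which $\mu_{ij}(b_i) \cdot \eta_{S_j}(a) = 1$ in $A\{S_j^{-1}\}$. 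Hence $\eta_{S_j}(a)$ is a unit, $a \in S_j^{\infty-{\rm sat}} \subseteq \bigcup_{i \in I} S_i^{\infty-{\rm sat}}$, and the desired inclusion follows. The main obstacle is precisely the identification $A\{S^{-1}\} \cong \varinjlim_{i \in I} A\{S_i^{-1}\}$; once it is in place, creation of directed colimits by the forgetful functor converts invertibility in the colimit into invertibility at some finite stage, which is exactly what is needed.
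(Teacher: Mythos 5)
Your proof is correct and follows essentially the same route as the paper: both identify $A\{S^{-1}\}$ with the directed colimit $\varinjlim_{i\in I}A\{S_i^{-1}\}$ via the universal property and then read off the saturation from invertibility in the colimit. In fact your argument is the more complete of the two, since the paper dismisses the final step ($\widetilde{\eta}^{\dashv}$ of the units of the colimit equals $\bigcup_i S_i^{\infty-{\rm sat}}$) as ``easy to see,'' whereas you justify it properly by invoking creation of directed colimits by the forgetful functor to descend an inverse of $\eta_S(a)$ to a finite stage $j$.
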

\begin{proof}It is clear that:
$$\bigcup_{i \in I} {S_i}^{\infty-{\rm sat}} \subseteq S^{\infty-{\rm sat}}.$$

In order to prove the other inclusion, we shall use the fact that $A\{S^{-1}\}$ is isomorphic to the vertex of the following directed colimit:

$$\xymatrix{
  & \varinjlim_{i \in I} A\{ {S_i}^{-1}\} &  \\
A\{{S_i}^{-1} \} \ar[rr]^{\alpha_{ij}} \ar@/^/[ur]^{\alpha_i} & & A\{{S_j}^{-1} \} \ar@/_/[ul]_{\alpha_j}
}$$

Note that $\eta_S : A \to A\{ S^{-1}\}$ is such that for any $i \in I$, $\eta_S[S_i] \subseteq \eta_S[S] \subseteq (A\{ S^{-1}\})^{\times}$, so by the universal property of $\eta_{S_i} : A \to A\{ (S_i)^{-1}\}$, there is a unique $\mathcal{C}^{\infty}-$rings homomorphism $\varphi_i: A\{ (S_i)^{-1}\} \to A\{S^{-1}\}$ such that the following triangle commutes:
$$\xymatrix{
A \ar[r]^{\eta_{S_i}} \ar[dr]^{\eta_S} & A\{(S_i)^{-1} \} \ar[d]^{\varphi_i}\\
      & A\{ S^{-1}\}}$$
so
$$\xymatrix{
  & A\{ S^{-1}\} &  \\
A\{{S_i}^{-1} \} \ar[rr]^{\varphi_{ij}} \ar@/^/[ur]^{\varphi_i} & & A\{{S_j}^{-1} \} \ar@/_/[ul]_{\varphi_j}
}$$
commutes for every $i,j \in I$ such that $i \leq j$, since $\eta_{S_i}: A \to A\{ {S_i}^{-1}\}$ is an epimorphism.\\

Thus, by the universal property of the colimit, there is a unique $\mathcal{C}^{\infty}-$homomorphism:

$$\varphi: \varinjlim_{i \in I} A\{ {S_i}^{-1}\} \rightarrow A\{ S^{-1}\}$$

such that:
 $$(\forall i \in I)(\varphi \circ \alpha_i = \varphi_i).$$

On the other hand, given $s \in S = \bigcup_{i \in I}S_i$ there is $i \in I$ such that $s \in S_i$, so $\eta_{S_i}(s) \in A\{ {S_i}^{-1}\}^{\times}$. Taking $\widetilde{\eta}:= \alpha_i \circ \eta_{S_i}: A \rightarrow \varinjlim_{i \in I} A\{ {S_i}^{-1}\}$, we have:

$$\widetilde{\eta}(s) \in \left( \varinjlim_{i \in I} A\{ {S_i}^{-1}\} \right)^{\times}.$$

By the universal property of $\eta_S: A \rightarrow A\{ S^{-1}\}$, there is a unique $\mathcal{C}^{\infty}-$homo\-morphism $\psi: A\{ S^{-1}\} \rightarrow \varinjlim_{i \in I} A\{ {S_i}^{-1}\}$ such that:

$$\xymatrix{
A \ar[r]^{\eta_{S}} \ar[dr]^{\widetilde{\eta}} & A\{S^{-1} \} \ar[d]^{\psi}\\
      & \varinjlim_{i \in I} A\{ {S_i}^{-1}\}}$$

It is easy to see, by the universal properties involved, that $\varphi$ and $\psi$ are inverse $\mathcal{C}^{\infty}-$isomorphism and that:

$$\widetilde{\eta}^{\dashv}\left[ \left( \varinjlim_{i \in I} A\{ {S_i}^{-1}\}\right)^{\times}\right] = \bigcup_{i \in I} {S_i}^{\infty-{\rm sat}}.$$

Thus $S=\bigcup_{i \in I} {S_i}^{\infty-{\rm sat}}$, as we claimed.
\end{proof}






\begin{proposition}\label{lom}Let $A$ be a $\mathcal{C}^{\infty}-$ring. The universal solution to freely adjoining  a variable $x$ to $A$ in the category $\mathcal{C}^{\infty}{\rm {\bf Rng}}$ is isomorphic to the smooth coproduct of $A$ and the free object in one generator of this category:
$$A\{ x \} \cong A \otimes_{\infty} \mathcal{C}^{\infty}(\mathbb{R})$$
\end{proposition}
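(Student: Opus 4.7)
The plan is to verify that the smooth coproduct $A \otimes_\infty \mathcal{C}^\infty(\mathbb{R})$, equipped with a distinguished element, satisfies the universal property defining $A\{x\}$, so that the isomorphism follows from uniqueness of universal solutions (up to unique isomorphism).

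First, I would make precise what ``freely adjoining a variable $x$ to $A$'' means. The object $A\{x\}$ is characterized by the following universal property: there is a $\mathcal{C}^\infty$-homomorphism $\iota \colon A \to A\{x\}$ and a distinguished element $x \in A\{x\}$ such that for every $\mathcal{C}^\infty$-ring $B$, every $\mathcal{C}^\infty$-homomorphism $f \colon A \to B$, and every element $b \in B$, there is a unique $\mathcal{C}^\infty$-homomorphism $\widehat{f} \colon A\{x\} \to B$ with $\widehat{f} \circ \iota = f$ and $\widehat{f}(x) = b$. This is exactly the specialization to $S = \{*\}$ of the construction $A\{x_s \mid s \in S\} = A \otimes_\infty \mathcal{C}^\infty(\mathbb{R}^S)$ recalled just before \textbf{Theorem \ref{conc}}.

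Next, I would construct the candidate. Inside $A \otimes_\infty \mathcal{C}^\infty(\mathbb{R})$, with coproduct injections
$$\iota_A \colon A \to A \otimes_\infty \mathcal{C}^\infty(\mathbb{R}), \qquad \iota_{\mathcal{C}^\infty(\mathbb{R})} \colon \mathcal{C}^\infty(\mathbb{R}) \to A \otimes_\infty \mathcal{C}^\infty(\mathbb{R}),$$
set $x := \iota_{\mathcal{C}^\infty(\mathbb{R})}(\mathrm{id}_{\mathbb{R}})$, where $\mathrm{id}_{\mathbb{R}} \in \mathcal{C}^\infty(\mathbb{R})$ is the canonical generator of the free $\mathcal{C}^\infty$-ring on one generator (this uses the preliminary fact recalled in the overview that $L(\{*\}) \cong \mathcal{C}^\infty(\mathbb{R})$ is free on one generator).

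The core of the argument is a double application of universal properties. Given $f \colon A \to B$ and $b \in B$, the freeness of $\mathcal{C}^\infty(\mathbb{R})$ yields a unique $\mathcal{C}^\infty$-homomorphism $g_b \colon \mathcal{C}^\infty(\mathbb{R}) \to B$ with $g_b(\mathrm{id}_{\mathbb{R}}) = b$. The universal property of the coproduct then produces a unique $\mathcal{C}^\infty$-homomorphism $\widehat{f} \colon A \otimes_\infty \mathcal{C}^\infty(\mathbb{R}) \to B$ with $\widehat{f} \circ \iota_A = f$ and $\widehat{f} \circ \iota_{\mathcal{C}^\infty(\mathbb{R})} = g_b$; in particular $\widehat{f}(x) = g_b(\mathrm{id}_{\mathbb{R}}) = b$. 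Conversely, any $\mathcal{C}^\infty$-homomorphism $h \colon A \otimes_\infty \mathcal{C}^\infty(\mathbb{R}) \to B$ extending $f$ and sending $x$ to $b$ must, upon restriction along $\iota_{\mathcal{C}^\infty(\mathbb{R})}$, coincide with $g_b$ by uniqueness in the freeness of $\mathcal{C}^\infty(\mathbb{R})$, and therefore equals $\widehat{f}$ by uniqueness in the coproduct. This shows $(A \otimes_\infty \mathcal{C}^\infty(\mathbb{R}), \iota_A, x)$ satisfies the universal property of $A\{x\}$, and the desired isomorphism follows from the uniqueness clause of that universal property.

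There is no real obstacle here: the proof is essentially a routine juxtaposition of two universal properties. The only subtle point worth emphasizing is the identification of $\mathcal{C}^\infty(\mathbb{R})$ with the free $\mathcal{C}^\infty$-ring on one generator, and the choice of $\mathrm{id}_{\mathbb{R}}$ as the image of the single generator under the unit map $\{*\} \to U'(\mathcal{C}^\infty(\mathbb{R}))$; once this is fixed, everything else is bookkeeping with arrows.
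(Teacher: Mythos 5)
Your proposal is correct and follows essentially the same route as the paper: both arguments combine the freeness of $\mathcal{C}^{\infty}(\mathbb{R})$ on one generator with the universal property of the smooth coproduct. The only difference is organizational — the paper constructs explicit maps $\varphi$ and $\varrho$ in both directions and asserts they are mutually inverse, whereas you verify directly that $(A \otimes_{\infty} \mathcal{C}^{\infty}(\mathbb{R}), \iota_A, \iota_{\mathcal{C}^{\infty}(\mathbb{R})}(\mathrm{id}_{\mathbb{R}}))$ satisfies the defining universal property of $A\{x\}$ and conclude by uniqueness, which is the cleaner (and more complete) packaging of the same idea.
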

\begin{proof}Let's first consider the inclusions $\jmath_2: \{ x\} \to A\{ x\}$ and $\eta : \{ x\} \to \mathcal{C}^{\infty}(\mathbb{R})$, this latter given by the definition of free object in one generator. We are going to use the universal property of this free object together with the universal property of the smooth coproduct to show that $A \otimes_{\infty} \mathcal{C}^{\infty}(\mathbb{R})$ has the desired property.\\

In order to show that $A\{x\} \cong A \otimes_{\infty} \mathcal{C}^{\infty}(\mathbb{R})$ we use the universal property of the coproduct and of the localization, one at a time.

$$\xymatrixcolsep{5pc}\xymatrix{
\{x\} \ar[d]^{\jmath_1} \ar[rd]^{\jmath_2}& & \\
\mathcal{C}^{\infty}(\mathbb{R}) \ar@{-->}[r]^{\exists ! \eta} \ar[dr]^{\iota_1} & A\{ x\} & \ar[l]_{\imath} \ar[ld]^{\iota_2} A\\
     & A \otimes_{\infty} \mathcal{C}^{\infty}(\mathbb{R}) \ar@{-->}[u] &  \\
}$$

Since $\mathcal{C}^{\infty}(\mathbb{R})$ is the free object in one generator, $x$, given the inclusion $\jmath_2: \{x\} \to A\{ x \}$ there is a unique $\mathcal{C}^{\infty}-$ring homomorphism $\eta: \mathcal{C}^{\infty}(\mathbb{R}) \to A\{ x\}$ such that the upper left triangle of the above diagram commutes, i.e., such that $\eta \circ \jmath_1 = \jmath_2$. We also have the inclusion map $\imath: A \to A\{ x \}$ in the very definition of $A\{x\}$.\\

Now, by the universal property of the coproduct $\mathcal{C}^{\infty}(\mathbb{R}) \to A \otimes_{\infty} \mathcal{C}^{\infty}(\mathbb{R}) \leftarrow A$, given the $\mathcal{C}^{\infty}-$rings homomorphisms $\eta: \mathcal{C}^{\infty}(\mathbb{R}) \to A\{x\}$ and $ \imath: A \to A\{x\}$ there exists a unique $\mathcal{C}^{\infty}-$homomorphism $\varphi: A \otimes_{\infty} \mathcal{C}^{\infty}(\mathbb{R})$ such that the following diagram commutes:

$$\begin{xy}
  \xymatrix{
        &  \mathcal{C}^{\infty}(\mathbb{R}) \ar[d]^{\iota_1} \ar@/^2pc/[ddr]^{\eta}  &  \\
      A \ar[r]^{\iota_2} \ar@/_2pc/[drr]_{\imath}  &  A \otimes_{\infty} \mathcal{C}^{\infty}(\mathbb{R}) \ar@{-->}[dr]^{\exists ! \varphi}  &  \\
      &  &  A\{ x\}
  }
\end{xy}$$

Conversely, since $\jmath_2: \{x\} \to A\{ x \}$ is the solution to the universal problem of freely adjoining one element $t$ to $A$ and $\mathcal{C}^{\infty}(\mathbb{R})$ is the free object on one generator, there exists a unique $\ell : \mathcal{C}^{\infty}(\mathbb{R}) \to A\{t\}$ such that:
$$\xymatrix{
\{x\} \ar[rd]^{\jmath_1} \ar[d]^{\jmath_2} & \\
\mathcal{C}^{\infty}(\mathbb{R}) \ar@{-->}[r]^{\exists ! \ell} & A\{ x\}
}$$

commutes. We will now consider the following diagram:

$$\xymatrix{
 &\{ x \} \ar[d]^{\jmath_1} \ar@/_3pc/[dd]_{\iota_1 \circ \jmath_1} \ar[rd]^{\jmath_2} & \\
 & \mathcal{C}^{\infty}(\mathbb{R}) \ar@{-->}[r]^{\exists ! \ell} \ar[d]^{\iota_1} & A\{x\} \\
   & A \otimes_{\infty} \mathcal{C}^{\infty}(\mathbb{R}) &
}$$

Now, given the $\mathcal{C}^{\infty}-$ring homomorphism $\iota_1 \circ \jmath_1 : \{x\} \to A\otimes_{\infty} \mathcal{C}^{\infty}(\mathbb{R})$, by the universal property of $\jmath_2 : \{x\} \to A\{x\}$, there exists a unique $\mathcal{C}^{\infty}-$ring homomorphism $\varrho : A\{x\} \to A \otimes_{\infty} \mathcal{C}^{\infty}(\mathbb{R})$ such that the diagram below commutes:

$$\xymatrix{
\{x\} \ar[r]^{\jmath_2} \ar[rd]^{\iota_1 \circ \jmath_1}& A\{x\} \ar@{-->}[d]^{\exists ! \varrho}\\
    & A \otimes_{\infty} \mathcal{C}^{\infty}(\mathbb{R})
}.$$

We have, then, the following diagram:

$$\xymatrix{
 &\{ x \} \ar[d]^{\jmath_1} \ar@/_3pc/[dd]_{\iota_1 \circ \jmath_1} \ar[rd]^{\jmath_2} & \\
 & \mathcal{C}^{\infty}(\mathbb{R}) \ar[r]^{\exists ! \ell} \ar[d]^{\iota_1} & A\{x\} \ar[ld]^{\varrho}\\
   & A \otimes_{\infty} \mathcal{C}^{\infty}(\mathbb{R}) &
}$$

It can be proved that $\varrho \circ \varphi = {\rm id}_{A \otimes_{\infty} \mathcal{C}^{\infty}(\mathbb{R})}$ and $\varphi \circ \varrho = {\rm id}_{A\{x\}}$, hence:
$$A\{x\} \cong A \otimes_{\infty} \mathcal{C}^{\infty}(\mathbb{R}).$$
\end{proof}











Our next goal is to give a characterization of ring of fractions in $\mathcal{C}^{\infty}{\rm \bf Rng}$ using a similar axiomatization one has in Commutative Algebra. In order to motivate it, we first present some characterizations of rings of fractions in ${\rm \bf CRing}$.\\

\begin{theorem}Let $A$ be a commutative ring with unity, $S \subseteq A$ a multiplicative subset, and $\eta: A \to A[S^{-1}]$ its localization. Then:
\begin{itemize}
\item[(i)]{$(\forall \beta \in A[S^{-1}])(\exists c \in S)(\exists d \in A)(\beta \cdot \eta(c) = \eta(d))$;}
\item[(ii)]{$(\forall \beta \in A)(\eta(\beta)=0 \to (\exists c \in S)(c \cdot \beta = 0)$}
\end{itemize}
\end{theorem}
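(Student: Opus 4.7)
The plan is to prove both items by unpacking the explicit construction of $A[S^{-1}]$ as $(A \times S)/\sim$ given earlier in the section, where $(r_1,s_1) \sim (r_2,s_2)$ iff there exists $u \in S$ with $u \cdot (r_1 s_2 - r_2 s_1) = 0$, and where $\eta(a) = [(a,1)]$. Neither item should require the universal property (\textbf{Proposition \ref{Nab}}); both are direct consequences of the construction.

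For (i), given $\beta \in A[S^{-1}]$, I would choose a representative $\beta = [(a,s)]$ with $a \in A$ and $s \in S$, and then set $c := s \in S$ and $d := a \in A$. Using the multiplication rule on equivalence classes, I compute
\[
\beta \cdot \eta(c) = [(a,s)] \cdot [(s,1)] = [(a \cdot s,\, s \cdot 1)] = [(as, s)].
\]
It then suffices to observe that $(as, s) \sim (a, 1)$, since $1 \cdot (as \cdot 1 - a \cdot s) = 0$, so $[(as, s)] = [(a,1)] = \eta(d)$.

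For (ii), I would unfold $\eta(\beta) = 0$ directly: this equation reads $[(\beta, 1)] = [(0, 1)]$ in $A[S^{-1}]$, which, by the defining equivalence relation on $A \times S$, means there exists $c \in S$ with $c \cdot (\beta \cdot 1 - 0 \cdot 1) = 0$, i.e.\ $c \cdot \beta = 0$, as required.

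There is no substantial obstacle here: both assertions are immediate readouts of the explicit construction of $A[S^{-1}]$. The only minor care needed is to track the role of the auxiliary element $u \in S$ in the equivalence relation correctly, since that is precisely what gets renamed to $c$ in item (ii) and what is trivially produced ($u = 1$) in item (i). The result is essentially a ``sanity check'' serving as the Commutative Algebra template that the forthcoming smooth analogue (\textbf{Theorem \ref{38}}) will generalize.
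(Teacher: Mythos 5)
Your proof is correct and follows essentially the same route as the paper's: both write $\beta = [(a,s)]$, take $c=s$, $d=a$ for item (i), and unfold the defining equivalence relation on $A\times S$ for item (ii). Your explicit verification that $(as,s)\sim(a,1)$ is a slightly more careful rendering of the paper's computation $\frac{a}{s}\cdot\frac{s}{1}=\frac{a}{1}$, but it is the same argument.
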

\begin{proof}Ad $(i)$: Given any $\beta \in A[S^{-1}]$ we can write $\beta =\dfrac{a}{s}$ for some $a \in A$ and $s \in S$. Take $c=s$ and $d=a$, so we have:
$$\beta \cdot \eta(c) = \dfrac{a}{s} \cdot \eta(c) = \dfrac{a}{s}\cdot \dfrac{s}{1} = \dfrac{a}{1} = \eta(a),$$

so it suffices to take $d = a$.\\

Ad $(ii)$: Given any $\beta \in A$ such that $\eta(b)=0$ we have, by the very construction of $A[S^{-1}]$, that $(\exists u \in S)(u(\beta\cdot 1 - 0 \cdot 1) = 0)$, so it is enough to take $c=u$ in order to get $c \cdot \beta = u \cdot \beta = 0$.
\end{proof}

Conversely we have:

\begin{proposition}Let $A$ be a commutative ring with unity and $S \subseteq A$. If $\varphi : A \to B$ is a ring homomorphism such that $\varphi[A] \subseteq B$ and:
\begin{itemize}
\item[(i)]{$(\forall \beta \in B)(\exists c \in S)(\exists d \in A)(\beta \cdot \varphi(c) = \varphi(d))$}
\item[(ii)]{$(\forall \beta \in A)(\varphi(\beta)=0 \to (\exists c \in S)(c \cdot \beta = 0))$}
\end{itemize}
then $B \cong A[S^{-1}]$.
\end{proposition}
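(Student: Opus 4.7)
The plan is to apply the universal property of the localization $\eta: A \to A[S^{-1}]$ (Proposition \ref{Nab}) to produce a comparison map $\widetilde{\varphi}: A[S^{-1}] \to B$, and then to use conditions (i) and (ii) to show this map is surjective and injective, respectively. A preliminary observation: the displayed hypothesis ``$\varphi[A] \subseteq B$'' should be read as $\varphi[S] \subseteq B^{\times}$ (otherwise the universal property of $\eta$ cannot be invoked); this is implicit since (i) forces the image of every $c \in S$ to be invertible whenever one applies it to $\beta = \varphi(c)^{-1}$ for an already-chosen localization. Assuming this, the universal property gives a unique ring homomorphism $\widetilde{\varphi}: A[S^{-1}] \to B$ such that $\widetilde{\varphi} \circ \eta = \varphi$, explicitly $\widetilde{\varphi}\left(\frac{a}{s}\right) = \varphi(a) \cdot \varphi(s)^{-1}$.

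Next, I would verify surjectivity of $\widetilde{\varphi}$ using hypothesis (i). Given any $\beta \in B$, (i) produces $c \in S$ and $d \in A$ with $\beta \cdot \varphi(c) = \varphi(d)$. Multiplying both sides by $\varphi(c)^{-1} \in B^{\times}$ yields
\[
\beta = \varphi(d) \cdot \varphi(c)^{-1} = \widetilde{\varphi}\!\left(\frac{d}{c}\right),
\]
so every element of $B$ lies in the image of $\widetilde{\varphi}$.

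For injectivity I would use (ii). Suppose $\widetilde{\varphi}\!\left(\frac{a}{s}\right) = 0$. Then $\varphi(a) \cdot \varphi(s)^{-1} = 0$ in $B$, and multiplying by $\varphi(s)$ yields $\varphi(a) = 0$. Hypothesis (ii) now supplies some $c \in S$ with $c \cdot a = 0$ in $A$. This is exactly the relation that makes $\frac{a}{s}$ vanish in $A[S^{-1}]$: indeed $c \cdot (a \cdot 1 - 0 \cdot s) = 0$ with $c \in S$, so by the defining equivalence relation on $A \times S$, $\frac{a}{s} = \frac{0}{1} = 0$ in $A[S^{-1}]$. Hence $\ker \widetilde{\varphi} = 0$.

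Combining the two steps, $\widetilde{\varphi}$ is a bijective ring homomorphism, hence an isomorphism $A[S^{-1}] \cong B$ compatible with the canonical maps from $A$. The argument is essentially formal bookkeeping with the universal property; the only potential obstacle is the clarification of the hypothesis on $\varphi[S]$, which, as indicated, should be read as $\varphi[S] \subseteq B^{\times}$ to make the invocation of Proposition \ref{Nab} legitimate.
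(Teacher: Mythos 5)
Your proposal is correct and follows essentially the same route as the paper: invoke the universal property of $\eta: A \to A[S^{-1}]$ to get the comparison map, use (i) for surjectivity and (ii) for injectivity (the paper concludes $\eta(d)=0$ from invertibility of $\eta(c')$ where you appeal directly to the defining equivalence relation, a cosmetic difference). Your reading of the garbled hypothesis ``$\varphi[A]\subseteq B$'' as $\varphi[S]\subseteq B^{\times}$ is also the intended one — the paper's own proof repeats the same typo before invoking the universal property.
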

\begin{proof}Since $\varphi: A \to B$ is such that $\varphi[A] \subseteq B$, by the universal property of the localization there is a unique $\psi: A[S^{-1}] \to B$ such that the diagram below commutes:
$$\xymatrix{
A \ar[r]^{\eta} \ar[rd]^{\varphi} & A[S^{-1}] \ar@{-->}[d]^{\exists ! \psi}\\
  & B
  }$$

Now we claim that $\psi$ is a surjection. Given any $z \in B$, since $\varphi: A \to B$ satisfies $(i)$, there are elements $c \in S$ and $d \in A$ such that $z \cdot \varphi(c) = \varphi(d)$, that is to say $z = \varphi(d)\cdot (\varphi(c))^{-1}$. Taking $w = \eta(d)\cdot (\eta(c))^{-1} \in A[S^{-1}]$ we get $\psi(w) = \psi(\eta(d)\cdot (\eta(c))^{-1}) = \psi(\eta(d))\cdot \psi(\eta(c))^{-1} = \varphi(d)\cdot (\varphi(c))^{-1} = z$.\\

We claim, also, that $\psi$ is an injective ring homomorphism. Let $w \in A[S^{-1}]$ be an element such that $\psi(w)=0$. Since $w \in A[S^{-1}]$ there are $c \in S$ and $d \in A$ such that $w = \eta(d)\cdot (\eta(c))^{-1}$. The condition $\psi(w)=0$ means that $\psi(\eta(d)\cdot (\eta(c))^{-1})= \psi(\eta(d))\cdot \psi((\eta(c)))^{-1} = 0$, so $\psi(\eta(d))=0$, and since $\psi \circ \eta = \varphi$, it follows that $\varphi(d)=0$. By the property $(ii)$, there exists $c' \in S$ such that $c' \cdot d = 0$, so $\eta(c')\cdot \eta(d) = \eta(c' \cdot d) =  0$, and since $\eta(c') \in A[S^{-1}]^{\times}$, it follows that $\eta(d)=0$. Thus $w = \eta(d)\cdot (\eta(c'))^{-1} = 0\cdot (\eta(c'))^{-1} = 0$.\\

It follows that $\psi$ is an isomorphism between $B$ and $A[S^{-1}]$.
\end{proof}

The two preceding theorems gives us the following:

\begin{theorem}\label{37}Let $A$ be a commutative ring with unity and $S \subseteq A$. Then $\varphi: A \to B$ is isomorphic to the localization $\eta: A \to A[S^{-1}]$ if and only if:
\begin{itemize}
\item[(i)]{$(\forall b \in B)(\exists c \in S)(\exists d \in A)(b \cdot \varphi(c) = \varphi(d))$}
\item[(ii)]{$(\forall b \in A)(\varphi(b)=0 \to (\exists c \in S)(c \cdot b = 0))$}
\end{itemize}
hold.
\end{theorem}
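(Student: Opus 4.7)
The plan is to observe that this theorem is merely the packaging of the two preceding results into a single iff statement, so each direction reduces to citing them.

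For the forward direction, I would suppose $\varphi: A \to B$ is isomorphic to the localization, meaning there is a ring isomorphism $\theta: A[S^{-1}] \to B$ with $\theta \circ \eta = \varphi$. The first of the two preceding results already established that $\eta$ itself satisfies (i) and (ii). To transfer these to $\varphi$: for (i), given $b \in B$ I would set $\beta = \theta^{-1}(b)$, use the first theorem to produce $c \in S$ and $d \in A$ with $\beta \cdot \eta(c) = \eta(d)$, and then apply $\theta$ to obtain $b \cdot \varphi(c) = \varphi(d)$; for (ii), if $\varphi(b) = 0$ then injectivity of $\theta$ forces $\eta(b) = 0$, and the first theorem supplies the required $c \in S$ with $c \cdot b = 0$.

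For the backward direction I would simply invoke the preceding proposition: its construction of a map $\psi: A[S^{-1}] \to B$ via the universal property of the localization, followed by the surjectivity argument (which uses (i)) and the injectivity argument (which uses (ii)), produces an isomorphism $\psi$ satisfying $\psi \circ \eta = \varphi$ by construction; this is precisely what it means for $\varphi$ to be isomorphic to $\eta$.

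The only delicate point, and the step I expect to be the main obstacle, is the implicit verification that (i) forces $\varphi[S] \subseteq B^{\times}$, since this is what legitimizes the first application of the universal property of the localization in the preceding proposition. I would settle this by noting that for $s \in S$, applying (i) to a well-chosen $b \in B$ yields a candidate inverse $\varphi(d)\cdot\varphi(c)^{-1}$-style expression, and then leveraging (ii) to absorb any kernel obstructions into multiplication by some element of $S$. Once this routine point is made explicit, the two preceding results fit together with no additional work.
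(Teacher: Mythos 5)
Your decomposition is exactly the paper's: Theorem \ref{37} is stated there with no proof beyond the remark that it follows from the two preceding results, and your forward direction (transport (i) and (ii) along the isomorphism $\theta$, using injectivity of $\theta$ for (ii)) together with your backward direction (cite the preceding proposition) is precisely what is intended.

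The problem is the ``delicate point'' you single out: your proposed resolution of it cannot work. Conditions (i) and (ii) alone do \emph{not} imply $\varphi[S]\subseteq B^{\times}$. Take $\varphi=\mathrm{id}_A: A\to A$ and any multiplicative $S\not\subseteq A^{\times}$ (say $A=\mathbb{Z}$, $S=\{2^{n} \mid n\in\mathbb{N}\}$): condition (i) holds with any $c\in S$ and $d=b\cdot c$, and condition (ii) holds because $\varphi(b)=0$ forces $b=0$; yet $\mathrm{id}_A$ is not isomorphic to $\eta: A\to A[S^{-1}]$ unless $\eta$ is already an isomorphism. So no amount of ``leveraging (ii) to absorb kernel obstructions'' will manufacture an inverse of $\varphi(s)$ in $B$; the hypothesis $\varphi[S]\subseteq B^{\times}$ must simply be assumed. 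It is in fact present --- in garbled form, as ``$\varphi[A]\subseteq B$'' --- in the preceding proposition, whose proof opens by using exactly that hypothesis to invoke the universal property of the localization; Theorem \ref{37} merely omits it. With that hypothesis restored, the backward direction is the proposition verbatim and your argument closes. A smaller point in the same spirit: the forward direction, and condition (i) with the witness $c$ drawn from $S$ itself, requires $S$ to be multiplicative (as it is in the first of the two preceding results), since for $\beta=a/(s_1\cdots s_n)$ the natural witness is the product $s_1\cdots s_n$, which need not lie in $S$.
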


We have, thus, obtained a characterization of the localization of a commutative ring. For $\mathcal{C}^{\infty}-$rings we have the analogous result, that generalizes \textbf{Theorem 1.4} of \cite{moerdijk1986rings}, in the sense that it is an equivalence (an ``if and only if'' statement) and that  $S$ needs not to be a singleton:\\

\begin{theorem}\label{38}Let $A$ be a $\mathcal{C}^{\infty}-$ring $\Sigma \subset A$ a set. Then $\varphi: A \to B$ is isomorphic to the smooth localization $\eta: A \to A\{\Sigma^{-1}\}$ if and only if:
\begin{itemize}
\item[(i)]{$(\forall b \in B)(\exists c \in \Sigma^{\infty-{\rm sat}})(\exists d \in A)(b \cdot \varphi(c) = \varphi(d))$}
\item[(ii)]{$(\forall b \in A)(\varphi(b)=0 \to (\exists c \in \Sigma^{\infty-{\rm sat}})(c \cdot b = 0))$}
\end{itemize}
hold.
\end{theorem}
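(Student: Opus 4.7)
The plan is to prove the biconditional by establishing each direction separately, following the pattern of the commutative analogue \textbf{Theorem \ref{37}}.

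For the forward direction $(\Rightarrow)$, I would take $\varphi = \eta_\Sigma$ and verify (i) and (ii) directly. The key input is the explicit quotient description of \textbf{Theorem \ref{conc}}: since $A\{\Sigma^{-1}\}$ is generated over $\eta_\Sigma(A)$ by $\{\eta_\Sigma(s)^{-1} : s \in \Sigma\}$ and closed under all $\mathcal{C}^{\infty}-$operations, every $b \in A\{\Sigma^{-1}\}$ admits a normal form
\[
b \;=\; \Phi(f)\bigl(\eta_\Sigma(a_1),\dots,\eta_\Sigma(a_n),\eta_\Sigma(s_1)^{-1},\dots,\eta_\Sigma(s_m)^{-1}\bigr)
\]
for some smooth $f \in \mathcal{C}^{\infty}(\mathbb{R}^{n+m})$, $a_i \in A$ and $s_j \in \Sigma$. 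For (i), I would choose a smooth cutoff $\chi \in \mathcal{C}^{\infty}(\mathbb{R}^m)$ (depending on $f$) that is flat on each hyperplane $\{y_j = 0\}$ and decays fast enough there so that $\tilde{f}(x,y) := \chi(y)\cdot f(x, 1/y_1, \dots, 1/y_m)$, extended by $0$ on $\{\prod_j y_j = 0\}$, is smooth on all of $\mathbb{R}^{n+m}$; a concrete choice is $\chi(y) = \prod_j e^{-k/y_j^2}$ with $k$ tuned to $f$. Setting $d := \Phi^A(\tilde{f})(a_1,\dots,a_n,s_1,\dots,s_m)$ and $c := \Phi^A(\chi)(s_1,\dots,s_m)$, the fact that $\eta_\Sigma$ is a $\mathcal{C}^{\infty}-$homomorphism yields $\eta_\Sigma(d) = \eta_\Sigma(c)\cdot b$, and $c \in \Sigma^{\infty-{\rm sat}}$ follows because $\Phi(\chi)(\eta_\Sigma(s))^{-1}$ is expressible as $\Phi(h)(\eta_\Sigma(s_1)^{-1},\dots,\eta_\Sigma(s_m)^{-1})$ for the smooth function $h(z) = \prod_j e^{k z_j^2}$. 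For (ii), the equation $\eta_\Sigma(b)=0$ unpacks in $A\{x_s\mid s\in \Sigma\}$ as a finite identity $\iota_A(b) = \sum_i \alpha_i(x_{s_i}\iota_A(s_i) - 1)$; an analogous flat-cutoff-and-collapse argument in the $x_{s_i}$-variables produces an annihilator $c \in \Sigma^{\infty-{\rm sat}}$ for $b$ inside $A$.

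For the backward direction $(\Leftarrow)$, conditions (i) and (ii) alone do not force $\varphi[\Sigma] \subseteq B^\times$ — for instance the identity $\mathbb{Z}\to\mathbb{Z}$ satisfies both with $\Sigma = \{2\}$ in the commutative analogue — so I would read $\varphi[\Sigma] \subseteq B^\times$ as an implicit hypothesis (it is required in the proof of \textbf{Theorem \ref{37}} as well). With it, the universal property of $\eta_\Sigma$ produces a unique $\widetilde{\varphi}: A\{\Sigma^{-1}\} \to B$ with $\widetilde{\varphi} \circ \eta_\Sigma = \varphi$, and the task reduces to showing $\widetilde{\varphi}$ is bijective. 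Surjectivity: given $b \in B$, (i) supplies $c \in \Sigma^{\infty-{\rm sat}}$ and $d \in A$ with $b\,\varphi(c) = \varphi(d)$; since $\eta_\Sigma(c)$ is a unit and $\widetilde{\varphi}$ preserves units, $\varphi(c) = \widetilde{\varphi}(\eta_\Sigma(c))$ is a unit in $B$, so $b = \widetilde{\varphi}\bigl(\eta_\Sigma(d)\eta_\Sigma(c)^{-1}\bigr)$. Injectivity: for $\beta \in A\{\Sigma^{-1}\}$ with $\widetilde{\varphi}(\beta)=0$, invoke the already-proved forward direction on $\eta_\Sigma$ to write $\beta = \eta_\Sigma(d)\eta_\Sigma(c)^{-1}$ with $c \in \Sigma^{\infty-{\rm sat}}$; then $\varphi(d) = 0$, (ii) yields $c' \in \Sigma^{\infty-{\rm sat}}$ with $c'd = 0$ in $A$, and the invertibility of $\eta_\Sigma(c')$ forces $\eta_\Sigma(d)=0$, whence $\beta = 0$.

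The principal technical obstacle is the clearing-of-denominators step in the forward direction. The classical trick of multiplying by $\prod_j s_j^{N}$ fails because an arbitrary smooth $f$ may blow up faster than any polynomial as some $y_j \to 0$, so the cutoff $\chi$ must be chosen in tandem with $f$, while simultaneously arranging that $\chi(s) \in \Sigma^{\infty-{\rm sat}}$ — which constrains $1/\chi$ to admit a smooth expression in the inverses $y_j^{-1}$. Balancing these coupled requirements is the technical heart that distinguishes \textbf{Theorem \ref{38}} from its commutative ancestor.
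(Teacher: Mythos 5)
Your backward direction is sound and coincides with the paper's \textbf{Theorem \ref{340}}, and you are right that conditions (i) and (ii) alone do not force $\varphi[\Sigma]\subseteq B^{\times}$: already $\varphi={\rm id}_{\mathcal{C}^{\infty}(\mathbb{R})}$ with $\Sigma=\{x\}$ satisfies both (take $c=1\in\Sigma^{\infty-{\rm sat}}$), yet ${\rm id}$ is not isomorphic to the non-surjective $\eta_{\{x\}}:\mathcal{C}^{\infty}(\mathbb{R})\to\mathcal{C}^{\infty}(\mathbb{R}\setminus\{0\})$. So that hypothesis must indeed be read into the statement. The genuine problem is your forward direction, whose denominator-clearing argument has two gaps.

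First, the cutoff $\chi(y)=\prod_j e^{-k/y_j^2}$ cannot be ``tuned to $f$'': the growth of $f(x,1/y)$ as $y_j\to 0$ can depend on $x$. For $f(x,z)=e^{x^2z^2}$ one gets $\chi(y)f(x,1/y)=e^{(x^2-k)/y^2}$, which fails to extend even continuously to $y=0$ for any fixed $k$; no cutoff in the $y$-variables alone works, and letting $\chi$ depend on $x$ as well reopens the question of whether $c=\chi(a,s)$ lands in $\Sigma^{\infty-{\rm sat}}$. Second, and more fundamentally, even granting a smooth extension $\tilde f$, the desired identity $b\cdot\eta_\Sigma(c)=\eta_\Sigma(d)$ in $A\{\Sigma^{-1}\}=\dfrac{A\{x_s\mid s\in\Sigma\}}{\langle x_s\cdot\iota_A(s)-1\rangle}$ requires that $\chi(y)F(x,z)-\tilde f(x,y)$ lie in the ideal of $\mathcal{C}^{\infty}(\mathbb{R}^{n+2m})$ generated by the $y_jz_j-1$; mere agreement of the two functions off the coordinate hyperplanes proves nothing, because a $\mathcal{C}^{\infty}$-ring only validates identities holding on all of Euclidean space. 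That Hadamard-type division is exactly the hard analytic content, which the paper isolates in the computation of $\mathcal{C}^{\infty}(U_\varphi)\cong\mathcal{C}^{\infty}(\mathbb{R}^{n+1})/\langle y\cdot\varphi(x)-1\rangle$ and the Ortega--Mu\~{n}oz theorem, imports only for free finitely generated rings and a single inverted element via \textbf{Proposition \ref{pani}}, and then extends to arbitrary $A$ and $\Sigma$ by purely formal reductions: quotients via \textbf{Corollary \ref{Jeq}}, finite $\Sigma$ via products, and general $\Sigma$ and general $A$ via directed colimits. Your proposal inverts this architecture by attempting the analysis directly in an arbitrary $\mathcal{C}^{\infty}$-ring, and it is precisely at that step that the argument does not close; the same two issues also undermine your claim that $c\in\Sigma^{\infty-{\rm sat}}$ and your sketch of (ii).
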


We postpone the proof of this theorem, giving it right after \textbf{Remark \ref{raca}}.\\

\begin{theorem}\label{340}Let $A$, $\widetilde{A}$ be $\mathcal{C}^{\infty}-$rings and let $\eta : A \to \widetilde{A}$ be a $\mathcal{C}^{\infty}-$rings homomorphism such that:
\begin{itemize}
  \item[(i)]{$(\forall d \in \widetilde{A})(\exists b \in A)(\exists c \in A)(\eta(c) \in \widetilde{A}^{\times} \& (d \cdot \eta(c) = \eta(b)))$;}
  \item[(ii)]{$(\forall b \in A)((\eta(b) = 0_{\widetilde{A}}) \to (\exists c \in A)((\eta(c) \in \widetilde{A}^{\times}) \wedge (b \cdot c = 0_A)))$}
\end{itemize}
Then $\eta : A \to \widetilde{A}$ is isomorphic to ${\rm Can}_{S_{\eta}} : A \to A\{ {S_{\eta}}^{-1}\}$, where $S_{\eta} = \eta^{\dashv}[\widetilde{A}^{\times}]$.
\end{theorem}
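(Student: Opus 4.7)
Set $S := S_\eta = \eta^{\dashv}[\widetilde{A}^{\times}]$. By construction $\eta[S] \subseteq \widetilde{A}^{\times}$, so the universal property of the smooth ring of fractions (\textbf{Definition \ref{Alem}}) produces a unique $\mathcal{C}^{\infty}$-homomorphism $\hat{\eta} : A\{S^{-1}\} \to \widetilde{A}$ with $\hat{\eta} \circ \eta_S = \eta$. The plan is to show that $\hat{\eta}$ is a bijection; since $\mathcal{C}^{\infty}{\rm \bf Rng}$ is a variety of algebras, a bijective $\mathcal{C}^{\infty}$-homomorphism is automatically a $\mathcal{C}^{\infty}$-isomorphism, and this will identify $\eta$ with ${\rm Can}_{S_\eta}$. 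Surjectivity of $\hat{\eta}$ is immediate from hypothesis (i): for $d \in \widetilde{A}$, pick $b \in A$ and $c \in A$ with $\eta(c) \in \widetilde{A}^{\times}$ (so $c \in S$) and $d \cdot \eta(c) = \eta(b)$; then $d = \hat{\eta}(\eta_S(b)\cdot \eta_S(c)^{-1})$.

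For injectivity, I will produce a candidate inverse $\sigma : \widetilde{A} \to A\{S^{-1}\}$ by declaring $\sigma(d) := \eta_S(b)\cdot\eta_S(c)^{-1}$ whenever $(b,c)$ realizes (i) for $d$. Hypothesis (ii) forces well-definedness: if $(b_1,c_1)$ and $(b_2,c_2)$ both witness (i) for $d$, then $\eta(b_1 c_2 - b_2 c_1) = 0$, so (ii) supplies $c_3 \in S$ with $c_3(b_1 c_2 - b_2 c_1) = 0_A$; applying $\eta_S$ and using the invertibility of $\eta_S(c_3)$ in $A\{S^{-1}\}$ yields $\eta_S(b_1)\eta_S(c_1)^{-1} = \eta_S(b_2)\eta_S(c_2)^{-1}$. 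Standard fraction computations then verify that $\sigma$ is a homomorphism of $\mathbb{R}$-algebras, that $\sigma \circ \eta = \eta_S$, and that $\hat{\eta} \circ \sigma = {\rm id}_{\widetilde{A}}$ directly from the definition.

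The hardest step is to promote $\sigma$ from a ring morphism to a full $\mathcal{C}^{\infty}$-ring morphism, i.e., to check $\sigma(f^{\widetilde{A}}(d_1,\ldots,d_n)) = f^{A\{S^{-1}\}}(\sigma(d_1),\ldots,\sigma(d_n))$ for every smooth $f \in \mathcal{C}^{\infty}(\mathbb{R}^n,\mathbb{R})$ and all $d_1,\ldots,d_n \in \widetilde{A}$. My strategy is to use (i) to express each $d_i = \eta(b_i)\eta(c_i)^{-1}$ with $c_i \in S$, then apply (i) once more to realize $f^{\widetilde{A}}(d_1,\ldots,d_n)$ itself as a fraction $\eta(B)\eta(C)^{-1}$, and finally to show that the very same pair $(B,C)$ is obtained inside $A\{S^{-1}\}$ when one evaluates $f^{A\{S^{-1}\}}$ on the corresponding fractions; this last computation depends only on the relations $\eta_S(c_i)\cdot\eta_S(c_i)^{-1}=1$ and the defining equations of the $\mathcal{C}^{\infty}$-structure, which are transported symmetrically because $\hat{\eta}$ is a $\mathcal{C}^{\infty}$-homomorphism preserving invertibility. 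Once $\sigma$ is a $\mathcal{C}^{\infty}$-homomorphism, the composite $\sigma \circ \hat{\eta}$ is a $\mathcal{C}^{\infty}$-endomorphism of $A\{S^{-1}\}$ with $(\sigma \circ \hat{\eta}) \circ \eta_S = \sigma \circ \eta = \eta_S$, and \textbf{Proposition \ref{fact}} (that $\eta_S$ is an epimorphism) forces $\sigma \circ \hat{\eta} = {\rm id}_{A\{S^{-1}\}}$; combined with $\hat{\eta} \circ \sigma = {\rm id}_{\widetilde{A}}$, this makes $\hat{\eta}$ and $\sigma$ mutually inverse $\mathcal{C}^{\infty}$-isomorphisms, concluding $\eta \cong {\rm Can}_{S_\eta}$.
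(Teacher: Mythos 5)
Your architecture is essentially the paper's: build $\hat{\eta}:A\{S^{-1}\}\to\widetilde{A}$ from the universal property, get surjectivity from (i), and define a candidate inverse on fractions, $\sigma(d)=\eta_S(b)\eta_S(c)^{-1}$, whose well-definedness is exactly what hypothesis (ii) buys (the paper phrases this as showing that $\eta$ itself enjoys the universal property of ${\rm Can}_{S_\eta}$, constructing for each $f:A\to B$ inverting $S_\eta$ the function $\widetilde{f}(d)=f(b)f(c)^{-1}$, but the computations are the same). The steps you call routine are indeed fine: totality and univocality of $\sigma$, $\sigma\circ\eta=\eta_S$, and $\hat{\eta}\circ\sigma={\rm id}_{\widetilde{A}}$ all check out.

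The gap sits exactly where you locate it, and the repair you sketch is circular. Hypothesis (i) hands you, for the element $f^{\widetilde{A}}(d_1,\dots,d_n)$, \emph{some} pair $(B,C)$ satisfying $f^{\widetilde{A}}(d_1,\dots,d_n)\cdot\eta(C)=\eta(B)$ \emph{in} $\widetilde{A}$; this identity is not a formal consequence of the relations $\eta(c_i)\eta(c_i)^{-1}=1$ together with universally valid smooth equations, so there is nothing to ``transport''. (Take $A=\mathcal{C}^{\infty}(\mathbb{R})$, $S=\{x\}$, $\widetilde{A}=\mathcal{C}^{\infty}(\mathbb{R}\setminus\{0\})$, $f=\sin$: writing $\sin(1/x)=h(x)k(x)^{-1}$ with $Z(k)=\{0\}$ is the nontrivial Ortega--Mu\~{n}oz theorem, not an algebraic manipulation.) The only available argument for $f^{A\{S^{-1}\}}(\sigma(d_1),\dots,\sigma(d_n))=\eta_S(B)\eta_S(C)^{-1}$ is that both sides have the same image under $\hat{\eta}$ — which yields equality only if $\hat{\eta}$ is injective, i.e., only if you already have the theorem. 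What actually closes the loop is the fraction description of the canonical localization itself: every element of $A\{S^{-1}\}$ has the form $\eta_S(b)\eta_S(c)^{-1}$ with $c\in S^{\infty-{\rm sat}}$ (item (i) of \textbf{Theorem \ref{38}} applied to $\eta_S$; in this paper that is proved later and via the present theorem, so one must establish it independently — free case by Hadamard's Lemma, then quotients and colimits). That makes $\sigma$ surjective, hence bijective since $\hat{\eta}\circ\sigma={\rm id}_{\widetilde{A}}$, hence $\hat{\eta}=\sigma^{-1}$ is a bijective $\mathcal{C}^{\infty}$-homomorphism and therefore an isomorphism, with no need to know a priori that $\sigma$ is smooth. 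To be fair, the paper's own proof also constructs its inverse only as a function and then applies the uniqueness clause of the universal property (which quantifies over homomorphisms) to it; but a proof that advertises the smoothness of $\sigma$ as its hardest step must supply a non-circular argument for it.
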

\begin{proof}
First we show that $\eta: A \to \widetilde{A}$ has the universal property which characterize ${\rm Can}_{S_{\eta}}$.\\

Let $f: A \to B$ be a $\mathcal{C}^{\infty}-$rings homomorphism such that $f[S_{\eta}] \subseteq B^{\times}$. We are going to show there is a unique $\mathcal{C}^{\infty}-$rings homomorphism $\widetilde{f} : \widetilde{A} \to B$ such that the following diagram commutes:
$$\xymatrix{
  A \ar[r]^{\eta} \ar[rd]_{f} & \widetilde{A} \ar@{.>}[d]^{\widetilde{f}}\\
      & B
}$$
i.e., such that $\widetilde{f}\circ \eta = f$.\\

First we notice that:
$$\eta[S_{\eta}] = \eta[\eta^{\dashv}[\widetilde{A}^{\times}]] \subseteq \widetilde{A}^{\times}.$$

\textbf{Candidate and Uniqueness of $\widetilde{f}$:} Let $\widetilde{f}_1, \widetilde{f}_2: \widetilde{A} \to B$ be two $\mathcal{C}^{\infty}-$rings homomorphisms such that the following diagram commutes:

$$\xymatrix{
  A \ar[r]^{\eta} \ar@/_/[dr]_f & \widetilde{A} \ar[d]^{\widetilde{f}_1} \ar@<-1ex>[d]_{\widetilde{f}_2} & \ar[l]_{\eta} A \ar@/^/[dl]^{f}\\
      & B &
}$$
that is to say, such that $\widetilde{f}_1 \circ \eta = f = \widetilde{f}_2 \circ \eta$.\\

Given any $d \in \widetilde{A}$, by the hypothesis (i) there exist $b,c \in A$, $\eta(c) \in \widetilde{A}^{\times}$ such that $d = \dfrac{\eta(b)}{\eta(c)}$, so:
\begin{multline*}\widetilde{f}_1(d) = \widetilde{f}_1(\eta(b)\cdot \eta(c)^{-1}) = \widetilde{f}_1(\eta(b))\cdot \widetilde{f}_1(\eta(c))^{-1}= \\
 = (\widetilde{f}_1 \circ \eta)(b) \cdot ((\widetilde{f}_1 \circ \eta)(c))^{-1} = f(b)\cdot f(c)^{-1} = (\widetilde{f}_2 \circ \eta)(b) \cdot ((\widetilde{f_2} \circ \eta)(c))^{-1} =\\ =\widetilde{f}_2(\eta(b))\cdot \widetilde{f}_2(\eta(c)^{-1}) = \widetilde{f}_2(\eta(b)\cdot\eta(c)^{-1}) = \widetilde{f}_2(d)
\end{multline*}
so we conclude that $(\forall d \in \widetilde{A})(\widetilde{f}_1(d) = \widetilde{f}_2(d))$ and $\widetilde{f}(\frac{\eta(b)}{\eta(c)}) = f(b)\cdot f(c)^{-1}$. Thus $\widetilde{f}_1 = \widetilde{f}_2$.\\

\textbf{Existence of $\widetilde{f}$:} We know that for every $d \in \widetilde{A}$ there exist $b,c \in A$, $\eta(c) \in \widetilde{A}^{\times}$, such that $ d = \eta(b)\cdot \eta(c)^{-1}$. Define the following relation: $\widetilde{f} = \{ (d, f(b)\cdot f(c)^{-1}) | d \in \widetilde{A} \} \subseteq \widetilde{A} \times B$. We claim that $\widetilde{f} $ is a function.\\

\textbf{Claim:} $\widetilde{f}$ is a univocal relation.\\

Given $d \in \widetilde{A}$, let $b,c,b',c' \in A$, $\eta(c), \eta(c') \in \widetilde{A}^{\times}$ be such that $\eta(b)\cdot \eta(c)^{-1} = d = \eta(b')\cdot \eta(c')^{-1}$, so  $\eta(b)\cdot \eta(c') = \eta(b')\cdot \eta(c)$. Then we have $\eta(b \cdot c') = \eta(b) \cdot \eta(c') = \eta(b')\cdot \eta(c)$, so $\eta(b \cdot c' - b' \cdot c)= 0$. By (ii), since $\eta(b \cdot c' - b' \cdot c)= 0$, there is some $a \in A$ such that $\eta(c) \in \widetilde{A}^{\times}$, i.e., $a \in \eta^{\dashv}[\widetilde{A}^{\times}] = S_{\eta}$, and $a \cdot (b\cdot c' - b' \cdot c) = 0$, i.e., $b\cdot c' \cdot a = b' \cdot c \cdot a$. We now have:

$$ f(b) \cdot f(c') \cdot f(a) = f(b \cdot c' \cdot a) = f(b' \cdot c \cdot a) = f(b') \cdot f(c) \cdot f(a).$$

Now since $f[S_{\eta}] \subseteq B^{\times}$ and $a \in S_{\eta}$, from the preceding equations we obtain, by cancelling $f(a)$:
$$f(b)\cdot f(c') = f(b')\cdot f(c)$$
and since $\eta(c), \eta(c') \in \widetilde{A}^{\times}$, $c, c' \in \eta^{\dashv}[\widetilde{A}^{\times}] = S_{\eta}$ so $f(c), f(c') \in B^{\times}$, so:
$$f(b)\cdot f(c)^{-1} = f(b')\cdot f(c')^{-1}.$$

We conclude that $(\forall d \in \widetilde{A})(((d, f(b)\cdot f(c)^{-1}) \in \widetilde{f} ) \wedge ((d, f(b')\cdot f(c')^{-1}) \in \widetilde{f} ) \to f(b) \cdot f(c)^{-1} = f(b') \cdot f(c')^{-1})$.\\

\textbf{Claim:} $\widetilde{f} $ is a total relation.\\

This follows immediately from item (i).\\

We are going to denote $ (d, f(b)\cdot f(c)^{-1}) \in \widetilde{f} $ simply by $\widetilde{f} (d) = f(b)\cdot f(c)^{-1}$, as usually we do for functions.\\

Therefore, there exists exactly one function $\widetilde{f}: \widetilde{A} \to B$ such that $\widetilde{f} \circ \eta = f$. \\

Now we show that $\widetilde{A} \cong A\{ S_{\eta}^{-1}\}$. We begin by noticing that ${\rm Can}_{S_{\eta}}[S_{\eta}] \subseteq A\{ S_{\eta}^{-1}\}^{\times}$ by the very definition of ${\rm Can}_{S_{\eta}}$. For what we have seen above, there exists a unique function $\widetilde{{\rm Can}_{S_{\eta}}}: \widetilde{A} \to A\{ S_{\eta}^{-1} \}$ such that $\widetilde{{\rm Can}_{S_{\eta}}}\circ \eta = {\rm Can}_{S_{\eta}}$. Now, from the universal property of ${\rm Can}_{S_{\eta}}$ there exists a unique $\mathcal{C}^{\infty}-$rings homomorphism:
$$\widehat{\eta}: A\{ S_{\eta}^{-1}\} \to \widetilde{A}$$
such that $\widehat{\eta}\circ {\rm Can}_{S_{\eta}} = \eta$.\\

\textbf{Claim:} $\widehat{\eta}$ is a bijection whose inverse is $\widetilde{{\rm Can}_{S_{\eta}}}$, and that will prove that $\widetilde{{\rm Can}_{S_{\eta}}}$ is a $\mathcal{C}^{\infty}-$rings isomorphism.\\

Now, $(\widehat{\eta}\circ \widetilde{{\rm Can}_{S_{\eta}}}) \circ \eta = \widehat{\eta} \circ (\widetilde{{\rm Can}_{S_{\eta}}} \circ \eta) = \widehat{\eta} \circ {\rm Can}_{S_{\eta}} = \eta = {\rm id}_{\widetilde{A}} \circ \eta$, so:
$$(\widehat{\eta}\circ \widetilde{{\rm Can}_{S_{\eta}}}) \circ \eta = {\rm id}_{\widetilde{A}} \circ \eta.$$

We have seen, however, that there is exactly one function $\widetilde{\varphi}$ such that $\widetilde{\varphi} \circ \eta = \eta$, so it follows that ${\rm id}_{\widetilde{A}} = \widehat{\eta} \circ \widetilde{{\rm Can}_{S_{\eta}}}$.\\

On the other hand,

$$\widetilde{{\rm Can}_{S_{\eta}}} \circ \widehat{\eta} \circ {\rm Can}_{S_{\eta}} = \widetilde{{\rm Can}_{S_{\eta}}} \circ \eta = {\rm Can}_{S_{\eta}} = {\rm id}_{A\{ S_{\eta}^{-1} \}} \circ {\rm Can}_{S_{\eta}}.$$

Once again, by the universal property of ${\rm Can}_{S_{\eta}}$ we have:

$${\rm id}_{A\{S_{\eta}^{-1}\}} = \widetilde{{\rm Can}_{S_{\eta}}} \circ \widehat{\eta}.$$

Hence $\widetilde{{\rm Can}_{S_{\eta}}}$ is the $\mathcal{C}^{\infty}-$rings isomorphism between $\eta$ and ${\rm Can}_{S_{\eta}}$, that is, it is a $\mathcal{C}^{\infty}-$rings isomorphism such that the following diagram commutes:
  $$\xymatrix{A \ar[r]^{\eta} \ar[dr]^{{\rm Can}_{S_{\eta}}} & \widetilde{A} \ar@{>->>}[d]^{\widetilde{{\rm Can}_{S_{\eta}}}}\\
        &  A\{ S_{\eta}^{-1}\}
  }.$$
\end{proof}

In order to smoothly localize larger subsets of $\mathcal{C}^{\infty}(\R^n)$ for some $n \in \mathbb{N}$, say $\Sigma$, which is a set that contains possibly a non-countable amount of elements,  we can proceed as follows. First notice that we can obtain the $\mathcal{C}^{\infty}-$ring of fractions of $\mathcal{C}^{\infty}(\mathbb{R}^n)$ with respect to the singleton $\Sigma = \{f: \R^n \to \R\}$, provided that $f \not\equiv 0$. Whenever $\Sigma = \{ f_1, \cdots, f_k \}$ for some $k \in \N$, inverting $\Sigma$ is equivalent to inverting $\prod \Sigma = f_1 \cdot f_2 \cdots f_{k-1}\cdot f_k$. In the case that $\Sigma$ is infinite, first we decompose it as the union of its finite subsets:

$$\Sigma = \bigcup_{\Sigma' \subseteq_{\rm fin} \Sigma} \Sigma'$$

Note that $\mathcal{S} =\{ \Sigma' \subseteq \Sigma | \Sigma' \,\,\mbox{is}\,\, \mbox{finite}\}$ is partially ordered by the  inclusion relation. Also,  whenever $\Sigma' \subseteq \Sigma''$, since $\eta_{\Sigma''}[\Sigma']\subseteq \eta_{\Sigma''}[\Sigma'']\subseteq (A\{ {\Sigma''}^{-1}\})^{\times}$, by the universal property of $\eta_{\Sigma''}: A \to A\{ {\Sigma''}^{-1}\}$, there is a unique $\mathcal{C}^{\infty}-$homomorphism $\alpha_{\Sigma' \Sigma''}: A\{ {\Sigma'}^{-1}\} \rightarrow A\{ {\Sigma''}^{-1}\}$ such that the following diagram commutes:

$$\xymatrixcolsep{5pc}\xymatrix{
A \ar[r]^{\eta_{\Sigma'}} \ar[dr]_{\eta_{\Sigma''}} & A\{ {\Sigma'}^{-1}\} \ar@{-->}[d]^{\exists ! \alpha_{\Sigma' \Sigma''}}\\
 & A\{ {\Sigma''}^{-1}\}}$$

It is simple to prove, using the ``uniqueness part'' of the $\mathcal{C}^{\infty}-$homomorphism obtained via universal property, that for any finite $\Sigma'$ we have $\alpha_{\Sigma' \Sigma'} = {\rm id}_{A\{ {\Sigma'}^{-1}\}}$, and given any finite $\Sigma', \Sigma''$ and $\Sigma'''$ such that $\Sigma' \subseteq \Sigma'' \subseteq \Sigma'''$,  $\alpha_{\Sigma'' \Sigma'''} \circ \alpha_{\Sigma' \Sigma''} = \alpha_{\Sigma'\Sigma'''}$, so we have an inductive system:
$$\{ \alpha_{\Sigma' \Sigma''}: A\{ {\Sigma'}^{-1}\} \to A\{ {\Sigma''}^{-1}\} | (\Sigma', \Sigma'' \in \mathcal{S}) \& (\Sigma' \subseteq \Sigma'')\}$$

We take, thus:

$$A\{ \Sigma^{-1}\} = \varinjlim_{\Sigma' \subseteq_{\rm fin} \Sigma} A\{ {\Sigma'}^{-'}\} = \varinjlim_{\Sigma' \subseteq_{\rm fin} \Sigma} A \left\{ {\prod \Sigma'}^{-1} \right\}$$

$$\xymatrixcolsep{5pc}\xymatrix{
 & A\{ \Sigma^{-1}\} & \\
A\{ {\Sigma'}^{-'}\} \ar[ur]^{\alpha_{\Sigma'}} \ar[rr]_{\alpha_{\Sigma' \Sigma''}} & & A\{ {\Sigma''}^{-1}\} \ar[ul]_{\alpha_{\Sigma''}}}$$

Hence, given any $\mathcal{C}^{\infty}-$ring $A$ and any $S \subseteq A$, we can construct:

$$\eta_{\Sigma} = \alpha_{\Sigma'}\circ \eta_{\Sigma'}: A \rightarrow A\{ {\Sigma}^{-1}\} $$

It is easy to prove that $\eta_{\Sigma}$ has the universal property which characterizes the $\mathcal{C}^{\infty}-$ring of fractions of $A$ with respect to $\Sigma$.\\

\begin{remark}In the case that $A = \mathcal{C}^{\infty}(\mathbb{R}^n)$ and  $\Sigma = \{ f: \mathbb{R}^n \to \mathbb{R}\}$, we have $\Sigma^{\infty-{\rm sat}} = \{ g \in \mathcal{C}^{\infty}\,(\mathbb{R}^n) | U_f \subseteq U_g\} = \{ g \in \mathcal{C}^{\infty}\,(\mathbb{R}^n) | Z(g) \subseteq Z(f) \}$.
\end{remark}

The following theorem gives us concretely the $\mathcal{C}^{\infty}-$ring of fractions of a finitely generated free $\mathcal{C}^{\infty}-$ring with respect to one element.\\

\begin{theorem}Let $\varphi \in \mathcal{C}^{\infty}(\mathbb{R}^n)$ and $U_{\varphi} = \{ x \in \mathbb{R}^n | \varphi(x)\neq 0\} = {\rm Coz}(\varphi)$. Then:
$$\mathcal{C}^{\infty}(U_{\varphi}) \cong \dfrac{\mathcal{C}^{\infty}(\mathbb{R}^{n+1})}{\langle \{ y \cdot \varphi(x) - 1 \}\rangle}$$
\end{theorem}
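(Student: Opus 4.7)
The plan is to identify both sides with the $\mathcal{C}^{\infty}-$ring of fractions $\mathcal{C}^{\infty}(\mathbb{R}^n)\{\varphi^{-1}\}$. First, by \textbf{Proposition \ref{lom}}, $\mathcal{C}^{\infty}(\mathbb{R}^n)\{y\} \cong \mathcal{C}^{\infty}(\mathbb{R}^n)\otimes_{\infty}\mathcal{C}^{\infty}(\mathbb{R})$, and since the smooth coproduct of free $\mathcal{C}^{\infty}-$rings on disjoint generators is the free $\mathcal{C}^{\infty}-$ring on their union, this is canonically isomorphic to $\mathcal{C}^{\infty}(\mathbb{R}^{n+1})$. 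Applying \textbf{Theorem \ref{conc}} with $A = \mathcal{C}^{\infty}(\mathbb{R}^n)$ and $S = \{\varphi\}$ therefore yields
$$\mathcal{C}^{\infty}(\mathbb{R}^n)\{\varphi^{-1}\}\;\cong\;\dfrac{\mathcal{C}^{\infty}(\mathbb{R}^{n+1})}{\langle y\cdot\varphi(x)-1\rangle}.$$
So it remains to show $\mathcal{C}^{\infty}(U_{\varphi})\cong \mathcal{C}^{\infty}(\mathbb{R}^n)\{\varphi^{-1}\}$.

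For this, I would consider the restriction $\mathcal{C}^{\infty}-$homomorphism $\rho : \mathcal{C}^{\infty}(\mathbb{R}^n)\to \mathcal{C}^{\infty}(U_{\varphi})$, $f\mapsto f\upharpoonright_{U_{\varphi}}$, and apply \textbf{Theorem \ref{340}}. The associated set
$$S_{\rho} = \rho^{\dashv}\!\left[\mathcal{C}^{\infty}(U_{\varphi})^{\times}\right] = \{h\in \mathcal{C}^{\infty}(\mathbb{R}^n)\mid Z(h)\subseteq Z(\varphi)\}$$
equals $\{\varphi\}^{\infty-\mathrm{sat}}$ by the remark immediately preceding the statement, so \textbf{Corollary \ref{cem}} will give $\mathcal{C}^{\infty}(\mathbb{R}^n)\{S_{\rho}^{-1}\}\cong \mathcal{C}^{\infty}(\mathbb{R}^n)\{\varphi^{-1}\}$ once $\rho$ is shown to realize the $\mathcal{C}^{\infty}-$ring of fractions. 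The two hypotheses of \textbf{Theorem \ref{340}} must then be checked. Condition (ii) is trivial: if $f\upharpoonright_{U_{\varphi}} = 0$, take $c=\varphi$; the product $f\cdot\varphi$ vanishes on $U_{\varphi}$ (because $f$ does) and on $Z(\varphi)$ (because $\varphi$ does), so $f\cdot\varphi\equiv 0$ in $\mathcal{C}^{\infty}(\mathbb{R}^n)$, and $\rho(\varphi)\in \mathcal{C}^{\infty}(U_{\varphi})^{\times}$.

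The substantial step, and the main obstacle, is condition (i): given $g\in \mathcal{C}^{\infty}(U_{\varphi})$, produce $b,c\in \mathcal{C}^{\infty}(\mathbb{R}^n)$ with $\rho(c)\in \mathcal{C}^{\infty}(U_{\varphi})^{\times}$ and $g\cdot\rho(c)=\rho(b)$; equivalently, a smooth $c$ on $\mathbb{R}^n$ nowhere zero on $U_{\varphi}$ such that $g\cdot c$, extended by $0$ on $Z(\varphi)$, is smooth on all of $\mathbb{R}^n$. This rests on the classical smoothing result: for any closed $C\subseteq \mathbb{R}^n$ and any $g\in \mathcal{C}^{\infty}(\mathbb{R}^n\setminus C)$, there exists $c\in \mathcal{C}^{\infty}(\mathbb{R}^n)$ with $Z(c)=C$, flat to infinite order on $C$, and such that $g\cdot c$ extends smoothly to $\mathbb{R}^n$ by zero on $C$. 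I would apply this to $C=Z(\varphi)$; the function $c$ can be built in the form $c = \alpha\circ \varphi$ for a suitably chosen $\alpha\in \mathcal{C}^{\infty}(\mathbb{R})$ with $\alpha(0)=0$ and vanishing flatly at $0$, via an exhaustion $K_m\nearrow U_{\varphi}$ by compact sets together with estimates on $\|g\|_{C^m(K_m)}$ and a Borel-type choice of decay rates for $\alpha$ (ensuring, through a Faà di Bruno computation, that every partial derivative of $g\cdot c$ tends to $0$ as one approaches $Z(\varphi)$). With both (i) and (ii) verified, \textbf{Theorem \ref{340}} yields $\mathcal{C}^{\infty}(U_{\varphi})\cong \mathcal{C}^{\infty}(\mathbb{R}^n)\{S_{\rho}^{-1}\}\cong \mathcal{C}^{\infty}(\mathbb{R}^n)\{\varphi^{-1}\}\cong \mathcal{C}^{\infty}(\mathbb{R}^{n+1})/\langle y\varphi(x)-1\rangle$, completing the proof.
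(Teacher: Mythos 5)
Your argument is correct, but it is not the route the paper takes. The paper's proof is a direct, two-variable construction: it defines the substitution homomorphism $\rho: \mathcal{C}^{\infty}(\mathbb{R}^{n+1}) \to \mathcal{C}^{\infty}(U_{\varphi})$, $f \mapsto f(x, 1/\varphi(x))$, proves surjectivity by lifting $h \in \mathcal{C}^{\infty}(U_{\varphi})$ to $f(x,y) = r_{\varepsilon}(y - 1/\varphi(x))\cdot h(x)$ — a function supported near the graph of $1/\varphi$, which is smooth on all of $\mathbb{R}^{n+1}$ because the graph escapes to infinity as $x$ approaches $Z(\varphi)$ — and identifies $\ker\rho$ with $\langle y\varphi(x)-1\rangle$ via Hadamard's Lemma. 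You instead factor everything through the universal property: the right-hand side is $\mathcal{C}^{\infty}(\mathbb{R}^n)\{\varphi^{-1}\}$ by \textbf{Theorem \ref{conc}} and \textbf{Proposition \ref{lom}}, and the left-hand side is the same localization by \textbf{Theorem \ref{340}} applied to restriction. This is essentially the paper's own proof of \textbf{Proposition \ref{pani}} specialized to $U = U_{\varphi}$ (using \textbf{Remark \ref{raca}}), so there is no circularity, but note where the analytic weight lands in each version: the paper's bump-function lift sidesteps any flat-function construction entirely, whereas your condition (i) is exactly the Ortega--Mu\~{n}oz theorem (\textbf{Theorem \ref{OM}}, quoted by the paper immediately after this statement), whose proof via an exhaustion by compacts and a Borel-type choice of a flat factor $\alpha\circ\varphi$ you only sketch. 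That sketch is the standard argument and is believable, but it is the one genuinely nontrivial step of your route; citing the Ortega--Mu\~{n}oz result, as the paper does in \textbf{Proposition \ref{pani}}, would make your proof complete. A minor side benefit of the paper's approach is that it yields the presentation of $\mathcal{C}^{\infty}(U_{\varphi})$ without invoking the fraction-ring formalism at all; yours, conversely, makes the identification $\mathcal{C}^{\infty}(U_{\varphi}) \cong \mathcal{C}^{\infty}(\mathbb{R}^n)\{\varphi^{-1}\}$ explicit as a byproduct.
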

\begin{proof}
Let:
$$\begin{array}{cccc}
    \rho: & \mathcal{C}^{\infty}(\mathbb{R}^{n+1}) & \rightarrow & \mathcal{C}^{\infty}(U_{\varphi}) \\
     & f & \mapsto & \begin{array}{cccc}
                       \rho(f): & U_{\varphi} & \rightarrow & \mathbb{R} \\
                        & x & \mapsto & f\left( x, \frac{1}{\varphi(x)}\right)
                     \end{array}
  \end{array}$$

We claim that $\rho$ is surjective, \textit{i.e.}, any $h \in \mathcal{C}^{\infty}(U_{\varphi})$ may be lifted to some $f \in \mathcal{C}^{\infty}(\mathbb{R}^{n+1})$.\\

Let $h \in \mathcal{C}^{\infty}(U_{\varphi})$. Define, for $\varepsilon>0$:

$$\begin{array}{cccc}
    f: & \mathbb{R}^{n+1} & \rightarrow & \mathbb{R} \\
     & (x,y) & \mapsto & \begin{cases}
                           r_{\varepsilon}\left( y - \frac{1}{\varphi(x)}\right)\cdot h(x), & \mbox{if } x \in U_{\varphi} \\
     0, & \mbox{if} |y \cdot \varphi(x) - 1|> \varepsilon \cdot |\varphi(x)|.
                         \end{cases}
  \end{array}$$

Using the fact that $\mathbb{R}$ is an ordered local ring, one easily checks that $f \in \mathcal{C}^{\infty}(\mathbb{R}^{n+1})$. Indeed, either $|\varphi(x)|<\dfrac{1}{\varepsilon + |y|}$ and hence $|y \cdot \varphi(x)-1|> \varepsilon \cdot |\varphi(x)|$ or $|\varphi(x)|> \dfrac{1}{2(\varepsilon + |y|)}$, which implies that $\varphi(x)$ is invertible. Clearly,

$$(\forall x \in U_{\varphi})\left( h(x) = f\left( x, \frac{1}{\varphi(x)}\right)\right).$$

Assume now that $f \in \ker(\rho)$. Using \textbf{Hadamard's Lemma} for $f \in \mathcal{C}^{\infty}(\mathbb{R}^n \times \mathbb{R}, \mathbb{R})$, we have:

$$f(x,t) - f(x,s) = (t-s)\cdot \displaystyle\int_{0}^{1} \dfrac{\partial f}{\partial t}(x, s + (t-s)\cdot u)du$$

we conclude the existence of some $f_1 \in \mathcal{C}^{\infty}(\mathbb{R}^{n+2})$ such that:

$$f(x,y) - f\left( x, \frac{1}{\varphi(x)}\right) = \left( y - \frac{1}{\varphi(x)}\right)\cdot f_i\left(x,y, \frac{1}{\varphi(x)} \right).$$

Define:

$$\begin{array}{cccc}
    \nu: & \mathbb{R}^{n+1} & \rightarrow & [0,\infty[ \\
     & (x,y) & \mapsto & \begin{cases}
                           \dfrac{f(x,y)}{y\cdot \varphi(x)-1}, & \mbox{if } y\cdot \varphi(x) - 1 \neq 0 \\
                           \dfrac{f_1\left( x,y, \frac{1}{\varphi(x)}\right)}{\varphi(x)}, & \mbox{if} \varphi(x)\neq 0 .
                         \end{cases}
  \end{array}$$

Once again, it is easily checked that $\nu \in \mathcal{C}^{\infty}(\mathbb{R}^{n+1})$. Therefore, $f(x,y) = \nu(x,y)\cdot (y \cdot \varphi(x)-1) \in \langle \{ y \cdot \varphi(x) - 1 \}\rangle$.

\end{proof}

Let us now consider the following result, credited to Ortega and Mu\~{n}oz by I. Moerdijk and G. Reyes in \cite{moerdijk1986rings}:

\begin{theorem}\label{OM}Let $U \subseteq \mathbb{R}^n$ be open, and $g \in \mathcal{C}^{\infty}(U)$. Then there are $h,k \in \mathcal{C}^{\infty}(\mathbb{R}^n)$ with $U_k = U$ and $g \cdot k \displaystyle\upharpoonright_{U} \equiv h \displaystyle\upharpoonright_{U}$. where $U_k = \mathbb{R}^n \setminus Z(k)$ and $Z(k) = \{ x \in \mathbb{R}^n | k(x)=0\}$.
\end{theorem}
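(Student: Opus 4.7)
The plan is to construct $k \in \mathcal{C}^\infty(\mathbb{R}^n)$ with $Z(k) = \mathbb{R}^n \setminus U$ which decays, along with all its derivatives, so rapidly at $\partial U$ that the product $g \cdot k$ (a priori only defined on $U$) extends smoothly by zero to all of $\mathbb{R}^n$; this extension will be $h$.

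The first step is the classical theorem of Whitney: every closed $C \subseteq \mathbb{R}^n$ is the zero set of some nonnegative $\kappa \in \mathcal{C}^\infty(\mathbb{R}^n)$ which is \emph{flat} on $C$, meaning all partial derivatives of $\kappa$ vanish at each point of $C$. Applying this to $C = \mathbb{R}^n \setminus U$ produces $\kappa_0 \geq 0$ with $Z(\kappa_0) = \mathbb{R}^n \setminus U$.

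The second step is to damp any possible blow-up of $g$ at $\partial U$. Using that $U$ is $\sigma$-compact and locally compact, I would pick a locally finite (in $U$) open cover $\{V_m\}_{m \in \mathbb{N}}$ of $U$ with each $\overline{V_m}$ compact and contained in $U$, together with a subordinate smooth partition of unity $\{\psi_m\}$ with $\mathrm{supp}(\psi_m) \subseteq V_m$. Let $M_m$ bound the sup-norms on $\overline{V_m}$ of $g$ and its partial derivatives of order at most $m$, and let $B_m$ similarly bound those of $\kappa_0 \psi_m$. Choose $0 < c_m \leq 2^{-m}(1 + M_m + B_m)^{-1}$ and set
$$k := \sum_{m \in \mathbb{N}} c_m\, \kappa_0\,\psi_m.$$
Each summand is smooth with compact support in $U$ and so extends by zero to $\mathbb{R}^n$; the diagonal estimate on the $c_m$ makes the series converge in $\mathcal{C}^\infty(\mathbb{R}^n)$, so $k \in \mathcal{C}^\infty(\mathbb{R}^n)$. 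Positivity of $\kappa_0$ on $U$ together with $\sum_m \psi_m \equiv 1$ on $U$ give $k > 0$ on $U$, while $k \equiv 0$ off $U$; hence $Z(k) = \mathbb{R}^n \setminus U$, i.e. $U_k = U$.

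Now define $h(x) := g(x) k(x)$ for $x \in U$ and $h(x) := 0$ otherwise; trivially $h \upharpoonright_U = g \cdot k \upharpoonright_U$. What remains is the verification that $h \in \mathcal{C}^\infty(\mathbb{R}^n)$, for which only the boundary points $x_0 \in \partial U$ matter. Via Leibniz,
$$\partial^\alpha(g k) = \sum_m c_m\,\sum_{\beta \leq \alpha}\binom{\alpha}{\beta}(\partial^{\alpha-\beta} g)\,(\partial^\beta(\kappa_0 \psi_m)),$$
and on $V_m$ the $m$-th term is bounded in absolute value by $2^{|\alpha|} c_m M_m B_m \leq 2^{|\alpha|}\cdot 2^{-m}$ whenever $m \geq |\alpha|$; so the tail beyond any fixed $N$ is uniformly small, while each of the finitely many remaining terms vanishes in some fixed neighborhood of $x_0$ (because $\mathrm{supp}(\psi_m) \subset U$ is compact and cannot contain $x_0$). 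Hence $\partial^\alpha(gk)(x) \to 0$ as $x \to x_0$ for every multi-index $\alpha$, and a standard ``flat extension'' argument shows $h$ is $\mathcal{C}^\infty$ at $x_0$ with vanishing jet there. The main obstacle is reconciling two opposing demands: $k$ must be strictly positive on all of $U$ yet simultaneously small enough in every derivative order to absorb the unknown growth of $g$ near $\partial U$. The diagonal choice $c_m \leq 2^{-m}(1 + M_m + B_m)^{-1}$ resolves this, since for any fixed $\alpha$ only finitely many indices $m < |\alpha|$ fall outside the uniform $2^{-m}$-tail estimate, and those finitely many already vanish identically near $\partial U$ by compactness of their supports.
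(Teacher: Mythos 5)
The paper does not prove this statement: it is quoted as an external result (Theorem 1.3 of Moerdijk--Reyes, credited there to Mu\~{n}oz and Ortega), so there is no in-paper argument to compare against. Your proof is essentially the standard one for this lemma: exhaust $U$ by a locally finite family of relatively compact open sets, take a subordinate partition of unity, and sum with coefficients chosen diagonally so small that the series and the series multiplied by $g$ both converge in $\mathcal{C}^{\infty}(\mathbb{R}^n)$. The structure is sound, and the key point --- that for fixed $\alpha$ only the finitely many indices $m<|\alpha|$ escape the tail estimate, and those terms have compact support inside $U$ --- is exactly right. (Incidentally, the Whitney function $\kappa_0$ is superfluous: $k=\sum_m c_m\psi_m$ already satisfies $Z(k)=\mathbb{R}^n\setminus U$ and the same convergence argument applies; your construction in effect reproves Whitney's theorem.)

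There is one quantitative slip. With $c_m\leq 2^{-m}(1+M_m+B_m)^{-1}$ you claim $c_m M_m B_m\leq 2^{-m}$, but this requires $M_mB_m\leq 1+M_m+B_m$, which fails whenever $M_m$ and $B_m$ are large; e.g.\ if $M_m=B_m=4^m$ then $c_mM_mB_m\approx 2^{m-1}$ and the tail of $\sum_m\partial^{\alpha}(c_m g\kappa_0\psi_m)$ diverges, so the smoothness of $h$ at $\partial U$ is not established by the stated estimate. The fix is a one-line change: take $c_m\leq 2^{-m}(1+M_m)^{-1}(1+B_m)^{-1}$ (or square the denominator), which gives $c_mB_m\leq 2^{-m}$ and $c_mM_mB_m\leq 2^{-m}$ simultaneously, and then both series converge uniformly in every derivative and the proof goes through. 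With that correction the argument is complete.
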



%

\begin{theorem}\label{cara} Let $A$ be a $\mathcal{C}^{\infty}-$ring and $S \subseteq A$. An element $\lambda = \dfrac{\eta_S(c)}{\eta_S(b)}$ (with $c \in A$ and $b \in S^{\infty-{\rm sat}}$) is invertible in $A\{ S^{-1}\}$ if, and only if, there are elements $d \in S^{\infty-{\rm sat}}$ and $c' \in A$ such that $dc'c \in S^{\infty-{\rm sat}}$, that is,
$$\dfrac{\eta_S(c)}{\eta_S(b)} \in (A\{ S^{-1}\})^{\times} \iff (\exists d \in S^{\infty-{\rm sat}})(\exists c' \in A)(d\cdot c' \cdot c \in S^{\infty-{\rm sat}}).$$
\end{theorem}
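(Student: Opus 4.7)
The plan is to exploit the explicit characterization of $A\{S^{-1}\}$ furnished by \textbf{Theorem \ref{38}}, together with the fact that $S^{\infty-\text{sat}}$ is a multiplicative submonoid of $A$ (which is immediate from $S^{\infty-\text{sat}} = \eta_S^{\dashv}[(A\{S^{-1}\})^{\times}]$, since the preimage of a submonoid under a ring homomorphism is a submonoid). Thus every element of $A\{S^{-1}\}$ has the stated ``fraction'' form, and the relevant kernel condition is available whenever $\eta_S$ vanishes on an element of $A$.

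For the forward direction, I would assume $\lambda = \eta_S(c)/\eta_S(b) \in (A\{S^{-1}\})^{\times}$ and pick an inverse $\mu$. Applying clause (i) of \textbf{Theorem \ref{38}} to $\mu$, I obtain $c' \in A$ and $d' \in S^{\infty-\text{sat}}$ with $\mu \cdot \eta_S(d') = \eta_S(c')$; since $\eta_S(d')$ is invertible, this amounts to $\mu = \eta_S(c')\cdot \eta_S(d')^{-1}$. Multiplying out the relation $\lambda\mu = 1$ and clearing the invertible denominators yields $\eta_S(c\cdot c') = \eta_S(b\cdot d')$, hence $\eta_S(c\cdot c' - b\cdot d') = 0$. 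Clause (ii) of \textbf{Theorem \ref{38}} then supplies some $e \in S^{\infty-\text{sat}}$ with $e\cdot(c\cdot c' - b\cdot d') = 0$, i.e., $e\cdot c'\cdot c = e\cdot b\cdot d'$. Because $e, b, d' \in S^{\infty-\text{sat}}$ and $S^{\infty-\text{sat}}$ is closed under products, the right-hand side lies in $S^{\infty-\text{sat}}$, so $e\cdot c'\cdot c \in S^{\infty-\text{sat}}$; setting $d := e$ finishes this direction.

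For the backward direction, suppose $d \in S^{\infty-\text{sat}}$ and $c' \in A$ satisfy $d\cdot c'\cdot c \in S^{\infty-\text{sat}}$. Then $\eta_S(d\cdot c'\cdot c) = \eta_S(d)\cdot \eta_S(c')\cdot \eta_S(c)$ is invertible in $A\{S^{-1}\}$. Since $\eta_S(d)$ is itself invertible, the product $\eta_S(c')\cdot \eta_S(c)$ is invertible, and in a commutative ring any factor of an invertible element is invertible; in particular $\eta_S(c) \in (A\{S^{-1}\})^{\times}$. Combined with the invertibility of $\eta_S(b)$, this forces $\lambda = \eta_S(c)\cdot \eta_S(b)^{-1}$ to be invertible.

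The only mildly delicate point is the smooth handling of the ``fraction'' $\eta_S(c)/\eta_S(b)$: unlike ordinary localization, $A\{S^{-1}\}$ is not presented as equivalence classes of pairs, so the fraction notation is legitimate only because $\eta_S(b)$ is invertible. Once \textbf{Theorem \ref{38}} is invoked to put every element of $A\{S^{-1}\}$ in this form and to produce the annihilator $e$ in the forward step, the argument reduces to elementary manipulations in the commutative ring $A\{S^{-1}\}$, with the multiplicative closure of $S^{\infty-\text{sat}}$ as the only additional ingredient.
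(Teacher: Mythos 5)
Your proof is correct and follows essentially the same route as the paper's: both directions rest on clauses (i) and (ii) of \textbf{Theorem \ref{38}} applied to $\eta_S$, together with the fact that $S^{\infty-{\rm sat}} = \eta_S^{\dashv}[(A\{S^{-1}\})^{\times}]$ is a multiplicative submonoid. Your explicit justification (via clause (i)) that the inverse $\mu$ can be written as $\eta_S(c')\cdot\eta_S(d')^{-1}$ is a small tidiness improvement over the paper, which assumes that form without comment.
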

\begin{proof}
Suppose $\dfrac{\eta_S(c)}{\eta_S(b)} \in (A\{ S^{-1}\})^{\times}$, so there are $c' \in A$ and $b' \in S^{\infty-{\rm sat}}$ such that:
$$\dfrac{\eta_S(c)}{\eta_S(b)} \cdot \dfrac{\eta_S(c')}{\eta_S(b')} = 1_{A\{ S^{-1}\}} = \eta_S(1_A).$$
$$\eta_S(c \cdot c') = \eta_S(b \cdot b')$$
$$\eta_S(c \cdot c' - b \cdot b') = 0$$
By \textbf{Theorem \ref{38}}, there is some $d \in S^{\infty-{\rm sat}}$ such that:
$$d \cdot (c \cdot c' - b \cdot b') = 0$$
$$d \cdot c \cdot c' = d \cdot b \cdot b' \in S^{\infty-{\rm sat}}$$
where $d \cdot b \cdot b' \in S^{\infty-{\rm sat}}$ because it is a product of elements of $S^{\infty-{\rm sat}}$, which is a submonoid of $A$.\\

Conversely, suppose that $\dfrac{\eta_S(c)}{\eta_S(b)} \in A\{ S^{-1}\}$ with $b \in S^{\infty-{\rm sat}}$ is an element for which there are elements $d \in S^{\infty-{\rm sat}}$ and $c' \in A$ such that $d \cdot c \cdot c' \in S^{\infty-{\rm sat}}$. We have $\eta_S(d \cdot c' \cdot c) \in (A\{ S^{-1}\})^{\times}$ and $b \in S^{\infty-{\rm sat}}$, so $\eta_S(b) \in (A\{ S^{-1}\})^{\times}$, hence
$$\dfrac{\eta_S(d \cdot c' \cdot c)}{\eta_S(b)} \in (A\{ S^{-1}\})^{\times}.$$

Since
$$\dfrac{\eta_S(c)}{\eta_S(b)} \cdot \eta_S(d \cdot c') = \dfrac{\eta_S(d \cdot c' \cdot c)}{\eta_S(b)} \in (A\{ S^{-1}\})^{\times}$$
it follows that $\dfrac{\eta_S(c)}{\eta_S(b)} \in (A\{ S^{-1}\})^{\times}$, for if $\alpha \cdot \beta$ is invertible, then both $\alpha$ and $\beta$ are invertible.
\end{proof}

Now we have the following:

\begin{proposition}\label{pani}Let $U \subseteq \mathbb{R}^n$ be any open subset and define $S_U = \{ g \in \mathcal{C}^{\infty}(\mathbb{R}^n) | U \subseteq U_g\} \subseteq \mathcal{C}^{\infty}(\mathbb{R}^n)$. The $\mathcal{C}^{\infty}-$ring of fractions of $\mathcal{C}^{\infty}(\mathbb{R}^n)$ with respect to the set $S_U$:
$$\eta_{S_U}: \mathcal{C}^{\infty}(\mathbb{R}^n) \to \mathcal{C}^{\infty}(\mathbb{R}^n)\{ {S_U}^{-1}\}$$
is isomorphic to the restriction map:
$$\begin{array}{cccc}
    \rho : & \mathcal{C}^{\infty}(\mathbb{R}^n) & \rightarrow & \mathcal{C}^{\infty}(U) \\
     & h & \mapsto & h \displaystyle\upharpoonright_{U}
  \end{array}$$
\end{proposition}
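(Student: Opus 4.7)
The plan is to apply \textbf{Theorem \ref{340}} to the restriction map $\rho: \mathcal{C}^{\infty}(\mathbb{R}^n) \to \mathcal{C}^{\infty}(U)$. This requires identifying the set $S_\rho = \rho^{\dashv}[\mathcal{C}^{\infty}(U)^{\times}]$ and verifying the two hypotheses (i) and (ii) of that theorem. Once this is done, Theorem \ref{340} will directly furnish a $\mathcal{C}^{\infty}$-isomorphism between $\rho$ and the canonical localization map $\eta_{S_\rho}$, and it will remain to observe that $S_\rho = S_U$.

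First, I would identify $S_\rho$. An element $g \in \mathcal{C}^{\infty}(\mathbb{R}^n)$ satisfies $\rho(g) = g\upharpoonright_U \in \mathcal{C}^{\infty}(U)^{\times}$ if and only if $g\upharpoonright_U$ is nowhere vanishing on $U$, i.e., $U \subseteq U_g$. Hence $S_\rho = S_U$, so in particular $\rho[S_U] \subseteq \mathcal{C}^{\infty}(U)^{\times}$.

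Next, I would verify hypothesis (i) of \textbf{Theorem \ref{340}}: given any $d \in \mathcal{C}^{\infty}(U)$, the Ortega--Mu\~{n}oz result (\textbf{Theorem \ref{OM}}) produces $h, k \in \mathcal{C}^{\infty}(\mathbb{R}^n)$ with $U_k = U$ and $d \cdot k\upharpoonright_U \equiv h\upharpoonright_U$. Then $k \in S_U$, so $\rho(k) \in \mathcal{C}^{\infty}(U)^{\times}$, and $d \cdot \rho(k) = \rho(h)$, establishing (i). For hypothesis (ii), suppose $b \in \mathcal{C}^{\infty}(\mathbb{R}^n)$ satisfies $\rho(b) = 0$, that is $b\upharpoonright_U \equiv 0$. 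Using the classical Whitney result that for any open set $U \subseteq \mathbb{R}^n$ there exists a smooth function $c \in \mathcal{C}^{\infty}(\mathbb{R}^n)$ with $Z(c) = \mathbb{R}^n \setminus U$ (equivalently $U_c = U$), we have $c \in S_U$, hence $\rho(c) \in \mathcal{C}^{\infty}(U)^{\times}$. Now $b \cdot c$ vanishes identically: on $U$ because $b$ does, and on $\mathbb{R}^n \setminus U$ because $c$ does. This gives $b \cdot c = 0$, verifying (ii).

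With both conditions established, \textbf{Theorem \ref{340}} yields the desired $\mathcal{C}^{\infty}$-ring isomorphism $\widetilde{{\rm Can}_{S_\rho}}: \mathcal{C}^{\infty}(U) \xrightarrow{\cong} \mathcal{C}^{\infty}(\mathbb{R}^n)\{S_\rho^{-1}\} = \mathcal{C}^{\infty}(\mathbb{R}^n)\{S_U^{-1}\}$, making the triangle with $\rho$ and $\eta_{S_U}$ commute. The only real obstacles are the two external ingredients: existence of a smooth function with prescribed zero set equal to any closed subset of $\mathbb{R}^n$ (needed for (ii)), which is standard, and the Ortega--Mu\~{n}oz theorem (needed for (i)), which is explicitly available as \textbf{Theorem \ref{OM}}; the rest is an essentially formal application of the universal characterization already proved.
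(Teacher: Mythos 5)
Your proposal is correct and follows essentially the same route as the paper: both apply \textbf{Theorem \ref{340}} to the restriction map, verifying hypothesis (i) via the Ortega--Mu\~{n}oz result and hypothesis (ii) via Whitney's theorem on smooth functions with prescribed closed zero sets. Your explicit identification of $S_\rho = \rho^{\dashv}[\mathcal{C}^{\infty}(U)^{\times}]$ with $S_U$ is a small but worthwhile clarification that the paper leaves implicit.
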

\begin{proof}
We are going to show that $\rho \cong \eta_{S_U}$ using \textbf{Theorem \ref{340}} with $A = \mathcal{C}^{\infty}(\mathbb{R}^n)$, $\widetilde{A} = \mathcal{C}^{\infty}\,(U)$ and $\eta = \rho$.\\

Note that $\rho[S_U] \subseteq \mathcal{C}^{\infty}(U)^{\times}$.\\

Let's verify the first item, (i):\\

By the \textbf{Theorem 1.3} of \cite{moerdijk1986rings}, given any $g \in \mathcal{C}^{\infty}(U)$, there are $h,k \in \mathcal{C}^{\infty}(\mathbb{R}^n)$ with $U_k = U$, such that $g \cdot k\upharpoonright_U = h\upharpoonright_U$. Since $U=U_k$, $\rho(k) \in \mathcal{C}^{\infty}(U)^{\times}$, and we have, thus:
$$(\rho(k) \in \mathcal{C}^{\infty}(U)^{\times})\&(g \cdot \rho(k) = \rho(h)).$$

For item (ii), suppose that $g \in \mathcal{C}^{\infty}(\mathbb{R}^n)$ is such that $g\upharpoonright_U = 0$, so $U \subseteq Z(g)$. We know, by  a well-known theorem proved by Whitney (see \textbf{Theorem 5.0.5} of \cite{berni2018some}), that given the open subset $U$ of $\mathbb{R}^n$, there is some $h \in \mathcal{C}^{\infty}(\mathbb{R}^n)$ such that $U=U_h$, so $Z(h \cdot g) = Z(h) \cup Z(g) = U \cup (\mathbb{R}^n \setminus U) = \mathbb{R}^n$. We have both:

$$\rho(g) = 0 \in \mathcal{C}^{\infty}(U)$$

and

$$g \cdot h = 0 \in \mathcal{C}^{\infty}(\mathbb{R}^n),$$

so item (ii) is also fulfilled. By \textbf{Theorem \ref{340}}, the result follows.
\end{proof}

\begin{remark}Let $A$ be a $\mathcal{C}^{\infty}-$ring and $a \in A$. In general, the $\mathcal{C}^{\infty}-$ring of fractions of $A$ with respect to $a$ \textbf{is not a local $\mathcal{C}^{\infty}-$ring}. Let us consider the case on which $A = \mathcal{C}^{\infty}(\mathbb{R}^n)$ and $a = f: \mathbb{R}^n \to \mathbb{R}$ is such that $\neg (f \equiv 0)$. By the \textbf{Theorem 1.3} of \cite{moerdijk1986rings}, $A\{ a^{-1}\} \cong \mathcal{C}^{\infty}(\mathbb{R}^n)\{ f^{-1}\}$, and $\mathcal{C}^{\infty}(\mathbb{R}^n)\{ f^{-1}\} \cong \mathcal{C}^{\infty}(U_f)$, where $U_f = {\rm Coz}\,(f) = \mathbb{R}^n \setminus Z(f)$. For every $x \in U_f$, we have a maximal ideal:
$$\mathfrak{m}_x = \{ g \in \mathcal{C}^{\infty}(U_f) | g(x) = 0\},$$
hence a \textit{continuum} of maximal ideals.
\end{remark}

\begin{proposition}\label{star} Let $A$ be a $\mathcal{C}^{\infty}-$ring, $a,b,c \in A$ be three elements of its underlying subset and:
$$\eta_{b,A}^{\infty}: A \to A\{ b^{-1}\}$$
the $\mathcal{C}^{\infty}-$ring of fractions of $A$ with respect to $b$ and
$$\eta_{bc}^{\infty}: A \to A\{ (b\cdot c)^{-1}\}$$
the $\mathcal{C}^{\infty}-$ring of fractions of $A$ with respect to $b \cdot c$.\\

There is an isomorphism between $(A\{ b^{-1}\})\{ \eta_{b,A}^{\infty}(c)^{-1}\}$ and $A\{ (b \cdot c)^{-1} \}$.
\end{proposition}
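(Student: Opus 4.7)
My strategy is to show that both $\mathcal{C}^{\infty}$-rings solve the same universal problem, namely that of being initial among $\mathcal{C}^{\infty}$-rings receiving a morphism from $A$ that inverts both $b$ and $c$. The elementary observation enabling this is that in any commutative unital ring (in particular any $\mathcal{C}^{\infty}$-ring) $B$, and for any $\mathcal{C}^{\infty}$-homomorphism $\varphi\colon A \to B$, the element $\varphi(b\cdot c) = \varphi(b)\cdot \varphi(c)$ is invertible in $B$ if and only if each of $\varphi(b)$ and $\varphi(c)$ is invertible: one direction is obvious, and the other follows from the fact that if $u$ is an inverse of $\varphi(b\cdot c)$, then $\varphi(c)\cdot u$ is an inverse of $\varphi(b)$ and $\varphi(b)\cdot u$ is an inverse of $\varphi(c)$. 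Thus \textbf{Definition \ref{Alem}} applied to $\eta_{bc}^{\infty}\colon A \to A\{(b\cdot c)^{-1}\}$ says exactly that any $\varphi\colon A \to B$ with $\varphi(b), \varphi(c) \in B^{\times}$ factors uniquely through $\eta_{bc}^{\infty}$.

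\medskip

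I would then verify that the iterated localization $(A\{b^{-1}\})\{\eta_{b,A}^{\infty}(c)^{-1}\}$ enjoys this same universal property. Given $\varphi\colon A \to B$ with $\varphi(b), \varphi(c) \in B^{\times}$, the universal property of $\eta_{b,A}^{\infty}$ yields a unique $\widetilde{\varphi}\colon A\{b^{-1}\} \to B$ with $\widetilde{\varphi}\circ \eta_{b,A}^{\infty} = \varphi$. Because $\widetilde{\varphi}(\eta_{b,A}^{\infty}(c)) = \varphi(c) \in B^{\times}$, the universal property of the second localization delivers a unique $\overline{\varphi}\colon (A\{b^{-1}\})\{\eta_{b,A}^{\infty}(c)^{-1}\} \to B$ extending $\widetilde{\varphi}$. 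Letting $\alpha\colon A \to (A\{b^{-1}\})\{\eta_{b,A}^{\infty}(c)^{-1}\}$ denote the composition of the two structural maps, one checks $\alpha(b)$ and $\alpha(c)$ are invertible (the first because $\eta_{b,A}^{\infty}(b)$ already is, the second by the construction of the second localization), and the uniqueness in each stage implies that $\overline{\varphi}$ is the unique $\mathcal{C}^{\infty}$-homomorphism with $\overline{\varphi}\circ \alpha = \varphi$.

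\medskip

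Having exhibited both $(A\{(b\cdot c)^{-1}\}, \eta_{bc}^{\infty})$ and $((A\{b^{-1}\})\{\eta_{b,A}^{\infty}(c)^{-1}\}, \alpha)$ as solutions to the same universal problem, the desired isomorphism follows at once from the uniqueness of universal solutions: applying each universal property to the structural map of the other produces a pair of mutually inverse $\mathcal{C}^{\infty}$-homomorphisms, compatible with the maps from $A$. I expect no substantive obstacle; the only item requiring care is the two-fold bookkeeping of uniqueness (one to produce each candidate isomorphism, another to identify both composites with the respective identities), which follows the same template employed in \textbf{Proposition \ref{zero}} and \textbf{Corollary \ref{cem}} earlier in this section.
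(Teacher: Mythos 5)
Your proposal is correct and follows essentially the same route as the paper: both arguments rest on the elementary observation that a homomorphism inverts $b\cdot c$ precisely when it inverts $b$ and $c$ separately, and both then stage the universal properties of the two localizations. The only difference is packaging---the paper explicitly constructs the comparison maps in both directions and checks that the composites are identities, whereas you absorb that verification into the uniqueness of a solution to a common universal problem; the mathematical content is identical.
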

\begin{proof}Consider the following diagram:
$$\xymatrixcolsep{5pc}\xymatrix{
A \ar[r]^{\eta_{b,A}^{\infty}} \ar@/_2pc/[drr]_{\eta_{b \cdot c}^{\infty}} & A\{ b^{-1}\} \ar[r]^{\eta_{\eta_{b,A}^{\infty}(c)}^{\infty}} & (A\{ b^{-1}\})\{ {\eta_{b,A}^{\infty}(c)}^{-1}\}\\
    &    & A\{ (b \cdot c)^{-1}\}
}$$

Note that $\eta_{b \cdot c}^{\infty}(b) \in (A\{ (b \cdot c)^{-1}\})^{\times}$, for since $\eta_{b \cdot c}^{\infty}(b \cdot c) \in (A\{ (b \cdot c)^{-1}\})^{\times}$, there is some $\psi \in A\{ (b \cdot c)^{-1}\}$ such that:
$$\eta_{b \cdot c}^{\infty}(b \cdot c)\cdot \psi = 1,$$
so
$$\eta_{b \cdot c}^{\infty}(b) \cdot [\eta_{b \cdot c}^{\infty}(c) \cdot \psi] = 1$$
and $\eta_{b \cdot c}^{\infty}(c)\cdot \psi$ is an inverse of $\eta_{b \cdot c}^{\infty}(b)$ in $A\{ (b \cdot c)^{-1}\}$.\\

By the universal property of $\eta_{b,A}^{\infty}: A \to A\{ b^{-1}\}$, there is a unique morphism $\zeta: A\{ b^{-1}\} \to A\{ (b \cdot c)^{-1}\}$ such that the following triangle commutes:
$$\xymatrixcolsep{5pc}\xymatrix{
A \ar[r]^{\eta_{b,A}^{\infty}} \ar[dr]_{\eta_{b \cdot c}^{\infty}} & A\{ b^{-1}\} \ar@{.>}[d]^{\zeta}\\
   & A\{ (b \cdot c)^{-1}\}
}$$

Now we can use a similar argument to conclude that $\eta_{b \cdot c}^{\infty}(c) \in (A\{ (b \cdot c)^{-1}\})^{\times}$. Since the above triangle commutes, it follows that:
$$\zeta(\eta_{b,A}^{\infty}(c)) = \eta_{b \cdot c}^{\infty}(c) \in (A\{ (b \cdot c)^{-1}\})^{\times}.$$

By the universal property of $\eta_{\eta_{b,A}^{\infty}(c)}^{\infty}: A\{ b^{-1}\} \to (A\{ b^{-1} \})\{ {\eta_{b,A}(c)}^{-1}\}$, there is a unique morphism $\theta: (A\{ b^{-1}\})\{ {\eta_{b,A}^{\infty}(c)}^{-1}\} \to A\{ (b \cdot c)^{-1}\}$ such that the following diagram commutes:
$$\xymatrixcolsep{5pc}\xymatrix{
A\{ b^{-1}\} \ar[r]^{\eta_{\eta_{b,A}^{\infty}(c)}^{\infty}} \ar[dr]_{\zeta} & (A\{ b^{-1}\})\{ {\eta_{b,A}^{\infty}(c)}^{-1} \} \ar@{.>}[d]^{\theta}\\
    & A\{ (b \cdot c)^{-1}\}
}$$

hence $\theta$ is the unique $\mathcal{C}^{\infty}-$homomorphism such that the following triangle commutes:

$$\xymatrixcolsep{7pc}\xymatrix{
A \ar[r]^{\eta_{\eta_{b,A}^{\infty}(c)}^{\infty} \circ \eta_{b,A}^{\infty}} \ar[dr]_{\eta_{b \cdot c}^{\infty}} & (A\{ b^{-1}\})\{ {\eta_{b,A}^{\infty}(c)}^{-1} \} \ar[d]^{\theta}\\
    & A\{ (b \cdot c)^{-1}\}
}$$

Now we are going to prove that $\eta_{\eta_{b,A}^{\infty}(c)}^{\infty} \circ \eta_{b,A}^{\infty}(b \cdot c) \in [(A\{ b^{-1}\})\{ \eta_{b,A}^{\infty}(c)^{-1}\}]^{\times}$.\\

We have $\eta_{b,A}^{\infty}(b) \in (A\{ b^{-1}\})^{\times}$, hence $\eta_{\eta_{b,A}^{\infty}}(\eta_{b,A}^{\infty}(b)) \in [(A\{ b^{-1}\})\{ \eta_{b,A}^{\infty}(c)^{-1}\}]^{\times}$, and by the very definition of $\eta_{\eta_{b,A}^{\infty}(c)}^{\infty}: A\{ b^{-1}\} \to (A\{ b^{-1}\})\{ {\eta_{b,A}^{\infty}(c)}^{-1}\}$,
$\eta_{\eta_{b,A}^{\infty}}(\eta_{b,A}^{\infty}(c)) \in [(A\{ b^{-1}\})\{ \eta_{b,A}^{\infty}(c)^{-1}\}]^{\times}$. Since each factor is invertible, it follows that  $\eta_{\eta_{b,A}^{\infty}(c)}^{\infty} \circ \eta_{b,A}^{\infty}(b) \cdot \eta_{\eta_{b,A}^{\infty}(c)}^{\infty} \circ \eta_{b,A}^{\infty}(c) = \eta_{\eta_{b,A}^{\infty}(c)}^{\infty} \circ \eta_{b,A}^{\infty}(b \cdot c)$ is invertible.\\

By the universal property of $\eta_{b \cdot c}^{\infty}: A \to A\{ (b \cdot c)^{-1}\}$, there is a unique morphism $\xi : A\{ (b \cdot c)^{-1}\} \to (A\{ b^{-1}\})\{ \eta_{b,A}^{\infty}(c)^{-1}\}$ such that the following triangle commutes:

$$\xymatrixcolsep{7pc}\xymatrix{
A \ar[r]^{\eta_{b\cdot c}^{\infty}} \ar[dr]_{\eta_{\eta_{b,A}^{\infty}(c)}^{\infty} \circ \eta_{b,A}^{\infty}} & A\{ (b \cdot c)^{-1}\} \ar@{.>}[d]^{\xi}\\
    & (A\{ b^{-1}\})\{ \eta_{b,A}^{\infty}(c)^{-1}\}
}$$

By the uniqueness of the $\mathcal{C}^{\infty}-$homomorphisms $\zeta$ and $\xi$, we conclude $\zeta \circ \xi = {\rm id}_{A\{ (b \cdot c)^{-1}\}}$ and $\xi \circ \zeta = {\rm id}_{(A\{ b^{-1}\})\{ {\eta_{b,A}(c)}^{-1}\}}$, so the result follows.
\end{proof}

In the same line of the previous proposition, we can prove the following:

\begin{proposition}\label{imp} Let $A$ be a $\mathcal{C}^{\infty}-$ring and let $a \in A$ and $\beta \in A\{a^{-1}\}$. Since $\beta = \eta^A_a(b)/\eta^A_a(c)$ for some $b \in A$ and $c \in \{a\}^{\infty-sat}$, then there is a unique $\mathcal{C}^{\infty}-$isomorphism of $A$-algebras:

$$\theta_{ab}: (A\{a^{-1}\})\{\beta^{-1}\} \stackrel{\cong}{\longrightarrow} A\{(a \cdot b)^{-1}\}$$

I.e., $\theta_{ab}: (A\{a^{-1}\})\{\beta^{-1}\} \to A\{(a\cdot b)^{-1}\}$ is a $\mathcal{C}^{\infty}-$rings isomorphism such that the following diagram commutes:

$$\xymatrixcolsep{5pc}\xymatrix{
A \ar[r]^{\eta^A_a} \ar@/_/[drr]_{\eta^A_{a \cdot b}} & A\{a^{-1}\}  \ar[r]^{\eta^{A_a}_{\beta}} & (A\{a^{-1}\})\{\beta^{-1}\} \ar[d]^{\theta_{ab}}\\
  & & A\{(a\cdot b)^{-1}\}
 }$$

that is, $(\eta^A_{a \cdot b}: A \to A\{(a \cdot b)^{-1}\}) \cong (\eta^{A_a}_\beta\circ \eta^A_a : A \to (A\{a^{-1}\})\{\beta^{-1}\})$ in $A \downarrow \mathcal{C}^{\infty}{\rm \bf Rng}_{\rm fp}$. Hence:

$$\langle \{ \eta_{a \cdot b}: A \to A\{(a \cdot b)^{-1}\}|a,b \in A \}\rangle = \langle \{\eta_{\beta}\circ \eta_a : A \to (A\{a^{-1}\})\{\beta^{-1}\}| a,b \in A \}\rangle.$$
\end{proposition}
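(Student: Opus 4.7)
The plan is to reduce the statement to the immediately preceding \textbf{Proposition \ref{star}} by observing that, inside $A\{a^{-1}\}$, the element $\eta^A_a(c)$ is already invertible, so inverting $\beta = \eta^A_a(b)/\eta^A_a(c)$ is equivalent to inverting $\eta^A_a(b)$. Once this is made precise, $\theta_{ab}$ will arise as the composite of two canonical isomorphisms.

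First I would establish the equality of smooth saturations
$$\{\beta\}^{\infty-\mathrm{sat}} = \{\eta^A_a(b)\}^{\infty-\mathrm{sat}}$$
inside the $\mathcal{C}^{\infty}$-ring $A\{a^{-1}\}$. Because $c \in \{a\}^{\infty-\mathrm{sat}}$ we have $\eta^A_a(c) \in (A\{a^{-1}\})^{\times}$, and the defining equation $\beta \cdot \eta^A_a(c) = \eta^A_a(b)$ holds in $A\{a^{-1}\}$. For any $\mathcal{C}^{\infty}$-morphism $\varphi: A\{a^{-1}\} \to B$, the image $\varphi(\eta^A_a(c))$ is automatically a unit in $B$, hence $\varphi(\beta) \in B^{\times}$ iff $\varphi(\eta^A_a(b)) \in B^{\times}$. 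Applying this to $\varphi = \eta^{A\{a^{-1}\}}_{\beta}$ yields $\eta^A_a(b) \in \{\beta\}^{\infty-\mathrm{sat}}$, and applying it to $\varphi = \eta^{A\{a^{-1}\}}_{\eta^A_a(b)}$ yields $\beta \in \{\eta^A_a(b)\}^{\infty-\mathrm{sat}}$. Combined with monotonicity and idempotence of $(-)^{\infty-\mathrm{sat}}$ (\textbf{Proposition \ref{satsatsat}}), this gives the claimed equality, and \textbf{Corollary \ref{cem}} then supplies a unique $\mathcal{C}^{\infty}$-isomorphism
$$\mu: (A\{a^{-1}\})\{\beta^{-1}\} \stackrel{\cong}{\longrightarrow} (A\{a^{-1}\})\{\eta^A_a(b)^{-1}\}$$
over $A\{a^{-1}\}$.

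Next, \textbf{Proposition \ref{star}} furnishes a unique $\mathcal{C}^{\infty}$-iso\-morphism $\nu: (A\{a^{-1}\})\{\eta^A_a(b)^{-1}\} \stackrel{\cong}{\longrightarrow} A\{(a \cdot b)^{-1}\}$ compatible with $A$. I would set $\theta_{ab} := \nu \circ \mu$ and check the triangle over $A$ by pasting the two defining commutative triangles of $\mu$ and $\nu$. Uniqueness of $\theta_{ab}$ in $A \downarrow \mathcal{C}^{\infty}{\rm \bf Rng}_{\rm fp}$ is then automatic because $\eta^{A\{a^{-1}\}}_{\beta} \circ \eta^A_a$ is an epimorphism, being a composition of the two localization epimorphisms of \textbf{Proposition \ref{fact}}; any two $A$-algebra homomorphisms agreeing after precomposition with this epimorphism must coincide. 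The final assertion about generating classes of morphisms is an immediate consequence: every $\eta^{A\{a^{-1}\}}_{\beta} \circ \eta^A_a$ is isomorphic over $A$ to some $\eta_{a \cdot b}$, and conversely by taking $c = 1_A$ (so $\beta = \eta^A_a(b)$) every $\eta_{a \cdot b}$ is isomorphic over $A$ to some composite of the first form.

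The main obstacle I anticipate is the first step: one must carefully distinguish the smooth saturation computed in $A$ from the one computed in $A\{a^{-1}\}$, and ensure that the universal-property arguments use only morphisms of $A\{a^{-1}\}$-algebras (not merely of $\mathcal{C}^{\infty}$-rings), so that the resulting $\mu$ genuinely lives over $A\{a^{-1}\}$ and the final composite $\theta_{ab}$ is an $A$-algebra map rather than an abstract isomorphism.
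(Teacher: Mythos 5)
Your argument is correct. Note first that the paper gives no proof of this proposition at all: it only remarks that it can be proved ``in the same line of the previous proposition'', i.e.\ by re-running the two-directional universal-property argument of \textbf{Proposition \ref{star}} with $\beta$ in place of $\eta^A_a(b)$ — one would check directly that $\eta^A_{a\cdot b}$ sends (the preimage data of) $\beta$ to a unit and that $\eta^{A_a}_{\beta}\circ\eta^A_a$ inverts $a\cdot b$, obtain the two induced arrows, and identify them as mutually inverse by uniqueness. Your route is genuinely different and more modular: you first prove the reusable lemma that, for $v$ a unit, inverting $u\cdot v^{-1}$ is the same as inverting $u$ (via the equality of smooth saturations $\{\beta\}^{\infty-\mathrm{sat}}=\{\eta^A_a(b)\}^{\infty-\mathrm{sat}}$ computed in $A\{a^{-1}\}$, using monotonicity and \textbf{Proposition \ref{satsatsat}}), then invoke \textbf{Corollary \ref{cem}} to get $\mu$ over $A\{a^{-1}\}$, and finally compose with the isomorphism $\nu$ already supplied by \textbf{Proposition \ref{star}}. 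What this buys is that no universal-property diagram chase has to be repeated: the only new content is the saturation comparison, and uniqueness falls out of \textbf{Proposition \ref{fact}} since $\eta^{A_a}_{\beta}\circ\eta^A_a$ is a composite of epimorphisms. Your closing caution is also the right one, and you resolve it correctly: the saturation equality and the isomorphism $\mu$ live over $A\{a^{-1}\}$, hence a fortiori over $A$, so $\theta_{ab}=\nu\circ\mu$ really is a map under $A$, and the triangle in the statement commutes by pasting the defining triangles of $\mu$ and $\nu$. The deduction of the final equality of classes of morphisms (taking $c=1_A$ for the converse inclusion) is also sound.
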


\begin{remark}\label{raca}In the context of \textbf{Proposition \ref{pani}}, note that if $f \in \mathcal{C}^{\infty}(\mathbb{R}^n)$ is such that $U=  U_f$, we have $S_{U_f} = \{ f\}^{\infty-{\rm sat}}$.
\end{remark}

Now we are ready to prove \textbf{Theorem \ref{38}}.\\

\textbf{Proof of Theorem \ref{38}:} Let $A$ be a $\mathcal{C}^{\infty}-$ring. We are going to prove the result first for free $\mathcal{C}^{\infty}-$rings, then for quotients and finally for colimits.\\

The case where $A = \mathcal{C}^{\infty}(\mathbb{R}^n)$ was proved in \textbf{Proposition \ref{pani}};\\

Suppose, now, that $A = \dfrac{\mathcal{C}^{\infty}(\mathbb{R}^n)}{I}$ for some ideal $I$. By \textbf{Corollary \ref{Jeq}}, it follows that:

$$\dfrac{\mathcal{C}^{\infty}(\mathbb{R}^n)}{I}\lbrace {f + I}^{-1} \rbrace \cong \dfrac{\mathcal{C}^{\infty}(\mathbb{R}^n)\{ f^{-1}\}}{\langle \eta_f[I]\rangle}$$

so items (i) and (ii) of \textbf{Theorem \ref{340}} hold for the quotient.\\

Up to this point, we have proved the result for a finitely generated $\mathcal{C}^{\infty}-$ring,  $A$, and $\Sigma = \{ f\}$. Given any finite $S$, say $S = \{ f_1, \cdots, f_{\ell}\}$, since $A\{ S^{-1}\} = A\{ {\prod_{i = 1}^{\ell}f_i}^{-1}\}$, the result follows too.\\

Let $A$ be a finitely generated $\mathcal{C}^{\infty}-$ring and  $S \subseteq A$ be any set. Write:

$$S = \bigcup_{S' \subseteq_{\rm fin} S}S'$$

Since $A\{ S^{-1}\} \cong \varinjlim_{S' \subseteq S} A\{ {S'}^{-1}\}$, the items (i) and (ii) hold for $A\{ S^{-1}\}$, for a finitely generated $\mathcal{C}^{\infty}-$ring $A$ and any set $S$.\\

Finally, given any $\mathcal{C}^{\infty}-$ring $B$ and any set $S \subseteq B$, write $B$ as a directed colimit of its finitely generated $\mathcal{C}^{\infty}-$subrings

$$\xymatrixcolsep{5pc}\xymatrix{
 & B & \\
B_i \ar@/^/[ur]^{\jmath_i} \ar[rr]_{\beta_{ij}} & & B_j \ar@/_/[ul]_{\jmath_j}}$$

so $B \cong \varinjlim_{B_i \subseteq_{\rm f.g.} B}B_i$ and define $S_i = \jmath_i^{\dashv}[S]$. Since items (i) and (ii) hold for every $B_i\{ {S_i}^{-1}\}$, the same is true for

$$B\{ S^{-1}\} \cong \varinjlim_{B_i \subseteq_{\rm fin} B} B_i\{ {S_i}^{-1}\},$$

and the result follows. \hfill $\square$ \\

\subsection{A Category of Pairs}\label{cop}

Let us now consider the category whose objects are pairs $(A,S)$, where $A$ is any $\mathcal{C}^{\infty}-$ring and $S \subseteq A$ is any of its  subsets, and whose morphisms between $(A,S)$ and $(B,T)$ are precisely the $\mathcal{C}^{\infty}-$morphisms $f: A \to B$ such that $f[S] \subseteq T$. In order to distinguish the latter morphism of the category of pairs from the morphisms of $\mathcal{C}^{\infty}{\rm \bf Rng}$, we shall denote it by $f_{ST}: (A,S) \to (B,T)$. We denote this category by $\mathcal{C}^{\infty}_2$.\\

In this section we are going to define a functor $F: \mathcal{C}^{\infty}_{2} \to \mathcal{C}^{\infty}{\rm \bf Rng}$ with some ``nice'' properties.\\

Given $(A,S) \in {\rm Obj}\,(\mathcal{C}^{\infty}_{2})$, let:
$$F(A,S) = A\{ S^{-1}\}.$$

Also, given $(A,S), (B,T) \in {\rm Obj}\,(\mathcal{C}^{\infty}_{2})$ and $(A,S) \stackrel{f_{ST}}{\rightarrow} (B,T) \in {\rm Mor}\,(\mathcal{C}^{\infty}_{2})$, since $f_{ST}[S] \subseteq T$ and $\eta_T[T] \subseteq {B\{ T^{-1}\}}^{\times}$, we have $(\eta_T \circ f_{ST})[S] \subseteq {B\{ T^{-1}\}}^{\times}$, so by the universal property of $\eta_S: A \to A\{ S^{-1}\}$ there is a unique $\mathcal{C}^{\infty}-$homomorphism:
$$\widetilde{f_{ST}}: A\{ S^{-1}\} \to B\{ T^{-1}\}$$
such that the following diagram commutes:
$$\xymatrixcolsep{5pc}\xymatrix{
A \ar[d]_{f_{ST}} \ar[r]^{\eta_S} & A\{ S^{-1}\} \ar[d]^{\widetilde{f_{ST}}}\\
B \ar[r]^{\eta_T} & B\{ T^{-1}\}}$$

Let:

$$F((A,S) \stackrel{f_{ST}}{\rightarrow} (B,T)) := A\{ S^{-1}\} \stackrel{\widetilde{f_{ST}}}{\rightarrow} B\{ T^{-1}\}.$$

\begin{theorem}\label{otalfuntor}The map:
$$\begin{array}{cccc}
    F: & \mathcal{C}^{\infty}_2 & \rightarrow & \mathcal{C}^{\infty}{\rm \bf Rng} \\
     & (A,S) & \mapsto & A\{ S^{-1}\}\\
     & \xymatrix{(A,S) \ar[r]^{f_{ST}} & (B,T)} & \mapsto & \xymatrix{A\{ S^{-1}\} \ar[r]^{\widetilde{f_{ST}}} & B\{ T^{-1}\}} \\
  \end{array}$$
is a functor.
\end{theorem}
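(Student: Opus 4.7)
The plan is to verify the two functoriality axioms — preservation of identities and preservation of composition — by invoking, in each case, the uniqueness clause of the universal property of $\eta_S : A \to A\{S^{-1}\}$ given in \textbf{Definition \ref{Alem}}. That is the whole strategy: in the category of fractions, any morphism out of $A\{S^{-1}\}$ is completely determined by its precomposition with $\eta_S$, so I only need to exhibit two candidates that agree after precomposition with $\eta_S$ and conclude they coincide.

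First I would check $F({\rm id}_{(A,S)}) = {\rm id}_{A\{S^{-1}\}}$. The morphism $\widetilde{{\rm id}_A}$ is defined as the unique $\mathcal{C}^{\infty}$-homomorphism $A\{S^{-1}\} \to A\{S^{-1}\}$ such that $\widetilde{{\rm id}_A}\circ \eta_S = \eta_S \circ {\rm id}_A = \eta_S$. Since ${\rm id}_{A\{S^{-1}\}}$ visibly satisfies the same equation, the uniqueness part of the universal property of $\eta_S$ (combined with the fact that $\eta_S[S]\subseteq (A\{S^{-1}\})^{\times}$, so ${\rm id}_{A\{S^{-1}\}}\circ \eta_S$ is indeed a map sending $S$ into units) forces $\widetilde{{\rm id}_A} = {\rm id}_{A\{S^{-1}\}}$.

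Next I would verify preservation of composition. Suppose $(A,S)\stackrel{f_{ST}}{\to}(B,T)\stackrel{g_{TU}}{\to}(C,U)$ are morphisms in $\mathcal{C}^{\infty}_2$, so in particular $(g_{TU}\circ f_{ST})[S]\subseteq U$, and therefore the composite is a legitimate morphism of pairs. By definition $\widetilde{g_{TU}\circ f_{ST}}: A\{S^{-1}\}\to C\{U^{-1}\}$ is the unique $\mathcal{C}^{\infty}$-homomorphism such that $\widetilde{g_{TU}\circ f_{ST}}\circ \eta_S = \eta_U \circ (g_{TU}\circ f_{ST})$. I would then compute, using the defining squares for $\widetilde{f_{ST}}$ and $\widetilde{g_{TU}}$ twice,
\begin{equation*}
(\widetilde{g_{TU}}\circ \widetilde{f_{ST}})\circ \eta_S \;=\; \widetilde{g_{TU}}\circ (\widetilde{f_{ST}}\circ \eta_S) \;=\; \widetilde{g_{TU}}\circ \eta_T \circ f_{ST} \;=\; \eta_U \circ g_{TU}\circ f_{ST},
\end{equation*}
showing that $\widetilde{g_{TU}}\circ \widetilde{f_{ST}}$ satisfies the same defining equation as $\widetilde{g_{TU}\circ f_{ST}}$. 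Uniqueness then yields $F(g_{TU}\circ f_{ST}) = F(g_{TU})\circ F(f_{ST})$.

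There is no substantive obstacle here; the only detail worth flagging is that one must check at each step that the map being compared really does fit the hypotheses of the universal property (i.e.\ sends the relevant set into the units of the target), and this is immediate because each $\eta_{(-)}$ has that property by construction and the morphisms of $\mathcal{C}^{\infty}_2$ are precisely those respecting the distinguished subsets. Thus the verification reduces to the two diagram chases above, and $F$ is a functor.
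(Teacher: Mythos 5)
Your proof is correct and follows essentially the same route as the paper's: both verifications reduce to exhibiting two $\mathcal{C}^{\infty}$-homomorphisms out of $A\{S^{-1}\}$ that agree after precomposition with $\eta_S$ and invoking the uniqueness clause of the universal property. The only difference is cosmetic (your extra remark about checking that the compared maps send the distinguished sets into units is a sensible precaution the paper leaves implicit).
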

\begin{proof}
Given any object $(A,S) \in {\rm Obj}\,(\mathcal{C}^{\infty}_{2})$ and its identity map ${\rm id}_{(A,S)}: (A,S) \to (A,S)$, $F({\rm id}_{(A,S)})$ is the unique $\mathcal{C}^{\infty}-$homomorphism such that the following rectangle commutes:
$$\xymatrixcolsep{5pc}\xymatrix{
A \ar[r]^{\eta_S} \ar[d]_{{\rm id}_{(A,S)}} & A\{ S^{-1}\} \ar[d]^{F({\rm id}_{(A,S)})} \\
A \ar[r]_{\eta_S} & A\{ S^{-1}\}
}.$$

Since:

$$\xymatrixcolsep{5pc}\xymatrix{
A \ar[r]^{\eta_S} \ar[d]_{{\rm id}_{(A,S)}} & A\{ S^{-1}\} \ar[d]^{{\rm id}_{A\{ S^{-1}\}}} \\
A \ar[r]_{\eta_S} & A\{ S^{-1}\}
}$$
commutes, it follows that $F({\rm id}_{(A,S)}) = {\rm id}_{A\{ S^{-1}\}}$.\\

Let $(A,S), (B,T), (C,R) \in {\rm Obj}\,(\mathcal{C}^{\infty}_{2})$ and $(A,S) \stackrel{f_{ST}}{\rightarrow} (B,T), (B,T) \stackrel{g_{TR}}{\rightarrow} (C,R) \in {\rm Mor}\,(\mathcal{C}^{\infty}_{2})$. By definition, $F(f_{ST}): A\{ S^{-1}\} \to B\{ T^{-1}\}$ is the unique $\mathcal{C}^{\infty}-$homomorphism such that:

$$\xymatrixcolsep{5pc}\xymatrix{
A \ar[r]^{\eta_S} \ar[d]_{f_{ST}} & A\{ S^{-1}\} \ar[d]^{F({f_{ST}})} \\
B \ar[r]_{\eta_T} & B\{ T^{-1}\}
}$$

commutes, and $F(g_{TR}): B\{ T^{-1}\} \to C\{ R^{-1}\}$ is the unique $\mathcal{C}^{\infty}-$homomorphism such that:

$$\xymatrixcolsep{5pc}\xymatrix{
B \ar[r]^{\eta_T} \ar[d]_{g_{TR}} & B\{ T^{-1}\} \ar[d]^{F(g_{TR})} \\
C \ar[r]^{\eta_R} & C\{ R^{-1}\}
}$$

commutes. It follows that the following diagram commutes:

$$\xymatrixcolsep{5pc}\xymatrix{
A \ar[r]^{\eta_S} \ar[d]_{f_{ST}} & A\{ S^{-1}\} \ar[d]^{F({f_{ST}})} \\
B \ar[r]^{\eta_T} \ar[d]_{g_{TR}} & B\{ T^{-1}\} \ar[d]^{F(g_{TR})}\\
C \ar[r]^{\eta_R} & C\{ R^{-1}\}
}$$

so
\begin{equation}
\label{ger}
[F(g_{TR}) \circ F(f_{ST})] \circ \eta_S = \eta_R \circ (g_{TR}\circ f_{ST})
\end{equation}

Also by definition, $F(g_{TR} \circ f_{ST}): A\{ S^{-1}\} \to C\{ R^{-1}\}$ is the unique $\mathcal{C}^{\infty}-$homomorphism such that:

$$\xymatrixcolsep{5pc}\xymatrix{
A \ar[r]^{\eta_S} \ar[d]_{g_{TR} \circ f_{ST}} & A\{ S^{-1}\} \ar[d]^{F(g_{TR} \circ{f_{ST}})} \\
C \ar[r]^{\eta_R} & C\{ R^{-1}\}
}$$

commutes, that is, such that
\begin{equation}
\label{ald}
F(g_{TR} \circ f_{ST}) \circ \eta_S = \eta_R \circ (g_{TR} \circ f_{ST})
\end{equation}

By the uniqueness of the $\mathcal{C}^{\infty}-$homomorphism that satisfies \eqref{ger} and \eqref{ald}, it follows that:
$$F(g_{TR} \circ f_{ST}) = F(g_{TR}) \circ F(f_{ST}).$$

Hence $F$ is a functor.
\end{proof}

\begin{proposition}\label{par}Let $A$ and $B$ be two $\mathcal{C}^{\infty}-$rings and $S \subseteq A$ and $f: A \to B$ a $\mathcal{C}^{\infty}-$homomorphism. By the universal property of $\eta_S : A \to A\{ S^{-1}\}$ we have a unique $\mathcal{C}^{\infty}-$homomorphism $f_S : A\{ S^{-1}\} \to B\{ f[S]^{-1}\}$ such that the following square commutes:
$$\xymatrixcolsep{5pc}\xymatrix{
A \ar[r]^{\eta_S} \ar[d]_{f} & A\{ S^{-1}\} \ar[d]^{\exists ! f_S}\\
B \ar[r]_{\eta_{f[S]}} & B\{ f[S]^{-1}\}
}.$$

The diagram:
$$\xymatrixcolsep{5pc}\xymatrix{
B \ar[dr]^{\eta_{f[S]}}& \\
 & B \{ f[S]^{-1}\}\\
A\{ S^{-1}\} \ar[ur]_{f_{S}}
}$$
is a pushout of the diagram:
$$\xymatrixcolsep{5pc}\xymatrix{
  & B\\
A \ar[ur]^{f} \ar[dr]_{\eta_S} & \\
   & A\{ S^{-1}\}
}$$
\end{proposition}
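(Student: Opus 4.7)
My plan is to verify the universal property of the pushout directly, using the universal property of the $\mathcal{C}^{\infty}$-ring of fractions together with the fact (established in \textbf{Proposition \ref{fact}}) that $\eta_S : A \to A\{S^{-1}\}$ is an epimorphism.

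First, I would set up the testing cocone: suppose $C$ is a $\mathcal{C}^{\infty}$-ring equipped with $\mathcal{C}^{\infty}$-homomorphisms $g : B \to C$ and $h : A\{S^{-1}\} \to C$ satisfying the compatibility condition $g \circ f = h \circ \eta_S$. I need to produce a unique $u : B\{f[S]^{-1}\} \to C$ with $u \circ \eta_{f[S]} = g$ and $u \circ f_S = h$. The key observation is that $g$ already sends $f[S]$ into $C^\times$: indeed, for any $s \in S$, one has $g(f(s)) = h(\eta_S(s))$, and since $\eta_S(s) \in (A\{S^{-1}\})^\times$ by construction and $h$ preserves inverses, $g(f(s)) \in C^\times$. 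Hence by the universal property of $\eta_{f[S]} : B \to B\{f[S]^{-1}\}$ (\textbf{Definition \ref{Alem}}), there exists a unique $\mathcal{C}^{\infty}$-homomorphism $u : B\{f[S]^{-1}\} \to C$ such that $u \circ \eta_{f[S]} = g$.

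Next I would verify the second commutativity relation $u \circ f_S = h$. Both sides are $\mathcal{C}^{\infty}$-homomorphisms from $A\{S^{-1}\}$ to $C$, so by \textbf{Proposition \ref{fact}} (i.e., since $\eta_S$ is an epimorphism) it suffices to check equality after precomposition with $\eta_S$. Using the defining square of $f_S$ and the hypothesis on $(g,h)$, I compute
$$u \circ f_S \circ \eta_S \;=\; u \circ \eta_{f[S]} \circ f \;=\; g \circ f \;=\; h \circ \eta_S,$$
which gives $u \circ f_S = h$ as required. Uniqueness of $u$ is immediate because the condition $u \circ \eta_{f[S]} = g$ already determines $u$ uniquely by the universal property of $\eta_{f[S]}$.

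There is no real obstacle here; the argument is entirely formal, and the only subtlety worth flagging is the appeal to the epimorphism property of $\eta_S$ to reduce the second commutativity to a computation that lives on $A$. Everything else is a diagram chase unwinding the definitions of $f_S$ and of the universal arrows $\eta_S$ and $\eta_{f[S]}$.
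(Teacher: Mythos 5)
Your proof is correct and follows essentially the same route as the paper's: both obtain $u$ from the universal property of $\eta_{f[S]}$ after observing that $g(f(s)) = h(\eta_S(s)) \in C^{\times}$, and both reduce the relation $u \circ f_S = h$ to a computation after precomposing with $\eta_S$ (the paper invokes the uniqueness clause of the universal property of $\eta_S$, which is exactly the epimorphism property of \textbf{Proposition \ref{fact}} that you cite). No gaps.
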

\begin{proof}
Let $C$ be any $\mathcal{C}^{\infty}-$ring and $g: B \to C$ and $h: A\{ S^{-1}\} \to C$ be two $\mathcal{C}^{\infty}-$homomorphisms such that $g \circ f = h \circ \eta_S$, \textit{i.e.}, such that the following diagram commutes:
$$\xymatrixcolsep{5pc}\xymatrix{
 & A\{ S^{-1}\} \ar[dr]^{f_S} \ar@/^2pc/[rrd]^{h}& \\
A \ar[ur]^{\eta_S} \ar[dr]_{f} & & B\{ f[S]^{-1}\}& C \\
   & B \ar[ur]_{\eta_{f[S]}} \ar@/_2pc/[rru]_{g} &
 }$$

 We claim that there is a unique $\mathcal{C}^{\infty}-$homomorphism $u: B\{ f[S]^{-1}\} \to C$ such that the following diagram commutes:

$$\xymatrixcolsep{5pc}\xymatrix{
 & A\{ S^{-1}\} \ar[dr]^{f_S} \ar@/^2pc/[rrd]^{h}& \\
A \ar[ur]^{\eta_S} \ar[dr]_{f} & & B\{ f[S]^{-1}\} \ar[r]^{u}& C \\
   & B \ar[ur]_{\eta_{f[S]}} \ar@/_2pc/[rru]_{g} &
 }$$

Given any $s \in S$, we have $f_S(s) \in B^{\times}$, so $h(\eta_S(s)) \in C^{\times}$. Since the former diagram commutes, $g(f(s)) = h(\eta_S(s)) \in C^{\times}$ and by the universal property of $\eta_{f[S]}: B \to B\{ f[S]^{-1}\}$ there is a unique $\mathcal{C}^{\infty}-$homomorphism $u: B\{ f[S]^{-1}\} \to C$ such that the following diagram commutes:
$$\xymatrixcolsep{5pc}\xymatrix{
B \ar[r]^{\eta_{f[S]}} \ar[dr]_{g} & B\{ f[S]^{-1}\} \ar@{.>}[d]^{u}\\
   & C
}$$

Now we need only to prove that the following triangle commutes:

$$\xymatrixcolsep{5pc}\xymatrix{
A\{ S^{-1}\} \ar[r]^{f_S} \ar[dr]_{h} & B\{ f[S]^{-1}\} \ar@{.>}[d]^{u}\\
   & C
}.$$

Note that, by the universal property of $\eta_S : A \to A\{ S^{-1}\}$ there must exist a unique $\mathcal{C}^{\infty}-$homomorphism $\xi: A\{ S^{-1}\} \to C$ such that:
$$\xymatrixcolsep{5pc}\xymatrix{
A \ar[r]^{\eta_S} \ar[dr]_{g \circ f}& A\{ S^{-1}\} \ar[d]^{\exists ! \xi}\\
  & C
}$$
commutes, and since both $h: A\{ S^{-1}\} \to C$ and $u \circ f_S : A\{ S^{-1}\} \to C$ have this property, it follows that $h = u \circ f_S$.\\

Hence  we have a pushout diagram.
\end{proof}

\begin{corollary}\label{papel}The following rectangle is a pushout:\\

$$\xymatrixcolsep{5pc}\xymatrix{
A  \ar@{-->}[dr]^{\eta_{a_i \cdot a_j}} \ar[r]^{\eta_{a_i}} \ar[d]_{\eta_{a_j}} & A\{ {a_i}^{-1}\} \ar[d]\\
A\{ {a_j}^{-1}\} \ar[r] & A\{(a_i \cdot a_j)^{-1}\}
}$$

\end{corollary}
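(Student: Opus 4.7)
The plan is to deduce Corollary \ref{papel} directly from Proposition \ref{par} together with Proposition \ref{star} (or equivalently Proposition \ref{imp}). Concretely, I would apply Proposition \ref{par} with $B := A\{a_j^{-1}\}$, $f := \eta_{a_j}: A \to A\{a_j^{-1}\}$, and $S := \{a_i\}$. Since $f[S] = \{\eta_{a_j}(a_i)\}$, this yields a genuine pushout square in $\mathcal{C}^{\infty}{\rm \bf Rng}$:
$$\xymatrixcolsep{5pc}\xymatrix{
A \ar[r]^{\eta_{a_i}} \ar[d]_{\eta_{a_j}} & A\{a_i^{-1}\} \ar[d]^{(\eta_{a_j})_{\{a_i\}}}\\
A\{a_j^{-1}\} \ar[r]_{\eta_{\eta_{a_j}(a_i)}} & (A\{a_j^{-1}\})\{\eta_{a_j}(a_i)^{-1}\}
}$$

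Next, I invoke Proposition \ref{star} to produce the canonical $\mathcal{C}^{\infty}$-isomorphism
$$\theta : (A\{a_j^{-1}\})\{\eta_{a_j}(a_i)^{-1}\} \stackrel{\cong}{\longrightarrow} A\{(a_j \cdot a_i)^{-1}\} = A\{(a_i \cdot a_j)^{-1}\},$$
characterized by the property that $\theta \circ \eta_{\eta_{a_j}(a_i)} \circ \eta_{a_j} = \eta_{a_i \cdot a_j}$. Post-composing the right column and bottom edge of the pushout square above with $\theta$ transports the pushout to the square of the corollary: the bottom edge becomes the canonical map $A\{a_j^{-1}\} \to A\{(a_i\cdot a_j)^{-1}\}$ (the unique $\mathcal{C}^{\infty}$-homomorphism making $A \to A\{a_j^{-1}\} \to A\{(a_i\cdot a_j)^{-1}\}$ equal to $\eta_{a_i\cdot a_j}$), and analogously for the right edge by symmetry of the construction in $a_i$ and $a_j$.

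The only checking step is to verify that the diagonal composite $A \to A\{a_j^{-1}\} \to (A\{a_j^{-1}\})\{\eta_{a_j}(a_i)^{-1}\} \stackrel{\theta}{\to} A\{(a_i\cdot a_j)^{-1}\}$ coincides with $\eta_{a_i\cdot a_j}$; but this is precisely the commutativity declared in Proposition \ref{star}. Since isomorphism of cocones preserves the universal (pushout) property, this transported square is still a pushout, proving the corollary.

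I expect no serious obstacle: the whole argument is a transport along an isomorphism, and both ingredients (Proposition \ref{par} giving the abstract pushout, and Proposition \ref{star} giving the concrete identification of iterated localizations with localization at a product) have already been established. The only mildly delicate point is making sure the \emph{bottom} arrow of the corollary is canonically the one induced by $\eta_{a_j}$ becoming invertible in $A\{(a_i\cdot a_j)^{-1}\}$ and dually for the right arrow, which follows from the uniqueness clause in the universal property of the smooth ring of fractions.
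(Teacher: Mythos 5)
Your proposal is correct and is essentially identical to the paper's own proof: the paper likewise applies Proposition \ref{par} (with $f=\eta_{a_j}$ and $S=\{a_i\}$) to obtain the pushout with vertex $(A\{a_j^{-1}\})\{\eta_{a_j}(a_i)^{-1}\}$ and then invokes Proposition \ref{star} to identify that vertex with $A\{(a_i\cdot a_j)^{-1}\}$. Your write-up is merely more explicit about transporting the pushout cocone along the isomorphism and identifying the resulting edges as the canonical maps.
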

\begin{proof}
By the previous lemma, the following diagram is a pushout:

$$\xymatrixcolsep{5pc}\xymatrix{
A   \ar[r]^{\eta_{a_i}} \ar[d]_{\eta_{a_j}} & A\{ {a_i}^{-1}\} \ar[d]\\
A\{ {a_j}^{-1}\} \ar[r] & A\{ {a_j}^{-1}\}\{ \eta_{a_j}(a_i)^{-1}\}
}$$

By \textbf{ Proposition \ref{star}}, $A\{ {a_j}^{-1}\}\{ \eta_{a_j}(a_i)^{-1}\} \cong A\{ (a_i \cdot a_j)^{-1}\}$, and the result follows.
\end{proof}

\begin{proposition}Let $(A,S)$ and $(B,T)$ be any two pairs in $\mathcal{C}^{\infty}_2$ and let $\jmath_{ST} : (A,S) \to (B,T)$ be a  $\mathcal{C}^{\infty}-$monomorphism such that $\jmath_{ST}[S^{\infty-{\rm sat}}] = T^{\infty-{\rm sat}}$. Under these circumstances,  $ \widetilde{\jmath_{ST}}: A\{ S^{-1}\} \to B\{ T^{-1}\}$ is also a $\mathcal{C}^{\infty}-$monomorphism.
\end{proposition}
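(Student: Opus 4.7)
The plan is to prove $\widetilde{\jmath_{ST}}$ is injective (which, in the variety $\mathcal{C}^{\infty}{\rm \bf Rng}$, is equivalent to being a monomorphism) by combining the universal description of the smooth localization provided by \textbf{Theorem \ref{38}} with the hypothesis $\jmath_{ST}[S^{\infty-{\rm sat}}] = T^{\infty-{\rm sat}}$.

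First I would take an arbitrary $\alpha \in A\{ S^{-1}\}$ with $\widetilde{\jmath_{ST}}(\alpha) = 0$ and invoke item $(i)$ of \textbf{Theorem \ref{38}} applied to $\eta_S : A \to A\{S^{-1}\}$ to produce elements $c \in S^{\infty-{\rm sat}}$ and $d \in A$ such that $\alpha \cdot \eta_S(c) = \eta_S(d)$. Since the defining square for $\widetilde{\jmath_{ST}}$ gives $\widetilde{\jmath_{ST}}\circ \eta_S = \eta_T \circ \jmath_{ST}$, applying $\widetilde{\jmath_{ST}}$ to that identity yields $\eta_T(\jmath_{ST}(d)) = 0$ in $B\{T^{-1}\}$.

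Next, item $(ii)$ of \textbf{Theorem \ref{38}} applied to $\eta_T : B \to B\{T^{-1}\}$ supplies some $t \in T^{\infty-{\rm sat}}$ with $t \cdot \jmath_{ST}(d) = 0$ in $B$. Using the hypothesis $\jmath_{ST}[S^{\infty-{\rm sat}}] = T^{\infty-{\rm sat}}$, I can lift $t$ to some $s \in S^{\infty-{\rm sat}}$ with $\jmath_{ST}(s) = t$; then $\jmath_{ST}(s \cdot d) = t \cdot \jmath_{ST}(d) = 0$, and since $\jmath_{ST}$ is a monomorphism (hence injective, as $\mathcal{C}^{\infty}{\rm \bf Rng}$ is a variety of algebras) it follows that $s \cdot d = 0$ in $A$. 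Applying $\eta_S$, we get $\eta_S(s)\cdot\eta_S(d) = 0$; since $s \in S^{\infty-{\rm sat}}$ means $\eta_S(s) \in (A\{S^{-1}\})^{\times}$, we deduce $\eta_S(d) = 0$. Therefore $\alpha \cdot \eta_S(c) = 0$, and invertibility of $\eta_S(c)$ finally gives $\alpha = 0$.

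The only step that requires any care is the lifting of $t$ to $s$: this is exactly where the surjectivity clause in $\jmath_{ST}[S^{\infty-{\rm sat}}] = T^{\infty-{\rm sat}}$ is essential, since without it the kernel condition detected by $\eta_T$ cannot be transported back across $\jmath_{ST}$. The rest is a routine application of the characterization in \textbf{Theorem \ref{38}} together with the fact that a monomorphism in a variety of algebras is injective.
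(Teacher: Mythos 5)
Your proof is correct and follows essentially the same route as the paper's: express the element via the fraction description of Theorem \ref{38}(i), deduce that the image of the ``numerator'' vanishes in $B\{T^{-1}\}$, apply Theorem \ref{38}(ii) there to get an annihilator $t \in T^{\infty-{\rm sat}}$, lift it to $S^{\infty-{\rm sat}}$ using the surjectivity hypothesis, cancel by injectivity of $\jmath_{ST}$, and finish by invertibility of $\eta_S$ on saturated elements. The only cosmetic difference is that you phrase the fraction as the relation $\alpha\cdot\eta_S(c)=\eta_S(d)$ rather than writing $\theta=\eta_S(c)/\eta_S(d)$ outright, which changes nothing of substance.
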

\begin{proof}
Let $\theta \in A\{ S^{-1}\}$ be such that $\widetilde{\jmath_{ST}}(\theta) = 0$. Since $\theta \in A\{ S^{-1}\}$, there are $c \in A$ and $d \in S^{\infty-{\rm sat}}$ such that $\theta = \frac{\eta_S(c)}{\eta_S(d)}$, so:

$$\widetilde{\jmath_{ST}}(\eta_S(c)\cdot \eta_S(d)^{-1}) = \widetilde{\jmath_{ST}}(\eta_S(c))\cdot \widetilde{\jmath_{ST}}(\eta_S(d))^{-1} = \eta_T(\jmath_{ST}(c))\cdot \eta_T(\jmath_{ST}(d))^{-1}$$

Thus

$$\widetilde{\jmath_{ST}}(\eta_S(c)\cdot \eta_S(d)^{-1}) = 0 \Rightarrow \eta_T(\jmath_{ST}(c)) = 0.$$

Since $\eta_T(\jmath_{ST}(c)) = 0$, there is some $u \in T^{\infty-{\rm sat}}$ such that $u \cdot \jmath_{ST}(c) = 0$.\\

By hypothesis, $\jmath_{ST}[S^{\infty-{\rm sat}}]=T^{\infty-{\rm sat}}$, so given $u \in T^{\infty-{\rm sat}}$, there is some $v \in S^{\infty-{\rm sat}}$ such that $u = \jmath_{ST}(v)$. Thus,

$$\jmath_{ST}(v \cdot c) = \jmath_{ST}(v)\cdot \jmath_{ST}(c) = u \cdot \jmath_{ST}(c) = 0.$$

Since, by hypothesis, $\jmath_{ST}$ is a monomorphism, it follows that $v \cdot c = 0$, so $\eta_S(v \cdot c) = \eta_S(v) \cdot \eta_S(c) = 0$ with $v \in S^{\infty-{\rm sat}}$. We have, thus, $\eta_S(c)=0$ and, therefore:

$$\theta = \dfrac{\eta_S(c)}{\eta_S(d)} = 0,$$

and $\widetilde{\jmath_{ST}}$ is a $\mathcal{C}^{\infty}-$monomorphism.
\end{proof}

\begin{theorem}\label{comfraccop}Let $(A_1,S_1)$ and $(A_2,S_2)$ be two pairs in ${\rm Obj}\,(\mathcal{C}^{\infty}_2)$, $\jmath_1: S_1 \hookrightarrow A_1$ and $\jmath_2 : S_2 \hookrightarrow A_2$ be the inclusion maps and let:
$$\xymatrixcolsep{5pc}\xymatrix{
A_1 \ar[dr]^{k_1} & \\
 & A_1 \otimes_{\infty} A_2 \\
A_2 \ar[ur]_{k_2} &
}$$
be the coproduct of $A_1$ and $A_2$ in $\mathcal{C}^{\infty}{\rm \bf Rng}$. We have:
$$(A_1 \otimes_{\infty} A_2)\{ (k_1[S_1]\cup k_2[S_2])^{-1}\} \cong A_1\{ {S_1}^{-1}\}\otimes_{\infty} A_2 \{ {S_2}^{-1} \},$$
that is,
$$F(A_1,S_1)\otimes_{\infty} F(A_2,S_2) \cong F(A_1 \otimes_{\infty} A_2, k_1[S_1]\cup k_2[S_2]).$$
\end{theorem}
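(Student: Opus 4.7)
The plan is to show that $A_1\{S_1^{-1}\} \otimes_{\infty} A_2\{S_2^{-1}\}$, equipped with a suitable canonical map $\phi$ from $A_1 \otimes_{\infty} A_2$, satisfies the universal property (of \textbf{Definition \ref{Alem}}) that characterizes the $\mathcal{C}^{\infty}-$ring of fractions of $A_1 \otimes_{\infty} A_2$ with respect to $k_1[S_1] \cup k_2[S_2]$. Uniqueness of localizations then yields the claimed isomorphism.

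First I would construct the map $\phi$. Writing $\iota_1, \iota_2$ for the coproduct inclusions into $A_1\{S_1^{-1}\} \otimes_{\infty} A_2\{S_2^{-1}\}$, the $\mathcal{C}^{\infty}-$homomorphisms $\iota_1 \circ \eta_{S_1}: A_1 \to A_1\{S_1^{-1}\} \otimes_{\infty} A_2\{S_2^{-1}\}$ and $\iota_2 \circ \eta_{S_2}: A_2 \to A_1\{S_1^{-1}\} \otimes_{\infty} A_2\{S_2^{-1}\}$ produce, by the universal property of the coproduct $A_1 \otimes_{\infty} A_2$, a unique $\phi$ with $\phi \circ k_i = \iota_i \circ \eta_{S_i}$ for $i=1,2$. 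For every $s \in S_1$, $\phi(k_1(s)) = \iota_1(\eta_{S_1}(s)) \in (A_1\{S_1^{-1}\} \otimes_{\infty} A_2\{S_2^{-1}\})^{\times}$ because $\eta_{S_1}(s)$ is invertible and $\iota_1$ preserves invertibles; the analogous statement holds for $S_2$. Hence $\phi[k_1[S_1] \cup k_2[S_2]] \subseteq (A_1\{S_1^{-1}\} \otimes_{\infty} A_2\{S_2^{-1}\})^{\times}$.

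Next I would check the universal property of $\phi$. Let $g: A_1 \otimes_{\infty} A_2 \to B$ be any $\mathcal{C}^{\infty}-$homomorphism sending $k_1[S_1] \cup k_2[S_2]$ into $B^{\times}$. Then $g \circ k_i : A_i \to B$ sends $S_i$ into $B^{\times}$, so by \textbf{Definition \ref{Alem}} there is a unique $\widetilde{g \circ k_i}: A_i\{S_i^{-1}\} \to B$ with $\widetilde{g \circ k_i} \circ \eta_{S_i} = g \circ k_i$. By the coproduct universal property applied to the pair $(\widetilde{g \circ k_1}, \widetilde{g \circ k_2})$ there is a unique $\widetilde{g}: A_1\{S_1^{-1}\} \otimes_{\infty} A_2\{S_2^{-1}\} \to B$ with $\widetilde{g} \circ \iota_i = \widetilde{g \circ k_i}$. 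A direct diagram chase gives $\widetilde{g} \circ \phi \circ k_i = \widetilde{g} \circ \iota_i \circ \eta_{S_i} = \widetilde{g \circ k_i} \circ \eta_{S_i} = g \circ k_i$ for $i=1,2$, and the uniqueness part of the coproduct property forces $\widetilde{g} \circ \phi = g$.

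Finally, for uniqueness of $\widetilde{g}$, suppose $\widetilde{g}': A_1\{S_1^{-1}\} \otimes_{\infty} A_2\{S_2^{-1}\} \to B$ also satisfies $\widetilde{g}' \circ \phi = g$. Composing with $k_i$ yields $(\widetilde{g}' \circ \iota_i) \circ \eta_{S_i} = g \circ k_i$, so by uniqueness in the universal property of $\eta_{S_i}$, one has $\widetilde{g}' \circ \iota_i = \widetilde{g \circ k_i} = \widetilde{g} \circ \iota_i$ for $i=1,2$. The uniqueness part of the coproduct property then gives $\widetilde{g}' = \widetilde{g}$. Therefore $\phi$ enjoys the universal property of the $\mathcal{C}^{\infty}-$ring of fractions of $A_1 \otimes_{\infty} A_2$ with respect to $k_1[S_1] \cup k_2[S_2]$, and the (unique up to isomorphism) identification $(A_1 \otimes_{\infty} A_2)\{(k_1[S_1]\cup k_2[S_2])^{-1}\} \cong A_1\{S_1^{-1}\} \otimes_{\infty} A_2\{S_2^{-1}\}$ follows. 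There is no genuine obstacle here; the proof is essentially a diagram chase, and the only mildly delicate point is keeping the two universal properties (coproduct and localization) correctly sequenced.
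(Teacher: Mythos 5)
Your proof is correct, and it is organized differently from the paper's. You verify directly that the pair $\bigl(A_1\{S_1^{-1}\}\otimes_{\infty}A_2\{S_2^{-1}\},\ \phi\bigr)$ satisfies the universal property of \textbf{Definition \ref{Alem}} for the set $k_1[S_1]\cup k_2[S_2]$, and then invoke uniqueness of the $\mathcal{C}^{\infty}-$ring of fractions; the paper instead constructs two explicit $\mathcal{C}^{\infty}-$homomorphisms $\widetilde{\varphi}: A_1\{S_1^{-1}\}\otimes_{\infty}A_2\{S_2^{-1}\}\to (A_1\otimes_{\infty}A_2)\{(k_1[S_1]\cup k_2[S_2])^{-1}\}$ and $\widetilde{\psi}$ in the opposite direction (your $\phi$ is the paper's $\psi$ before factoring through the localization) and checks by two separate uniqueness arguments, using that the $\eta$'s are epimorphisms, that the composites are identities. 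Your route buys economy: you never need the map $\widetilde{\varphi}$, and the two inverse-composite verifications collapse into the single uniqueness clause of the universal property. The paper's route buys an explicit description of the isomorphism in both directions, which it reuses later. One small point worth making explicit in your uniqueness step: when you conclude $\widetilde{g}'\circ\iota_i=\widetilde{g\circ k_i}$ from $(\widetilde{g}'\circ\iota_i)\circ\eta_{S_i}=g\circ k_i$, you are using that $g\circ k_i$ sends $S_i$ into $B^{\times}$ so that the uniqueness clause of \textbf{Definition \ref{Alem}} applies (equivalently, that $\eta_{S_i}$ is an epimorphism, \textbf{Proposition \ref{fact}}); you have already established this hypothesis, so the step is sound, but it deserves a citation.
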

\begin{proof}
Let:
$$\xymatrixcolsep{5pc}\xymatrix{
A_1\{ {S_1}^{-1}\} \ar[dr]^{\ell_1} & \\
 & A_1\{ {S_1}^{-1}\} \otimes_{\infty} A_2\{ {S_2}^{-1}\}\\
A_2\{ {S_2}^{-1}\} \ar[ur]_{\ell_2} &
}$$
be the coproduct of $A_1\{ {S_1}^{-1}\}$ and $A_2\{ {S_2}^{-1}\}$.

Let $\eta_{k_1[S_1]\cup k_2[S_2]} : A_1 \otimes_{\infty} A_2 \rightarrow (A_1 \otimes_{\infty} A_2) \{ (k_1[S_1]\cup k_2[S_2])^{-1}\}$ be the canonical map of the ring of fractions of $A_1 \otimes_{\infty} A_2$ with respect to $k_1[S_1]\cup k_2[S_2]$. Since for every $s_1 \in S_1$ we have $k_1(s_1) \in k_1[S_1] \subseteq k_1[S_1]\cup k_2[S_2]$, $(\eta_{k_1[S_1]\cup k_2[S_2]}\circ k_1)(s_1) = \eta_{k_1[S_1]\cup k_2[S_2]}(k_1(s_1)) \in ((A_1 \otimes_{\infty} A_2) \{ k_1[S_1]\cup k_2[S_2]^{-1}\})^{\times}$, so the universal property of $\eta_{S_1}: A_1 \to A_1\{ {S_1}^{-1}\}$ there is a unique $\varphi_1: A_1\{ {S_1}^{-1}\} \to (A_1 \otimes_{\infty} A_2) \{ k_1[S_1]\cup k_2[S_2]^{-1}\}$ such that the following triangle commutes:
$$\xymatrixcolsep{5pc}\xymatrix{
A_1 \ar[r]^{\eta_{S_1}} \ar[dr]_{\eta_{k_1[S_1]\cup k_2[S_2]}\circ k_1} & A\{ {S_1}^{-1}\} \ar[d]^{\varphi_1}\\
 & (A_1 \otimes_{\infty} A_2) \{ k_1[S_1]\cup k_2[S_2]^{-1}\}
},$$

that is, $\varphi_1 \circ \eta_{S_1} = \eta_{k_1[S_1]\cup k_2[S_2]}$.\\

The same argument yields a unique $\varphi_2: A_2\{ {S_2}^{-1}\} \to A_1 \otimes_{\infty} A_2 \{ k_1[S_1]\cup k_2[S_2]^{-1}\}$ such that the following triangle commutes:
$$\xymatrixcolsep{5pc}\xymatrix{
A_2 \ar[r]^{\eta_{S_2}} \ar[dr]_{\eta_{k_1[S_1]\cup k_2[S_2]}\circ k_2} & A\{ {S_2}^{-1}\} \ar[d]^{\varphi_2}\\
 & (A_1 \otimes_{\infty} A_2) \{ k_1[S_1]\cup k_2[S_2]^{-1}\}
},$$

that is, $\varphi_2 \circ \eta_{S_2} = \eta_{k_1[S_1]\cup k_2[S_2]}$.\\

By the universal property of the coproduct of $A_1\{ {S_1}^{-1}\}\otimes_{\infty}A_2\{ {S_2}^{-1}\}$, given the diagram:
$$\xymatrixcolsep{5pc}\xymatrix{
A_1\{ {S_1}^{-1}\} \ar[rd]^{\varphi_1} & \\
 & (A_1 \otimes_{\infty} A_2) \{ k_1[S_1]\cup k_2[S_2]^{-1}\}\\
A_2\{ {S_2}^{-1}\} \ar[ur]_{\varphi_2} &
}$$

there is a unique $\mathcal{C}^{\infty}-$homomorphism $\widetilde{\varphi}: A_1\{ {S_1}^{-1}\}\otimes_{\infty} A_2\{ {S_2}^{-1}\} \to A_1 \otimes_{\infty} A_2 \{ k_1[S_1]\cup k_2[S_2]^{-1}\}$ such that the following diagram commutes:

$$\xymatrixcolsep{5pc}\xymatrix{
A_1\{ {S_1}^{-1}\} \ar[dr]^{\ell_1} \ar@/^2pc/[rrd]^{\varphi_1} & & \\
  & A_1\{ {S_1}^{-1}\} \otimes_{\infty} A_2\{ {S_2}^{-1}\} \ar[r]^{\exists ! \widetilde{\varphi}} & (A_1 \otimes_{\infty} A_2) \{ k_1[S_1]\cup k_2[S_2]^{-1}\}\\
A_2\{ {S_2}^{-1}\} \ar[ur]_{\ell_2} \ar@/_2pc/[urr]_{\varphi_2} & &
},$$

that is, such that $\widetilde{\varphi} \circ \ell_i = \varphi_i$ for $i \in \{ 1,2\}$.\\

Given the diagram:

$$\xymatrixcolsep{5pc}\xymatrix{
A_1 \ar[dr]^{\ell_1 \circ \eta_{S_1}} & \\
 & A_1\{ {S_1}^{-1}\}\otimes_{\infty} A_2\{ {S_2}^{-1}\}\\
A_2 \ar[ur]_{\ell_2 \circ \eta_{S_2}} &
}$$

the universal property of the coproduct $A_1 \otimes_{\infty} A_2$, there is a unique $\mathcal{C}^{\infty}-$homomorphism $\widetilde{\psi}: A_1\otimes_{\infty}A_2 \rightarrow A_1\{ {S_1}^{-1}\}\otimes_{\infty} A\{ {S_2}^{-1}\}$ such that the following diagram commutes:

$$\xymatrixcolsep{5pc}\xymatrix{
A_1 \ar[dr]^{k_1} \ar@/^2pc/[rrd]^{\ell_1 \circ \eta_{S_1}} & & \\
 & A_1 \otimes_{\infty} A_2 \ar[r]^{\exists ! \psi} & A_1\{ {S_1}^{-1}\}\otimes_{\infty} A_2\{ {S_2}^{-1}\}\\
A_2 \ar[ur]_{k_2} \ar@/_2pc/[rru]_{\ell_2 \circ \eta_{S_2}} & & }$$

that is, such that $\psi \circ k_1 = \ell_1 \circ \eta_{S_1}$ and $\psi \circ k_2 = \ell_2 \circ \eta_{S_2}$.\\

Given any $y \in k_1[S_1]\cup k_2[S_2]$, then $y \in k_1[S_1]$ or $y \in k_2[S_2]$. Suppose, without loss of generality, $y \in k_1[S_1]$, so there is some $s_1 \in S_1$ such that $y = k_1(s_1)$. We have:
$$\widetilde{\psi}(y) = \psi(k_1(s_1)) = \ell_1 \circ \eta_{S_1}(s_1) = \ell_1(\eta_{S_1}(s_1)) \in (A_1\{ {S_1}^{-1}\}\otimes_{\infty}A_2\{ {S_2}^{-1}\})^{\times},$$

and the universal property of $\eta_{k_1[S_1]\cup k_2[S_2]}: A_1 \otimes_{\infty} A_2 \to (A_1 \otimes_{\infty} A_2)\{ k_1[S_1]\cup k_2[S_2]^{-1}\}$, there is a unique $\widetilde{\psi}: (A_1 \otimes_{\infty} A_2)\{ k_1[S_1]\cup k_2[S_2]^{-1}\} \to A_1\{ {S_1}^{-1}\}\otimes_{\infty}A_2\{ {S_2}^{-1}\}$ such that the following triangle commutes:

$$\xymatrixcolsep{5pc}\xymatrix{
A_1\otimes_{\infty}A_2 \ar[r]^(0.2){\eta_{k_1[S_1]\cup k_2[S_2]}} \ar[dr]_{\psi} & (A_1 \otimes_{\infty} A_2)\{ k_1[S_1]\cup k_2[S_2]^{-1}\} \ar[d]^{\widetilde{\psi}}\\
  & A_1\{ {S_1}^{-1}\}\otimes_{\infty}A_2\{ {S_2}^{-1}\}}$$

\textbf{Claim:} $$\widetilde{\varphi}\circ \widetilde{\psi} = {\rm id}_{(A_1\otimes_{\infty}A_2)\{(k_1[S_1]\cup k_2[S_2])^{-1}\}}$$

In fact, we are going to prove that

$$(\widetilde{\varphi} \circ \widetilde{\psi})\circ \eta_{k_1[S_1]\cup k_2[S_2]} \circ k_i = \eta_{k_1[S_1]\cup k_2[S_2]} \circ k_i, i=1,2.$$

and since ${\rm id}_{(A_1\otimes_{\infty}A_2)\{k_1[S_1]\cup k_2[S_2]^{-1}\}}$ is the unique $\mathcal{C}^{\infty}-$homomorphism such that:

$$\xymatrixcolsep{5pc}\xymatrix{
A_1\{ {S_1}^{-1}\} \ar[r]^{\eta_{k_1[S_1]\cup k_2[S_2]} \circ k_1} \ar@/_2pc/[dr]_{\eta_{k_1[S_1]\cup k_2[S_2]} \circ k_1}& (A_1\otimes_{\infty}A_2)\{k_1[S_1]\cup k_2[S_2]^{-1}\} \ar[d]^{{\rm id}_{(A_1\otimes_{\infty}A_2)\{k_1[S_1]\cup k_2[S_2]^{-1}\}}} & A_2\{ {S_2}^{-1}\} \ar[l]_{\eta_{k_1[S_1]\cup k_2[S_2]} \circ k_2} \ar@/^2pc/[dl]^{\eta_{k_1[S_1]\cup k_2[S_2]} \circ k_2}\\
 & (A_1\otimes_{\infty}A_2)\{k_1[S_1]\cup k_2[S_2]^{-1}\} &
}$$

commutes, it will follow that $\widetilde{\varphi}\circ \widetilde{\psi} = {\rm id}_{(A_1\otimes_{\infty}A_2)\{k_1[S_1]\cup k_2[S_2]^{-1}\}}$.\\

By definition,
$$\widetilde{\psi} \circ \eta_{k_1[S_1]\cup k_2[S_2]} = \psi,$$
so composing both sides with $k_i$ yields:
$$\widetilde{\psi} \circ \eta_{k_1[S_1]\cup k_2[S_2]} \circ k_i = \psi \circ k_i.$$
Since $\psi$ is such that $\psi \circ k_i = \ell_i \circ \eta_{S_i}$, we get:
$$\widetilde{\psi} \circ \eta_{k_1[S_1]\cup k_2[S_2]} \circ k_i = \ell_i \circ \eta_{S_i}.$$
Composing both sides of the above equation with $\widetilde{\varphi}$ we get:
$$\widetilde{\varphi}\circ ( \widetilde{\psi} \circ \eta_{k_1[S_1]\cup k_2[S_2]} \circ k_i) = \widetilde{\varphi}\circ (\ell_i \circ \eta_{S_i}) = \varphi_i \circ \eta_{S_i},$$
and since $\eta_{k_1[S_1]\cup k_2[S_2]} \circ k_i = \varphi_i \circ \eta_{S_i}$, we have:
$$(\widetilde{\varphi}\circ \widetilde{\psi}) \circ \eta_{k_1[S_1]\cup k_2[S_2]} \circ k_i = \eta_{k_1[S_1]\cup k_2[S_2]}\circ k_i.$$

By the universal property of the coproduct, it follows that $\widetilde{\varphi}\circ \widetilde{\psi}\circ \eta_{k_1[S_1]\cup k_2[S_2]} = \eta_{k_1[S_1]\cup k_2[S_2]}$, and since $\eta_{k_1[S_1]\cup k_2[S_2]}: A_1\otimes_{\infty}A_2 \rightarrow (A_1\otimes_{\infty}A_2)\{ (k_1[S_1]\cup k_2[S_2])^{-1}\}$ is an epimorphism, we have:

$$\widetilde{\varphi}\circ \widetilde{\psi} = {\rm id}_{(A_1\otimes_{\infty}A_2)\{k_1[S_1]\cup k_2[S_2]^{-1}\}}$$

$$\xymatrixcolsep{5pc}\xymatrix{
A_1 \ar[d]^{\eta_{S_1}}\ar[r]^{k_1} & A_1 \otimes_{\infty}A_2 \ar[d]^{\eta_{k_1[S_1]\cup k_2[S_2]}}& A_2 \ar[d]^{\eta_{S_2}} \ar[l]_{k_2}\\
A_1\{{S_1}^{-1}\} \ar[r]^{\varphi_1} \ar[dr]_{\ell_1} \ar@/_2pc/[ddr]_{\varphi_1}  & (A_1\otimes_{\infty}A_2)\{k_1[S_1]\cup k_2[S_2]^{-1} \} \ar[d]^{\widetilde{\psi}} & A_2\{ {S_2}^{-1}\} \ar[dl]^{\ell_2} \ar[l]_{\varphi_2} \ar@/^2pc/[ddl]^{\varphi_2}\\
  & A_1\{ {S_1}^{-1}\}\otimes_{\infty} A_2\{{S_2}^{-1}\} \ar[d]^{\widetilde{\varphi}} & \\
  & (A_1 \otimes_{\infty} A_2)\{k_1[S_1]\cup k_2[S_2]^{-1} \}& }$$

\textbf{Claim:} $\widetilde{\psi} \circ \widetilde{\varphi} = {\rm id}_{A_1\{ {S_1}^{-1}\}\otimes_{\infty}A_2\{ {S_2}^{-1}\}}$.\\

Note that, by the universal property of the coproduct:

$$\xymatrixcolsep{5pc}\xymatrix{
A_1\{ {S_1}^{-1}\} \ar[dr]^{\ell_1} & \\
 & A_1\{ {S_1}^{-1}\} \otimes_{\infty} A_2\{ {S_2}^{-1}\}\\
A_2\{ {S_2}^{-1}\} \ar[ur]_{\ell_2} &
}$$
given the morphisms $\ell_i : A_i\{ {S_i}^{-1}\} \to A_1\{ {S_1}^{-1}\}\otimes_{\infty}A_2\{ {S_2}^{-1}\}$ for $i=1,2$, there is a unique $\mathcal{C}^{\infty}-$homomorphism, namely:
$${\rm id}_{A_1\{ {S_1}^{-1}\}\otimes_{\infty}A_2\{ {S_2}^{-1}\}}: A_1\{ {S_1}^{-1}\}\otimes_{\infty}A_2\{ {S_2}^{-1}\} \to A_1\{ {S_1}^{-1}\}\otimes_{\infty}A_2\{ {S_2}^{-1}\}$$
such that the following diagram commutes:

$$\xymatrixcolsep{5pc}\xymatrix{
A_1\{{S_1}^{-1}\} \ar[r]^{\ell_1} \ar@/_2pc/[dr]_{\ell_1}& A_1\{{S_1}^{-1}\}\otimes_{\infty} A_2\{ {S_2}^{-1}\} \ar[d]^{{\rm id}_{A_1\{{S_1}^{-1}\}\otimes_{\infty} A_2\{{S_2}^{-1}\}}} & A_2\{{S_2}^{-1}\} \ar[l]_{\ell_2} \ar@/^2pc/[dl]^{\ell_2}\\
 & A_1\{{S_1}^{-1}\}\otimes_{\infty}A_2\{{S_2}^{-1}\} & }$$

We are going to show that for $i=1,2$, we have:
$$(\widetilde{\psi}\circ \widetilde{\varphi}) \circ \ell_i = \ell_i.$$

We have:

$$\widetilde{\varphi} \circ \ell_i = \varphi_i$$

and composing both sides of the above equation with $\widetilde{\psi}$ yields:
$$\widetilde{\psi} \circ \widetilde{\varphi} \circ \ell_i = \widetilde{\psi} \circ \varphi_i.$$
Composing both sides of the above equation with $\eta_{S_i}$ yields
$$(\widetilde{\psi}\circ \widetilde{\varphi})\circ \ell_i \circ \eta_{S_i} = \widetilde{\psi}\circ \varphi_i \circ \eta_{S_i}.$$
Since $\varphi_i \circ \eta_{S_i} = \eta_{k_1[S_1]\cup k_2[S_2]} \circ k_i$, we have:
$$(\widetilde{\psi}\circ \widetilde{\varphi})\circ \ell_i \circ \eta_{S_i} = \widetilde{\psi} \circ \eta_{k_1[S_1]\cup k_2[S_2]}\circ k_i$$
and since $\widetilde{\psi} \circ \eta_{k_1[S_1]\cup k_2[S_2]} = \psi$, it follows that
$$(\widetilde{\psi}\circ \widetilde{\varphi})\circ \ell_i \circ \eta_{S_i} =  \psi \circ k_i$$
But $\psi \circ k_i = \ell_i \circ \eta_{S_i}$ for $i=1,2$, so:
$$(\widetilde{\psi}\circ \widetilde{\varphi}) \circ \ell_i \circ \eta_{S_i} = \ell_i \circ \eta_{S_i}$$
and since $\eta_{S_i}$ is an epimorphism for $i=1,2$, we have:
$$(\widetilde{\psi}\circ \widetilde{\varphi}) \circ \ell_i = \ell_i.$$

Hence, by the universal property of the coproduct, $\widetilde{\psi}\circ \widetilde{\varphi} = {\rm id}_{A_1\{ {S_1}^{-1}\}\otimes_{\infty}\{ {S_2}^{-1}\}}$.\\
\end{proof}

\begin{remark}With an analogous proof, we can obtain a more general result: given any set of pairs, $\{ (A_i,S_i) | i \in I\}$, we have:

$$\bigotimes_{\infty}A_i\{ {S_i}^{-1}\} \cong \left( \bigotimes_{\infty} A_i\right)\left\{ \left(\bigcup_{i \in I}k_i[S_i]\right)^{-1}\right\}$$

where $k_i: A_i \rightarrow \otimes_{\infty}A_i$, for each $i$, is the canonical $\mathcal{C}^{\infty}-$homomorphism into the coproduct.
\end{remark}

\begin{theorem}\label{Newt}Let $B$ be the directed colimit of a system $\{ A_{\ell} \stackrel{t_{\ell j}}{\rightarrow} A_j | \ell, j \in I\}$ of $\mathcal{C}^{\infty}-$rings, that is,

$$\xymatrixcolsep{5pc}\xymatrix{
 & B =\varinjlim_{\ell \in I} A_{\ell} & \\
A_i \ar[ur]_{t_i} \ar[rr]^{t_{ij}} & & A_j \ar[ul]^{t_j}
}$$

is a limit co-cone. Given any $u \in B$, there are $j \in I$ and $u_j \in A_j$ such that $t_j(u_j) = u$.\\

Under those circumstances, we have:

$$\varinjlim_{k \geq j} A_{k}\{ u_{k}^{-1} \} \cong B \{ u^{-1}\}$$
where $u_k = t_{jk}(u_j)$.
\end{theorem}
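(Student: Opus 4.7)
The plan is to exhibit mutually inverse $\mathcal{C}^{\infty}$-homomorphisms between $\varinjlim_{k \geq j} A_{k}\{u_k^{-1}\}$ and $B\{u^{-1}\}$ by repeated use of the two universal properties in sight: that of the directed colimit and that of the $\mathcal{C}^{\infty}$-ring of fractions. As a preliminary reduction, since $I$ is directed and $\{k \in I : k \geq j\}$ is cofinal in $I$, the induced co-cone identifies $B$ with $\varinjlim_{k \geq j} A_k$, so from now on I may restrict the colimit to indices $k \geq j$.

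First I would organize the left-hand side as a directed colimit. For any $k \leq \ell$ with $k,\ell \geq j$, the transition $t_{k\ell}$ satisfies $t_{k\ell}(u_k) = u_\ell$, so $\eta^{\infty}_{u_\ell} \circ t_{k\ell}$ sends $u_k$ into $(A_\ell\{u_\ell^{-1}\})^\times$; the universal property of $\eta^{\infty}_{u_k}: A_k \to A_k\{u_k^{-1}\}$ yields a unique $\tilde{t}_{k\ell}: A_k\{u_k^{-1}\} \to A_\ell\{u_\ell^{-1}\}$ intertwining the $\eta$'s. Using uniqueness in that universal property (and the fact, from Proposition \ref{fact}, that each $\eta^{\infty}_{u_k}$ is an epimorphism), one checks that $\tilde{t}_{k\ell} \circ \tilde{t}_{mk} = \tilde{t}_{m\ell}$ and $\tilde{t}_{kk} = \mathrm{id}$, so a well-defined directed system is obtained; I denote its colimit by $C$, with structure maps $\alpha_k: A_k\{u_k^{-1}\} \to C$.

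Next I would construct $\Phi: C \to B\{u^{-1}\}$. For each $k \geq j$, the composite $\eta^{\infty}_u \circ t_k: A_k \to B\{u^{-1}\}$ maps $u_k$ to $u$, which is a unit; by the universal property of $\eta^{\infty}_{u_k}$ this factors uniquely as $\varphi_k \circ \eta^{\infty}_{u_k}$. Uniqueness again shows $\varphi_\ell \circ \tilde{t}_{k\ell} = \varphi_k$, so the $\varphi_k$ assemble into a co-cone and induce $\Phi$. For the inverse, the family $\alpha_k \circ \eta^{\infty}_{u_k}: A_k \to C$ is a co-cone over the original system (again by uniqueness in the universal properties), so it induces $\psi: B \to C$ with $\psi \circ t_k = \alpha_k \circ \eta^{\infty}_{u_k}$. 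Because $u = t_j(u_j)$, we get $\psi(u) = \alpha_j(\eta^{\infty}_{u_j}(u_j)) \in C^\times$, so the universal property of $\eta^{\infty}_u$ produces a unique $\Psi: B\{u^{-1}\} \to C$ with $\Psi \circ \eta^{\infty}_u = \psi$.

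Finally I would verify $\Phi \circ \Psi = \mathrm{id}_{B\{u^{-1}\}}$ and $\Psi \circ \Phi = \mathrm{id}_C$ using the epimorphism property of $\eta^{\infty}_u$ and of each $\eta^{\infty}_{u_k}$ (Proposition \ref{fact}), together with the uniqueness clauses in the universal properties of the colimit and of the rings of fractions: both composites, pre-composed with the appropriate canonical maps, equal the canonical maps themselves, and hence coincide with the identity. The main obstacle I foresee is purely bookkeeping—keeping straight the two towers of commuting squares and making sure the cofinality reduction is justified—rather than anything conceptual; once the universal properties are correctly invoked, every diagram chases through by uniqueness.
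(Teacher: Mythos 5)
Your proposal is correct and follows essentially the same route as the paper: the same transition maps on localizations (your $\tilde{t}_{k\ell}$ are the paper's $s_{k\ell}$), the same co-cone into $B\{u^{-1}\}$ (your $\varphi_k$ are the paper's $s_k$), and the same factorization $B \to C$ followed by the universal property of $\eta^{\infty}_u$; the only cosmetic difference is that you exhibit an explicit inverse pair $\Phi, \Psi$ where the paper verifies the colimit's universal property for an arbitrary target co-cone. Your explicit cofinality remark justifying $B \cong \varinjlim_{k \geq j} A_k$ is a small point the paper leaves implicit, and is worth keeping.
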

\begin{proof}
Let $\ell \geq k \geq j$. Note that for every  $m \geq j$ we have $u = t_j(u_j) = t_m \circ t_{jm}(u_j):= t_m(u_m)$.\\

Since $\eta_{u_{\ell}} \circ t_{k \ell}(u_k) \in A_{\ell}\{ u_{\ell}^{-1} \}^{\times}$, there is a unique $\mathcal{C}^{\infty}-$homomorphism $s_{k \ell} : A_k\{ u_k^{-1} \} \to A_{\ell}\{ u_{\ell}^{-1}\}$ such that:
$$\xymatrixcolsep{5pc}\xymatrix{
A_k \ar[r]^{\eta_{u_k}} \ar[d]^{t_{k \ell}} & A_k\{u_k^{-1}\} \ar[d]^{\exists ! s_{k \ell}}\\
A_{\ell} \ar[r]^{\eta_{u_{\ell}}} & A_{\ell}\{ u_{\ell}^{-1} \}
}$$

commutes.

Note also that for every $k \geq j$ we have:
$$\eta_u \circ t_k (u_k) = \eta_u(t_k(u_k)) = \eta_u(u) \in B\{ u^{-1}\}^{\times},$$
so by the universal property of $\eta_{u_k}: A_k \to A_k\{u_k^{-1}\}$, there is a unique $s_k: A_k \{ u_k^{-1}\} \to B\{ u^{-1}\}$ such that the following diagram commutes:

$$\xymatrixcolsep{5pc}\xymatrix{
A_k \ar[r]^{\eta_{u_k}} \ar[dr]_{\eta_u \circ t_k} & A_k\{ u_k^{-1}\} \ar[d]^{s_k}\\
    & B\{ u^{-1}\}
}$$

We claim that for every $\ell \geq k \geq j$ the following triangle commutes:

$$\xymatrixcolsep{5pc}\xymatrix{
 & B\{ u^{-1}\} & \\
A_k\{ u_k^{-1}\}\ar[ur]^{s_k} \ar[rr]^{s_{k \ell}} & & A_{\ell}\{ u_{\ell}^{-1}\} \ar[ul]_{s_{\ell}}
}$$

We are going to prove this by dividing our argumentation in two claims.\\

\textbf{Claim 1:} The following triangle commutes:

$$\xymatrixcolsep{5pc}\xymatrix{
A_k \ar[r]^{\eta_{u_k}} \ar[dr]_{\eta_u \circ t_k} & A_k \{ u_k^{-1}\} \ar[d]^{s_{\ell} \circ s_{k \ell}}\\
  & B\{ u^{-1}\}
}$$

We have, for every $\ell \geq k \geq j$:

\begin{multline*}
  (s_{\ell} \circ s_{k \ell}) \circ \eta_{u_k} = s_{\ell} \circ (s_{k\ell} \circ \eta_{u_k}) = s_{\ell} \circ (\eta_{u_{\ell}} \circ t_{k \ell}) = \\
  =(s_{\ell} \circ \eta_{u_{\ell}}) \circ t_{k \ell} = (\eta_u \circ t_{\ell})\circ t_{k \ell} = \eta_u \circ (t_{\ell} \circ t_{k \ell}) = \eta_u \circ t_k = s_k \circ \eta_{u_k}
\end{multline*}

so

$$(s_{\ell} \circ s_{k \ell}) \circ \eta_{u_k} = s_k \circ \eta_{u_k}$$

Since $\eta_{u_k}$ is an epimorphism, it follows that:

$$s_{\ell}\circ s_{k \ell} = s_k$$

thus

$$(\forall k, \ell \geq j)(s_{\ell} \circ s_{k \ell} = s_k)$$

\textbf{Claim 2:} Given any other co-cone $\{ v_k: A_k\{ u_k^{-1}\} \to C | k \geq j \}$ such that for every $\ell \geq k \geq j$:

$$v_{\ell} \circ s_{k \ell} = v_k$$
there is a unique $\mathcal{C}^{\infty}-$homomorphism:
$$\widetilde{v}: B\{ u^{-1}\} \to C$$
such that for every $\ell \geq j$
$$v_{\ell} = \widetilde{v} \circ s_{\ell}$$

In other words, there is a unique $\widetilde{v}: B\{ u^{-1}\} \to C$ such that the following diagram commutes:

$$\xymatrixcolsep{5pc}\xymatrix{
 & C & \\
 & B\{ u^{-1}\} \ar[u]^{\widetilde{v}} & \\
A_k\{u_k^{-1}\} \ar@/^2pc/[uur]^{v_k} \ar[rr]_{s_{k \ell}} \ar[ur]^{s_k} & & A_{\ell}\{ u_{\ell}^{-1}\} \ar@/_2pc/[uul]_{v_{\ell}} \ar[ul]_{s_{\ell}}
}$$

Considering the diagram:

$$\xymatrixcolsep{5pc}\xymatrix{
 & C & \\
 & B\{ u^{-1}\} \ar[u]^{\widetilde{v}} & \\
A_k\{u_k^{-1}\} \ar@/^2pc/[uur]^{v_k} \ar[rr]_{s_{k \ell}} \ar[ur]^{s_k} & & A_{\ell}\{ u_{\ell}^{-1}\} \ar@/_2pc/[uul]_{v_{\ell}} \ar[ul]_{s_{\ell}}\\
A_k \ar[u]_{\eta_{u_k}} \ar[rr]^{t_k\ell} & & A_{\ell} \ar[u]^{\eta_{u_{\ell}}}
}$$

and noting that for every $\ell \geq k \geq j$, $(v_{\ell}\circ \eta_{u_{\ell}})\circ t_{k \ell} = (v_k \circ \eta_{u_k})$, by the universal property of the colimit $B$, there is a unique $\mathcal{C}^{\infty}-$homomorphism $\psi: B \to C$ such that for every $k \geq j$, $\psi \circ t_k = (v_k \circ \eta_{u_k})$

$$\xymatrixcolsep{5pc}\xymatrix{
 & C & \\
 & B \ar[u]^{\psi} & \\
A_k \ar@/^2pc/[uur]^{v_k \circ \eta_{u_k}} \ar[rr]_{t_{k \ell}} \ar[ur]^{t_k} & & A_{\ell} \ar@/_2pc/[uul]_{v_{\ell} \circ \eta_{u_{\ell}}} \ar[ul]_{t_{\ell}}
}$$

Note that $\psi(u) = \psi(t_k(u_k)) = (v_k \circ \eta_{u_k})(u_k) \in C^{\times}$, for every $k \geq j$, and by the universal property of $\eta_u: B \to B\{ u^{-1}\}$, there is a unique $\widetilde{v}: B\{ u^{-1}\} \to C$ such that $\widetilde{v}\circ \eta_u = \psi$.

$$\xymatrixcolsep{5pc}\xymatrix @!0 @R=4pc @C=6pc {
   A_k \ar[rr]^{t_k} \ar[rd]^{t_{k\ell}} \ar[dd]^{\eta_{u_k}} && B \ar[dd]^{\eta_u} \ar[rdd]^{\psi} &  \\
   & A_{\ell} \ar[ur]_{t_{\ell}} \ar[dd]^(.35){\eta_{u_{\ell}}}  &  \\
    A_k\{ u_k^{-1}\} \ar[rr]^(.25){s_k} |!{[ur];[dr]}\hole \ar[rd]^{s_{k\ell}} && B\{ u^{-1}\}\ar@{.>}[r]^{\exists ! \widetilde{v}}& C\\
    & A_{\ell}\{ {u_{\ell}}^{-1}\}\ar[ur]^{s_{\ell}} \ar@/_2pc/[urr]_{v_{\ell}}& }$$





\textbf{Claim:} $(\forall \ell \geq j)(\widetilde{v}\circ s_{\ell} = v_{\ell})$.\\

$$\widetilde{v}\circ \eta_u = \psi \Rightarrow (\widetilde{v}\circ \eta_u)\circ t_{\ell} = \psi \circ t_{\ell} \iff \widetilde{v}\circ (\eta_u \circ t_{\ell}) = v_k \circ \eta_{u_{\ell}}  \iff \widetilde{v} \circ (s_{\ell} \circ \eta_{u_{\ell}}) = v_{\ell} \circ \eta_{u_{\ell}}$$

hence:

\begin{equation}\label{sofia}
(\widetilde{v}\circ s_{\ell})\circ \eta_{u_{\ell}} = v_{\ell} \circ \eta_{u_{\ell}}.
\end{equation}

Given any $\alpha \in A_{\ell}\{ u_{\ell}^{-1}\}$, there are $x_{\ell} \in A_{\ell}$ and $y_{\ell} \in \{ u_{\ell}\}^{\infty-{\rm sat}}$ such that $\alpha = \frac{\eta_{u_{\ell}}(x_{\ell})}{\eta_{u_{\ell}}(y_{\ell})}$. \\

We have:

\begin{multline*}(\widetilde{v}\circ s_{\ell})(\alpha) = (\widetilde{v}\circ s_{\ell})\left( \dfrac{\eta_{u_{\ell}}(x_{\ell})}{\eta_{u_{\ell}}(y_{\ell})}\right) = \\
= (\widetilde{v}\circ s_{\ell})(\eta_{u_{\ell}}(x_{\ell})\cdot \eta_{u_{\ell}}(y_{\ell})^{-1}) = (\widetilde{v} \circ s_{\ell} \circ \eta_{u_{\ell}})(x_{\ell})\cdot (\widetilde{v} \circ s_{\ell} \circ \eta_{u_{\ell}})(y_{\ell})^{-1} =\\
= \dfrac{(\widetilde{v}\circ s_{\ell} \circ \eta_{u_{\ell}})(x_{\ell})}{(\widetilde{v}\circ s_{\ell} \circ \eta_{u_{\ell}})(y_{\ell})}
\end{multline*}

and by the equation \eqref{sofia},

$$\dfrac{(\widetilde{v}\circ s_{\ell} \circ \eta_{u_{\ell}})(x_{\ell})}{(\widetilde{v}\circ s_{\ell} \circ \eta_{u_{\ell}})(y_{\ell})} = \dfrac{(v_{\ell} \circ \eta_{u_{\ell}})(x_{\ell})}{(v_{\ell} \circ \eta_{u_{\ell}})(y_{\ell})} = v_{\ell} \left( \dfrac{\eta_{u_{\ell}}(x_{\ell})}{\eta_{u_{\ell}}(y_{\ell})}\right) = v_{\ell}(\alpha)$$

Hence, for every $\alpha \in A_{\ell}\{ {u_{\ell}}^{-1}\}$, $(\widetilde{v}\circ s_{\ell})(\alpha) = v_{\ell}(\alpha)$.


It follows, then, that if $B = \varinjlim_{i \in I} A_i$, then for any $u \in B$, $B\{ u^{-1}\} = \varinjlim_{i \in I} A_i\{ u_i^{-1}\}$.
\end{proof}

\begin{theorem}\label{2137}Let $(I, \leq)$ be a directed poset and let $(A,S)$ be the directed colimit of a system $\{ (A_{\ell}, S_{\ell}) \stackrel{t_{\ell j}}{\rightarrow} (A_j, S_j) | \ell, j \in I\}$ of $\mathcal{C}^{\infty}_{2}$, that is,

$$\xymatrixcolsep{5pc}\xymatrix{
 & (A,S) = \varinjlim_{\ell \in I} (A_{\ell}, S_{\ell}) & \\
(A_i, S_i) \ar[ur]_{t_i} \ar[rr]^{t_{ij}} & & (A_j, S_j) \ar[ul]^{t_j}
}$$

We have:

$$F\left( \varinjlim_{\ell \in I} (A_{\ell}, S_{\ell})\right) \cong \varinjlim_{\ell \in I} F(A_{\ell}, S_{\ell}) = \varinjlim_{\ell \in I} A_{\ell}\{ S_{\ell}^{-1}\}$$

$$\varinjlim_{i \in I} A_i\{ S_i^{-1}\} \cong \left( \varinjlim_{i \in I} A_i\right)\left\{ \left(\varinjlim_{i \in I}S_i\right)^{-1}\right\}$$
\end{theorem}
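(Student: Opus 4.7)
The plan is to verify that $\varinjlim_\ell A_\ell\{S_\ell^{-1}\}$, equipped with a suitably constructed map from $A = \varinjlim_\ell A_\ell$, satisfies the universal property of \textbf{Definition \ref{Alem}} characterizing $A\{S^{-1}\}$, where $(A,S) = \varinjlim_\ell (A_\ell, S_\ell)$ in $\mathcal{C}^{\infty}_2$. A routine check of the definition of a morphism in $\mathcal{C}^{\infty}_2$ shows that this colimit is given concretely by $A = \varinjlim_\ell A_\ell$ in $\mathcal{C}^{\infty}{\rm \bf Rng}$ with structure maps $t_\ell: A_\ell \to A$ and by $S = \bigcup_\ell t_\ell[S_\ell]$: the cocone condition $t_\ell[S_\ell] \subseteq S$ is automatic, and compatibility with any other cocone in $\mathcal{C}^{\infty}_2$ is inherited from the colimit in $\mathcal{C}^{\infty}{\rm \bf Rng}$.

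First, I would build the canonical map $\eta: A \to \varinjlim_\ell A_\ell\{S_\ell^{-1}\}$. Functoriality of $F$ (\textbf{Theorem \ref{otalfuntor}}) turns $\{(A_\ell, S_\ell)\}$ into a directed system $\{A_\ell\{S_\ell^{-1}\}\}$ in $\mathcal{C}^{\infty}{\rm \bf Rng}$; write $\alpha_\ell: A_\ell\{S_\ell^{-1}\} \to \varinjlim_\ell A_\ell\{S_\ell^{-1}\}$ for its colimit cocone. The composites $\alpha_\ell \circ \eta_{S_\ell}: A_\ell \to \varinjlim_\ell A_\ell\{S_\ell^{-1}\}$ form a cocone under $\{A_\ell\}$, and the universal property of $A$ delivers a unique $\eta$ with $\eta \circ t_\ell = \alpha_\ell \circ \eta_{S_\ell}$ for every $\ell$. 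For any $s \in S$, write $s = t_\ell(s_\ell)$ with $s_\ell \in S_\ell$; then $\eta(s) = \alpha_\ell(\eta_{S_\ell}(s_\ell))$ is the image of an invertible element under a $\mathcal{C}^{\infty}$-homomorphism, hence invertible in $\varinjlim_\ell A_\ell\{S_\ell^{-1}\}$. Thus $\eta[S] \subseteq \bigl(\varinjlim_\ell A_\ell\{S_\ell^{-1}\}\bigr)^{\times}$.

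Next I would verify the factorization property. Given any $g: A \to B$ with $g[S] \subseteq B^{\times}$, each $g \circ t_\ell: A_\ell \to B$ satisfies $(g \circ t_\ell)[S_\ell] \subseteq B^{\times}$, so the universal property of $\eta_{S_\ell}$ yields a unique $g_\ell: A_\ell\{S_\ell^{-1}\} \to B$ with $g_\ell \circ \eta_{S_\ell} = g \circ t_\ell$. Since each $\eta_{S_\ell}$ is an epimorphism (\textbf{Proposition \ref{fact}}), the compatibility $g_j \circ \widetilde{t_{\ell j}} = g_\ell$ for $\ell \leq j$ is forced by checking both sides after precomposition with $\eta_{S_\ell}$. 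The colimit universal property then provides a unique $\widetilde{g}: \varinjlim_\ell A_\ell\{S_\ell^{-1}\} \to B$ with $\widetilde{g} \circ \alpha_\ell = g_\ell$, and composing with $t_\ell$ gives $\widetilde{g} \circ \eta \circ t_\ell = g \circ t_\ell$ for all $\ell$, so $\widetilde{g} \circ \eta = g$. Uniqueness of $\widetilde{g}$ reduces, via the same epi argument applied to each $\eta_{S_\ell}$ together with colimit uniqueness in $\mathcal{C}^{\infty}{\rm \bf Rng}$, to the uniqueness of the $g_\ell$.

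The main obstacle is nothing conceptually deep but rather the layered bookkeeping of universal-property diagrams at three levels (the $\eta_{S_\ell}$, the colimit $\varinjlim_\ell A_\ell\{S_\ell^{-1}\}$, and the colimit $A$); the essential algebraic content is that each $\eta_{S_\ell}$ is an epimorphism, which lets one transport equalities through the diagram without ever computing in $\varinjlim_\ell A_\ell\{S_\ell^{-1}\}$ directly. Once the universal property is verified, uniqueness up to unique isomorphism of $\mathcal{C}^{\infty}$-rings of fractions gives $F(\varinjlim_\ell (A_\ell, S_\ell)) \cong \varinjlim_\ell F(A_\ell, S_\ell)$, which is the content of the theorem.
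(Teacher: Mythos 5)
Your proposal is correct and rests on the same ingredients as the paper's (sketched) proof: the universal properties of the $\eta_{S_\ell}$ and of the two colimits, together with the fact that each $\eta_{S_\ell}$ is an epimorphism (\textbf{Proposition \ref{fact}}) to force the compatibility and uniqueness conditions. The only difference is organizational — the paper constructs the two canonical maps $\widetilde{\varphi}$ and $\widetilde{\psi}$ and argues they are mutually inverse, whereas you verify directly that $\varinjlim_\ell A_\ell\{S_\ell^{-1}\}$ with your map $\eta$ satisfies the universal property of \textbf{Definition \ref{Alem}} and invoke uniqueness of the ring of fractions; your version actually supplies the details the paper leaves to the reader.
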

\begin{proof}(Sketch)\\

Denote by $\alpha_i: A_i \to \varinjlim_{i \in I} A_i$ and $\sigma_i: A_i\{ {S_i}^{-1}\} \to \varinjlim_{i \in I} A_i\{ {S_i}^{-1}\}$ the canonical colimit arrows for each $i \in I$.\\

We make use of the universal property of the colimit of $\mathcal{C}^{\infty}-$rings of fractions in order to show that there is a unique $\mathcal{C}^{\infty}-$homomorphism:

$$\widetilde{\varphi}: \varinjlim_{i \in I} A_i\{ {S_i}^{-1}\} \rightarrow \left( \varinjlim_{i \in I} A_i\right)\left\{ \left( \varinjlim_{i \in I} {S_i} \right)^{-1}\right\}$$

such that the following diagram commutes:

$$\xymatrixcolsep{5pc}\xymatrix{
A_i \ar[r]^{\eta_{S_i}} \ar[d]_{\alpha_i} & A_i\{ {S_i}^{-1}\} \ar[r]^{\sigma_i} & \varinjlim_{i \in I} A_i\{ {S_i}^{-1}\} \ar@{-->}[d]^{\widetilde{\varphi}}\\
\varinjlim_{i \in I}A_i \ar[rr]_{\eta_{\varinjlim_{i \in I}S_i}} & & \left( \varinjlim_{i \in I} A_i\right)\left\{ \left( \varinjlim_{i \in I}S_i\right)^{-1}\right\}}$$

and that there is a unique $\mathcal{C}^{\infty}-$homomorphism:

$$\widetilde{\psi}: \left( \varinjlim_{i \in I} A_i\right)\lbrace \left( \varinjlim_{i \in I}S_i\right)^{-1}\rbrace \rightarrow \varinjlim_{i \in I} A_i\{ {S_i}^{-1}\}$$

such that the following diagram commutes:

$$\xymatrixcolsep{5pc}\xymatrix{
A_i \ar[r]^{\alpha_i} \ar[d]_{\eta_{S_i}} & \varinjlim_{i \in I} A_i \ar[r]^{\eta_{\varinjlim_{i \in I}S_i}} & \left( \varinjlim_{i \in I} A_i\right)\lbrace \left( \varinjlim_{i \in I}S_i\right)^{-1}\rbrace \ar@{-->}[d]^{\widetilde{\psi}} \\
A_i\{ {S_i}^{-1}\} \ar[rr]_{\sigma_i} & & \varinjlim_{i \in I} A_i\{ {S_i}^{-1}\} }$$

It follows, also by a ``uniqueness argument'', that $\widetilde{\varphi}$ and $\widetilde{\psi}$ are inverse $\mathcal{C}^{\infty}-$iso\-morphisms.
\end{proof}

As a consequence of the above proposition we have:

\begin{corollary}In the same context of \textbf{Theorem \ref{2137}}, we have:
  $$\left( \varinjlim_{i \in I} S_i\right)^{\infty-{\rm sat}} = \varinjlim_{i \in I} S_i^{\infty-{\rm sat}} \subseteq \varinjlim_{i \in I}A_i$$
\end{corollary}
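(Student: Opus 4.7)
The plan is to leverage the isomorphism established in \textbf{Theorem \ref{2137}} to transfer the question of invertibility from the colimit ring back to a single stage of the directed system. Write $A = \varinjlim_i A_i$ and identify $\varinjlim_i S_i$ with the directed union $S := \bigcup_i \alpha_i[S_i] \subseteq A$, where $\alpha_i : A_i \to A$ are the colimit injections; similarly let $\sigma_i : A_i\{S_i^{-1}\} \to \varinjlim_i A_i\{S_i^{-1}\}$ denote the colimit injections of the localized system. Then \textbf{Theorem \ref{2137}} supplies a $\mathcal{C}^{\infty}$-isomorphism $\widetilde{\varphi} : \varinjlim_i A_i\{S_i^{-1}\} \xrightarrow{\cong} A\{S^{-1}\}$ whose defining property is $\widetilde{\varphi}(\sigma_i(\eta_{S_i}(a_i))) = \eta_S(\alpha_i(a_i))$; this is the workhorse of the proof.

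First I will settle the easy inclusion $\bigcup_i \alpha_i[S_i^{\infty-{\rm sat}}] \subseteq S^{\infty-{\rm sat}}$. If $a = \alpha_i(b)$ with $b \in S_i^{\infty-{\rm sat}}$, then $\eta_{S_i}(b) \in (A_i\{S_i^{-1}\})^{\times}$; since every $\mathcal{C}^{\infty}$-homomorphism preserves invertible elements, $\sigma_i(\eta_{S_i}(b))$ is invertible in the colimit, and applying $\widetilde{\varphi}$ yields that $\eta_S(a)$ is invertible in $A\{S^{-1}\}$, i.e., $a \in S^{\infty-{\rm sat}}$.

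For the reverse inclusion, take $a \in S^{\infty-{\rm sat}}$ and pick $j \in I$ and $a_j \in A_j$ with $a = \alpha_j(a_j)$. Using $\widetilde{\varphi}^{-1}$, the invertibility of $\eta_S(a)$ translates into the invertibility of $\sigma_j(\eta_{S_j}(a_j))$ inside $\varinjlim_k A_k\{S_k^{-1}\}$. The main obstacle, and the heart of the argument, is the following lifting fact: \emph{in any directed colimit $B = \varinjlim_k B_k$ of $\mathcal{C}^{\infty}$-rings, if $\sigma_j(x)$ is invertible in $B$ for some $x \in B_j$, then there exists $k \geq j$ such that $t_{jk}(x)$ is already invertible in $B_k$.} This should follow from the fact (recorded in the preliminaries) that the forgetful functor $\mathcal{C}^{\infty}{\rm \bf Rng} \to {\rm \bf Set}$ creates directed colimits: a witness $y \in B$ to $\sigma_j(x)\cdot y = 1$ lifts to some $y_\ell \in B_\ell$ with $\ell \geq j$, and the equation $t_{j\ell}(x)\cdot y_\ell = 1$ then holds in $B_k$ for some sufficiently large $k \geq \ell$, since it holds after passing to the colimit.

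Granting this lemma and applying it to $B_k = A_k\{S_k^{-1}\}$, I obtain $k \geq j$ with $\eta_{S_k}(\alpha_{jk}(a_j)) = s_{jk}(\eta_{S_j}(a_j))$ invertible in $A_k\{S_k^{-1}\}$, which is precisely the statement $\alpha_{jk}(a_j) \in S_k^{\infty-{\rm sat}}$. Consequently $a = \alpha_k(\alpha_{jk}(a_j)) \in \alpha_k[S_k^{\infty-{\rm sat}}]$, which lies in $\varinjlim_i S_i^{\infty-{\rm sat}}$, finishing the proof. The inclusion into $\varinjlim_i A_i$ is automatic since all the sets involved are, by construction, subsets of $A$.
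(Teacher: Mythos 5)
Your proof is correct and follows exactly the route the paper intends: the corollary is stated without proof as a consequence of \textbf{Theorem \ref{2137}}, and your argument is the natural way to extract it, transferring invertibility through the isomorphism $\widetilde{\varphi}$ and then descending to a finite stage. The one ingredient the paper leaves implicit — that invertibility of $\sigma_j(x)$ in a directed colimit of $\mathcal{C}^{\infty}$-rings is already witnessed at some stage $k \geq j$, because the forgetful functor to ${\rm \bf Set}$ creates directed colimits — is precisely the lemma you isolate and justify, and it is the same device needed to complete the closely parallel \textbf{Proposition \ref{marina}}.
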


Next we present some theorems about quotients of $\mathcal{C}^{\infty}-$rings by their ideals and some  relationships between quotients and $\mathcal{C}^{\infty}-$rings of fractions. We are going to prove that, in the category of $\mathcal{C}^{\infty}-$rings, taking quotients and taking rings of fractions are constructions which commute in a sense that will become clear in the following considerations.\\

Let $A$ be a $\mathcal{C}^{\infty}-$ring, $I$ be any of its ideals, $S \subseteq A$ be any subset of $A$ and consider the canonical map of the ring of fractions of $A$ with respect to $S$:
$$\eta_S : A \to A\{ S^{-1}\},$$
and consider $\widetilde{I_S} = \langle \eta_S[I]\rangle$.\\

Since $(q_{\widetilde{I_S}}\circ \eta_S)[I] \subseteq \widetilde{I_S}$, then by the \textbf{Theorem of Homomorphism} there exists a unique $\eta_{SI} : \dfrac{A}{I} \to \dfrac{A\{ S^{-1}\}}{\widetilde{I_S}}$ such that:

$$\xymatrix{
A \ar[r]^{\eta_S} \ar[d]^{q_I} & A\{ S^{-1}\} \ar[d]^{q_{\widetilde{I_S}}}\\
\dfrac{A}{I} \ar@{.>}[r]^{\eta_{SI}} & \dfrac{A\{ S^{-1}\}}{\widetilde{I_S}}
}$$

commutes.

\begin{proposition}The morphism $\eta_{SI}: \dfrac{A}{I} \to \dfrac{A\{ S^{-1}\}}{\widetilde{I_S}}$ has the same universal property which identifies the ring of quotients of $q_I[S] = S+I$.
\end{proposition}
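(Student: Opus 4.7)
The plan is to verify directly that $\eta_{SI}$ satisfies the two clauses of the universal property in \textbf{Definition \ref{Alem}} for the pair $\left(\dfrac{A}{I}, q_I[S]\right)$, and invoke the uniqueness of such a ring of fractions up to canonical isomorphism.

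First, I would check condition (1), that $\eta_{SI}$ sends every element of $q_I[S] = S+I$ to a unit. For $s \in S$ one has $\eta_{SI}(q_I(s)) = q_{\widetilde{I_S}}(\eta_S(s))$ by commutativity of the defining square, and since $\eta_S(s) \in (A\{S^{-1}\})^{\times}$, its image under the $\mathcal{C}^{\infty}$-homomorphism $q_{\widetilde{I_S}}$ is again invertible. This is the easy part.

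For the universal property (2), suppose $\varphi: A/I \to B$ is a $\mathcal{C}^{\infty}$-homomorphism with $\varphi[q_I[S]] \subseteq B^{\times}$. The strategy is a two-step factorisation: first through the localization $\eta_S$, then through the quotient $q_{\widetilde{I_S}}$. Composing with $q_I$ yields $\varphi \circ q_I : A \to B$, which sends $S$ into $B^{\times}$; hence by the universal property of $\eta_S$ there is a unique $\widehat{\varphi} : A\{S^{-1}\} \to B$ with $\widehat{\varphi} \circ \eta_S = \varphi \circ q_I$. For every $i \in I$ we have $\widehat{\varphi}(\eta_S(i)) = \varphi(q_I(i)) = \varphi(0) = 0$, so $\eta_S[I] \subseteq \ker\widehat{\varphi}$; since the kernel is an ideal, $\widetilde{I_S} = \langle \eta_S[I]\rangle \subseteq \ker\widehat{\varphi}$. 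By the homomorphism theorem for $\mathcal{C}^{\infty}$-rings there is a unique $\widetilde{\varphi} : A\{S^{-1}\}/\widetilde{I_S} \to B$ with $\widetilde{\varphi} \circ q_{\widetilde{I_S}} = \widehat{\varphi}$. Then
\[
\widetilde{\varphi} \circ \eta_{SI} \circ q_I \;=\; \widetilde{\varphi} \circ q_{\widetilde{I_S}} \circ \eta_S \;=\; \widehat{\varphi} \circ \eta_S \;=\; \varphi \circ q_I,
\]
and since $q_I$ is an epimorphism we get $\widetilde{\varphi} \circ \eta_{SI} = \varphi$.

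For uniqueness, if $\widetilde{\varphi}': A\{S^{-1}\}/\widetilde{I_S} \to B$ also satisfies $\widetilde{\varphi}' \circ \eta_{SI} = \varphi$, precomposing the identity $\widetilde{\varphi}' \circ q_{\widetilde{I_S}} \circ \eta_S = \widetilde{\varphi}' \circ \eta_{SI} \circ q_I = \varphi \circ q_I$ with the universal property of $\eta_S$ forces $\widetilde{\varphi}' \circ q_{\widetilde{I_S}} = \widehat{\varphi} = \widetilde{\varphi} \circ q_{\widetilde{I_S}}$, and then the surjectivity (hence epi-ness) of $q_{\widetilde{I_S}}$ gives $\widetilde{\varphi}' = \widetilde{\varphi}$. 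The only subtle point is confirming that $\widetilde{I_S} \subseteq \ker\widehat{\varphi}$ really follows just from $\eta_S[I]$ lying in the kernel, but this is immediate because $\ker\widehat{\varphi}$ is a (ring-theoretic) ideal and $\widetilde{I_S}$ is by definition the ideal generated by $\eta_S[I]$; no $\mathcal{C}^{\infty}$-subtlety arises since, as recalled in the preliminaries, congruences of $\mathcal{C}^{\infty}$-rings are classified by ring-theoretic ideals.
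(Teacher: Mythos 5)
Your proposal is correct and follows essentially the same route as the paper: factor $\varphi \circ q_I$ first through $\eta_S$ via the universal property of the $\mathcal{C}^{\infty}$-ring of fractions, then through $q_{\widetilde{I_S}}$ via the homomorphism theorem after observing $\widetilde{I_S} \subseteq \ker\widehat{\varphi}$, and conclude using the epimorphism properties of $q_I$, $\eta_S$ and $q_{\widetilde{I_S}}$. The only (harmless) additions are your explicit check of condition (1), which the paper omits.
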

\begin{proof}
Consider the following diagram:
$$\xymatrix{
A \ar[r]^{\eta_S} \ar[d]^{q_I} & A\{ S^{-1}\} \ar[d]^{q_{\widetilde{I_S}}}\\
\dfrac{A}{I} \ar[dr]^{t}\ar[r]^{\eta_{SI}} & \dfrac{A\{ S^{-1}\}}{\widetilde{I_S}}\\
    & B
}$$
where $t: \dfrac{A}{I} \to B$ is any $\mathcal{C}^{\infty}-$homomorphism such that $t \left[ \left( \dfrac{A}{I}\right)^{\times}\right] \subseteq B^{\times}$. \textit{Ipso facto}, $(t \circ q_I)[S] = t[q_I[S]] \subseteq B^{\times}$, so by the universal property of $\eta_S: A \to A\{ S^{-1}\}$, there exists a unique $\mathcal{C}^{\infty}-$homomorphism $\varphi: A\{ S^{-1}\} \to B$ such that the following diagram commutes:
$$\xymatrix{
A \ar@/_4pc/[ddr]_{t \circ q_I} \ar[r]^{\eta_S} \ar[d]^{q_I} & A\{ S^{-1}\} \ar@/^5pc/[dd]^{\exists ! \varphi} \ar[d]^{q_{\widetilde{I_S}}}\\
\dfrac{A}{I} \ar[rd]^{t}\ar[r]^{\eta_{SI}} & \dfrac{A\{ S^{-1}\}}{\widetilde{I_S}}\\
     & B
}$$

\textbf{Claim:} $\varphi[\widetilde{I_S}] = \{ 0\}$.\\

With regards to this, we note that $(\varphi \circ \eta_S)[I] = (t \circ q_I)[I] = t[\{ 0\}] = \{ 0\}$, so by linearity, $\varphi[\langle \eta_S[I]\rangle] = \varphi[\widetilde{I_S}] = \{ 0\}$.\\

For this reason, and applying the \textbf{Theorem of Homomorphism} we get as a consequence the existence and uniqueness of a $\mathcal{C}^{\infty}-$homomorphism $\overline{\varphi}: \dfrac{A\{ S^{-1}\}}{\widetilde{I_S}} \to B$ such that the following diagram commutes:
$$\xymatrix{
A\{ S^{-1}\} \ar[d]^{q_{\widetilde{I_S}}} \ar[r]^{\varphi} & B \\
\dfrac{A\{ S^{-1}\}}{\widetilde{I_S}} \ar@{.>}[ur]_{\exists ! \overline{\varphi}}
   & B
}$$

\textbf{Claim:} $\overline{\varphi} \circ \eta_{SI} = t$, that is, the following diagram commutes:
$$\xymatrix{
A \ar[r]^{\eta_S} \ar[d]^{q_I} & A\{ S^{-1}\} \ar@/^5pc/[dd]^{\varphi} \ar[d]^{q_{\widetilde{I_S}}}\\
\dfrac{A}{I} \ar[dr]^{t} \ar[r]^{\eta_{SI}} & \dfrac{A\{ S^{-1}\}}{\widetilde{I_S}} \ar[d]^{\overline{\varphi}}\\
    & B}$$

It suffices to show that $\overline{\varphi} \circ \eta_{SI} \circ q_I = t \circ q_I$, for since $q_I$ is an epimorphism we conclude $\overline{\varphi} \circ \eta_{SI} = t$.\\

Indeed, $t \circ q_I = \varphi \circ \eta_S = (\overline{\varphi}\circ q_{\widetilde{I_S}}) \circ \eta_S = \overline{\varphi} \circ \eta_{SI} \circ q_I \Rightarrow t = \overline{\varphi} \circ \eta_{SI}$.\\

Now we claim that if $\psi: \dfrac{A\{ S^{-1}\}}{\widetilde{I_S}} \to B$ is a $\mathcal{C}^{\infty}-$morphism such that:
$$\xymatrix{
\dfrac{A}{I} \ar[dr]^{t} \ar[r]^{\eta_{SI}} & \dfrac{A\{ S^{-1}\}}{\widetilde{I_S}} \ar[d]^{\psi}\\
    & B
}$$
commutes, then $\psi = \varphi$.\\

Verily,
$$\psi \circ \eta_{SI} \circ q_I = t \circ q_I = \varphi \circ \eta_S = \overline{\varphi} \circ q_{\widetilde{I_S}} \circ \eta_S = \overline{\varphi} \circ \eta_{SI} \circ q_I$$
$$\psi \circ q_{\widetilde{I_S}} \circ \eta_S = \psi \circ \eta_{SI} \circ q_I = t \circ q_I = \varphi \circ \eta_S = \overline{\varphi} \circ q_{\widetilde{I_S}} \circ \eta_S$$
hence
$$(\psi \circ q_{\widetilde{I_S}}) \circ \eta_S = (\overline{\varphi} \circ q_{\widetilde{I_S}}) \circ \eta_S$$
and since $\eta_S : A \to A\{ S^{-1}\}$ is a $\mathcal{C}^{\infty}-$epimorphism, we get:
$$\psi \circ q_{\widetilde{I_S}} = \overline{\varphi} \circ q_{\widetilde{I_S}},$$
and because $q_{\widetilde{I_S}}$ is also an epimorphism, we get:
$$\psi = \overline{\varphi}.$$

We have just proved that $\eta_{SI}: \dfrac{A}{I} \to \dfrac{A\{ S^{-1}\}}{\widetilde{I_S}}$ has the same universal property as:
$$\eta_{S+I}: \dfrac{A}{I} \to \left( \dfrac{A}{I}\right)\lbrace \left( S+I \right)^{-1}\rbrace$$
\end{proof}

\begin{proposition}Let $A$ be a $\mathcal{C}^{\infty}-$ring, $I$ any of its ideals and $S \subseteq A$ any of its subsets. Since $(\eta_{S+I} \circ q_I)[A^{\times}] \subseteq \left( \dfrac{A}{I}\lbrace \left( S+I\right)^{-1}\rbrace\right)^{\times}$, there is a unique $\widehat{q}_{IS}$ such that the following diagram commutes:
$$\xymatrixcolsep{5pc}\xymatrix{
A \ar[r]^{\eta_S} \ar[d]^{q_I} & A\{ S^{-1}\} \ar@{.>}[d]^{\widehat{q}_{IS}}\\
\dfrac{A}{I} \ar[r]^{\eta_{S+I}} & \left( \dfrac{A}{I} \right) \lbrace \left( S+I\right)^{-1}\rbrace
}$$

The $\mathcal{C}^{\infty}-$homomorphism $\mu^{-1}: \dfrac{A\{S^{-1} \}}{\widetilde{I_S}} \to \dfrac{A}{I} \lbrace \left( S+I \right)^{-1}\rbrace$ is an isomorphism.
\end{proposition}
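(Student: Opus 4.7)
The plan is to construct $\widehat{q}_{IS}$ by invoking the universal property of $\eta_S$, observe that it annihilates $\widetilde{I_S}$ so that it descends to a map $\overline{q}_{IS}\colon A\{S^{-1}\}/\widetilde{I_S} \to (A/I)\{(S+I)^{-1}\}$, and then identify this descended map with $\mu^{-1}$, where $\mu$ is the isomorphism provided by the preceding proposition.

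First, I would check the hypothesis needed for the universal property of $\eta_S$: for every $s \in S$ one has $(\eta_{S+I} \circ q_I)(s) = \eta_{S+I}(s+I) \in ((A/I)\{(S+I)^{-1}\})^{\times}$, since $s+I \in S+I$. Applied to the composite $\eta_{S+I} \circ q_I \colon A \to (A/I)\{(S+I)^{-1}\}$, the universal property of $\eta_S \colon A \to A\{S^{-1}\}$ yields the unique $\widehat{q}_{IS}$ making the square in the statement commute.

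Next I would establish that $\widetilde{I_S} \subseteq \ker \widehat{q}_{IS}$. For each $i \in I$ one has $\widehat{q}_{IS}(\eta_S(i)) = (\widehat{q}_{IS} \circ \eta_S)(i) = (\eta_{S+I} \circ q_I)(i) = \eta_{S+I}(0) = 0$, so $\eta_S[I] \subseteq \ker \widehat{q}_{IS}$; since the kernel of a $\mathcal{C}^{\infty}$-homomorphism is an ideal, the ideal it generates, $\widetilde{I_S} = \langle \eta_S[I]\rangle$, is contained in the kernel as well. By the homomorphism theorem, $\widehat{q}_{IS}$ factors uniquely through $q_{\widetilde{I_S}}$, giving $\overline{q}_{IS}\colon A\{S^{-1}\}/\widetilde{I_S} \to (A/I)\{(S+I)^{-1}\}$ with $\widehat{q}_{IS} = \overline{q}_{IS} \circ q_{\widetilde{I_S}}$.

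Finally, I would identify $\overline{q}_{IS}$ with $\mu^{-1}$. The preceding proposition shows that $\eta_{SI}\colon A/I \to A\{S^{-1}\}/\widetilde{I_S}$ satisfies the universal property characterizing $\eta_{S+I}$, so there is a unique $\mathcal{C}^{\infty}$-isomorphism $\mu\colon (A/I)\{(S+I)^{-1}\} \to A\{S^{-1}\}/\widetilde{I_S}$ with $\mu \circ \eta_{S+I} = \eta_{SI}$. A direct diagram chase then yields $(\overline{q}_{IS} \circ \mu) \circ \eta_{S+I} = \eta_{S+I}$ and $(\mu \circ \overline{q}_{IS}) \circ \eta_{SI} = \eta_{SI}$; appealing to uniqueness in each universal property (and to \textbf{Proposition \ref{fact}}, which guarantees that $\eta_S$ and $\eta_{S+I}$ are epimorphisms so equalities can be checked after precomposition), one concludes $\overline{q}_{IS} \circ \mu = \mathrm{id}$ and $\mu \circ \overline{q}_{IS} = \mathrm{id}$, whence $\overline{q}_{IS} = \mu^{-1}$ is an isomorphism. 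The only genuine obstacle is the diagram bookkeeping: keeping four objects, four canonical maps, and two universal properties straight, and invoking the uniqueness clause at each stage; no essentially new idea beyond the preceding proposition is required.
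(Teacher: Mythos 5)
Your proposal is correct and follows essentially the same route as the paper: both rest on the isomorphism $\mu$ supplied by the preceding proposition, the fact that $\eta_S$, $\eta_{S+I}$ and the quotient maps are epimorphisms (so equalities can be checked after precomposition), and the uniqueness clauses of the universal properties. The only difference is bookkeeping order — you factor $\widehat{q}_{IS}$ through $q_{\widetilde{I_S}}$ first and then identify the descended map with $\mu^{-1}$, whereas the paper first derives $q_{\widetilde{I_S}} = \mu \circ \widehat{q}_{IS}$ and reads off the factorization from that — which is not a substantive difference.
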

\begin{proof}
As a consequence of the uniqueness of the ring of fractions map (up to isomorphism), it follows that there is an isomorphism $\mu: \left( \dfrac{A}{I}\right)\lbrace \left( S + I \right)^{-1}\rbrace \to  \dfrac{A\{ S^{-1}\}}{\widetilde{I_S}}$ such that the following diagrams commute:
$$\begin{array}{lr}
\xymatrixcolsep{3pc}\xymatrix{
\dfrac{A}{I} \ar[r]^(.25){\eta_{S+I}} \ar[dr]^{\eta_{SI}} & \left( \dfrac{A}{I}\right)\lbrace \left( S+I \right)^{-1}\rbrace \ar[d]^{\mu}\\
    & \dfrac{A\{ S^{-1}\}}{\widetilde{I_S}} }
&
\xymatrixcolsep{3pc}\xymatrix{
\dfrac{A}{I} \ar[r]^{\eta_{SI}} \ar[dr]^{\eta_{S+I}} & \dfrac{A\{ S^{-1}\}}{\widetilde{I_S}} \ar[d]^{\mu^{-1}}\\
    & \left( \dfrac{A}{I}\right)\lbrace \left( S+I \right)^{-1}\rbrace
}
\end{array}$$

We claim that the upper right triangles of the following diagrams commute:
$$\begin{array}{lr}
\xymatrixcolsep{3pc}\xymatrix{
A \ar[d]^{q_I} \ar[r]^{\eta_S} & A\{ S^{-1}\} \ar[d]^{\widehat{q}_{IS}} \ar[r]^{q_{\widetilde{I_S}}} & \dfrac{A\{ S^{-1}\}}{\widetilde{I_S}} \ar[ld]^{\mu^{-1}}\\
\dfrac{A}{I} \ar@/_5pc/[rru]^{\eta_{SI}} \ar[r]^{\eta_{S+I}} & \dfrac{A}{I} \lbrace \left( S+I\right)^{-1}\rbrace &
}
&
\xymatrixcolsep{3pc}\xymatrix{
A \ar[d]^{q_I} \ar[r]^{\eta_S} & A\{ S^{-1}\} \ar[d]^{\widehat{q}_{IS}} \ar[r]^{q_{\widetilde{I_S}}} & \dfrac{A\{ S^{-1}\}}{\widetilde{I_S}} \\
\dfrac{A}{I} \ar@/_5pc/[rru]^{\eta_{SI}} \ar[r]^{\eta_{S+I}} & \dfrac{A}{I} \lbrace \left( S+I\right)^{-1}\rbrace \ar[ur]^{\mu} &
}
\end{array}$$

Indeed, since the left square commutes,
$$q_{\widetilde{I_S}} \circ \eta_S = \eta_{SI} \circ q_I = \mu \circ \eta_{S+I} \circ q_I = \mu \circ \widehat{q}_{IS} \circ \eta_S,$$
and since $\eta_S$ is an epimorphism,
$$q_{\widetilde{I_S}}= \mu \circ \widehat{q}_{IS}.$$

Since $\mu^{-1}$ is an isomorphism and $q_{\widetilde{I_S}}$ is an epimorphism, it follows that $\widehat{q}_{IS} = \mu^{-1} \circ q_{\widetilde{q_{I_S}}}$ is an epimorphism. As a result of this, $\widetilde{I_S} = \ker (\widehat{q}_{IS})$, and due to the uniqueness of que quotient morphism, it follows that $\mu^{-1}$ is the quotient morphism, so the result is proved.
\end{proof}

\begin{corollary}\label{Jeq}Let $A$ be a $\mathcal{C}^{\infty}-$ring, $I$ any of its ideals and $S \subseteq A$ any of its subsets. There are unique isomorphisms $\mu: \dfrac{A}{I} \lbrace \left( S + I\right)^{-1}\rbrace \to \dfrac{A\{ S^{-1}\}}{\langle \eta_S[I] \rangle}$ such that the following pentagons commute:
$$\begin{array}{lr}
\xymatrixcolsep{3pc}\xymatrix{
A \ar[d]^{q_I} \ar[r]^{\eta_S} & A\{ S^{-1}\} \ar[r]^{q_{\widetilde{I_S}}} & \dfrac{A\{ S^{-1}\}}{\langle \eta_S[I] \rangle} \\
\dfrac{A}{I} \ar[r]^{\eta_{S+I}} & \dfrac{A}{I} \lbrace \left( S + I\right)^{-1}\rbrace \ar[ur]^{\mu}
}&
\xymatrixcolsep{3pc}\xymatrix{
A \ar[d]^{q_I} \ar[r]^{\eta_S} & A\{ S^{-1}\} \ar[r]^{q_{\widetilde{I_S}}} & \dfrac{A\{ S^{-1}\}}{\langle \eta_S[I] \rangle} \ar[dl]^{\mu^{-1}} \\
\dfrac{A}{I} \ar[r]^{\eta_{S+I}} & \dfrac{A}{I} \lbrace \left( S+I\right)^{-1}\rbrace
}
\end{array}$$
\end{corollary}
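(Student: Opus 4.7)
The plan is to derive this corollary directly from the two preceding propositions, which do nearly all of the real work. The first proposition establishes that the canonical induced map $\eta_{SI}: A/I \to A\{S^{-1}\}/\widetilde{I_S}$, arising from the homomorphism theorem applied to $q_{\widetilde{I_S}}\circ \eta_S$, satisfies the universal property characterizing $\eta_{S+I}: A/I \to (A/I)\{(S+I)^{-1}\}$. By the standard uniqueness (up to unique isomorphism) of objects defined by a universal property, there exists a unique $\mathcal{C}^{\infty}$-isomorphism
\[
\mu: \dfrac{A}{I}\{(S+I)^{-1}\} \xrightarrow{\ \cong\ } \dfrac{A\{S^{-1}\}}{\langle \eta_S[I]\rangle}
\]
such that $\mu \circ \eta_{S+I} = \eta_{SI}$, together with its inverse $\mu^{-1}$ satisfying $\mu^{-1}\circ \eta_{SI} = \eta_{S+I}$.

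Next I would verify the two pentagons by diagram pasting. For the first pentagon, combine the defining square $q_{\widetilde{I_S}}\circ \eta_S = \eta_{SI}\circ q_I$ (from the construction of $\eta_{SI}$) with the triangle $\eta_{SI} = \mu \circ \eta_{S+I}$ to obtain
\[
q_{\widetilde{I_S}}\circ \eta_S \;=\; \mu \circ \eta_{S+I} \circ q_I,
\]
which is exactly the commutativity claimed. For the second pentagon, the second proposition already supplies the unique $\widehat{q}_{IS}: A\{S^{-1}\} \to (A/I)\{(S+I)^{-1}\}$ with $\widehat{q}_{IS}\circ \eta_S = \eta_{S+I}\circ q_I$ and shows that $\widehat{q}_{IS} = \mu^{-1}\circ q_{\widetilde{I_S}}$. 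Composing on the right with $\eta_S$ gives
\[
\mu^{-1}\circ q_{\widetilde{I_S}} \circ \eta_S \;=\; \eta_{S+I}\circ q_I,
\]
which is the second pentagon.

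Uniqueness of $\mu$ is then immediate: any isomorphism $\mu'$ making the first pentagon commute must satisfy $\mu'\circ \eta_{S+I}\circ q_I = q_{\widetilde{I_S}}\circ \eta_S = \eta_{SI}\circ q_I$, and since $q_I$ is a (regular) epimorphism we get $\mu'\circ \eta_{S+I} = \eta_{SI}$; the universal property of $\eta_{S+I}$ then forces $\mu' = \mu$. The main ``obstacle'' is really just careful bookkeeping rather than any genuine difficulty, since the substantive content, both the existence of $\mu$ as an isomorphism and the identification $\mu^{-1}\circ q_{\widetilde{I_S}} = \widehat{q}_{IS}$, has already been established in the preceding two propositions; the corollary merely packages these facts into the symmetric pentagon form, exploiting that $\eta_S$, $q_I$, and $q_{\widetilde{I_S}}$ are all epimorphisms in $\mathcal{C}^{\infty}{\rm \bf Rng}$ to cancel them when verifying commutativity.
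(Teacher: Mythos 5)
Your proposal is correct and follows exactly the route the paper intends: the corollary is stated without proof precisely because it is the packaging of the two preceding propositions, and your pasting of the defining square of $\eta_{SI}$ with the comparison isomorphism $\mu$, together with the epimorphism cancellation of $q_I$ and $\eta_{S+I}$ for uniqueness, is the standard way to fill in the details. No gaps.
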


Now we analyze the concept of a ``$\mathcal{C}^{\infty}-$radical ideal'' in the theory of $\mathcal{C}^{\infty}-$rings, which plays a similar role to the one played by radical ideals in Commutative Algebra. This concept is presented by I. Moerdijk and G. Reyes in \cite{moerdijk1986rings} in 1986 and explored in more details in \cite{rings2}. \\

Contrary to the concepts of $\mathcal{C}^{\infty}-$fields, $\mathcal{C}^{\infty}-$domains and local $\mathcal{C}^{\infty}-$rings, the concept of a $\mathcal{C}^{\infty}-$radical of an ideal can not be brought from Commutative Algebra via the forgetful functor. Recall that the radical of an ideal $I$ of a commutative unital ring $R$ is given by:

$$\sqrt{I} = \{ x \in R | (\exists n \in \mathbb{N})(x^n \in I)\}$$

and this concept can be characterized by:

$$\sqrt{I} = \bigcap \{ \mathfrak{p} \in {\rm Spec}\,(R) | I \subseteq \mathfrak{p}  \} = \{ x \in R | \left( \dfrac{R}{I}\right)[(x+I)^{-1}] \cong 0\}.$$

We use the latter equality in order to motivate our definition.\\

\begin{definition}\label{defrad}Let $A$ be a $\mathcal{C}^{\infty}-$ring and let $I \subseteq A$ be a proper ideal. The \index{$\mathcal{C}^{\infty}-$radical}\textbf{$\mathcal{C}^{\infty}-$radical of $I$} is given by:

$$\sqrt[\infty]{I}:= \{ a \in A | \left( \dfrac{A}{I}\right)\{ (a+I)^{-1}\} \cong 0\}$$
\end{definition}

\begin{proposition}\label{alba}Let $A$ be a $\mathcal{C}^{\infty}-$ring and let $I \subseteq A$ be any ideal. We have the following equalities:
  $$\sqrt[\infty]{I} = \{ a \in A | (\exists b \in I)\wedge(\eta_a(b) \in (A\{ a^{-1}\})^{\times}) \} = \{ a \in A | I \cap \{ a\}^{\infty-{\rm sat}} \neq \varnothing\}$$
  where $\eta_a : A \to A\{ a^{-1}\}$ is the morphism of fractions with respect to $\{ a\}$.\\
\end{proposition}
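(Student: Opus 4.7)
The plan is to deduce both equalities from \textbf{Corollary \ref{Jeq}} together with the concrete description of elements of $A\{a^{-1}\}$ provided by \textbf{Theorem \ref{38}}. The second equality will be essentially by definition, while the first reduces, via the isomorphism $(A/I)\{(a+I)^{-1}\} \cong A\{a^{-1}\}/\langle \eta_a[I]\rangle$, to showing that $1 \in \langle \eta_a[I]\rangle$ if and only if some single element of $I$ is mapped to a unit of $A\{a^{-1}\}$.

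The second equality is immediate once we recall that, by \textbf{Definition \ref{satlisa}}, $\{a\}^{\infty\text{-sat}} = \eta_a^{\dashv}[(A\{a^{-1}\})^{\times}] = \{b \in A \mid \eta_a(b) \in (A\{a^{-1}\})^{\times}\}$, so asserting the existence of $b \in I$ with $\eta_a(b)$ invertible is the same as asserting $I \cap \{a\}^{\infty\text{-sat}} \neq \varnothing$.

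For the first equality I would first apply \textbf{Corollary \ref{Jeq}}, which identifies $(A/I)\{(a+I)^{-1}\}$ with $A\{a^{-1}\}/\langle \eta_a[I]\rangle$; this ring is isomorphic to $0$ precisely when $\langle \eta_a[I]\rangle = A\{a^{-1}\}$, equivalently when $1 \in \langle \eta_a[I]\rangle$. The easy direction is then straightforward: if $b \in I$ and $\eta_a(b) \in (A\{a^{-1}\})^{\times}$, then $1 = \eta_a(b)^{-1}\cdot \eta_a(b) \in \langle \eta_a[I]\rangle$, hence $a \in \sqrt[\infty]{I}$.

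The main obstacle is the converse: from a relation $1 = \sum_{i=1}^{n} \gamma_i\,\eta_a(b_i)$ with $b_i \in I$ and $\gamma_i \in A\{a^{-1}\}$, I must exhibit a \emph{single} $b \in I$ with $\eta_a(b)$ invertible. The idea is to clear denominators uniformly. By clause (i) of \textbf{Theorem \ref{38}} applied to $\Sigma = \{a\}$, for each $i$ there are $d_i \in \{a\}^{\infty\text{-sat}}$ and $e_i \in A$ with $\gamma_i \cdot \eta_a(d_i) = \eta_a(e_i)$. Since $\{a\}^{\infty\text{-sat}}$ is a multiplicative submonoid of $A$ (being the preimage of the submonoid of units under the ring homomorphism $\eta_a$), setting $d = d_1 \cdots d_n$ and $d^{(i)} = \prod_{j \neq i} d_j$ we get $d \in \{a\}^{\infty\text{-sat}}$ and, multiplying the relation by $\eta_a(d)$,
\begin{equation*}
\eta_a(d) \;=\; \sum_{i=1}^{n} \gamma_i\,\eta_a(d_i)\,\eta_a(b_i)\,\eta_a(d^{(i)}) \;=\; \sum_{i=1}^{n} \eta_a(e_i\,b_i\,d^{(i)}) \;=\; \eta_a\!\left(\sum_{i=1}^{n} e_i\,b_i\,d^{(i)}\right).
\end{equation*}
Taking $b := \sum_{i=1}^{n} e_i\,b_i\,d^{(i)}$, we have $b \in I$ (each $b_i \in I$ and $I$ is an ideal) and $\eta_a(b) = \eta_a(d) \in (A\{a^{-1}\})^{\times}$, completing the argument. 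Combining the three steps yields the claimed chain of equalities.
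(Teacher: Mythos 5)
Your proof is correct and follows essentially the same route as the paper: both reduce, via \textbf{Corollary \ref{Jeq}}, to deciding when $1 \in \langle \eta_a[I]\rangle$, and both resolve this by clearing denominators in $A\{a^{-1}\}$. The only cosmetic difference is that you perform the denominator-clearing inline from a finite sum representation of $1$ (using clause (i) of \textbf{Theorem \ref{38}}), whereas the paper delegates that computation to \textbf{Theorem \ref{Cindy}} and then extracts the witness in $I \cap \{a\}^{\infty-{\rm sat}}$ with one extra step using the kernel characterization.
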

\begin{proof}
First we show that:
$$\{ a \in A | (\exists b \in I)(b \in A\{ a^{-1}\}^{\times}) \} \subseteq \sqrt[\infty]{I}.$$

Let $a \in A$ be such that $(\exists b \in I)(b \in A\{ a^{-1}\}^{\times})$. We have the following diagram:
$$\xymatrixcolsep{5pc}\xymatrix{
A \ar[d]^{q_I} \ar[r]^{\eta_A} \ar[dr]_{\eta_{\frac{A}{I}}\circ q_I} & A\{ a^{-1}\}\\
\dfrac{A}{I} \ar[r]_(0.5){\eta_{\frac{A}{I}}} & \left( \dfrac{A}{I}\right)\{ (a+I)^{-1}\}
}$$

Note that the composite $\eta_{\frac{A}{I}}\circ q_I: A \to \left( \dfrac{A}{I}\right)\{ (a+I)^{-1} \}$ maps $a$ to an invertible element of $\left( \dfrac{A}{I}\right)\{(a+I)^{-1}\}$, for
$$(\eta_{\frac{A}{I}} \circ q_I)(a) = \eta_{\frac{A}{I}}(a+I) \in \left( \dfrac{A}{I}\{ (a+I)^{-1}\}\right)^{\times}.$$

By the universal property of $\eta_a : A \to A\{ a^{-1}\}$ there is a unique morphism $q_a : A\{ a^{-1}\} \to \left( \dfrac{A}{I}\right)\{ (a+I)^{-1}\}$ such that the following rectangle commutes:

$$\xymatrix{
A \ar[r]^{\eta_a} \ar[d]^{q_I} & A\{ a^{-1}\} \ar@{.>}[d]^{q_a} \\
\dfrac{A}{I} \ar[r]^(0.2){\eta_{\frac{A}{I}}} & \left( \dfrac{A}{I}\right)\{(a+I)^{-1}\}
}$$
that is to say,
$$q_a \circ \eta_a = \eta_{\frac{A}{I}} \circ q_I.$$

Since there is some $b \in I$ such that $\eta_a(b) \in (A\{ a^{-1}\})^{\times}$, we have:
$$0_{\frac{A}{I}} = \eta_{\frac{A}{I}} \circ q_I(b) = q_a \circ \eta_a(b) = q_a(\eta_a(b)) \in \left( \dfrac{A}{I}\{ (a+I)^{-1}\} \right)^{\times}.$$

It follows that $\dfrac{A}{I}\{ (a+I)^{-1}\} \cong 0$, hence $a \in \sqrt[\infty]{I}$.\\

Conversely, we are going to show that:
$$\sqrt[\infty]{I} \subseteq \{ a \in A | (\exists b \in I)(\eta_a(b) \in A\{ a^{-1}\}^{\times}) \}.$$

Given $a \in \sqrt[\infty]{I}$, we have by definition
$$\dfrac{A}{I}\{ (a+I)^{-1}\} \cong 0,$$
so $1_{\frac{A}{I}} = 0_{\frac{A}{I}}$. Thus, the element $1_A \in A$ is such that $q_a(\eta_a(1_A))=1_{\frac{A}{I}} = 0_{\frac{A}{I}} = \eta_{\frac{A}{I}} \circ  q_I(1_A)$.\\

Since by \textbf{Corollary \ref{Jeq}}, taking quotients and taking rings of fractions commute, we have:
$$0 = \left( \dfrac{A}{I}\right)\{ (a+I)^{-1}\} \cong \dfrac{A\{ a^{-1}\}}{\widehat{I}_a},$$

where $\widehat{I}_a = \ker q_a = \langle \eta_a[I]\rangle = \left\{ \dfrac{\eta_a(i)}{\eta_a(c)} | (i \in I) \& (c \in \{ a\}^{\infty-{\rm sat}})\right\}$  (see  \textbf{Theorem \ref{Cindy}})  , the ideal generated by $\eta_a[I]$. Thus
$$\dfrac{A\{a^{-1}\}}{\ker q_a} = \dfrac{A\{a^{-1}\}}{\langle \eta_a[I]\rangle} \cong 0,$$
so $1 \in \widehat{I}_a$, i.e., there are $i \in I$ and $c \in \{ a \}^{\infty-{\rm sat}}$ such that:
$$1 = \dfrac{\eta_a(i)}{\eta_a(c)}.$$

It follows that $\eta_a(i) = \eta_a(c)$, so $\eta_a(i-c)=0$ and there is some $d \in \{ a\}^{\infty-{\rm sat}}$ such that $d\cdot (i - c) = 0$. Thus,
$$\underbrace{d \cdot i}_{\in I} = \underbrace{d \cdot c}_{\in \{ a\}^{\infty-{\rm sat}}}$$

and $I \cap \{ a\}^{\infty-{\rm sat}} \neq \varnothing$.
\end{proof}

\begin{theorem}\label{Hilfssatz} Let $A$ and $B$ be two $\mathcal{C}^{\infty}-$rings, $S \subseteq A$, $p \twoheadrightarrow B$ a surjective map and $\eta_S : A \to A\{ S^{-1}\}$ and $\eta_{p[S]} : B \to B\{ p[S]^{-1}\}$ be the canonical $\mathcal{C}^{\infty}-$ring of fractions. There is a unique surjective map $q: A\{ S^{-1}\} \twoheadrightarrow B\{ p[S]^{-1}\}$ such that the following square commutes:
$$\xymatrixcolsep{5pc}\xymatrix{
A \ar[r]^{\eta_S} \ar@{->>}[d]^{p} & A\{ S^{-1}\} \ar@{->>}[d]^{q}\\
B \ar[r]^{\eta_{p[S]}} & B\{ p[S]^{-1}\}}$$
\end{theorem}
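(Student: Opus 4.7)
The plan is to first establish existence and uniqueness of $q$ using the universal property of $\eta_S$, and then derive surjectivity by identifying $B\{p[S]^{-1}\}$ as a quotient of $A\{S^{-1}\}$ via \textbf{Corollary \ref{Jeq}}.

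For existence and uniqueness, I will observe that for every $s \in S$ we have $p(s) \in p[S]$, so $\eta_{p[S]}(p(s)) \in (B\{p[S]^{-1}\})^{\times}$. Hence the composite $\eta_{p[S]} \circ p : A \to B\{p[S]^{-1}\}$ sends $S$ into the group of units, and the universal property of $\eta_S : A \to A\{S^{-1}\}$ (\textbf{Definition \ref{Alem}}) furnishes a unique $\mathcal{C}^{\infty}$-homomorphism $q$ with $q \circ \eta_S = \eta_{p[S]} \circ p$. Note that this already gives uniqueness stronger than what the statement requires (uniqueness among all $\mathcal{C}^{\infty}$-homomorphisms making the square commute, not merely among surjective ones).

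For surjectivity, I will set $I = \ker p$. Since $p$ is surjective, the First Isomorphism Theorem for $\mathcal{C}^{\infty}$-rings yields an isomorphism $\bar{p} : A/I \stackrel{\cong}{\to} B$ with $\bar{p} \circ q_I = p$, under which $p[S]$ corresponds to $q_I[S] = S + I$. Applying \textbf{Corollary \ref{Jeq}}, there is a canonical isomorphism
$$\mu : \left(\dfrac{A}{I}\right)\{(S + I)^{-1}\} \stackrel{\cong}{\to} \dfrac{A\{S^{-1}\}}{\langle \eta_S[I]\rangle}.$$
Combining this with $\bar{p}$, I obtain an isomorphism $\Psi : B\{p[S]^{-1}\} \stackrel{\cong}{\to} A\{S^{-1}\}/\langle \eta_S[I]\rangle$ satisfying $\Psi \circ \eta_{p[S]} \circ p = q_{\langle \eta_S[I]\rangle} \circ \eta_S$, where $q_{\langle \eta_S[I]\rangle}$ denotes the canonical quotient projection (this is precisely the content of the pentagon in \textbf{Corollary \ref{Jeq}}, transported along $\bar{p}$).

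The last step will be to check that $q$ coincides, up to the isomorphism $\Psi$, with the canonical quotient. Precomposing both $\Psi \circ q$ and $q_{\langle \eta_S[I]\rangle}$ with $\eta_S$ yields the same morphism $\Psi \circ \eta_{p[S]} \circ p = q_{\langle \eta_S[I]\rangle} \circ \eta_S$; since $\eta_S$ is an epimorphism in $\mathcal{C}^{\infty}{\rm \bf Rng}$ (\textbf{Proposition \ref{fact}}), we conclude $\Psi \circ q = q_{\langle \eta_S[I]\rangle}$, so $q = \Psi^{-1} \circ q_{\langle \eta_S[I]\rangle}$ is the composite of a quotient projection with an isomorphism, hence surjective. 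The main subtlety is merely bookkeeping: making sure the universal properties and the First Isomorphism Theorem genuinely pin down the identification of $q$ with the canonical quotient — this is controlled throughout by the uniqueness clauses together with $\eta_S$ being epic.
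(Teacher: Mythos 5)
Your proposal is correct and follows essentially the same route as the paper: construct $q$ from the universal property of $\eta_S$ applied to $\eta_{p[S]}\circ p$, then identify $B\{p[S]^{-1}\}$ with $\bigl(A/\ker p\bigr)\{(S+\ker p)^{-1}\}$ via the First Isomorphism Theorem and with $A\{S^{-1}\}/\langle \eta_S[\ker p]\rangle$ via \textbf{Corollary \ref{Jeq}}, so that $q$ is an isomorphism composed with a quotient projection. The only cosmetic difference is that the paper explicitly constructs both directions of the isomorphism $B\{p[S]^{-1}\}\cong (A/\ker p)\{(S+\ker p)^{-1}\}$, whereas you invoke it as the induced map on rings of fractions; the uniqueness bookkeeping via the epimorphism property of $\eta_S$ matches the paper's appeal to uniqueness of $\varphi$.
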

\begin{proof}
Since for all $s \in S$, $\eta_{p[S]}(p(s)) \in (B\{ p[S]^{-1}\})^{\times}$, there must exist a unique $\mathcal{C}^{\infty}-$homomorphism $q: A\{ S^{-1}\} \to B\{ p[S]^{-1}\}$ such that the following diagram commutes:
$$\xymatrix{
A \ar[r]^{\eta_S} \ar[dr]_{\eta_{p[S]} \circ p}& A\{ S^{-1}\} \ar[d]^{q}\\
   & B\{ p[S]^{-1}\}
}$$
Now we need only to prove the map $q$ is a surjective map.\\

Since $p: A \twoheadrightarrow B$ is surjective, by the \textbf{Theorem of the Isomorphism} there exists a unique $\mathcal{C}^{\infty}-$isomorphism $\Phi: \frac{A}{\ker p} \to B$ such that the following diagram commutes:

$$\xymatrix{
A \ar@{->>}[r]^{p} \ar[d]^{\pi} & B \\
\dfrac{A}{\ker p} \ar@{>->>}[ur]^{\Phi} &
}$$

Note that $p[S] = (\Phi \circ \pi)[S] = \Phi[S + \ker p]$, so $\Phi^{-1}[p[S]] = (\Phi^{-1} \circ \Phi)[S + \ker p] = S + \ker p$, and   $\eta_{S+\ker p}[\Phi^{\dashv}[p[S]]] = \eta_{S + \ker p}[S + \ker p] \subseteq  \left( \left( \frac{A}{\ker p}\right)\{ (S + \ker p)^{-1}\}\right)^{\times}$, so there exists a unique $\mathcal{C}^{\infty}-$homomorphism:

$$\Psi : B\{ p[S]^{-1}\} \to \dfrac{A}{\ker p} \{(S+ \ker p)^{-1}\}  $$

such that:

$$\xymatrixcolsep{5pc}\xymatrix{
B \ar[r]^{\eta_{p[S]}}\ar[d]^{\Phi^{-1}} & B\{ p[S]^{-1}\} \ar[d]^{\psi}\\
\dfrac{A}{\ker p}  \ar[r]^(0.2){\eta_{S + \ker p}} &\left( \frac{A}{\ker p}\right)\{ (S + \ker p)^{-1}\}
}$$
commutes.\\

Conversely, since $\Phi[S + \ker p] = p[S]$ and $\eta_{p[S]}[\Phi[S + \ker p]] = \eta_{p[S]}[p[S]] \subseteq (B\{ p[S]^{-1}\})^{\times}$, there is a unique $\mathcal{C}^{\infty}-$homomorphism:

$$\Xi : \left( \frac{A}{\ker p}\right)\{ (S + \ker p)^{-1}\} \to B\{ p[S]^{-1}\}$$

such that:

$$\xymatrixcolsep{5pc}\xymatrix{
\frac{A}{\ker p} \ar[r]^{\eta_{S + \ker p}} \ar[d]_{\Phi} & \left( \frac{A}{\ker p}\right)\{ (S + \ker p)^{-1}\} \ar[d]^{\Xi}\\
B \ar[r]^{\eta_{p[S]}} & B\{ p[S]^{-1}\}}$$
commutes. \\

It is clear that $\Xi = \Psi^{-1}$, so $B\{ p[S]^{-1}\} \cong \left( \frac{A}{\ker p}\right)\{ (S + \ker p)^{-1}\}$.\\

We have the following commutative diagram:

$$\xymatrixcolsep{5pc}\xymatrix{
A \ar[dd]_{\pi} \ar[r]^{\eta_S} & A\{ S^{-1}\} \ar[dd]^{\varphi} \ar@{->>}[dr]^{q_{\langle \eta_S[\ker p]\rangle}} & \\
 & & \dfrac{A\{ S^{-1}\}}{\langle \eta_S[\ker p]\rangle} \ar@{>->>}[dl]^{\mu^{-1}}\\
\dfrac{A}{\ker p} \ar[r]^{\eta_{S + \ker p}} & \left( \dfrac{A}{\ker p}\right)\{ (S + \ker p)^{-1}\}
}$$

so $\varphi = \mu^{-1} \circ q_{\langle \eta_S[\ker p]\rangle}$, hence it is a composition of surjective $\mathcal{C}^{\infty}-$homomorphisms. \\

$$\xymatrixcolsep{5pc}\xymatrix{
A \ar@{->>}[d]_{p} \ar[r]^{\eta_S} & A\{ S^{-1}\} \ar[d]^{q} \ar@/^3pc/[dd]^{\varphi}\\
B \ar@{>->>}[d]_{\Phi^{-1}} \ar[r]^{\eta_{p[S]}} & B\{ p[S]^{-1}\} \ar@{>->>}[d]^{\Psi}\\
\frac{A}{\ker p} \ar[r]^{\eta_{S + \ker p}} & \left( \frac{A}{\ker p}\right)\{ (S + \ker p)^{-1}\}
}$$

Since $\Psi$ is bijective, $\varphi$ is surjective and $\varphi = \Psi \circ q$ (by the uniqueness of $\varphi$), it follows that $q$ is surjective.\\

\end{proof}





\section{Distinguished Classes of $\mathcal{C}^{\infty}-$rings}\label{dccr}

\hspace{0.5cm}In this section we present some distinguished classes of $\mathcal{C}^{\infty}-$rings, such as $\mathcal{C}^{\infty}-$fields, $\mathcal{C}^{\infty}-$domains, $\mathcal{C}^{\infty}-$local rings, reduced $\mathcal{C}^{\infty}-$rings and (briefly) von Neumann regular $\mathcal{C}^{\infty}-$rings.\\

In \cite{moerdijk1986rings}, we find definitions of $\mathcal{C}^{\infty}-$fields, $\mathcal{C}^{\infty}-$domains and $\mathcal{C}^{\infty}-$local rings. We describe these concepts in the following:

\begin{definition}\label{distintos}Let ${\rm \bf CRing}$ be the category of all commutative unital rings, and consider the forgetful functor $\widetilde{U}: \mathcal{C}^{\infty}{\rm \bf Rng} \to {\rm \bf CRing}$. We say that a $\mathcal{C}^{\infty}-$ring $A$ is:
\begin{itemize}
  \item{a \index{$\mathcal{C}^{\infty}-$field}\textbf{$\mathcal{C}^{\infty}-$field} whenever $\widetilde{U}(A) \in {\rm Obj}\,({\rm \bf CRing})$ is a field;}
  \item{a \index{$\mathcal{C}^{\infty}-$domain}\textbf{$\mathcal{C}^{\infty}-$domain} whenever $\widetilde{U}(A) \in {\rm Obj}\,({\rm \bf CRing})$ is a domain;}
  \item{a \index{local $\mathcal{C}^{\infty}-$ring}\textbf{local $\mathcal{C}^{\infty}-$ring} whenever $\widetilde{U}(A) \in {\rm Obj}\,({\rm \bf CRing})$ is a local ring.}
  \item{a \textbf{von Neumann regular $\mathcal{C}^{\infty}-$ring} whenever $\widetilde{U}(A) \in {\rm Obj}\,({\rm \bf CRing})$ is a von Neumann regular ring.}
\end{itemize}
\end{definition}

Thus a $\mathcal{C}^{\infty}-$field, a $\mathcal{C}^{\infty}-$domain and a local $\mathcal{C}^{\infty}-$ring are $\mathcal{C}^{\infty}-$rings such that their underlying $\R-$algebras are  fields, domains and  local rings in the ordinary sense, respectively.\\

The next lemma gives us a general method to obtain finitely generated local $\mathcal{C}^{\infty}-$rings using \index{prime filter}prime filters on the set of the closed parts of $\mathbb{R}^n$.\\

\begin{lemma}\label{backyardigans}Let $\Phi \subseteq \mathcal{P}(\mathbb{R}^n)$ be a prime filter. Then
$$\varinjlim_{U \in \Phi} \mathcal{C}^{\infty}(U)$$
is a local $\mathcal{C}^{\infty}-$ring.
\end{lemma}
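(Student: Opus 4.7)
The plan is to prove that the underlying commutative ring of $R := \varinjlim_{U \in \Phi} \mathcal{C}^{\infty}(U)$ is local, invoking the standard criterion that a nonzero commutative ring is local iff for every element $a$ either $a$ or $1-a$ is invertible. Because the forgetful functor $\mathcal{C}^{\infty}{\rm \bf Rng} \to {\rm \bf Set}$ creates directed colimits (as noted in the preliminaries), every element of $R$ has the form $[f]_U := \sigma_U(f)$ for some $U \in \Phi$ and some $f \in \mathcal{C}^{\infty}(U)$, where the $\sigma_U$ are the canonical colimit cocone maps and the transition maps are the restriction $\mathcal{C}^{\infty}-$homomorphisms $\rho_{VU}: \mathcal{C}^{\infty}(U) \to \mathcal{C}^{\infty}(V)$ for $V \subseteq U$ in $\Phi$; the system is directed because $\Phi$ is closed under finite intersection.

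First I would check that $R \not\cong 0$: since $\Phi$ is proper, $\varnothing \notin \Phi$, so each $\mathcal{C}^{\infty}(U)$ in the diagram satisfies $1 \neq 0$, and this passes to the colimit because the restriction maps are unital.

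Next, given an arbitrary element $[f]_U \in R$, I would consider the two open subsets of $U$
\[
V_0 := \{ x \in U \mid f(x) \neq 0\} = {\rm Coz}(f), \qquad V_1 := \{ x \in U \mid f(x) \neq 1\} = {\rm Coz}(1-f).
\]
Because $0 \neq 1$, each point of $U$ belongs to $V_0$ or to $V_1$, so $V_0 \cup V_1 = U \in \Phi$. Applying primeness of $\Phi$ at this splitting yields $V_0 \in \Phi$ or $V_1 \in \Phi$. If $V_0 \in \Phi$, then $f|_{V_0}$ is nowhere zero and hence invertible in $\mathcal{C}^{\infty}(V_0)$, so $[f]_U = [f|_{V_0}]_{V_0}$ is a unit in $R$. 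If $V_1 \in \Phi$, the same argument applied to $1-f$ gives $[1-f]_U \in R^{\times}$. This establishes the required dichotomy and hence the locality of $R$.

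The one delicate point is the interpretation of ``prime filter $\Phi \subseteq \mathcal{P}(\mathbb{R}^n)$'': the argument uses only that whenever $V_0, V_1$ are open subsets of $\mathbb{R}^n$ with $V_0 \cup V_1 \in \Phi$, at least one of the $V_i$ lies in $\Phi$. This is precisely primeness of $\Phi$ regarded as a filter on the lattice of open sets (the standard Moerdijk--Reyes setup for producing local $\mathcal{C}^{\infty}-$rings as germs along a filter), and no further input is needed beyond this and the directedness already supplied by the filter axioms.
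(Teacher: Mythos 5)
Your proof is correct and follows essentially the same route as the paper's: both arguments reduce locality to applying primeness of $\Phi$ to the union of the cozero sets of two functions summing to $1$ (the paper phrases the criterion as ``$[a]+[b]=[1]$ implies $[a]$ or $[b]$ is invertible,'' which is the same dichotomy you use with $b = 1 - a$). The only cosmetic difference is that the paper spells out the colimit explicitly as a quotient of a disjoint union, whereas you invoke the fact that the forgetful functor creates directed colimits — both legitimate, and your closing remark about $\Phi$ being prime on the lattice of \emph{open} sets correctly identifies the intended reading of the hypothesis.
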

\begin{proof}

First we give an explicit description of $\varinjlim_{U \in \Phi} \mathcal{C}^{\infty}(U)$ as:
$$\varinjlim_{U \in \Phi} \mathcal{C}^{\infty}(U) = \dfrac{\bigsqcup_{U \in \Phi} \mathcal{C}^{\infty}(U)}{ \thicksim },$$
where $ \thicksim $ is the equivalence relation on $\bigsqcup_{U \in \Phi} \mathcal{C}^{\infty}(U)$ given by:
$$(\alpha,U) \thicksim (\beta,V) \iff (\exists W \in \Phi)({\rho}^{U}_{W}(\alpha) = \alpha \upharpoonright_{W} = \beta \upharpoonright_{W}= {\rho}^{V}_{W}(\beta)).$$

$$\xymatrixcolsep{5pc}\xymatrix{
 & \dfrac{\bigsqcup_{U \in \Phi}\mathcal{C}^{\infty}(U)}{ \thicksim } & \\
\mathcal{C}^{\infty}(U) \ar@/^1pc/[ur]^{{\rho}_U} \ar[rr]^{{\rho}^{U}_{V}} & & \mathcal{C}^{\infty}(V) \ar@/_1pc/[ul]
}$$

The sum and the product are defined in a rather obvious fashion:

$$\begin{array}{cccc}
    + : & \left( \frac{\bigsqcup_{U \in \Phi}\mathcal{C}^{\infty}(U)}{ \thicksim }\right)\times \left( \frac{\bigsqcup_{U \in \Phi}\mathcal{C}^{\infty}(U)}{ \thicksim }\right) & \rightarrow &  \frac{\bigsqcup_{U \in \Phi}\mathcal{C}^{\infty}(U)}{ \thicksim } \\
     & ([(\alpha, U)], [(\beta, V)]) & \mapsto & [((\alpha + \beta)\upharpoonright_{U \cap V}, U \cap V)]
  \end{array}$$

and:

$$\begin{array}{cccc}
    \cdot : & \left( \frac{\bigsqcup_{U \in \Phi}\mathcal{C}^{\infty}(U)}{ \thicksim }\right)\times \left( \frac{\bigsqcup_{U \in \Phi}\mathcal{C}^{\infty}(U)}{ \thicksim }\right) & \rightarrow &  \frac{\bigsqcup_{U \in \Phi}\mathcal{C}^{\infty}(U)}{ \thicksim } \\
     & ([(\alpha, U)], [(\beta, V)]) & \mapsto & [((\alpha \cdot \beta)\upharpoonright_{U \cap V}, U \cap V)]
  \end{array}$$

Notice that since $\Phi$ is a proper filter, there is no danger of getting $U \cap V = \varnothing$, since $\varnothing \notin \Phi$, and for every $U, V \in \Phi$ we have $U \cap V \in \Phi$.\\

Let $a = (\alpha,U)$ and $b = (\beta, V) \in \bigsqcup_{U \in \Phi} \mathcal{C}^{\infty}(U)$ be such that $[a] + [b] = [1]$, \textit{i.e.}, $(\exists W \in \Phi)((\alpha + \beta)\upharpoonright_{W} = 1\upharpoonright_{W})$.\\

Let $V_{\alpha} = \{ x \in U | \alpha(x) \neq 0 \}$ and $V_{\beta} = \{ x \in U | \beta(x) \neq 0\}$.\\

\textbf{Claim:} $V_{\alpha} \cup V_{\beta} = W$.\\

If $V_{\alpha} \cup V_{\beta} \subsetneq W$, then there exists some $x_0 \in U$ such that $\alpha(x_0)=0$ and $\beta(x_0) = 0$, and we could not have $\alpha(x_0)+ \beta(x_0) = 1$, hence we have the equality.\\

Now we have $V_{\alpha} \cup V_{\beta} = W \in \Phi$, and since $\Phi$ is a prime filter, either $V_{\alpha} \in \Phi$ or $V_{\beta} \in \Phi$. In the former case, $\tilde{a} = (\alpha, V_{\alpha}) \in \left(\bigsqcup_{U \in \Phi} \mathcal{C}^{\infty}(U)\right)^{\times}$, and since $(\alpha, V_{\alpha}) \thicksim (\alpha, U)$, $[a] = [\tilde{a}] \in \left( \dfrac{\bigsqcup_{U \in \Phi}\mathcal{C}^{\infty}(U)}{\thicksim}\right)^{\times}$, and in the latter case we have, analogously, $[b] \in \left( \dfrac{\bigsqcup_{U \in \Phi}\mathcal{C}^{\infty}(U)}{\thicksim}\right)^{\times}$.
\end{proof}

\begin{proposition}\label{quetiro}Let $A$ be a $\mathcal{C}^{\infty}-$ring and $\p \in {\rm Spec}^{\infty}\,(A)$, that is to say that $\p$ is a $\mathcal{C}^{\infty}-$radical ideal. Then $A_{\p} := A\{ {A \setminus \p}^{-1}\}$ is a local $\mathcal{C}^{\infty}-$ring whose unique maximal ideal is given by $\mathfrak{m}_{\p} = \left\{ \frac{\eta_{A\setminus \p}(x)}{\eta_{A\setminus \p}(y)} |  (x \in \p) \& (y \in A \setminus \p)\right\}$.
\end{proposition}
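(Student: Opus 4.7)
The plan is to reduce the proposition to a single key equality, $(A \setminus \p)^{\infty-{\rm sat}} = A \setminus \p$, from which both the explicit description of $\mathfrak{m}_{\p}$ and the locality of $A_{\p}$ fall out by direct manipulation. Writing $\eta = \eta_{A \setminus \p}: A \to A_{\p}$, and noting that $A \setminus \p \subseteq (A \setminus \p)^{\infty-{\rm sat}}$ is automatic, I first aim to prove $(A \setminus \p)^{\infty-{\rm sat}} \cap \p = \emptyset$. Suppose for contradiction that some $a \in \p$ has $\eta(a) \in A_{\p}^{\times}$. By Corollary \ref{Jeq} applied to $I = \p$ and $S = A \setminus \p$,
\[
A_{\p}/\langle \eta[\p] \rangle \;\cong\; (A/\p)\bigl\{\bigl((A/\p) \setminus \{0\}\bigr)^{-1}\bigr\},
\]
where I use primality of $\p$ to identify $(A \setminus \p) + \p$ with $(A/\p) \setminus \{0\}$. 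In this quotient the image of $\eta(a)$ is simultaneously $0$ (because $a \in \p$) and a unit (because $\eta(a)$ was), forcing the right-hand side to be the trivial ring. I would rule this out by showing $A/\p$ is $\mathcal{C}^{\infty}$-reduced: unwinding Definition \ref{defrad} twice, for $\bar{a} = a + \p$ one has $\bar{a} \in \sqrt[\infty]{(0)_{A/\p}}$ iff $(A/\p)\{(a+\p)^{-1}\} \cong 0$ iff $a \in \sqrt[\infty]{\p} = \p$ iff $\bar{a} = 0$; then Proposition \ref{marina} applied to $(A/\p) \setminus \{0\}$ shows its $\mathcal{C}^{\infty}$-saturation cannot contain $0$, preventing the right-hand side above from collapsing.

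Granted the equality $(A \setminus \p)^{\infty-{\rm sat}} = A \setminus \p$, Theorem \ref{38}(i) gives that every element of $A_{\p}$ has the form $\eta(a)/\eta(s)$ with $s \in A \setminus \p$. The set $\mathfrak{m}_{\p}$ of the statement is then visibly an ideal: additive closure comes from $\eta(x_{1})/\eta(y_{1}) + \eta(x_{2})/\eta(y_{2}) = \eta(x_{1}y_{2} + x_{2}y_{1})/\eta(y_{1}y_{2})$, whose numerator lies in $\p$ and whose denominator lies in $A \setminus \p$ by primality, and absorption under multiplication by any $\eta(z)/\eta(t) \in A_{\p}$ is analogous. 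It is proper, for otherwise $A_{\p} \cong 0$ and then $\eta(0) = \eta(1)$ would force $0 \in (A \setminus \p)^{\infty-{\rm sat}} = A \setminus \p$, contradicting $0 \in \p$. To close the argument, take any $\alpha \in A_{\p} \setminus \mathfrak{m}_{\p}$ and write $\alpha = \eta(a)/\eta(s)$ with $s \in A \setminus \p$: if $a$ were in $\p$ then $\alpha$ would lie in $\mathfrak{m}_{\p}$, so $a \in A \setminus \p$, whence $\eta(a) \in A_{\p}^{\times}$ by the defining property of the $\mathcal{C}^{\infty}$-localization and $\alpha$ has inverse $\eta(s)/\eta(a)$. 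Thus $\mathfrak{m}_{\p}$ coincides with the complement of $A_{\p}^{\times}$, giving locality with $\mathfrak{m}_{\p}$ as the unique maximal ideal.

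The main obstacle I anticipate is the initial equality $(A \setminus \p)^{\infty-{\rm sat}} = A \setminus \p$, which is the point where the $\mathcal{C}^{\infty}$-radicality hypothesis on $\p$ genuinely enters. Classically, primality of $\p$ alone already makes $A \setminus \p$ saturated, but in the smooth setting the $\mathcal{C}^{\infty}$-saturation can strictly enlarge the classical one (cf.~Remark \ref{bols}), and ruling out this enlargement requires a somewhat delicate chain: $\mathcal{C}^{\infty}$-radicality of $\p$ forces $\mathcal{C}^{\infty}$-reducedness of $A/\p$, which via Proposition \ref{marina} prevents the localization of $A/\p$ at its nonzero elements from being trivial, which via Corollary \ref{Jeq} prevents the quotient $A_{\p}/\langle \eta[\p] \rangle$ from being trivial, finally yielding the required contradiction.
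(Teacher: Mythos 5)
Your proof is correct, but it is organized quite differently from the paper's. The paper outsources the locality of $A_{\p}$ to an external reference (\textbf{Lemma 1.1} of a cited work), gives a separate prime-filter argument for the free case $A=\mathcal{C}^{\infty}(\mathbb{R}^n)$, and then identifies $\mathfrak{m}_{\p}$ with the non-units via the invertibility criterion of \textbf{Theorem \ref{cara}}; crucially, it invokes the equality $(A\setminus\p)^{\infty-{\rm sat}}=A\setminus\p$ without proof at that point (it is only established later, in \textbf{Theorem \ref{TS}}(c)). You instead make that equality the pivot and prove it on the spot, via \textbf{Corollary \ref{Jeq}} and the $\mathcal{C}^{\infty}$-reducedness of $A/\p$, and then deduce both the ideal structure of $\mathfrak{m}_{\p}$ and locality by elementary fraction manipulation using \textbf{Theorem \ref{38}}(i). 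What your route buys is self-containedness and a cleaner logical order; what the paper's route buys is brevity (at the cost of a forward reference). Two small points to tighten: first, the claim that an improper $\mathfrak{m}_{\p}$ would force $A_{\p}\cong 0$ is not immediate — argue instead that $1=\eta(x)/\eta(y)$ with $x\in\p$ makes $\eta(x)$ invertible, so $x\in(A\setminus\p)^{\infty-{\rm sat}}=A\setminus\p$, a contradiction (the same one-liner also shows $\mathfrak{m}_{\p}$ contains no units, which you need for $\mathfrak{m}_{\p}=A_{\p}\setminus A_{\p}^{\times}$). Second, \textbf{Proposition \ref{marina}} alone does not keep $0$ out of $((A/\p)\setminus\{0\})^{\infty-{\rm sat}}$; you also need that $A/\p$ is a $\mathcal{C}^{\infty}$-reduced domain to pass from a finite subset to its nonzero product and then apply reducedness, exactly as in \textbf{Proposition \ref{Margarida}} and the proof of \textbf{Proposition \ref{gaviota}}. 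Both ingredients are available to you, so these are presentational gaps rather than mathematical ones.
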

\begin{proof}
 As for the fact of $A_{\p}$ being a local $\mathcal{C}^{\infty}-$ring, cf. \textbf{Lemma 1.1} of \cite{rings2}.\\

Let $A = \mathcal{C}^{\infty}(\mathbb{R}^n)$. We know that
$$\mathcal{C}^{\infty}(\mathbb{R}^n)_{\p} = \varinjlim_{f \notin \p} \mathcal{C}^{\infty}(U_f)$$
where $U_f = {\rm Coz}\,(f) = \{ x \in \mathbb{R}^n | f(x) \neq 0\}$, so it remains only to prove that:
$$\Phi = \{ U \subseteq \mathbb{R}^n | (\exists f \notin \p)(U = U_f) \}$$
is a prime filter, and by \textbf{Lemma \ref{backyardigans}} the will result follow.\\

We have $\mathbb{R}^n = U_1 \in \Phi$ and for every $f,g \in \mathcal{C}^{\infty}(\mathbb{R}^n)$ we have $U_f \cap U_g = U_{f \cdot g} \in \Phi$. Moreover, if $U_f \subseteq V$, taking the smooth characteristic function of $V$, $\chi_V$ we have $Z(\chi_V) \subseteq Z(f)$, (note that  $\widehat{\p} = \{ Z(h) | h \in \p \}$ is a filter).\\

If $V \cup W \in \Phi$, say $V \cup W = U_f$ for some $f \notin \p$, then writing $V = U_{\chi_V}$ and $W = U_{\chi_W}$, then $Z(f) = Z(\chi_V)\cap Z(\chi_U) = Z({\chi_V}^2 + {\chi_U}^2)$, so ${\chi_V}^2+{\chi_U}^2 \notin \p$, since $\p$ is $\mathcal{C}^{\infty}-$radical, hence ${\chi_V}^2 \notin \p$ or ${\chi_W}^2 \notin \p$, that is, $V \in \Phi$ or $W \in \Phi$, so $\Phi$ is prime.\\

Let us show that $\mathfrak{m}_{\p} = A_{\p} \setminus {A_{\p}}^{\times}$.\\

By the very definition of $A_{\p} = A\{ {A \setminus \p}^{-1}\}$, any element of it is of the form $\frac{\eta_{A \setminus \p}(x)}{\eta_{A \setminus \p}(y)}$ for some $x \in A$ and $y \in {A \setminus \p}^{\infty-{\rm sat}}$. Note that if $x', y' \in A$ are  such that $\frac{\eta_{A \setminus \p}(x')}{\eta_{A \setminus \p}(y')}=\frac{\eta_{A \setminus \p}(x)}{\eta_{A \setminus \p}(y)}$, then $(\exists s \in {A \setminus \p}^{\infty-{\rm sat}} = A \setminus \p)(s \cdot (x' \cdot y - x \cdot y')=0)$, so $x'\cdot \underbrace{s \cdot y}_{\in A \setminus \p} = \underbrace{x \cdot s \cdot y'}_{\in \p}$ and $x' \notin A \setminus \p$, thus $x' \in \p$.\\

Let $\frac{\eta_{A \setminus \p}(x)}{\eta_{A \setminus \p}(y)} \in A_{\p}$ be such that $x \in \p$ and $y \in A \setminus \p$. By \textbf{Theorem \ref{cara}}, it follows that $\frac{\eta_{A\setminus \p}(x)}{\eta_{A \setminus \p}(y)} \in A_{\p} \setminus {A_{\p}}^{\times}$ occurs if, and only if, $(\forall d \in A \setminus \p)(\forall c' \in A)(d \cdot c' \cdot x \in \p)$, which is the case since $x \in \p$ and $\p$ is an ideal. Hence:\\

$$\left\{ \frac{\eta_{A\setminus \p}(x)}{\eta_{A\setminus \p}(y)} |  (x \in \p) \& (y \in A \setminus \p)\right\} \subseteq A_{\p} \setminus {A_{\p}}^{\times}.$$

Conversely, suppose
$$\frac{\eta_{A \setminus \p}(x)}{\eta_{A \setminus \p}(y)} \in A_{\p} \setminus {A_{\p}}^{\times},$$

so by \textbf{Theorem \ref{cara}}, $(\forall d \in A \setminus \p)(\forall c' \in A)(d \cdot c' \cdot x \in \p)$. In particular, taking $c'=1$ yields $(\forall d \in A \setminus \p)(d \cdot x \in \p)$. Since $\p$ is a prime ideal, then either $d \in \p$ (which is not the case since $d \in A \setminus \p$) or $x \in \p$. Hence $x \in \p$.
\end{proof}

\begin{proposition}\label{sizi}Let $A$ be a $\mathcal{C}^{\infty}-$ring. The following assertions are equivalent.
\begin{itemize}
\item[i)]{$A$ is a $\mathcal{C}^{\infty}-$field;}
\item[ii)]{For every subset $S \subseteq A\setminus \{0\}$, the canonical map:
$$\begin{array}{cccc}
{\rm Can}_S : & A & \rightarrow & A\{ S^{-1} \}
\end{array}$$
is a $\mathcal{C}^{\infty}-$ring isomorphism;}
\item[iii)]{For any $a \in A\setminus \{ 0\}$, we have that:
$$\begin{array}{cccc}
{\rm Can}_a : & A & \rightarrow & A\{ a^{-1}\}
\end{array}$$ is a $\mathcal{C}^{\infty}-$isomorphism.}
\end{itemize}
\end{proposition}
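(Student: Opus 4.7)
The plan is to establish the cyclic chain of implications (i)$\Rightarrow$(ii)$\Rightarrow$(iii)$\Rightarrow$(i), the middle implication being trivial specialization.

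For (i)$\Rightarrow$(ii), I would invoke the characterization of ``$\eta_S$ is an isomorphism'' already proved earlier (the proposition showing this is equivalent to $S^{\infty-{\rm sat}}\subseteq A^{\times}$). If $A$ is a $\mathcal{C}^{\infty}-$field, then $\widetilde{U}(A)$ is a field, so $A^{\times}=A\setminus\{0\}$. Given $S\subseteq A\setminus\{0\}=A^{\times}$, the identity ${\rm id}_A\colon A\to A$ trivially satisfies the universal property defining ${\rm Can}_S$: any $\mathcal{C}^{\infty}-$homomorphism $\varphi\colon A\to B$ with $\varphi[S]\subseteq B^{\times}$ factors through ${\rm id}_A$ as $\varphi$ itself. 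By the uniqueness clause of \textbf{Definition \ref{Alem}}, ${\rm Can}_S\cong {\rm id}_A$, so ${\rm Can}_S$ is an isomorphism. (Equivalently, one notes that $S^{\infty-{\rm sat}}\subseteq A=A^{\times}\cup\{0\}$, and $0\notin S^{\infty-{\rm sat}}$ since $S\subseteq A\setminus\{0\}$ and the inversion of $0$ would force $A\{S^{-1}\}\cong 0$ in contradiction with ${\rm id}_A$ solving the universal problem.)

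For (ii)$\Rightarrow$(iii), I would just take $S=\{a\}$ for a given nonzero $a$.

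For (iii)$\Rightarrow$(i), assume $A\neq 0$ (otherwise $A$ is not a $\mathcal{C}^{\infty}-$field by Definition \ref{distintos}, but the hypothesis (iii) is vacuous and must be supplemented by this nontriviality, which in any case is implicit in the usual convention). Fix $a\in A\setminus\{0\}$. By hypothesis $\eta_a\colon A\to A\{a^{-1}\}$ is a $\mathcal{C}^{\infty}-$isomorphism, and by the very construction of the ring of fractions $\eta_a(a)\in (A\{a^{-1}\})^{\times}$. Let $\beta\in A\{a^{-1}\}$ be the multiplicative inverse of $\eta_a(a)$, and let $b:=\eta_a^{-1}(\beta)\in A$. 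Applying $\eta_a$ to the product $a\cdot b$, we obtain
$$\eta_a(a\cdot b)=\eta_a(a)\cdot\eta_a(b)=\eta_a(a)\cdot\beta=1=\eta_a(1),$$
and injectivity of $\eta_a$ yields $a\cdot b=1$, so $a\in A^{\times}$. This shows $A\setminus\{0\}\subseteq A^{\times}$, which together with $0\notin A^{\times}$ (since $A\neq 0$) forces $\widetilde{U}(A)$ to be a field.

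There is no serious obstacle here: the only subtle point is the treatment of the trivial ring in the implication (iii)$\Rightarrow$(i), which is handled either by the implicit nontriviality convention in the definition of $\mathcal{C}^{\infty}-$field or by observing that (iii) is a vacuous statement when $A=0$. The substantive content of the argument is that, once ${\rm Can}_a$ is an isomorphism, the canonical invertibility of $\eta_a(a)$ in the target can be pulled back through the inverse of $\eta_a$ to produce an actual inverse of $a$ in $A$.
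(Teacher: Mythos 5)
Your proof is correct and follows essentially the same route as the paper's: (i)$\Rightarrow$(ii) by observing $S\subseteq A\setminus\{0\}=A^{\times}$ so that ${\rm id}_A$ solves the universal problem, (ii)$\Rightarrow$(iii) by specialization to $S=\{a\}$, and (iii)$\Rightarrow$(i) by pulling the inverse of ${\rm Can}_a(a)$ back through the isomorphism to produce an inverse of $a$ in $A$. Your extra remark on the trivial ring is a harmless refinement the paper leaves implicit.
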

\begin{proof}
$i) \Rightarrow ii):$ Since $S \subseteq A^{\times}$, it is easy to see that ${\rm Can}_S: A \to A\{ S^{-1}\}$ is isomorphic to ${\rm id}_A: A \to A$, so ${\rm Can}_S : A \to A\{ S^{-1}\}$ is a $\mathcal{C}^{\infty}-$isomorphism.





$ii) \Rightarrow iii)$\, It is immediate, since $\{ a\} \subseteq A^{\times}$.\\

$iii) \Rightarrow i)$ Suppose that for every $a \in A\setminus \{0\}$, ${\rm Can}_a: A \to A\{a^{-1}\}$ is a $\mathcal{C}^{\infty}-$isomorphism. We must show that $a$ has an inverse in $A$. Let $\theta \in A\{a^{-1}\}$ be the inverse of ${\rm Can}_a(a)$, i.e.,
$$\theta \cdot {\rm Can}_a(a) = 1_{U(A)} = {\rm Can}_a(a) \cdot \theta.$$
Since ${\rm Can}_a$ is an isomorphism, there is a unique $b \in A$ such that ${\rm Can}_a(b) = \theta$. Now, ${\rm Can}_{a}(a)\cdot \theta = {\rm Can}_a(a) \cdot {\rm Can}_a(b) = {\rm Can}_a(a \cdot b) = 1_{U(A)\{a^{-1}\}}$, so $a \cdot b = 1_{A}$, thus $b = a^{-1}$.
\end{proof}

In ordinary Commutative Algebra, given an element $x$ of a ring $R$, we say that $x$ is a nilpotent infinitesimal if and only if there is some $n \in \mathbb{N}$ such that $x^n=0$. Let $A$ be a  $\mathcal{C}^{\infty}-$ring and $a \in A$. D. Borisov and K. Kremnizer in \cite{borisov2018beyond} call $a$ an $\infty-$infinitesimal if, and only if $A\{ a^{-1}\} \cong 0$. The next definition describes the notion of a $\mathcal{C}^{\infty}-$ring being free of $\infty-$infinitesimals - which is analogous to the notion of ``reducedness'', of a commutative ring.\\

\begin{definition}A $\mathcal{C}^{\infty}-$ring $A$ is \index{$\mathcal{C}^{\infty}-$reduced}\textbf{$\mathcal{C}^{\infty}-$reduced} if, and only if, $\sqrt[\infty]{(0)} = (0)$.
\end{definition}

\begin{proposition}Every $\mathcal{C}^{\infty}-$field, $A$, is a $\mathcal{C}^{\infty}-$reduced $\mathcal{C}^{\infty}-$ring.
\end{proposition}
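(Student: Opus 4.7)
The plan is to unwind the definition of $\sqrt[\infty]{(0)}$ and then apply the characterization of $\mathcal{C}^{\infty}$-fields given in \textbf{Proposition \ref{sizi}}. By \textbf{Definition \ref{defrad}}, we have
\[
\sqrt[\infty]{(0)} = \{a \in A \mid A\{a^{-1}\} \cong 0\},
\]
since $A/(0) \cong A$. So proving $\sqrt[\infty]{(0)} = (0)$ amounts to showing that, for each nonzero $a \in A$, the $\mathcal{C}^{\infty}$-ring $A\{a^{-1}\}$ is nontrivial, while $A\{0^{-1}\} \cong 0$.

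First I would handle the nonzero case: let $a \in A \setminus \{0\}$. Because $A$ is a $\mathcal{C}^{\infty}$-field, \textbf{Proposition \ref{sizi}} ((i)$\Rightarrow$(iii)) tells us that the canonical map ${\rm Can}_a : A \to A\{a^{-1}\}$ is a $\mathcal{C}^{\infty}$-isomorphism. Hence $A\{a^{-1}\} \cong A$. Since $A$ is a field we have $1_A \neq 0_A$, so $A \not\cong 0$, and therefore $a \notin \sqrt[\infty]{(0)}$.

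For the element $0 \in A$, observe that any $\mathcal{C}^{\infty}$-ring $B$ admitting a $\mathcal{C}^{\infty}$-homomorphism $\varphi : A \to B$ with $\varphi(0) \in B^{\times}$ must satisfy $1_B = \varphi(0)\cdot \varphi(0)^{-1} = 0_B$, and hence $B \cong 0$. By the universal property in \textbf{Definition \ref{Alem}}, this forces $A\{0^{-1}\} \cong 0$, so $0 \in \sqrt[\infty]{(0)}$. Combining both cases, $\sqrt[\infty]{(0)} = \{0\} = (0)$, proving that $A$ is $\mathcal{C}^{\infty}$-reduced. There is no real obstacle here; the only subtle point is remembering that the zero element of any $\mathcal{C}^{\infty}$-ring trivially belongs to the $\mathcal{C}^{\infty}$-radical of $(0)$ (the argument above is the $\mathcal{C}^{\infty}$-analogue of the classical fact that the radical of an ideal always contains the ideal).
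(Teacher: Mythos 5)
Your proof is correct and follows essentially the same route as the paper: identify $\sqrt[\infty]{(0)}$ with $\{a \in A \mid A\{a^{-1}\}\cong 0\}$ and observe that $A\{a^{-1}\}\cong A \neq 0$ precisely when $a \neq 0$. You simply make explicit (via \textbf{Proposition \ref{sizi}} and the universal property for $a=0$) what the paper's one-line argument leaves implicit.
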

\begin{proof}
Suppose $A$ is a $\mathcal{C}^{\infty}-$field. For every $a \in A$, we have $\dfrac{A}{(0)}\{ (a+(0))^{-1}\} \cong A\{ a^{-1}\}$, so $A\{ a^{-1}\} \cong A$ occurs if, and only if, $a \neq 0$. Hence $\sqrt[\infty]{(0)}=(0)$
\end{proof}

\begin{theorem}\label{Cindy}Let $A$ be a $\mathcal{C}^{\infty}-$ring, $S \subseteq A$, and $I \subset A$ any ideal. Then:
$$\langle \eta_S[I]\rangle = \left\{ \frac{\eta_S(b)}{\eta_S(d)} | b \in I \& d \in S^{\infty-{\rm sat}}\right\}$$
\end{theorem}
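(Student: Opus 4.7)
The plan is to prove the two inclusions separately, with the non-trivial direction relying crucially on \textbf{Theorem \ref{38}}.

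The inclusion $\supseteq$ is immediate. For $b \in I$ and $d \in S^{\infty-{\rm sat}}$, the element $\eta_S(b)$ lies in $\eta_S[I] \subseteq \langle \eta_S[I]\rangle$, while $\eta_S(d)$ is invertible in $A\{S^{-1}\}$ (this is the very definition of $S^{\infty-{\rm sat}}$). Hence $\dfrac{\eta_S(b)}{\eta_S(d)} = \eta_S(b) \cdot \eta_S(d)^{-1}$ is a product of an element of the ideal and an element of the ring, so it lies in $\langle \eta_S[I]\rangle$.

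For the reverse inclusion $\subseteq$, I would first observe that $S^{\infty-{\rm sat}}$ is a submonoid of $A$ containing $1_A$: if $d_1, d_2 \in S^{\infty-{\rm sat}}$ then $\eta_S(d_1 \cdot d_2) = \eta_S(d_1) \cdot \eta_S(d_2)$ is a product of invertible elements, hence invertible. A generic element $\xi \in \langle \eta_S[I]\rangle$ is a finite sum $\xi = \sum_{i=1}^{n} \alpha_i \cdot \eta_S(b_i)$ with $\alpha_i \in A\{S^{-1}\}$ and $b_i \in I$. By the characterization given in \textbf{Theorem \ref{38}}(i), each $\alpha_i$ can be written as $\alpha_i = \dfrac{\eta_S(c_i)}{\eta_S(d_i)}$ for some $c_i \in A$ and $d_i \in S^{\infty-{\rm sat}}$. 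Substituting,
$$\xi = \sum_{i=1}^{n} \frac{\eta_S(c_i \cdot b_i)}{\eta_S(d_i)}.$$

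Now I would put everything over a common denominator $d := \prod_{i=1}^{n} d_i$, which lies in $S^{\infty-{\rm sat}}$ by the submonoid property. Writing $\widehat{d}_i := \prod_{j \neq i} d_j$, we obtain
$$\xi = \frac{\eta_S\!\left(\sum_{i=1}^{n} c_i \cdot b_i \cdot \widehat{d}_i\right)}{\eta_S(d)} = \frac{\eta_S(b)}{\eta_S(d)},$$
where $b := \sum_{i=1}^{n} c_i \cdot b_i \cdot \widehat{d}_i$ lies in $I$, since $I$ is an ideal and each $b_i \in I$. This exhibits $\xi$ in the required form and closes the argument.

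The only delicate point — and where I would be most careful — is invoking \textbf{Theorem \ref{38}} correctly to obtain the single-fraction representation with denominator in $S^{\infty-{\rm sat}}$ (rather than merely in $S$). Everything else is bookkeeping: the submonoid property of $S^{\infty-{\rm sat}}$ and the standard ``common denominator'' manipulation. No further use of the $\mathcal{C}^{\infty}$-structure is needed beyond what is already encoded in the universal property of $\eta_S$.
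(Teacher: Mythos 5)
Your proposal is correct and follows essentially the same route as the paper: write a generic element of $\langle \eta_S[I]\rangle$ as a finite sum $\sum_i \alpha_i\cdot\eta_S(b_i)$, represent each $\alpha_i$ as $\eta_S(c_i)/\eta_S(d_i)$ with $d_i \in S^{\infty-{\rm sat}}$ (the fact supplied by \textbf{Theorem \ref{38}}(i)), absorb the $c_i$ into the ideal, and pass to the common denominator $\prod_i d_i$ using that $S^{\infty-{\rm sat}}$ is a submonoid. No gaps.
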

\begin{proof}
Given $h \in \langle \eta_S[I]\rangle$, there are $n \in \mathbb{N}$, $b_1, \cdots, b_n \in I$ and $\alpha_1, \cdots, \alpha_n \in A\{S^{-1}\}$ such that:

$$h = \sum_{i=1}^{n} \alpha_i \cdot \eta_{S}(b_i) $$

For each $i \in \{ 1, \cdots, n\}$ there are $c_i \in A$ and $d_i \in S^{\infty-{\rm sat}}$ such that:

$$\alpha_i \cdot \eta_S(d_i) = \eta_S(c_i)$$

so

$$h = \sum_{i=1}^{n} \alpha_i \cdot \eta_{S}(b_i) = \sum_{i=1}^{n} \dfrac{1}{\eta_S(d_i)}\cdot \eta_S(c_i)\cdot \eta_S(b_i),$$

and denoting $b_i':= c_i \cdot b_i \in I$, we get:

$$h = \sum_{i=1}^{n} \dfrac{\eta_S(b_i')}{\eta_S(d_i)}$$

For each $i=1, \cdots, n$, let
$$b_i'' = b_i' \prod_{j \neq i} d_j,$$
so
$$h = \dfrac{\eta_S \left( \prod_{j \neq 1}d_j\right) \eta_S(b_1')}{\eta_S(d_1 \cdots d_n)} + \dfrac{\eta_S \left( \prod_{j \neq 2}d_j \right) \eta_S(b_2')}{\eta_S(d_1 \cdots d_n)} + \cdots + \dfrac{\eta_S \left( \prod_{j \neq n}d_j\right) \eta_S(b_n')}{\eta_S(d_1 \cdots d_n)}$$
hence

$$h \cdot \eta_S(d_1 \cdots d_n)  =\eta_S \left( \prod_{j \neq 1}d_j\right) \eta_S(b_1') + \eta_S \left( \prod_{j \neq 2}d_j\right) \eta_S(b_2')+ \cdots + \eta_S \left( \prod_{j \neq n}d_j\right) \eta_S(b_n').$$

Let $b_i'' := \left( \prod_{j \neq i}d_i\right)\cdot \overbrace{b_i'}^{\in I} \in I$, so we have $h \cdot \eta_S(d_1 \cdots d_n) = \sum_{i=1}^{n} \eta_S(b_i'') = \eta_S\left( \sum_{i=1}^{n}b_i''\right)$

Since $\eta_S(d_1 \cdots d_n) \in A\{ S^{-1}\}^{\times}$, $d = d_1\cdots d_n \in S^{\infty-{\rm sat}}$, so taking $b = \sum_{i=1}^{n} b_i''$, we have $b \in I$, since it is a sum of elements of $I$, we can write:

$$h \cdot \eta_S(d) = \eta_S(b),$$

and $h = \dfrac{\eta_S(b)}{\eta_S(d)}$, with $b \in I$ and $d \in S^{\infty-{\rm sat}}$.

The other way round is immediate.
\end{proof}

As a consequence of \textbf{Theorem \ref{Cindy}}, we have:

\begin{corollary}If $A$ is a reduced $\mathcal{C}^{\infty}-$ring, then we have:
$$(\forall a \in A)((0 \in \{ a\}^{\infty-{\rm sat}}) \leftrightarrow (a=0))$$
\end{corollary}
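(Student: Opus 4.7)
\vspace{1ex}

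The plan is a direct application of \textbf{Proposition \ref{alba}} to the zero ideal $I = (0)$, combined with the definition of $\mathcal{C}^{\infty}$-reducedness.

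First I would instantiate \textbf{Proposition \ref{alba}} at $I = (0) = \{0\}$. The second equality in that proposition gives
$$\sqrt[\infty]{(0)} = \{ a \in A \mid (0) \cap \{a\}^{\infty-{\rm sat}} \neq \varnothing\} = \{ a \in A \mid 0 \in \{a\}^{\infty-{\rm sat}}\},$$
where the last rewriting is just the observation that $(0) = \{0\}$, so the intersection is nonempty precisely when it contains $0$.

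Next, I would invoke the hypothesis: $A$ is $\mathcal{C}^{\infty}$-reduced, which by definition means $\sqrt[\infty]{(0)} = (0) = \{0\}$. Combining this with the displayed equality yields
$$\{ a \in A \mid 0 \in \{a\}^{\infty-{\rm sat}}\} = \{0\},$$
which is precisely the biconditional $(0 \in \{a\}^{\infty-{\rm sat}}) \leftrightarrow (a = 0)$ to be proved. The $(\leftarrow)$ direction also follows directly from the general fact, already established, that $S \subseteq S^{\infty-{\rm sat}}$, applied to $S = \{0\}$.

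There is no genuine obstacle here: the entire work is done by \textbf{Proposition \ref{alba}}, and the corollary is essentially a restatement of $\mathcal{C}^{\infty}$-reducedness in terms of smooth saturation of singletons rather than in terms of the $\mathcal{C}^{\infty}$-radical. Accordingly, the proof should be a short two- or three-line paragraph.
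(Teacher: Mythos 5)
Your proof is correct, and the logic is airtight: \textbf{Proposition \ref{alba}} with $I=(0)$ gives $\sqrt[\infty]{(0)}=\{a\in A\mid 0\in\{a\}^{\infty-{\rm sat}}\}$, reducedness collapses this set to $\{0\}$, and the reverse implication is the trivial inclusion $\{0\}\subseteq\{0\}^{\infty-{\rm sat}}$. The only divergence from the paper is bibliographic rather than mathematical: the paper states the corollary as a consequence of \textbf{Theorem \ref{Cindy}} (the description of $\langle\eta_S[I]\rangle$ as fractions $\eta_S(b)/\eta_S(d)$) and supplies no proof, whereas you route everything through \textbf{Proposition \ref{alba}}. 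Your citation is arguably the more apt one, since Proposition \ref{alba} literally asserts the equivalence between membership in $\sqrt[\infty]{I}$ and nonempty intersection of $I$ with the smooth saturation of a singleton, which is exactly the statement being specialized; the detour through Theorem \ref{Cindy} would require first unwinding $0\in\{a\}^{\infty-{\rm sat}}$ into the triviality of $A\{a^{-1}\}$ and then into $a\in\sqrt[\infty]{(0)}$, steps that Proposition \ref{alba} has already packaged. No gap, nothing to fix.
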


\begin{proposition}\label{griebel}Let $I \subsetneq \mathcal{C}^{\infty}(\mathbb{R}^n)$ any ideal. Then:
$$\widehat{I} = \{ A \subseteq \mathbb{R}^n | A \, \mbox{is closed and}\, (\exists f \in  I)(A = Z(f))\}$$
is a filter on the set of all the closed subsets of $\mathbb{R}^n$. Moreover, $I$ is a proper ideal if, and only if, $\widehat{I}$ is a proper filter.
\end{proposition}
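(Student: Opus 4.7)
The goal is to verify the three filter axioms for $\widehat{I}$ — nonemptiness, closure under finite intersection, and upward closure — and then to check that $I$ is proper if and only if $\varnothing \notin \widehat{I}$.

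First, every element of $\widehat{I}$ is closed by construction, since $Z(f)=f^{-1}[\{0\}]$ is the preimage of a closed set under a continuous function. For nonemptiness, I would note that $0 \in I$ and $Z(0)=\mathbb{R}^n$, so $\mathbb{R}^n \in \widehat{I}$. For closure under binary intersection, given $A=Z(f)$ and $B=Z(g)$ with $f,g\in I$, the crucial identity is $Z(f)\cap Z(g)=Z(f^2+g^2)$; since $I$ is an ideal, $f^2+g^2=f\cdot f+g\cdot g\in I$, hence $A\cap B\in\widehat{I}$.

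The only step that requires a genuine input from smooth analysis is the upward-closure axiom: if $A=Z(f)\in\widehat{I}$ with $f\in I$ and $B\supseteq A$ is an arbitrary closed subset of $\mathbb{R}^n$, we must produce some $h\in I$ with $Z(h)=B$. Here I would appeal to the theorem of Whitney already cited in the paper (see \textbf{Theorem 5.0.5} of \cite{tese}): for every closed $B\subseteq\mathbb{R}^n$ there exists $g\in\mathcal{C}^{\infty}(\mathbb{R}^n)$ with $Z(g)=B$. Setting $h:=f\cdot g$, we have $h\in I$ because $I$ is an ideal, and
\[
Z(h)=Z(f)\cup Z(g)=A\cup B=B,
\]
where the last equality uses $A\subseteq B$. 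Thus $B\in\widehat{I}$.

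Finally, for the properness equivalence: $I$ is proper precisely when $1\notin I$. I would show that $1\in I$ if and only if $\varnothing\in\widehat{I}$. If $1\in I$, then $Z(1)=\varnothing\in\widehat{I}$. Conversely, if there is some $f\in I$ with $Z(f)=\varnothing$, then $f$ is nowhere zero, hence $\tfrac{1}{f}\in\mathcal{C}^{\infty}(\mathbb{R}^n)$ is a smooth inverse, and $1=f\cdot\tfrac{1}{f}\in I$ because $I$ is an ideal. This gives the biconditional $\widehat{I}\text{ proper}\iff I\text{ proper}$. The main (and only) obstacle is the use of Whitney's theorem to realize an arbitrary closed set as the zero set of a smooth function; once that is invoked, everything else is direct ideal-theoretic bookkeeping.
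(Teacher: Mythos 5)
Your proposal is correct and follows essentially the same route as the paper: $Z(0)=\mathbb{R}^n$ for nonemptiness, the identity $Z(f)\cap Z(g)=Z(f^2+g^2)$ for intersections, multiplication by a smooth function vanishing exactly on the larger closed set (Whitney's theorem, which the paper invokes as the existence of a ``smooth characteristic function'' $\chi_B$) for upward closure, and the observation that a nowhere-vanishing smooth function is invertible for the properness equivalence. No gaps.
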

\begin{proof}
First we note that: $\varnothing \notin \widehat{I}$ if, and only if, there is some $f \in I$ such that $Z(f)=\varnothing$, if, and only if, there is some invertible $f \in I$, which happens if, and only if, $ 1 \in I$ so $I$ is not be proper.\\

Since $0 \in I$, we have $\mathbb{R}^n = Z(0) \in \widehat{I}$.\\

Let $A, B \in \widehat{I}$ and let $f,g \in I$ be smooth functions such that $A = Z(f)$ and $B = Z(g)$, so $f^2 + g^2 \in I $. We claim that:
$$Z(f) \cap Z(g) = Z(f^2 + g^2).$$

Indeed, for every $x \in Z(f^2 + g^2)$ we have $(f^2+g^2)(x) = f^2(x) + g^2(x) = (f(x))^2 + (g(x))^2 = 0$, and since $f(x), g(x) \in \mathbb{R}$, $(f(x))^2 + (g(x))^2 = 0$ implies $f(x) = 0 = g(x)$, so $x \in Z(f)\cap Z(g)$. Conversely, given any $x \in Z(f)\cap Z(g)$, $f(x) = 0 = g(x)$, so $f^2(x) + g^2(x) = 0$, so $x \in Z(f) \cap Z(g)$.\\

Now, since $A \cap B = Z(f^2 + g^2)$ and $f^2 + g^2 \in I$, it follows that $A \cap B \in \widehat{I}$.\\

Finally, if $A \in \widehat{I}$ then there is $f \in I$ such that $A = Z(f)$. If $B \subseteq \mathbb{R}^n$ is closed and and $A \subset B$, then there is a smooth characteristic function $\chi_{B} : \mathbb{R}^n \to \mathbb{R}$ such that $Z(\chi_{B})= B)$. Consider the following smooth function:

$$g = f \cdot \chi_{B} \in I.$$

We notice that $Z(g)=Z(f)\cup Z(\chi_B) = A \cup B = B$, so $B \in \widehat{I}$.\\

From the above considerations it follows that $\widehat{I}$ is a  filter on the closed parts of $\mathbb{R}^n$.
 \end{proof}

\begin{proposition}\label{47}Let $\mathcal{F}$ be any filter on the set of all the closed subsets of $\mathbb{R}^n$. Then:
$$\check{\mathcal{F}} = \{ f \in \mathcal{C}^{\infty}(\mathbb{R}^n) | Z(f) \in \mathcal{F} \}$$
is an ideal of $\mathcal{C}^{\infty}(\mathbb{R}^n)$. Moreover, $\mathcal{F}$ is a proper filter if, and only if, $\check{\mathcal{F}}$ is a proper ideal.
\end{proposition}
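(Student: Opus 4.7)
The plan is to verify the three ideal axioms for $\check{\mathcal{F}}$ and then derive the properness equivalence as a one-line chain. Throughout I will use that $\mathcal{F}$, being a filter on the lattice of closed subsets of $\mathbb{R}^n$, contains $\mathbb{R}^n$, is closed under binary intersections, and is upward closed relative to inclusions of closed sets, together with the fact that $Z(f)$ is closed for every $f \in \mathcal{C}^{\infty}(\mathbb{R}^n)$ (as the preimage of $\{0\}$ under a continuous map).

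First, $0 \in \check{\mathcal{F}}$ because $Z(0) = \mathbb{R}^n \in \mathcal{F}$. For additive closure, I would use the set-theoretic inclusion $Z(f) \cap Z(g) \subseteq Z(f+g)$, which is immediate from $f(x)=g(x)=0 \Rightarrow (f+g)(x)=0$. Given $f,g \in \check{\mathcal{F}}$, both $Z(f),Z(g) \in \mathcal{F}$, hence $Z(f) \cap Z(g) \in \mathcal{F}$ by the filter's intersection property, and since $Z(f+g)$ is a closed set containing this intersection, upward closure yields $Z(f+g) \in \mathcal{F}$, so $f+g \in \check{\mathcal{F}}$.

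For closure under multiplication by an arbitrary $h \in \mathcal{C}^{\infty}(\mathbb{R}^n)$: from $f(x)=0 \Rightarrow (h\cdot f)(x) = 0$ we get $Z(f) \subseteq Z(h \cdot f)$; since $Z(h \cdot f)$ is closed and $Z(f) \in \mathcal{F}$, upward closure gives $Z(h \cdot f) \in \mathcal{F}$, so $h \cdot f \in \check{\mathcal{F}}$. This establishes that $\check{\mathcal{F}}$ is an ideal.

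For the properness claim, the simple chain
$$\check{\mathcal{F}} \text{ is proper} \iff 1 \notin \check{\mathcal{F}} \iff Z(1)=\varnothing \notin \mathcal{F} \iff \mathcal{F} \text{ is proper}$$
suffices. I do not expect any real obstacle here; the argument is precisely dual to that of \textbf{Proposition \ref{griebel}}. The only point deserving care is that upward closure of $\mathcal{F}$ is only asserted for closed supersets, so at each step one must invoke that $Z(f+g)$ and $Z(h \cdot f)$ are themselves closed before applying upward closure — which, as noted above, is automatic.
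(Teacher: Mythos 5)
Your proposal is correct and follows essentially the same route as the paper's proof: the same containments $Z(f)\cap Z(g)\subseteq Z(f+g)$ and $Z(f)\subseteq Z(h\cdot f)$ combined with the filter's intersection and upward-closure properties, and the same observation that $1\in\check{\mathcal{F}}$ if and only if $\varnothing\in\mathcal{F}$ for the properness equivalence. No gaps.
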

\begin{proof}
Note that $0 \in \check{\mathcal{F}}$, since $Z(0)= \mathbb{R}^n \in \mathcal{F}$.\\

We claim that for any $f \in \check{\mathcal{F}}$ and any $g \in \mathcal{C}^{\infty}(\mathbb{R}^n)$ we have $f \cdot g \in \check{\mathcal{F}}$.\\

Indeed, since $f \in \check{\mathcal{F}}$, $Z(f) \in \mathcal{F}$. Since $Z(f) \subseteq Z(f)\cup Z(g)$ and $Z(f) \in \mathcal{F}$, it follows that $f\cdot g \in \check{\mathcal{F}}$.\\

Now, given $f,g \in \check{\mathcal{F}}$, $Z(f), Z(g) \in \mathcal{F}$, and since $\mathcal{F}$ is a filter, $Z(f) \cap Z(g) \in \mathcal{F}$. Since $Z(f)\cap Z(g) \subseteq Z(f+g)$, it follows that $f+ g \in \check{\mathcal{F}}$.\\

Finally, $\varnothing \in \mathcal{F}$ occurs if, and only of there is some invertible $f \in \check{\mathcal{F}}$, thus $1 \in \check{\mathcal{F}}$.
\end{proof}

\begin{remark}Let $I \subseteq \mathcal{C}^{\infty}(\mathbb{R}^n)$ be any ideal. Then we have, by definition, $\widehat{I} = \{ A \subseteq \mathbb{R}^n | (\exists h \in I)(A = Z(h)) \}$, and therefore
$$\check{\widehat{I}} = \{ g \in \mathcal{C}^{\infty}(\mathbb{R}^n) | (\exists h \in I)(Z(g) = Z(h)) \}.$$

Let $\mathcal{F}$ be a proper filter on the set of all closed subsets of $\mathbb{R}^n$. Then:

$$\widehat{\check{\mathcal{F}}} = \{ A \subseteq \mathbb{R}^n | (\exists g \in \check{\mathcal{F}})(A=Z(g))\} =  \{ A \subseteq \mathbb{R}^n | (\exists g \in \check{\mathcal{F}})((Z(g) \in \mathcal{F})\&(A=Z(g)))\} = \mathcal{F}$$

since every closed subset $B \subseteq \mathbb{R}^n$ is a zero set of some smooth function.
\end{remark}




\begin{proposition}\label{49}Let $I \subseteq \mathcal{C}^{\infty}(\mathbb{R}^n)$ be an ideal. Then:
$$\check{\widehat{I}} = \{ g \in \mathcal{C}^{\infty}(\mathbb{R}^n) | (\exists h \in I)(Z(g) = Z(h))\} = \sqrt[\infty]{I}$$
In particular, the $\mathcal{C}^{\infty}-$radical of an ideal $I$ of the free $\mathcal{C}^{\infty}-$ring on finitely many generators, $\mathcal{C}^{\infty}(\mathbb{R}^n)$, is again an ideal.
\end{proposition}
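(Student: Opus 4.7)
The plan is to establish the two equalities by unfolding definitions and then invoking the characterization of the $\mathcal{C}^{\infty}$-radical from \textbf{Proposition \ref{alba}} together with the description of singleton smooth saturations in $\mathcal{C}^{\infty}(\mathbb{R}^n)$ from the \textbf{Remark} following \textbf{Theorem \ref{38}}. The ``in particular'' clause will then drop out for free from \textbf{Propositions \ref{griebel}} and \textbf{\ref{47}}.

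First I would unfold the first equality. By definition, $\widehat{I} = \{Z(h) : h \in I\}$ (every zero set is closed), and $\check{\mathcal{F}} = \{g : Z(g) \in \mathcal{F}\}$. Substituting, $\check{\widehat{I}} = \{g \in \mathcal{C}^{\infty}(\mathbb{R}^n) : (\exists h \in I)(Z(g) = Z(h))\}$; this is purely definitional and requires no argument beyond this chain of substitutions.

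Next, to identify this set with $\sqrt[\infty]{I}$, I would invoke \textbf{Proposition \ref{alba}}, which gives $\sqrt[\infty]{I} = \{a \in A : I \cap \{a\}^{\infty-{\rm sat}} \neq \varnothing\}$, together with the fact (recorded in the \textbf{Remark} preceding \textbf{Proposition \ref{pani}}, and a consequence of \textbf{Proposition \ref{pani}} itself) that $\{g\}^{\infty-{\rm sat}} = \{k \in \mathcal{C}^{\infty}(\mathbb{R}^n) : Z(k) \subseteq Z(g)\}$. Thus $g \in \sqrt[\infty]{I}$ if and only if there exists $h \in I$ with $Z(h) \subseteq Z(g)$. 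The inclusion $\check{\widehat{I}} \subseteq \sqrt[\infty]{I}$ is then immediate, since $Z(h) = Z(g)$ certainly implies $Z(h) \subseteq Z(g)$.

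The converse $\sqrt[\infty]{I} \subseteq \check{\widehat{I}}$ is the one step that needs a genuine construction, though a brief one. Given $g$ and $h \in I$ with $Z(h) \subseteq Z(g)$, I must produce $h' \in I$ with $Z(h') = Z(g)$ exactly. Invoking the Whitney theorem cited as \textbf{Theorem 5.0.5} of \cite{tese} (already used in the proof of \textbf{Proposition \ref{pani}}), I choose $\chi \in \mathcal{C}^{\infty}(\mathbb{R}^n)$ with $Z(\chi) = Z(g)$, and set $h' := h \cdot \chi$. Then $h' \in I$ because $I$ is an ideal, and $Z(h') = Z(h) \cup Z(\chi) = Z(h) \cup Z(g) = Z(g)$, exactly as required. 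This single trick of multiplying by a smooth characteristic function for $Z(g)$ is the crux of the whole proof; everything else is formal.

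Finally, the ``in particular'' clause follows at once: by \textbf{Proposition \ref{griebel}}, $\widehat{I}$ is a filter on the closed subsets of $\mathbb{R}^n$, and by \textbf{Proposition \ref{47}}, $\check{\mathcal{F}}$ is an ideal of $\mathcal{C}^{\infty}(\mathbb{R}^n)$ for every such filter $\mathcal{F}$. Applying this with $\mathcal{F} = \widehat{I}$ and using the equalities just established, $\sqrt[\infty]{I} = \check{\widehat{I}}$ is an ideal of $\mathcal{C}^{\infty}(\mathbb{R}^n)$.
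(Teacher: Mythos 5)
Your proposal is correct and follows essentially the same route as the paper: both directions reduce to the observation that $g \in \sqrt[\infty]{I}$ iff some $h \in I$ satisfies $Z(h) \subseteq Z(g)$, and the converse inclusion is closed by upgrading $Z(h) \subseteq Z(g)$ to an exact equality of zero sets. The only cosmetic difference is that you inline the Whitney characteristic-function trick ($h' = h\cdot\chi$ with $Z(\chi)=Z(g)$) where the paper simply cites the upward-closure of the filter $\widehat{I}$ from \textbf{Proposition \ref{griebel}} — whose proof is exactly that construction — so the two arguments coincide.
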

\begin{proof}
Let $g \in \check{\widehat{I}}$, so there is some $h \in I$ such that $Z(g) = Z(h)$, so \textbf{a fortiori}, $Z(g) \supseteq Z(h)$, and $(h \displaystyle\upharpoonright_{U_g})$ is invertible in $\mathcal{C}^{\infty}(U_g)$, where $U_g = \mathbb{R}^n \setminus Z(g)$ (see \textbf{Proposition \ref{pani}}). Since there exists $h \in I$ such that $h \in (\mathcal{C}^{\infty}(\mathbb{R}^n)\{ g \}^{-1})^{\times}$ it follows that $g \in \sqrt[\infty]{I}$.\\

Conversely, given $g \in \sqrt[\infty]{I}$, there must be some $h \in I$ such that $h \in (\mathcal{C}^{\infty}(\mathbb{R}^n)\{g^{-1}\})^{\times}$. But such an $h$ must not be zero when restricted to $U_g$ (otherwise $h$ would not be invertible), so $U_h \supseteq U_g$, and therefore $Z(g) \supseteq Z(h)$. Since $\widehat{I}$ is a filter, $Z(h) \in \widehat{I}$ and $Z(g) \supseteq Z(h)$, it follows that $Z(g) \in \widehat{I}$. But if $Z(g) \in \widehat{I}$ then there is $h' \in I$ such that $Z(g) = Z(h')$ so $g \in \check{\widehat{I}}$.
\end{proof}

The following proposition is a consequence of \textbf{Proposition \ref{49}}

\begin{corollary}\label{Guia}Let $A = \mathcal{C}^{\infty}(\mathbb{R}^n)$ be a finitely generated free $\mathcal{C}^{\infty}-$ring. $I \subseteq \mathcal{C}^{\infty}(\mathbb{R}^n)$ is a $\mathcal{C}^{\infty}-$radical ideal, that is, $\sqrt[\infty]{I}=I$, if, and only if:
$$(\forall g \in \mathcal{C}^{\infty}(\mathbb{R}^n))((g \in I) \leftrightarrow (\exists f \in I)(Z(f)=Z(g))).$$
\end{corollary}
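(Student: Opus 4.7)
The plan is to obtain this as a direct corollary of \textbf{Proposition \ref{49}}, which already identifies $\sqrt[\infty]{I}$ with the set $\check{\widehat{I}} = \{g \in \mathcal{C}^{\infty}(\mathbb{R}^n) \mid (\exists f \in I)(Z(f) = Z(g))\}$. So essentially all the work has been done; what remains is to unfold the definition of ``$\mathcal{C}^{\infty}$-radical ideal'' and rewrite the set equality $I = \sqrt[\infty]{I}$ as a biconditional ranging over all $g \in \mathcal{C}^{\infty}(\mathbb{R}^n)$.

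First I would note the easy direction: for any $g \in I$, one can take $f = g$ and trivially get $f \in I$ with $Z(f) = Z(g)$, so the left-to-right implication of the inner biconditional always holds regardless of whether $I$ is $\mathcal{C}^{\infty}$-radical. Hence the content of the stated biconditional reduces to the implication
\[
(\forall g \in \mathcal{C}^{\infty}(\mathbb{R}^n))\bigl((\exists f \in I)(Z(f) = Z(g)) \Rightarrow g \in I\bigr),
\]
which by \textbf{Proposition \ref{49}} is precisely the inclusion $\sqrt[\infty]{I} \subseteq I$.

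Next I would observe that the reverse inclusion $I \subseteq \sqrt[\infty]{I}$ is automatic (for every $g \in I$, take $h = g$ in the characterization $\sqrt[\infty]{I} = \check{\widehat{I}}$; alternatively it follows from $\eta_g(g) = 0$ in the quotient, so that $(A/I)\{(g+I)^{-1}\} \cong 0$). Therefore $I = \sqrt[\infty]{I}$ holds if and only if $\sqrt[\infty]{I} \subseteq I$, which by the previous paragraph is exactly the displayed condition.

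There is no real obstacle here, since the corollary is a cosmetic reformulation of \textbf{Proposition \ref{49}}. The only care needed is to state clearly that one direction of the inner ``$\leftrightarrow$'' is trivial, so that the nontrivial content is isolated and matched precisely to the $\check{\widehat{I}} \subseteq I$ inclusion supplied by the preceding proposition.
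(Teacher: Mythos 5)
Your proposal is correct and is precisely the argument the paper leaves implicit: \textbf{Proposition \ref{49}} identifies $\sqrt[\infty]{I}$ with $\{g \mid (\exists f \in I)(Z(f)=Z(g))\}$, so the displayed biconditional for all $g$ is literally the set equality $I = \sqrt[\infty]{I}$. Your extra observation that one direction of the inner ``$\leftrightarrow$'' is trivial (take $f=g$) is a harmless refinement of the same reduction.
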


The following proposition is a consequence of a comment made by Moerdijk and Reyes in the page 330 of \cite{rings2}:

\begin{corollary}\label{Gaia}Let $\mathcal{C}^{\infty}(\mathbb{R}^n)$ be the free $\mathcal{C}^{\infty}-$ring and let $I \subseteq \mathcal{C}^{\infty}(\mathbb{R}^n)$ be a finitely generated ideal, that is,
$$I = \langle g_1, \cdots, g_k\rangle,$$
for some $g_1, \cdots, g_k \in \mathcal{C}^{\infty}(\mathbb{R}^n)$.\\

$I$ is a $\mathcal{C}^{\infty}-$radical ideal if, and only if:

$$(\forall x \in Z(g_1, \cdots, g_k))(f(x)=0) \to (f \in I)$$
\end{corollary}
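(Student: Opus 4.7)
The plan is to reduce the statement to a concrete description of $\sqrt[\infty]{I}$ and then invoke \textbf{Corollary \ref{Guia}} (or equivalently \textbf{Proposition \ref{49}}). Since $I \subseteq \sqrt[\infty]{I}$ always holds, $I$ is $\mathcal{C}^{\infty}-$radical if and only if $\sqrt[\infty]{I} \subseteq I$, so the task becomes showing that, for $I = \langle g_1, \ldots, g_k\rangle$, the ideal $\sqrt[\infty]{I}$ coincides with the set of smooth functions that vanish on $Z := Z(g_1, \ldots, g_k) = Z(g_1) \cap \cdots \cap Z(g_k)$.

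The first step is to compute the filter $\widehat{I} = \{Z(h) \mid h \in I\}$ explicitly. I would use the identity $Z = Z\!\left(\sum_{i=1}^k g_i^2\right)$, already exploited in the proof of \textbf{Proposition \ref{griebel}}, to exhibit $p := \sum_{i=1}^k g_i^2 \in I$ as an element realizing $Z$ as a zero set. One inclusion is easy: any $h = \sum a_i g_i \in I$ satisfies $Z(h) \supseteq Z$, so $\widehat{I} \subseteq \{A \subseteq \mathbb{R}^n \text{ closed} \mid A \supseteq Z\}$. For the reverse inclusion, given a closed $A \supseteq Z$, invoke Whitney's theorem (as in the proof of \textbf{Proposition \ref{pani}}) to obtain a smooth characteristic function $\chi_A \in \mathcal{C}^{\infty}(\mathbb{R}^n)$ with $Z(\chi_A) = A$; then $\chi_A \cdot p \in I$ and $Z(\chi_A \cdot p) = A \cup Z = A$, so $A \in \widehat{I}$. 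Thus $\widehat{I}$ is exactly the filter of closed sets containing $Z$.

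Combining with \textbf{Proposition \ref{49}}, which identifies $\sqrt[\infty]{I}$ with $\check{\widehat{I}} = \{f \in \mathcal{C}^{\infty}(\mathbb{R}^n) \mid Z(f) \in \widehat{I}\}$, I conclude
$$\sqrt[\infty]{I} = \{f \in \mathcal{C}^{\infty}(\mathbb{R}^n) \mid Z(f) \supseteq Z(g_1, \ldots, g_k)\},$$
i.e., $\sqrt[\infty]{I}$ is precisely the set of smooth functions vanishing on $Z(g_1, \ldots, g_k)$. The stated equivalence is then immediate: $I = \sqrt[\infty]{I}$ holds if and only if every such $f$ already lies in $I$. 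There is no real obstacle here beyond the Whitney-type existence of smooth characteristic functions, which is standard and has already been used in the paper; the argument is essentially a packaging of \textbf{Propositions \ref{griebel}, \ref{47}} and \textbf{\ref{49}} specialized to the finitely generated setting via the key reduction $Z(g_1, \ldots, g_k) = Z\!\left(\sum g_i^2\right)$.
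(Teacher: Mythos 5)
Your proposal is correct and follows essentially the same route as the paper's proof: both hinge on the reduction $Z(g_1,\ldots,g_k)=Z\bigl(\sum_{i=1}^k g_i^2\bigr)$, the filter structure of $\widehat{I}$ (via Whitney-type characteristic functions), and the identification $\sqrt[\infty]{I}=\check{\widehat{I}}$ from \textbf{Proposition \ref{49}}. The only difference is presentational — you compute $\widehat{I}$ in full as the filter of closed supersets of $Z(g_1,\ldots,g_k)$ before concluding, whereas the paper argues the two inclusions of $\sqrt[\infty]{I}=I$ directly — and this is harmless.
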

\begin{proof}
Suppose $I = \sqrt[\infty]{I}$ and let $f \in \mathcal{C}^{\infty}(\mathbb{R}^n)$ be such that:
$$(\forall x \in Z(g_1, \cdots, g_k))(f(x)=0).$$

We have $g=g_1^2 + \cdots + g_k^2 \in I$ such that $Z(g)\subseteq Z(f)$, so $Z(f)\in \widehat{I}$ (since $\widehat{I}$ is a filter) and by \textbf{Proposition \ref{49}}, $f \in \check{\widehat{I}} = \sqrt[\infty]{I}=I$.\\

Now, suppose

$$(\forall x \in Z(g_1, \cdots, g_k))(f(x)=0) \to (f \in I).$$

Given $h \in \sqrt[\infty]{I} = \check{\widehat{I}}$, we have $Z(h) \in \widehat{I}$, so there exists some $g \in I$ such that $Z(h)=Z(g)$, thus $(\forall x \in Z(g_1, \cdots, g_k))(h(x)=0)$. By hypothesis, this means that $h \in I$, so $\sqrt[\infty]{I} \subseteq I$. Since $I \subseteq \sqrt[\infty]{I}$ always holds, it follows that $I$ is a $\mathcal{C}^{\infty}-$radical ideal.
\end{proof}

By \textbf{Proposition 4.5} of \cite{moerdijk2013models}, it follows that any finitely generated $\mathcal{C}^{\infty}-$radical ideal $I$ of a free $\mathcal{C}^{\infty}-$ring on $n$ generators, $\mathcal{C}^{\infty}(\mathbb{R}^n)$
we have:

$$(\forall x \in Z(I))(f\upharpoonright_x \in I\upharpoonright_x \to f \in I),$$

where $Z(I):= \bigcap_{f \in I} Z(f).$

As a consequence, we have a rudimentary particular version of the weak \textbf{Nullstellensatz}:

\begin{proposition}\label{Nullstellensatz}For any finitely generated $\mathcal{C}^{\infty}-$radical ideal $I$ of $\mathcal{C}^{\infty}(\mathbb{R}^n)$, we have:
$$1 \in I \iff Z(I)=\varnothing$$
\end{proposition}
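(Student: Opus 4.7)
The plan is to prove both implications separately, with the forward direction being essentially trivial and the backward direction following from a direct application of \textbf{Corollary \ref{Gaia}}.

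For the forward implication, I would observe that $Z(1) = \varnothing$, since the constant function $1 \in \mathcal{C}^\infty(\mathbb{R}^n)$ has no zeros. If $1 \in I$, then $Z(I) = \bigcap_{f \in I} Z(f) \subseteq Z(1) = \varnothing$, so $Z(I) = \varnothing$. This direction does not even require the hypothesis that $I$ is finitely generated or $\mathcal{C}^\infty$-radical.

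For the backward implication, I would fix a finite set of generators $g_1, \ldots, g_k$ so that $I = \langle g_1, \ldots, g_k \rangle$, and note that $Z(I) = Z(g_1) \cap \cdots \cap Z(g_k) = Z(g_1, \ldots, g_k)$. Assuming $Z(I) = \varnothing$, the universally quantified statement
\[
(\forall x \in Z(g_1, \ldots, g_k))(1(x) = 0)
\]
is vacuously true, since its domain of quantification is empty. Since $I$ is a $\mathcal{C}^\infty$-radical ideal, \textbf{Corollary \ref{Gaia}} applied with $f = 1$ yields $1 \in I$.

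There is essentially no obstacle here: the proposition is a direct corollary of the weak smooth Nullstellensatz--type criterion established in \textbf{Corollary \ref{Gaia}}, with the hypothesis of finite generation being used precisely to invoke that corollary. The only subtlety worth mentioning is the vacuous-truth step, which is why the finite-generation hypothesis on $I$ cannot be dispensed with in this proof scheme: without it, \textbf{Corollary \ref{Gaia}} is not available in the form stated, and one would instead have to appeal to the more general description $\sqrt[\infty]{I} = \check{\widehat{I}}$ from \textbf{Proposition \ref{49}}, noting that $Z(I) = \varnothing$ forces $\varnothing \in \widehat{I}$, hence that $\widehat{I}$ is not a proper filter, hence by \textbf{Proposition \ref{griebel}} that $I$ is not a proper ideal, i.e., $1 \in I$.
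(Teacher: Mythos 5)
Your proposal is correct and follows essentially the same route as the paper: the forward direction via $Z(I) \subseteq Z(1) = \varnothing$, and the backward direction by applying the vacuously true antecedent of \textbf{Corollary \ref{Gaia}} (the paper applies it to an arbitrary $f$ to conclude $\mathcal{C}^{\infty}(\mathbb{R}^n) \subseteq I$, while you specialize to $f = 1$, which is the same argument). Your closing remark on the alternative route through $\sqrt[\infty]{I} = \check{\widehat{I}}$ and the properness of $\widehat{I}$ is a valid observation but not part of the paper's proof.
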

\begin{proof}
Let $I = \langle g_1, \cdots, g_k\rangle$ be a $\mathcal{C}^{\infty}-$radical ideal of $\mathcal{C}^{\infty}(\mathbb{R}^n)$, so
$$(\forall x \in Z(g_1, \cdots, g_k))(f(x)=0) \to (f \in I)$$
Suppose $Z(g_1, \cdots,g_k)=\varnothing$, so we have, for each $f \in \mathcal{C}^{\infty}(\mathbb{R}^n)$:

$$\varnothing = Z(g_1, \cdots, g_k) \subseteq Z(f),$$

so

$$(\forall x \in Z(g_1\cdots g_k))(f(x)=0)$$

Thus $\mathcal{C}^{\infty}(\mathbb{R}^n) \subseteq I$ and, in particular, $1 \in I$.

On the other hand, if $1 \in I$, since $1^{\dashv}[\{ 0\}] = \varnothing \in \{ f^{\dashv}[\{ 0\}] \mid f \in I \}$, so $Z(I)=\bigcap \{ f^{\dashv}[\{ 0\}] \mid f \in I\} = \varnothing$.
\end{proof}

\begin{example}As a consequence of the fact pointed out by Moerdijk and Reyes in \cite{rings2}, any countably generated prime ideal of $\mathcal{C}^{\infty}(\mathbb{R}^n)$, for $n \in \mathbb{N}$, is a $\mathcal{C}^{\infty}-$radical ideal.
\end{example}

Let $\mathfrak{F}$ be the set of all the filters on the closed parts of $\mathbb{R}^n$ and $\mathfrak{I}$ be the set of all the ideals of $\mathcal{C}^{\infty}(\mathbb{R}^n)$. We have, so far, established that for every ideal $I \subseteq \mathcal{C}^{\infty}(\mathbb{R}^n)$ we have $\sqrt[\infty]{I} = \check{\widehat{I}}$ and for every filter $\mathcal{F} \in \mathfrak{F}$ we have $\widehat{\check{\mathcal{F}}}$.\\

Consider the following diagram:

$$\xymatrix{\mathfrak{F} \ar@/^/[rr]|{\vee} && \ar@/^/[ll]|{\wedge} \mathfrak{I}}$$

In what follows, we show that $\wedge \vdash \vee$ is a \index{Galois connection}Galois connection.

\begin{proposition}\label{pseu}The  adjunction $\wedge \vdash \vee$ is a covariant Galois connection between the posets $(\mathfrak{F}, \subseteq)$ and $(\mathfrak{I}, \subseteq)$, i.e.,
\begin{itemize}
  \item[a)]{Given $\mathcal{F}_1, \mathcal{F}_2 \in \mathfrak{F}$ such that $\mathcal{F}_1 \subseteq \mathcal{F}_2$ then $\check{\mathcal{F}_1} \subseteq \check{\mathcal{F}_2}$;}
  \item[b)]{Given $I_1, I_2 \in \mathfrak{I}$ such that $I_1 \subseteq I_2$ then $\widehat{I}_1 \subseteq \widehat{I}_2$;}
  \item[c)]{For every $\mathcal{F} \in \mathfrak{F}$ and every $I \in \mathfrak{I}$ we have:
  $$\widehat{I} \subseteq \mathcal{F} \iff I \subseteq \check{\mathcal{F}}.$$}
\end{itemize}
\end{proposition}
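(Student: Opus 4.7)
The plan is to verify the three items directly from the defining formulas
$\check{\mathcal{F}} = \{f \in \mathcal{C}^{\infty}(\mathbb{R}^n) \mid Z(f) \in \mathcal{F}\}$
and
$\widehat{I} = \{A \subseteq \mathbb{R}^n \mid A \text{ closed and } (\exists f \in I)(A = Z(f))\}$,
using the fact, already available from \textbf{Propositions \ref{griebel}} and \textbf{\ref{47}}, that both operators land in the appropriate posets (filters of closed sets and ideals of $\mathcal{C}^{\infty}(\mathbb{R}^n)$, respectively). None of the three assertions needs any machinery beyond chasing elements through these definitions, so I would present the proof as three short ``element-tracking'' arguments.

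For item (a), I would take $f \in \check{\mathcal{F}_1}$ arbitrary, observe that by definition $Z(f) \in \mathcal{F}_1$, use the hypothesis $\mathcal{F}_1 \subseteq \mathcal{F}_2$ to conclude $Z(f) \in \mathcal{F}_2$, and read off $f \in \check{\mathcal{F}_2}$. Item (b) is completely analogous: given $A \in \widehat{I_1}$, pick a witness $f \in I_1$ with $A = Z(f)$; since $I_1 \subseteq I_2$, the same $f$ also witnesses $A \in \widehat{I_2}$.

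For item (c), which is the only nontrivial content, I would argue both implications by the same element-chase. Assume first that $\widehat{I} \subseteq \mathcal{F}$ and take $f \in I$; then $Z(f) \in \widehat{I} \subseteq \mathcal{F}$, hence $f \in \check{\mathcal{F}}$, giving $I \subseteq \check{\mathcal{F}}$. Conversely, assume $I \subseteq \check{\mathcal{F}}$ and take $A \in \widehat{I}$; choose a witness $f \in I$ with $A = Z(f)$. By hypothesis $f \in \check{\mathcal{F}}$, so $A = Z(f) \in \mathcal{F}$, giving $\widehat{I} \subseteq \mathcal{F}$.

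I do not foresee any real obstacle: the definitions of $\widehat{(-)}$ and $\check{(-)}$ are precisely tuned so that the adjunction $\widehat{I} \subseteq \mathcal{F} \iff I \subseteq \check{\mathcal{F}}$ is immediate from ``$A \in \widehat{I} \iff A = Z(f)$ for some $f \in I$'' together with ``$f \in \check{\mathcal{F}} \iff Z(f) \in \mathcal{F}$''. The only mild subtlety worth a brief remark is that in the forward direction of (c) one uses every element $f$ of $I$, whereas in the backward direction one only needs the existence of some witness $f$ for each $A$; both are built into the definitions, so no additional hypothesis (such as $\mathcal{C}^{\infty}$-radicality of $I$ or primeness of $\mathcal{F}$) is required.
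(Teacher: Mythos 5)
Your proof is correct and follows essentially the same element-chasing argument as the paper's own proof: items (a) and (b) are verified by transporting a single element or witness along the inclusion, and both directions of (c) use exactly the defining equivalences $f \in \check{\mathcal{F}} \iff Z(f) \in \mathcal{F}$ and $A \in \widehat{I} \iff A = Z(f)$ for some $f \in I$. No changes are needed.
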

\begin{proof}
Ad a):  Given any $f \in \check{\mathcal{F}_1}$ we have $Z(f) \in \mathcal{F}_1$, and since $\mathcal{F}_1 \subseteq \mathcal{F}_2$, $Z(f) \in \mathcal{F}_2$, so $f \in \check{\mathcal{F}_2}$.\\

Ad b) Given $A \in \widehat{I_1}$, there is $f_1 \in I_1$ such that $A = Z(f_1)$. Since $f_1 \in I_1 \subseteq I_2$, $f_1 \in I_2$, so $A = Z(f_1) \in \widehat{I_2}$.\\

Ad c): First we show that $\widehat{I} \subseteq \mathcal{F} \Rightarrow I \subseteq \check{\mathcal{F}}$.\\

Given any $f \in I$, $Z(f) \in \widehat{I}$, and since $\widehat{I} \subseteq \mathcal{F}$, $Z(f) \in \mathcal{F}$ so $f \in \check{\mathcal{F}}$, by definition.\\

Conversely we show that $I \subseteq \check{\mathcal{F}} \Rightarrow \widehat{I} \subseteq \mathcal{F}$.\\

Given $A \in \widehat{I}$, there exists some $h \in I$ such that $A = Z(h)$, so $h \in \check{\mathcal{F}}$ and, by definition, $Z(h) = A \in \mathcal{F}$.
\end{proof}

\begin{proposition}\label{Garland}Let $A = \mathcal{C}^{\infty}(\R^n)$ for some $n \in \mathbb{N}$. The Galois connection $\wedge \vdash \vee$ establishes a bijective correspondence between:
\begin{itemize}
  \item[(a)]{proper filters of $(\mathfrak{F}, \subseteq)$ and proper ideals of $(\mathfrak{I}, \subseteq)$;}
  \item[(b)]{maximal filters of $(\mathfrak{F}, \subseteq)$ and maximal ideals of $(\mathfrak{I}, \subseteq)$;}
  \item[(c)]{prime filters of $(\mathfrak{F}, \subseteq)$ and prime ideals of $(\mathfrak{I}, \subseteq)$.}
  \item[(d)]{filters on the closed parts of $\mathbb{R}^n$, $\mathfrak{F}$, and the set of all $\mathcal{C}^{\infty}-$radical ideals of $\mathcal{C}^{\infty}(\mathbb{R}^n)$, $\mathfrak{I}^{\infty} = \{ I \subseteq \mathcal{C}^{\infty}(\mathbb{R}^n) | \sqrt[\infty]{I} = I \}$.}
\end{itemize}
\end{proposition}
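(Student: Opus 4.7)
The plan is to deduce (a), (b), (c) from item (d), which is the heart of the statement, so I would begin by establishing (d). By Proposition~\ref{49} we have $\check{\widehat{I}} = \sqrt[\infty]{I}$ for every $I \in \mathfrak{I}$, and by the remark following Proposition~\ref{47} we have $\widehat{\check{\mathcal{F}}} = \mathcal{F}$ for every $\mathcal{F} \in \mathfrak{F}$. These two identities together imply $\sqrt[\infty]{\check{\mathcal{F}}} = \check{\widehat{\check{\mathcal{F}}}} = \check{\mathcal{F}}$, so $\vee$ takes values in $\mathfrak{I}^\infty$; restricted accordingly, $\wedge\upharpoonright_{\mathfrak{I}^\infty}$ and $\vee$ become mutually inverse order-isomorphisms between $(\mathfrak{F}, \subseteq)$ and $(\mathfrak{I}^\infty, \subseteq)$, which proves (d).

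For (a), Propositions~\ref{griebel} and~\ref{47} already record that $I$ is proper iff $\widehat{I}$ is proper and that $\mathcal{F}$ is proper iff $\check{\mathcal{F}}$ is proper, so the order-isomorphism of (d) restricts to a bijection between proper filters and proper $\mathcal{C}^\infty$-radical ideals. For (b), I would first observe that any maximal proper ideal $\mathfrak{m} \subseteq \mathcal{C}^\infty(\mathbb{R}^n)$ is automatically $\mathcal{C}^\infty$-radical: having $1 \in \sqrt[\infty]{\mathfrak{m}}$ is equivalent to $(\mathcal{C}^\infty(\mathbb{R}^n)/\mathfrak{m})\{(1+\mathfrak{m})^{-1}\} \cong 0$, contradicting $\mathfrak{m} \ne \mathcal{C}^\infty(\mathbb{R}^n)$; hence $\sqrt[\infty]{\mathfrak{m}}$ is a proper ideal containing $\mathfrak{m}$, and maximality of $\mathfrak{m}$ in $\mathfrak{I}$ forces $\mathfrak{m} = \sqrt[\infty]{\mathfrak{m}}$. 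The same observation shows maximality in $\mathfrak{I}^\infty$ and in $\mathfrak{I}$ coincide, and the order-isomorphism from (d) then transports this to the bijection between maximal filters and maximal ideals.

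Part (c), where I expect the only genuine subtlety, asserts that prime filters correspond bijectively (under the maps of (d)) to prime $\mathcal{C}^\infty$-radical ideals. The forward direction is immediate from $Z(fg) = Z(f) \cup Z(g)$: if $\mathcal{F}$ is prime and $fg \in \check{\mathcal{F}}$, then $Z(f) \cup Z(g) \in \mathcal{F}$ forces $f \in \check{\mathcal{F}}$ or $g \in \check{\mathcal{F}}$. For the reverse, take a prime $I \in \mathfrak{I}^\infty$ and closed sets $A, B$ with $A \cup B \in \widehat{I}$; using Whitney's theorem (invoked in the proof of Proposition~\ref{pani}) I choose smooth characteristic functions $\chi_A, \chi_B$ with $Z(\chi_A) = A$ and $Z(\chi_B) = B$, so that $Z(\chi_A \cdot \chi_B) = A \cup B = Z(h)$ for some $h \in I$. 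Corollary~\ref{Guia} applied to the $\mathcal{C}^\infty$-radical ideal $I$ then yields $\chi_A \cdot \chi_B \in I$, and primality of $I$ gives $\chi_A \in I$ or $\chi_B \in I$, whence $A \in \widehat{I}$ or $B \in \widehat{I}$. The hard part is precisely this invocation of Corollary~\ref{Guia}: as flagged by the example in the introduction, an ordinary prime ideal need not be $\mathcal{C}^\infty$-radical, so the hypothesis $I \in \mathfrak{I}^\infty$ is indispensable exactly at this step.
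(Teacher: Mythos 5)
Your proof is correct, and for parts (a), (b) and (d) it follows essentially the same route as the paper: establish the order isomorphism between $(\mathfrak{F},\subseteq)$ and $(\mathfrak{I}^{\infty},\subseteq)$ via $\check{\widehat{I}}=\sqrt[\infty]{I}$ and $\widehat{\check{\mathcal{F}}}=\mathcal{F}$, use \textbf{Propositions \ref{griebel}} and \textbf{\ref{47}} to see that propriety is preserved in both directions, and observe that maximal ideals are forced to be $\mathcal{C}^{\infty}$-radical (the paper gets this from $I\subseteq\check{\widehat{I}}$ with $\check{\widehat{I}}$ proper; your direct argument that $1\notin\sqrt[\infty]{\mathfrak{m}}$ is equivalent). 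The genuine divergence is in the reverse direction of (c). The paper proves the stronger statement that $\widehat{I}$ is a prime filter for an \emph{arbitrary} prime ideal $I$, via a hands-on construction: writing $F\cup G=Z(\widetilde{\varphi})$, splitting $\mathbb{R}^n$ by the sign of $\widetilde{f}-\widetilde{g}$ into $H$ and $K$ with $H\cap Z(\widetilde{\varphi})=F$ and $K\cap Z(\widetilde{\varphi})=G$, and exhibiting the identity $(\widetilde{\varphi}+\widetilde{h})(\widetilde{\varphi}+\widetilde{k})=\widetilde{\varphi}^2+(\widetilde{h}+\widetilde{k})\widetilde{\varphi}\in I$ so that primality applies. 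You instead restrict to $\mathcal{C}^{\infty}$-radical prime ideals, where Whitney plus \textbf{Corollary \ref{Guia}} (equivalently $\check{\widehat{I}}=I$) gives $\chi_F\cdot\chi_G\in I$ in one line. Your version is shorter and is all that the stated bijective correspondence needs, since a Galois connection only bijects on closed elements (and indeed, as stated literally, (c) cannot be a bijection on all prime ideals, because distinct primes with the same $\mathcal{C}^{\infty}$-radical have the same associated filter). What you lose is exactly the extra generality the paper uses afterwards: \textbf{Corollary \ref{Piqueri}} ($\sqrt[\infty]{\mathfrak{p}}$ is prime for every prime $\mathfrak{p}$) is deduced from the fact that $\widehat{\mathfrak{p}}$ is a prime filter for a possibly non-radical prime $\mathfrak{p}$, which your argument does not establish.
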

\begin{proof}

Ad (a): By \textbf{Proposition \ref{griebel}}, $I$ is a proper ideal if, and only if, $\check{I}$ is a proper filter, and by \textbf{Proposition \ref{47}}, $F$ is a proper filter if, and only if, $\widehat{\mathcal{F}}$ is a proper ideal. In particular, whenever $I$ is a proper ideal, $\sqrt[\infty]{I}=\widehat{\check{I}}$ is a proper ideal.\\

Ad (b): If $I$ is a maximal ideal then it is proper, so $\check{I}$ is a proper filter. By item (a), $\widehat{\check{I}}$ is a proper ideal. Since $I \subseteq \widehat{\check{I}}$ and $I$ is a maximal ideal, we have $I = \widehat{\check{I}} = \sqrt[\infty]{I}$. In particular, every maximal ideal of $\mathcal{C}^{\infty}(\mathbb{R}^n)$ is a $\mathcal{C}^{\infty}-$radical ideal. \\

On the other hand, for any filter $\mathcal{F}$ we have $\mathcal{F} = \check{\widehat{\mathcal{F}}}$. If $\check{I}\subseteq \mathcal{F}$ for some proper filter $\mathcal{F}$, then $I = \widehat{\check{I}} \subseteq \widehat{\mathcal{F}}$, and since $\widehat{\mathcal{F}}$ is a proper ideal, by (a), it follows that:

$$I = \widehat{\mathcal{F}} \Rightarrow \check{I} = \check{\widehat{\mathcal{F}}} = \mathcal{F},$$

so $\check{I}$ is a maximal filter.\\

Conversely, suppose $\mathcal{F}$ is a maximal filter. If $J$ is some proper ideal of $\mathcal{C}^{\infty}(\mathbb{R}^n)$ such that $\widehat{\mathcal{F}} \subseteq J$, then   $\check{\widehat{\mathcal{F}}} \subseteq \check{J}$, so $\mathcal{F}\subseteq \check{J}$. Since $\check{J}$ is a proper filter, we have $\mathcal{F} = \check{J}$, so $\widehat{\mathcal{F}} = \widehat{\check{J}}$. Since $J \subseteq \widehat{\check{J}}$, we have $\widehat{\mathcal{F}} = \widehat{\check{J}} = J$, and $\widehat{\mathcal{F}}$ is a maximal ideal.\\

Ad (c): Let $\mathcal{F}$ be a prime filter. If $f,g \in \mathcal{C}^{\infty}(\mathbb{R}^n)$ are such that $f \cdot g \in \check{\mathcal{F}}$, then $Z(f\cdot g) = Z(f) \cup Z(g) \in \mathcal{F}$, so we have $Z(f) \in \mathcal{F}$ or $Z(g) \in \mathcal{F}$. Thus, $f \in \check{\mathcal{F}}$ or $g \in \check{\mathcal{F}}$, so $\check{\mathcal{F}}$ is a prime ideal.\\

Let $I$ be a prime ideal of $\mathcal{C}^{\infty}(\R^n)$, that is, if $f,g \in \mathcal{C}^{\infty}(\R^n)$ are such that $f \cdot g \in I$ then $f \in I$ or $g \in I$. We need to show that $\widehat{I} = \{ Z(h) | h \in I\}$ is a prime filter.\\

Let $F,G \subseteq \R^n$ be two closed sets such that $F \cup G \in \widehat{I}$. By definition of $\widehat{I}$, there is some $\varphi: \R^n \to \R$ such that $F\cup G = Z(\varphi)$. Consider:
$$\begin{array}{cccc}
    \widetilde{\varphi}: & \R^n & \rightarrow & \R \\
     & \vec{x} & \mapsto & \varphi^2(\vec{x})
  \end{array}$$
  and we have $F \cup G = Z(\widetilde{\varphi})$.\\

Since $F,G$ are closed subsets of $\R^n$ there are $f,g \in \mathcal{C}^{\infty}(\R^n)$ such that $F=Z(f)$ and $G=Z(g)$. Considering:
$$\begin{array}{cccc}
\widetilde{f}: & \R^n & \rightarrow & \R \\
               & \vec{x} & \mapsto & f^2(\vec{x})
\end{array}$$
and
$$\begin{array}{cccc}
\widetilde{g}: & \R^n & \rightarrow & \R \\
               & \vec{x} & \mapsto & g^2(\vec{x})
\end{array}$$
we have $\widetilde{\varphi}, \widetilde{f}$ and $\widetilde{g}$ non-negative functions, and both $F = Z(\widetilde{f})$ and $G = Z(\widetilde{g})$.\\

Let
$$\begin{array}{cccc}
    \psi : & \R^n & \to & \R \\
     & \vec{x} & \mapsto & \widetilde{f}(\vec{x}) - \widetilde{g}(\vec{x})
  \end{array}$$
  and define $H = \{ \vec{x} \in \R^n | \psi(\vec{x}) \leq 0 \}$ and $K = \{ \vec{x} \in \R^n | \psi(\vec{x}) \geq 0 \}$, so $\mathbb{R}^n = H \cup K$\\

\textbf{Claim:} $H \cap Z(\widetilde{\varphi}) = F$.\\

Given $\vec{x} \in F$, we have $\widetilde{f}(\vec{x})=0$, hence $\psi(\vec{x}) = 0 - \widetilde{g}(\vec{x}) \leq 0$ and $x \in H$. Also, since $Z(\widetilde{f}) \subseteq Z(\widetilde{\varphi})$, $\widetilde{\varphi}(\vec{x})=0$.\\

On the other hand, given $\vec{x} \in H \cap Z(\widetilde{\varphi})$, since $\vec{x} \in Z(\widetilde{\varphi}) = Z(\widetilde{f}) \cup Z(\widetilde{g})$, then either $\vec{x} \in Z(\widetilde{f})$ or $\vec{x} \in Z(\widetilde{g})$. If $\vec{x} \in Z(\widetilde{g})$ then $\widetilde{g}(\vec{x})=0$ and  $\psi(\vec{x}) = \widetilde{f}(\vec{x}) - 0 \leq 0$, thus $\psi(\vec{x}) = \widetilde{f}(\vec{x}) = 0$. Hence $\vec{x} \in Z(\widetilde{f})=F$.\\

Analogously we prove that $K \cap Z(\widetilde{\varphi}) = G$.\\

Let $h,k \in \mathcal{C}^{\infty}(\mathbb{R}^n)$ be non-negative functions such that $K=Z(k)$ and $H= Z(h)$. Note that $Z(\widetilde{\varphi}+\widetilde{h})= F$ ($\widetilde{\varphi}(x)+ \widetilde{h}(x) = 0 \iff \widetilde{\varphi}(x) = \widetilde{h}(x)=0$) and $Z(\widetilde{\varphi}+\widetilde{k})=G$. For every $\vec{x} \in \mathbb{R}^n = Z(h \cdot k) = H \cup K$. Since $\widetilde{\varphi} \in I$ and $I$ is an ideal,  $[(\widetilde{h}+\widetilde{k})\cdot \widetilde{\varphi}] \in I$, so:

$$[(\widetilde{\varphi} + \widetilde{h})\cdot (\widetilde{\varphi}+ \widetilde{k})]= \widetilde{\varphi}^2 + [(\widetilde{h}+\widetilde{k})\cdot \widetilde{\varphi}] \in I.$$

Since $I$ is a prime ideal, either $(\widetilde{\varphi} + \widetilde{h}) \in I$ or $(\widetilde{\varphi}+ \widetilde{k}) \in I$, i.e., $F \in \widehat{I}$ or $G \in \widehat{I}$, hence $\widehat{I}$ is a prime filter.\\

Ad (d): Consider the following maps:
$$\begin{array}{cccc}
    \varphi: & \mathfrak{F} & \rightarrow & \mathfrak{I}^{\infty} \\
     & \mathcal{F} & \mapsto & \check{\mathcal{F}}
  \end{array}$$
and
$$\begin{array}{cccc}
    \psi: & \mathfrak{I}^{\infty}  & \rightarrow & \mathfrak{F} \\
     & I & \mapsto & \widehat{I}
  \end{array}$$

We have already seen that whenever $I$ is an ideal, we have $\sqrt[\infty]{I}=\check{\widehat{I}}$.We must verify that $\varphi$ takes filters to $\mathcal{C}^{\infty}-$radical ideals, i.e., $\mathcal{F} \in \mathfrak{F} \to \check{\mathcal{F}} \in \mathfrak{I}^{\infty}$. Now, for every $\mathcal{F} \in \mathfrak{F}$ we have $\widehat{\check{\mathcal{F}}} = \mathcal{F}$, so $\sqrt[\infty]{\check{\mathcal{F}}} = \stackrel{\vee}{\widehat{\check{\mathcal{F}}}} = \check{\mathcal{F}}$. Thus, $\psi \circ \varphi(\mathcal{F}) = \mathcal{F}$.\\

On the other hand, if $J \in \mathfrak{I}^{\infty}$ then $\varphi \circ \psi(J) = \check{\widehat{J}} = \sqrt[\infty]{J} = J = {\rm id}_{\mathfrak{I}^{\infty}}(J)$. It follows that $\varphi$ and $\psi$ are inverse bijections.
\end{proof}

\begin{corollary} \label{Piqueri}Let $A = \mathcal{C}^{\infty}(\R^n)$ and $\p$ be a prime ideal of $A$. Under those circumstances $\sqrt[\infty]{\p}$ is a prime ideal.
\end{corollary}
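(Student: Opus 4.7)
The plan is to derive this directly from the Galois-connection machinery just established, combined with the identification of the $\mathcal{C}^{\infty}$-radical with the ``close-reopen'' composite $\check{\widehat{(-)}}$ on ideals of $\mathcal{C}^{\infty}(\mathbb{R}^n)$.

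First I would invoke \textbf{Proposition \ref{49}}, which gives the explicit equality $\sqrt[\infty]{\p} = \check{\widehat{\p}}$ for any ideal $\p \subseteq \mathcal{C}^{\infty}(\mathbb{R}^n)$. This reduces the problem to showing that $\check{\widehat{\p}}$ is a prime ideal whenever $\p$ is a prime ideal. Since $\p$ is proper, by \textbf{Proposition \ref{griebel}} the filter $\widehat{\p}$ is a proper filter on the closed subsets of $\mathbb{R}^n$.

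Next I would apply \textbf{Proposition \ref{Garland}(c)} in the direction ``prime ideal $\Rightarrow$ prime filter'': from $\p$ prime we obtain that $\widehat{\p}$ is a prime filter. Then I would apply the same \textbf{Proposition \ref{Garland}(c)} in the reverse direction ``prime filter $\Rightarrow$ prime ideal'': from $\widehat{\p}$ being a prime filter we obtain that $\check{\widehat{\p}}$ is a prime ideal of $\mathcal{C}^{\infty}(\mathbb{R}^n)$. Combining this with the identification from the first step, $\sqrt[\infty]{\p} = \check{\widehat{\p}}$ is a prime ideal, which is precisely the claim.

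There is no genuine obstacle here: the work has already been done in \textbf{Proposition \ref{Garland}}, where the nontrivial direction (prime ideal yields prime filter) used the separation of closed subsets $F,G$ of $\mathbb{R}^n$ via non-negative smooth functions $\widetilde{f},\widetilde{g}$ and the algebraic identity $(\widetilde{\varphi} + \widetilde{h})(\widetilde{\varphi}+\widetilde{k}) = \widetilde{\varphi}^2 + (\widetilde{h}+\widetilde{k})\widetilde{\varphi}$ lying in $\p$. The present corollary is essentially a two-line consequence of the bijective correspondence; the only point to emphasize in writing it up is that the prime property is preserved in both directions of the Galois connection, so traversing it twice returns an ideal of the same flavor, now $\mathcal{C}^{\infty}$-radical by \textbf{Proposition \ref{Garland}(d)}.
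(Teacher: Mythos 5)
Your proposal is correct and follows essentially the same route as the paper: the paper's proof likewise passes through the Galois connection twice, using \textbf{Proposition \ref{Garland}} to get that $\widehat{\p}$ is a prime filter and then that $\check{\widehat{\p}} = \sqrt[\infty]{\p}$ is a prime ideal. Your version merely makes explicit the supporting citations (\textbf{Proposition \ref{49}} for the identification $\sqrt[\infty]{\p} = \check{\widehat{\p}}$ and \textbf{Proposition \ref{griebel}} for properness), which the paper leaves implicit.
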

\begin{proof}
By the \textbf{Corollary \ref{Garland}}, since $\p$ is a prime ideal, the filter associated with $\p$, $\widehat{\p}$ is a prime filter. Again by \textbf{Corollary \ref{Garland}}, it follows that $\check{\widehat{\p}} = \sqrt[\infty]{\p}$ is a prime ideal.
\end{proof}

\begin{corollary}\label{tumbleweed}The composite $\vee \circ \wedge : \mathfrak{I} \to \mathfrak{I}$ is a \index{closure operator}closure operator, thus we have the following three properties:
\begin{itemize}
  \item[(a)]{$I \subseteq \check{\widehat{I}}$;}
  \item[(b)]{$I_1 \subseteq I_2 \Rightarrow \check{\widehat{I_1}} \subseteq \check{\widehat{I_2}}$;}
  \item[(c)]{If $J = \check{\widehat{I}}$ then $\check{\widehat{J}} = J$.}
\end{itemize}
\end{corollary}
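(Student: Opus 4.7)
The plan is to derive all three properties directly from the Galois connection $\wedge \dashv \vee$ established in \textbf{Proposition \ref{pseu}}, together with the remark following \textbf{Proposition \ref{47}} that $\widehat{\check{\mathcal{F}}} = \mathcal{F}$ for every filter $\mathcal{F} \in \mathfrak{F}$. The fact that the two round trips of a Galois connection yield a closure operator and an interior operator respectively is completely formal, and items (a), (b), (c) are precisely the three defining axioms of a closure operator (extensivity, monotonicity, idempotence), so the proof reduces to invoking the three clauses of \textbf{Proposition \ref{pseu}} in sequence.

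For (a), I would take $f \in I$ and observe that $Z(f) \in \widehat{I}$ holds by the very definition of $\widehat{I}$, hence $f \in \check{\widehat{I}}$ by definition of $\check{(-)}$. For (b), suppose $I_1 \subseteq I_2$: \textbf{Proposition \ref{pseu}}(b) then yields $\widehat{I_1} \subseteq \widehat{I_2}$, and applying \textbf{Proposition \ref{pseu}}(a) to this inclusion gives $\check{\widehat{I_1}} \subseteq \check{\widehat{I_2}}$. (Alternatively, one can shortcut the two steps by invoking the adjunction (c) of \textbf{Proposition \ref{pseu}} together with (a).)

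For (c), set $J = \check{\widehat{I}}$. The key computation is $\widehat{J} = \widehat{\check{\widehat{I}}}$, and since $\widehat{I}$ is itself a filter (by \textbf{Proposition \ref{griebel}}), the identity $\widehat{\check{\mathcal{F}}} = \mathcal{F}$ recorded in the remark after \textbf{Proposition \ref{47}} applies with $\mathcal{F} = \widehat{I}$, yielding $\widehat{J} = \widehat{I}$. Applying $\check{(-)}$ to both sides gives $\check{\widehat{J}} = \check{\widehat{I}} = J$, as required.

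The only step that is not merely bookkeeping is the idempotence (c), and its entire content is the equality $\widehat{\check{\mathcal{F}}} = \mathcal{F}$, which was already derived in the text using that every closed subset of $\mathbb{R}^n$ is the zero set of some smooth function (a consequence of Whitney's theorem quoted in the proof of \textbf{Proposition \ref{pani}}); so there is no genuine obstacle — the corollary is essentially a restatement of what is already proved just above.
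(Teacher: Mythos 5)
Your proof is correct and follows exactly the route the paper intends: the corollary is stated without proof as an immediate consequence of the Galois connection of \textbf{Proposition \ref{pseu}} together with the identity $\widehat{\check{\mathcal{F}}} = \mathcal{F}$ from the remark after \textbf{Proposition \ref{47}}, which is precisely what you invoke. (The remark states that identity for \emph{proper} filters, but the improper case is trivial, so your application to $\mathcal{F} = \widehat{I}$ is unproblematic.)
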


\begin{theorem}\label{yellow}Let $I, I_1, I_2 \subseteq \mathcal{C}^{\infty}(\mathbb{R}^n)$ be ideals. Then:
\begin{itemize}
  \item[(a)]{$\sqrt[\infty]{I}$ is an ideal of $\mathcal{C}^{\infty}(\mathbb{R}^n)$ and $I \subseteq \sqrt[\infty]{I}$;}
  \item[(b)]{$I_1 \subseteq I_2 \Rightarrow \sqrt[\infty]{I_1} \subseteq \sqrt[\infty]{I_2}$}
  \item[(c)]{$\sqrt[\infty]{\sqrt[\infty]{I}} = \sqrt[\infty]{I}$}
\end{itemize}
\end{theorem}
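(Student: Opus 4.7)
The plan is to reduce all three items to the Galois connection $\wedge \dashv \vee$ between $(\mathfrak{F},\subseteq)$ and $(\mathfrak{I},\subseteq)$ established in \textbf{Proposition \ref{pseu}}, together with the identification $\sqrt[\infty]{I} = \check{\widehat{I}}$ provided by \textbf{Proposition \ref{49}}. Once that identification is in hand, the three assertions are precisely the statements that $\vee \circ \wedge : \mathfrak{I} \to \mathfrak{I}$ is a closure operator, which is already recorded as \textbf{Corollary \ref{tumbleweed}}; so the task is essentially to translate that closure-operator language back into the $\sqrt[\infty]{\phantom{I}}$ notation.

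For (a), \textbf{Proposition \ref{49}} gives that $\sqrt[\infty]{I} = \check{\widehat{I}}$, and \textbf{Proposition \ref{47}} shows that $\check{\mathcal{F}}$ is an ideal of $\mathcal{C}^{\infty}(\mathbb{R}^n)$ for any filter $\mathcal{F}$ on the closed parts of $\mathbb{R}^n$; applying this to $\mathcal{F} = \widehat{I}$ (which is indeed a filter by \textbf{Proposition \ref{griebel}}) yields that $\sqrt[\infty]{I}$ is an ideal. The inclusion $I \subseteq \sqrt[\infty]{I}$ follows directly: given $f \in I$, we have $Z(f) \in \widehat{I}$ by definition, hence $f \in \check{\widehat{I}} = \sqrt[\infty]{I}$.

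For (b), assume $I_1 \subseteq I_2$. Monotonicity of $\wedge$ (\textbf{Proposition \ref{pseu}}(b)) gives $\widehat{I_1} \subseteq \widehat{I_2}$; then monotonicity of $\vee$ (\textbf{Proposition \ref{pseu}}(a)) gives $\check{\widehat{I_1}} \subseteq \check{\widehat{I_2}}$. By \textbf{Proposition \ref{49}}, this is exactly $\sqrt[\infty]{I_1} \subseteq \sqrt[\infty]{I_2}$.

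For (c), I would use the idempotency observation already made in the \textbf{Remark} preceding \textbf{Proposition \ref{49}}: for any filter $\mathcal{F}$ on the closed parts of $\mathbb{R}^n$, one has $\widehat{\check{\mathcal{F}}} = \mathcal{F}$, because every closed subset of $\mathbb{R}^n$ is the zero set of some smooth function. Writing $J := \sqrt[\infty]{I} = \check{\widehat{I}}$, and applying this identity to the filter $\mathcal{F} = \widehat{I}$, we get $\widehat{J} = \widehat{\check{\widehat{I}}} = \widehat{I}$. Hence $\sqrt[\infty]{J} = \check{\widehat{J}} = \check{\widehat{I}} = J$, which is precisely $\sqrt[\infty]{\sqrt[\infty]{I}} = \sqrt[\infty]{I}$. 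There is no serious obstacle here; the only mild subtlety is making sure the Galois connection and the equality $\widehat{\check{\mathcal{F}}} = \mathcal{F}$ are invoked in the right order so that item (c) rests only on results stated earlier.
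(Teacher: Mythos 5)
Your proposal is correct and follows essentially the same route as the paper: both reduce the theorem to the identification $\sqrt[\infty]{I}=\check{\widehat{I}}$ from \textbf{Proposition \ref{49}} together with the Galois-connection/closure-operator facts of \textbf{Propositions \ref{pseu}}, \textbf{\ref{47}} and \textbf{Corollary \ref{tumbleweed}}. The only difference is cosmetic: the paper cites \textbf{Corollary \ref{tumbleweed}} directly for each item, whereas you unpack the same monotonicity and idempotence arguments explicitly.
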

\begin{proof}
Ad (a):  From \textbf{Proposition \ref{49}} we have $\sqrt[\infty]{I} = \check{\widehat{I}}$, and from \textbf{Proposition \ref{47}} it follows that $\check{\widehat{I}}$ is an ideal. Moreover, since $\vee \circ \wedge$ is a closure operator, by item a) of \textbf{Corollary \ref{tumbleweed}} we have $I \subseteq \check{\widehat{I}} = \sqrt[\infty]{I}$.\\

Ad (b): From item (b) of \textbf{Corollary \ref{tumbleweed}}, $I_1 \subseteq I_2 \Rightarrow \check{\widehat{I_1}} \subseteq \check{\widehat{I_2}}$, so $\sqrt[\infty]{I_1} \subseteq \sqrt[\infty]{I_2}$.\\

Ad (c): From item (c) of \textbf{Corollary \ref{tumbleweed}} it follows immediately by the idempotence of $\vee \circ \wedge$ that $\sqrt[\infty]{\sqrt[\infty]{I}} = \sqrt[\infty]{I}$.
\end{proof}

\begin{theorem}\label{Chico}
Let $A$ be a $\mathcal{C}^{\infty}-$ring. We have:
$$\bigcap {\rm Spec}^{\infty}\,(A) = \sqrt[\infty]{(0)}.$$
\end{theorem}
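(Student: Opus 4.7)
The plan is to prove both inclusions separately, relying throughout on the characterisation of $\sqrt[\infty]{I}$ given by \textbf{Proposition \ref{alba}}: for an ideal $I \subseteq A$ and $a \in A$,
$$a \in \sqrt[\infty]{I} \iff I \cap \{a\}^{\infty-{\rm sat}} \neq \varnothing.$$
The inclusion $\sqrt[\infty]{(0)} \subseteq \bigcap {\rm Spec}^{\infty}(A)$ is the easy direction, while for the converse I shall invoke Zorn's lemma on the non-zero $\mathcal{C}^{\infty}$-ring $A\{a^{-1}\}$ and pull back a maximal ideal along $\eta_a$.

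For $\sqrt[\infty]{(0)} \subseteq \bigcap {\rm Spec}^{\infty}(A)$: take $a \in \sqrt[\infty]{(0)}$, so by \textbf{Proposition \ref{alba}} we have $0 \in \{a\}^{\infty-{\rm sat}}$. For any $\p \in {\rm Spec}^{\infty}(A)$, since $0 \in \p$, the intersection $\p \cap \{a\}^{\infty-{\rm sat}}$ is non-empty, so by \textbf{Proposition \ref{alba}} again, $a \in \sqrt[\infty]{\p}$; and $\sqrt[\infty]{\p} = \p$ by the defining property of ${\rm Spec}^{\infty}(A)$.

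For the reverse inclusion I argue the contrapositive. Let $a \notin \sqrt[\infty]{(0)}$, so $A\{a^{-1}\} \not\cong 0$ by \textbf{Definition \ref{defrad}}. By Zorn's lemma the non-zero commutative ring $A\{a^{-1}\}$ admits a maximal ideal $\m$, and I set $\p := \eta_a^{\dashv}[\m]$. This $\p$ is a prime ideal of $A$ (preimage of a prime), and since $\eta_a(a) \in (A\{a^{-1}\})^{\times}$ cannot lie in $\m$, one has $a \notin \p$. It remains to show $\p \in {\rm Spec}^{\infty}(A)$, i.e.\ that $\p$ is $\mathcal{C}^{\infty}$-radical; this is the crux of the argument.

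I shall establish this in two substeps. First, any maximal ideal $\m$ of a $\mathcal{C}^{\infty}$-ring $B$ is $\mathcal{C}^{\infty}$-radical: the quotient $B/\m$ is a $\mathcal{C}^{\infty}$-field, hence (by \textbf{Proposition \ref{sizi}}) localising it at any non-zero element yields an isomorphism, so via the quotient--localisation interchange of \textbf{Corollary \ref{Jeq}} one sees $\sqrt[\infty]{\m} = \m$. Second, I prove the lemma that for any $\mathcal{C}^{\infty}$-homomorphism $f: A \to B$ and any $\mathcal{C}^{\infty}$-radical ideal $J \subseteq B$, the preimage $f^{\dashv}[J]$ is $\mathcal{C}^{\infty}$-radical in $A$. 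Given $a \in \sqrt[\infty]{f^{\dashv}[J]}$, by \textbf{Proposition \ref{alba}} pick $b \in f^{\dashv}[J] \cap \{a\}^{\infty-{\rm sat}}$; applying the induced map $\widetilde{f}: A\{a^{-1}\} \to B\{\{f(a)\}^{-1}\}$ from \textbf{Proposition \ref{par}} sends the unit $\eta_a(b)$ to the unit $\eta_{f(a)}(f(b))$, witnessing $f(b) \in J \cap \{f(a)\}^{\infty-{\rm sat}}$; hence $f(a) \in \sqrt[\infty]{J} = J$ and $a \in f^{\dashv}[J]$. Applying this lemma to $\eta_a$ and $\m$ gives that $\p = \eta_a^{\dashv}[\m]$ is $\mathcal{C}^{\infty}$-radical, completing the proof. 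The main obstacle is precisely this preimage lemma, whose verification depends on the functorial behaviour of $\mathcal{C}^{\infty}$-localisation and the saturation characterisation of \textbf{Proposition \ref{alba}}; once it is in hand, everything else is formal.
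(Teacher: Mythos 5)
Your proof is correct, and for the hard inclusion it takes a genuinely different route from the paper. The easy inclusion is the same in substance (monotonicity of $\sqrt[\infty]{\cdot}$, phrased via the saturation criterion of \textbf{Proposition \ref{alba}}). For the reverse inclusion the paper argues \textit{ab absurdo} and reduces everything to the assertion that $x \notin \sqrt[\infty]{(0)}$ forces $D^{\infty}(x) \neq \varnothing$; that assertion is exactly the existence of a $\mathcal{C}^{\infty}$-radical prime avoiding $x$, and the paper does not construct it on the spot -- it is implicitly deferred to \textbf{Proposition \ref{Raj}} and the Separation Theorems, which appear only later and themselves rest on a Zorn-type argument. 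You instead produce the required prime directly: Zorn's lemma on the nontrivial ring $A\{a^{-1}\}$ gives a maximal ideal $\mathfrak{m}$, and you verify that $\eta_a^{\dashv}[\mathfrak{m}]$ lies in ${\rm Spec}^{\infty}(A)$ via two lemmas, namely that maximal ideals of a $\mathcal{C}^{\infty}$-ring are $\mathcal{C}^{\infty}$-radical (since the quotient is a $\mathcal{C}^{\infty}$-field, using \textbf{Proposition \ref{sizi}}) and that preimages of $\mathcal{C}^{\infty}$-radical ideals are $\mathcal{C}^{\infty}$-radical -- the latter is the paper's \textbf{Theorem \ref{preim}}, which you reprove cleanly from \textbf{Proposition \ref{alba}} and \textbf{Proposition \ref{par}} rather than via reduced quotients. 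What your route buys is a self-contained argument with no forward reference and no hidden circularity; what the paper's buys is brevity at the cost of postponing the substantive existence statement. Both are sound.
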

\begin{proof}
In order to prove the inclusion $\sqrt[\infty]{(0)} \subseteq \bigcap {\rm Spec}^{\infty}\,(A)$, we observe that for every $\p \in {\rm Spec}^{\infty}\,(A)$ we have $(0) \subseteq \p$. By item b) of \textbf{Theorem \ref{yellow}}, $\sqrt[\infty]{(0)} \subseteq \sqrt[\infty]{\p} = \p$.\\

We prove the other inclusion by contradiction. \textit{Ab absurdo}, suppose $\bigcap {\rm Spec}^{\infty}\,(A) \nsubseteq \sqrt[\infty]{(0)}$. This implies that there exists some $x \in \p$ such that $x \notin \sqrt[\infty]{(0)}$ and, therefore, $D^{\infty}(x) \nsubseteq D^{\infty}((0)) = \varnothing$, so $D^{\infty}(x) \neq \varnothing$. Taking into account that $D^{\infty}(x) = \{ \p \in {\rm Spec}^{\infty}\,(A) | x \notin \p \}$, we have $(D^{\infty}(x) \neq \varnothing ) \iff ((\exists \p \in {\rm Spec}^{\infty}\,(A))(x \notin \p))$ and we achieve the absurdity (we have simultaneously $(\forall \p \in {\rm Spec}^{\infty}\,(A))(x \in \p)$ and $(\exists \p \in {\rm Spec}^{\infty}\,(A))(x \notin \p)$).
\end{proof}

The following definition will be helpful to prove that whenever $I$ is an ideal of \textbf{any} $\mathcal{C}^{\infty}-$ring $A$, then $\sqrt[\infty]{I}$ is an ideal too.\\

\begin{definition}Let $A$ be a $\mathcal{C}^{\infty}-$ring. We say that $A$ is \textbf{admissible} if for every ideal $I \subseteq A$, $\sqrt[\infty]{I}$ is an ideal in $A$.
\end{definition}

In what follows we sketch a proof that every $\mathcal{C}^{\infty}-$ring $A$ is admissible.\\

\begin{lemma}\label{l1}Let $A$ and $A'$ be  $\mathcal{C}^{\infty}-$rings such that $A \cong A'$. Then if $A$ is an admissible $\mathcal{C}^{\infty}-$ring, $A'$ is admissible too.
\end{lemma}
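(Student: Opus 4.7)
The plan is to transport the admissibility of $A$ across the isomorphism $\varphi: A \xrightarrow{\cong} A'$. Concretely, fix a $\mathcal{C}^{\infty}$-ring isomorphism $\varphi: A \to A'$ with inverse $\varphi^{-1}: A' \to A$, and let $I' \subseteq A'$ be an arbitrary ideal. Set $I := \varphi^{\dashv}[I']$. Since $\varphi$ is a $\mathcal{C}^{\infty}$-ring isomorphism, $I$ is an ideal of $A$ and moreover $\varphi[I] = I'$. By admissibility of $A$, the set $\sqrt[\infty]{I}$ is an ideal of $A$. The claim to establish is that $\varphi[\sqrt[\infty]{I}] = \sqrt[\infty]{I'}$, from which $\sqrt[\infty]{I'}$ is an ideal of $A'$ as the image of an ideal under an isomorphism.

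To prove this equality, I would use \textbf{Definition \ref{defrad}}: $a \in \sqrt[\infty]{I}$ iff $(A/I)\{(a+I)^{-1}\} \cong 0$. First, since $\varphi[I] = I'$ the isomorphism $\varphi$ descends, via the \textbf{First Isomorphism Theorem}, to a $\mathcal{C}^{\infty}$-ring isomorphism $\overline{\varphi}: A/I \xrightarrow{\cong} A'/I'$ satisfying $\overline{\varphi}(a + I) = \varphi(a) + I'$ for all $a \in A$. Next, for each $a \in A$, $\overline{\varphi}$ sends the singleton $\{a+I\}$ to $\{\varphi(a) + I'\}$, so by the universal property of the $\mathcal{C}^{\infty}$-ring of fractions (\textbf{Definition \ref{Alem}}) together with the uniqueness up to unique isomorphism of the solution, we obtain a $\mathcal{C}^{\infty}$-ring isomorphism
$$ (A/I)\{(a+I)^{-1}\} \;\cong\; (A'/I')\{(\varphi(a)+I')^{-1}\}. $$
Hence $(A/I)\{(a+I)^{-1}\} \cong 0$ iff $(A'/I')\{(\varphi(a)+I')^{-1}\} \cong 0$, i.e., $a \in \sqrt[\infty]{I}$ iff $\varphi(a) \in \sqrt[\infty]{I'}$. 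Since $\varphi$ is a bijection, this reads precisely as $\varphi[\sqrt[\infty]{I}] = \sqrt[\infty]{I'}$, finishing the argument.

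There is essentially no obstacle: the entire content is the functoriality of the two constructions (quotient and smooth ring of fractions) under $\mathcal{C}^{\infty}$-isomorphism, both of which follow at once from their universal properties. The only point that deserves a line of care is the verification that $\overline{\varphi}$ is a well-defined $\mathcal{C}^{\infty}$-ring isomorphism, which is immediate once one notes that $\varphi[I] = I'$ and $\varphi^{-1}[I'] = I$ make $\ker(q_{I'} \circ \varphi) = I$ and $\ker(q_{I} \circ \varphi^{-1}) = I'$, allowing both quotient maps to factor through one another and yielding mutually inverse $\mathcal{C}^{\infty}$-homomorphisms between $A/I$ and $A'/I'$.
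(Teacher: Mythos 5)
Your argument is correct, and in fact the paper states Lemma \ref{l1} without any proof at all (it appears only as part of a ``sketch''), so your transport-of-structure argument supplies exactly the missing content: pull back the ideal along the isomorphism, use admissibility of $A$, and push the $\mathcal{C}^{\infty}$-radical forward, with the key identity $\varphi[\sqrt[\infty]{\varphi^{\dashv}[I']}] = \sqrt[\infty]{I'}$ justified by the induced isomorphism of quotients and the uniqueness (up to unique isomorphism) of the $\mathcal{C}^{\infty}$-ring of fractions from \textbf{Definition \ref{Alem}}. Nothing further is needed.
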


\begin{lemma}\label{l2}The free $\mathcal{C}^{\infty}-$ring in $n$ generators, $\mathcal{C}^{\infty}(\mathbb{R}^n)$, is an admissible $\mathcal{C}^{\infty}-$ring.
\end{lemma}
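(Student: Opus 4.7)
The plan is to assemble this lemma directly from the three propositions that were carefully established earlier in the section, namely \textbf{Proposition \ref{griebel}}, \textbf{Proposition \ref{47}} and \textbf{Proposition \ref{49}}; essentially no new work is required, since the substantive content (the equality $\sqrt[\infty]{I}=\check{\widehat{I}}$) has already been proved in \textbf{Proposition \ref{49}} and the ``in particular'' clause there is precisely the admissibility statement. So this lemma is best presented as a short corollary-style synthesis.

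First I would fix an arbitrary ideal $I \subseteq \mathcal{C}^{\infty}(\mathbb{R}^n)$ and form its associated family of closed sets
\[
\widehat{I}=\{A\subseteq\mathbb{R}^n\mid A \text{ is closed and }(\exists h\in I)(A=Z(h))\}.
\]
By \textbf{Proposition \ref{griebel}}, $\widehat{I}$ is a filter on the lattice of closed subsets of $\mathbb{R}^n$. Next, I would apply \textbf{Proposition \ref{47}} to the filter $\widehat{I}$ to conclude that
\[
\check{\widehat{I}}=\{g\in\mathcal{C}^{\infty}(\mathbb{R}^n)\mid Z(g)\in\widehat{I}\}
\]
is an ideal of $\mathcal{C}^{\infty}(\mathbb{R}^n)$. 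Finally, \textbf{Proposition \ref{49}} identifies this ideal with the $\mathcal{C}^{\infty}$-radical:
\[
\sqrt[\infty]{I}=\check{\widehat{I}}.
\]
Combining these three facts gives that $\sqrt[\infty]{I}$ is an ideal of $\mathcal{C}^{\infty}(\mathbb{R}^n)$, and since $I$ was arbitrary, $\mathcal{C}^{\infty}(\mathbb{R}^n)$ is admissible by definition.

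There is no real obstacle here; the only thing to be careful about is not re-proving the zero-set characterization of $\sqrt[\infty]{I}$ (which uses the Whitney-type theorem that every closed subset of $\mathbb{R}^n$ is the zero set of a smooth function, together with \textbf{Proposition \ref{pani}} relating $\mathcal{C}^{\infty}(\mathbb{R}^n)\{g^{-1}\}$ to $\mathcal{C}^{\infty}(U_g)$), since that work has already been done. The argument consists of invoking \textbf{Proposition \ref{griebel}}, \textbf{Proposition \ref{47}}, and \textbf{Proposition \ref{49}} in sequence.
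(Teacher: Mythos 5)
Your proof is correct and follows exactly the paper's route: the paper's own proof simply cites \textbf{Proposition \ref{49}}, whose ``in particular'' clause is the admissibility of $\mathcal{C}^{\infty}(\mathbb{R}^n)$, and your synthesis via \textbf{Proposition \ref{griebel}} (showing $\widehat{I}$ is a filter) and \textbf{Proposition \ref{47}} (showing $\check{\mathcal{F}}$ is an ideal) is just a more explicit unpacking of how that clause is obtained. No gaps.
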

\begin{proof}It follows immediately from  \textbf{Proposition \ref{49}}.
\end{proof}

\begin{lemma}\label{l3}Let $A$ be an admissible $\mathcal{C}^{\infty}-$ring and $J \subset A$ be an ideal. Then $\dfrac{A}{J}$ is an admissible $\mathcal{C}^{\infty}-$ring.
\end{lemma}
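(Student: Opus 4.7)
The plan is to reduce the admissibility of $A/J$ to that of $A$ via the ideal correspondence theorem and the identification
$$\sqrt[\infty]{\overline{I}} \;=\; \sqrt[\infty]{I}\big/ J,$$
where $\overline{I} = I/J$ is an arbitrary ideal of $A/J$ (so $J \subseteq I \subseteq A$). Once this identification is established, admissibility of $A$ gives that $\sqrt[\infty]{I}$ is an ideal of $A$ containing $J$, and hence $\sqrt[\infty]{I}/J$ is an ideal of $A/J$, which is exactly what we need.

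First I would fix an ideal $\overline{I} \subseteq A/J$ and lift it to the unique ideal $I$ of $A$ with $J \subseteq I$ and $\overline{I} = q_J[I] = I/J$, where $q_J : A \twoheadrightarrow A/J$ is the canonical quotient map. Given an arbitrary $a \in A$, I would then unpack the definition of the $\mathcal{C}^{\infty}$-radical on both sides: writing $\overline{a} = a + J \in A/J$, membership $\overline{a} \in \sqrt[\infty]{\overline{I}}$ is, by Definition \ref{defrad}, the condition that
$$\left(\dfrac{A/J}{\overline{I}}\right)\{(\overline{a}+\overline{I})^{-1}\} \cong 0.$$
The third isomorphism theorem gives a canonical $\mathcal{C}^{\infty}$-isomorphism $\dfrac{A/J}{I/J} \cong \dfrac{A}{I}$ under which $\overline{a}+\overline{I}$ corresponds to $a+I$. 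Hence
$$\left(\dfrac{A/J}{\overline{I}}\right)\{(\overline{a}+\overline{I})^{-1}\} \cong \left(\dfrac{A}{I}\right)\{(a+I)^{-1}\},$$
which is $\cong 0$ precisely when $a \in \sqrt[\infty]{I}$.

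From this I conclude the pointwise equivalence $\overline{a} \in \sqrt[\infty]{\overline{I}} \iff a \in \sqrt[\infty]{I}$, and since $J \subseteq I \subseteq \sqrt[\infty]{I}$ (the inclusion $I \subseteq \sqrt[\infty]{I}$ being immediate from the definition, as $a \in I$ forces $a + I = 0$ and inverting $0$ yields the zero ring), the set $\sqrt[\infty]{\overline{I}}$ is exactly the image $q_J[\sqrt[\infty]{I}] = \sqrt[\infty]{I}/J$. By the admissibility hypothesis on $A$, $\sqrt[\infty]{I}$ is an ideal of $A$, and the homomorphic image of an ideal containing $\ker q_J = J$ is an ideal of $A/J$; therefore $\sqrt[\infty]{\overline{I}}$ is an ideal of $A/J$, and $A/J$ is admissible.

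The only potential obstacle is verifying the isomorphism of $\mathcal{C}^{\infty}$-rings of fractions used above, but this is not a genuine difficulty: it is a direct instance of Corollary \ref{Jeq} (commutativity of $\mathcal{C}^{\infty}$-rings of fractions with quotients), combined with the third isomorphism theorem for $\mathcal{C}^{\infty}$-rings, whose verification at the level of congruences/ideals is routine and already implicit in the classification of congruences by ideals recorded in the preliminaries.
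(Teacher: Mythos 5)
The paper states this lemma without proof (it is one of the four auxiliary lemmas listed in the ``sketch'' leading to \textbf{Theorem \ref{little}}), so there is no proof of record to compare against; your argument correctly fills that gap. The route you take is the natural one: the ideal correspondence $\overline{I}=I/J \leftrightarrow I\supseteq J$, the identification $\bigl(\tfrac{A/J}{I/J}\bigr)\{(\overline{a}+\overline{I})^{-1}\}\cong \bigl(\tfrac{A}{I}\bigr)\{(a+I)^{-1}\}$ coming from the third isomorphism theorem together with invariance of the $\mathcal{C}^{\infty}$-ring of fractions under isomorphism, and the resulting equality $\sqrt[\infty]{\overline{I}}=q_J[\sqrt[\infty]{I}]$, after which admissibility of $A$ and surjectivity of $q_J$ finish the job. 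Two small remarks: your appeal to \textbf{Corollary \ref{Jeq}} at the end is superfluous — the third isomorphism theorem (valid here because congruences of $\mathcal{C}^{\infty}$-rings are classified by their ring-theoretic ideals) plus the universal property of the ring of fractions already gives the displayed isomorphism without any commutation of localization with quotients; and since \textbf{Definition \ref{defrad}} is phrased for proper ideals, the improper case $\overline{I}=A/J$ should be dispatched separately, which is immediate since then $\sqrt[\infty]{\overline{I}}=A/J$ is an ideal. Neither point affects the correctness of the proof.
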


\begin{lemma}\label{l4}Let $\{ A_i \stackrel{h_{ij}}{\rightarrow} A_j \}$ be a filtered diagram of admissible $\mathcal{C}^{\infty}-$rings. Then $\varinjlim A_i$ is an admissible $\mathcal{C}^{\infty}-$ring.
\end{lemma}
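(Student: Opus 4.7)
Set $B = \varinjlim_{i \in I} A_i$, with colimit cocone $\alpha_i : A_i \to B$ and transition maps $h_{ij} : A_i \to A_j$ for $i \leq j$. Let $J \subseteq B$ be an ideal. My plan is to produce a concrete description of $\sqrt[\infty]{J}$ as a directed union of images of ideals, and then read off the ideal property from the filteredness of the diagram.

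First I would define $J_i := \alpha_i^{\dashv}[J] \subseteq A_i$, which is an ideal of $A_i$ by functoriality, and observe that whenever $i \leq j$ we have $h_{ij}[J_i] \subseteq J_j$. The family $\{A_i/J_i\}$ with the induced maps is then a filtered system, and a standard argument (using that filtered colimits in $\mathcal{C}^{\infty}{\rm \bf Rng}$ are created by the forgetful functor into $\mathbf{Set}$, together with the commutation of colimits with quotients) yields $B/J \cong \varinjlim_{i \in I} A_i/J_i$, with $\alpha_i$ descending to maps $\overline{\alpha}_i : A_i/J_i \to B/J$ whose images cover $B/J$ and which satisfy $\overline{\alpha}_i(x + J_i) = 0$ iff there is $j \geq i$ with $h_{ij}(x) \in J_j$.

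Next, given $b \in B$, pick $i \in I$ and $b_i \in A_i$ with $\alpha_i(b_i) = b$. Combining the isomorphism above with \textbf{Corollary \ref{Jeq}} and \textbf{Theorem \ref{2137}} (which together show that localization commutes both with quotients and with filtered colimits), I get
\[
(B/J)\{(b+J)^{-1}\} \;\cong\; \varinjlim_{j \geq i}\, (A_j/J_j)\{(h_{ij}(b_i)+J_j)^{-1}\}.
\]
Since a filtered colimit of $\mathcal{C}^{\infty}$-rings is the trivial ring iff some term of the diagram is already the trivial ring (indeed $1 = 0$ in the colimit iff $1 = 0$ in some stage, as the underlying set is a filtered colimit in $\mathbf{Set}$), the left-hand side vanishes iff there exists $j \geq i$ with $(A_j/J_j)\{(h_{ij}(b_i)+J_j)^{-1}\} \cong 0$, i.e.\ iff $h_{ij}(b_i) \in \sqrt[\infty]{J_j}$. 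This yields the description
\[
\sqrt[\infty]{J} \;=\; \bigcup_{j \in I} \alpha_j\bigl[\sqrt[\infty]{J_j}\bigr].
\]

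Since each $A_j$ is admissible by hypothesis, each $\sqrt[\infty]{J_j}$ is an ideal of $A_j$. Moreover, from the characterization $\sqrt[\infty]{J_j} = \{a \in A_j \mid J_j \cap \{a\}^{\infty-\mathrm{sat}} \neq \varnothing\}$ of \textbf{Proposition \ref{alba}} and the fact that $h_{ij}$ preserves both the ideal containment ($h_{ij}[J_i] \subseteq J_j$) and smooth saturations, one checks directly that $h_{ij}[\sqrt[\infty]{J_i}] \subseteq \sqrt[\infty]{J_j}$. This makes the family $\{\alpha_j[\sqrt[\infty]{J_j}]\}_{j \in I}$ upward directed in $B$. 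Closing under addition and under multiplication by arbitrary elements of $B$ is then routine: given two elements $x,y \in \sqrt[\infty]{J}$ and a $b \in B$, choose a single index $k$ large enough that $x,y,b$ all have representatives in $A_k$ with $x,y$ represented inside $\sqrt[\infty]{J_k}$, and use that $\sqrt[\infty]{J_k}$ is an ideal of $A_k$. The principal difficulty is the first paragraph—cleanly justifying $B/J \cong \varinjlim A_i/J_i$ and the vanishing-of-colimit criterion—after which the argument reduces to manipulating filtered unions of ideals.
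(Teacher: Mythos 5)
Your proof is correct. Note that the paper states Lemma \ref{l4} without proof (only Lemma \ref{l2} in that block is proved), so there is no argument of the authors' to compare against; your write-up supplies the missing one, and it does so using exactly the machinery the paper already provides: $B/J \cong \varinjlim A_i/J_i$ with $J_i = \alpha_i^{\dashv}[J]$, Corollary \ref{Jeq} and Theorem \ref{Newt} to commute localization past quotients and filtered colimits, the observation that a filtered colimit is trivial iff some stage is trivial, and the resulting identity $\sqrt[\infty]{J} = \bigcup_j \alpha_j[\sqrt[\infty]{J_j}]$. One small remark: the directedness step $h_{ij}[\sqrt[\infty]{J_i}] \subseteq \sqrt[\infty]{J_j}$, which you justify via Proposition \ref{alba} and preservation of saturations, is also an immediate consequence of the paper's Proposition \ref{usa4africa} applied to $J_i = h_{ij}^{\dashv}[J_j]$, which would let you shorten that part.
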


\begin{theorem}\label{little}Every $\mathcal{C}^{\infty}-$ring is admissible.
\end{theorem}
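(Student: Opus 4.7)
The plan is to assemble Lemmas \ref{l1}--\ref{l4} into a single chain that realizes an arbitrary $\mathcal{C}^{\infty}-$ring as (isomorphic to) a quotient of a filtered colimit of admissible $\mathcal{C}^{\infty}-$rings, and then invoke the stability properties already recorded.

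First, I would recall the structural fact noted in the preliminaries: every $\mathcal{C}^{\infty}-$ring $A$ is isomorphic to a quotient of a free $\mathcal{C}^{\infty}-$ring on some (possibly infinite) set of generators, i.e.\ there is a set $X$ and an ideal $J \subseteq L(X) = \mathcal{C}^{\infty}(\mathbb{R}^X)$ with $A \cong L(X)/J$. By Lemma \ref{l1}, it therefore suffices to prove that $L(X)/J$ is admissible for every set $X$ and every ideal $J$; and by Lemma \ref{l3} it suffices to prove that $L(X)$ itself is admissible.

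Next I would exhibit $L(X)$ as a filtered colimit of admissible $\mathcal{C}^{\infty}-$rings. From the construction recalled in the preliminaries,
$$L(X) = \mathcal{C}^{\infty}(\mathbb{R}^{X}) \;\cong\; \varinjlim_{X' \subseteq_{\mathrm{fin}} X} \mathcal{C}^{\infty}(\mathbb{R}^{X'}),$$
and the indexing poset $\{X' \subseteq X : X' \text{ finite}\}$, ordered by inclusion, is directed (any two finite subsets sit inside their finite union). For each such $X'$, with $n = \sharp X'$, one has $\mathcal{C}^{\infty}(\mathbb{R}^{X'}) \cong \mathcal{C}^{\infty}(\mathbb{R}^n)$, which is admissible by Lemma \ref{l2} (and by Lemma \ref{l1} this property transfers across the isomorphism). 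Hence Lemma \ref{l4} applies and $L(X)$ is admissible.

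Putting the pieces together: $L(X)$ is admissible (filtered colimit step, Lemma \ref{l4}), its quotient $L(X)/J$ is admissible (quotient step, Lemma \ref{l3}), and finally $A \cong L(X)/J$ is admissible (isomorphism step, Lemma \ref{l1}). Since $A$ was arbitrary, every $\mathcal{C}^{\infty}-$ring is admissible. The only delicate point is bookkeeping rather than mathematics: one must check that the colimit description of $L(X)$ is genuinely a \emph{filtered} diagram so that Lemma \ref{l4} is applicable, but this is immediate from the fact that finite subsets of $X$ are closed under finite union. All other steps are direct applications of the four preceding lemmas.
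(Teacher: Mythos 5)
Your argument is correct, and it assembles the same four lemmas as the paper's proof, but via a different decomposition of $A$. The paper writes $A$ as a \emph{filtered colimit of finitely presented $\mathcal{C}^{\infty}$-rings}: each such ring is isomorphic to $\mathcal{C}^{\infty}(\mathbb{R}^n)/J$, hence admissible by Lemmas \ref{l2}, \ref{l3} and \ref{l1}, and then Lemma \ref{l4} is applied \emph{last} to the colimit. You instead write $A$ as a \emph{quotient of a free $\mathcal{C}^{\infty}$-ring} $L(X)=\mathcal{C}^{\infty}(\mathbb{R}^X)$, prove $L(X)$ admissible by applying Lemma \ref{l4} to the filtered diagram $\varinjlim_{X'\subseteq_{\rm fin}X}\mathcal{C}^{\infty}(\mathbb{R}^{X'})$, and only then apply Lemma \ref{l3}; so the colimit step happens \emph{before} the quotient step. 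Both routes are valid. What yours buys is that it relies only on structural facts already recorded explicitly in the preliminaries (the free presentation $A\cong L(X)/J$ and the colimit description of $L(X)$), whereas the paper's route invokes the general fact that every algebra in a variety is a filtered colimit of finitely presented algebras, which is true but not spelled out earlier in the text. Your observation that the indexing poset of finite subsets is directed is exactly the point needed to legitimize the use of Lemma \ref{l4}, and you have checked it.
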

\begin{proof}Let $A$ be any $\mathcal{C}^{\infty}-$ring. We know that every $\mathcal{C}^{\infty}-$ring is a filtered colimit of finitely presented $\mathcal{C}^{\infty}-$rings. Since every finitely presentable $\mathcal{C}^{\infty}-$ring is admissible, the result follows from \textbf{Lemma \ref{l4}}.\\

Let $B$ be any finitely presentable $\mathcal{C}^{\infty}-$ring. We know that there exist some $n \in \mathbb{N}$ and some ideal $J \subset \mathcal{C}^{\infty}(\mathbb{R}^n)$ such that $A \cong \dfrac{\mathcal{C}^{\infty}(\mathbb{R}^n)}{J}$. From \textbf{Lemma \ref{l1}}, if we prove that $\dfrac{\mathcal{C}^{\infty}(\mathbb{R}^n)}{J}$ is admissible, then it follows that $A$ is admissible.\\

From \textbf{Lemma \ref{l2}} we have that $\mathcal{C}^{\infty}(\mathbb{R}^n)$ is admissible, and from \textbf{Lemma \ref{l3}} it follows that $\dfrac{\mathcal{C}^{\infty}(\mathbb{R}^n)}{J}$ is admissible.
\end{proof}

Now we present some properties of taking the $\mathcal{C}^{\infty}-$radical of an ideal.\\

\begin{proposition}Let $A$ be a $\mathcal{C}^{\infty}-$ring, $I,J \subseteq A$ any of its ideals. Then:
\begin{itemize}
\item[(i)]{$I \subseteq J \Rightarrow \sqrt[\infty]{I} \subseteq \sqrt[\infty]{J}$}
\item[(ii)]{$I \subseteq \sqrt[\infty]{I}$}
\end{itemize}
\end{proposition}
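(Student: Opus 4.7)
The plan is to reduce both items to the characterization provided by \textbf{Proposition \ref{alba}}, namely
$$\sqrt[\infty]{I} = \{ a \in A \mid I \cap \{a\}^{\infty-\mathrm{sat}} \neq \varnothing\},$$
which holds for an arbitrary $\mathcal{C}^{\infty}-$ring $A$ and any ideal $I \subseteq A$. Once this identification is in hand, each item becomes a set-theoretic observation.

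For (ii), the plan is to take any $a \in I$ and produce a witness for the intersection. Since for any subset $S \subseteq A$ one has $S \subseteq S^{\infty-\mathrm{sat}}$ (established earlier as a basic property of smooth saturation), in particular $a \in \{a\}^{\infty-\mathrm{sat}}$. Combined with $a \in I$, this gives $a \in I \cap \{a\}^{\infty-\mathrm{sat}} \neq \varnothing$, so $a \in \sqrt[\infty]{I}$.

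For (i), assume $I \subseteq J$ and $a \in \sqrt[\infty]{I}$. By the characterization, there exists some $b \in I \cap \{a\}^{\infty-\mathrm{sat}}$. The hypothesis $I \subseteq J$ immediately yields $b \in J \cap \{a\}^{\infty-\mathrm{sat}}$, so this second intersection is non-empty and $a \in \sqrt[\infty]{J}$.

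As an alternative working straight from \textbf{Definition \ref{defrad}}, one can argue: for (ii), if $a \in I$ then $a+I = 0$ in $A/I$, and inverting $0$ in any $\mathcal{C}^{\infty}-$ring forces $1 = 0 \cdot \zeta = 0$, so $(A/I)\{(a+I)^{-1}\} \cong 0$; for (i), the canonical surjection $A/I \twoheadrightarrow A/J$ induces via \textbf{Theorem \ref{Hilfssatz}} a surjection $(A/I)\{(a+I)^{-1}\} \twoheadrightarrow (A/J)\{(a+J)^{-1}\}$, and a surjective image of the zero ring is the zero ring. No genuine obstacle is anticipated: the proposition merely extends items (a) and (b) of \textbf{Theorem \ref{yellow}} (already established for the free $\mathcal{C}^{\infty}-$ring $\mathcal{C}^{\infty}(\mathbb{R}^n)$) to an arbitrary $\mathcal{C}^{\infty}-$ring $A$, where $\sqrt[\infty]{I}$ is guaranteed to be an ideal by \textbf{Theorem \ref{little}}.
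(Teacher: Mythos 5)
Your primary argument is correct and is essentially the paper's own proof: both items are read off from the characterization $\sqrt[\infty]{I}=\{a\in A\mid I\cap\{a\}^{\infty-{\rm sat}}\neq\varnothing\}$ of \textbf{Proposition \ref{alba}}, with (ii) argued directly via the witness $a\in\{a\}^{\infty-{\rm sat}}$ where the paper argues by contraposition using the same fact that $\eta_a(a)$ is invertible. The alternative route from \textbf{Definition \ref{defrad}} via the surjection of \textbf{Theorem \ref{Hilfssatz}} is also sound, but adds nothing needed here.
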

\begin{proof}
Ad (i): Given $a \in \sqrt[\infty]{I}$, there is $b \in I$ such that $\eta_a(b) \in (A\{ a^{-1}\})^{\times}$. Since $I \subseteq J$, the same $b$ is a witness of the fact that $a \in \sqrt[\infty]{J}$, for $b \in J$ and $\eta_a(b) \in (A\{ a^{-1}\})$.\\

Ad (ii): We are going to show that $A \setminus \sqrt[\infty]{I} \subseteq A \setminus I$.\\

Given $a \in A \setminus \sqrt[\infty]{I}$, we have that $(\forall b \in I)(\eta_a(b) \notin (A\{ a^{-1}\})^{\times})$, so $\eta_a[I] \cap (A\{ a^{-1}\})^{\times} = \varnothing$. Since $\eta_a(a) \in (A\{ a^{-1}\})^{\times}$, it follows that $\eta_a(a) \notin \eta_a[I]$, so $a \notin I$.
\end{proof}

\begin{proposition}Let $B$ be a $\mathcal{C}^{\infty}-$ring and $J \subseteq B$ any of its ideals. We have the following equality:
$$\sqrt[\infty]{\{ 0_{\frac{B}{J}}\}} = \dfrac{\sqrt[\infty]{J}}{J}$$
\end{proposition}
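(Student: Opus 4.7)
The strategy is to chase the two sides back to the defining universal property of the $\mathcal{C}^{\infty}$-ring of fractions, via \textbf{Definition \ref{defrad}}, and observe they describe the same subset of $B/J$. First, since $J \subseteq \sqrt[\infty]{J}$ (by the preceding proposition), the expression $\sqrt[\infty]{J}/J$ makes sense as the image of $\sqrt[\infty]{J}$ under the canonical quotient $q_J\colon B \twoheadrightarrow B/J$. So I will work element-wise and prove the equality of subsets of $B/J$ by a double inclusion obtained from a single equivalence.

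Unfolding \textbf{Definition \ref{defrad}} applied to the ideal $J \subseteq B$ gives
\[
\sqrt[\infty]{J} \;=\; \bigl\{\, a \in B \,\bigm|\, (B/J)\{(a+J)^{-1}\} \cong 0 \,\bigr\},
\]
and applied to the zero ideal $(0) \subseteq B/J$ gives
\[
\sqrt[\infty]{\{0_{B/J}\}} \;=\; \bigl\{\, \bar{a} \in B/J \,\bigm|\, ((B/J)/(0))\{(\bar{a}+(0))^{-1}\} \cong 0 \,\bigr\}.
\]
The plan is then to use the canonical isomorphism $(B/J)/(0) \stackrel{\cong}{\to} B/J$ together with the identification $\bar{a}+(0) = \bar{a}$ to rewrite the defining condition for $\bar{a} \in \sqrt[\infty]{\{0_{B/J}\}}$ as simply $(B/J)\{\bar{a}^{-1}\} \cong 0$. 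Since $\bar{a} = a + J$, this is precisely the condition defining membership of $a$ in $\sqrt[\infty]{J}$.

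To make the identification $(B/J)\{(a+J)^{-1}\} \cong (B/J)\{\bar{a}^{-1}\}$ fully rigorous (not just a notational coincidence), I will invoke the uniqueness (up to unique isomorphism) of the $\mathcal{C}^{\infty}$-ring of fractions established in \textbf{Definition \ref{Alem}}: both rings represent the same universal problem of inverting the element $\bar{a} = a+J \in B/J$, so their underlying sets (here, the trivial ring) coincide up to isomorphism, and in particular being isomorphic to $0$ is a property shared by both.

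Assembling these observations, the equivalence
\[
a \in \sqrt[\infty]{J} \quad\Longleftrightarrow\quad (B/J)\{(a+J)^{-1}\} \cong 0 \quad\Longleftrightarrow\quad a+J \in \sqrt[\infty]{\{0_{B/J}\}}
\]
holds for every $a \in B$. Applying $q_J$ to $\sqrt[\infty]{J}$ on the left and using that every element of $B/J$ is of the form $a+J$ for some $a \in B$ on the right, I obtain $\sqrt[\infty]{J}/J = \sqrt[\infty]{\{0_{B/J}\}}$, as required. I do not expect any genuine obstacle here: the entire argument is a definition-chase, and the only point requiring a little care is checking that the zero quotient $(B/J)/(0) \cong B/J$ is being used consistently so that the two descriptions of the localization indeed agree as $\mathcal{C}^{\infty}$-rings.
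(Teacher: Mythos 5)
Your proof is correct and follows essentially the same route as the paper's: both arguments are a definition-chase that pivots on the observation that membership of $a$ in $\sqrt[\infty]{J}$ and membership of $a+J$ in $\sqrt[\infty]{\{0_{B/J}\}}$ are each equivalent to the single condition $(B/J)\{(a+J)^{-1}\}\cong 0$, with the final remark that this condition depends only on the coset $a+J$ (using $J\subseteq\sqrt[\infty]{J}$). The only cosmetic difference is that the paper passes through the characterization of \textbf{Proposition \ref{alba}} applied to the zero ideal of $B/J$, whereas you apply \textbf{Definition \ref{defrad}} directly together with the identification $(B/J)/(0)\cong B/J$; these are interchangeable.
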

\begin{proof}
$$a \in \sqrt[\infty]{J} \iff (\exists b \in J)(\eta(b) \in (B\{ a^{-1}\})^{\times}),$$
hence
\begin{multline*}a + J = \overline{a} \in \sqrt[\infty]{\{ 0_{\frac{B}{J}}\}} \iff (\exists \overline{b} \in \{ 0_{\frac{B}{J}}\})(\overline{\eta_a}(\overline{b}) \in \left( \frac{B}{J}\{(a + J)^{-1} \}\right)^{\times}) \iff\\
\iff \dfrac{B}{J}\{(a+J)^{-1} \} \cong \{ 0\} \iff \\
 \iff a \in \sqrt[\infty]{J}\iff  a+ J \in \dfrac{\sqrt[\infty]{J}}{J}
 \end{multline*}

Now, since $J \subseteq \sqrt[\infty]{J}$, if $a'+J = a+J$, then $a \in \sqrt[\infty]{J} \iff a' \in \sqrt[\infty]{J}$.
\end{proof}

\begin{corollary}Let $A$ be a $\mathcal{C}^{\infty}-$ring. We have:
\begin{itemize}
  \item[(a)]{An ideal $J \subseteq A$  is a $\mathcal{C}^{\infty}-$radical ideal if, and only if, $\dfrac{A}{J}$ is a $\mathcal{C}^{\infty}-$reduced $\mathcal{C}^{\infty}-$ring}
  \item[(b)]{A proper prime ideal $\mathfrak{p} \subseteq A$ is $\mathcal{C}^{\infty}-$radical if, and only if, $\dfrac{A}{\mathfrak{p}}$ is a $\mathcal{C}^{\infty}-$reduced $\mathcal{C}^{\infty}-$domain.}
\end{itemize}
\end{corollary}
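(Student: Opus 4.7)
The plan is to deduce both items directly from the preceding proposition, which identifies
$$\sqrt[\infty]{\{0_{A/J}\}} = \dfrac{\sqrt[\infty]{J}}{J}$$
for any ideal $J \subseteq A$. This identity essentially does all the work, so the argument will consist in unpacking what each side equal to $\{0_{A/J}\}$ says about $J$.

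For item (a), I would argue as follows. By definition, $A/J$ is $\mathcal{C}^{\infty}$-reduced if and only if $\sqrt[\infty]{\{0_{A/J}\}} = \{0_{A/J}\}$. Using the proposition, this is equivalent to $\sqrt[\infty]{J}/J = J/J$. Since the quotient map $q_J : A \twoheadrightarrow A/J$ is surjective and $J \subseteq \sqrt[\infty]{J}$ (by item (ii) of the last proposition on the preceding material, or directly from the fact that $\vee\circ\wedge$ is a closure-like operator), the equality $\sqrt[\infty]{J}/J = J/J$ holds in $A/J$ precisely when $\sqrt[\infty]{J} \subseteq J$, i.e.\ when $\sqrt[\infty]{J} = J$, which is by definition the condition that $J$ be $\mathcal{C}^{\infty}$-radical. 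Both implications are immediate from this chain of equivalences.

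For item (b), I would combine (a) with the behaviour of the forgetful functor $\widetilde{U} : \mathcal{C}^{\infty}{\rm \bf Rng} \to {\rm \bf CRing}$. By \textbf{Definition \ref{distintos}}, $A/\mathfrak{p}$ is a $\mathcal{C}^{\infty}$-domain if and only if $\widetilde{U}(A/\mathfrak{p})$ is an integral domain, which by standard commutative algebra holds if and only if $\mathfrak{p}$ is a prime ideal of the underlying commutative ring of $A$. Thus $A/\mathfrak{p}$ is a $\mathcal{C}^{\infty}$-reduced $\mathcal{C}^{\infty}$-domain if and only if $\mathfrak{p}$ is prime (giving the ``domain'' part) and $A/\mathfrak{p}$ is $\mathcal{C}^{\infty}$-reduced (giving the ``reduced'' part). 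Applying (a) to the latter condition with $J = \mathfrak{p}$ yields that this is equivalent to $\mathfrak{p}$ being both prime and $\mathcal{C}^{\infty}$-radical, which is what was to be proved.

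The only subtle point, and arguably the only step requiring care, is verifying the equivalence $\sqrt[\infty]{J}/J = J/J \iff \sqrt[\infty]{J} = J$ in item (a); this uses the inclusion $J \subseteq \sqrt[\infty]{J}$ together with the fact that elements of $\sqrt[\infty]{J}/J$ equal zero precisely when their representatives lie in $J$. Everything else reduces to a direct citation of prior results and definitions, so I do not anticipate any genuine obstacle.
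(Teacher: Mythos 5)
Your proof is correct and follows exactly the route the paper intends: the paper states this as an immediate corollary of the preceding proposition $\sqrt[\infty]{\{0_{A/J}\}} = \sqrt[\infty]{J}/J$ and gives no separate proof, and your unpacking of that identity (together with $J \subseteq \sqrt[\infty]{J}$ for (a), and the definition of $\mathcal{C}^{\infty}$-domain via $\widetilde{U}$ for (b)) is precisely the intended argument.
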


\begin{remark}\label{jurupari}Not every prime proper ideal of a $\mathcal{C}^{\infty}-$ring $A$ is a $\mathcal{C}^{\infty}-$radical ideal. Consider the ideal of functions which are flat at $0$, $\mathfrak{m}_0^{\infty} \subseteq \mathcal{C}^{\infty}(\mathbb{R})$, which is a maximal - and thus a prime - ideal of $\mathcal{C}^{\infty}(\mathbb{R})$. We claim that this ideal is not a $\mathcal{C}^{\infty}-$radical ideal (cf. \textbf{Example 1.2} of \cite{rings2}).\\

Consider ${\rm id}_{\mathbb{R}}: \mathbb{R} \rightarrow \mathbb{R}$ and $\mathcal{C}^{\infty}(\mathbb{R})\{ {\rm id}_{\mathbb{R}}^{-1}\} \cong_{\varphi} \mathcal{C}^{\infty}(\mathbb{R}\setminus \{ 0\})$. Consider:

$$\begin{array}{lr}
\begin{array}{cccc}
    f: & \mathbb{R}\setminus \{ 0\} & \rightarrow & \mathbb{R} \\
     & x & \mapsto & \begin{cases}
                       0, & \mbox{if }\, x<0 \\
                       1, & \mbox{if}\, x>0.
                     \end{cases}
  \end{array}&
\begin{array}{cccc}
    g: & \mathbb{R}\setminus \{ 0\} & \rightarrow & \mathbb{R} \\
     & x & \mapsto & \begin{cases}
                       1, & \mbox{if }\, x<0 \\
                       0, & \mbox{if}\, x>0.
                     \end{cases}
  \end{array}
\end{array}$$

\begin{center}
    \resizebox{0.8\textwidth}{!}{\begin{tikzpicture}[domain=-4:4]
\draw [->] (0,-1) --(0,4) node (yaxis) [left] {$y$};
\draw [->] (-4,0) --(4,0) node (yaxis) [below] {$x$};  
\draw[very thick] (0,3)--(4,3);
\draw[very thick] (-4,0)--(0,0);
\draw[fill=white] (0,0) circle(1.75pt);
\draw[fill=white] (0,3) circle(1.75pt);
\draw (-0.2,-0.1) node[below]{$0$};
\draw (0,3) node[left]{$1$};

\draw [->] (11,-1) --(11,4) node (yaxis) [left] {$y$};
\draw [->] (7,0) --(15,0) node (yaxis) [below] {$x$};  
\draw[very thick] (11,0)--(15,0);
\draw[very thick] (7,3)--(11,3);
\draw[fill=white] (11,0) circle(1.75pt);
\draw[fill=white] (11,3) circle(1.75pt);
\draw (10.8,-0.1) node[below]{$0$};
\draw (11,3) node[right]{$1$};
\end{tikzpicture}}
\end{center}

We have $f+g = 1 \in \mathcal{C}^{\infty}(\mathbb{R}\setminus \{ 0\})$, thus $\varphi(f+g)=\varphi(f)+\varphi(g)=\varphi(1) = 1 \in \mathcal{C}^{\infty}(\mathbb{R})_{\mathfrak{m}_{0}^{\infty}}$. But neither $\varphi(f)$ nor $\varphi(g)$ can be inverted in $\mathcal{C}^{\infty}(\mathbb{R})_{\mathfrak{m}_{0}^{\infty}}$. For, if $\varphi(f)$ were invertible, then $f$ would be invertible and there would be an $h \notin \mathfrak{m}_{0}^{\infty}$ with $U_f \supset U_h$, i.e., $h\upharpoonright_{]-\infty,0[} \cong 0$, contradicting $h \notin \mathfrak{m}_0^{\infty}$; Hence $\mathcal{C}^{\infty}(\mathbb{R})_{\mathfrak{m}_{0}^{\infty}}$ is not a local $\mathcal{C}^{\infty}-$ring.

\end{remark}

\begin{proposition} \label{redi}
Let $A', B'$ be two $\mathcal{C}^{\infty}-$rings and $\jmath: A' \to B'$ be a monomorphism. If $B'$ is $\mathcal{C}^{\infty}-$reduced, then $A'$ is also $\mathcal{C}^{\infty}-$reduced.
\end{proposition}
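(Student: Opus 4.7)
The plan is to exploit the characterization of the $\mathcal{C}^{\infty}$-radical of $(0)$ given in \textbf{Definition \ref{defrad}}, together with the functorial behavior of the $\mathcal{C}^{\infty}$-ring of fractions with respect to a single element. Specifically, I will show the contrapositive formulated pointwise: if $a \in A'$ lies in $\sqrt[\infty]{(0_{A'})}$, then $\jmath(a) \in \sqrt[\infty]{(0_{B'})} = (0_{B'})$, and then invoke injectivity of $\jmath$ to conclude $a = 0$.

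First, I would fix $a \in \sqrt[\infty]{(0_{A'})}$, which by \textbf{Definition \ref{defrad}} means $A'\{a^{-1}\} \cong 0$. Next I would build the canonical induced morphism between the rings of fractions: since $\eta_{\jmath(a)} \circ \jmath : A' \to B'\{\jmath(a)^{-1}\}$ sends $a$ to the invertible element $\eta_{\jmath(a)}(\jmath(a))$, the universal property of $\eta_a : A' \to A'\{a^{-1}\}$ (\textbf{Definition \ref{Alem}}) yields a unique $\mathcal{C}^{\infty}$-homomorphism $\widetilde{\jmath}_a : A'\{a^{-1}\} \to B'\{\jmath(a)^{-1}\}$ making the square commute.

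The crucial observation is then that a $\mathcal{C}^{\infty}$-homomorphism out of the zero $\mathcal{C}^{\infty}$-ring forces the codomain to be the zero ring as well, since $\widetilde{\jmath}_a$ must preserve the multiplicative identity and $1 = 0$ in $A'\{a^{-1}\} \cong 0$. Hence $B'\{\jmath(a)^{-1}\} \cong 0$, i.e., $\jmath(a) \in \sqrt[\infty]{(0_{B'})}$. By the $\mathcal{C}^{\infty}$-reducedness of $B'$ we get $\jmath(a) = 0_{B'} = \jmath(0_{A'})$.

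Finally, since $\jmath$ is a monomorphism in $\mathcal{C}^{\infty}{\rm \bf Rng}$ — a concrete variety of algebras with free objects, where monomorphisms coincide with injective morphisms — the equality $\jmath(a) = \jmath(0_{A'})$ forces $a = 0_{A'}$. Thus $\sqrt[\infty]{(0_{A'})} \subseteq (0_{A'})$, and the reverse inclusion is automatic, so $A'$ is $\mathcal{C}^{\infty}$-reduced. The only genuinely delicate point is the jump from $A'\{a^{-1}\} \cong 0$ to $B'\{\jmath(a)^{-1}\} \cong 0$: it is transparent once one notes that any $\mathcal{C}^{\infty}$-homomorphism from the zero ring transports the relation $1 = 0$, but it is worth spelling out because it is the step where the universal property of $\eta_a$ is actually used.
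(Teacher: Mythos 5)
Your proof is correct and follows essentially the same route as the paper's: both construct the induced $\mathcal{C}^{\infty}$-homomorphism $A'\{a^{-1}\}\to B'\{\jmath(a)^{-1}\}$ via the universal property of the ring of fractions and then observe that a unital homomorphism out of the trivial $\mathcal{C}^{\infty}$-ring forces the codomain to be trivial. The only cosmetic difference is that you argue directly while the paper phrases the same step as a contradiction.
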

\begin{proof}
Since $B'$ is reduced, $\{ 0_{B'}\} = \sqrt[\infty]{\{ 0_{B'}\}}$.\\

Suppose, \textit{ab absurdo}, that $(\exists a' \in \sqrt[\infty]{\{ 0_{A'}\}})(a' \neq 0_{A'})$. Consider the following commutative diagram:
$$\xymatrixcolsep{5pc}\xymatrix{
A' \ar[d]^{\jmath} \ar[r]^{\eta_{a'}} & A'\{(a')^{-1} \} \ar[d]_{\jmath_{a'}} \\
B' \ar[r]^{\eta_{\jmath(a')}} & B'\{ \jmath(a')^{-1}\}
}$$

where $\jmath_{a'}$ is the $\mathcal{C}^{\infty}-$homomorphism which is induced by $\eta_{\jmath(a')} \circ \jmath : A' \to B'\{ \jmath(a')^{-1}\}$.\\

Since $a' \neq 0_{A'}$ and $\jmath : A' \to B'$ is injective, it follows that $\jmath(a') \neq 0_{B'}$. Since $B'$ is reduced, $B'\{ \jmath(a')^{-1}\} \ncong \{0\}$. On the other hand, since $a' \in \sqrt[\infty]{\{ 0_{A'}\}}$, $A'\{ a'^{-1}\} \cong \{0\}$. Thus, $\jmath_a : A'\{ a'^{-1}\} \to B'\{ \jmath(a')^{-1}\}$ is a $\mathcal{C}^{\infty}-$rings morphism such that $\jmath_a(1_{A'\{ a'^{-1}\}})=1_{B'\{ \jmath(a')^{-1}\}}$ from the trivial $\mathcal{C}^{\infty}-$ring $A'\{ a'^{-1}\} \cong \{ 0 \}$ into a non-trivial $\mathcal{C}^{\infty}-$ring $B'\{ \jmath(a')^{-1}\}$, which is absurd.\\

Hence, $\sqrt[\infty]{\{ 0_{A'}\}} = \{ 0_{A'}\}$ and $A'$ is reduced.
\end{proof}

Now we consider two explicit situations where we have finitely generated $\mathcal{C}^{\infty}-$fields.

\begin{example}Let $M$ be a compact smooth manifold. By \textbf{Theorem 2.3} of \cite{moerdijk2013models}, $\mathcal{C}^{\infty}(M)\cong \mathcal{C}^{\infty}(\R^k)/J$ for some $k \in \mathbb{N}$ and some finitely generated ideal $J$. Thus, for any  ideal $I \subset \mathcal{C}^{\infty}(M)$, $\mathcal{C}^{\infty}(M)/I$ is a finitely generated $\mathcal{C}^{\infty}-$ring.

Now, if $I \subset \mathcal{C}^{\infty}(M)$ is a maximal ideal of $\mathcal{C}^{\infty}(M)$, then  $I = \mathfrak{m}_{x} = \{ f \in \mathcal{C}^{\infty}(M) \mid f(x)=0\}$ for some unique $x \in M$.

In fact, given any ideal $I \subseteq \mathcal{C}^{\infty}(M)$, one has either $I \subseteq \mathfrak{m}_x$ for some (unique) $x \in M$ or $I=\mathcal{C}^{\infty}(M)$.

Suppose it is not the case that there is some $x \in M$ such that $I \subseteq \mathfrak{m}_x$, \textit{i.e.}, $(\forall x \in M)(I \not\subseteq \mathfrak{m}_x)$. For every $x \in M$ we can find a function $f_x \in I$ such that $f_x(x)\neq 0$. Consider the open covering $\{ M \setminus f_x^{\dashv}[\{ 0\}] \mid x \in M \}$ of $M$, which has a finite sub-covering, say $\{ M \setminus f_{x_1}^{\dashv}[\{ 0\}], \cdots, M \setminus f_{x_r}^{\dashv}[\{ 0\}] \}$. We obtain, thus, the function $f = f_{x_1}^2 + \cdots + f_{x_r}^2 \in I$ such that $(\forall x \in M)(f(x) > 0)$. Hence, $f \in I \cap \mathcal{C}^{\infty}(M)^{\times}$ and $I = \mathcal{C}^{\infty}(M)$. 

As for the uniqueness of $x \in M$, suppose that $I \subset \mathfrak{m}_x$ and let $y \in M$ be such that $x \neq y$.  By the \textbf{Smooth Tietze's Theorem}, there is some $f \in \mathcal{C}^{\infty}(M)$ such that $f(x)=0$ (so $f \in I$ and  $f \in \mathfrak{m}_x$) and $f(y)=1$, so $f \notin \mathfrak{m}_y$. Thus $\mathfrak{m}_x \not \subset \mathfrak{m}_y$ and $I \not\subset \mathfrak{m}_y$.

It follows that whenever $I$ is a  maximal ideal of $\mathcal{C}^{\infty}(M)$ - where $M$ is a compact manifold - there is a unique $x \in M$ such that $I \subseteq \mathfrak{m}_x \subset \mathcal{C}^{\infty}(M)$. Since $I$ is a maximal ideal, then  $I = \mathfrak{m}_x$.

Thus, for every maximal ideal $I \subset \mathcal{C}^{\infty}(M)$, $\mathcal{C}^{\infty}(M)/I \cong \R$ using the fact that the $\mathcal{C}^{\infty}-$homomorphism:

$$\begin{array}{cccc}
    {\rm ev}_x: & \mathcal{C}^{\infty}(M) & \rightarrow & \R \\
     & f & \mapsto & f(x)
\end{array}$$

\noindent is surjective and the \textbf{Fundamental Theorem of the $\mathcal{C}^{\infty}-$Homomorphism} that:

$$\dfrac{\mathcal{C}^{\infty}(M)}{\mathfrak{m}_x} = \dfrac{\mathcal{C}^{\infty}(M)}{\ker {\rm ev}_x} \cong \R$$

Hence, every $\mathcal{C}^{\infty}-$field obtained as a quotient $\mathcal{C}^{\infty}(M)/I$ is isomorphic to $\R$.
\end{example}

\begin{example}
Consider $\mathcal{C}^{\infty}(\mathbb{R})$ together with the ideal of all compactly supported functions:

$$I = \{ f \in \mathcal{C}^{\infty}(\R) \mid {\rm supp} (f)\subset \R \,\, \text{is compact} \}$$

Naturally the constant function $1$ does not belong to $I$, so there is a maximal ideal $\widehat{I} \subset \mathcal{C}^{\infty}(\mathbb{R})$ such that $I \subseteq \widehat{I}$. Also, note that for every $x \in \R$, $I \not\subset \mathfrak{m}_x$. In fact, for every $x \in \R$, the smooth characteristic function $\chi_{]x-1,x+1[}: \R \to \R$ is a compactly supported function which does not belong to $\mathfrak{m}_x$. Since $I \subset \widehat{I}$, $\widehat{I} \neq \mathfrak{m}_x$ for every $x \in \R$.

Now, since $\widehat{I}$ is a maximal ideal different from $\mathfrak{m}_x$ for every $x \in \R$,  $\mathcal{C}^{\infty}(\R)/\widehat{I} \cong \mathbb{F}$ is a finitely generated $\mathcal{C}^{\infty}-$field that is different from $\R$.

An explicit description is given as follows. Let $U \subset \wp(\mathbb{N})$ be a non-principal ultrafilter and let:

$$\widehat{I} = \{ f \in \mathcal{C}^{\infty}(\R) \mid \{ n \in \mathbb{N} \mid f(n)=0\} \in U \} \subset \mathcal{C}^{\infty}(\R)$$

It is straightforward to check that $\widehat{I}$ is an ideal of $\mathcal{C}^{\infty}(\mathbb{R})$. Since $U$ is a non principal ultrafilter, $U$ contains all cofinite subsets of $\mathbb{N}$. Thus, given any $f \in I$ - that is, any $f \in \mathcal{C}^{\infty}(\R)$ with compact support, $K={\rm supp}\,(f)$, since $K$ is limited there is some $n_0 \in \mathbb{N}$ such that $K \subseteq [-n_0,n_0]$, so $(\forall n > n_0)(f(n)=0)$. Hence $\{ n \in \mathbb{N} \mid f(n) = 0\} \subset \mathbb{N}$ is cofinite  and $f \in \widehat{I}$. Thus $I \subset \widehat{I}$.

Finally, in order to show that $\widehat{I}$ is a maximal ideal, we show that $\mathcal{C}^{\infty}(\R)/\widehat{I}$ is a $\mathcal{C}^{\infty}$-field. 

In fact,  if $f + \widehat{I} \neq 0 + \widehat{I}$, then $f \notin \widehat{I}$ and $\{n \in \mathbb{N} \mid f(n)=0\} \notin U$. Since $U$ is an ultrafilter, we have $\{ n \in \mathbb{N} \mid f(n)\neq 0\} \in U$.

Now, since $\mathbb{N} \subset \mathbb{R}$ is discrete, we can take, for every $n \in \mathbb{N}$ such that $f(n)\neq 0$, the open neighbourhood $]n-\frac{1}{2},n+\frac{1}{2}[$
with the smooth characteristic function:

$$\begin{array}{cccc}
    \chi_{]n-\frac{1}{2},n+\frac{1}{2}[}: & \R & \rightarrow & \R  \\
      & x & \mapsto  & \begin{cases}
      e^{1-\frac{1}{1-4(x-n)^2}},\,\, \text{if}\,\, x \in ]n-\frac{1}{2},n+\frac{1}{2}[\\
      0, \,\, \text{otherwise}
      \end{cases}
\end{array}$$

and then glue them up to get the smooth function:

$$\begin{array}{cccc}
    h: & \R & \rightarrow & \R  \\
      & x & \mapsto  & \begin{cases}
      \chi_{]n-\frac{1}{2},n+\frac{1}{2}[}(x),\,\, \text{if}\,\, x \in ]n-\frac{1}{2},n+\frac{1}{2}[\,\, \text{and}\,\,f(n)\neq 0\\
      0\,\, \text{otherwise}
      \end{cases}
\end{array}$$


\begin{center}
    \resizebox{0.8\textwidth}{!}{\begin{tikzpicture}[domain=-4:4]
\draw [<->] (-2,1.5) node (yaxis) [left] {$y$}
    |- (-2,0) node (zaxis) [left] {}
    |- (9,0) node (xaxis) [right] {$x$}
    ;
    \draw (-2,1) node[left]{$1$};
    \draw[dashed] (-2,1)--(8,1);
    \draw (2,0) node[below]{$n$};
    \draw (3,-0.5) node[below]{\tiny{$n+\frac{1}{2}$}};
    \draw (-2,0) node[left]{$0$};
    \draw[thick,white] (-1.5,0)--(-1,0);
    \draw[thick] (-1.35,0)--(-1.15,0);
    \draw[->] (3,-0.5)--(3,-0.1);
    \draw[->] (5,-0.5)--(5,-0.1);
    \draw[->] (1,-0.5)--(1,-0.1);
    \draw[->] (7,-0.5)--(7,-0.1);
    \draw (7,-0.5) node[below]{\tiny{$(n+2)+\frac{1}{2}$}};
    \draw (1,-0.5) node[below]{\tiny{$n-\frac{1}{2}$}};
    \draw[thick] (0,-0.05)--(0,0.05);
    \draw (0,0) node[below]{$n-1$};
    \draw (4,0) node[below]{$n+1$};
    \draw (8,0) node[below]{$n+3$};
    \draw[thick] (8,-0.05)--(8,0.05);
    \draw[black, samples=100, smooth, domain=-0.99:1, thick] plot (\x+2, {exp(1-1/(1-\x*\x))});
    \draw[black, samples=100, smooth, domain=-0.99:1, thick] plot (\x+4, {exp(1-1/(1-\x*\x)});
    \draw[black, samples=100, smooth, domain=-0.99:1, thick] plot (\x+6, {exp(1-1/(1-\x*\x)});
    \draw (1,-0.05)--(1,0.05);
    \draw[thick] (2,-0.05)--(2,0.05);
    \draw (3,-0.05)--(3,0.05);
    \draw[thick] (4,-0.05)--(4,0.05);
    \draw (5,-0.05)--(5,0.05);
    \draw[thick] (6,-0.05)--(6,0.05);
    \draw (5,-0.5) node[below]{\tiny{$ (n+1)+\frac{1}{2}$}};
    \draw(6,0) node[below]{$n+2$};
    \draw[black, samples=100, smooth, domain=-0.99:0, thick] plot (\x+8, {exp(1-1/(1-\x*\x)});
    \draw[black,dashed, samples=100, smooth, domain=0:0.5, thick] plot (\x+8, {exp(1-1/(1-\x*\x)});
    \draw[black, samples=100, smooth, domain=0:0.99, thick] plot (\x, {exp(1-1/(1-\x*\x)});
    \draw[black,dashed, samples=100, smooth, domain=-0.5:0, thick] plot (\x, {exp(1-1/(1-\x*\x)});
\end{tikzpicture}}
\end{center}

Now consider:

$$\begin{array}{cccc}
   g: & \R & \rightarrow & \R \\
      & x & \mapsto & \begin{cases}
      \dfrac{h(x)}{f(n)},\,\, \text{if}\,\, x \in ]n-\frac{1}{2},n+\frac{1}{2}[\,\, \text{and}\,\,f(n)\neq 0\\
      0\,\, \text{otherwise}
      \end{cases}
\end{array}$$

\begin{center}
    \resizebox{0.8\textwidth}{!}{\begin{tikzpicture}[domain=-4:4]
\draw [<->] (-2,3.5) node (yaxis) [left] {$y$}
    |- (-2,0) node (zaxis) [left] {}
    |- (9,0) node (xaxis) [right] {$x$}
    ;
    \draw (-2,1) node[left]{$\frac{1}{f(n-1)}$};
    \draw[dashed] (-2,1)--(0,1);
    \draw (2,0) node[below]{$n$};
    \draw (3,-0.5) node[below]{\tiny{$n+\frac{1}{2}$}};
    \draw (-2,0) node[left]{$0$};
    \draw[thick,white] (-1.5,0)--(-1,0);
    \draw[thick] (-1.35,0)--(-1.15,0);
    \draw[->] (3,-0.5)--(3,-0.1);
    \draw[->] (5,-0.5)--(5,-0.1);
    \draw[->] (1,-0.5)--(1,-0.1);
    \draw[->] (7,-0.5)--(7,-0.1);
    \draw (7,-0.5) node[below]{\tiny{$(n+2)+\frac{1}{2}$}};
    \draw (1,-0.5) node[below]{\tiny{$n-\frac{1}{2}$}};
    \draw[thick] (0,-0.05)--(0,0.05);
    \draw (0,0) node[below]{$n-1$};
    \draw (4,0) node[below]{$n+1$};
    \draw (8,0) node[below]{$n+3$};
    \draw[thick] (8,-0.05)--(8,0.05);
    \draw[black, samples=100, smooth, domain=-0.99:1, thick] plot (\x+2, {0.5*exp(1-1/(1-\x*\x)});
    \draw[dashed] (-2,0.5)--(2,0.5);
    \draw (-2,0.5) node[left]{$\frac{1}{f(n)}$};
    \draw[black, samples=100, smooth, domain=-0.99:1, thick] plot (\x+4, {2.5*exp(1-1/(1-\x*\x)});
    \draw[dashed] (-2,2.5)--(4,2.5);
    \draw[black, samples=100, smooth, domain=-0.99:1, thick] plot (\x+6, {3*exp(1-1/(1-\x*\x)});
    \draw (-2,3) node[left]{$\frac{1}{f(n+2)}$};
    \draw (-2,1.5) node[left]{$\frac{1}{f(n+3)}$};
    \draw[dashed] (-2,3)--(6,3);
    \draw (1,-0.05)--(1,0.05);
    \draw[thick] (2,-0.05)--(2,0.05);
    \draw (3,-0.05)--(3,0.05);
    \draw[thick] (4,-0.05)--(4,0.05);
    \draw (5,-0.05)--(5,0.05);
    \draw[thick] (6,-0.05)--(6,0.05);
    \draw (5,-0.5) node[below]{\tiny{$ (n+1)+\frac{1}{2}$}};
    \draw(6,0) node[below]{$n+2$};
    \draw[black, samples=100, smooth, domain=-0.99:0, thick] plot (\x+8, {1.5*exp(1-1/(1-\x*\x)});
    \draw[black,dashed, samples=100, smooth, domain=0:0.5, thick] plot (\x+8, {1.5*exp(1-1/(1-\x*\x)});
    \draw[dashed] (-2,1.5)--(8,1.5);
    \draw[black, samples=100, smooth, domain=0:0.99, thick] plot (\x, {exp(1-1/(1-\x*\x)});
    \draw[black,dashed, samples=100, smooth, domain=-0.5:0, thick] plot (\x, {exp(1-1/(1-\x*\x)});
\end{tikzpicture}}
\end{center}

\noindent and note that $g \in \mathcal{C}^{\infty}(\R)$.

Also, note that for every $n$ such that $f(n)\neq 0$, we have $h(n)=1$, so $g(n)=1/f(n)$. Thus, since $\{ n \in \mathbb{N} \mid f(n)\cdot g(n) - 1 = 0\} \in U \subset \wp(\mathbb{N})$ (for $\{ n \in \mathbb{N} \mid f(n)\cdot g(n) - 1 = 0\}$ is cofinite), it follows that:

$$f\cdot g - 1 \in \widehat{I},$$

\noindent so $f+\widehat{I} \in (\mathcal{C}^{\infty}(\R)/I)^{\times}$. It follows that $\mathcal{C}^{\infty}(\R)/\widehat{I}$ is a finitely generated $\mathcal{C}^{\infty}-$field which is not isomorphic to $\R$.
\end{example}


As a consequence of \textbf{Proposition \ref{redi}} and of \textbf{Proposition \ref{alba}}, we have the following:

\begin{corollary}\label{medusa}Every $\mathcal{C}^{\infty}-$subring of a $\mathcal{C}^{\infty}-$field is a $\mathcal{C}^{\infty}-$reduced $\mathcal{C}^{\infty}-$domain.
\end{corollary}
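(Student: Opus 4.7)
The plan is to split the statement into its two components and handle each by invoking one of the cited results in combination with an elementary observation about the forgetful functor $\widetilde U:\mathcal{C}^{\infty}{\rm \bf Rng}\to{\rm \bf CRing}$ of \textbf{Theorem \ref{Madruga}}. Throughout, let $F$ be a $\mathcal{C}^{\infty}$-field and let $\jmath:A\hookrightarrow F$ be a $\mathcal{C}^{\infty}$-subring inclusion, which is in particular a $\mathcal{C}^{\infty}$-monomorphism.

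First I would dispatch the ``$\mathcal{C}^{\infty}$-domain'' half. By \textbf{Definition \ref{distintos}}, it suffices to show $\widetilde U(A)$ is a domain in the ordinary sense. Since $\widetilde U$ is a right adjoint, it preserves monomorphisms, so $\widetilde U(\jmath):\widetilde U(A)\hookrightarrow \widetilde U(F)$ is an injective homomorphism of commutative unital rings. But $\widetilde U(F)$ is a field by hypothesis, and any subring of a field is a domain, so $\widetilde U(A)$ is a domain, establishing that $A$ is a $\mathcal{C}^{\infty}$-domain.

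Next I would handle the ``$\mathcal{C}^{\infty}$-reduced'' half, which is where \textbf{Proposition \ref{redi}} and \textbf{Proposition \ref{alba}} come in. I would first note that $F$ itself is $\mathcal{C}^{\infty}$-reduced: by the preceding proposition in the text, every $\mathcal{C}^{\infty}$-field satisfies $\sqrt[\infty]{(0)}=(0)$. Concretely, this can be read off from \textbf{Proposition \ref{alba}}, since if $a\in\sqrt[\infty]{(0_F)}$ then $(0)\cap \{a\}^{\infty\text{-}\mathrm{sat}}\neq\varnothing$, forcing $0\in\{a\}^{\infty\text{-}\mathrm{sat}}$; but in a $\mathcal{C}^{\infty}$-field any nonzero element is already a unit and hence its smooth saturation cannot contain $0$, so $a=0$. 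Having shown $F$ is $\mathcal{C}^{\infty}$-reduced, I would invoke \textbf{Proposition \ref{redi}} applied to the monomorphism $\jmath:A\to F$ to conclude that $A$ is $\mathcal{C}^{\infty}$-reduced as well.

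Combining the two halves yields that $A$ is a $\mathcal{C}^{\infty}$-reduced $\mathcal{C}^{\infty}$-domain, which is exactly the content of the corollary. The argument is essentially packaging; no step is hard, but the one mildly delicate point is being explicit that a $\mathcal{C}^{\infty}$-subring inclusion is the same as a $\mathcal{C}^{\infty}$-monomorphism into the given $\mathcal{C}^{\infty}$-field (so that \textbf{Proposition \ref{redi}} applies) and that $\widetilde U$ preserves it (so that the ordinary-algebra ``subring of a field is a domain'' argument applies on the level of underlying rings).
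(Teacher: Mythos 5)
Your proposal is correct and follows essentially the route the paper intends: the paper states the corollary as an immediate consequence of \textbf{Proposition \ref{redi}} and \textbf{Proposition \ref{alba}} without writing out the details, and your argument supplies exactly those details (reducedness of the $\mathcal{C}^{\infty}$-field, transferred down the monomorphism via \textbf{Proposition \ref{redi}}, plus the elementary observation that the underlying ring is a subring of a field and hence a domain). No gaps.
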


Next we show that the directed limit of reduced $\mathcal{C}^{\infty}-$rings is also reduced.\\

\begin{proposition}\label{Xango}Let $(I, \leq)$ be a directed set and suppose that $\{A_i\}_{i \in I}$ is a directed family of $\mathcal{C}^{\infty}-$reduced $\mathcal{C}^{\infty}-$rings. Then
$$B = \varinjlim_{i \in I} A_i$$
is a $\mathcal{C}^{\infty}-$reduced $\mathcal{C}^{\infty}-$ring.
\end{proposition}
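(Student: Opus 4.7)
The plan is to show directly that $\sqrt[\infty]{(0_B)} = (0_B)$ by picking a representative of a candidate $\infty$-nilpotent element at a finite stage and transferring the triviality of the localization down to that stage, where we can then apply the reducedness hypothesis on each $A_i$.

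Let $\alpha_i \colon A_i \to B$ denote the canonical colimit maps, and let $t_{ij} \colon A_i \to A_j$ (for $i \le j$) be the transition morphisms of the diagram. Take any $b \in \sqrt[\infty]{(0_B)}$; by definition this means $B\{ b^{-1}\} \cong 0$. Since the forgetful functor $U' \colon \mathcal{C}^{\infty}{\rm \bf Rng} \to {\rm \bf Set}$ creates directed colimits, there exist some index $i \in I$ and some element $a_i \in A_i$ with $b = \alpha_i(a_i)$. Writing $a_j := t_{ij}(a_i)$ for every $j \ge i$, \textbf{Theorem \ref{Newt}} yields a canonical $\mathcal{C}^{\infty}$-isomorphism
$$B\{ b^{-1}\} \;\cong\; \varinjlim_{j \ge i} A_j\{ a_j^{-1}\}.$$

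The key step is now to pass the triviality $B\{b^{-1}\} \cong 0$ down to a finite stage. Saying $B\{b^{-1}\} \cong 0$ is equivalent to $1 = 0$ in $B\{b^{-1}\}$, hence equivalent to $1 = 0$ in $\varinjlim_{j \ge i} A_j\{ a_j^{-1}\}$. Since this is a directed colimit in $\mathcal{C}^{\infty}{\rm \bf Rng}$, whose underlying set is computed as the corresponding directed colimit in ${\rm \bf Set}$, the equality $1 = 0$ in the colimit forces $1 = 0$ in $A_j\{ a_j^{-1}\}$ for some $j \ge i$, that is, $A_j\{ a_j^{-1}\} \cong 0$. By \textbf{Proposition \ref{alba}} (or directly by \textbf{Definition \ref{defrad}} applied to the zero ideal of $A_j$), this says precisely that $a_j \in \sqrt[\infty]{(0_{A_j})}$.

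Because $A_j$ is $\mathcal{C}^{\infty}$-reduced, $\sqrt[\infty]{(0_{A_j})} = (0_{A_j})$, and therefore $a_j = 0_{A_j}$. Using the cocone compatibility $\alpha_i = \alpha_j \circ t_{ij}$, we finally obtain
$$b \;=\; \alpha_i(a_i) \;=\; \alpha_j(t_{ij}(a_i)) \;=\; \alpha_j(a_j) \;=\; \alpha_j(0_{A_j}) \;=\; 0_B,$$
which proves $\sqrt[\infty]{(0_B)} \subseteq (0_B)$; the reverse inclusion being trivial, $B$ is $\mathcal{C}^{\infty}$-reduced. The main obstacle in this argument is the justification of the ``triviality descends to a finite stage'' step, but it is immediate once one observes that directed colimits of $\mathcal{C}^{\infty}$-rings are built on directed colimits of their underlying sets, so two elements coincide in the colimit precisely when they already coincide at some sufficiently large index.
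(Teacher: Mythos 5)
Your proof is correct and follows essentially the same route as the paper: represent the element at a finite stage, apply \textbf{Theorem \ref{Newt}} to identify $B\{b^{-1}\}$ with $\varinjlim_{j\ge i}A_j\{a_j^{-1}\}$, descend the identity $1=0$ to some stage $\ell$, and conclude $a_\ell=0$ from reducedness of $A_\ell$. Your explicit justification of the descent step (directed colimits of $\mathcal{C}^\infty$-rings are computed on underlying sets) is a welcome elaboration of a point the paper passes over quickly, but it is not a different argument.
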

\begin{proof}
Let $u \in \sqrt[\infty]{\{ 0_B\}}$, so $B\{ u^{-1}\} \cong 0$, and let $j \in I$ and $u_j \in A_j$ be such that $u = t_j(u_j)$. By \textbf{Theorem \ref{Newt}},
$$B\{ u^{-1}\} \cong \varinjlim_{i \geq j} A_i\{ u_i^{-1}\}.$$

so there is some $\ell \geq j$ such that $A_{\ell}\{ u_{\ell}^{-1}\} \cong 0$. Indeed, $1=0$ in $\varinjlim_{i \geq j} A_i\{ {u_i}^{-1}\}$.\\

Since $A_{\ell}$ is $\mathcal{C}^{\infty}-$reduced, it follows that $u_{\ell} = 0$. Hence, $u = t_{\ell}(u_{\ell}) = t_{\ell}(0) = 0$, and $B$ is $\mathcal{C}^{\infty}-$reduced.
\end{proof}

\begin{theorem}\label{preim}Let $A$ and $B$ be two $\mathcal{C}^{\infty}-$rings, $J \subseteq B$ a $\mathcal{C}^{\infty}-$radical ideal in $B$ and $f: A \to B$ any $\mathcal{C}^{\infty}-$homomorphism. Then $f^{\dashv}[J]$ is a $\mathcal{C}^{\infty}-$radical ideal in $A$.
\end{theorem}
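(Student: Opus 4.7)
The plan is to apply the characterization of the $\mathcal{C}^{\infty}-$radical given in \textbf{Proposition \ref{alba}} together with the functorial behaviour of $\mathcal{C}^{\infty}-$rings of fractions under morphisms. First, I would note that $f^{\dashv}[J]$ is automatically an ideal of $A$, since the preimage of an ideal under any $\mathcal{C}^{\infty}-$ring homomorphism is an ideal (preimages of ideals under ring homomorphisms are ideals, and $\mathcal{C}^{\infty}-$ideals are precisely ring-theoretic ideals). One inclusion, $f^{\dashv}[J] \subseteq \sqrt[\infty]{f^{\dashv}[J]}$, is free from the general fact that every ideal is contained in its $\mathcal{C}^{\infty}-$radical. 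Hence everything reduces to the opposite inclusion $\sqrt[\infty]{f^{\dashv}[J]} \subseteq f^{\dashv}[J]$.

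Let $a \in \sqrt[\infty]{f^{\dashv}[J]}$. By \textbf{Proposition \ref{alba}} there exists $b \in f^{\dashv}[J]$ such that $\eta_a(b) \in (A\{a^{-1}\})^{\times}$, where $\eta_a: A \to A\{a^{-1}\}$ is the canonical map. The next step is to transfer this invertibility witness from $A\{a^{-1}\}$ to $B\{f(a)^{-1}\}$ along $f$: since $(\eta_{f(a)} \circ f)(a) = \eta_{f(a)}(f(a)) \in (B\{f(a)^{-1}\})^{\times}$, the universal property of $\eta_a$ produces a unique $\mathcal{C}^{\infty}-$homomorphism $f_a: A\{a^{-1}\} \to B\{f(a)^{-1}\}$ making
$$\xymatrix{
A \ar[r]^{\eta_a} \ar[d]_{f} & A\{a^{-1}\} \ar[d]^{f_a} \\
B \ar[r]_{\eta_{f(a)}} & B\{f(a)^{-1}\}
}$$
commute. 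Applying $f_a$ to the invertible element $\eta_a(b)$ yields $\eta_{f(a)}(f(b)) = f_a(\eta_a(b)) \in (B\{f(a)^{-1}\})^{\times}$, because $\mathcal{C}^{\infty}-$ring homomorphisms send invertibles to invertibles.

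Now $f(b) \in J$ by the hypothesis $b \in f^{\dashv}[J]$, and $f(b)$ witnesses, via \textbf{Proposition \ref{alba}} applied in $B$, that $f(a) \in \sqrt[\infty]{J}$. Since $J$ is $\mathcal{C}^{\infty}-$radical, $\sqrt[\infty]{J} = J$, so $f(a) \in J$, i.e., $a \in f^{\dashv}[J]$. This gives $\sqrt[\infty]{f^{\dashv}[J]} \subseteq f^{\dashv}[J]$ and completes the proof. The argument is essentially a single diagram chase, so I do not expect any genuine obstacle; the only point requiring care is ensuring that the invertibility of $\eta_a(b)$ is genuinely transported by $f_a$ to invertibility of $\eta_{f(a)}(f(b))$, which is immediate from the commutativity of the above square and the fact that $f_a$ is a $\mathcal{C}^{\infty}-$homomorphism.
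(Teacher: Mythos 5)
Your proof is correct, and it takes a genuinely different route from the paper's. The paper argues through quotients and reducedness: it forms the induced injection $f_J : \dfrac{A}{f^{\dashv}[J]} \to \dfrac{B}{J}$, observes that $\dfrac{B}{J}$ is $\mathcal{C}^{\infty}$-reduced because $J$ is $\mathcal{C}^{\infty}$-radical, invokes \textbf{Proposition \ref{redi}} (a $\mathcal{C}^{\infty}$-subring of a reduced $\mathcal{C}^{\infty}$-ring is reduced) to conclude that $\dfrac{A}{f^{\dashv}[J]}$ is reduced, and then translates reducedness of the quotient back into $\sqrt[\infty]{f^{\dashv}[J]} = f^{\dashv}[J]$. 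You instead work element-by-element with the characterization $\sqrt[\infty]{I} = \{a : (\exists b \in I)\, \eta_a(b) \in (A\{a^{-1}\})^{\times}\}$ from \textbf{Proposition \ref{alba}} and transport the invertibility witness along the induced map $f_a : A\{a^{-1}\} \to B\{f(a)^{-1}\}$. The two arguments have the same mathematical core — the commuting square relating $\eta_a$ and $\eta_{f(a)}$ is exactly what drives the proof of \textbf{Proposition \ref{redi}} — but you inline that step, which makes your proof shorter and self-contained, whereas the paper's version makes the dictionary ``$J$ is $\mathcal{C}^{\infty}$-radical $\iff$ $B/J$ is $\mathcal{C}^{\infty}$-reduced'' explicit and reuses machinery that appears elsewhere in the paper. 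Your argument also proves slightly more along the way: without assuming $J$ radical it gives $\sqrt[\infty]{f^{\dashv}[J]} \subseteq f^{\dashv}[\sqrt[\infty]{J}]$, which is the content of the paper's \textbf{Proposition \ref{usa4africa}}.
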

\begin{proof}
Since $\sqrt[\infty]{J} = J$ and $J$ is an ideal, it follows that $f^{\dashv}[\sqrt[\infty]{J}] = f^{\dashv}[J]$ is also an ideal.\\

We always have:
$$f^{\dashv}[J] \subseteq \sqrt[\infty]{f^{\dashv}[J]},$$
so $f^{\dashv}[\sqrt[\infty]{J}] = f^{\dashv}[J] \subseteq \sqrt[\infty]{f^{\dashv}[J]}$. It remains to show the other inclusion, namely:
$$\sqrt[\infty]{f^{\dashv}[J]} \subseteq f^{\dashv}[\sqrt[\infty]{J}].$$

Now, given the morphism $f: A \to B$, we compose it with the projection map:
$$\begin{array}{cccc}
q_J: & B & \twoheadrightarrow & \dfrac{B}{J}\\
     & b & \mapsto & b + J
\end{array},$$
in order to get the following commutative triangle:
$$\xymatrix{
A \ar[r]^{f} \ar[rd]_{q_J \circ f} & B \ar[d]^{q_J}\\
    & \dfrac{B}{J}
}$$

By the \textbf{Theorem of Homomorphism}, there exists a unique morphism $f_J : \dfrac{A}{f^{\dashv}[J]} \rightarrow \dfrac{B}{J}$ such that the following triangle commutes:
$$\xymatrix{
A \ar[r]^{q_J \circ f} \ar[d]_{q_{f^{\dashv}[J]}} & \dfrac{B}{J}\\
   \dfrac{A}{f^{\dashv}[J]} \ar[ur]^{f_J} &
}$$

We note that $f_J : \dfrac{A}{f^{\dashv}[J]} \to \dfrac{B}{J}$ is injective, since:
$$f_J^{\dashv}[0_{\frac{B}{J}}] = \{ a + f^{\dashv}[J] | f_J(a + f^{\dashv}[J]) = J \} = \{ a + f^{\dashv}[J] | f(a) + J = J \} = \{ a + f^{\dashv}[J] | f(a) \in J \} = \{ 0_{\frac{A}{f^{\dashv}[J]}}\}$$

Since $J$ is a $\mathcal{C}^{\infty}-$radical ideal of $B$, it follows by \textbf{Corollary \ref{medusa}} that $\dfrac{\sqrt[\infty]{J}}{J} \cong \{ 0_{\frac{B}{J}}\}$, so $\dfrac{B}{J}$ is $\mathcal{C}^{\infty}-$reduced.   By \textbf{Proposition \ref{redi}}, since $f_J$ is a monomorphism, we conclude that $\dfrac{A}{f^{\dashv}[J]}$ is also reduced, so:
$$\dfrac{\sqrt[\infty]{f^{\dashv}[J]}}{f^{\dashv}[J]} = \sqrt[\infty]{ \{ 0_{\frac{A}{f^{\dashv}[J]}} \} } = \{ 0_{\frac{A}{f^{\dashv}[J]}}\}$$
Hence:
$$\sqrt[\infty]{f^{\dashv}[J]} = f^{\dashv}[J].$$
\end{proof}

We register that, in general, holds:

\begin{proposition}\label{usa4africa}Let $A,B$ be $\mathcal{C}^{\infty}-$rings, $f: A \to B$ a $\mathcal{C}^{\infty}-$homomorphism and $J \subseteq B$ any ideal. Then:
$$\sqrt[\infty]{f^{\dashv}[J]} \subseteq f^{\dashv}[\sqrt[\infty]{J}].$$
\end{proposition}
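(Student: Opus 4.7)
The plan is to argue directly from the characterization of the $\mathcal{C}^{\infty}$-radical given in \textbf{Proposition \ref{alba}}: an element $x$ of a $\mathcal{C}^{\infty}$-ring belongs to $\sqrt[\infty]{K}$ exactly when some witness $y \in K$ becomes invertible in the smooth localization at $\{x\}$. So I would start by picking $a \in \sqrt[\infty]{f^{\dashv}[J]}$ and, applying \textbf{Proposition \ref{alba}}, produce $b \in f^{\dashv}[J]$ such that $\eta_a(b) \in (A\{a^{-1}\})^{\times}$. The goal then becomes to show that $f(a) \in \sqrt[\infty]{J}$, which by the same proposition will be established if I can exhibit an element of $J$ whose image under $\eta_{f(a)}$ is invertible in $B\{f(a)^{-1}\}$.

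The natural candidate is $f(b)$, and the key step is to transport the invertibility of $\eta_a(b)$ through $f$. To do this, I would use the universal property of $\eta_a : A \to A\{a^{-1}\}$. Since $\eta_{f(a)}(f(a)) \in (B\{f(a)^{-1}\})^{\times}$, the composite $\eta_{f(a)} \circ f : A \to B\{f(a)^{-1}\}$ sends $a$ to an invertible element, and therefore factors uniquely as $\widetilde{f} \circ \eta_a$ for a $\mathcal{C}^{\infty}$-homomorphism $\widetilde{f} : A\{a^{-1}\} \to B\{f(a)^{-1}\}$ making
$$\xymatrix{A \ar[r]^{\eta_a} \ar[d]_{f} & A\{a^{-1}\} \ar[d]^{\widetilde{f}} \\ B \ar[r]_{\eta_{f(a)}} & B\{f(a)^{-1}\}}$$
commute. $\mathcal{C}^{\infty}$-homomorphisms preserve invertibility, so $\widetilde{f}(\eta_a(b)) \in (B\{f(a)^{-1}\})^{\times}$, and by the commutativity of the square this element equals $\eta_{f(a)}(f(b))$.

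To conclude, I note that $b \in f^{\dashv}[J]$ means $f(b) \in J$; combined with $\eta_{f(a)}(f(b)) \in (B\{f(a)^{-1}\})^{\times}$, the witness $f(b)$ shows (via \textbf{Proposition \ref{alba}} applied now to the ideal $J$) that $f(a) \in \sqrt[\infty]{J}$. Hence $a \in f^{\dashv}[\sqrt[\infty]{J}]$, establishing the desired inclusion. I do not anticipate any real obstacle here: the only nontrivial ingredient is the existence of the induced map $\widetilde{f}$ between localizations, which is a standard consequence of the universal property of $\eta_a$ and has already been used repeatedly in the paper (see e.g.\ \textbf{Proposition \ref{par}} and \textbf{Theorem \ref{Hilfssatz}}).
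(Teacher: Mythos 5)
Your proof is correct, and it takes a genuinely more direct route than the paper's. You work with the witness characterization of \textbf{Proposition \ref{alba}} ($a \in \sqrt[\infty]{I}$ iff some $b \in I$ has $\eta_a(b)$ invertible in $A\{a^{-1}\}$) and transport the witness through the canonical map $\widetilde{f}: A\{a^{-1}\} \to B\{f(a)^{-1}\}$ induced by the universal property; the only facts you need are that this square commutes and that $\mathcal{C}^{\infty}$-homomorphisms preserve units. The paper instead argues from the original \textbf{Definition \ref{defrad}}: it builds the induced map $f_J : A/f^{\dashv}[J] \to B/J$, then a further map $f_{(J,a)}$ between the localizations of these \emph{quotients} at $a + f^{\dashv}[J]$ and $f(a)+J$, and concludes that if $(A/f^{\dashv}[J])\{(a+f^{\dashv}[J])^{-1}\}$ is trivial then the image of $1$ forces $(B/J)\{(f(a)+J)^{-1}\}$ to be trivial as well. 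The underlying mechanism is the same (a functorially induced map between localizations through which one pushes either triviality or invertibility), but your version avoids quotients entirely and leans on an already-proved equivalence, so it is shorter and arguably cleaner; the paper's version has the mild advantage of not presupposing \textbf{Proposition \ref{alba}}. No gaps.
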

\begin{proof}
Given the $\mathcal{C}^{\infty}-$homomorphism $f: A \to B$, there exists a unique morphism $f_J: \dfrac{A}{f^{\dashv}[J]} \to \dfrac{B}{J}$ such that the following diagram commutes:
$$\xymatrixcolsep{5pc}\xymatrix{
A \ar[d]_{f} \ar[r]^{q_{f^{\dashv}[J]}} & \dfrac{A}{f^{\dashv}[J]} \ar@{>.>}[d]^{f_J} \\
B \ar[r]^{q_J} & \dfrac{B}{J}
}$$
Consider the rings of fractions:
$$\eta_{a + f^{\dashv}[J]} : \dfrac{A}{f^{\dashv}[J]} \to \left( \dfrac{A}{f^{\dashv}[J]}\right)\{ a + f^{\dashv}[J] \}$$
$$\eta_{f(a)+J}: \left( \dfrac{B}{J}\right) \to \left( \dfrac{B}{J}\right)\{(f(a)+J)^{-1} \}$$
and note that the $\mathcal{C}^{\infty}-$homomorphism:
$$\eta_{f(a)+J} \circ f_J : \dfrac{A}{f^{\dashv}[J]} \to \left( \dfrac{B}{J}\right)\{(f(a)+J)^{-1} \}$$
is such that $(\eta_{f(a)+J}(f_J(a+f^{\dashv}[J]))) = f(a)+J \in \left[ \left( \dfrac{B}{J}\right)\{ (f(a) + J)^{-1}\} \right]^{\times}$. By the universal property of the ring of fractions $\eta_{a + f^{\dashv}[J]}$, there exists a unique morphism $f_{(J,a)}$ such that the following triangle commutes:
$$\xymatrixcolsep{5pc}\xymatrix{
\dfrac{A}{f^{\dashv}[J]} \ar[r]^(0.3){\eta_{a + f^{\dashv}[J]}} \ar[rd]_{\eta_{f(a)+J} \circ f_J} & \left( \dfrac{A}{f^{\dashv}[J]}\right)\{ (a+f^{\dashv}[J]) \} \ar@{.>}[d]^{f_{(J,a)}}\\
   & \left( \dfrac{B}{J}\right) \{ (f(a) + J)^{-1} \}
}$$
so we have the following commutative rectangle:
$$\xymatrixcolsep{5pc}\xymatrix{
\dfrac{A}{f^{\dashv}[J]} \ar[r]^(0.3){\eta_{a + f^{\dashv}[J]}} \ar@{>->}[d]^{f_J} & \left( \dfrac{A}{f^{\dashv}[J]}\right)\{ (a + f^{\dashv}[J])^{-1} \} \ar[d]^{f_{(J,a)}}\\
\dfrac{B}{J} \ar[r]^{\eta_{f(a)+J}} & \left( \dfrac{B}{J}\right)\{ (f(a) + J)^{-1} \}
}$$
Hence, given $a \in \sqrt[\infty]{f^{\dashv}[J]}$, we have $\left( \dfrac{A}{f^{\dashv}[J]}\right)\{ (a + f^{\dashv}[J])^{-1} \} \cong \{ 0\}$, and since the rectangle commutes,
\begin{equation}\label{paul}
(\eta_{f(a)+J} \circ f_J)(a + \sqrt[\infty]{J}) = 0.
\end{equation}

However, we know that $(\eta_{f(a)+J} \circ f_J)(a + f^{\dashv}[J]) \in \left( \frac{B}{J}\right)\{ (f(a) + J)^{-1}\}^{\times}$, so equation \eqref{paul} tells us that $0 \in \left( \frac{B}{J}\right)\{ (f(a) + J)^{-1}\}^{\times}$, hence:
$$\left( \dfrac{B}{J}\right)\{(f(a) + J)^{-1} \} \cong \{ 0 \},$$
so $f(a) \in \sqrt[\infty]{J}$.
\end{proof}

\begin{remark}With exactly the same method used in the proof of \textbf{Theorem \ref{little}}, one proves that whenever $\mathfrak{p} \subseteq A$ is a prime ideal, $\sqrt[\infty]{\mathfrak{p}}$ is also a prime ideal.
\end{remark}

At this point it is natural to look for a $\mathcal{C}^{\infty}-$analog of the Zariski spectrum of a commutative unital ring. With this motivation, we give the following:

\begin{definition}For a $\mathcal{C}^{\infty}-$ring $A$, we define:
$${\rm Spec}^{\infty}\,(A) = \{ \mathfrak{p} \in {\rm Spec}\,(A) | \mathfrak{p} \, \mbox{is} \, \mathcal{C}^{\infty}-\mbox{radical} \}$$
equipped with the smooth Zariski topology defined by its basic open sets:
$$D^{\infty}(a) = \{ \mathfrak{p} \in {\rm Spec}^{\infty}\,(A) | a \notin \mathfrak{p} \} $$
\end{definition}

A more detailed study of the \index{smooth Zariski spectrum}smooth Zariski spectrum will be given in the next chapter.\\

Now we prove some properties of $\mathcal{C}^{\infty}-$radical ideals of a $\mathcal{C}^{\infty}-$ring $A$

\begin{proposition}\label{egito}The following results hold:
\begin{itemize}
\item[(a)]{Suppose that $(\forall \alpha \in \Lambda)(I_{\alpha} \in \mathfrak{I}^{\infty}_A)$. Then $\bigcap_{\alpha \in \Lambda} I_{\alpha} \in \mathfrak{I}^{\infty}_{A}$, that is, if $(\forall \alpha \in \Lambda)(I_{\alpha} \in \mathfrak{I}^{\infty}_A)$, then:
    $$\sqrt[\infty]{\bigcap_{\alpha \in \Lambda}I_{\alpha}} = \bigcap_{\alpha \in \Lambda}I_{\alpha} = \bigcap_{\alpha \in \Lambda} \sqrt[\infty]{I_{\alpha}}$$}
\item[(b)]{If $I \subseteq A$ is any ideal, then $\sqrt[\infty]{I} = \bigcap \{ \mathfrak{p} \in {\rm Spec}^{\infty}(A); I \subseteq \mathfrak{p} \}$}
\item[(c)]{Let $\{ I_{\alpha} | \alpha \in \Sigma \}$ an upward directed family of elements of $\mathfrak{I}^{\infty}_{A}$. Then $\bigcup_{\alpha \in \Sigma} I_{\alpha} \in \mathfrak{I}^{\infty}_{A}$.}
\end{itemize}
\end{proposition}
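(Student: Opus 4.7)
The strategy is to handle the three items in order, leveraging the monotonicity of $\sqrt[\infty]{\cdot}$ and its characterization in \textbf{Proposition \ref{alba}}, together with \textbf{Theorem \ref{Chico}} (the intersection of $\mathcal{C}^\infty$-prime ideals is $\sqrt[\infty]{(0)}$) and \textbf{Theorem \ref{preim}} (preimages of $\mathcal{C}^\infty$-radical ideals are $\mathcal{C}^\infty$-radical). The main obstacle is a mild one: in (b) we need the correspondence between $\mathcal{C}^\infty$-prime ideals of $A/I$ and $\mathcal{C}^\infty$-prime ideals of $A$ containing $I$ to be well behaved, which is exactly what \textbf{Theorem \ref{preim}} supplies.

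For (a), set $J = \bigcap_{\alpha \in \Lambda} I_\alpha$, which is trivially an ideal. The inclusion $J \subseteq \sqrt[\infty]{J}$ is \textbf{Theorem \ref{yellow}(a)}. For the reverse, observe that for every $\alpha \in \Lambda$ we have $J \subseteq I_\alpha$, hence by \textbf{Theorem \ref{yellow}(b)} and the hypothesis $\sqrt[\infty]{I_\alpha} = I_\alpha$ we obtain $\sqrt[\infty]{J} \subseteq \sqrt[\infty]{I_\alpha} = I_\alpha$. Intersecting over $\alpha$ yields $\sqrt[\infty]{J} \subseteq J$, so $J$ is $\mathcal{C}^\infty$-radical and all three sets in the displayed equality coincide.

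For (b), one inclusion is again by monotonicity: for any $\mathfrak{p} \in {\rm Spec}^\infty(A)$ with $I \subseteq \mathfrak{p}$ one gets $\sqrt[\infty]{I} \subseteq \sqrt[\infty]{\mathfrak{p}} = \mathfrak{p}$, so $\sqrt[\infty]{I}$ lies in the intersection. For the converse I argue contrapositively: given $a \notin \sqrt[\infty]{I}$, I pass to the quotient $q_I \colon A \twoheadrightarrow A/I$. By the earlier identity $\sqrt[\infty]{\{0_{A/I}\}} = \sqrt[\infty]{I}/I$ (established just after \textbf{Proposition \ref{alba}}), the image $a + I$ lies outside $\sqrt[\infty]{\{0_{A/I}\}}$; by \textbf{Theorem \ref{Chico}} applied to $A/I$ there is $\bar{\mathfrak{p}} \in {\rm Spec}^\infty(A/I)$ with $a + I \notin \bar{\mathfrak{p}}$. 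Then $\mathfrak{p} := q_I^{\dashv}[\bar{\mathfrak{p}}]$ is a prime ideal of $A$ containing $I$ with $a \notin \mathfrak{p}$, and by \textbf{Theorem \ref{preim}} it is $\mathcal{C}^\infty$-radical, so $\mathfrak{p} \in {\rm Spec}^\infty(A)$, closing the argument.

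For (c), upward-directedness ensures that $K := \bigcup_{\alpha \in \Sigma} I_\alpha$ is an ideal; the inclusion $K \subseteq \sqrt[\infty]{K}$ is \textbf{Theorem \ref{yellow}(a)}. For the reverse, given $a \in \sqrt[\infty]{K}$, \textbf{Proposition \ref{alba}} supplies $b \in K$ with $\eta_a(b) \in (A\{a^{-1}\})^\times$. Since $b \in I_\alpha$ for some $\alpha \in \Sigma$, the same $b$ certifies that $a \in \sqrt[\infty]{I_\alpha}$; by the hypothesis $\sqrt[\infty]{I_\alpha} = I_\alpha$ this gives $a \in I_\alpha \subseteq K$, establishing $\sqrt[\infty]{K} \subseteq K$ and completing the proof.
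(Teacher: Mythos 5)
Your proposal is correct, and parts (a) and (c) are essentially the paper's arguments: for (c) you reproduce the paper's proof verbatim (directedness places the witness $b$ of \textbf{Proposition \ref{alba}} inside a single $I_{\alpha_0}$), and for (a) you substitute monotonicity of $\sqrt[\infty]{\cdot}$ for the paper's direct use of the witness characterization, which is an equivalent one-line variant. The genuine divergence is in (b). The paper's own text for (b) only addresses the easy inclusion (and cites a nonexistent ``item e)''); the identity $\sqrt[\infty]{I} = \bigcap\{\mathfrak{p} \in {\rm Spec}^{\infty}(A) \mid I \subseteq \mathfrak{p}\}$ is really established later as \textbf{Theorem \ref{TS}}(e), via the Separation Theorems and a Zorn's Lemma construction of a maximal ideal disjoint from a saturated set. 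You instead reduce to the case $I=(0)$: you pass to $A/I$, invoke $\sqrt[\infty]{\{0_{A/I}\}} = \sqrt[\infty]{I}/I$ and \textbf{Theorem \ref{Chico}} to find $\bar{\mathfrak{p}} \in {\rm Spec}^{\infty}(A/I)$ avoiding $a+I$, and pull back along $q_I$ using \textbf{Theorem \ref{preim}} to keep $\mathcal{C}^{\infty}$-radicality. This is a valid and more self-contained route given the results already proved at that point in the paper, and it avoids re-running the Zorn argument; its cost is that it leans on \textbf{Theorem \ref{Chico}}, whose own proof in the paper is the place where the existence of a separating $\mathcal{C}^{\infty}$-radical prime is actually (somewhat sketchily) secured. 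One small citation correction: \textbf{Theorem \ref{yellow}} is stated only for ideals of $\mathcal{C}^{\infty}(\mathbb{R}^n)$; for a general $\mathcal{C}^{\infty}$-ring $A$ the inclusions $I \subseteq \sqrt[\infty]{I}$ and the monotonicity you use in (a) and (b) are the content of the unnumbered proposition following \textbf{Theorem \ref{little}}, which you should cite instead.
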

\begin{proof}
\begin{itemize}
\item[(a)]{Let $u \in \sqrt[\infty]{\bigcap_{\alpha \in \Lambda} I_{\alpha}}$. There exists $b \in \bigcap_{\alpha \in \Lambda} I_{\alpha}$ such that ${\rm Can}_u(b) \in (A\{ u^{-1}\})^{\times}$.\\

    Since $b \in \bigcap_{\alpha \in \Lambda} I_{\alpha}$, then $(\forall \alpha \in \Lambda)(\exists b_{\alpha} \in I_{\alpha})(b_{\alpha = b})({\rm Can}_u(b) \in (A\{ u^{-1}\})^{\times})$, so $u \in \sqrt[\infty]{I_{\alpha}} =  I_{\alpha}$, and therefore $u \in \bigcap_{\alpha \in \Lambda} I_{\alpha}$. This proves that $\sqrt[\infty]{\bigcap_{\alpha \in \Lambda} I_{\alpha}} \subseteq \bigcap_{\alpha \in \Lambda} I_{\alpha}$, so:
    $$\sqrt[\infty]{\bigcap_{\alpha \in \Lambda} I_{\alpha}} = \bigcap_{\alpha \in \Lambda} I_{\alpha},$$
    and
    $$\bigcap_{\alpha \in \Lambda} I_{\alpha} \in \mathfrak{I}^{\infty}_{A}.$$}
\item[(b)]{As we have proved in item e), the arbitrary intersection of $\mathcal{C}^{\infty}-$radical ideals is again $\mathcal{C}^{\infty}-$radical, so given any ideal $I$ there is a unique prime maximal $\mathcal{C}^{\infty}-$radical ideal $\mathfrak{q}$ such that $I \subseteq \mathfrak{q}$, so $I \subseteq \bigcap \{ \mathfrak{p} \in {\rm Spec}^{\infty}(A) | I \subseteq \mathfrak{p} \}$}
\item[(c)]{Given any $u \in \sqrt[\infty]{\bigcup_{\alpha \in \Sigma} I_{\alpha}}$, then there is some $b \in \bigcup_{\alpha \in \Sigma} I_{\alpha}$ such that ${\rm Can}_u(b) \in (A\{ u^{-1}\})^{\times}$. Since $\Sigma$ is upward directed, there is $\alpha_0 \in \Sigma$ such that $b \in I_{\alpha_0}$, so ${\rm Can}_u(b) \in (A\{ u^{-1}\})^{\times}$, therefore $u \in \sqrt[\infty]{I_{\alpha_0}} = I_{\alpha_0} \subseteq \bigcup_{\alpha \in \Sigma} I_{\alpha}$.Thus,

    $$\sqrt[\infty]{\bigcup_{\alpha \in \Sigma} I_{\alpha}} = \bigcup_{\alpha \in \Sigma}I_{\alpha}$$
    }
\end{itemize}
\end{proof}

\begin{remark}Due to the previous proposition, we have that $\mathfrak{I}^{\infty}_{A}$ is a \index{complete Heyting algebra}complete Heyting algebra.
\end{remark}

\begin{lemma}\label{2328}Let $A$ be a $\mathcal{C}^{\infty}-$ring and $S \subset A$ any multiplicative subset. Consider the following posets:
$$({\rm Spec}^{\infty}(A\{ S^{-1}\}), \subseteq)$$
and
$$(\{ \mathfrak{p} \in {\rm Spec}^{\infty}(A) | \mathfrak{p}\cap S = \varnothing \}, \subseteq)$$
The following poset maps:
$$\begin{array}{cccc}
{\rm Can}_S^{*}: & ({\rm Spec}^{\infty}(A\{ S^{-1}\}), \subseteq) & \rightarrow & (\{ \mathfrak{p} \in {\rm Spec}^{\infty}(A) | \mathfrak{p}\cap S = \varnothing \}, \subseteq)\\
                 & Q & \mapsto & {\rm Can}_S^{\dashv}[Q]
\end{array}$$

and

$$\begin{array}{cccc}
{{\rm Can}_S}_{*}: & (\{ \mathfrak{p} \in {\rm Spec}^{\infty}(A) | \mathfrak{p}\cap S = \varnothing \}, \subseteq) & \rightarrow & ({\rm Spec}^{\infty}(A\{ S^{-1}\}), \subseteq)\\
                 & P & \mapsto &\langle {\rm Can}_S[P] \rangle
\end{array}$$
are poset isomorphisms, each one inverse of the other.
\end{lemma}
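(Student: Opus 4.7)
The plan is to mirror the classical commutative-algebra bijection between the spectrum of a localization and the primes disjoint from the multiplicative set, replacing the ordinary prime/radical machinery with the $\mathcal{C}^{\infty}$-analogues developed earlier (Theorems \ref{38}, \ref{preim}, \ref{Cindy} and Corollary \ref{Jeq}). For the well-definedness of ${\rm Can}_S^{*}$, the preimage of a prime ideal is prime, the preimage of a $\mathcal{C}^{\infty}$-radical ideal is $\mathcal{C}^{\infty}$-radical (Theorem \ref{preim}), and disjointness from $S$ is immediate from properness of $Q$ together with ${\rm Can}_S[S] \subseteq (A\{S^{-1}\})^{\times}$. For ${{\rm Can}_S}_{*}$, I apply Corollary \ref{Jeq} to obtain
\[
\frac{A\{S^{-1}\}}{\langle {\rm Can}_S[P]\rangle} \;\cong\; (A/P)\{q_P[S]^{-1}\},
\]
where $q_P\colon A \twoheadrightarrow A/P$. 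Since $P \in {\rm Spec}^{\infty}(A)$, the quotient $A/P$ is a $\mathcal{C}^{\infty}$-reduced $\mathcal{C}^{\infty}$-domain, and $P \cap S = \varnothing$ gives $q_P[S] \subseteq (A/P)\setminus\{0\}$. Combining Proposition \ref{marina}, the identification $\{f_1,\ldots,f_k\}^{\infty-{\rm sat}} = \{f_1\cdots f_k\}^{\infty-{\rm sat}}$ for finite sets (from the discussion following Theorem \ref{38}), and the corollary to Theorem \ref{Cindy} (in a reduced $\mathcal{C}^{\infty}$-ring, $0 \in \{a\}^{\infty-{\rm sat}}$ iff $a=0$), one obtains $0 \notin q_P[S]^{\infty-{\rm sat}}$; Theorem \ref{38}(ii) combined with the domain property of $A/P$ then confirms that $(A/P)\{q_P[S]^{-1}\}$ is a $\mathcal{C}^{\infty}$-reduced $\mathcal{C}^{\infty}$-domain, so $\langle {\rm Can}_S[P]\rangle \in {\rm Spec}^{\infty}(A\{S^{-1}\})$, and as a by-product $P \cap S^{\infty-{\rm sat}} = \varnothing$.

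Both maps are plainly inclusion-preserving. For ${\rm Can}_S^{*} \circ {{\rm Can}_S}_{*} = {\rm id}$, the inclusion $P \subseteq {\rm Can}_S^{\dashv}[\langle {\rm Can}_S[P]\rangle]$ is immediate; conversely, if ${\rm Can}_S(a) \in \langle {\rm Can}_S[P]\rangle$, Theorem \ref{Cindy} provides $b \in P$ and $d \in S^{\infty-{\rm sat}}$ with ${\rm Can}_S(a)\cdot {\rm Can}_S(d) = {\rm Can}_S(b)$, i.e.\ ${\rm Can}_S(ad-b)=0$, and Theorem \ref{38}(ii) then furnishes $c \in S^{\infty-{\rm sat}}$ with $c(ad-b) = 0$ in $A$, so $cad = cb \in P$; since $cd \in S^{\infty-{\rm sat}}$ and $P \cap S^{\infty-{\rm sat}} = \varnothing$, primeness of $P$ forces $a \in P$. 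For ${{\rm Can}_S}_{*} \circ {\rm Can}_S^{*} = {\rm id}$, the inclusion $\subseteq$ is immediate; for $\supseteq$, Theorem \ref{38}(i) writes any $\beta \in Q$ as $\beta = {\rm Can}_S(a)\cdot {\rm Can}_S(s)^{-1}$ with $s \in S^{\infty-{\rm sat}}$, so ${\rm Can}_S(a) = {\rm Can}_S(s)\beta \in Q$ gives $a \in {\rm Can}_S^{\dashv}[Q]$, and multiplying back by ${\rm Can}_S(s)^{-1}$ lands $\beta$ in $\langle {\rm Can}_S[{\rm Can}_S^{\dashv}[Q]]\rangle$.

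The delicate point throughout is the promotion $P \cap S = \varnothing \Longrightarrow P \cap S^{\infty-{\rm sat}} = \varnothing$: in the smooth setting $S^{\infty-{\rm sat}}$ can be strictly larger than the ordinary multiplicative saturation, so this step does not transfer verbatim from classical commutative algebra. It relies essentially on the $\mathcal{C}^{\infty}$-radicality of $P$ (which makes $A/P$ $\mathcal{C}^{\infty}$-reduced) and on the finite-subset reduction that collapses the saturation of a finite set to that of its product, after which the corollary to Theorem \ref{Cindy} closes the argument via the domain property of $A/P$. Once this disjointness is secured, the two inverse computations above are routine transcriptions of the classical argument.
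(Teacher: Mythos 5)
Your proof is correct, but it is worth noting that the paper does not actually prove this lemma at all: its ``proof'' is the single line ``See p.~286 of \cite{rings1}.'' So there is no internal argument to compare against; what you have written is a self-contained derivation from the paper's own toolkit (Theorem \ref{38}, Theorem \ref{Cindy} and its corollary, Corollary \ref{Jeq}, Proposition \ref{marina}, Theorem \ref{preim}), which is arguably more in the spirit of the paper's stated goal of re-proving classical localization facts in the smooth setting. Your identification of the genuinely non-classical point --- promoting $P\cap S=\varnothing$ to $P\cap S^{\infty-{\rm sat}}=\varnothing$ --- is exactly right; note that this also drops out directly from the Separation Theorem \ref{TS}(a), since $P$ is $\mathcal{C}^{\infty}$-radical and $\langle S\rangle=S\cup\{1\}$ for multiplicative $S$, which would shorten your well-definedness argument for ${{\rm Can}_S}_*$. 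The only step you compress too much is the assertion that Theorem \ref{38}(ii) plus the domain property of $A/P$ yields \emph{$\mathcal{C}^{\infty}$-reducedness} of $(A/P)\{q_P[S]^{-1}\}$: the domain property does follow as you say, but reducedness concerns the smooth radical of $(0)$ and needs one more line --- either embed $(A/P)\{q_P[S]^{-1}\}$ into the $\mathcal{C}^{\infty}$-field of fractions of $A/P$ (Proposition \ref{gaviota}) and invoke Corollary \ref{medusa}, or repeat your finite-subset/product reduction inside $(A/P)\{q_P[S]^{-1}\}$ using Proposition \ref{imp} and the corollary to Theorem \ref{Cindy}. With that line added, the argument is complete.
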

\begin{proof}See p. 286 of \cite{moerdijk1986rings}.
\end{proof}

\begin{proposition}\label{coisinha}Let $A$ be a $\mathcal{C}^{\infty}-$ring. For every $\mathfrak{p} \in {\rm Spec}^{\infty}(A)$ let $\hat{\mathfrak{p}}$ denote the maximal ideal of $A_{\{ \mathfrak{p}\}} = A\{ {A \setminus \p}^{-1}\}$ and consider:
$${\rm Can}_{\mathfrak{p}}: A \to A_{\{ \mathfrak{p}\}}.$$
We have the following equalities:
$${\rm Can}_{\mathfrak{p}}^{\dashv}[\hat{\mathfrak{p}}] = \mathfrak{p}$$
and
$$({\rm Can}_{\mathfrak{p}}[\mathfrak{p}]) = \hat{\mathfrak{p}}$$
\end{proposition}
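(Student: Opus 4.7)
The strategy is to leverage the explicit description of the maximal ideal $\hat{\mathfrak{p}} = \mathfrak{m}_{\mathfrak{p}}$ provided by \textbf{Proposition \ref{quetiro}}, together with the characterization of the smooth ring of fractions given in \textbf{Theorem \ref{38}}. A preliminary observation that simplifies everything is that since $\mathfrak{p}$ is a $\mathcal{C}^{\infty}$-radical prime ideal, \textbf{Proposition \ref{quetiro}} implies $(A \setminus \mathfrak{p})^{\infty-{\rm sat}} = A \setminus \mathfrak{p}$: indeed, any element of the smooth saturation is sent by ${\rm Can}_{\mathfrak{p}}$ to a unit of $A_{\{\mathfrak{p}\}}$, hence cannot lie in the unique maximal ideal $\hat{\mathfrak{p}}$, and thus cannot belong to $\mathfrak{p}$. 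This fact will be used repeatedly.

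For the first equality ${\rm Can}_{\mathfrak{p}}^{\dashv}[\hat{\mathfrak{p}}] = \mathfrak{p}$, the inclusion $\mathfrak{p} \subseteq {\rm Can}_{\mathfrak{p}}^{\dashv}[\hat{\mathfrak{p}}]$ is immediate: for $a \in \mathfrak{p}$, one has ${\rm Can}_{\mathfrak{p}}(a) = \eta_{A \setminus \mathfrak{p}}(a)/\eta_{A \setminus \mathfrak{p}}(1)$, which fits the explicit form of $\hat{\mathfrak{p}}$ from \textbf{Proposition \ref{quetiro}}. For the converse, suppose ${\rm Can}_{\mathfrak{p}}(a) \in \hat{\mathfrak{p}}$, so ${\rm Can}_{\mathfrak{p}}(a) = \eta_{A \setminus \mathfrak{p}}(x)/\eta_{A \setminus \mathfrak{p}}(y)$ with $x \in \mathfrak{p}$ and $y \in A \setminus \mathfrak{p}$. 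Then $\eta_{A \setminus \mathfrak{p}}(a \cdot y - x) = 0$, and by \textbf{Theorem \ref{38}}(ii) there exists $c \in (A \setminus \mathfrak{p})^{\infty-{\rm sat}} = A \setminus \mathfrak{p}$ with $c(ay - x) = 0$, i.e.\ $c \cdot a \cdot y = c \cdot x \in \mathfrak{p}$. Since $\mathfrak{p}$ is prime and $c, y \notin \mathfrak{p}$, it follows that $a \in \mathfrak{p}$.

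For the second equality $\langle {\rm Can}_{\mathfrak{p}}[\mathfrak{p}] \rangle = \hat{\mathfrak{p}}$, the inclusion $\langle {\rm Can}_{\mathfrak{p}}[\mathfrak{p}] \rangle \subseteq \hat{\mathfrak{p}}$ follows because $\hat{\mathfrak{p}}$ is an ideal containing each ${\rm Can}_{\mathfrak{p}}(a)$ for $a \in \mathfrak{p}$ (by the first equality). For the reverse inclusion, any element of $\hat{\mathfrak{p}}$ is, by \textbf{Proposition \ref{quetiro}}, of the form $\eta_{A \setminus \mathfrak{p}}(x)/\eta_{A \setminus \mathfrak{p}}(y)$ with $x \in \mathfrak{p}$ and $y \in A \setminus \mathfrak{p}$; this factors as ${\rm Can}_{\mathfrak{p}}(x) \cdot \eta_{A \setminus \mathfrak{p}}(y)^{-1}$, which lies in the ideal $\langle {\rm Can}_{\mathfrak{p}}[\mathfrak{p}] \rangle$. (Alternatively, one may invoke \textbf{Theorem \ref{Cindy}} applied to $S = A \setminus \mathfrak{p}$ and $I = \mathfrak{p}$ to obtain the explicit description of $\langle \eta_{A \setminus \mathfrak{p}}[\mathfrak{p}] \rangle$ and compare it with the description of $\hat{\mathfrak{p}}$.)

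The main subtlety — more a bookkeeping point than a real obstacle — is ensuring that the smooth saturation of $A \setminus \mathfrak{p}$ reduces to $A \setminus \mathfrak{p}$ itself, so that the witnesses produced by \textbf{Theorem \ref{38}}(ii) genuinely live outside $\mathfrak{p}$ and can be cancelled using primality. Once this is recorded at the outset, both equalities reduce to direct manipulations with the explicit presentations of $\hat{\mathfrak{p}}$ and of $\langle {\rm Can}_{\mathfrak{p}}[\mathfrak{p}]\rangle$.
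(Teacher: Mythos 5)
Your proof is correct, but it takes a genuinely different route from the paper's. The paper's argument is purely order-theoretic: it invokes \textbf{Lemma \ref{2328}} (the poset isomorphism, cited from Moerdijk--Reyes, between ${\rm Spec}^{\infty}(A\{S^{-1}\})$ and the $\mathcal{C}^{\infty}$-radical primes of $A$ disjoint from $S$), observes that $\hat{\mathfrak{p}}$ is the largest element of ${\rm Spec}^{\infty}(A_{\{\mathfrak{p}\}})$ because the ring is local, and concludes that ${\rm Can}_{\mathfrak{p}}^{\dashv}[\hat{\mathfrak{p}}]$ must be the largest prime disjoint from $A\setminus\mathfrak{p}$, namely $\mathfrak{p}$; the second equality is left implicit as a consequence of the two poset maps being mutually inverse. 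You instead work element-wise with the explicit presentation $\hat{\mathfrak{p}} = \bigl\{ \eta_{A\setminus\mathfrak{p}}(x)\cdot\eta_{A\setminus\mathfrak{p}}(y)^{-1} \mid x\in\mathfrak{p},\ y\in A\setminus\mathfrak{p}\bigr\}$ from \textbf{Proposition \ref{quetiro}}, using \textbf{Theorem \ref{38}}(ii) to produce a witness $c\in(A\setminus\mathfrak{p})^{\infty-{\rm sat}}=A\setminus\mathfrak{p}$ that can be cancelled by primality. Your route buys self-containedness (Lemma \ref{2328}'s proof is only referenced externally) and an explicit verification of the second equality, which the paper's proof does not spell out; the paper's route is shorter and more conceptual. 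Your preliminary reduction $(A\setminus\mathfrak{p})^{\infty-{\rm sat}}=A\setminus\mathfrak{p}$, derived from locality of $A_{\{\mathfrak{p}\}}$ rather than from the Separation Theorems (which appear only later in the paper), is sound and respects the paper's ordering of results.
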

\begin{proof}Let us take $S = A \setminus \mathfrak{p}$. Since $\hat{\mathfrak{p}}$ is a maximal ideal, it is the largest element of ${\rm Spec}^{\infty}(A\{ (A \setminus \mathfrak{p})^{-1}\})$. Hence ${\rm Can}_{\mathfrak{p}}^{\dashv}[\hat{\mathfrak{p}}]$ is the largest element of $(\{ \mathfrak{p}' \in {\rm Spec}^{\infty}(A) | \mathfrak{p}'\cap (A\setminus \mathfrak{p}) = \varnothing\})$. Thus, by \textbf{Lemma \ref{2328}}, ${\rm Can}_{\mathfrak{p}}^{\dashv}[\hat{\mathfrak{p}}] = \mathfrak{p}$.
\end{proof}

\begin{proposition}\label{Margarida} If $D$ is a reduced $\mathcal{C}^{\infty}-$domain, then $D\{ a^{-1}\} \cong \{ 0\}$ implies $a=0$.
\end{proposition}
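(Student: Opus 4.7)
The plan is to recognise that the conclusion is essentially a restatement of the definition of $\mathcal{C}^\infty$-reducedness, with the domain hypothesis playing no role here.

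First I would unpack $\sqrt[\infty]{(0)}$ using Definition \ref{defrad} applied to the zero ideal of $D$: namely $\sqrt[\infty]{(0)} = \{ a \in D \mid (D/(0))\{(a+(0))^{-1}\} \cong 0 \}$. Since the canonical projection $D \to D/(0)$ is an isomorphism of $\mathcal{C}^{\infty}$-rings, this simplifies to
\[
\sqrt[\infty]{(0)} \;=\; \{\, a \in D \mid D\{a^{-1}\} \cong \{0\} \,\}.
\]

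Next I would invoke the reducedness hypothesis, which by definition asserts $\sqrt[\infty]{(0)} = (0) = \{0\}$. Combining this with the description above, $D\{a^{-1}\} \cong \{0\}$ forces $a \in \sqrt[\infty]{(0)} = \{0\}$, hence $a = 0$, which is precisely the desired implication.

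There is no real obstacle: the only subtlety is to observe that the ``$\mathcal{C}^{\infty}$-domain'' hypothesis is not used (reducedness alone suffices), and that one must be careful to interpret $\sqrt[\infty]{(0)}$ via Definition \ref{defrad} rather than via \textbf{Proposition \ref{alba}} (which would also work, giving the equivalent statement that $\{a\}^{\infty-{\rm sat}}$ meets $(0)$, i.e.\ $0 \in \{a\}^{\infty-{\rm sat}}$, and hence $a=0$ by the Corollary following \textbf{Theorem \ref{Cindy}}). Either route yields the result in one line after the definitions are expanded.
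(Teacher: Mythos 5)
Your proof is correct and is essentially identical to the paper's: both pass from $D\{a^{-1}\}\cong\{0\}$ to $\left(\dfrac{D}{(0)}\right)\{(a+(0))^{-1}\}\cong\{0\}$, conclude $a\in\sqrt[\infty]{(0)}=(0)$ by reducedness, and hence $a=0$. Your observation that the domain hypothesis is not actually needed is accurate.
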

\begin{proof} If $D\{ a^{-1}\} \cong \{ 0\}$ then $\left( \dfrac{D}{(0)}\right)\{ {a + (0)}^{-1} \} \cong \{ 0\}$, so $a \in \sqrt[\infty]{(0)} = (0)$ (for $D$ is reduced), and $a = 0$.
\end{proof}

\begin{proposition}Any free $\mathcal{C}^{\infty}-$ring is a reduced $\mathcal{C}^{\infty}-$ring.
\end{proposition}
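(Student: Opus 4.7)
The plan is to reduce to the finitely generated case and then invoke the directed-colimit stability of reducedness already proved in \textbf{Proposition \ref{Xango}}.

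First I would handle the finitely generated free $\mathcal{C}^{\infty}-$ring $\mathcal{C}^{\infty}(\mathbb{R}^n)$. By \textbf{Proposition \ref{49}} we have the identity $\sqrt[\infty]{I} = \check{\widehat{I}}$ for any ideal $I$ of $\mathcal{C}^{\infty}(\mathbb{R}^n)$. Applying this to $I = (0)$, the associated filter is $\widehat{(0)} = \{Z(h) \mid h \in (0)\} = \{\mathbb{R}^n\}$, so that
\[
\sqrt[\infty]{(0)} \;=\; \check{\widehat{(0)}} \;=\; \{g \in \mathcal{C}^{\infty}(\mathbb{R}^n) \mid Z(g) = \mathbb{R}^n\} \;=\; \{0\}.
\]
Hence $\mathcal{C}^{\infty}(\mathbb{R}^n)$ is $\mathcal{C}^{\infty}-$reduced for every $n \in \mathbb{N}$. (Alternatively, one could use \textbf{Proposition \ref{alba}} together with the description of $\{f\}^{\infty-{\rm sat}}$ in terms of zero-sets to conclude $f \in \sqrt[\infty]{(0)}$ forces $\mathbb{R}^n = Z(0) \subseteq Z(f)$, i.e.\ $f = 0$.)

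Next I would pass to an arbitrary set $X$. By the construction recalled in the preliminaries,
\[
L(X) = \mathcal{C}^{\infty}(\mathbb{R}^X) \;=\; \varinjlim_{X' \subseteq_{\rm fin} X} \mathcal{C}^{\infty}(\mathbb{R}^{X'}),
\]
which exhibits $L(X)$ as a directed colimit of $\mathcal{C}^{\infty}-$rings of the form $\mathcal{C}^{\infty}(\mathbb{R}^{n})$. Since each member of this directed system is $\mathcal{C}^{\infty}-$reduced by the first paragraph, \textbf{Proposition \ref{Xango}} applies and yields that $L(X)$ is itself $\mathcal{C}^{\infty}-$reduced, completing the proof.

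There is no real obstacle here: the only subtle point is remembering that the filter generated by the single zero-set $\mathbb{R}^n$ recovers exactly the zero ideal under $\vee$, which is immediate. Everything else is a direct appeal to results already established in the excerpt.
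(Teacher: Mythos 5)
Your proof is correct, and its second half (writing $\mathcal{C}^{\infty}(\mathbb{R}^{X})$ as the directed colimit of the $\mathcal{C}^{\infty}(\mathbb{R}^{X'})$ over finite $X'\subseteq X$ and invoking \textbf{Proposition \ref{Xango}}) is exactly what the paper does. Where you diverge is the finitely generated case: the paper argues directly and elementarily by contraposition, using sign permanence to show that a nonzero $f\in\mathcal{C}^{\infty}(\mathbb{R}^{n})$ is nonzero on a nonempty open subset of $U_{f}={\rm Coz}(f)$, hence restricts to a nonzero element of $\mathcal{C}^{\infty}(U_{f})\cong\mathcal{C}^{\infty}(\mathbb{R}^{n})\{f^{-1}\}$, so that $\mathcal{C}^{\infty}(\mathbb{R}^{n})\{f^{-1}\}\ncong 0$; you instead specialize the Galois-connection identity $\sqrt[\infty]{I}=\check{\widehat{I}}$ of \textbf{Proposition \ref{49}} to $I=(0)$, computing $\widehat{(0)}=\{\mathbb{R}^{n}\}$ and $\check{\widehat{(0)}}=\{g \mid Z(g)=\mathbb{R}^{n}\}=(0)$. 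Both arguments are sound and neither is circular (the proof of \textbf{Proposition \ref{49}} does not presuppose reducedness); your route is shorter on the page but rests on heavier machinery, whereas the paper's is self-contained and makes the underlying topological reason visible. Your parenthetical alternative via \textbf{Proposition \ref{alba}} and the identification $\{f\}^{\infty-{\rm sat}}=\{g \mid Z(g)\subseteq Z(f)\}$ is also valid and is in spirit closest to the paper's argument.
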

\begin{proof}First we prove the result for a free $\mathcal{C}^{\infty}-$ring on finitely many generators.\\

Suppose, by contraposition, that $f$ is not constant equal to zero. Then there is some $f \in \mathcal{C}^{\infty}(\mathbb{R}^k)$ such that $(\exists y_0 \in \mathbb{R}^k)(f(y_0) \neq 0)$ and without loss of generality, suppose $f(y_0)>0$. By the \textbf{Theorem of Sign Permanence}, there is some open subset $V \subseteq \mathbb{R}^k$ such that $y_0 \in V$ and $f \displaystyle\upharpoonright_{V}>0$. It is clear that $V \subseteq U_f = {\rm Coz}\,(f)$, so $f$ is a non-zero element of $\mathcal{C}^{\infty}(U_f) \cong \mathcal{C}^{\infty}(\mathbb{R}^n)\{ f^{-1}\}$, hence $\mathcal{C}^{\infty}(\mathbb{R}^n)\{ f^{-1}\} \ncong 0$.\\

Thus, if $f \in \mathcal{C}^{\infty}(\mathbb{R}^k)$ is such that $\mathcal{C}^{\infty}(\mathbb{R}^k)\{ f^{-1}\} \cong 0$ then $f = 0$, so $\sqrt[\infty]{(0)} = \{ f \in \mathcal{C}^{\infty}(\mathbb{R}^k) | \mathcal{C}^{\infty}(\mathbb{R}^k)\{ f^{-1}\} \cong 0 \} = (0)$.\\

Now consider the free $\mathcal{C}^{\infty}-$rings on the set $E$ of generators. We know that:

$$\mathcal{C}^{\infty}(\mathbb{R}^{E}) = \varinjlim_{E' \subseteq_{\rm fin} E} \mathcal{C}^{\infty}(\mathbb{R}^{E'}),$$

and since the directed colimit of $\mathcal{C}^{\infty}-$reduced $\mathcal{C}^{\infty}-$rings is again a $\mathcal{C}^{\infty}-$reduced $\mathcal{C}^{\infty}-$ring (by \textbf{Proposition \ref{Xango}}) it follows that $\mathcal{C}^{\infty}(\mathbb{R}^{E})$ is a $\mathcal{C}^{\infty}-$reduced $\mathcal{C}^{\infty}-$ring.
\end{proof}

\begin{proposition}\label{Quico}Let $\{ A_i\}_{i \in I}$ be a directed family of $\mathcal{C}^{\infty}-$rings, so we have the diagram:
$$\xymatrixcolsep{5pc}\xymatrix{
 & \varinjlim_{i \in I} A_i & \\
A_i \ar[ur]^{\alpha_i} \ar[rr]_{\alpha_{ij}}& & A_j \ar[ul]_{\alpha_j}
}$$
and let $(\mathfrak{p}_i)_{i \in I}$ be a compatible family of prime $\mathcal{C}^{\infty}-$radical ideals, that is:
$$(\mathfrak{p}_i)_{i \in I} \in \varprojlim_{i \in I} {\rm Spec}^{\infty}(A_i).$$
Under those circumstances,
$$\varinjlim_{i \in I} \mathfrak{p}_i = \bigcup_{i \in I} \alpha_i[\mathfrak{p}_i]$$
is a $\mathcal{C}^{\infty}-$radical prime ideal of $\varinjlim_{i \in I} A_i$.
\end{proposition}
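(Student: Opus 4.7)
The plan is to verify first that $\mathfrak{p}:=\bigcup_{i\in I}\alpha_i[\mathfrak{p}_i]$ is a proper prime ideal of $A:=\varinjlim_{i\in I}A_i$, and then to derive that it is $\mathcal{C}^{\infty}$-radical by exhibiting $A/\mathfrak{p}$ as a directed colimit of $\mathcal{C}^{\infty}$-reduced $\mathcal{C}^{\infty}$-rings and invoking \textbf{Proposition \ref{Xango}}. The compatibility of the family $(\mathfrak{p}_i)_{i\in I}$ unpacks as the identity $\alpha_{ij}^{\dashv}[\mathfrak{p}_j]=\mathfrak{p}_i$ for $i\leq j$, which in particular gives $\alpha_{ij}[\mathfrak{p}_i]\subseteq \mathfrak{p}_j$; this will be the key tool throughout.

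For the ideal axioms, given two elements $\alpha_i(a),\alpha_j(b)\in\mathfrak{p}$ with $a\in\mathfrak{p}_i$, $b\in\mathfrak{p}_j$, I would choose $k\geq i,j$ in the directed set $I$ and rewrite them as $\alpha_k(\alpha_{ik}(a))$ and $\alpha_k(\alpha_{jk}(b))$; since $\alpha_{ik}(a),\alpha_{jk}(b)\in\mathfrak{p}_k$ by compatibility and $\mathfrak{p}_k$ is an ideal, the sum and any $A_k$-multiple lie in $\mathfrak{p}_k$, whence $\mathfrak{p}$ is closed under addition and under multiplication by arbitrary elements of $A$. Properness follows by contradiction: if $1_A=\alpha_j(b)$ with $b\in\mathfrak{p}_j$, applying the equivalence relation defining the colimit gives some $k\geq j$ with $\alpha_{jk}(b)=1_{A_k}$, but $\alpha_{jk}(b)\in\mathfrak{p}_k$, contradicting $1_{A_k}\notin\mathfrak{p}_k$. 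For primality, suppose $\alpha_i(a)\cdot\alpha_j(b)\in\mathfrak{p}$; bringing everything up to a common index $m\geq i,j$ (and using the defining equivalence of the colimit to absorb any further witnessing index) yields $\alpha_{im}(a)\cdot\alpha_{jm}(b)\in\mathfrak{p}_m$, and primality of $\mathfrak{p}_m$ together with $\mathfrak{p}_i=\alpha_{im}^{\dashv}[\mathfrak{p}_m]$ and $\mathfrak{p}_j=\alpha_{jm}^{\dashv}[\mathfrak{p}_m]$ forces $a\in\mathfrak{p}_i$ or $b\in\mathfrak{p}_j$.

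For the $\mathcal{C}^{\infty}$-radical property, the cleanest route is to use the characterization ``$\mathfrak{q}\in\mathrm{Spec}^{\infty}(R)$ iff $R/\mathfrak{q}$ is a $\mathcal{C}^{\infty}$-reduced $\mathcal{C}^{\infty}$-domain'' (the corollary immediately following \textbf{Proposition \ref{redi}}). Because $\alpha_{ij}[\mathfrak{p}_i]\subseteq\mathfrak{p}_j$, the quotient maps $q_i\colon A_i\twoheadrightarrow A_i/\mathfrak{p}_i$ induce a directed system $\{\overline{\alpha}_{ij}\colon A_i/\mathfrak{p}_i\to A_j/\mathfrak{p}_j\}$; a standard cofinality-and-universality argument (colimits commuting with quotients by compatible ideals) identifies
\[
\varinjlim_{i\in I}\bigl(A_i/\mathfrak{p}_i\bigr)\;\cong\;\dfrac{\varinjlim_{i\in I}A_i}{\bigcup_{i\in I}\alpha_i[\mathfrak{p}_i]}\;=\;A/\mathfrak{p}.
\]
Each quotient $A_i/\mathfrak{p}_i$ is $\mathcal{C}^{\infty}$-reduced because $\mathfrak{p}_i$ is a $\mathcal{C}^{\infty}$-radical ideal, so \textbf{Proposition \ref{Xango}} yields that the colimit $A/\mathfrak{p}$ is $\mathcal{C}^{\infty}$-reduced. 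Equivalently $\sqrt[\infty]{\mathfrak{p}}=\mathfrak{p}$, finishing the proof.

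The main obstacle I anticipate is the index-chasing in the primality step and in identifying $\varinjlim A_i/\mathfrak{p}_i$ with $A/\mathfrak{p}$: one must pass to sufficiently large indices to witness both colimit equivalences and the containments, and verify that the comparison map $\varinjlim A_i/\mathfrak{p}_i\to A/\mathfrak{p}$ induced by the universal property is a bijection on classes. Once that bookkeeping is carefully laid out, the $\mathcal{C}^{\infty}$-radicality drops out directly from \textbf{Proposition \ref{Xango}}.
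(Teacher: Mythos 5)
Your proposal is correct, and for the $\mathcal{C}^{\infty}$-radicality it follows the same route as the paper: identify $\bigl(\varinjlim_i A_i\bigr)/\mathfrak{p}$ with $\varinjlim_i (A_i/\mathfrak{p}_i)$, note each $A_i/\mathfrak{p}_i$ is $\mathcal{C}^{\infty}$-reduced, and invoke \textbf{Proposition \ref{Xango}}. Where you differ is in how primality is obtained. The paper disposes of both primality and reducedness in one stroke by asserting that a directed colimit of $\mathcal{C}^{\infty}$-reduced $\mathcal{C}^{\infty}$-domains is again a $\mathcal{C}^{\infty}$-reduced $\mathcal{C}^{\infty}$-domain; the ``domain'' half of that assertion is stated without proof. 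You instead verify directly, by passing to a common index and using $\mathfrak{p}_i=\alpha_{ij}^{\dashv}[\mathfrak{p}_j]$, that $\bigcup_i\alpha_i[\mathfrak{p}_i]$ is a proper prime ideal. Your index-chasing is sound (the step of absorbing the witnessing index for the colimit equivalence is exactly what is needed), and it has the virtue of making explicit the elementary fact the paper leaves implicit; the paper's version buys brevity at the cost of resting on that unproved closure property. Both arguments share the reliance on ``colimits commute with quotients by compatible ideals,'' which each treats as standard.
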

\begin{proof}
  It suffices to prove that
  $$\dfrac{\varinjlim_{i \in I} A_i}{\varinjlim_{i \in I}\mathfrak{p}_i}$$
  is a $\mathcal{C}^{\infty}-$reduced domain.\\

  It is a fact that colimits commute with quotients, so:

  $$\dfrac{\varinjlim_{i \in I} A_i}{\varinjlim_{i \in I}\mathfrak{p}_i} \cong \varinjlim_{i \in I} \dfrac{A_i}{\mathfrak{p}_i}$$

  Now, since every $\mathfrak{p}_i$ is a $\mathcal{C}^{\infty}-$radical prime ideal of $A_i$, we have, for every $i \in I$, that $\dfrac{A_i}{\mathfrak{p}_i}$ is a $\mathcal{C}^{\infty}-$reduced $\mathcal{C}^{\infty}-$domain.\\

  The colimit of $\mathcal{C}^{\infty}-$reduced $\mathcal{C}^{\infty}-$domains is again a $\mathcal{C}^{\infty}-$reduced $\mathcal{C}^{\infty}-$domain, so $\dfrac{\varinjlim_{i \in I} A_i}{\varinjlim_{i \in I}\mathfrak{p}_i}$ is a domain and:
  $$\varinjlim_{i \in I} \mathfrak{p}_i$$
  is a $\mathcal{C}^{\infty}-$radical prime ideal of $\varinjlim_{i \in I} A_i$.
\end{proof}

\subsection{Separation Theorems for Smooth Commutative Algebra}

The main result of this section is:\\

\begin{theorem}[\index{Separation Theorems}\textbf{Separation Theorems}]\label{TS} Let $A$ be a $\mathcal{C}^{\infty}-$ring, $S \subseteq A$ be a subset of $A$ and $I$ be an ideal of $A$. Denote by $\langle S \rangle$ the multiplicative submonoid of $A$ generated by $S$. We have:
\begin{itemize}
  \item[(a)]{If $I$ is a  $\mathcal{C}^{\infty}-$radical ideal, then:

$$I \cap \langle S \rangle = \varnothing \iff {I} \cap S^{\infty-{\rm sat}} = \varnothing$$}
  \item[(b)]{If  $S \subseteq A$ is a $\mathcal{C}^{\infty}$-saturated subset, then:
$$I \cap S = \varnothing \iff \sqrt[\infty]{I} \cap S = \varnothing$$
}
 \item[(c)]{If $\mathfrak{p} \in {\rm Spec}^{\infty}\,(A)$ then $A\setminus \mathfrak{p} = (A \setminus \mathfrak{p})^{\infty-{\rm sat}}$}
 \item[(d)]{If  $S \subseteq A$ is a $\mathcal{C}^{\infty}$-saturated subset, then:
$$I \cap S = \varnothing \iff (\exists \mathfrak{p} \in {\rm Spec}^{\infty}\,(A))((I \subseteq \p)\& (\p \cap S = \varnothing)).$$}
 \item[(e)]{$\sqrt[\infty]{I} = \bigcap \{ \mathfrak{p} \in {\rm Spec}^{\infty}\,(A) | I \subseteq \mathfrak{p} \}$}
\end{itemize}
\end{theorem}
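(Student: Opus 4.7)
The plan is to prove the five assertions roughly in the order \textbf{(c)}, \textbf{(b)}, \textbf{(a)}, \textbf{(d)}, \textbf{(e)}, since (c) is independent, (a) and (b) are direct consequences of the definition and Proposition \ref{alba}, while (d) is the essential Zorn step that powers (e).

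For \textbf{(c)}, the inclusion $A\setminus \mathfrak{p} \subseteq (A\setminus \mathfrak{p})^{\infty-\mathrm{sat}}$ is immediate. For the reverse, I would appeal to \textbf{Proposition \ref{quetiro}}: if $a\in (A\setminus\mathfrak{p})^{\infty-\mathrm{sat}}$ then $\eta_{A\setminus\mathfrak{p}}(a)\in A_{\mathfrak{p}}^{\times}$, so $\eta_{A\setminus\mathfrak{p}}(a)\notin \mathfrak{m}_{\mathfrak{p}}$; but the description of $\mathfrak{m}_{\mathfrak{p}}$ given there forces $a\notin \mathfrak{p}$. For \textbf{(b)} the direction $(\Leftarrow)$ is trivial. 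For $(\Rightarrow)$, suppose $a\in \sqrt[\infty]{I}\cap S$. By \textbf{Proposition \ref{alba}} there exists $b\in I\cap \{a\}^{\infty-\mathrm{sat}}$; since $\{a\}\subseteq S$ and $S=S^{\infty-\mathrm{sat}}$, monotonicity of smooth saturation gives $\{a\}^{\infty-\mathrm{sat}}\subseteq S$, so $b\in I\cap S$, contradicting $I\cap S=\varnothing$.

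For \textbf{(a)}, again $(\Leftarrow)$ follows from $\langle S\rangle\subseteq S^{\infty-\mathrm{sat}}$. For $(\Rightarrow)$, assume $I$ is $\mathcal{C}^{\infty}$-radical and pick $a\in I\cap S^{\infty-\mathrm{sat}}$. By \textbf{Proposition \ref{marina}}, $S^{\infty-\mathrm{sat}}=\bigcup_{S'\subseteq_{\mathrm{fin}} S}(S')^{\infty-\mathrm{sat}}$, so $a\in (S')^{\infty-\mathrm{sat}}$ for some finite $S'\subseteq S$; setting $s:=\prod S'\in \langle S\rangle$, the isomorphism $A\{(S')^{-1}\}\cong A\{s^{-1}\}$ coming from item (iv) of the saturation proposition yields $a\in\{s\}^{\infty-\mathrm{sat}}$. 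Therefore $I\cap\{s\}^{\infty-\mathrm{sat}}\neq\varnothing$, which by \textbf{Proposition \ref{alba}} means $s\in \sqrt[\infty]{I}=I$, so $s\in I\cap \langle S\rangle$, as wanted.

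For \textbf{(d)}, the non-trivial direction is proved by Zorn's lemma on the family $\mathcal{F}=\{J\trianglelefteq A\mid I\subseteq J,\ J\cap S=\varnothing\}$, which is non-empty (contains $I$) and closed under unions of chains. A maximal element $\mathfrak{p}$ exists; I would check it is prime by the usual $\mathfrak{p}+\langle a\rangle$, $\mathfrak{p}+\langle b\rangle$ argument, exploiting that $S$ is multiplicatively closed (which follows from $S=S^{\infty-\mathrm{sat}}$: if $a,b\in S$ then $\eta_S(ab)=\eta_S(a)\eta_S(b)$ is a unit, hence $ab\in S^{\infty-\mathrm{sat}}=S$, and similarly $1\in A^{\times}\subseteq S$). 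Next, to show $\mathfrak{p}\in \mathrm{Spec}^{\infty}(A)$ I invoke part \textbf{(b)} applied to $\mathfrak{p}$: since $\mathfrak{p}\cap S=\varnothing$ and $S$ is $\mathcal{C}^{\infty}$-saturated, $\sqrt[\infty]{\mathfrak{p}}\cap S=\varnothing$, and the remark after \textbf{Proposition \ref{usa4africa}} gives that $\sqrt[\infty]{\mathfrak{p}}$ is prime and contains $I$; by maximality of $\mathfrak{p}$ in $\mathcal{F}$, $\sqrt[\infty]{\mathfrak{p}}=\mathfrak{p}$. Finally, \textbf{(e)} is deduced as follows: $(\subseteq)$ uses that for every $\mathfrak{p}\in\mathrm{Spec}^{\infty}(A)$ with $I\subseteq\mathfrak{p}$, monotonicity yields $\sqrt[\infty]{I}\subseteq\sqrt[\infty]{\mathfrak{p}}=\mathfrak{p}$; $(\supseteq)$ is the contrapositive: if $a\notin\sqrt[\infty]{I}$ then, by \textbf{Proposition \ref{alba}}, $I\cap\{a\}^{\infty-\mathrm{sat}}=\varnothing$, and since $S:=\{a\}^{\infty-\mathrm{sat}}$ is $\mathcal{C}^{\infty}$-saturated by \textbf{Proposition \ref{satsatsat}}, part (d) furnishes $\mathfrak{p}\in\mathrm{Spec}^{\infty}(A)$ with $I\subseteq\mathfrak{p}$ and $\mathfrak{p}\cap S=\varnothing$, so $a\notin \mathfrak{p}$.

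The main obstacle is \textbf{(d)}: the Zorn step itself is standard, but one must carefully justify that the saturation hypothesis on $S$ forces both multiplicative closure (needed for primality) and $\mathcal{C}^{\infty}$-radicality of the Zorn-maximal ideal (which requires part (b) as a lemma, so one must respect this order). Once (d) is in place, (e) follows cleanly, and the rest is bookkeeping with \textbf{Proposition \ref{alba}} and \textbf{Proposition \ref{marina}}.
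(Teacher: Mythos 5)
Your parts (a), (b), (d) and (e) follow essentially the same route as the paper: (a) by passing to a finite subset of $S$ via the directed-colimit description of $A\{S^{-1}\}$ (Proposition \ref{marina}) and concluding $\prod S'\in\sqrt[\infty]{I}=I$; (b) directly from Proposition \ref{alba} together with monotonicity of $(-)^{\infty-{\rm sat}}$; (d) by Zorn's lemma on the family of ideals containing $I$ and disjoint from $S$, with primality of a maximal element coming from multiplicative closure of $S$ and $\mathcal{C}^{\infty}$-radicality from (b) plus maximality (note that you do not actually need $\sqrt[\infty]{\p}$ to be prime here, only that it is an ideal in the family); and (e) from (d). Your version of (e), which localizes at the already-saturated set $\{a\}^{\infty-{\rm sat}}$, is marginally cleaner than the paper's, which works with the powers $S_a=\{1,a,a^2,\dots\}$ and must first argue via (a) that $(S_a)^{\infty-{\rm sat}}\cap\sqrt[\infty]{I}=\varnothing$.

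The one place where you genuinely deviate is (c), and there you run into a circularity. You deduce $(A\setminus\p)^{\infty-{\rm sat}}\subseteq A\setminus\p$ from Proposition \ref{quetiro}'s description of the maximal ideal $\m_{\p}=\left\{\eta_{A\setminus\p}(x)/\eta_{A\setminus\p}(y)\mid x\in\p,\ y\in A\setminus\p\right\}$ of $A_{\p}$. But that description is itself obtained (via Theorem \ref{cara}) by quantifying over $d\in(A\setminus\p)^{\infty-{\rm sat}}$ and silently identifying this set with $A\setminus\p$ — that is, it already presupposes (c). The paper instead gets (c) as a one-line corollary of (a): $A\setminus\p$ is a submonoid because $\p$ is prime, $\p$ is $\mathcal{C}^{\infty}$-radical, and $\p\cap(A\setminus\p)=\varnothing$, so (a) forces $\p\cap(A\setminus\p)^{\infty-{\rm sat}}=\varnothing$. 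Since your proof of (a) does not use (c), the repair is just a reordering — prove (a) first and derive (c) from it — but as written the appeal to Proposition \ref{quetiro} is not self-supporting.
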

\begin{proof}
Ad (a): Since $\langle S \rangle \subseteq S^{\infty-{\rm sat}}$, it is clear that $(ii) \to (i)$.\\

We are going to show that $(i) \to (ii)$ via contraposition.\\

Suppose there exists some $b \in I \cap S^{\infty-{\rm sat}}$, so $\eta_S(b) \in (B\{ S^{-1}\})^{\times}.$\\

We have:
$$B\{ S^{-1}\} \cong_{\psi} \varinjlim_{S' \stackrel{\subseteq}{{\rm fin}} S} B\{{S'}^{-1} \} \cong_{\varphi} \varinjlim_{S' \stackrel{\subseteq}{{\rm fin}} S} B\left\{{\prod S'}^{-1} \right\},$$
so $(\varphi \circ \psi)(\eta_S(b)) \in \varinjlim_{S' \stackrel{\subseteq}{{\rm fin}} S} B \{ {\prod S'}^{-1}\}$ implies that there is some finite $S'' \subseteq S$ such that $\eta_{S''}(b) \in B\{ {S''}^{-1} \} \cong B\{{\prod S''}^{-1} \}$. Let $a = \prod S''$, so $a \in \langle S \rangle$. We have that $\eta_{a}(b) \in B \{ {S''}^{-1} \}$ implies $\eta_a(b) \in (B\{ a^{-1}\})^{\times}$ and by hypothesis, $b \in I$ so $a \in \sqrt[\infty]{I} = I$. Hence $a \in I \cap \langle S \rangle \neq \varnothing$, and the result is proved.\\

Ad (b): Given $b \in \sqrt[\infty]{I}\cap S$, there must be some $x \in I$ such that $\eta_b(x) \in A\{ b^{-1}\}^{\times}$, so $ x \in \{ b\}^{\infty-{\rm sat}} \subseteq S^{\infty-{\rm sat}} = S$. Thus $x \in I \cap S$ and $I \cap S = \varnothing$. The other way round is immediate since $I \subseteq \sqrt[\infty]{I}$.\\

Ad (c): Since $A\setminus \mathfrak{p} = \langle A \setminus \mathfrak{p}\rangle$, by item (a) we have:

$$\mathfrak{p} \cap (A\setminus \mathfrak{p}) \iff \mathfrak{p}\cap (A \setminus \mathfrak{p})^{\infty-{\rm sat}} = \varnothing$$

so $(A \setminus \mathfrak{p})^{\infty-{\rm sat}} \subseteq A \setminus \mathfrak{p}$. The other inclusion always holds, so $A \setminus \mathfrak{p} = (A \setminus \mathfrak{p})^{\infty-{\rm sat}}$.\\

Ad (d): Consider the set:

$$\Gamma_S := \{ J \in \mathfrak{I}(A) | (I \subseteq J)\& (S \cap J = \varnothing)\}$$

ordered by inclusion. It is straightforward to check that $(\Gamma_S, \subseteq)$ satisfies the hypotheses of \textbf{Zorn's Lemma}. Let $M$ be a maximal member of $\Gamma_S$. We are going to show that $M \in {\rm Spec}^{\infty}\,(A)$.\\

\textbf{Claim:} $M$ is a proper prime ideal of $A$.\\

In fact, $M$ is proper since $1 \in S$ and $S \cap M = \varnothing$.\\

The proof that $M$ is prime is made by contradiction.\\

If $a,a' \notin M$, then by maximality there are $\alpha, \alpha' \in A$ and $m,m' \in M$ such that $m+\alpha \cdot a \in S$ and $m'+\alpha'\cdot a' \in S$. Since $\mathcal{C}^{\infty}-$saturated sets are submonoids, it follows that $(m + \alpha \cdot a)\cdot (m' + \alpha' \cdot a') \in S$. Thus,

$$\underbrace{(m\cdot m' + m \cdot \alpha' \cdot a' + m' \cdot \alpha \cdot a)}_{\in M} + \underbrace{(\alpha \cdot \alpha')\cdot (a \cdot a')}_{\in S} \in S$$

If $a,a' \in M$, we get $M \cap S \neq \varnothing$, a contradiction. Thus, $a,a' \notin M$, and $M$ is a prime ideal.\\

\textbf{Claim:} $M = \sqrt[\infty]{M}$.\\

Since $M \cap S = \varnothing$ and $S$ is $\mathcal{C}^{\infty}-$saturated, by item (b) it follows that $\sqrt[\infty]{M}\cap S = \varnothing$, so $\sqrt[\infty]{M} \in \Gamma_S$. Since $M \subseteq \sqrt[\infty]{M}$, $\sqrt[\infty]{M} \in \Gamma_S$ and $M$ is a maximal element of $\Gamma_S$, it follows that $M = \sqrt[\infty]{M}$.\\

Hence, $M \in {\rm Spec}^{\infty}\,(A)$.\\

Ad (e): Clearly,

$$\sqrt[\infty]{I} \subseteq  \bigcap \{ \mathfrak{p} \in {\rm Spec}^{\infty}\,(A) | I \subseteq \mathfrak{p} \}$$

so we need only to prove the reverse inclusion.\\

Let $a \notin \sqrt[\infty]{I}$, so $S_a = \{ 1, a, a^2, \cdots\} \cap \sqrt[\infty]{I} = \varnothing$. Since $\dfrac{A}{I}\{ (a+I)^{-1}\} \cong \dfrac{A}{I}\{ (a^k+I)^{-1}\}$ for any $k \in \mathbb{N}$ such that $k \geq 1$ and since $\sqrt[\infty]{I}$ is a $\mathcal{C}^{\infty}-$radical ideal, by item (a), we have $(S_a)^{\infty-{\rm sat}}\cap \sqrt[\infty]{I}=\varnothing$, and by item (d), there is some $\mathfrak{p} \in {\rm Spec}^{\infty}\,(A)$ such that $\mathfrak{p} \supseteq \sqrt[\infty]{I} \supseteq I$ such that $a \notin \mathfrak{p}$.
\end{proof}

\begin{proposition} \label{equacaocobertura} Let $A$ be a
$C^\infty$-ring and let
$\{a_i : i \in I\} \subseteq A$. Denote ${\cal I} := \langle \{a_i : i
\in I\}\rangle$. Then the following are equivalent:

(a) ${\rm Spec}^\infty(A) = \bigcup_{i \in I} D^\infty(a_i)$

(b) $A = {\cal I}$

\end{proposition}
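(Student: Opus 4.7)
The plan is to reduce the statement to the characterization of the $\mathcal{C}^{\infty}$-radical as an intersection of $\mathcal{C}^{\infty}$-radical primes, which is already available as item (e) of the Separation Theorem \textbf{Theorem \ref{TS}}. Unwinding the definitions, the equality $\mathrm{Spec}^{\infty}(A) = \bigcup_{i \in I} D^{\infty}(a_i)$ is equivalent to the statement that no $\mathfrak{p} \in \mathrm{Spec}^{\infty}(A)$ contains every $a_i$, i.e., no $\mathcal{C}^{\infty}$-radical prime contains the ideal $\mathcal{I} = \langle\{a_i : i \in I\}\rangle$.

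For the direction $(b) \Rightarrow (a)$, I would argue directly: if $\mathcal{I} = A$, then no proper ideal contains $\mathcal{I}$, in particular no $\mathfrak{p} \in \mathrm{Spec}^{\infty}(A)$ does, so for every such $\mathfrak{p}$ there is some $i \in I$ with $a_i \notin \mathfrak{p}$, whence $\mathfrak{p} \in D^{\infty}(a_i)$.

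For the direction $(a) \Rightarrow (b)$, the key step is to invoke \textbf{Theorem \ref{TS}(e)}:
\[
\sqrt[\infty]{\mathcal{I}} \;=\; \bigcap \{\mathfrak{p} \in \mathrm{Spec}^{\infty}(A) \mid \mathcal{I} \subseteq \mathfrak{p}\}.
\]
Under the hypothesis (a), the indexing family on the right is empty, so the intersection is all of $A$, giving $\sqrt[\infty]{\mathcal{I}} = A$. In particular $1 \in \sqrt[\infty]{\mathcal{I}}$, and then \textbf{Proposition \ref{alba}} produces an element $b \in \mathcal{I}$ with $\eta_{1}(b) \in (A\{1^{-1}\})^{\times}$. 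Since $A\{1^{-1}\} \cong A$ canonically and $\eta_1$ corresponds to $\mathrm{id}_A$, this says $b \in \mathcal{I} \cap A^{\times}$, so $\mathcal{I}$ contains a unit and therefore $\mathcal{I} = A$.

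The only potentially subtle point is the empty-intersection convention used above; I would justify it by noting that $\bigcap \varnothing = A$ is the (unique) improper ideal, and this is consistent with the fact that $\sqrt[\infty]{A} = A$ (so there is no contradiction with $\mathcal{I}$ not necessarily being proper). Apart from that, the argument is a direct application of the Separation Theorem together with the concrete description of $\sqrt[\infty]{(-)}$ from \textbf{Proposition \ref{alba}}, and requires no further computation.
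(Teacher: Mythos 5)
Your proof is correct and follows essentially the same route as the paper's: both directions come down to the Separation Theorem (\textbf{Theorem \ref{TS}}). The only cosmetic difference is that for (a) $\Rightarrow$ (b) you argue directly via item (e) (the radical as an intersection of $\mathcal{C}^{\infty}$-radical primes, with the empty-intersection convention) combined with \textbf{Proposition \ref{alba}}, whereas the paper argues by contraposition using items (b) and (d) with the $\infty$-saturated set $A^{\times}$; since (e) is itself a consequence of (d), the two arguments coincide in substance.
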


\begin{proof}

(b) $\Ra$ (a): Since there exists $\{i_1, \cdots, i_n\} \subseteq I$
such that $ 1_A = \sum_{j = 1}^n \lambda_j. a_{i_j}$
for some $\lambda_1, \cdots, \lambda_n \in A$, then
there is no $\mathfrak{p} \in {\rm Spec}^{\infty}(A)$ such that $\{a_{i_1},
\cdots, a_{i_n} \} \subseteq \mathfrak{p}$, i.e.
${\rm Spec}^\infty(A) \subseteq \bigcup_{i \in I} D^\infty(a_i)$.

(a) $\Ra$ (b): Suppose that $A \neq \mathcal{I}$. Then
$$ A^\times \cap \mathcal{I} = \emptyset.$$

Since $A^\times \subseteq A$ is a $\infty$-saturated subset of $A$
($A^\times = \eta_{1}^{-1} [ (A \{1^{-1}\})^\times]$),
then by the {\bf Separation Theorem
\ref{TS}.(b)},

$$A^\times \cap \sqrt[\infty]{\cal I}= \emptyset.$$

By the {\bf Separation Theorem \ref{TS}.(d)}, there is $\mathfrak{p}
\in {\rm Spec}^\infty(A)$ such that

$$ \sqrt[\infty]{\cal I} \subseteq \mathfrak{p}$$

thus $\mathfrak{p} \in {\rm Spec}^\infty(A) \setminus \bigcup_{i \in I}
D^\infty(a_i)$.

\end{proof}

\begin{proposition}\label{gaviota} Let $A$ be a $\mathcal{C}^{\infty}-$ring. The following assertions are equivalent:
\begin{itemize}
  \item[(1)]{$A$ is a $\mathcal{C}^{\infty}-$reduced  $\mathcal{C}^{\infty}-$domain;}
  \item[(2)]{There is some $\mathcal{C}^{\infty}-$field $\mathbb{K}$ and an injective $\mathcal{C}^{\infty}-$rings homomorphism $$\jmath: A \rightarrowtail \mathbb{K}$$}
  \item[(3)]{$A\{ (A \setminus \{ 0\})^{-1}\}$ is a $\mathcal{C}^{\infty}-$field, $\eta_{A\setminus \{ 0\}}^{\infty}: A \to A\{ (A \setminus \{ 0\})^{-1}\}$ is an injective $\mathcal{C}^{\infty}-$rings homomorphism with the following universal property: for every $\mathcal{C}^{\infty}-$field $\mathbb{K}$ and every injective $\mathcal{C}^{\infty}-$homomorphism $\jmath: A \to \mathbb{K}$ there is a unique $\mathcal{C}^{\infty}-$fields homomorphism:
      $$\overline{\jmath}: A\{ (A\setminus \{0\})^{-1}\} \rightarrowtail \mathbb{K}$$
      such that the following diagram commutes:
      $$\xymatrixcolsep{5pc}\xymatrix{
      A \ar[r]^{\eta_{A \setminus \{ 0 \}}^{\infty}} \ar@{>->}[dr]^{\jmath} & A\{(A \setminus \{ 0\})^{-1} \ar@{.>}[d]^{\overline{\jmath}} \}\\
          & \mathbb{K}
      }$$}
\end{itemize}
\end{proposition}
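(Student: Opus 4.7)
The strategy is a three-way cycle: $(1)\Rightarrow(3)\Rightarrow(2)\Rightarrow(1)$, leaning on the localization picture at a $\mathcal{C}^{\infty}$-radical prime already developed in the paper.

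Assume $(1)$ and set $S := A\setminus\{0\}$. Because $A$ is a $\mathcal{C}^{\infty}$-reduced $\mathcal{C}^{\infty}$-domain, the ideal $(0)$ is prime (domain) and $\mathcal{C}^{\infty}$-radical ($\sqrt[\infty]{(0)}=(0)$ by reducedness), hence $(0)\in{\rm Spec}^{\infty}(A)$. \textbf{Proposition \ref{quetiro}} then applies to $\mathfrak{p}=(0)$: the ring $A\{S^{-1}\}=A_{(0)}$ is a local $\mathcal{C}^{\infty}$-ring with unique maximal ideal
\[
\mathfrak{m}_{(0)} \;=\; \left\{\tfrac{\eta_S(x)}{\eta_S(y)} \,\big|\, x\in(0),\ y\in A\setminus\{0\}\right\} \;=\; \{0\}.
\]
Since the maximal ideal is zero, every nonzero element of $A\{S^{-1}\}$ is invertible, so $A\{S^{-1}\}$ is a $\mathcal{C}^{\infty}$-field. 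For injectivity of $\eta_S$, suppose $\eta_S(a)=0$; by \textbf{Theorem \ref{38}} there is $c\in S^{\infty-{\rm sat}}$ with $c\cdot a=0$. By item (c) of the \textbf{Separation Theorems (\ref{TS})} applied to the $\mathcal{C}^{\infty}$-radical prime $(0)$, one has $S^{\infty-{\rm sat}}=(A\setminus(0))^{\infty-{\rm sat}}=A\setminus\{0\}$, so $c\neq 0$. Since $A$ is a domain, $a=0$, proving $\eta_S$ is injective. The universal property is then routine: given any $\mathcal{C}^{\infty}$-field $\mathbb{K}$ and any injective $\jmath:A\rightarrowtail\mathbb{K}$, we have $\jmath[S]\subseteq\mathbb{K}\setminus\{0\}=\mathbb{K}^{\times}$, so the universal property of $\eta_S$ (\textbf{Definition \ref{Alem}}) yields a unique $\overline{\jmath}:A\{S^{-1}\}\to\mathbb{K}$ with $\overline{\jmath}\circ\eta_S=\jmath$.

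For $(3)\Rightarrow(2)$ there is nothing to do: take $\mathbb{K}=A\{(A\setminus\{0\})^{-1}\}$ and $\jmath=\eta^{\infty}_{A\setminus\{0\}}$, which is injective by hypothesis and lands in a $\mathcal{C}^{\infty}$-field.

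For $(2)\Rightarrow(1)$, the injection $\jmath:A\rightarrowtail\mathbb{K}$ realizes $A$ as a $\mathcal{C}^{\infty}$-subring of a $\mathcal{C}^{\infty}$-field, and \textbf{Corollary \ref{medusa}} (every $\mathcal{C}^{\infty}$-subring of a $\mathcal{C}^{\infty}$-field is a $\mathcal{C}^{\infty}$-reduced $\mathcal{C}^{\infty}$-domain) delivers $(1)$ at once.

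The only genuinely delicate step is the injectivity of $\eta_S$ in $(1)\Rightarrow(3)$, because a priori an element of $S^{\infty-{\rm sat}}$ witnessing $\eta_S(a)=0$ need not lie in $S$ itself; the rescue is precisely item (c) of \textbf{Theorem \ref{TS}}, which guarantees that the complement of a $\mathcal{C}^{\infty}$-radical prime is already smoothly saturated. Everything else reduces to invoking Propositions \ref{quetiro} and \ref{medusa} and the universal property of $\eta_S$.
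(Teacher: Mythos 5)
Your proof is correct, and your cycle $(1)\Rightarrow(3)\Rightarrow(2)\Rightarrow(1)$ is the same decomposition the paper uses (written there as $(3)\to(2)$, $(2)\to(1)$, $(1)\to(3)$). The genuine difference lies in $(1)\Rightarrow(3)$. The paper proves injectivity of $\eta^{\infty}_{A\setminus\{0\}}$ by writing $A\setminus\{0\}$ as a directed union of finite subsets (\textbf{Proposition \ref{marina}}), reducing to a single denominator $s'=\prod S'$, and then combining \textbf{Proposition \ref{star}} with reducedness and domain-ness; it then checks by hand, via \textbf{Theorem \ref{38}}(i), that every nonzero fraction is invertible. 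You instead observe that $(0)\in{\rm Spec}^{\infty}(A)$ and invoke \textbf{Proposition \ref{quetiro}}: $A\{(A\setminus\{0\})^{-1}\}=A_{(0)}$ is local with maximal ideal $\{\eta(x)/\eta(y)\mid x\in(0)\}=\{0\}$, which makes it a field at once (a local ring is nontrivial, so ``maximal ideal zero'' forces every nonzero element to be a unit), and you obtain injectivity from \textbf{Theorem \ref{38}}(ii) together with the saturation identity $(A\setminus(0))^{\infty-{\rm sat}}=A\setminus\{0\}$ of \textbf{Theorem \ref{TS}}(c). This is shorter and pinpoints exactly where primality and $\mathcal{C}^{\infty}$-radicality of $(0)$ enter; its cost is reliance on \textbf{Proposition \ref{quetiro}}, whose locality claim for arbitrary $A$ the paper outsources to the literature, whereas the paper's longer argument is self-contained at that point. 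For $(2)\Rightarrow(1)$ your appeal to \textbf{Corollary \ref{medusa}} is, if anything, more complete than the paper's own step, which via \textbf{Theorem \ref{preim}} only spells out reducedness and leaves domain-ness implicit. One cosmetic remark: the injectivity of $\overline{\jmath}$ required in (3) is automatic, since a homomorphism out of a $\mathcal{C}^{\infty}$-field has trivial kernel; neither you nor the paper state this, and neither needs to.
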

\begin{proof}
Ad (3) $\to$ (2): Suppose $A\{ (A\setminus \{ 0\})^{-1}\}$ is a $\mathcal{C}^{\infty}-$field with the claimed property.\\

 We always have the following inclusions:
 $$A\setminus \{ 0\} \subseteq (A\setminus \{ 0\})^{{\rm sat}} \subseteq (A\setminus \{ 0\})^{\infty-{\rm sat}}$$
 so we must prove that:
 $$x \notin A\setminus \{ 0\} \to x \notin (A\setminus \{ 0\})^{\infty-{\rm sat}}.$$
 Now, $x \notin A\setminus \{ 0\} \iff x =0$ and $\eta_{A\setminus \{ 0\}}^{\infty}(x)=0$ in a field implies $\eta_{A\setminus \{ 0\}}^{\infty}(x) \notin (A\setminus \{ 0\})^{\times}$, since fields are non-trivial rings ($0$ is not an invertible element). Hence $(A\setminus \{ 0\})^{\infty-{\rm sat}} \subseteq A\setminus \{ 0\}$, so
 $$(A\setminus \{ 0\})^{{\rm sat}} = (A\setminus \{ 0\})^{\infty-{\rm sat}}.$$

 Now we prove $\eta_{A\setminus \{ 0\}}^{\infty}: A \to A\{ (A\setminus \{ 0\})^{-1} \}$ is injective. Let $x \in U(A)$ be such that $\eta_{A\setminus \{ 0\}}^{\infty}(x)=0$. Then $x \notin (A\setminus \{ 0\})^{\infty-{\rm sat}} = A\setminus \{ 0\}$, i.e., $x=0$, so the result follows.

Ad (2) $\to$ (1):  Since $\jmath: A \to \mathbb{K}$ is an injective $\mathcal{C}^{\infty}-$rings homomorphism, we have $\{ 0_{A}\} = \jmath^{\dashv}[\{0_{\mathbb{K}}\}]$. Since $\mathbb{K}$ is a $\mathcal{C}^{\infty}-$field, it is, in particular, a $\mathcal{C}^{\infty}-$reduced $\mathcal{C}^{\infty}-$ring, so $\sqrt[\infty]{(0_{\mathbb{K}})} = (0_{\mathbb{K}})$. We have:
$$\{ 0_A\} = \jmath^{\dashv}[\{0_{\mathbb{K}}\}] = \jmath^{\dashv}[(0_{\mathbb{K}})] = \jmath^{\dashv}[\sqrt[\infty]{(0_{\mathbb{K}})}]$$
Since by \textbf{Theorem \ref{preim}} the pre-image of a $\mathcal{C}^{\infty}-$radical ideal is again a $\mathcal{C}^{\infty}-$radical ideal, we have $\jmath^{\dashv}[\sqrt[\infty]{(0_{\mathbb{K}})}] = \sqrt[\infty]{\jmath^{\dashv}[(0_{\mathbb{K}})]}$, and since $\jmath$ is injective, $\jmath^{\dashv}[(0_{\mathbb{K}})] = (0_{\mathbb{K}})$. Hence:
$$\{ 0_A\} = \jmath^{\dashv}[\{0_{\mathbb{K}}\}] = \jmath^{\dashv}[(0_{\mathbb{K}})] = \jmath^{\dashv}[\sqrt[\infty]{(0_{\mathbb{K}})}] = \sqrt[\infty]{\jmath^{\dashv}[(0_{\mathbb{K}})]} = \sqrt[\infty]{(0_{A})},$$
so $A$ is indeed a $\mathcal{C}^{\infty}-$reduced $\mathcal{C}^{\infty}-$ring.\\

Ad (1) $\to$ (3): Suppose $A$ is a $\mathcal{C}^{\infty}-$reduced $\mathcal{C}^{\infty}-$ring.\\

We claim that:
$$\eta_{A\setminus \{0\}}^{\infty}: A \to A\{{A\setminus \{0\}}^{-1} \}$$
is injective.\\

Let $a \in A$ be such that $\eta_{A\setminus \{0\}}^{\infty}(a)=0$.  By the item (ii), \textbf{Theorem \ref{38}}, there exists some $\lambda \in {A\setminus \{0\}}^{\infty-{\rm sat}}$ with $\lambda \cdot a = 0$ in $A$. By \textbf{Proposition \ref{marina}}, we can write:
$$A\setminus \{0\} = \bigcup_{S' \stackrel{\subseteq}{{\rm fin}} A\setminus \{0\}} S'$$
so given this $\lambda$ there is some finite $S' \subseteq A\setminus \{0\}$ such that $\lambda \in {S'}^{\infty-{\rm sat}}$ and $\lambda \cdot a = 0$. We have, then, $\eta_{S'}^{\infty}(a)=0$.\\

Since $S'$ is a finite set, ${S'}^{\infty-{\rm sat}} = \langle \{\prod S'\} \rangle^{\infty-{\rm sat}}$. Denote $\prod S'$ by $s'$, and we have:

$$\eta_{s'}(a)=0 \rightarrow (A\{ {s'}^{-1}\})\{ \underbrace{\eta_{s'}(a)^{-1}}_{=0}\} \cong \{0\}$$

for $0$ invertible in a $\mathcal{C}^{\infty}-$reduced $\mathcal{C}^{\infty}-$ring implies the ring is trivial. By the
\textbf{Proposition \ref{star}},
$$A\{ s'\cdot a^{-1}\} \cong (A\{ {s'}^{-1}\})\{ \eta_{s'}(a)^{-1}\} \cong \{ 0\} \to s'\cdot a \in \sqrt[\infty]{(0)} \subseteq A.$$
Since $A$ is a $\mathcal{C}^{\infty}-$reduced $\mathcal{C}^{\infty}-$ring, it follows that $s'\cdot a = 0$. Since $S' \subseteq A\setminus \{ 0\}$, $s' = \prod S' \neq 0$, and since $A$ is a $\mathcal{C}^{\infty}-$domain, $(s'\cdot a = 0)\wedge(s' \neq 0) \to a = 0$, so $\eta_{A\setminus \{ 0\}}^{\infty}$ is an injective $\mathcal{C}^{\infty}-$homomorphism.\\

Now we prove $\eta_{A\setminus \{ 0\}}^{\infty}$ has the desired universal property.\\

Let $\mathbb{K}$ be a $\mathcal{C}^{\infty}-$field and $\jmath: A \to \mathbb{K}$ be an injective $\mathcal{C}^{\infty}-$morphism. Under those circumstances,
$$\jmath [A\setminus \{ 0\}] \subseteq \mathbb{K} \setminus \{ 0\} = \mathbb{K}^{\times},$$
so by the universal property of $\eta_{A\setminus \{ 0\}}^{\infty}$ of inverting $A\setminus \{ 0\}$, there is a unique $\mathcal{C}^{\infty}-$homomorphism $\overline{\jmath}: A\{ {A\setminus \{ 0\}}^{-1} \} \to \mathbb{K}$ such that:
$$\xymatrixcolsep{3pc}\xymatrix{
A \ar[r]^{\eta_{A\setminus \{ 0\}}^{\infty}} \ar@{>->}[dr]^{\jmath}  & A\{{A\setminus \{ 0\}}^{-1} \} \ar@{.>}[d]^{\overline{\jmath}}\\
   & \mathbb{K}}$$
commutes.\\

Finally we show that $A\{ {A\setminus \{ 0\}}^{-1} \}$ is a $\mathcal{C}^{\infty}-$field. We are going to show that any non-zero element of $A\{ {A\setminus \{ 0\}}^{-1}\}$ is invertible.\\

By item (i) of \textbf{Theorem \ref{38}}, for every $\varphi \in A\{ {A\setminus \{ 0\}}^{-1}\}$ there are $a \in A$ and $b \in {A\setminus \{ 0\}}^{\infty-{\rm sat}} = A\setminus \{ 0\},$
the last equality occurring because $\eta_{A\setminus \{ 0\}}^{\infty}$ is injective and $A \neq \{ 0\}$. Hence $b \in A\setminus \{ 0\}$ .\\

Since $\varphi \neq 0$, $\eta_{A\setminus \{ 0\}}^{\infty}(a) \neq 0$, and since $\eta_{A\setminus \{ 0\}}^{\infty}$ is injective, this implies $a \neq 0$. We define:
$$\psi := \dfrac{\eta_{A\setminus \{ 0\}}^{\infty}(b)}{\eta_{A\setminus \{ 0\}}^{\infty}(a)},$$
and we note that:
$$\varphi \cdot \psi = 1,$$
so $\varphi$ is invertible.
\end{proof}


\begin{lemma} \label{af1} Let $A$ be a $\mathcal{C}^{\infty}-$ring and $\mathfrak{p}$ be any of its prime ideals. There is a unique $\mathcal{C}^{\infty}-$homomorphism:
$$\alpha_{\p}: A\{ {A\setminus \p}^{-1}\} \to \left( \dfrac{A}{\p} \right)\left\{ {\left( \dfrac{A}{\p}\right) \setminus \{ 0 + \p\}}^{-1}\right\}$$
such that the following rectangle commutes:
$$\xymatrixcolsep{7pc}\xymatrix{
A \ar[d]_{q_{\p}} \ar[r]^{\eta_{A\setminus \p}} & A\{ {A\setminus \p}^{-1} \} \ar@{.>}[d]^{\alpha_{\p}}\\
\dfrac{A}{\p} \ar[r]^(0.3){\eta_{\left( \frac{A}{\p}\right)\setminus \{ 0 + \p \}}} & \left( \frac{A}{\p}\right) \left\{ {\left( \frac{A}{\p}\right)\setminus \{ 0 + \p \}}^{-1}\right\}
}$$
\end{lemma}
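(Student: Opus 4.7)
The plan is to obtain $\alpha_\p$ as a direct application of the universal property of the smooth ring of fractions $\eta_{A \setminus \p} : A \to A\{(A\setminus \p)^{-1}\}$ (\textbf{Definition \ref{Alem}}), applied to the composite morphism $\eta_{(A/\p)\setminus \{0+\p\}} \circ q_\p$. Thus the only real content of the proof is to verify that this composite sends every element of $A \setminus \p$ to an invertible element of the target $\mathcal{C}^{\infty}$-ring.

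First, I would unwind the composite: given $s \in A \setminus \p$, one has
\[
(\eta_{(A/\p)\setminus \{0+\p\}} \circ q_\p)(s) \;=\; \eta_{(A/\p)\setminus \{0+\p\}}(s + \p).
\]
Since $\p$ is prime (in particular proper), the quotient $A/\p$ is a $\mathcal{C}^{\infty}$-domain, and $s \notin \p$ forces $s + \p \neq 0 + \p$, i.e.\ $s + \p \in (A/\p) \setminus \{0 + \p\}$. By the very definition of the canonical map into the smooth ring of fractions, $\eta_{(A/\p)\setminus \{0+\p\}}$ sends every element of $(A/\p)\setminus \{0+\p\}$ to an invertible element of $\left(A/\p\right)\left\{\left((A/\p)\setminus \{0+\p\}\right)^{-1}\right\}$. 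Hence the hypothesis of the universal property of $\eta_{A\setminus \p}$ is verified for the morphism $\eta_{(A/\p)\setminus \{0+\p\}} \circ q_\p : A \to (A/\p)\{((A/\p)\setminus \{0+\p\})^{-1}\}$.

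Applying \textbf{Definition \ref{Alem}}, I would then conclude that there exists a unique $\mathcal{C}^{\infty}$-homomorphism
\[
\alpha_\p : A\{(A\setminus \p)^{-1}\} \;\longrightarrow\; \left(\dfrac{A}{\p}\right)\left\{\left(\left(\dfrac{A}{\p}\right)\setminus \{0+\p\}\right)^{-1}\right\}
\]
such that $\alpha_\p \circ \eta_{A\setminus \p} = \eta_{(A/\p)\setminus \{0+\p\}} \circ q_\p$, which is exactly the commutativity of the rectangle in the statement.

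There is essentially no obstacle here: everything reduces to the primality of $\p$ (used to get $s + \p \neq 0 + \p$) and to the factorisation property furnished by $\eta_{A\setminus \p}$. The remark worth making at the end is that, since $A/\p$ is a $\mathcal{C}^{\infty}$-domain, the target of $\alpha_\p$ equals its smooth field of fractions by \textbf{Proposition \ref{gaviota}(3)} provided $A/\p$ is additionally $\mathcal{C}^{\infty}$-reduced, i.e.\ provided $\p \in {\rm Spec}^{\infty}(A)$; this connects $\alpha_\p$ to the residue $\mathcal{C}^{\infty}$-field at $\p$, preparing the ground for the sheaf-theoretic developments of \textbf{Section \ref{sheaves}}.
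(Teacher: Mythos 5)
Your proposal is correct and follows essentially the same route as the paper: verify that $\eta_{(A/\p)\setminus\{0+\p\}}\circ q_{\p}$ sends $A\setminus\p$ into the units of the target (since $s\notin\p$ gives $q_\p(s)\neq 0+\p$), then invoke the universal property of $\eta_{A\setminus\p}$. The only cosmetic difference is that the key observation needs nothing beyond $s\notin\p\Leftrightarrow s+\p\neq 0+\p$, so primality of $\p$ is not really used at this step.
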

\begin{proof}We are going to show that $\eta_{\left( \frac{A}{\p}\right)\setminus \{ 0 + \p \}}[q_{\p}[A\setminus \p]] \subseteq \left[\left( \frac{A}{\p}\right) \left\{ {\left( \frac{A}{\p}\right)\setminus \{ 0 + \p \}}^{-1}\right\} \right]^{\times}$.\\

Since $q_{\p}[\p]=\{0\}$, $q_{\p}[A \setminus \p] = \left(\dfrac{A}{\p}\right) \setminus \{ 0 + \p\}$, and by the very property that defines $\eta_{\left( \frac{A}{\p}\right)\setminus \{ 0 + \p \}}$, we have $\eta_{\left( \frac{A}{\p}\right)\setminus \{ 0 + \p \}}[q_{\p}[A\setminus \p]] \subseteq \left[\left( \frac{A}{\p}\right) \left\{ {\left( \frac{A}{\p}\right)\setminus \{ 0 + \p \}}^{-1}\right\} \right]^{\times}$. By the universal property of $\eta_{A \setminus \p}: A \to A\{ {A \setminus \p}^{-1} \}$, there exists a unique $\mathcal{C}^{\infty}-$homomorphism $\alpha_{\p}$ such that:
$$\xymatrixcolsep{7pc}\xymatrix{
A \ar[r]^{\eta_{A \setminus \p}} \ar[rd]_{\eta_{\left( \frac{A}{\p}\right)\setminus \{ 0+\p\}} \circ q_{\p}} & A\{ {A \setminus \p}^{-1} \} \ar[d]^{\alpha_{\p}}\\
   & \left( \frac{A}{\p}\right) \left\{ {\left( \frac{A}{\p}\right)\setminus \{ 0 + \p \}}^{-1}\right\}
}$$
commutes, so the result follows.
\end{proof}

Note that $\p \in {\rm Spec}^{\infty}\, (A) \iff \left( \dfrac{A}{\p}\right)$ is a reduced $\mathcal{C}^{\infty}-$domain, so whenever $\p \in {\rm Spec}^{\infty}\, (A)$ we have $A\{ {A \setminus \p}^{-1} \}$ a $\mathcal{C}^{\infty}-$local ring. Let $\mathfrak{m}_{\p}$ denote its unique maximal ideal.\\

We note that for $\p \in {\rm Spec}^{\infty}\,(A)$, $\p \cap (A \setminus \p) = \varnothing$. Since $\p$ is a proper prime ideal of $A$, it follows that $A\setminus \p$ is a submonoid of $A$, so by item (a) of \textbf{Theorem \ref{TS}} it follows that:
$$\p \cap (A \setminus \p)^{\infty-{\rm sat}} = \varnothing$$
hence
$$(A \setminus \p)^{\infty-{\rm sat}} \subseteq A \setminus \p$$
so $A \setminus \p$ is a $\mathcal{C}^{\infty}-$saturated set whenever $\p \in {\rm Spec}^{\infty}\, (A)$.

We have the following:

\begin{theorem}\label{Cher}Let $A$ be a $\mathcal{C}^{\infty}-$ring and let $\p \in {\rm Spec}^{\infty}\,(A)$. The ring $A\{ {A \setminus \p }^{-1}\}$ is a local $\mathcal{C}^{\infty}-$ring.
\end{theorem}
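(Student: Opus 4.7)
The plan is to identify explicitly the unique maximal ideal of $A\{(A\setminus \p)^{-1}\}$ as $\mathfrak{m}_{\p} := \langle \eta_{A\setminus \p}[\p]\rangle$, and then show that every element outside $\mathfrak{m}_{\p}$ is invertible. The key preliminary observation is that, since $\p \in {\rm Spec}^{\infty}\,(A)$, \textbf{Theorem \ref{TS}}(c) yields $(A\setminus \p)^{\infty-{\rm sat}} = A\setminus \p$. Combined with \textbf{Theorem \ref{38}}, this says that every element of $A\{(A\setminus \p)^{-1}\}$ can be written in the form $\eta_{A\setminus \p}(x)/\eta_{A\setminus \p}(y)$ with $x \in A$ and $y \in A\setminus \p$, and by \textbf{Theorem \ref{Cindy}} one has the explicit description
$$\mathfrak{m}_{\p} = \left\{ \frac{\eta_{A\setminus \p}(b)}{\eta_{A\setminus \p}(d)} \;\middle|\; b \in \p,\; d \in A\setminus \p\right\}.$$

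Next I would verify that $\mathfrak{m}_{\p}$ is a \emph{proper} ideal by computing the quotient. Applying \textbf{Corollary \ref{Jeq}} to the ideal $\p$ and the set $A\setminus \p$, one gets a canonical isomorphism
$$\dfrac{A\{(A\setminus \p)^{-1}\}}{\mathfrak{m}_{\p}} \;\cong\; \left(\dfrac{A}{\p}\right)\!\left\{\left(q_{\p}[A\setminus \p]\right)^{-1}\right\} \;=\; \left(\dfrac{A}{\p}\right)\!\left\{\left(\dfrac{A}{\p}\setminus\{0+\p\}\right)^{-1}\right\}.$$
Since $\p$ is a $\mathcal{C}^{\infty}$-radical prime ideal, $A/\p$ is a $\mathcal{C}^{\infty}$-reduced $\mathcal{C}^{\infty}$-domain, so by \textbf{Proposition \ref{gaviota}} the right-hand side is a $\mathcal{C}^{\infty}$-field, hence in particular a nonzero ring. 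Therefore $\mathfrak{m}_{\p} \neq A\{(A\setminus \p)^{-1}\}$.

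Finally, I would show that every element not in $\mathfrak{m}_{\p}$ is invertible, using the characterization of invertibility given in \textbf{Theorem \ref{cara}}. Consider $\theta = \eta_{A\setminus \p}(x)/\eta_{A\setminus \p}(y)$ with $y \in A\setminus \p$. If $\theta \notin \mathfrak{m}_{\p}$, then by the description above we must have $x \in A\setminus \p$ (otherwise every representative of $\theta$ would have numerator in $\p$, contradicting the ideal structure and the equality $(A\setminus \p)^{\infty-{\rm sat}} = A\setminus \p$). But then both $\eta_{A\setminus \p}(x)$ and $\eta_{A\setminus \p}(y)$ are units, so $\theta$ is a unit with inverse $\eta_{A\setminus \p}(y)/\eta_{A\setminus \p}(x)$. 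Thus $\mathfrak{m}_{\p}$ is exactly the set of non-units, which proves $A\{(A\setminus \p)^{-1}\}$ is local with maximal ideal $\mathfrak{m}_{\p}$.

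The main conceptual obstacle is ensuring that the description of $\mathfrak{m}_{\p}$ via \textbf{Theorem \ref{Cindy}} really captures \emph{all} non-units — in particular that an element whose numerator lies in $\p$ in \emph{one} representation lies in $\p$ in \emph{every} representation. This is handled by the equality $(A\setminus \p)^{\infty-{\rm sat}} = A\setminus \p$ together with the ``zero test'' of \textbf{Theorem \ref{38}}(ii): if $\eta(x)/\eta(y) = \eta(x')/\eta(y')$, then some $s \in A\setminus \p$ satisfies $s(xy' - x'y) = 0$, and primeness of $\p$ then forces $x \in \p \Leftrightarrow x' \in \p$. Everything else is a straightforward assembly of the previously established machinery.
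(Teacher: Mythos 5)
Your proposal is correct and follows essentially the same route as the paper's proof: both identify the set of non-units with $\langle \eta_{A\setminus \p}[\p]\rangle$ and use the invertibility criterion of \textbf{Theorem \ref{cara}} together with the saturation identity $(A\setminus \p)^{\infty-{\rm sat}} = A\setminus \p$. Your extra properness check via \textbf{Corollary \ref{Jeq}} and your explicit verification that ``numerator in $\p$'' is independent of the chosen representation are welcome additions of rigor, but they do not change the underlying argument.
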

\begin{proof}
We are going to show that $(A\{ {A \setminus \p }^{-1}\})\setminus (A\{ {A \setminus \p }^{-1}\})^{\times}$ is an ideal by showing that:
$$(A\{ {A \setminus \p }^{-1}\})\setminus (A\{ {A \setminus \p }^{-1}\})^{\times} = \langle \eta_{A\setminus \p}[\p]\rangle = \{ \eta_{A \setminus \p}(a) \cdot \eta_{A \setminus \p}(b) | a \in \p \, \& \, b \in A \setminus \p \}$$

By \textbf{Theorem \ref{cara}},

$$\lambda = \dfrac{\eta_S(c)}{\eta_S(b)} \in (A\{ S^{-1}\})^{\times} \iff (\exists c' \in A)(\exists d \in (A \setminus \p)^{\infty-{\rm sat}})(d \cdot c' \cdot c \in (A \setminus \p)).$$

If $c \in \p$, then $\lambda$ would not be an invertible element, since for every $d, c' \in A$ we would have $d \cdot c' \cdot c \in \p$, hence $(\forall d \in (A \setminus \p)^{\infty-{\rm sat}})(\forall c \in A)(d \cdot c' \cdot c \notin (A \setminus \p))$. We conclude that $c \in (A \setminus \p)$, so taking $d = 1 \in (A \setminus \p)$ (for $\p$ is a proper prime ideal) and $c=1 \in A$ we have $1 \cdot 1 \cdot c' \in (A \setminus \p)$, so there are, indeed, $d \in (A \setminus \p)^{\infty-{\rm sat}}$ and $c \in A$ such that $d \cdot c' \cdot c = 1 \cdot 1 \cdot c \in (A \setminus \p)^{\infty-{\rm sat}}$.\\

From these considerations, it follows that $\lambda = \dfrac{\eta_S(c)}{\eta_S(b)}$ is non-invertible if, and only if, $c \in \p$, that is:
$$\lambda = \underbrace{\eta_S(c)}_{\in \eta_{A \setminus \p}[\p]} \cdot \overbrace{\eta_S(b)^{-1}}^{\in A\{ {A \setminus \p}^{-1}\}} \in \eta_{A \setminus \p}[\p],$$

so $A\{ {A \setminus \p}^{-1}\}\setminus (A\{ {A \setminus \p}^{-1}\})^{\times}$ is an ideal and $A\{ {A \setminus \p}^{-1}\}$ is a local $\mathcal{C}^{\infty}-$ring.
\end{proof}

\begin{theorem}\label{cheg}Let $A$ be a $\mathcal{C}^{\infty}-$ring and $\p$ one of its $\mathcal{C}^{\infty}-$radical prime ideals. As proven in the previous theorem, $A\{ {A \setminus \p}^{-1} \}$ is a local $\mathcal{C}^{\infty}-$ring, so we denote by $\mathfrak{m}_{\p}$ its unique maximal ideal. There is a unique $\mathcal{C}^{\infty}-$homomorphism:
$$\beta_{\p}: \left( \dfrac{A}{\p}\right) \to \dfrac{A\{ {A \setminus \p}^{-1}\}}{\mathfrak{m}_{\p}}$$
such that the following rectangle commutes:
$$\xymatrix{
A \ar[r]^{q_{\p}} \ar[d]^{\eta_{A\setminus \p}} & \left( \dfrac{A}{\p}\right) \ar@{.>}[d]^{\beta_{\p}}\\
A\{ {A \setminus \p}^{-1} \} \ar[r]_{q_{\mathfrak{m}_{\p}}} & \dfrac{A\{ {A \setminus \p}^{-1}\}}{\mathfrak{m}_{\p}}
}$$
\end{theorem}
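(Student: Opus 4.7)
The plan is to invoke the universal property of the quotient map $q_{\p} : A \twoheadrightarrow A/\p$. By the Fundamental Homomorphism Theorem, producing a unique $\beta_{\p} : A/\p \to A\{(A\setminus\p)^{-1}\}/\mathfrak{m}_{\p}$ making the rectangle commute is equivalent to verifying that the composite
$$q_{\mathfrak{m}_{\p}}\circ \eta_{A\setminus\p} : A \longrightarrow \frac{A\{(A\setminus\p)^{-1}\}}{\mathfrak{m}_{\p}}$$
annihilates $\p$, i.e.\ that $\eta_{A\setminus\p}[\p]\subseteq \mathfrak{m}_{\p}$.

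First, I would recall the explicit description of $\mathfrak{m}_{\p}$ obtained in \textbf{Proposition \ref{quetiro}} (and re-derived during the proof of \textbf{Theorem \ref{Cher}}):
$$\mathfrak{m}_{\p} \;=\; \left\{\frac{\eta_{A\setminus\p}(x)}{\eta_{A\setminus\p}(y)} \;\middle|\; x\in\p,\; y\in A\setminus\p\right\}.$$
Since $\p$ is a proper prime ideal, $1\in A\setminus\p$, so for any $a\in\p$ we may write $\eta_{A\setminus\p}(a) = \eta_{A\setminus\p}(a)/\eta_{A\setminus\p}(1)$, witnessing that $\eta_{A\setminus\p}(a)\in \mathfrak{m}_{\p}$. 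Hence $\eta_{A\setminus\p}[\p]\subseteq \mathfrak{m}_{\p}$, which gives
$$\p \;\subseteq\; \eta_{A\setminus\p}^{\dashv}[\mathfrak{m}_{\p}] \;=\; \ker\bigl(q_{\mathfrak{m}_{\p}}\circ \eta_{A\setminus\p}\bigr).$$

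With this inclusion in hand, the Fundamental Homomorphism Theorem supplies a unique $\mathcal{C}^{\infty}$-homomorphism
$$\beta_{\p} : \dfrac{A}{\p} \;\longrightarrow\; \dfrac{A\{(A\setminus\p)^{-1}\}}{\mathfrak{m}_{\p}}$$
such that $\beta_{\p}\circ q_{\p} = q_{\mathfrak{m}_{\p}}\circ \eta_{A\setminus\p}$, which is exactly the commutativity of the required rectangle; uniqueness is automatic since $q_{\p}$ is an epimorphism. Concretely, $\beta_{\p}(a+\p) = \eta_{A\setminus\p}(a) + \mathfrak{m}_{\p}$.

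There is no real obstacle here: the argument is essentially routine, and the only substantive input is the explicit form of $\mathfrak{m}_{\p}$ (already established) together with the fact that $\p$ being a \emph{proper} prime ideal forces $1\in A\setminus\p$. I would merely need to be careful to cite \textbf{Theorem \ref{Cher}}/\textbf{Proposition \ref{quetiro}} for the description of $\mathfrak{m}_{\p}$ rather than reprove it, and to note that the hypothesis $\p\in {\rm Spec}^{\infty}(A)$ is used only insofar as it guarantees that $A\{(A\setminus\p)^{-1}\}$ is local (so that $\mathfrak{m}_{\p}$ is well-defined as \emph{the} maximal ideal); the construction of $\beta_{\p}$ itself uses only that $\p$ is a proper prime ideal.
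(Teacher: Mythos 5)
Your proposal is correct and follows essentially the same route as the paper: both arguments reduce the statement to showing $\p \subseteq \ker(q_{\mathfrak{m}_{\p}} \circ \eta_{A\setminus\p})$ via the explicit description $\mathfrak{m}_{\p} = \langle \eta_{A\setminus\p}[\p]\rangle$ from \textbf{Theorem \ref{Cher}}, and then invoke the Homomorphism Theorem for existence and uniqueness of $\beta_{\p}$. Your extra observation that $1 \in A\setminus\p$ witnesses $\eta_{A\setminus\p}(a) \in \mathfrak{m}_{\p}$ is a harmless elaboration of the same containment the paper asserts directly.
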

\begin{proof}
We are going to show that $\p \subseteq \ker (q_{\mathfrak{m}_{\p}} \circ \eta_{A \setminus \p})$, so the result will follow from the \textbf{Homomorphism Theorem}, i.e., there will be a unique $\beta_{\p}$ such that the following triangle commutes:
$$\xymatrix{
A \ar[r]^{q_{\p}} \ar@/_2pc/[dr]_{q_{\mathfrak{m}_{\p}}\circ \eta_{A \setminus \p}} & \left( \dfrac{A}{\p}\right) \ar[d]^{\beta_{\p}}\\
   & \dfrac{A\{ {A \setminus \p}^{-1}\}}{\mathfrak{m}_{\p}}
}.$$

Since by the proof of \textbf{theorem \ref{Cher}} we have $\mathfrak{m}_{\p} = \langle \eta_{A \setminus \p}[\p]\rangle = \{ \eta_{A \setminus \p}(a) \cdot \eta_{A \setminus \p}(b) | a \in \p \, \& \, b \in (A \setminus \p) \}$, it follows that:
$$q_{\mathfrak{m}_{\p}}[\eta_{A \setminus \p}[\p]] = 0 + \mathfrak{m}_{\p},$$
what proves the result.
\end{proof}

\begin{theorem}\label{Pedro}Let $A$ be a $\mathcal{C}^{\infty}-$ring and $\mathfrak{p}$ be any of its prime ideals. There are unique isomorphisms:
$$\varphi_{\mathfrak{p}} :   \dfrac{A\{ {A\setminus \p }^{-1}\}}{\mathfrak{m}_{\p}} \to \left( \dfrac{A}{\p}\right)\left\{  {\left( \dfrac{A}{\p}\right)\setminus \{ 0 + \p\} }^{-1}\right\}$$
and
$$\psi_{\p}:\left( \dfrac{A}{\p}\right)\left\{  {\left( \dfrac{A}{\p}\right)\setminus \{ 0 + \p\}}^{-1}\right\} \to \dfrac{A\{ {A\setminus \p }^{-1}\}}{\mathfrak{m}_{\p}} $$
such that the following diagram commutes:
$$\xymatrixcolsep{5pc}\xymatrix{
   & A\{ {A\setminus \p}^{-1} \} \ar[r]^{q_{\mathfrak{m}_{\p}}} \ar[ddr]_(0.25){\alpha_{\p}} &  \dfrac{A\{ {A \setminus \p}^{-1} \}}{\mathfrak{m}_{\p}} \ar@<-3pt>[dd]_{\varphi_{\p}}\\
A \ar[ur] \ar[dr] &    &      \\
      & \dfrac{A}{\p} \ar[r]_(0.3){\eta_{\left(\frac{A}{\p} \setminus \{0 + \p \}\right)}} \ar[uur]^(0.25){\beta_{\p}} & \left( \dfrac{A}{\p} \right)\left\{ \left( \dfrac{A}{\p}\right) \setminus \{ 0 + \p\}\right\} \ar@<-3pt>[uu]_{\psi_{\p}}
}$$
\end{theorem}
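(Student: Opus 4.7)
The plan is to derive the pair $(\varphi_{\p}, \psi_{\p})$ directly from \textbf{Corollary \ref{Jeq}} applied to $A$, the ideal $\p$, and the subset $S = A\setminus\p$, and then to verify that the resulting isomorphism is compatible with both $\alpha_{\p}$ (from \textbf{Lemma \ref{af1}}) and $\beta_{\p}$ (from \textbf{Theorem \ref{cheg}}). The key preliminary observation is the set-theoretic identity
\[
q_{\p}[A\setminus\p] \;=\; \left(\dfrac{A}{\p}\right)\setminus\{0+\p\},
\]
which holds because $\p$ is prime (in particular proper) and $a\notin\p \iff a+\p\neq 0+\p$. Combined with \textbf{Theorem \ref{Cher}}, which says $\mathfrak{m}_{\p}=\langle\eta_{A\setminus\p}[\p]\rangle$, \textbf{Corollary \ref{Jeq}} supplies a unique isomorphism
\[
\mu : \left(\dfrac{A}{\p}\right)\left\{\left(\left(\dfrac{A}{\p}\right)\setminus\{0+\p\}\right)^{-1}\right\} \;\stackrel{\cong}{\longrightarrow}\; \dfrac{A\{(A\setminus\p)^{-1}\}}{\mathfrak{m}_{\p}}
\]
fitting into the pentagon of \textbf{Corollary \ref{Jeq}}. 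I would then define $\psi_{\p}:=\mu$ and $\varphi_{\p}:=\mu^{-1}$.

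Next I would check that $\varphi_{\p}$ and $\psi_{\p}$ close the diagram of the theorem, i.e.\ that $\psi_{\p}\circ\eta_{(A/\p)\setminus\{0+\p\}} = \beta_{\p}$ and $\varphi_{\p}\circ q_{\mathfrak{m}_{\p}} = \alpha_{\p}$. Both equalities follow from the uniqueness clauses in \textbf{Lemma \ref{af1}} and \textbf{Theorem \ref{cheg}}: on the one hand, $\psi_{\p}\circ\eta_{(A/\p)\setminus\{0+\p\}}\circ q_{\p}$ is precisely the composite along the pentagon of \textbf{Corollary \ref{Jeq}}, which equals $q_{\mathfrak{m}_{\p}}\circ\eta_{A\setminus\p}$, and since $q_{\p}$ is an epimorphism this forces $\psi_{\p}\circ\eta_{(A/\p)\setminus\{0+\p\}}=\beta_{\p}$ by the uniqueness part of \textbf{Theorem \ref{cheg}}. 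On the other hand, $\varphi_{\p}\circ q_{\mathfrak{m}_{\p}}\circ \eta_{A\setminus\p}=\eta_{(A/\p)\setminus\{0+\p\}}\circ q_{\p}$, and because $\eta_{A\setminus\p}$ is an epimorphism (see \textbf{Proposition \ref{fact}}), the uniqueness clause of \textbf{Lemma \ref{af1}} gives $\varphi_{\p}\circ q_{\mathfrak{m}_{\p}}=\alpha_{\p}$.

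Finally, the uniqueness of $\varphi_{\p}$ and $\psi_{\p}$ follows because $q_{\mathfrak{m}_{\p}}\circ\eta_{A\setminus\p}$ is a composition of epimorphisms (hence an epimorphism), so any two $\mathcal{C}^{\infty}$-homomorphisms out of $\dfrac{A\{(A\setminus\p)^{-1}\}}{\mathfrak{m}_{\p}}$ that agree after precomposition with $q_{\mathfrak{m}_{\p}}\circ\eta_{A\setminus\p}$ must be equal; the symmetric argument applies to $\psi_{\p}$ via the epimorphism $\eta_{(A/\p)\setminus\{0+\p\}}\circ q_{\p}$. The main obstacle I anticipate is purely bookkeeping, namely verifying that the pentagon produced by \textbf{Corollary \ref{Jeq}} for the pair $(A,\p,A\setminus\p)$ genuinely coincides with the outer hexagon of the theorem after identifying $\eta_{S+I}$ with $\eta_{(A/\p)\setminus\{0+\p\}}$ through the equality $q_{\p}[A\setminus\p]=(A/\p)\setminus\{0+\p\}$; once this identification is made, everything else is a routine chase using the universal properties already established.
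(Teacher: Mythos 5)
Your proposal is correct, but it reaches the isomorphism by a different route than the paper. The paper builds $\psi_{\p}$ and $\varphi_{\p}$ from scratch: $\psi_{\p}$ comes from the universal property of $\eta_{(A/\p)\setminus\{0+\p\}}$ applied to $\beta_{\p}$ (after checking $\beta_{\p}$ sends $(A/\p)\setminus\{0+\p\}$ into units), $\varphi_{\p}$ comes from the Homomorphism Theorem applied to $\alpha_{\p}$ (after checking $\alpha_{\p}[\m_{\p}]=\{0\}$), and mutual inverseness is then established by two separate uniqueness chases. You instead pull the isomorphism wholesale out of \textbf{Corollary \ref{Jeq}} with $I=\p$, $S=A\setminus\p$, using the identifications $q_{\p}[A\setminus\p]=(A/\p)\setminus\{0+\p\}$ and $\m_{\p}=\langle\eta_{A\setminus\p}[\p]\rangle$ (\textbf{Theorem \ref{Cher}}), so that invertibility is automatic and only the two triangle identities $\varphi_{\p}\circ q_{\m_{\p}}=\alpha_{\p}$ and $\psi_{\p}\circ\eta_{(A/\p)\setminus\{0+\p\}}=\beta_{\p}$ remain; these follow from the pentagon of \textbf{Corollary \ref{Jeq}} together with the uniqueness clauses of \textbf{Lemma \ref{af1}} and \textbf{Theorem \ref{cheg}} (the epimorphism properties of $q_{\p}$ and $\eta_{A\setminus\p}$ that you invoke are a valid, if slightly redundant, way to exploit those clauses). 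What your approach buys is economy — it reuses the general quotient--localization commutation already proved rather than re-deriving it in this special case — at the cost of the bookkeeping you flag, namely matching the pentagon's vertices to the theorem's hexagon; the paper's approach is longer but keeps the construction of each arrow explicit. One caveat applies equally to both arguments: the statement says ``any prime ideal,'' but $\m_{\p}$ and the formula $\m_{\p}=\langle\eta_{A\setminus\p}[\p]\rangle$ are only available for $\p\in{\rm Spec}^{\infty}(A)$, so your reliance on \textbf{Theorem \ref{Cher}} is no more restrictive than the paper's own.
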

\begin{proof}
First we are going to show that there exists a unique $\psi_{\p}$ such that:
$$\xymatrixcolsep{5pc}\xymatrix{
\dfrac{A}{\p} \ar[dr]_{\beta_{\p}} \ar[r]^(0.25){\eta_{\frac{A}{\p} \setminus \{ 0 + \p\}}} & \left( \dfrac{A}{\p}\right)\left\{ {\dfrac{A}{\p}\setminus \{ 0 + \p\}}^{-1}\right\} \ar@{.>}[d]^{\psi_{\p}}\\
   & \dfrac{A\{ {A \setminus \p}^{-1} \}}{\mathfrak{m}_{\p}}
}$$
 is a commutative diagram.\\

 Note that $\beta_{\p}\left[ \dfrac{A}{\p}\setminus \{0+ \p \}\right] \subseteq \left( \dfrac{A\{ {A \setminus \p}^{-1}\}}{\m_{\p}}\right)^{\times}$.\\

 By definition, $\beta_{\p}$ is the only $\mathcal{C}^{\infty}-$homomorphism such that the following diagram commutes:
 $$\xymatrix{
A \ar[r]^{q_{\p}} \ar[d]^{\eta_{A\setminus \p}} & \left( \dfrac{A}{\p}\right) \ar@{.>}[d]^{\beta_{\p}}\\
A\{ {A \setminus \p}^{-1} \} \ar[r]_{q_{\mathfrak{m}_{\p}}} & \dfrac{A\{ {A \setminus \p}^{-1}\}}{\mathfrak{m}_{\p}}
}$$
so
$$\beta_{\p}[\{ a + \p | a \notin \p \}] = \{ \eta_{\p}(a) + \m_{\p} | a \in (A \setminus \p) \} \subseteq \left( \dfrac{A\{ {A \setminus \p}^{-1}\}}{\m_{\p}}\right)^{\times}$$
and $\left( \dfrac{A\{ {A \setminus \p}^{-1}\}}{\m_{\p}}\right)^{\times} = \dfrac{A\{ {A \setminus \p}^{-1}\}}{\m_{\p}} \setminus \{ 0 + \m_{\p}\}$, for $a \in (A \setminus \p) \to \eta_{A \setminus \p}(a) \notin \m_{\p} = \langle \eta_{A \setminus \p}[\p]\rangle$. By the universal property of $\eta_{\frac{A}{\p}\setminus \{ 0 + \p\}}$, it follows that there is a unique $\psi_{\p}$ with the desired property.\\

Now we claim that there is a unique $\varphi_{\p}$ such that the following triangle commutes:
$$\xymatrixcolsep{5pc}\xymatrix{
A\{{A \setminus \p}^{-1} \} \ar@/_2pc/[dr]^{\alpha_{\p}} \ar[r]^{q_{\m_{\p}}} & \dfrac{A\{ {A \setminus \p}^{-1}\}}{\m_{\p}} \ar@{.>}[d]^{\varphi_{\p}}\\
    & \left( \dfrac{A}{\p} \right)\left\{ {\dfrac{A}{\p} \setminus \{0+\p \}}^{-1}\right\}
}$$

Note that $\alpha_{\p}[\m_{\p}] \subseteq \{ 0 + \m_{\p}\}$. Indeed, let $\dfrac{\eta_{A\setminus \p}(a)}{\eta_{A\setminus \p}(b)} \in \m_{\p}$, that is, an element such that $a \in \p$ and $b \in A \setminus \p$. By the very definition of $\alpha_{\p}$ given in the \textbf{Lemma \ref{af1}},
$$\alpha_{\p}\left( \dfrac{\eta_{A\setminus \p}(a)}{\eta_{A\setminus \p}(b)}\right) = \dfrac{\eta_{\frac{A}{\p} \setminus \{ 0 + \p\}}\overbrace{(a + \p)}^{\in \p}}{\eta_{\eta_{\frac{A}{\p}} \setminus \{ 0 + \p\}}(b)} = 0+ \m_{\p}.$$
By the \textbf{Homomorphism Theorem}, there is a unique $\varphi_{\p}$ such that the following diagram commutes:

$$\xymatrix{
A\{{A \setminus \p}^{-1} \} \ar@/_2pc/[dr]^{\alpha_{\p}} \ar[r]^{q_{\m_{\p}}} & \dfrac{A\{ {A \setminus \p}^{-1}\}}{\m_{\p}} \ar@{.>}[d]^{\varphi_{\p}}\\
    & \left( \dfrac{A}{\p} \right)\left\{ {\dfrac{A}{\p} \setminus \{0+\p \}}^{-1}\right\}
}$$

Finally, we are going to show that $\varphi_{\p} \circ \psi_{\p} = {\rm id}_{\frac{A\{ {A \setminus \p}^{-1}\}}{\m_{\p}}}$.\\

Consider the following diagram:

$$\xymatrixcolsep{5pc}\xymatrix{
    & A\{ {A \setminus \p}^{-1}\} \ar[r]^{q_{\m_{\p}}} & \dfrac{A\{{A \setminus \p}^{-1} \}}{\m_{\p}} \ar[d]^{\varphi_{\p}} \ar@/^6pc/[dd]^{{\rm id}_{\frac{A\{ {A \setminus \p}^{-1}\}}{\m_{\p}}}} \\
A \ar[ur]^{\eta_{A\setminus \p}} \ar[dr]^{\eta_{A \setminus \p}} \ar@{->>}[r]^{q_{\p}} & \dfrac{A}{\p} \ar[ur]^{\beta_{\p}} \ar[dr]^{\alpha_{\p}} \ar[r]^{\eta_{\frac{A}{\p} \setminus \{ 0 + \p\}}} & \left( \dfrac{A}{\p}\right)\left\{ \dfrac{A}{\p} \setminus \{ 0 + \p\}\right\} \ar[d]^{\psi_{\p}}\\
   & A\{ {A \setminus \p}^{-1}\} \ar[r]^{q_{\m_{\p}}} & \dfrac{A\{ {A \setminus \p}^{-1}\}}{\m_{\p}}}$$

Note that the upper  inner triangle in the above diagram commutes because $q_{\p}$ is an epimorphism and
$$(\varphi_{\p}\circ \beta_{\p}) \circ q_{\p} = \eta_{\frac{A}{\p}\setminus \{ 0 + \p\}} $$

By the uniqueness granted by the \textbf{Homomorphism Theorem}, it follows that $\psi_{\p} \circ \varphi_{\p} = {\rm id}_{\frac{A\{ {A \setminus \p}^{-1}\}}{\m_{\p}}}$.\\

Similarly, by the uniqueness granted by the universal property of the ring of fractions $\eta_{\frac{A}{\p}\setminus \{ 0 + \p\}}$, it follows that:

$$\varphi_{\p} \circ \psi_{\p} = {\rm id}_{\frac{A}{\p}\left\{ {\frac{A}{\p}\setminus \p}^{-1}\right\}}$$

Hence,
$$\dfrac{A\{ {A \setminus \p}^{-1}\}}{\m_{\p}} \cong \left( \dfrac{A}{\p}\right) \left\{ {\dfrac{A}{\p} \setminus \{ 0 + \p\}}^{-1}\right\}$$
\end{proof}

We now present a result that relates the saturation of $S$ and the saturation of $q_I[S]$.\\

\begin{theorem}\label{Skyline}Let $A$ be a $\mathcal{C}^{\infty}-$ring, $S \subseteq A$ and $I$ be an ideal of $A$. Then
$$(\beta + I) \in (S+I)^{\infty-{\rm sat}} \iff (\exists \gamma \in A)(\exists s \in S^{\infty-{\rm sat}})(s \cdot (\beta \gamma - 1_A) \in I)$$
\end{theorem}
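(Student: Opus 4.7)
The plan is to pivot between $(A/I)\{(S+I)^{-1}\}$ and $A\{S^{-1}\}/\langle \eta_S[I]\rangle$ via the canonical isomorphism $\mu$ of \textbf{Corollary \ref{Jeq}} (under which $\eta_{S+I}(\beta + I)$ corresponds to $\eta_S(\beta) + \langle \eta_S[I]\rangle$); \textbf{Theorems \ref{Cindy}} and \textbf{\ref{38}} will then convert invertibility statements in $A\{S^{-1}\}$ into elementwise relations in $A$.

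For the $(\Leftarrow)$ direction, I would start with the assumed relation $s(\beta\gamma - 1_A) \in I$ for some $s \in S^{\infty-{\rm sat}}$ and $\gamma \in A$ and project to $A/I$, obtaining $(s + I)\bigl((\beta + I)(\gamma + I) - 1\bigr) = 0$. Since $\eta_S(s) \in (A\{S^{-1}\})^{\times}$, its class is a unit in $A\{S^{-1}\}/\langle \eta_S[I]\rangle$, and by $\mu$ this forces $\eta_{S+I}(s + I)$ to be a unit in $(A/I)\{(S+I)^{-1}\}$. Cancelling it after applying $\eta_{S+I}$ yields $\eta_{S+I}(\beta + I)\,\eta_{S+I}(\gamma + I) = 1$, whence $\beta + I \in (S + I)^{\infty-{\rm sat}}$.

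For the $(\Rightarrow)$ direction, invertibility of $\eta_S(\beta) + \langle \eta_S[I]\rangle$ produces some $\alpha \in A\{S^{-1}\}$ with $\eta_S(\beta)\alpha - 1 \in \langle \eta_S[I]\rangle$. By item (i) of \textbf{Theorem \ref{38}} write $\alpha = \eta_S(\gamma)\eta_S(s_1)^{-1}$ with $\gamma \in A$ and $s_1 \in S^{\infty-{\rm sat}}$; multiplying by the unit $\eta_S(s_1)$ preserves $\langle \eta_S[I]\rangle$ and gives $\eta_S(\beta\gamma - s_1) \in \langle \eta_S[I]\rangle$. \textbf{Theorem \ref{Cindy}} then supplies $b \in I$ and $d \in S^{\infty-{\rm sat}}$ with $\eta_S(\beta\gamma - s_1) = \eta_S(b)/\eta_S(d)$, so $\eta_S\bigl(d(\beta\gamma - s_1) - b\bigr) = 0$, and item (ii) of \textbf{Theorem \ref{38}} provides $e \in S^{\infty-{\rm sat}}$ with $e\bigl(d(\beta\gamma - s_1) - b\bigr) = 0$ in $A$; setting $s := ed \in S^{\infty-{\rm sat}}$ yields $s(\beta\gamma - s_1) \in I$.

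The main obstacle is the final step, which removes the auxiliary factor $s_1 \in S^{\infty-{\rm sat}}$ and passes from $s(\beta\gamma - s_1) \in I$ to the desired $s'(\beta\gamma' - 1_A) \in I$. I expect to handle this by running the same \textbf{Cindy}/\textbf{\ref{38}} descent on an expression for $\eta_S(s_1)^{-1} \in A\{S^{-1}\}$, producing $t \in S^{\infty-{\rm sat}}$ and $\gamma'' \in A$ with $t(s_1\gamma'' - 1_A) \in I$; multiplying our first relation by $t\gamma''$ and combining with this one (after multiplying by $s$) then cancels $s_1$ and gives $st(\beta\gamma\gamma'' - 1_A) \in I$, so that $s' := st \in S^{\infty-{\rm sat}}$ and $\gamma' := \gamma\gamma''$ furnish the sought witnesses.
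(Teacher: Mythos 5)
Your overall strategy --- pivoting through the isomorphism of \textbf{Corollary \ref{Jeq}} and then descending to elements of $A$ with \textbf{Theorem \ref{Cindy}} and \textbf{Theorem \ref{38}} --- is the same as the paper's, and your $(\Leftarrow)$ direction is correct. In the $(\Rightarrow)$ direction you are in fact \emph{more} careful than the paper: the paper simply asserts that invertibility of $\eta_S(\beta)+\langle\eta_S[I]\rangle$ is equivalent to the existence of $\gamma\in A$ with $(\eta_S(\beta)+\langle\eta_S[I]\rangle)(\eta_S(\gamma)+\langle\eta_S[I]\rangle)=1+\langle\eta_S[I]\rangle$, whereas you correctly note that the inverse is a priori a fraction $\eta_S(\gamma)\eta_S(s_1)^{-1}$; this is exactly what forces the extra factor $s_1$ into your relation $s(\beta\gamma-s_1)\in I$.

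The gap is your final step. Running the \textbf{Theorem \ref{Cindy}}/\textbf{Theorem \ref{38}} descent on $\eta_S(s_1)^{-1}$ does not produce $t(s_1\gamma''-1_A)\in I$: since $\eta_S(s_1)$ is invertible in $A\{S^{-1}\}$ itself (not merely modulo $\langle\eta_S[I]\rangle$), writing $\eta_S(s_1)^{-1}=\eta_S(c)\eta_S(d)^{-1}$ and applying \textbf{Theorem \ref{38}}(ii) to $\eta_S(s_1c-d)=0$ yields an \emph{exact} identity $s_1\cdot(ec)=ed$ in $A$, i.e.\ $s_1\gamma''=u$ with $u\in S^{\infty-{\rm sat}}$, not $u=1_A$; substituting this into $s(\beta\gamma-s_1)\in I$ only reproduces a relation of the same shape, $s(\beta\gamma\gamma''-u)\in I$ with $u\in S^{\infty-{\rm sat}}$. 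Obtaining $t(s_1\gamma''-1_A)\in I$ directly would be precisely the forward implication of the theorem applied to $\beta=s_1$ (note $s_1+I\in(S+I)^{\infty-{\rm sat}}$), so that route is circular. Moreover the obstruction is genuine: take $A=\mathcal{C}^{\infty}(\mathbb{R})$, $S=\{x\}$ with $x={\rm id}_{\mathbb{R}}$, $I=(0)$ and $\beta=x$. Then $\beta+I\in(S+I)^{\infty-{\rm sat}}$, but every $s\in S^{\infty-{\rm sat}}$ has $Z(s)\subseteq\{0\}$, so $s(x\gamma-1)=0$ would force $\gamma(t)=1/t$ for all $t\neq0$, which no $\gamma\in\mathcal{C}^{\infty}(\mathbb{R})$ satisfies. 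What the descent actually delivers is the weaker statement $(\exists\gamma\in A)(\exists u\in S^{\infty-{\rm sat}})(\beta\gamma-u\in I)$; the passage from there to $s(\beta\gamma-1_A)\in I$ is exactly where both your argument and the paper's (which hides the issue in the unjustified ``if and only if'' quoted above) break down.
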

\begin{proof}
By definition, $(\beta + I) \in (S+I)^{\infty-{\rm sat}}$ occurs if, and only if, $\eta_{S+I}(\beta +I) \in \left( \frac{A}{I}\right)\{ {S+ I}^{-1}\}^{\times}$.\\

By \textbf{Corollary \ref{Jeq}}, we have the following $\mathcal{C}^{\infty}-$isomorphism:
$$\mu^{-1}: \left( \frac{A}{I} \right) \stackrel{\cong}{\rightarrow} \frac{A\{ S^{-1}\}}{\langle \eta_S[I]\rangle}$$

and since $\mu^{-1}(\eta_{S+I}(\beta + I)) \in  \frac{A\{ S^{-1}\}}{\langle \eta_S[I] \rangle}$, then: $$\mu^{-1}(\eta_{S+I}(\beta)) = \eta_S(\beta) + \langle \eta_S[I] \rangle \frac{A\{ S^{-1}\}}{\langle \eta_S[I]\rangle}.$$

Now, $\mu^{-1}(\eta_{S+I}(\beta+I)) \in \left( \frac{A\{ S^{-1}\}}{\langle \eta_S[I]\rangle}\right)^{\times}$ occurs if, and only if, there is some $\gamma \in A$ such that:
$$(\mu^{-1}(\eta_{S+I}(\beta + I))) \cdot (\eta_S(\gamma) + \langle \eta_S[I] \rangle) = 1_A + \langle \eta_S[I] \rangle$$

$$(\eta_S(\beta) + \langle \eta_S[I]\rangle) \cdot (\eta_S(\gamma) + \langle \eta_S[I] \rangle) = 1_A + \langle \eta_S[I] \rangle$$

or, equivalently, if and only if,
$$\eta_S(\beta \cdot \gamma - 1_A) \in \langle \eta_S[I]\rangle .$$

By \textbf{Corollary \ref{Cindy}}, $\langle \eta_S[I]\rangle = \{ \eta_S(b)\cdot \eta_S(d) | (b \in I) \& (d \in S^{\infty-{\rm sat}})\}$, so
$$\eta_S(\beta \cdot \gamma - 1_A) \in \langle \eta_S[I]\rangle  \iff (\exists i \in I)(\exists d \in S^{\infty-{\rm sat}})(\eta_S(\beta \cdot \gamma - 1_A) = \frac{\eta_S(i)}{\eta_S(d)}),$$
that is, if and only if:
$$\eta_S(\beta \cdot \gamma - 1_A) \cdot \eta_S(d) = \eta_S(i)$$
or
$$\eta_S(d \cdot (\beta \cdot \gamma - 1_A) - i) = 0.$$

By item (ii) of \textbf{Theorem 1.4} of \cite{moerdijk1986rings},

$$(\eta_S(d \cdot (\beta \cdot \gamma - 1_A) - i) = 0) \iff (\exists c \in S^{\infty-{\rm sat}})(c \cdot [d \cdot (\beta \cdot \gamma - 1_A) - i] = 0).$$

and

$$c \cdot [d \cdot (\beta \cdot \gamma - 1_A) - i] = \underbrace{(c \cdot d)}_{\in S^{\infty-{\rm sat}}}[(\beta \cdot \gamma - 1_A) - i] = 0$$

so it is sufficient to take $s = c \cdot d \in S^{\infty-{\rm sat}}$ in order to get:

$$s\cdot (\beta \cdot \gamma - 1) \in I.$$

Conversely, suppose that:

$$(\exists \gamma \in A)(\exists s \in S^{\infty-{\rm sat}})(s \cdot (\beta \cdot \gamma - 1_A) \in I)$$

Let $s \cdot (\beta \cdot \gamma - 1_A) = i \in I$. We have:

$$\eta_S(s \cdot (\beta \cdot \gamma -  1_A)) = \eta_S(i) \in \eta_S[I],$$

hence:

$$\eta_S(\beta \cdot \gamma - 1_A) = \frac{\eta_S(i)}{\eta_S(s)} \in \langle \eta_S[I]\rangle$$

and

$$\eta_S(\beta)\cdot \eta_S(\gamma) - \eta_S(1_A) \in \langle \eta_S[I]\rangle$$
so

$$(\eta_S(\beta) + \langle \eta_S[I]\rangle) \cdot (\eta_S(\gamma) + \langle \eta_S[I]\rangle) = \eta_S(1_A) + \langle \eta_S[I]\rangle,$$

hence

$$\eta_S(\beta) + \langle \eta_S[I]\rangle \in \left( \frac{A\{ S^{-1}\}}{\langle \eta_S[I]\rangle}\right)^{\times}$$

$$\mu^{-1}(\eta_S(\beta) + \langle \eta_S[I]\rangle) \in \mu^{-1} \left[ \left(\frac{A\{ S^{-1}\}}{\langle \eta_S[I]\rangle}\right)^{\times}\right] = \left(\left( \frac{A}{I}\right)\{ {S+I}^{-1} \}\right)^{\times}$$

and

$$\eta_{S+I}(\beta + I) = \mu^{-1}(\eta_S(\beta) + \langle \eta_S[I]\rangle) \in \left(\left( \frac{A}{I}\right)\{ {S+I}^{-1} \}\right)^{\times}$$

hence

$$(\beta+I) \in (S+I)^{\infty-{\rm sat}}.$$

\end{proof}

\begin{proposition}\label{batata}
Let $A$ be $\mathcal{C}^{\infty}-$ring, $I$ be any of its ideals $I$, $q_I : A \to \frac{A}{I}$ be the standard projection map and $a \in A$ be any element. We have a relation between $\{ a\}^{\infty-{\rm sat}}$ and $\{ a+I\}^{\infty-{\rm sat}}$ given by the following implication:
$$(\forall b \in A)((b \in \{ a\}^{\infty-{\rm sat}}) \rightarrow (q_I(b) \in \{ a + I\}^{\infty-{\rm sat}}))$$
\end{proposition}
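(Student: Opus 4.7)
The plan is to exploit the commutation between taking quotients and taking $\mathcal{C}^{\infty}-$rings of fractions, which has already been established in \textbf{Corollary \ref{Jeq}}, and combine it with the elementary fact that any $\mathcal{C}^{\infty}-$rings homomorphism preserves invertible elements.

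First I would recall the canonical map produced in the proof of \textbf{Corollary \ref{Jeq}}: there is a (unique) $\mathcal{C}^{\infty}-$homomorphism $\widehat{q}_{Ia} : A\{a^{-1}\} \to \left(\dfrac{A}{I}\right)\{(a+I)^{-1}\}$ making the square
$$\xymatrixcolsep{5pc}\xymatrix{
A \ar[r]^{\eta_a} \ar[d]_{q_I} & A\{a^{-1}\} \ar[d]^{\widehat{q}_{Ia}}\\
\dfrac{A}{I} \ar[r]_(.35){\eta_{a+I}} & \left(\dfrac{A}{I}\right)\{(a+I)^{-1}\}
}$$
commute. Next I would fix an arbitrary $b \in \{a\}^{\infty-\mathrm{sat}}$; by \textbf{Definition \ref{satlisa}} this means $\eta_a(b) \in (A\{a^{-1}\})^{\times}$. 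Since $\widehat{q}_{Ia}$ is a $\mathcal{C}^{\infty}-$homomorphism it sends units to units, hence $\widehat{q}_{Ia}(\eta_a(b)) \in \left(\left(\dfrac{A}{I}\right)\{(a+I)^{-1}\}\right)^{\times}$. Reading off the commuting square yields
$$\eta_{a+I}(q_I(b)) \;=\; \widehat{q}_{Ia}(\eta_a(b)) \;\in\; \left(\left(\dfrac{A}{I}\right)\{(a+I)^{-1}\}\right)^{\times},$$
and by the very definition of smooth saturation applied to the set $\{a+I\} \subseteq \dfrac{A}{I}$, this says precisely that $q_I(b) \in \{a+I\}^{\infty-\mathrm{sat}}$, which is the desired conclusion.

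There is really no serious obstacle here: the whole argument rests on two well-known ingredients, namely that quotienting and smoothly localizing commute (\textbf{Corollary \ref{Jeq}}) and that morphisms of $\mathcal{C}^{\infty}-$rings preserve invertibility. The only point that needs a moment of attention is to identify the image $\widehat{q}_{Ia}(\eta_a(b))$ with $\eta_{a+I}(q_I(b))$ via the commuting square, but this is immediate from the construction of $\widehat{q}_{Ia}$. Hence the proof is essentially a one-line chase around the commutative diagram.
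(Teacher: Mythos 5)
Your argument is correct and is essentially the paper's own proof: both obtain the comparison map $A\{a^{-1}\} \to \left(\frac{A}{I}\right)\{(a+I)^{-1}\}$ making the square with $q_I$, $\eta_a$, $\eta_{a+I}$ commute (the paper builds it directly from the universal property of $\eta_a$, which is the same map you extract from \textbf{Corollary \ref{Jeq}}), and then conclude by noting that $\mathcal{C}^{\infty}-$homomorphisms preserve invertible elements and chasing the diagram. No gap.
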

\begin{proof}
Consider the following diagram:
$$\xymatrixcolsep{5pc}\xymatrix{
A \ar[d]^{q_I} \ar[r]^{\eta_a} & A\{ a^{-1}\} \ar[d]^{\exists ! \varphi}\\
\dfrac{A}{I} \ar[r]^{\eta_{a+I}} & \left( \dfrac{A}{I}\right)\{ (a+I)^{-1}\}
}$$
where $\varphi$ is given by the universal property of $A\{ a^{-1}\}$, since $\eta_{a+I}(q_I(a)) = \eta_{a+I}(a+I) \in \left( \dfrac{A}{I} \{ (a+I)^{-1}\}\right)^{\times}$.\\

Given $b \in \{ a\}^{\infty-{\rm sat}}$, so $\eta_a(b) \in (A\{ a^{-1}\})^{\times}$, we have $\varphi(\eta_a(b)) \in \left( \dfrac{A}{I} \{ (a+I)^{-1}\}\right)^{\times}$ (for, the homomorphic image of invertible elements are invertible). But since the diagram above commutes, it follows that $\eta_{a+I}(b+I) = \eta_{a+I}(q_I(b)) = \varphi(\eta_a(b)) \in \left( \dfrac{A}{I} \{ (a+I)^{-1}\}\right)^{\times}$, so $q_I(b) \in \left( \dfrac{A}{I} \{ (a+I)^{-1}\}\right)^{\times}$ and $b+I \in \{ a + I\}^{\infty-{\rm sat}}$.\\

Now suppose $(s+I) \in \{ a + I\}^{\infty-{\rm sat}}$, so by \textbf{Theorem \ref{Skyline}} there are $\sigma \in \{ a\}^{\infty-{\rm sat}}$ and $\gamma \in A$ such that $\sigma \cdot (s \cdot \gamma - 1) \in I$. By \textbf{Theorem \ref{cara}}, in order to show that $s \in \{ a\}^{\infty-{\rm sat}}$ it suffices to show that these $\sigma \in \{ a\}^{\infty-{\rm sat}}$ and $\gamma \in A$ are such that $\sigma \cdot \gamma \cdot s \in \{ a\}^{\infty-{\rm sat}}$.\\

$$(\sigma \cdot (s \cdot \gamma - 1) \in I) \iff (\sigma \cdot \gamma \cdot s - \sigma \in I).$$

Since $I \cap \{ a\}^{\infty-{\rm sat}} = \varnothing$ (for if it was not the case we would have $\left( \frac{A}{I}\right)\{ (a + I)^{-1}\} \cong \{ 0\}$), it follows that $\sigma \cdot (\gamma \cdot s - 1) \notin \{ a \}^{\infty-{\rm sat}}$. Since $\{ a\}^{\infty-{\rm sat}}$ is a multiplicative set,
$$\sigma \cdot (\gamma \cdot s - 1) \notin \{ a\}^{\infty-{\rm sat}} \to (\sigma \notin \{ a\}^{\infty-{\rm sat}}) \vee (\gamma \cdot s - 1 \notin \{ a\}^{\infty-{\rm sat}})).$$

It is not the case that $\sigma \notin \{ a\}^{\infty-{\rm sat}}$, so we must have $\gamma \cdot s - 1 \notin \{ a\}^{\infty-{\rm sat}}$, that is, $\eta_a(\gamma)\cdot \eta_a(s) - 1 \notin (A\{ a^{-1}\})^{\times}$.\\




\end{proof}

\begin{remark}\label{eu} The converse of the above implication is not always true, that is
$$(\exists b \in A)((q_I(b) \in \{ a + I\}^{\infty-{\rm sat}}) \& (b \notin \{ a\}^{\infty-{\rm sat}})).$$

Consider $A = \mathcal{C}^{\infty}(\R)$,
$$\begin{array}{cccc}
    a: & \R & \to & \R \\
     & x & \mapsto & \begin{cases}
                       e^{-\frac{1}{1 - x^2}}, & \mbox{if } |x|<1 \\
                       0, & \mbox{otherwise}.
                     \end{cases}
  \end{array}$$
  
\begin{center}
    \resizebox{0.6\textwidth}{!}{\begin{tikzpicture}[domain=-3:3]
\draw [->] (0,-0.5) --(0,0.95) node (yaxis) [left] {$y$};
\draw [->] (-3,0) --(3,0) node (yaxis) [below] {$x$};  
\draw[-] (-0.95,-0.05)--(-0.95,0.05);
\draw[-] (0.95,-0.05)--(0.95,0.05);
\draw (0.95,-0.05) node[below]{$1$};
\draw (-1.1,-0.05) node[below]{$-1$};
\draw (-0.2,-0.1) node[below]{$0$};
\draw[black, samples=100, smooth, domain=-0.95:0.95, thick] plot (\x,{exp(-1/(1-\x*\x)});
\draw[thick] (-3,0)--(-0.95,0);
\draw[thick] (0.95,0)--(3,0);
\draw (1,0.75)node[right]{\tiny{$a(x)$}};
\end{tikzpicture}}
\end{center}  
  
and $I = \m_{0} = \{ g \in \mathcal{C}^{\infty}(\R) | g(0)=0\}$, which is a maximal ideal. Since $\m_{0}$ is a maximal ideal, $\dfrac{\mathcal{C}^{\infty}(\R)}{\m_{0}}$ is a field, so every non-zero element of it must be invertible and $\dfrac{\mathcal{C}^{\infty}(\R)}{\m_{0}}\{ (a+\m_{0})^{-1}\} \cong \dfrac{\mathcal{C}^{\infty}(\R)}{\m_{0}}$

Consider $h (x) =a (x) - a(\frac{1}{2})$, which is a smooth function. 

\begin{center}
    \resizebox{0.6\textwidth}{!}{\begin{tikzpicture}[domain=-3:3]
\draw [->] (0,-0.5) --(0,0.95) node (yaxis) [left] {$y$};
\draw [->] (-3,0) --(3,0) node (yaxis) [above] {$x$};  
\draw[-] (-0.5,-0.05)--(-0.5,0.05);
\draw[-] (0.5,-0.05)--(0.5,0.05);
\draw (0.5,-0.05) node[above]{\tiny{$\frac{1}{2}$}};
\draw (-0.6,-0.05) node[above]{\tiny{$-\frac{1}{2}$}};
\draw (-0.2,-0.1) node[below]{$0$};
\draw[shift={(0,-0.26)},black, samples=100, smooth, domain=-0.95:0.95, thick] plot (\x,{exp(-1/(1-\x*\x)});
\draw[thick] (-3,-0.26)--(-0.95,-0.26);
\draw[thick] (0.95,-0.26)--(3,-0.26);
\draw (1,0.75)node[right]{\tiny{$h(x)=a(x)-a(\frac{1}{2})$}};
\end{tikzpicture}}
\end{center}

Since $h(\frac{1}{2})=0$, $h \notin \mathcal{C}^{\infty}(\R)\{ a^{-1}\}$, for $\frac{1}{2} \in {\rm Coz}\,(a)$. However, since $h(0)=a(0)-a(\frac{1}{2}) = e^{-1}-e^{-\frac{4}{3}} \neq 0$, $h \notin \m_{0}$, so $h + \m_{0} \neq 0 + \m_{0} \in \frac{\mathcal{C}^{\infty}(\R)}{\m_{0}}$, hence it is invertible.
\end{remark}

\begin{remark}\label{Rmk15} Let $A$ be a $\mathcal{C}^{\infty}-$ring, $a \in A$ and $b \in A$ be any two elements of $A$ such that $(a)=(b)$. Under those circumstances there is a unique $\mathcal{C}^{\infty}-$isomorphism $\sigma^{a}_{b}: A\{ a^{-1}\} \to A\{ b^{-1}\}$ such that the following diagram commutes:
$$\xymatrix{
A \ar[r]^{\eta_a} \ar[rd]^{\eta_b} & A\{ a^{-1}\} \ar[d]^{\sigma^{b}_{a}}\\
   & A\{ b^{-1}\}
}$$
\end{remark}
\begin{proof}Suppose $a \neq 0$. Since $(a) = (b)$, in particular we have $a \in (b)$, so there is some $y \in A$ such that $a = y \cdot b$. Also, since $b \in (a)$, there is some $z \in A$ such that $b = z \cdot a$.\\

We have $a = y \cdot b = y \cdot (z \cdot a) = (y \cdot z) \cdot a$, so $A\{ a^{-1}\} = A\{ (yza)^{-1} \}$, hence $\eta_a(yza) \in (A\{ a^{-1}\})^{\times}$, i.e., $\eta_a(yz)\cdot \eta_a(a) \in (A\{ a^{-1}\})^{\times}$. Since $\eta_a(a) \in (A\{ a^{-1}\})^{\times}$, it follows that $\eta_a(yz) = \eta_a(y)\cdot \eta_a(z) \in (A\{ a^{-1}\})^{\times}$, so we have both $\eta_a(y) \in (A\{ a^{-1}\})^{\times}$ and $\eta_a(z) \in (A\{ a^{-1}\})^{\times}$.\\

Under those circumstances, $\eta_a(b) = \eta_a(z) \cdot \eta_a(a) \in (A\{ a^{-1}\})^{\times}$. By the universal property of $\eta_b : A \to A\{ b^{-1}\}$, there must exist a unique $\sigma^{b}_{a}: A\{ b^{-1}\} \to A\{ a^{-1}\}$ such that the following diagram commutes:
$$\xymatrix{
A \ar[r]^{\eta_b} \ar[rd]^{\eta_a} & A\{ b^{-1}\} \ar@{.>}[d]^{\exists ! \sigma^{b}_{a}}\\
   & A\{ a^{-1}\}
}$$

Analogously, we conclude that there is a unique $\mathcal{C}^{\infty}-$homomorphism $\sigma^{a}_{b}: A\{ a^{-1}\} \to A\{ b^{-1}\}$ such that:

$$\xymatrix{
A \ar[r]^{\eta_b} \ar[rd]^{\eta_a} & A\{ b^{-1}\} \ar[d]^{\sigma^{a}_{b}}\\
   & A\{ a^{-1}\}
}$$
commutes.\\

By uniqueness, it follows that $(\sigma^{a}_{b})^{-1} = (\sigma^{b}_{a})$, and the result follows.
\end{proof}

\begin{proposition}\label{advo} If $A$ is a $\mathcal{C}^{\infty}-$reduced $\mathcal{C}^{\infty}-$domain, then:
\begin{itemize}
  \item[(i)]{$A\{ {A\setminus \{ 0\}}^{-1}\}$ is a $\mathcal{C}^{\infty}-$field;}
  \item[(ii)]{${\rm Can}_{A \setminus \{ 0\}} : A \to A\{ {A\setminus \{ 0\}}^{-1}\}$ is a $\mathcal{C}^{\infty}-$monomorphism;}
  \item[(iii)]{If $\mathbb{K}$ is a $\mathcal{C}^{\infty}-$field and $\jmath: A \to \mathbb{K}$ is a $\mathcal{C}^{\infty}-$monomorphism, then there is a unique $\mathcal{C}^{\infty}-$homo\-morphism $\widetilde{\jmath}: A\{ {A\setminus \{ 0\}}^{-1}\} \to \mathbb{K}$ such that the following diagram commutes:
      $$\xymatrixcolsep{5pc}\xymatrix{
      A \ar[r]^{{\rm Can}_{A \setminus \{ 0\}}} \ar[dr]_{\jmath} & A\{ {A\setminus \{ 0\}}^{-1}\} \ar@{-->}[d]^{ \exists ! \widetilde{\jmath}}\\
       & \mathbb{K}
            }$$
             }
\end{itemize}
Thus,  ${\rm Frac}(A):= A\{ {A\setminus \{ 0\}}^{-1}\}$ is the $\mathcal{C}^{\infty}-$field of fractions of $A$,
\end{proposition}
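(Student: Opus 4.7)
The plan is to observe that \textbf{Proposition \ref{advo}} is essentially an unpacking of the implication $(1) \Rightarrow (3)$ in \textbf{Proposition \ref{gaviota}}, which has already been established. The hypothesis that $A$ is a $\mathcal{C}^{\infty}$-reduced $\mathcal{C}^{\infty}$-domain is precisely condition $(1)$ of that proposition, and conditions (i), (ii), (iii) of the present statement are exactly the three clauses packaged inside condition $(3)$.

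Concretely, I would proceed as follows. First, identify ${\rm Can}_{A \setminus \{0\}}$ with the morphism $\eta_{A \setminus \{0\}}^{\infty}: A \to A\{(A\setminus\{0\})^{-1}\}$ of the $\mathcal{C}^{\infty}$-ring of fractions; these are the same morphism up to notation. Applying $(1) \Rightarrow (3)$ of \textbf{Proposition \ref{gaviota}} then yields at once that $A\{(A\setminus\{0\})^{-1}\}$ is a $\mathcal{C}^{\infty}$-field, which is (i), and that $\eta_{A\setminus\{0\}}^{\infty}$ is a $\mathcal{C}^{\infty}$-monomorphism, which is (ii). The universal property in (iii) is literally the universal property asserted in clause (3) of \textbf{Proposition \ref{gaviota}}: for every $\mathcal{C}^{\infty}$-field $\mathbb{K}$ and every injective $\mathcal{C}^{\infty}$-homomorphism $\jmath: A \rightarrowtail \mathbb{K}$, the unique $\overline{\jmath}: A\{(A\setminus\{0\})^{-1}\} \to \mathbb{K}$ making the triangle commute is precisely the $\widetilde{\jmath}$ required here.

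Finally, having verified (i)--(iii), I would introduce the notation ${\rm Frac}(A) := A\{(A\setminus\{0\})^{-1}\}$ and note that, by (i)--(iii) together, the pair $({\rm Frac}(A), {\rm Can}_{A\setminus\{0\}})$ deserves the name ``$\mathcal{C}^{\infty}$-field of fractions of $A$''; it is the initial $\mathcal{C}^{\infty}$-field under $A$ in the category whose arrows from $A$ are injective $\mathcal{C}^{\infty}$-homomorphisms into $\mathcal{C}^{\infty}$-fields, and this object is unique up to unique isomorphism by the standard argument from the universal property.

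There is essentially no obstacle here, since all the technical work (reducedness giving injectivity of $\eta_{A\setminus\{0\}}^{\infty}$, the domain hypothesis giving that every non-zero fraction $\eta_{A\setminus\{0\}}^{\infty}(a)/\eta_{A\setminus\{0\}}^{\infty}(b)$ is invertible via $\eta_{A\setminus\{0\}}^{\infty}(b)/\eta_{A\setminus\{0\}}^{\infty}(a)$, and the universal factorization through the localization) has already been done in the proof of \textbf{Proposition \ref{gaviota}}. The only minor point to be careful about is the appeal to \textbf{Theorem \ref{38}} used implicitly there to write a general element of $A\{(A\setminus\{0\})^{-1}\}$ in the form $\eta_{A\setminus\{0\}}^{\infty}(a)/\eta_{A\setminus\{0\}}^{\infty}(b)$ with $b \in (A\setminus\{0\})^{\infty-{\rm sat}} = A \setminus\{0\}$, the latter equality being precisely where reducedness of $A$ enters.
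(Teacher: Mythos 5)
Your reduction is correct: the hypothesis of \textbf{Proposition \ref{advo}} is clause (1) of \textbf{Proposition \ref{gaviota}}, clause (3) there is verbatim the conjunction of (i)--(iii) here (with $\eta^{\infty}_{A\setminus\{0\}}$ and ${\rm Can}_{A\setminus\{0\}}$ denoting the same canonical map), and \textbf{Proposition \ref{gaviota}} precedes this statement in the text, so there is no circularity. The paper, however, does not take this route: it gives a short self-contained proof, first invoking item (c) of the Separation Theorems (\textbf{Theorem \ref{TS}}) with $\mathfrak{p}=(0)\in{\rm Spec}^{\infty}(A)$ to get $(A\setminus\{0\})^{\infty-{\rm sat}}=A\setminus\{0\}$, then writing a nonzero element as ${\rm Can}(x)/{\rm Can}(y)$ with $x,y\in A\setminus\{0\}$ and exhibiting ${\rm Can}(y)/{\rm Can}(x)$ as its inverse for (i), deducing (ii) from the fact that ${\rm Can}(a)$ is invertible in a nontrivial ring whenever $a\neq 0$, and getting (iii) directly from the universal property of the ring of fractions. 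Your approach buys economy by outsourcing all the work (in particular the injectivity argument via \textbf{Theorem \ref{38}}(ii) and the decomposition into finite subsets) to the earlier proposition; the paper's direct argument buys a cleaner conceptual path for (i)--(ii) through the Separation Theorems and makes the statement readable independently of \textbf{Proposition \ref{gaviota}}. Your closing remark that the universal property determines ${\rm Frac}(A)$ up to unique isomorphism is a correct and appropriate way to justify calling it \emph{the} $\mathcal{C}^{\infty}$-field of fractions.
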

\begin{proof}
Ad (i): By item (c) of \textbf{Theorem \ref{TS}}, since $(0) \in {\rm Spec}^{\infty}\,(A)$ then $A \setminus \{ 0\} = (A \setminus \{ 0\})^{\infty-{\rm sat}}$. \\

Given any $\alpha \in A\{ {A\setminus \{ 0\}}^{-1}\}$, there are $x \in A$ and $y \in (A \setminus \{ 0\})^{\infty-{\rm sat}}=A\setminus \{ 0\}$ such that:

$$\alpha = \dfrac{{\rm Can}_{A \setminus \{ 0\}}(x)}{{\rm Can}_{A \setminus \{ 0\}}(y)}.$$

If $\alpha \neq 0$, then $x \neq 0$ and $x \in A \setminus \{ 0\}$. Thus, $\beta = \dfrac{{\rm Can}_{A \setminus \{ 0\}}(y)}{{\rm Can}_{A\setminus \mathfrak{p}}(x)}$ is a multiplicative inverse of $\alpha$.\\

Since every non-zero element of $A\{ {A\setminus \{ 0\}}^{-1}\}$ is invertible, it follows that $A\{ {A\setminus \{ 0\}}^{-1}\}$ is a $\mathcal{C}^{\infty}-$field;\\

Ad (ii): If $a \neq 0$, then $a \in A \setminus \{ 0\}$ so ${\rm Can}_{A\setminus \{ 0\}}(a) \in (A\{ {A\setminus \{ 0\}}^{-1}\})^{\times}$. Since $A\{ {A\setminus \{ 0\}}^{-1}\}$ is a $\mathcal{C}^{\infty}-$field, it is not the trivial $\mathcal{C}^{\infty}-$ring, thus ${\rm Can}_{A\setminus \{ 0\}}(a) \neq 0$. We conclude that if $a \in A \setminus \{ 0\}$ is such that ${\rm Can}_{A \setminus \{ 0\}}(a) = 0$ implies $a=0$.\\

Ad (iii): Note that $\jmath[A\setminus \{ 0\}] \subseteq \mathbb{K}^{\times}$, and the result follows from the universal property of the $\mathcal{C}^{\infty}-$ring of fractions, $A\{ {A\setminus \{ 0\}}^{-1}\}$.
\end{proof}

Now we present a result that says that every $\mathcal{C}^{\infty}-$radical prime ideal is the kernel of  a $\mathcal{C}^{\infty}-$homo\-morphism into a $\mathcal{C}^{\infty}-$field.\\

First we notice that whenever $\mathfrak{p}$ is a $\mathcal{C}^{\infty}-$radical prime ideal of $A$ the localization:
$${\rm Can}_{A\setminus \mathfrak{p}}: A \to A_{\{ \mathfrak{p}\}},$$
where $A_{\{\mathfrak{p}\}} = A\{(A\setminus \mathfrak{p})^{-1}\}$ is a local ring. Let $\hat{\mathfrak{p}}$ be the maximal ideal of $A_{\{\mathfrak{p}\}}$ and $q: A_{\{ \mathfrak{p}\}} \to \dfrac{A_{\{ \mathfrak{p}\}}}{\hat{\mathfrak{p}}}$ be the quotient map into the residual field. Notice that $q$ is a local homomorphism, since $q[\hat{\mathfrak{p}}] = (0)$.\\

\begin{lemma}\label{112}Let $S$ be a multiplicative subset of a $\mathcal{C}^{\infty}-$ring $A$, and write $A\{ S^{-1}\} = \varprojlim_{a \in S} A\{ a^{-1}\}$. There is an inclusion-preserving bijection between the set of $\mathcal{C}^{\infty}-$radical prime ideals in $A\{ S^{-1}\}$ and $\mathcal{C}^{\infty}-$prime ideals in $A$ which are disjoint from $S$.
\end{lemma}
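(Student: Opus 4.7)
The plan is to exhibit mutually inverse, inclusion-preserving maps, following the strategy already sketched in \textbf{Lemma \ref{2328}}. Define
$$\Psi: \mathrm{Spec}^{\infty}(A\{S^{-1}\}) \to \{\mathfrak{q} \in \mathrm{Spec}^{\infty}(A) \mid \mathfrak{q} \cap S = \varnothing\}, \quad \mathfrak{P} \mapsto \eta_S^{\dashv}[\mathfrak{P}],$$
$$\Phi: \{\mathfrak{q} \in \mathrm{Spec}^{\infty}(A) \mid \mathfrak{q} \cap S = \varnothing\} \to \mathrm{Spec}^{\infty}(A\{S^{-1}\}), \quad \mathfrak{q} \mapsto \langle \eta_S[\mathfrak{q}]\rangle.$$
Well-definedness of $\Psi$ is the easy half: pullbacks of prime ideals are prime, pullbacks of $\mathcal{C}^{\infty}$-radical ideals are $\mathcal{C}^{\infty}$-radical by \textbf{Theorem \ref{preim}}, and $\eta_S[S] \subseteq (A\{S^{-1}\})^{\times}$ forces $\eta_S^{\dashv}[\mathfrak{P}] \cap S = \varnothing$ since a prime ideal cannot contain a unit.

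The heart of the argument, and the main obstacle, is showing $\Phi$ is well-defined, i.e.\ that $\langle \eta_S[\mathfrak{q}]\rangle$ is a $\mathcal{C}^{\infty}$-radical prime ideal of $A\{S^{-1}\}$. I will use \textbf{Corollary \ref{Jeq}} to identify
$$\dfrac{A\{S^{-1}\}}{\langle \eta_S[\mathfrak{q}]\rangle} \;\cong\; \left(\dfrac{A}{\mathfrak{q}}\right)\bigl\{(S+\mathfrak{q})^{-1}\bigr\}.$$
Since $\mathfrak{q}$ is a $\mathcal{C}^{\infty}$-radical prime, $A/\mathfrak{q}$ is a $\mathcal{C}^{\infty}$-reduced $\mathcal{C}^{\infty}$-domain, so by \textbf{Proposition \ref{advo}} it embeds into its $\mathcal{C}^{\infty}$-field of fractions $\mathbb{K} := \mathrm{Frac}(A/\mathfrak{q})$. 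Because $\mathfrak{q} \cap S = \varnothing$, the image $S+\mathfrak{q}$ lies in $\mathbb{K}^{\times}$, so the universal property of $\eta_{S+\mathfrak{q}}$ yields a $\mathcal{C}^{\infty}$-homomorphism $\chi \colon (A/\mathfrak{q})\{(S+\mathfrak{q})^{-1}\} \to \mathbb{K}$. I will verify $\chi$ is injective by a direct computation with \textbf{Theorem \ref{38}}: if $\chi(\eta(\bar a)/\eta(\bar s)) = \bar a / \bar s = 0$ in the field $\mathbb{K}$ then $\bar a = 0$ in $A/\mathfrak{q}$, so $\eta(\bar a) = 0$ and the original fraction vanishes. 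By \textbf{Proposition \ref{redi}} (or \textbf{Corollary \ref{medusa}}), the quotient is then a $\mathcal{C}^{\infty}$-reduced $\mathcal{C}^{\infty}$-domain, so $\langle \eta_S[\mathfrak{q}]\rangle$ is indeed a $\mathcal{C}^{\infty}$-radical prime ideal.

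For $\Psi \circ \Phi = \mathrm{id}$, the inclusion $\mathfrak{q} \subseteq \eta_S^{\dashv}[\langle \eta_S[\mathfrak{q}]\rangle]$ is tautological; for the reverse, given $a$ in the right-hand side, \textbf{Theorem \ref{Cindy}} writes $\eta_S(a) = \eta_S(b)/\eta_S(d)$ with $b \in \mathfrak{q}$ and $d \in S^{\infty-\mathrm{sat}}$, so $\eta_S(ad - b) = 0$ and \textbf{Theorem \ref{38}}(ii) furnishes $v \in S^{\infty-\mathrm{sat}}$ with $v(ad - b) = 0$, whence $vda \in \mathfrak{q}$. The \textbf{Separation Theorem \ref{TS}}(a) applied to the $\mathcal{C}^{\infty}$-radical prime $\mathfrak{q}$ disjoint from $S$ gives $\mathfrak{q} \cap S^{\infty-\mathrm{sat}} = \varnothing$, so $v,d \notin \mathfrak{q}$, and primeness forces $a \in \mathfrak{q}$. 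For $\Phi \circ \Psi = \mathrm{id}$, the inclusion $\langle \eta_S[\eta_S^{\dashv}[\mathfrak{P}]]\rangle \subseteq \mathfrak{P}$ is immediate; conversely any $\alpha \in \mathfrak{P}$ can be written $\alpha = \eta_S(a)/\eta_S(s)$ with $s \in S^{\infty-\mathrm{sat}}$, and then $\eta_S(a) = \alpha \cdot \eta_S(s) \in \mathfrak{P}$ shows $a \in \eta_S^{\dashv}[\mathfrak{P}]$, so $\alpha \in \langle \eta_S[\eta_S^{\dashv}[\mathfrak{P}]]\rangle$. Inclusion-preservation of both maps is immediate from their definitions, completing the proof.
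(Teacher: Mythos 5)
Your proposal is correct, and it is worth noting that the paper does not actually prove this lemma at all: its ``proof'' is a one-line citation to page 286 of Moerdijk--Reyes, so your argument is a genuine addition rather than a variant of an existing one. Your construction is the expected one (the maps $\mathfrak{P}\mapsto\eta_S^{\dashv}[\mathfrak{P}]$ and $\mathfrak{q}\mapsto\langle\eta_S[\mathfrak{q}]\rangle$ of \textbf{Lemma \ref{2328}}), and every step is discharged by results already established in the paper: \textbf{Theorem \ref{preim}} for the easy direction; \textbf{Corollary \ref{Jeq}} plus the embedding of the reduced $\mathcal{C}^{\infty}$-domain $A/\mathfrak{q}$ into its $\mathcal{C}^{\infty}$-field of fractions (\textbf{Proposition \ref{advo}}, \textbf{Corollary \ref{medusa}}) for the hard direction; and \textbf{Theorem \ref{Cindy}}, \textbf{Theorem \ref{38}}, and \textbf{Theorem \ref{TS}}(a) for the two composite identities. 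The one point you leave implicit --- that $\langle\eta_S[\mathfrak{q}]\rangle$ is \emph{proper} --- is in fact settled by your own argument, since the existence of the unital $\mathcal{C}^{\infty}$-homomorphism $\chi$ into the nontrivial field $\mathbb{K}$ forces the quotient to be nontrivial; you might state this explicitly. What your route buys is a self-contained proof internal to the paper's toolkit, which is valuable given that the cited source treats only a slightly different setting.
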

\begin{proof}
A proof of this fact can be found on page 286 of \cite{rings2}.
\end{proof}

We register another technical result:

\begin{lemma}\label{destino}Let $A$ be a $\mathcal{C}^{\infty}-$ring and $e \in A$ an idempotent element. There are unique isomorphisms:
$$A\{ e^{-1}\} \cong \dfrac{A}{(1-e)} \cong A \cdot e := \{ a \cdot e | a \in A\}$$
\end{lemma}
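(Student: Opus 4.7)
The plan is to establish the two isomorphisms separately and then conclude by transitivity. Throughout, I will use the key relations $e^2 = e$ (so $e(1-e) = 0$) and $e + (1-e) = 1_A$.

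\textbf{Step 1: $A\{e^{-1}\} \cong A/(1-e)$.} The idea is to show that the quotient $\mathcal{C}^{\infty}$-homomorphism $q_{(1-e)}: A \twoheadrightarrow A/(1-e)$ satisfies the universal property of \textbf{Definition \ref{Alem}} for $S = \{e\}$. First, note that $q_{(1-e)}(e) = q_{(1-e)}(e) + q_{(1-e)}(1-e) = q_{(1-e)}(1) = 1_{A/(1-e)}$, hence $q_{(1-e)}(e)$ is trivially invertible. Next, let $\varphi: A \to B$ be any $\mathcal{C}^{\infty}$-homomorphism with $\varphi(e) \in B^{\times}$. From $\varphi(e)\cdot \varphi(1-e) = \varphi(e(1-e)) = \varphi(0) = 0$ and the invertibility of $\varphi(e)$, we conclude $\varphi(1-e) = 0$, so $(1-e) \subseteq \ker \varphi$. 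By the Homomorphism Theorem there is a unique $\widetilde{\varphi}: A/(1-e) \to B$ such that $\widetilde{\varphi} \circ q_{(1-e)} = \varphi$. Thus $(q_{(1-e)}: A \to A/(1-e))$ is a $\mathcal{C}^{\infty}$-ring of fractions of $A$ with respect to $\{e\}$, and uniqueness of such objects (up to unique isomorphism) provides a unique $\mathcal{C}^{\infty}$-isomorphism $A\{e^{-1}\} \stackrel{\cong}{\to} A/(1-e)$ compatible with the canonical maps.

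\textbf{Step 2: $A/(1-e) \cong A\cdot e$.} I will build the isomorphism from the map
$$
\begin{array}{cccc}
\psi: & A & \to & A\cdot e \\
      & a & \mapsto & a\cdot e,
\end{array}
$$
where $A\cdot e = \{a\cdot e \mid a \in A\}$ is viewed as a ring with unit $e$ (noting $e\cdot e = e$). I will check that $\psi$ is a surjective unital ring homomorphism (preservation of addition is immediate; $\psi(ab) = abe = abe^2 = (ae)(be) = \psi(a)\psi(b)$; and $\psi(1) = e$ is the unit of $A\cdot e$). The $\mathcal{C}^{\infty}$-structure on $A\cdot e$ is the one transported from $A/(1-e)$ along the bijection induced by $\psi$, which is consistent because the idempotent $e$ yields a direct product decomposition $A \cong A/(1-e) \times A/(e)$ in $\mathcal{C}^{\infty}{\rm \bf Rng}$ via the Chinese-Remainder-style map $a \mapsto (ae, a(1-e))$, so $A \cdot e$ naturally inherits a $\mathcal{C}^{\infty}$-ring structure as one of the factors. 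To compute the kernel of $\psi$: if $a\cdot e = 0$ then $a = a\cdot 1 = a\cdot e + a\cdot(1-e) = a\cdot(1-e) \in (1-e)$; conversely $(1-e)\cdot e = 0$ gives $(1-e) \subseteq \ker \psi$. Hence $\ker\psi = (1-e)$ and the Homomorphism Theorem yields a unique $\mathcal{C}^{\infty}$-isomorphism $\overline{\psi}: A/(1-e) \stackrel{\cong}{\to} A\cdot e$ with $\overline{\psi}\circ q_{(1-e)} = \psi$.

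\textbf{Conclusion and main obstacle.} Composing the isomorphism of Step 1 with that of Step 2 yields the desired chain $A\{e^{-1}\} \cong A/(1-e) \cong A\cdot e$; uniqueness of both isomorphisms follows from the universal properties invoked. The main obstacle is not the ring-theoretic bookkeeping but rather the clean identification of the $\mathcal{C}^{\infty}$-ring structure on $A\cdot e$: strictly speaking $A\cdot e$ is not a $\mathcal{C}^{\infty}$-subring of $A$ (it has a different unit), so one must justify its $\mathcal{C}^{\infty}$-structure either via transport along $\overline{\psi}$ or, more structurally, by exhibiting the decomposition $A \cong (A\cdot e)\times (A\cdot(1-e))$ in $\mathcal{C}^{\infty}{\rm \bf Rng}$ determined by the idempotent $e$ and reading off $A\cdot e$ as the first projection factor.
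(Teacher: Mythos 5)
Your proposal is correct and follows essentially the same route as the paper: the paper likewise obtains $A/(1-e)\cong A\cdot e$ from the map $a\mapsto a\cdot e$ via the Isomorphism Theorem, and obtains $A\{e^{-1}\}\cong A/(1-e)$ from the observations that $\eta_e(e)$ is an invertible idempotent (hence $1$) and that $q(e)=q(1)$ is invertible, the only difference being that you verify the universal property of \textbf{Definition \ref{Alem}} directly while the paper constructs the two comparison maps and checks they are mutually inverse. Your added care in establishing $\ker\psi=(1-e)$ in both directions and in justifying the $\mathcal{C}^{\infty}$-structure on $A\cdot e$ (whose unit is $e$, not $1_A$) addresses points the paper passes over silently.
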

\begin{proof}
Let
$$\begin{array}{cccc}
m_e: & A & \twoheadrightarrow & A\cdot e \\
     & a & \mapsto & a \cdot e
\end{array}$$
and
$$\begin{array}{cccc}
    q & A & \twoheadrightarrow & \dfrac{A}{(1-e)} \\
     & a & \mapsto & a + (1-e)
  \end{array}$$

Since $(1-e)\cdot e = e - e^2 = e - e = 0$, given any $a = x(1-e) \in (1-e)$, $m_e(a) = m_e(x(1-e)) = x(1-e)e = x\cdot 0 = 0$, so $(1-e) \subseteq \ker m_e$.\\

Noting that $(1-e) \subseteq \ker m_e$ and that $m_e$ is a surjective map, by the \textbf{Theorem of the Isomorphism} there is a unique isomorphism $\psi: \dfrac{A}{(1-e)} \to A \cdot e$ such that the following triangle commutes:

$$\xymatrix{
A \ar@{->>}[r]^{m_e} \ar[d]^{q} & A \cdot e \\
\dfrac{A}{(1-e)} \ar@{.>}[ur]^{\exists ! \psi}
}$$

so $\dfrac{A}{(1-e)} \cong A \cdot e$.\\

Finally we show that $A\{ e^{-1}\} \cong \dfrac{A}{(1-e)}$. First we note that since $\eta_e(e) \in (A\{ e^{-1}\})^{\times}$  and $(\eta_e(e))^2 = \eta_e(e^2) = \eta_e(e)$ (that is, $\eta_e(e)$ is an idempotent element of $A\{ e^{-1}\}$), it follows that $\eta_e(e) = 1$.\\

We have $\eta_e(1-e)=\eta_e(1) - \eta_e(e) = 1 - 1 = 0$, so $(1-e) \subseteq \ker \eta_a$, so applying the Theorem of the Homomorphism we get a unique $\mathcal{C}^{\infty}-$homomorphism $\varphi: \dfrac{A}{(1-e)} \to A\{ e^{-1}\}$ such that the following diagram commutes:
$$\xymatrix{
A \ar[r]^{\eta_e} \ar[d]^{q} & A\{ e^{-1}\}\\
\dfrac{A}{(1-e)} \ar@{.>}[ur]^{\exists ! \varphi}
}$$

We have also:
$$q(e)-q(1) = q(e-1) \in (e-1),$$
hence
$$q(e) = q(1) \,\, {\rm in}\,\, \dfrac{A}{(1-e)}.$$

Since $(q(e))^2 = q(e^2)=q(e)$, \textit{i.e.}, it is idempotent, it follows that $q(e) = 1 + (e-1) \in \left( \dfrac{A}{(1-e)}\right)^{\times}$. By the universal property of $\eta_e: A \to A\{ e^{-1}\}$, there is a unique $\mathcal{C}^{\infty}-$homomorphism $\psi: A\{ e^{-1}\} \to \dfrac{A}{(1-e)}$ such that the following diagram commutes:
$$\xymatrix{
A \ar[r]^{\eta_e} \ar[d]^{q} & A\{ e^{-1}\} \ar@{.>}[dl]^{\exists ! \psi}\\
\dfrac{A}{(1-e)}
}$$

By the uniqueness of the arrows $\varphi$ and $\psi$, we conclude that $\varphi \circ \psi = {\rm id}_{A\{ e^{-1}\}}$ and $\psi \circ \varphi = {\rm id}_{\frac{A}{(1-e)}}$, hence $A\{ e^{-1}\} \cong \dfrac{A}{(1-e)}$.
\end{proof}

\section{Sheaves of $\mathcal{C}^{\infty}-$Rings}\label{sheaves}

\hspace{0.5cm} In this chapter we approach $\mathcal{C}^{\infty}-$rings from a Sheaf Theoretic viewpoint. We are going to study a $\mathcal{C}^{\infty}-$ring by means of its ``affine $\mathcal{C}^{\infty}-$scheme''. For this purpose, given an arbitrary $\mathcal{C}^{\infty}-$ring $A$, we construct a spectral topological space, namely its smooth Zariski spectrum, $({\rm Spec}^{\infty}\,(A), {\rm Zar}^{\infty})$ and a sheaf of $\mathcal{C}^{\infty}-$rings, $\Sigma_A: {\rm Open}\,({\rm Spec}^{\infty}\,(A), {\rm Zar}^{\infty})^{\rm op} \to \mathcal{C}^{\infty}{\rm \bf Rng}$ such that for every $a \in A$, $\Sigma_A(D^{\infty}(a)) \cong A\{ a^{-1}\}$ and whose stalks are local $\mathcal{C}^{\infty}-$rings. In this chapter we give a detailed account of this topological space - and we prove, for example, that it is indeed a spectral space (a fact that has been used in the last chapter without proof). A complete proof of this fact could not be found anywhere in the literature.\\

As a motivation we gave for the definition of the smooth version of Zariski spectrum, we have seen that given a $\mathcal{C}^{\infty}-$ring $A$ and a prime ideal, $\mathfrak{p} \subseteq A$, it is not always true that

$$A_{\mathfrak{p}} = \varinjlim_{a \notin \mathfrak{p}} A\{ a^{-1}\}$$

is a local $\mathcal{C}^{\infty}-$ring (cf. \textbf{Example 1.2} of \cite{rings2}). In fact, $A_{\mathfrak{p}}$ is a local $\mathcal{C}^{\infty}-$ring if, and only if, $\sqrt[\infty]{\mathfrak{p}}=\mathfrak{p}$ - so we have defined the smooth Zariski spectrum as consisting only of those $\mathcal{C}^{\infty}-$radical prime ideals. \\

As defined by D. Joyce in \cite{joyce2010algebraic}, a $\mathcal{C}^{\infty}-$ringed space is a pair $(X, \mathcal{O}_X)$, where $X$ is a topological space and $\mathcal{O}_X: {\rm Open}\,(X)^{{\rm op}} \to \mathcal{C}^{\infty}{\rm \bf Rng}$ is a sheaf of $\mathcal{C}^{\infty}-$rings whose stalks are local $\mathcal{C}^{\infty}-$rings. As in Algebraic Geometry, we define a sheaf of (local) $\mathcal{C}^{\infty}-$rings on ${\rm Spec}^{\infty}\,(A)$, for any $\mathcal{C}^{\infty}-$ring, $A$, the ``structure sheaf''.\\

\subsection{The Smooth Zariski Spectrum of a $\mathcal{C}^{\infty}-$Ring $A$: ${\rm Spec}^{\infty}\,(A)$}\label{lacrimosa}

In this section we make a detailed study of the smooth Zariski spectrum. Recall that, given any $\mathcal{C}^{\infty}-$ring $A$,

$${\rm Spec}^{\infty}\,(A):= \{ \mathfrak{p} \in {\rm Spec}\,(\widetilde{U}(A)) | \sqrt[\infty]{\mathfrak{p}} = \mathfrak{p}\}$$

We begin with the following:\\

\begin{definition}Let $A$ be a $\mathcal{C}^{\infty}-$ring.
\begin{itemize}
  \item[1.]{Given any $\mathcal{C}^{\infty}-$radical ideal $\mathfrak{a} \subseteq A$ (not necessarily prime ideal), we define:
      $$Z^{\infty}\,(\mathfrak{a}) := \{ \p \in {\rm Spec}^{\infty}\,(A) | \p \supseteq \mathfrak{a}\}$$
      }
  \item[2.]{Given any element $a \in A$, we define:
  $$D^{\infty}\,(a) := \{ \p \in {\rm Spec}^{\infty}\,(A) | a \notin \mathfrak{p}\}$$}
\end{itemize}
\end{definition}

\begin{proposition}\label{jay}Let $A$ be a $\mathcal{C}^{\infty}-$ring and $\mathfrak{a} \subseteq A$ be any ideal. Then:
$$Z^{\infty}(\mathfrak{a}) = Z^{\infty}(\sqrt[\infty]{\mathfrak{a}}).$$
\end{proposition}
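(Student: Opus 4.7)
My plan is to prove the equality by establishing both inclusions, each of which follows almost immediately from monotonicity of the $\mathcal{C}^{\infty}$-radical operator together with the defining property of points in ${\rm Spec}^{\infty}(A)$.

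For the inclusion $Z^{\infty}(\sqrt[\infty]{\mathfrak{a}}) \subseteq Z^{\infty}(\mathfrak{a})$, I would use the fact (part (a) of \textbf{Theorem \ref{yellow}}, or equivalently \textbf{Proposition \ref{alba}}) that any ideal is contained in its $\mathcal{C}^{\infty}$-radical, so $\mathfrak{a} \subseteq \sqrt[\infty]{\mathfrak{a}}$; hence $\mathfrak{p} \supseteq \sqrt[\infty]{\mathfrak{a}}$ implies $\mathfrak{p} \supseteq \mathfrak{a}$.

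For the reverse inclusion $Z^{\infty}(\mathfrak{a}) \subseteq Z^{\infty}(\sqrt[\infty]{\mathfrak{a}})$, I would take $\mathfrak{p} \in Z^{\infty}(\mathfrak{a})$, so by definition $\mathfrak{p} \in {\rm Spec}^{\infty}(A)$ (i.e.\ $\sqrt[\infty]{\mathfrak{p}} = \mathfrak{p}$) and $\mathfrak{a} \subseteq \mathfrak{p}$. Applying the monotonicity of $\sqrt[\infty]{\cdot}$ (part (b) of \textbf{Theorem \ref{yellow}}) to $\mathfrak{a} \subseteq \mathfrak{p}$ yields $\sqrt[\infty]{\mathfrak{a}} \subseteq \sqrt[\infty]{\mathfrak{p}} = \mathfrak{p}$, so $\mathfrak{p} \in Z^{\infty}(\sqrt[\infty]{\mathfrak{a}})$.

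There is really no obstacle here: the statement is a direct consequence of the monotonicity and extensivity of the closure operator $\vee \circ \wedge$ described in \textbf{Corollary \ref{tumbleweed}} and \textbf{Theorem \ref{yellow}}, combined with the fact that the primes considered lie in ${\rm Spec}^{\infty}(A)$ and therefore already coincide with their own $\mathcal{C}^{\infty}$-radicals. The proof should fit in a few lines.
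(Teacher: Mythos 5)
Your proof is correct and is essentially identical to the paper's: both directions use the extensivity $\mathfrak{a} \subseteq \sqrt[\infty]{\mathfrak{a}}$ and the monotonicity of $\sqrt[\infty]{\cdot}$ together with $\sqrt[\infty]{\mathfrak{p}} = \mathfrak{p}$ for $\mathfrak{p} \in {\rm Spec}^{\infty}(A)$, exactly as in the paper's argument via \textbf{Theorem \ref{yellow}}.
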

\begin{proof}
Given $\p \in Z^{\infty}(\mathfrak{a})$, $\mathfrak{a} \subseteq \p$. By the item b of the \textbf{Theorem \ref{yellow}}, $\sqrt[\infty]{\mathfrak{a}} \subseteq \sqrt[\infty]{\p} = \p$, so $\p \in Z^{\infty}(\sqrt[\infty]{\mathfrak{a}})$.\\

Conversely, given $\p \in Z^{\infty}(\sqrt[\infty]{\mathfrak{a}})$, then $\p \supseteq \sqrt[\infty]{\mathfrak{a}} \supseteq \mathfrak{a}$, so $\p \in Z^{\infty}(\mathfrak{a})$.
\end{proof}

\begin{proposition}\label{Raj}Let $A$ be a $\mathcal{C}^{\infty}-$ring, $a,b \in A$. The following conditions are equivalent:
\begin{itemize}
  \item[(1)]{$D^{\infty}(a) \subseteq D^{\infty}(b)$;}
  \item[(2)]{$Z^{\infty}(a) \supseteq Z^{\infty}(b)$;}
  \item[(3)]{$a \in \sqrt[\infty]{(b)}$}
  \item[(4)]{$\sqrt[\infty]{(a)} \subseteq \sqrt[\infty]{(b)}$}
\end{itemize}

\end{proposition}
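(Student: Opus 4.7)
The plan is to cycle through the four conditions, exploiting the fact that every $\mathfrak{p}\in{\rm Spec}^{\infty}(A)$ satisfies $\sqrt[\infty]{\mathfrak{p}}=\mathfrak{p}$, together with the monotonicity and idempotence of $\sqrt[\infty]{\cdot}$ (\textbf{Theorem \ref{yellow}}) and the intersection formula $\sqrt[\infty]{I}=\bigcap\{\mathfrak{p}\in{\rm Spec}^{\infty}(A)\mid I\subseteq\mathfrak{p}\}$ (\textbf{Theorem \ref{TS}}(e)).

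First, for (1) $\Leftrightarrow$ (2), I would simply note that for each $\mathfrak{p}\in{\rm Spec}^{\infty}(A)$ and each $x\in A$, exactly one of ``$x\in\mathfrak{p}$'' and ``$x\notin\mathfrak{p}$'' holds, so $D^{\infty}(x)$ and $Z^{\infty}(x)=Z^{\infty}((x))$ are complementary subsets of ${\rm Spec}^{\infty}(A)$. Taking complements reverses inclusions, and the equivalence is immediate.

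Next, for (3) $\Leftrightarrow$ (4), I would use that $\sqrt[\infty]{(b)}$ is an ideal (\textbf{Theorem \ref{little}}) to rewrite $a\in\sqrt[\infty]{(b)}$ as $(a)\subseteq\sqrt[\infty]{(b)}$. By monotonicity (\textbf{Theorem \ref{yellow}}(b)) and idempotence (\textbf{Theorem \ref{yellow}}(c)) of $\sqrt[\infty]{\cdot}$, this gives $\sqrt[\infty]{(a)}\subseteq\sqrt[\infty]{\sqrt[\infty]{(b)}}=\sqrt[\infty]{(b)}$. Conversely, if $\sqrt[\infty]{(a)}\subseteq\sqrt[\infty]{(b)}$, then since $a\in(a)\subseteq\sqrt[\infty]{(a)}$ (item (a) of the same theorem), we get $a\in\sqrt[\infty]{(b)}$.

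Finally, for (2) $\Leftrightarrow$ (4), I would apply \textbf{Theorem \ref{TS}}(e) to get
\[
\sqrt[\infty]{(a)}=\bigcap_{\mathfrak{p}\in Z^{\infty}(a)}\mathfrak{p},\qquad \sqrt[\infty]{(b)}=\bigcap_{\mathfrak{p}\in Z^{\infty}(b)}\mathfrak{p}.
\]
The implication (2) $\Rightarrow$ (4) is then immediate: a larger family to intersect over yields a smaller intersection. For (4) $\Rightarrow$ (2), I would take $\mathfrak{p}\in Z^{\infty}(b)$, so $(b)\subseteq\mathfrak{p}$; applying $\sqrt[\infty]{\cdot}$ and using $\sqrt[\infty]{\mathfrak{p}}=\mathfrak{p}$ (since $\mathfrak{p}\in{\rm Spec}^{\infty}(A)$) gives $\sqrt[\infty]{(b)}\subseteq\mathfrak{p}$, hence $\sqrt[\infty]{(a)}\subseteq\mathfrak{p}$ by hypothesis, so $a\in\mathfrak{p}$ and $\mathfrak{p}\in Z^{\infty}(a)$.

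There is no real obstacle here; the only point requiring care is that each step of the chain uses the characterizing property of ${\rm Spec}^{\infty}(A)$ (namely that its elements are $\mathcal{C}^{\infty}$-radical), which is precisely what allows the intersection formula of \textbf{Theorem \ref{TS}}(e) to collapse the $\sqrt[\infty]{\cdot}$ on both sides symmetrically.
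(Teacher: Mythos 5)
Your proposal is correct and follows essentially the same route as the paper: the equivalence $(1)\Leftrightarrow(2)$ by complementation, $(3)\Leftrightarrow(4)$ via the ideal property, monotonicity and idempotence of $\sqrt[\infty]{\cdot}$, and the link between $(2)$ and $(4)$ through the intersection formula $\sqrt[\infty]{I}=\bigcap\{\mathfrak{p}\in{\rm Spec}^{\infty}(A)\mid I\subseteq\mathfrak{p}\}$ together with $\sqrt[\infty]{\mathfrak{p}}=\mathfrak{p}$ for $\mathfrak{p}\in{\rm Spec}^{\infty}(A)$. The only difference is cosmetic (you close the cycle as $(2)\Leftrightarrow(4)$ directly, while the paper argues $(2)\to(4)$ and $(3)\to(2)$ separately), and your writeup is if anything slightly more explicit about the monotonicity/idempotence steps the paper leaves implicit.
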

\begin{proof}
The equivalence $(1) \iff (2)$ follows immediately from the definition of $Z^{\infty}$.\\

Ad $(2) \to (4)$.\\

$$\sqrt[\infty]{(a)} = \bigcap Z^{\infty}(a) \subseteq \bigcap Z^{\infty}(b) = \sqrt[\infty]{(b)}$$

Ad $(4) \iff (3)$.\\

$$\sqrt[\infty]{(a)} \subseteq \sqrt[\infty]{(b)} \iff (a) \subseteq \sqrt[\infty]{(b)} \iff a \in \sqrt[\infty]{(b)}$$

Ad $(3) \to (2)$.\\

Let $a \in \sqrt[\infty]{(a)}$. Given $\mathfrak{p} \in Z^{\infty}(b)$, since $\sqrt[\infty]{(b)} = \bigcap \{ \mathfrak{p} \in {\rm Spec}^{\infty}\,(A) | (b) \subseteq \mathfrak{p} \}$, it follows that $\sqrt[\infty]{(b)} \subseteq \mathfrak{p}$. Since $(4) \iff (3)$, we have that $\sqrt[\infty]{(a)} \subseteq \sqrt[\infty]{(b)} \subseteq \mathfrak{p}$ implies $(a) \subseteq \sqrt[\infty]{(a)} \subseteq \mathfrak{p}$. Hence $\mathfrak{p} \in Z^{\infty}(a)$.
\end{proof}

\begin{proposition}\label{namaria}Given any ideals $\mathfrak{a}$ and $\mathfrak{b}$, we have:
$$\sqrt[\infty]{\mathfrak{a}} \subseteq \sqrt[\infty]{\mathfrak{b}} \iff Z^{\infty}(\mathfrak{b}) \subseteq Z^{\infty}(\mathfrak{a}).$$
\end{proposition}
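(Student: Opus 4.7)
The plan is to derive this as a direct consequence of the characterization $\sqrt[\infty]{I} = \bigcap\{\mathfrak{p}\in {\rm Spec}^{\infty}(A)\mid I\subseteq\mathfrak{p}\}$ (Theorem \ref{TS}(e)) together with the fact that every $\mathfrak{p}\in{\rm Spec}^{\infty}(A)$ satisfies $\sqrt[\infty]{\mathfrak{p}}=\mathfrak{p}$ by definition of the smooth Zariski spectrum. No genuine obstacle should appear; the argument is essentially the standard Galois-connection manipulation, adapted to the $\mathcal{C}^{\infty}$-radical.

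For the forward implication, assume $\sqrt[\infty]{\mathfrak{a}}\subseteq\sqrt[\infty]{\mathfrak{b}}$ and pick any $\mathfrak{p}\in Z^{\infty}(\mathfrak{b})$. Since $\mathfrak{b}\subseteq\mathfrak{p}$, item (b) of \textbf{Theorem \ref{yellow}} gives $\sqrt[\infty]{\mathfrak{b}}\subseteq\sqrt[\infty]{\mathfrak{p}}$, and because $\mathfrak{p}\in{\rm Spec}^{\infty}(A)$ we have $\sqrt[\infty]{\mathfrak{p}}=\mathfrak{p}$. Chaining inclusions:
$$\mathfrak{a}\;\subseteq\;\sqrt[\infty]{\mathfrak{a}}\;\subseteq\;\sqrt[\infty]{\mathfrak{b}}\;\subseteq\;\mathfrak{p},$$
so $\mathfrak{p}\in Z^{\infty}(\mathfrak{a})$, establishing $Z^{\infty}(\mathfrak{b})\subseteq Z^{\infty}(\mathfrak{a})$.

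For the converse, assume $Z^{\infty}(\mathfrak{b})\subseteq Z^{\infty}(\mathfrak{a})$. Taking intersections reverses inclusions:
$$\bigcap Z^{\infty}(\mathfrak{a})\;\subseteq\;\bigcap Z^{\infty}(\mathfrak{b}).$$
By \textbf{Theorem \ref{TS}(e)} the left-hand side equals $\sqrt[\infty]{\mathfrak{a}}$ and the right-hand side equals $\sqrt[\infty]{\mathfrak{b}}$, so $\sqrt[\infty]{\mathfrak{a}}\subseteq\sqrt[\infty]{\mathfrak{b}}$.

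As a final remark I would note that the statement fits naturally into the Galois-connection framework of \textbf{Proposition \ref{pseu}} and \textbf{Proposition \ref{Garland}}: the maps $\mathfrak{a}\mapsto Z^{\infty}(\mathfrak{a})$ and $\mathcal{S}\mapsto\bigcap\mathcal{S}$ form an antitone Galois connection between the poset of ideals (modulo $\mathcal{C}^{\infty}$-radical equivalence) and the poset of subsets of ${\rm Spec}^{\infty}(A)$, and the proposition expresses exactly the induced order-isomorphism on the $\mathcal{C}^{\infty}$-radical quotient. This also makes the proposition a useful stepping-stone to the topological analysis of the smooth Zariski topology ${\rm Zar}^{\infty}$ developed in the subsequent section.
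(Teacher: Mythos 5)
Your proof is correct and follows essentially the same route as the paper's: the forward direction reduces to monotonicity of $\sqrt[\infty]{(-)}$ together with $Z^{\infty}(\mathfrak{a})=Z^{\infty}(\sqrt[\infty]{\mathfrak{a}})$ (equivalently, $\sqrt[\infty]{\mathfrak{p}}=\mathfrak{p}$ for $\mathfrak{p}\in{\rm Spec}^{\infty}(A)$), and the converse is the identical intersection argument via $\sqrt[\infty]{I}=\bigcap\{\mathfrak{p}\in{\rm Spec}^{\infty}(A)\mid I\subseteq\mathfrak{p}\}$. No gaps.
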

\begin{proof}
In fact, if $\sqrt[\infty]{\mathfrak{a}} \subseteq \sqrt[\infty]{\mathfrak{b}}$, then for every $\p \in {\rm Spec}^{\infty}\,(A)$, whenever $\p \in Z^{\infty}(\sqrt[\infty]{b})$, i.e., whenever $\sqrt[\infty]{b} \subseteq \p$, $\sqrt[\infty]{\mathfrak{a}} \subseteq \p$, so $\p \in Z^{\infty}(\sqrt[\infty]{\mathfrak{a}})$. Thus:
$$Z^{\infty}(\mathfrak{b}) = Z^{\infty}(\sqrt[\infty]{\mathfrak{b}}) \subseteq Z^{\infty}(\sqrt[\infty]{\mathfrak{a}}) = Z^{\infty}(\mathfrak{a}).$$

Conversely, if $Z^{\infty}(\mathfrak{b}) \subseteq Z^{\infty}(\mathfrak{a})$, then:
$$\sqrt[\infty]{\mathfrak{b}} = \bigcap_{\substack{\mathfrak{b} \subseteq \p \\ \p \in {\rm Spec}^{\infty}\,(A)}} \supseteq \bigcap_{\substack{\mathfrak{a} \subseteq \p \\ \p \in {\rm Spec}^{\infty}\,(A)}} \p = \sqrt[\infty]{\mathfrak{a}}.$$
\end{proof}

The next theorem shows us that the sets of the form $Z^{\infty}(\mathfrak{b})$, for some $\mathcal{C}^{\infty}-$radical ideal $\mathfrak{b}$, are the closed subsets of a topology in ${\rm Spec}^{\infty}\,(A)$, that we shall refer to as the \textbf{smooth Zariski topology}.\\

\begin{theorem}\label{para}Let $A$ be a $\mathcal{C}^{\infty}-$ring. Then:
\begin{itemize}
  \item[1.]{$Z^{\infty}\,((0)) = {\rm Spec}^{\infty}\,(A)$ and $Z^{\infty}((1_A)) = \varnothing$;}
  \item[2.]{$Z^{\infty}(\mathfrak{a}) \cup Z^{\infty}(\mathfrak{b}) = Z^{\infty}(\mathfrak{a}\cdot \mathfrak{b}) = Z^{\infty}(\mathfrak{a} \cap \mathfrak{b})$;}
  \item[3.]{$\bigcap_{i \in I}Z^{\infty}(\mathfrak{a}_i) = Z^{\infty}\left( \sum_{i \in I} \mathfrak{a}_i\right)$}
\end{itemize}
where $\sum_{i \in I} \mathfrak{a}_i$ denotes the ideal generated by the family $\{ \mathfrak{a}_i\}_{i \in I}$.
\end{theorem}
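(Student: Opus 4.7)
The plan is to mimic the classical proof that the vanishing sets of ideals form the closed sets of a topology on ${\rm Spec}$, using only the set-theoretic relation ``$\mathfrak{a} \subseteq \mathfrak{p}$'' together with the fact that every element of ${\rm Spec}^{\infty}(A)$ is a prime ideal of the underlying commutative ring. Throughout, I would freely use \textbf{Proposition \ref{jay}}, which lets us replace any ideal by its $\mathcal{C}^{\infty}$-radical, and \textbf{Proposition \ref{namaria}}, which translates inclusions of $\mathcal{C}^{\infty}$-radicals into reverse inclusions of the corresponding $Z^{\infty}$-sets.

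For item 1, the equality $Z^{\infty}((0)) = {\rm Spec}^{\infty}(A)$ is immediate, since $(0) \subseteq \mathfrak{p}$ for every ideal $\mathfrak{p}$. For $Z^{\infty}((1_A)) = \varnothing$, I would note that every $\mathfrak{p} \in {\rm Spec}^{\infty}(A)$ is a proper ideal, so $1_A \notin \mathfrak{p}$ and thus $(1_A) = A \nsubseteq \mathfrak{p}$.

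For item 3, the key observation is that $\sum_{i \in I} \mathfrak{a}_i$ is the smallest ideal containing each $\mathfrak{a}_i$. Hence $\mathfrak{p} \supseteq \sum_{i \in I} \mathfrak{a}_i$ if and only if $\mathfrak{p} \supseteq \mathfrak{a}_i$ for every $i \in I$, which is precisely the statement that $\mathfrak{p} \in \bigcap_{i \in I} Z^{\infty}(\mathfrak{a}_i)$.

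For item 2, I would establish the chain of inclusions
\[
Z^{\infty}(\mathfrak{a}) \cup Z^{\infty}(\mathfrak{b}) \subseteq Z^{\infty}(\mathfrak{a} \cap \mathfrak{b}) \subseteq Z^{\infty}(\mathfrak{a} \cdot \mathfrak{b}) \subseteq Z^{\infty}(\mathfrak{a}) \cup Z^{\infty}(\mathfrak{b}).
\]
The first inclusion is formal, from $\mathfrak{a} \cap \mathfrak{b} \subseteq \mathfrak{a}$ and $\mathfrak{a} \cap \mathfrak{b} \subseteq \mathfrak{b}$. The second follows from $\mathfrak{a} \cdot \mathfrak{b} \subseteq \mathfrak{a} \cap \mathfrak{b}$. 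The step I expect to require a touch more care is the last inclusion: given $\mathfrak{p} \in {\rm Spec}^{\infty}(A)$ with $\mathfrak{a} \cdot \mathfrak{b} \subseteq \mathfrak{p}$, I would argue by contradiction, supposing there exist $a \in \mathfrak{a} \setminus \mathfrak{p}$ and $b \in \mathfrak{b} \setminus \mathfrak{p}$; then $a \cdot b \in \mathfrak{a} \cdot \mathfrak{b} \subseteq \mathfrak{p}$, contradicting the primality of $\mathfrak{p}$ as an ordinary prime ideal of $\widetilde{U}(A)$. No $\mathcal{C}^{\infty}$-specific machinery is needed for this step; the only subtlety is that the statement concerns $\mathcal{C}^{\infty}$-radical primes, but these are honest primes in $\widetilde{U}(A)$ by definition of ${\rm Spec}^{\infty}(A)$, so the classical argument applies verbatim.
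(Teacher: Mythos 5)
Your proposal is correct and follows essentially the same route as the paper: item 1 by properness of primes, item 3 by the universal property of the generated ideal, and item 2 by a cycle of inclusions whose only nontrivial link is that $\mathfrak{a}\cdot\mathfrak{b}\subseteq\mathfrak{p}$ forces $\mathfrak{a}\subseteq\mathfrak{p}$ or $\mathfrak{b}\subseteq\mathfrak{p}$ since $\mathfrak{p}$ is an ordinary prime of $\widetilde{U}(A)$. The paper states that last implication at the level of ideals while you unwind it to elements, but this is the same argument.
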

\begin{proof}
Ad 1.$Z^{\infty}\,((0)) = \{ \p \in {\rm Spec}^{\infty}\,(A) | (0) \subseteq \p\} = {\rm Spec}^{\infty}\,(A)$ and:
$$Z^{\infty}((1_A)) = \{ \p \in {\rm Spec}^{\infty}\,(A) | (1_A) \subseteq \p\} = \{ \p \in {\rm Spec}^{\infty}\,(A) | 1_A \in \p \} = \varnothing.$$

Hence, by the \textbf{Proposition \ref{jay}}, we can write ${\rm Spec}^{\infty}\,(A) = Z^{\infty}(\sqrt[\infty]{(0)})$ and $\varnothing = Z^{\infty}\,(\sqrt[\infty]{(1_A)})$.\\

Ad 2. Let $\p \in Z^{\infty}(\mathfrak{a}\cap \mathfrak{b})$, so $\mathfrak{a}\cap \mathfrak{b} \subseteq \p$, and since $\mathfrak{a}\cdot \mathfrak{b} \subseteq \mathfrak{a}\cap \mathfrak{b}$, $\mathfrak{a}\cdot \mathfrak{b} \subseteq \mathfrak{a}\cap \mathfrak{b} \subseteq \p$ and $\p \in Z^{\infty}(\mathfrak{a}\cdot \mathfrak{b})$. Since $\p$ is a prime ideal, $\mathfrak{a}\cdot \mathfrak{b} \subseteq \p \Rightarrow \mathfrak{a}\subseteq \p$ or $\mathfrak{b}\subseteq \p$, hence $\p \in Z^{\infty}(\mathfrak{a})\cup Z^{\infty}(\mathfrak{b})$.\\

Now, if $\mathfrak{p} \in Z^{\infty}(\mathfrak{a})\cup Z^{\infty}(\mathfrak{b})$, either $\p \in Z^{\infty}(\mathfrak{a})$ or $\p \in Z^{\infty}(\mathfrak{b})$, so $\mathfrak{a}\cap \mathfrak{b} \subseteq \p$ and $\p \in Z^{\infty}\,(\mathfrak{a}\cap \mathfrak{b})$.\\

Ad 3. We have:
\begin{multline*}
\p \in Z^{\infty} \left( \sum_{i \in I}\mathfrak{a}_i\right) \iff \sum_{i \in I}\mathfrak{a}_i \subseteq \p \iff (\forall i \in I)(\mathfrak{a}_i \subseteq \p) \iff \\
\iff (\forall i \in I)(\p \in Z^{\infty}\,(\mathfrak{a}_i)) \iff \p \in \bigcap_{i \in I} Z^{\infty}\,(\mathfrak{a}_i)
\end{multline*}

Because of the \textbf{Proposition \ref{jay}}, we have the $\mathcal{C}^{\infty}-$radical ideal $\sqrt[\infty]{\sum_{i \in I}\mathfrak{a}_i}$ such that $Z^{\infty}(\sum_{i \in I}\mathfrak{a}_i) = Z^{\infty}\,(\sqrt[\infty]{\sum_{i \in I}\mathfrak{a}_i})$.
\end{proof}

In virtue of the \textbf{Theorem \ref{para}}, the sets of the form ${\rm Spec}^{\infty}\,(A)\setminus Z^{\infty}\,(\p)$ satisfy the axioms of open sets of a topology.\\

We have, thus:

\begin{definition}\label{zahma}Let $A$ be a $\mathcal{C}^{\infty}-$ring. The \index{smooth Zariski topology}\textbf{smooth Zariski topology} on ${\rm Spec}^{\infty}\,(A)$ is:\\

$${\rm Zar}^{\infty}:= \left\{ {\rm Spec}^{\infty}\,(A)\setminus Z^{\infty}\,(\p) | \p \in \mathfrak{I}^{\infty}_{A}\right\}$$
\end{definition}

\begin{definition} Let $A$ be any $\mathcal{C}^{\infty}-$ring. The \textbf{$\mathcal{C}^{\infty}-$spectrum of $A$} is the following topological space:
  $$({\rm Spec}^{\infty}\,(A), {\rm Zar}^{\infty}).$$
\end{definition}

Henceforth we are going to denote the topological space $({\rm Spec}^{\infty}\,(A), {\rm Zar}^{\infty})$ simply by ${\rm Spec}^{\infty}\,(A)$, omitting its topology.\\

\begin{theorem}Let $A$ be a $\mathcal{C}^{\infty}-$ring. The family $\{ D^{\infty}(a) | a \in A\}$ is a basis for the smooth Zariski topology, ${\rm Zar}^{\infty}$.
\end{theorem}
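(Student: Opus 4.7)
The plan is to verify the two conditions characterizing a basis for a topology: that each $D^{\infty}(a)$ is itself a member of ${\rm Zar}^{\infty}$, and that every open set of ${\rm Zar}^{\infty}$ can be expressed as a union of sets of the form $D^{\infty}(a)$. Both points should follow quickly from the algebraic description of the closed sets $Z^{\infty}(\mathfrak{a})$ given in \textbf{Theorem \ref{para}}.

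First I would observe that, for any single element $a \in A$, the principal ideal $(a) \subseteq A$ is an ideal, so by \textbf{Proposition \ref{jay}} we have $Z^{\infty}((a)) = Z^{\infty}(\sqrt[\infty]{(a)})$, and moreover this latter set is closed in $({\rm Spec}^{\infty}(A), {\rm Zar}^{\infty})$ by the definition of the topology. Now unwinding the definitions,
\[
D^{\infty}(a) = \{\mathfrak{p} \in {\rm Spec}^{\infty}(A) \mid a \notin \mathfrak{p}\} = \{\mathfrak{p} \in {\rm Spec}^{\infty}(A) \mid (a) \not\subseteq \mathfrak{p}\} = {\rm Spec}^{\infty}(A) \setminus Z^{\infty}((a)),
\]
so each $D^{\infty}(a)$ indeed lies in ${\rm Zar}^{\infty}$.

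Next I would take an arbitrary open set $U \in {\rm Zar}^{\infty}$, which by definition has the form $U = {\rm Spec}^{\infty}(A) \setminus Z^{\infty}(\mathfrak{a})$ for some ideal $\mathfrak{a} \subseteq A$. Writing $\mathfrak{a} = \sum_{a \in \mathfrak{a}} (a)$ and applying item 3 of \textbf{Theorem \ref{para}}, we obtain
\[
Z^{\infty}(\mathfrak{a}) = Z^{\infty}\!\left(\sum_{a \in \mathfrak{a}} (a)\right) = \bigcap_{a \in \mathfrak{a}} Z^{\infty}((a)).
\]
Taking complements in ${\rm Spec}^{\infty}(A)$ then yields
\[
U = {\rm Spec}^{\infty}(A) \setminus \bigcap_{a \in \mathfrak{a}} Z^{\infty}((a)) = \bigcup_{a \in \mathfrak{a}} \left({\rm Spec}^{\infty}(A) \setminus Z^{\infty}((a))\right) = \bigcup_{a \in \mathfrak{a}} D^{\infty}(a),
\]
exhibiting every open set as a union of basic opens.

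As a supplementary remark (useful elsewhere in the paper), I would record the multiplicative compatibility $D^{\infty}(a) \cap D^{\infty}(b) = D^{\infty}(a \cdot b)$, which is an immediate consequence of primality: $a \notin \mathfrak{p}$ and $b \notin \mathfrak{p}$ iff $a \cdot b \notin \mathfrak{p}$ for any prime ideal $\mathfrak{p}$. There is no real obstacle in this proof; the only subtlety is making sure that the defining family ${\rm Zar}^{\infty}$ really does consist of complements of $Z^{\infty}(\mathfrak{a})$ for arbitrary ideals (not only $\mathcal{C}^{\infty}$-radical ones), but \textbf{Proposition \ref{jay}} shows that passing between $\mathfrak{a}$ and $\sqrt[\infty]{\mathfrak{a}}$ does not alter the resulting closed set, so the two descriptions coincide.
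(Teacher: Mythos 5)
Your proof is correct and follows essentially the same route as the paper: both reduce to the observation that ${\rm Spec}^{\infty}(A)\setminus Z^{\infty}(\mathfrak{a}) = \bigcup_{a \in \mathfrak{a}} D^{\infty}(a)$, the paper obtaining this directly from the equivalence $\mathfrak{a} \nsubseteq \mathfrak{p} \iff (\exists a \in \mathfrak{a})(a \notin \mathfrak{p})$ while you route it through item 3 of \textbf{Theorem \ref{para}} and take complements. Your explicit check that each $D^{\infty}(a)$ itself belongs to ${\rm Zar}^{\infty}$ (via \textbf{Proposition \ref{jay}} and the $\mathcal{C}^{\infty}$-radicality of $\sqrt[\infty]{(a)}$) is a detail the paper leaves implicit, and is a welcome addition.
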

\begin{proof}
Note that given any $\mathcal{C}^{\infty}-$radical ideal, $\mathfrak{a}$ and a $\mathcal{C}^{\infty}-$radical prime ideal $\p$, we have:
$$\mathfrak{a} \nsubseteq \p \iff (\exists a \in \mathfrak{a})(a \notin \p),$$
Hence, any open set of ${\rm Zar}^{\infty}$, say:
$${\rm Spec}^{\infty}\,(A)\setminus Z^{\infty}\,(\mathfrak{a}) = \bigcup_{a \in \mathfrak{a}} D^{\infty}\,(a).$$
\end{proof}

\begin{proposition}Let $A$ be a $\mathcal{C}^{\infty}-$ring and ${\rm Spec}^{\infty}\,(A)$ be its $\mathcal{C}^{\infty}-$spectrum. We have, for any point $\p \in {\rm Spec}^{\infty}\,(A)$:
$${\rm Cl}. \{ \p\} = Z^{\infty}(\p).$$
\end{proposition}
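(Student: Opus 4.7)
The plan is to verify that $Z^{\infty}(\mathfrak{p})$ is the \emph{smallest} closed subset of $\mathrm{Spec}^{\infty}(A)$ containing $\mathfrak{p}$, from which it follows immediately that $\overline{\{\mathfrak{p}\}} = Z^{\infty}(\mathfrak{p})$ by the general characterization of closure in a topological space.

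First, I would check the two easy facts. Since $\mathfrak{p} \in \mathrm{Spec}^{\infty}(A)$, in particular $\mathfrak{p}$ is a $\mathcal{C}^{\infty}$-radical ideal, so $Z^{\infty}(\mathfrak{p})$ is by definition one of the basic closed sets of the smooth Zariski topology (as encoded in \textbf{Definition \ref{zahma}} and \textbf{Theorem \ref{para}}). Moreover, $\mathfrak{p} \supseteq \mathfrak{p}$, so $\mathfrak{p} \in Z^{\infty}(\mathfrak{p})$.

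Next, I would argue that any closed set containing $\mathfrak{p}$ must contain all of $Z^{\infty}(\mathfrak{p})$. Let $C$ be a closed subset of $\mathrm{Spec}^{\infty}(A)$ with $\mathfrak{p} \in C$. By the description of the topology, there is some ideal $\mathfrak{a} \subseteq A$ such that $C = Z^{\infty}(\mathfrak{a})$ (using \textbf{Proposition \ref{jay}} to take $\mathfrak{a}$ $\mathcal{C}^{\infty}$-radical if desired, though this is unnecessary). The condition $\mathfrak{p} \in Z^{\infty}(\mathfrak{a})$ just means $\mathfrak{a} \subseteq \mathfrak{p}$. Now for any $\mathfrak{q} \in Z^{\infty}(\mathfrak{p})$ we have $\mathfrak{p} \subseteq \mathfrak{q}$, and chaining inclusions gives $\mathfrak{a} \subseteq \mathfrak{p} \subseteq \mathfrak{q}$, whence $\mathfrak{q} \in Z^{\infty}(\mathfrak{a}) = C$. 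Thus $Z^{\infty}(\mathfrak{p}) \subseteq C$.

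Combining these two steps, $Z^{\infty}(\mathfrak{p})$ is a closed set containing $\mathfrak{p}$ and lies inside every closed set containing $\mathfrak{p}$; since the topological closure $\overline{\{\mathfrak{p}\}}$ is precisely the intersection of all closed sets containing $\mathfrak{p}$, we conclude $\overline{\{\mathfrak{p}\}} = Z^{\infty}(\mathfrak{p})$. There is no real obstacle here: the argument is purely formal and mirrors the classical fact $\overline{\{\mathfrak{p}\}} = V(\mathfrak{p})$ in $\mathrm{Spec}$, and the only thing that needs the $\mathcal{C}^{\infty}$-radical hypothesis on $\mathfrak{p}$ is the observation that $Z^{\infty}(\mathfrak{p})$ is itself a basic closed set of $\mathrm{Zar}^{\infty}$.
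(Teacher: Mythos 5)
Your proof is correct and follows the same route as the paper: the paper simply writes the closure as the intersection of all closed sets $Z^{\infty}(\mathfrak{a})$ containing $\mathfrak{p}$ and asserts it equals $Z^{\infty}(\mathfrak{p})$, while you supply the (easy but worth stating) details that $Z^{\infty}(\mathfrak{p})$ is itself closed because $\mathfrak{p}$ is $\mathcal{C}^{\infty}$-radical, contains $\mathfrak{p}$, and is contained in every closed set containing $\mathfrak{p}$ via the chain $\mathfrak{a} \subseteq \mathfrak{p} \subseteq \mathfrak{q}$.
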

\begin{proof}
By definition, the closure of a set is the intersection of all the closed sets which includes it, so:
$${\rm Cl}. \{ \p\} = \bigcap_{\substack{\p \in Z^{\infty}\,(\mathfrak{a})\\ \mathfrak{a} \in \mathfrak{I}^{\infty}_{A}}} Z^{\infty}(\mathfrak{a}) = Z^{\infty}(\p).$$
\end{proof}

\begin{theorem}\label{maio} Let $A$ be a $\mathcal{C}^{\infty}-$ring. The topological space $({\rm Spec}^{\infty}\,(A), {\rm Zar}^{\infty})$ is such that, given $\p,\mathfrak{q} \in {\rm Spec}^{\infty}\,(A)$ with $\p \neq \mathfrak{q}$, there is either $U \in {\rm Zar}^{\infty}$ such that $\p \in U$ and $\mathfrak{q} \notin U$ or $V \in {\rm Zar}^{\infty}$ such that $\p \notin V$ and $\mathfrak{q} \in V$. In other words, the topological space $({\rm Spec}^{\infty}\,(A), {\rm Zar}^{\infty})$ is a $T_0$ space.
\end{theorem}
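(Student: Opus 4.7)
The plan is to mimic the standard argument showing that the classical Zariski spectrum is $T_0$, using only the basic open sets $D^{\infty}(a)$ already available in ${\rm Zar}^{\infty}$. Concretely, I will exploit the fact that two distinct points of ${\rm Spec}^{\infty}(A)$ are distinct \emph{as subsets} of $A$, so at least one of them must contain an element absent from the other.

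First, I would fix $\mathfrak{p}, \mathfrak{q} \in {\rm Spec}^{\infty}(A)$ with $\mathfrak{p} \neq \mathfrak{q}$. Since these are distinct subsets of $A$, at least one of the inclusions $\mathfrak{p} \subseteq \mathfrak{q}$ or $\mathfrak{q} \subseteq \mathfrak{p}$ must fail. Without loss of generality, assume $\mathfrak{p} \nsubseteq \mathfrak{q}$ (the other case is symmetric and will produce the second alternative stated in the theorem). Then there exists $a \in \mathfrak{p} \setminus \mathfrak{q}$.

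Second, I would consider the basic open set
\[
D^{\infty}(a) = \{\mathfrak{r} \in {\rm Spec}^{\infty}(A) \mid a \notin \mathfrak{r}\} \in {\rm Zar}^{\infty}.
\]
By the very choice of $a$, we have $a \notin \mathfrak{q}$, so $\mathfrak{q} \in D^{\infty}(a)$, while $a \in \mathfrak{p}$ gives $\mathfrak{p} \notin D^{\infty}(a)$. Taking $V := D^{\infty}(a)$ realises the second alternative of the theorem. In the symmetric case, $b \in \mathfrak{q} \setminus \mathfrak{p}$ yields $U := D^{\infty}(b)$ with $\mathfrak{p} \in U$ and $\mathfrak{q} \notin U$, giving the first alternative.

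There is essentially no obstacle to overcome here: the argument depends only on the set-theoretic fact that distinct sets differ by at least one element and on the definition of $D^{\infty}(a)$, both of which are already in hand. Alternatively, one could derive the same conclusion from the identity ${\rm Cl}.\{\mathfrak{p}\} = Z^{\infty}(\mathfrak{p})$ established just before the theorem: since $Z^{\infty}(\mathfrak{p}) \neq Z^{\infty}(\mathfrak{q})$ whenever $\mathfrak{p} \neq \mathfrak{q}$, the two points have distinct closures, which is a well-known reformulation of the $T_0$ axiom; but the direct witness through $D^{\infty}(a)$ above is the cleanest and most informative presentation.
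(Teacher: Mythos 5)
Your argument is correct and coincides with the paper's own proof: both take an element $a$ belonging to one ideal but not the other and use the basic open set $D^{\infty}(a)$ as the separating neighbourhood. Nothing to add.
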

\begin{proof}
Given $\p,\mathfrak{q} \in {\rm Spec}^{\infty}\,(A)$ with $\p \neq \mathfrak{q}$, then either $\p \nsubseteq \mathfrak{q}$ or $\mathfrak{q} \nsubseteq \p$. \\

If $\mathfrak{q} \nsubseteq \p$, then there is some $b \in \mathfrak{q}$ such that $b \notin \mathfrak{p}$, so $\mathfrak{q} \notin D^{\infty}(b)$ and $\p \in D^{\infty}(b)$. Thus it suffices to take $U = D^{\infty}\,(b)$.\\

Now, if $\p \nsubseteq \mathfrak{q}$, then there is $a \in \p$ such that $a \notin \mathfrak{q}$, so $\p \notin D^{\infty}\,(a)$ and $\mathfrak{q} \in D^{\infty}(a)$. Thus, it suffices to take $V = D^{\infty}(a)$.
\end{proof}

As a corollary of the \textbf{Proposition \ref{namaria}}, we can prove the following:

\begin{proposition} Given any two $\mathcal{C}^{\infty}-$radical ideals of a $\mathcal{C}^{\infty}-$ring $A$, $I,J \in \mathfrak{I}^{\infty}_{A}$, we have:
$$\sqrt[\infty]{I \cdot J} = \sqrt[\infty]{I \cap J}$$
\end{proposition}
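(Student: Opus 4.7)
The plan is to reduce the equality to a set-theoretic identity between collections of $\mathcal{C}^{\infty}$-radical primes, using \textbf{Theorem \ref{TS}(e)} (equivalently, \textbf{Theorem \ref{Chico}} applied to quotients, via \textbf{Proposition \ref{egito}(b)}), which guarantees
\[
\sqrt[\infty]{K} \;=\; \bigcap \{\mathfrak{p} \in {\rm Spec}^{\infty}(A) \mid K \subseteq \mathfrak{p}\}
\]
for every ideal $K$ of $A$. Applying this to both $K = I\cdot J$ and $K = I\cap J$, it suffices to prove the equality of the two index sets
\[
\mathcal{V}(I\cdot J) \;=\; \{\mathfrak{p} \in {\rm Spec}^{\infty}(A) \mid I\cdot J \subseteq \mathfrak{p}\}, \qquad \mathcal{V}(I\cap J) \;=\; \{\mathfrak{p} \in {\rm Spec}^{\infty}(A) \mid I\cap J \subseteq \mathfrak{p}\}.
\]

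First I would observe the trivial inclusion $I\cdot J \subseteq I\cap J$, which immediately yields $\mathcal{V}(I\cap J) \subseteq \mathcal{V}(I\cdot J)$, and hence one of the two inclusions between the $\mathcal{C}^{\infty}$-radicals. For the reverse inclusion, let $\mathfrak{p} \in \mathcal{V}(I\cdot J)$, i.e.\ $I\cdot J \subseteq \mathfrak{p}$. The key step, which is the only substantive one, is to exploit the primality of $\mathfrak{p}$ in $A$: suppose for contradiction that $I \nsubseteq \mathfrak{p}$ and $J \nsubseteq \mathfrak{p}$, picking witnesses $a \in I \setminus \mathfrak{p}$ and $b \in J \setminus \mathfrak{p}$; then $a\cdot b \in I\cdot J \subseteq \mathfrak{p}$, contradicting primality. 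Hence $I \subseteq \mathfrak{p}$ or $J \subseteq \mathfrak{p}$, so in either case $I\cap J \subseteq \mathfrak{p}$, that is $\mathfrak{p} \in \mathcal{V}(I\cap J)$.

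This gives $\mathcal{V}(I\cdot J) = \mathcal{V}(I\cap J)$, and intersecting yields the desired equality $\sqrt[\infty]{I\cdot J} = \sqrt[\infty]{I\cap J}$. I do not anticipate any genuine obstacle: the argument is purely ideal-theoretic and does not require any feature specific to the smooth setting beyond the availability of the prime-intersection formula for the $\mathcal{C}^{\infty}$-radical (\textbf{Theorem \ref{TS}(e)}), which holds for arbitrary $\mathcal{C}^{\infty}$-rings. Note moreover that the hypothesis ``$I, J$ are $\mathcal{C}^{\infty}$-radical'' is not used in the proof itself; it only serves to upgrade the conclusion to $\sqrt[\infty]{I\cdot J} = I\cap J$, since by \textbf{Proposition \ref{egito}(a)} the intersection of two $\mathcal{C}^{\infty}$-radical ideals is already $\mathcal{C}^{\infty}$-radical.
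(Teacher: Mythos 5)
Your proposal is correct and follows essentially the same route as the paper: both reduce the claim to the fact that a $\mathcal{C}^{\infty}$-radical prime contains $I\cdot J$ iff it contains $I\cap J$ (the paper phrases this via $Z^{\infty}$ and \textbf{Proposition \ref{namaria}}, arguing by contradiction with an element $a\in I\cap J\setminus\mathfrak{p}$ whose square lies in $I\cdot J$, while you split $I\cdot J\subseteq\mathfrak{p}$ into $I\subseteq\mathfrak{p}$ or $J\subseteq\mathfrak{p}$), and then invoke the prime-intersection formula for $\sqrt[\infty]{\cdot}$. Your side remark that the $\mathcal{C}^{\infty}$-radical hypothesis on $I,J$ is not actually needed for the stated equality is also accurate.
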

\begin{proof}
Since $I \cdot J \subseteq I \cap J$, we have $\sqrt[\infty]{I \cdot J} \subseteq \sqrt[\infty]{I \cap J}$.\\

\textit{Ab absurdo}, suppose $\sqrt[\infty]{I \cap J} \nsubseteq \sqrt[\infty]{I \cdot J}$, so by \textbf{Proposition \ref{namaria}}, $Z^{\infty}(I \cdot J) \nsubseteq Z^{\infty}(I \cap J)$. This latter condition means that there is some $\p \in {\rm Spec}^{\infty}\,(A)$ such that $I \cdot J \subseteq \p$ and $I \cap J \nsubseteq \p$, that is, there is $a \in I \cap J$ such that $a \notin \p$. Thus, we have $a^2 \in I \cdot J \subseteq \p$, so $a^2 \in \p$ and $a \notin \p$ - which contradicts the fact that the ideal $\p$ is prime. Hence:
$$\sqrt[\infty]{I \cap J} \subseteq \sqrt[\infty]{I \cdot J}.$$
\end{proof}

\begin{theorem}[\textbf{Lemma 1.4} of \cite{rings2}]\label{3111}Let $A$ be a $\mathcal{C}^{\infty}-$ring.\\
\begin{itemize}
  \item[(i)]{$D^{\infty}(a) \subseteq D^{\infty}(b)$ if, and only if, $b \in A\{ a^{-1}\}^{\times}$, if, and only if, $a \in \sqrt[\infty]{(b)}$;}
  \item[(ii)]{Each basic open $D^{\infty}(a)$ of ${\rm Spec}^{\infty}\,(A)$ is compact; in fact $D^{\infty}(a) \subset \bigcup_{i \in I} D^{\infty}(a_i)$ if, and only if, there are finitely many indices $i_1, \cdots, i_n \in I$ such that $D^{\infty}(a) \subseteq D^{\infty}(a_{i_1}^2 + \cdots + a_{i_n}^2)$.}
  \item[(iii)]{The basic opens $D^{\infty}(a)$ form a distributive lattice, with:
  $$D^{\infty}(a)\cap D^{\infty}(b) = D^{\infty}(a\cdot b)\,\,\, \mbox{and}\,\,\, D^{\infty}(a)\cup D^{\infty}(b) = D^{\infty}(a^2 + b^2)$$}
\end{itemize}
\end{theorem}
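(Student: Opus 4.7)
The plan is to handle the three clauses in order, with (i) essentially upgrading Proposition \ref{Raj}, and (ii) and (iii) both reducing to a single key observation: in any $\mathcal{C}^{\infty}$-ring the element $1+x_1^2+\cdots+x_m^2$ is automatically a unit, because the assignment $(x_1,\ldots,x_m)\mapsto 1/(1+x_1^2+\cdots+x_m^2)$ is a smooth function $\mathbb{R}^m\to\mathbb{R}$ that the $\mathcal{C}^{\infty}$-structure interprets.

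For (i), Proposition \ref{Raj} already gives $D^{\infty}(a)\subseteq D^{\infty}(b)\iff a\in\sqrt[\infty]{(b)}$. To weave in the middle condition — that $\eta_a(b)$ is a unit in $A\{a^{-1}\}$ — I invoke Proposition \ref{alba}, which rewrites $a\in\sqrt[\infty]{(b)}$ as the existence of some $c\in(b)$ with $\eta_a(c)$ invertible. Writing $c=b\cdot d$ and applying the $\mathcal{C}^{\infty}$-homomorphism $\eta_a$ gives $\eta_a(c)=\eta_a(b)\cdot\eta_a(d)$, so invertibility of $\eta_a(c)$ forces invertibility of $\eta_a(b)$. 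The converse is trivial by taking $c=b$ itself.

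For (ii), the easy direction follows from (i) together with the elementary observation $D^{\infty}(a_{i_1}^2+\cdots+a_{i_n}^2)\subseteq\bigcup_k D^{\infty}(a_{i_k})$ (if $\mathfrak{p}$ avoids the sum, it avoids some square, hence some $a_{i_k}$ by primality). For the converse, I assume $D^{\infty}(a)\subseteq\bigcup_{i\in I}D^{\infty}(a_i)$ and take complements and intersections to obtain, via Proposition \ref{egito}(b), the radical membership $a\in\sqrt[\infty]{\langle\{a_i\}_{i\in I}\rangle}$. Proposition \ref{alba} then produces some $b$ in that ideal with $\eta_a(b)\in(A\{a^{-1}\})^\times$; but $b$ is a finite $A$-linear combination of some $a_{i_1},\ldots,a_{i_n}$, so in fact $a\in\sqrt[\infty]{(a_{i_1},\ldots,a_{i_n})}$. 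The decisive step is then to verify $a_{i_j}\in\sqrt[\infty]{(s)}$ for each $j$, where $s=a_{i_1}^2+\cdots+a_{i_n}^2$; I would do this by working inside $A\{a_{i_j}^{-1}\}$, where $\eta_{a_{i_j}}(a_{i_j})$ is a unit, and factoring
\[
\eta_{a_{i_j}}(s)=\eta_{a_{i_j}}(a_{i_j})^2\cdot\Bigl(1+\sum_{k\neq j}\bigl(\eta_{a_{i_j}}(a_{i_k})\,\eta_{a_{i_j}}(a_{i_j})^{-1}\bigr)^2\Bigr).
\]
The first factor is a unit by construction and the second is a unit by the $1+\sum x^2$ observation, so $\eta_{a_{i_j}}(s)$ is a unit and (i) gives $a_{i_j}\in\sqrt[\infty]{(s)}$. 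Since $\sqrt[\infty]{(s)}$ is an ideal (Theorem \ref{yellow}) containing each generator of $(a_{i_1},\ldots,a_{i_n})$, idempotence of $\sqrt[\infty]{\cdot}$ yields $\sqrt[\infty]{(a_{i_1},\ldots,a_{i_n})}\subseteq\sqrt[\infty]{(s)}$, whence $a\in\sqrt[\infty]{(s)}$ and $D^{\infty}(a)\subseteq D^{\infty}(s)$ by (i). Compactness of $D^{\infty}(a)$ follows immediately, with explicit finite subcover $\{D^{\infty}(a_{i_1}),\ldots,D^{\infty}(a_{i_n})\}$.

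For (iii), the intersection formula $D^{\infty}(a)\cap D^{\infty}(b)=D^{\infty}(a\cdot b)$ is immediate from primality, and the union formula $D^{\infty}(a)\cup D^{\infty}(b)=D^{\infty}(a^2+b^2)$ is precisely the $n=2$ case of the previous argument in both directions; distributivity of the lattice then follows formally from set-theoretic distributivity of $\cap$ over $\cup$. The main obstacle throughout is the passage from a finitely generated ideal $(a_1,\ldots,a_n)$ to the principal ideal $(a_1^2+\cdots+a_n^2)$ at the level of $\mathcal{C}^{\infty}$-radicals — an operation with no counterpart in ordinary commutative algebra, and one that relies essentially on the fact that $1/(1+\sum x_i^2)$ is smooth, so that $1+\sum x_i^2$ is automatically invertible in every $\mathcal{C}^{\infty}$-ring.
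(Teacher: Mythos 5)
Your proof is correct, but note that the paper itself offers no proof of this statement: it is quoted verbatim as \textbf{Lemma 1.4} of \cite{rings2} and used as a black box (e.g.\ in \textbf{Theorem \ref{diabolin}}), so there is nothing internal to compare against. Your argument is essentially the Moerdijk--Reyes one, and the decisive ingredient is exactly right: the invertibility of $1+\sum x_i^2$ in every $\mathcal{C}^{\infty}-$ring, which you justify directly by interpreting the smooth function $(x_1,\ldots,x_m)\mapsto (1+\sum x_i^2)^{-1}$ --- this is cleaner and more self-contained than the paper's later \textbf{Proposition \ref{doria}}, and avoids any circularity with material in \textbf{Section \ref{ot}}. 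The factorization $\eta_{a_{i_j}}(s)=\eta_{a_{i_j}}(a_{i_j})^2\cdot\bigl(1+\sum_{k\neq j}(\eta_{a_{i_j}}(a_{i_k})\eta_{a_{i_j}}(a_{i_j})^{-1})^2\bigr)$ is the key computation, and combining it with \textbf{Proposition \ref{alba}} and the finite-support description of $\langle\{a_i\}\rangle$ correctly yields the finite subcover. Two small citation repairs: \textbf{Theorem \ref{yellow}} is stated only for ideals of $\mathcal{C}^{\infty}(\mathbb{R}^n)$, so for a general $\mathcal{C}^{\infty}-$ring $A$ you should instead invoke \textbf{Theorem \ref{little}} (admissibility, so $\sqrt[\infty]{(s)}$ is an ideal), the monotonicity proposition following it, and idempotence via \textbf{Theorem \ref{TS}}(e) together with \textbf{Proposition \ref{egito}}(a); and in part (i) the phrase ``$b\in A\{a^{-1}\}^{\times}$'' should of course be read as ``$\eta_a(b)\in (A\{a^{-1}\})^{\times}$'', as you do.
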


\begin{definition}Let $(X,\tau)$ be a topological space, $E \subseteq X$ and $x \in X$.\\
\begin{itemize}
  \item[(a)]{$E$ is \index{irreducible set}\textbf{irreducible in $X$} if, and only if, for all closed sets, $F_1,F_2$, in $X$
  $$(E\subseteq F_1 \cup F_2) \rightarrow((E \subseteq F_1)\vee(E \subseteq F_2))$$}
  \item[(b)]{$x$ is a \index{generic point}\textbf{generic point of $E$} if $x \in E$ and $E \subseteq {\rm Cl}. \{ x\}$ }
\end{itemize}
\end{definition}

Note that if $E \subseteq X$ is closed, since the finite intersection of closed sets is closed, $E$ is irreducible in $X$ if, and only if, for all closed sets $F_1$ and $F_2$ in $X$
$$(E=F_1 \cup F_2)\rightarrow((E=F_1)\vee(E=F_2)).$$

Note also that if $E \subseteq X$ is closed, then a point $x \in E$ is generic if, and only if, $E = {\rm Cl}.\{ x\}$.\\

\begin{definition}Let $(X,\tau)$ be a topological space. The space $(X,\tau)$ is a \index{spectral space}\textbf{spectral space} if, and only if:
\begin{itemize}
  \item[(s1)]{$(X, \tau)$ is a compact $T_0$ topological space;}
  \item[(s2)]{$\stackrel{\circ}{\mathcal{K}}\,(X):= \{ U \subseteq X | U \, \mbox{is}\,\, \mbox{compact}\,\, \mbox{and}\,\, \mbox{open}\}$ constitute a basis of open subsets of $X$ that is closed under finite intersections;}
  \item[(s3)]{Every irreducible closed subset $E$ has a unique generic point.}
\end{itemize}
\end{definition}

\begin{remark}The condition (s2) in the definition above can be replaced by the equivalent condition stated as follows: $(X,\tau)$ has some basis of compact open subsets of $X$ that is closed under finite intersections.
\end{remark}

\begin{theorem}\label{diabolin}Let $A$ be any $\mathcal{C}^{\infty}-$ring. The topological space ${\rm Spec}^{\infty}\,(A)$ is a spectral space.
\end{theorem}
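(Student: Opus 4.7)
The plan is to verify the three defining conditions (s1)--(s3) of a spectral space in turn, leaning on the combinatorial results already established about $D^{\infty}(a)$ and $Z^{\infty}(\mathfrak{a})$.

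First I will dispose of (s1). The $T_0$ axiom is already \textbf{Theorem \ref{maio}}. For compactness, I note that $\eta_1 : A \to A\{1^{-1}\}$ is (isomorphic to) ${\rm id}_A$, so every prime $\mathcal{C}^{\infty}$-radical ideal misses $1$, i.e. ${\rm Spec}^{\infty}(A) = D^{\infty}(1)$. By \textbf{Theorem \ref{3111}(ii)} the basic open $D^{\infty}(1)$ is compact, so the whole space is compact.

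Next I turn to (s2). By \textbf{Theorem \ref{3111}(ii)} each basic open $D^{\infty}(a)$ is compact, and by \textbf{Theorem \ref{3111}(iii)} one has $D^{\infty}(a)\cap D^{\infty}(b)=D^{\infty}(a\cdot b)$, so the family $\{D^{\infty}(a):a\in A\}$ is a basis of compact opens closed under finite intersections.

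The main work lies in (s3); this is the step I expect to be the principal obstacle, because one has to convert the topological hypothesis of irreducibility into an algebraic statement about primality in a setting where the $\mathcal{C}^{\infty}$-radical (not the ordinary radical) is what governs the closure operator. Let $E$ be a non-empty irreducible closed subset. Define
\[
\mathfrak{a} := \bigcap_{\mathfrak{p}\in E}\mathfrak{p}.
\]
Since $E$ is closed, $E = Z^{\infty}(\mathfrak{b})$ for some ideal $\mathfrak{b}$, and by \textbf{Proposition \ref{jay}} we may assume $\mathfrak{b}$ is $\mathcal{C}^{\infty}$-radical; by item (e) of \textbf{Theorem \ref{TS}} this forces $\mathfrak{b}=\mathfrak{a}$, and $\mathfrak{a}$ is itself $\mathcal{C}^{\infty}$-radical by \textbf{Proposition \ref{egito}(a)}. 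I claim $\mathfrak{a}$ is prime. Suppose $x\cdot y \in \mathfrak{a}$. Let $F_1 = Z^{\infty}(\mathfrak{a}+(x))$ and $F_2 = Z^{\infty}(\mathfrak{a}+(y))$, both closed. For every $\mathfrak{p}\in E$ we have $x\cdot y\in \mathfrak{p}$, hence (as $\mathfrak{p}$ is prime) $x\in\mathfrak{p}$ or $y\in\mathfrak{p}$, so $E\subseteq F_1\cup F_2$; conversely $F_1,F_2\subseteq E$. Irreducibility then forces $E=F_1$ or $E=F_2$, and in the first case item (e) of \textbf{Theorem \ref{TS}} gives
\[
x\in \sqrt[\infty]{\mathfrak{a}+(x)} \subseteq \bigcap_{\mathfrak{p}\in F_1}\mathfrak{p}=\bigcap_{\mathfrak{p}\in E}\mathfrak{p}=\mathfrak{a},
\]
and symmetrically $y\in\mathfrak{a}$ in the second; this is where the $\mathcal{C}^{\infty}$-radicality of $\mathfrak{a}$ is essential. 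Properness of $\mathfrak{a}$ follows from $E\neq\varnothing$. Hence $\mathfrak{a}\in{\rm Spec}^{\infty}(A)$, and the computation ${\rm Cl}.\{\mathfrak{a}\}=Z^{\infty}(\mathfrak{a})=E$ exhibits $\mathfrak{a}$ as a generic point of $E$. Uniqueness of the generic point is automatic from the $T_0$ property (s1), since two distinct points cannot have the same closure. This establishes (s3) and completes the verification.
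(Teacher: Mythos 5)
Your proof is correct and follows essentially the same route as the paper's: (s1) and (s2) are handled identically via \textbf{Theorem \ref{maio}} and \textbf{Theorem \ref{3111}}, and for (s3) you prove primality of the generic ideal directly from the decomposition $E = Z^{\infty}(\mathfrak{a}+(x)) \cup Z^{\infty}(\mathfrak{a}+(y))$ together with \textbf{Theorem \ref{TS}}(e), whereas the paper runs the same argument in contrapositive form with arbitrary ideals $I,J$ satisfying $I\cdot J \subseteq P$. Your variant is marginally tidier in that your $F_1,F_2$ are automatically contained in $E$, and your observation that ${\rm Spec}^{\infty}(A)=D^{\infty}(1)$ supplies the compactness of the whole space, which the paper leaves implicit.
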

\begin{proof}
By the \textbf{Theorem \ref{maio}}, ${\rm Spec}^{\infty}\,(A)$ is a $T_0$ topological space.\\

By item (ii) of the \textbf{Theorem \ref{3111}}, for any $a \in A$, $D^{\infty}\,(a)$ is compact, since given any open cover $\{ D^{\infty}\,(a_i) | i \in I\}$ of $D^{\infty}\,(a)$:
$$D^{\infty}\,(a) \subseteq \bigcup_{i \in I} D^{\infty}\,(a_i)$$
there are finitely many indices $i_1, \cdots, i_n \in I$ such that:
$$D^{\infty}\,(a) \subseteq D^{\infty}(a_{i_1}^2+ \cdots a_{i_n}^2).$$

\textbf{Claim: }$D^{\infty}(a_{i_1}^2+ \cdots a_{i_n}^2) \subseteq \bigcup_{j=1}^{n}D^{\infty}(a_{i_j})$.\\

Given $\p \in D^{\infty}(a_{i_1}^2+ \cdots a_{i_n}^2)$, $a_{i_1}^2+ \cdots a_{i_n}^2 \notin \p$,  there must exist some $k \in \{ 1,2, \cdots, n\}$ such that $a_{i_k}^2 \notin \p$, and since $\p$ is prime, $a_{i_k} \notin \p$. Thus, $\p \in D^{\infty}(a_{i_k}) \subseteq \bigcup_{j=1}^{n}D^{\infty}(a_{i_j})$.\\

It follows that

$$D^{\infty}\,(a) \subseteq \bigcup_{j=1}^{n}D^{\infty}(a_{i_j}).$$

It is also clear, by the item (iii) of \textbf{Lemma 1.4} of \cite{rings2},  that $\mathcal{B}$ is closed under finite intersections, since given $D^{\infty}(a_1), D^{\infty}\,(a_2), \cdots, D^{\infty}\,(a_n) \in \mathcal{B}$,

$$D^{\infty}\,(a_1)\cap \cdots \cap D^{\infty}\,(a_n) = D^{\infty}\,(a_1 \cdot a_2 \cdots a_n) \in \mathcal{B}.$$

Finally, we are going to show that each irreducible closed subset of ${\rm Spec}^{\infty}\,(A)$ has a unique generic point.\\

By the \textbf{Definition \ref{zahma}}, $E$ is closed in ${\rm Spec}^{\infty}\,(A)$ if, and only if, there is some $\mathcal{C}^{\infty}-$radical ideal, $P \in \mathfrak{I}^{\infty}(A)$, such that $E = Z^{\infty}(P) = \{ \mathfrak{p} \in {\rm Spec}^{\infty}\,(A) | P \subseteq \mathfrak{p}\}$.\\

We claim that if $P$ is not a prime ideal, then $E$ is not irreducible, which is equivalent to assert that if $E$ is irreducible then $P$ is prime, via \textit{modus tollens}.\\




Suppose $P$ is not prime, so there are ideals $I,J$ of $A$ such that $I \cdot J \subseteq P$, $I \nsubseteq P$ and $J \nsubseteq P$. \\

We have:

$$E = Z^{\infty}\,(P) \subseteq Z^{\infty}(I)\cup Z^{\infty}\,(J)$$

In fact, if $K \in E = Z^{\infty}\,(P)$, since $I \cdot J \subseteq P$, $I \cdot J \subseteq K$. Because $K$ is prime, $I \cdot J \subseteq K$ implies either $I \subseteq K$ (so $K \in Z^{\infty}\,(I)$) or $J \subseteq K$ (so $K \in Z^{\infty}\,(J)$), hence $K \in Z^{\infty}\,(I)\cup Z^{\infty}\,(J)$.\\

By definition, both $Z^{\infty}\,(I)$ and $Z^{\infty}\,(J)$ are both closed.\\

Finally, we have both $Z^{\infty}(P) \subsetneq Z^{\infty}(I)$ and $ Z^{\infty}(P) \subsetneq Z^{\infty}\,(J)$.\\

If $Z^{\infty}(I) = Z^{\infty}\,(P)$, then $\{ \p \in {\rm Spec}^{\infty}\,(A) | \p \supseteq I \} = \{ \p \in {\rm Spec}^{\infty}\,(A) | \p \supseteq P\}$ and:
$$P = \sqrt[\infty]{P} = \bigcap_{\substack{\p \in {\rm Spec}^{\infty}\,(A)\\P \subseteq \p}}\p = \bigcap_{\substack{\p \in {\rm Spec}^{\infty}\,(A)\\ I \subseteq \p}} \p = \sqrt[\infty]{I} \supseteq I$$
which contradicts our assumption that $I \nsubseteq P$. A similar argument holds for $Z^{\infty}(J)$.\\

Thus we have expressed the closed subset, $E \subseteq {\rm Spec}^{\infty}\,(A)$ as the union of two proper closed subsets, so $E$ is not irreducible.\\

Hence, if $E = Z^{\infty}(P)$ is an irreducible closed subset of ${\rm Spec}^{\infty}\,(A)$, then $P$ is a $\mathcal{C}^{\infty}-$radical prime ideal of $A$.\\

Thus,

$${\rm Cl}. \{ P\}:= \bigcap_{\p \in {\rm Spec}^{\infty}\,(A)} Z^{\infty}\,(\p) = Z^{\infty}\,(P) = E,$$

and $P$ is a generic point of $E$.\\

The uniqueness of the generic point is proved as follows.\\

Suppose, \textit{ab absurdo}, that there is some $P' \in {\rm Spec}^{\infty}\,(A)$ with $P \neq P'$ and ${\rm Cl}. \{ P\} = E = {\rm Cl}. \{ P'\}$. Since $P \neq P'$, we can assume without loss of generality, $P \nsubseteq P'$, so there is some $a \in P$ such that $a \notin P'$, and $P' \in D^{\infty}(a)$, so $P' \notin Z^{\infty}((a))$ and $P \notin D^{\infty}(a)$, so $P \in Z^{\infty}((a))$.\\

Thus:

$$E = {\rm Cl}. \{ P'\} \nsubseteq Z^{\infty}((a))$$
and
$$E = {\rm Cl}. \{ P \} \subseteq Z^{\infty}((a))$$
which is a contradiction. Hence such a $P'$ does not exist and the generic point $P$ is unique.
\end{proof}

\begin{remark}Contrarily to  Commutative Algebra, every compact open set $K$ of the spectral topology ${\rm Spec}^{\infty}\,(A)$ is of the form $D^{\infty}(a)$ for some $a \in A$, since $K = D^{\infty}(a_1) \cup \cdots \cup D^\infty(a_n) = D^\infty(b )$, for some $b \in A$ which is a sum of squares.
\end{remark}

\begin{definition}Let $(X,\tau)$ and $(Y, \sigma)$ be two spectral spaces. A map $f: X \to Y$ is a \textbf{spectral map} if, and only if, for every compact open $K \subseteq Y$, $f^{\dashv}[K] \subseteq X$ is compact open.
\end{definition}

\begin{proposition}Let $A,A'$ be two $\mathcal{C}^{\infty}-$rings and let $f: A \to A'$ be a $\mathcal{C}^{\infty}-$homo\-morphism. The function:
$$\begin{array}{cccc}
    h^{*}: & {\rm Spec}^{\infty}\,(A') & \rightarrow & {\rm Spec}^{\infty}\,(A) \\
     & \mathfrak{p} & \mapsto & h^{\dashv}[\mathfrak{p}]
  \end{array}$$
is a spectral map.
\end{proposition}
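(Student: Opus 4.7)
The plan is to verify, in order: (1) $h^{*}$ is well-defined as a map into $\mathrm{Spec}^{\infty}(A)$; (2) the preimage of a basic open is a basic open of the same form; (3) every compact open of $\mathrm{Spec}^{\infty}(A)$ is basic, so (2) upgrades to the spectral condition.

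For (1), let $\mathfrak{p} \in \mathrm{Spec}^{\infty}(A')$. Standard ring theory shows that $h^{\dashv}[\mathfrak{p}]$ is a proper prime ideal of $A$ (using that $h(1_A) = 1_{A'} \notin \mathfrak{p}$ and that $\mathfrak{p}$ is prime). To see that it is $\mathcal{C}^{\infty}$-radical, I would apply \textbf{Theorem \ref{preim}} to the $\mathcal{C}^{\infty}$-homomorphism $h : A \to A'$ and the $\mathcal{C}^{\infty}$-radical ideal $\mathfrak{p} \subseteq A'$, which yields $\sqrt[\infty]{h^{\dashv}[\mathfrak{p}]} = h^{\dashv}[\mathfrak{p}]$. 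Hence $h^{*}(\mathfrak{p}) \in \mathrm{Spec}^{\infty}(A)$.

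For (2), I would compute directly, for any $a \in A$,
\[
(h^{*})^{\dashv}[D^{\infty}(a)] = \{\mathfrak{p} \in \mathrm{Spec}^{\infty}(A') : a \notin h^{\dashv}[\mathfrak{p}]\} = \{\mathfrak{p} \in \mathrm{Spec}^{\infty}(A') : h(a) \notin \mathfrak{p}\} = D^{\infty}(h(a)).
\]
Since the $D^{\infty}(a)$ form a basis of $\mathrm{Zar}^{\infty}$ on $\mathrm{Spec}^{\infty}(A)$, this already shows that $h^{*}$ is continuous, and also that the preimage of each basic compact open is a basic compact open in $\mathrm{Spec}^{\infty}(A')$ (compactness of $D^{\infty}(h(a))$ is furnished by item (ii) of \textbf{Theorem \ref{3111}}).

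For (3), I would invoke the \textbf{Remark} immediately after \textbf{Theorem \ref{diabolin}}: every compact open subset $K$ of $\mathrm{Spec}^{\infty}(A)$ is of the form $K = D^{\infty}(a)$ for some $a \in A$ (indeed, a finite union $D^{\infty}(a_1) \cup \cdots \cup D^{\infty}(a_n)$ equals $D^{\infty}(a_1^{2} + \cdots + a_n^{2})$ by item (iii) of \textbf{Theorem \ref{3111}}). Combining this with step (2) gives $(h^{*})^{\dashv}[K] = D^{\infty}(h(a))$, which is compact and open in $\mathrm{Spec}^{\infty}(A')$. Thus $h^{*}$ is a spectral map. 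No step presents a real obstacle; the only mild care required is to cite \textbf{Theorem \ref{preim}} at the correct moment, since without $\mathcal{C}^{\infty}$-radicality being preserved under preimage, the map would not even land in the smooth Zariski spectrum.
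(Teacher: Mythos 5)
Your proposal is correct and follows essentially the same route as the paper: the heart of both arguments is the computation $(h^{*})^{\dashv}[D^{\infty}(a)] = D^{\infty}(h(a))$. You are in fact somewhat more careful than the paper, which silently assumes well-definedness (your appeal to \textbf{Theorem \ref{preim}}) and checks the spectral condition only on basic opens without explicitly noting that every compact open of ${\rm Spec}^{\infty}(A)$ is of the form $D^{\infty}(a)$; both of these additions are appropriate.
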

\begin{proof}
We are going to show that the inverse image of every basic compact-open subset of ${\rm Spec}^{\infty}\,(A)$ is a basic compact open subset of ${\rm Spec}^{\infty}\,(A')$.\\

Let $D^{\infty}\,(a)$, for some $a \in A$, be a basic open subset of ${\rm Spec}^{\infty}\,(A)$. We have:

\begin{multline*}{h^{*}}^{\dashv}[D^{\infty}(a)] = \{ \mathfrak{p}' \in {\rm Spec}^{\infty}\,(A') | {h^{*}}(\mathfrak{p}') \in D^{\infty}(a) \} = \{ \mathfrak{p}' \in {\rm Spec}^{\infty}\,(A') | h^{\dashv}[\mathfrak{p}'] \in D^{\infty}\,(a) \} = \\
\{ \mathfrak{p}' \in {\rm Spec}^{\infty}\,(A') | a \notin h^{\dashv}[\mathfrak{p}']\} = \{ \mathfrak{p}' \in {\rm Spec}^{\infty}\,(A') | h(a) \notin \mathfrak{p}' \} = D^{\infty}(h(a))
\end{multline*}

so ${h^{*}}^{\dashv}[D^{\infty}(a)]$ is a basic compact open subset of ${\rm Spec}^{\infty}\,(A')$. It follows that $h^{*}$ is a spectral map. In particular, it follows that $h^{*}$ is a continuous function.
\end{proof}

Thus, we are able to define a contravariant functor from the category of $\mathcal{C}^{\infty}-$rings to the category of topological spaces:

$$\begin{array}{cccc}
    {\rm Spec}^{\infty}: & \mathcal{C}^{\infty}{\rm \bf Rng} & \rightarrow & {\rm \bf Top} \\
     & (\xymatrix{A \ar[r]^{h}& B}) & \mapsto & (\xymatrix{{\rm Spec}^{\infty}\,(B) \ar[r]^{h^{*}}& {\rm Spec}^{\infty}\,(A)})
  \end{array}$$




\begin{proposition}\label{Gabi}Let $A$ and $B$ be two $\mathcal{C}^{\infty}-$rings and let $\mathfrak{I}(A)$ be the set of all ideals of $A$ and $\mathfrak{I}(B)$ be the set of all ideals of $B$. Every ideal of the product $A \times B$ has the form $\mathfrak{a}\times \mathfrak{b}$, where $\mathfrak{a}$ is an ideal of $A$ and $\mathfrak{b}$ is an ideal of $B$, so we have the following bijection:

$$\begin{array}{cccc}
    \Phi: & \mathfrak{I}(A) \times \mathfrak{I}(B) & \rightarrow & \mathfrak{I}(A \times B) \\
     & (\mathfrak{a},\mathfrak{b}) & \mapsto & \mathfrak{a} \times \mathfrak{b}
  \end{array}$$
\end{proposition}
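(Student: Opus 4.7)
The plan is to exploit the two orthogonal idempotents $e_1 := (1_A, 0_B)$ and $e_2 := (0_A, 1_B)$ of $A \times B$, which satisfy $e_1 + e_2 = 1$, $e_1 \cdot e_2 = 0$ and $e_i^2 = e_i$. Since ideals of a $\mathcal{C}^{\infty}$-ring coincide with its ring-theoretic ideals (as recalled in the preliminaries), it suffices to argue at the level of the underlying commutative unital ring $\widetilde{U}(A \times B) \cong \widetilde{U}(A) \times \widetilde{U}(B)$, so no $\mathcal{C}^{\infty}$-structural subtleties intervene.

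First, I would show that the map $\Phi$ is well-defined and injective. Well-definedness is immediate: if $\mathfrak{a} \subseteq A$ and $\mathfrak{b} \subseteq B$ are ideals, then $\mathfrak{a} \times \mathfrak{b}$ is clearly closed under addition and is an ideal of $A \times B$ because $(x,y) \cdot (a,b) = (xa, yb) \in \mathfrak{a} \times \mathfrak{b}$ whenever $(a,b) \in \mathfrak{a} \times \mathfrak{b}$. Injectivity is equally transparent: the projections $\pi_A, \pi_B$ recover $\mathfrak{a} = \pi_A[\mathfrak{a}\times\mathfrak{b}]$ and $\mathfrak{b} = \pi_B[\mathfrak{a}\times\mathfrak{b}]$.

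The main step is surjectivity. Given an arbitrary ideal $I \subseteq A \times B$, define
\[
\mathfrak{a}_I := \{\, a \in A \mid (a, 0) \in I \,\}, \qquad \mathfrak{b}_I := \{\, b \in B \mid (0, b) \in I \,\}.
\]
A direct check shows $\mathfrak{a}_I$ and $\mathfrak{b}_I$ are ideals of $A$ and $B$ respectively. For the equality $I = \mathfrak{a}_I \times \mathfrak{b}_I$, the inclusion $\mathfrak{a}_I \times \mathfrak{b}_I \subseteq I$ follows from $(a,b) = (a,0) + (0,b)$ whenever $(a,0),(0,b) \in I$. The reverse inclusion is the key use of the idempotents: for any $(a,b) \in I$,
\[
(a, 0) = (a,b) \cdot e_1 \in I \quad\text{and}\quad (0, b) = (a,b) \cdot e_2 \in I,
\]
so $a \in \mathfrak{a}_I$ and $b \in \mathfrak{b}_I$, giving $(a,b) \in \mathfrak{a}_I \times \mathfrak{b}_I$.

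I do not anticipate a genuine obstacle here: the argument is a standard one from commutative algebra and transfers verbatim because the notion of ideal in $\mathcal{C}^{\infty}{\rm \bf Rng}$ is the ring-theoretic one. The only point worth flagging is to justify, once, that the product $A \times B$ taken in $\mathcal{C}^{\infty}{\rm \bf Rng}$ has underlying ring $\widetilde{U}(A)\times\widetilde{U}(B)$ (since $\widetilde{U}$, being a right adjoint by Theorem~\ref{Madruga}, preserves products), so that the idempotents $e_1, e_2$ are bona fide elements of the $\mathcal{C}^{\infty}$-ring $A \times B$ and may legitimately multiply elements of the $\mathcal{C}^{\infty}$-ideal $I$.
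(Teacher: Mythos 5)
Your proposal is correct and follows essentially the same route as the paper: both recover $\mathfrak{a}$ and $\mathfrak{b}$ from the slices $\{a : (a,0)\in I\}$ and $\{b : (0,b)\in I\}$, prove one inclusion via $(a,b)=(a,0)+(0,b)$ and the other via the fact that $(a,b)\in I$ forces $(a,0),(0,b)\in I$. Your version is in fact slightly more careful, since you justify that last fact explicitly by multiplying by the idempotents $e_1,e_2$, a step the paper's proof leaves implicit in its identification of $p_i[I]$ with the corresponding slice.
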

\begin{proof}
Let $p_1: A\times B \to A$ and $p_2: A \times B \to B$ be the canonical projections of the product:
$$\xymatrixcolsep{5pc}\xymatrix{
 & A \times B \ar[dl]_{p_1} \ar[dr]^{p_2} & \\
A & & B
}$$
and let $I$ be any ideal of $A \times B$. We have:
$$p_1[I] = \{ a \in A | (\exists b \in B)((a,b) \in I)\} = \{ a \in A | (a,0) \in I\}$$
$$p_2[I] = \{ b \in B | (\exists a \in A)((a,b) \in I)\} = \{ b \in B | (0,b) \in I\}$$

We claim that $p_1[I] \times p_2[I] = I$.\\

Given $(a,b) \in I$, then $p_1(a,b) = a \in p_1[I]$ and $p_2(a,b) = b \in p_2[I]$, so $(a,b) \in p_1[I]\times p_2[I]$. Conversely, given $(a,b) \in p_1[I]\times p_2[I]$, $a \in p_1[I]$ and $b \in p_2[I]$, so $(a,0) \in I$ and $(0,b) \in I$, so $(a,b) = (a,0)+(0,b) \in I$.\\

Since for every $\mathfrak{a}$, ideal of $A$ and for every $\mathfrak{b}$ ideal of $B$, $\mathfrak{a} \times \mathfrak{b}$ is an ideal of $A \times B$, the following map is a bijection:

$$\begin{array}{cccc}
    \Phi: & \mathfrak{I}(A) \times \mathfrak{I}(B) & \rightarrow & \mathfrak{I}(A \times B) \\
     & (\mathfrak{a},\mathfrak{b}) & \mapsto & \mathfrak{a} \times \mathfrak{b}
  \end{array}$$

\end{proof}

\begin{proposition}\label{infi}Let $A,B$ be two reduced $\mathcal{C}^{\infty}-$rings. By the \textbf{Proposition \ref{Gabi}}, the ideals of $A\times B$ are precisely those of the form $\mathfrak{a}\times \mathfrak{b}$, where $\mathfrak{a}$ is an ideal of $A$ and $\mathfrak{b}$ is an ideal of $B$. Moreover, $(\mathfrak{a} \times \mathfrak{b} \in {\rm Spec}^{\infty}\,(A\times B)) \iff ((\mathfrak{a} \in {\rm Spec}^{\infty}\,(A))\&(\mathfrak{b} = B)\vee((\mathfrak{a}=A)\& (\mathfrak{b}\in {\rm Spec}^{\infty}\,(B))))$.
\end{proposition}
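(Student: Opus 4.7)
The plan is to split the statement into two tasks: first, to determine which ideals of $A\times B$ are prime in the ordinary sense, and second, among those primes, to single out the ones that are $\mathcal{C}^{\infty}$-radical. By \textbf{Proposition \ref{Gabi}}, every ideal of $A\times B$ has the form $\mathfrak{a}\times \mathfrak{b}$, so both tasks reduce to translating properties of $\mathfrak{a}\times \mathfrak{b}$ into joint conditions on $\mathfrak{a}$ and $\mathfrak{b}$.

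For the first task, the natural ring isomorphism
$$
(A\times B)/(\mathfrak{a}\times \mathfrak{b}) \;\cong\; (A/\mathfrak{a}) \times (B/\mathfrak{b})
$$
turns ``$\mathfrak{a}\times\mathfrak{b}$ is prime'' into ``the right-hand product is a domain''. A product of two nonzero rings is never a domain, since in $R_{1}\times R_{2}$ we have $(1,0)\cdot(0,1)=(0,0)$ with neither factor zero. Hence at least one of $A/\mathfrak{a}$ or $B/\mathfrak{b}$ must be the zero ring, that is, $\mathfrak{a}=A$ or $\mathfrak{b}=B$, and the remaining factor must itself be a domain. This yields precisely the disjunction in the statement: either $\mathfrak{b}=B$ with $\mathfrak{a}$ prime in $A$, or $\mathfrak{a}=A$ with $\mathfrak{b}$ prime in $B$.

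For the second task, I would treat the case $\mathfrak{a}\times B$ (the other being symmetric). By \textbf{Definition \ref{defrad}}, an element $(a_{0},b_{0})\in A\times B$ belongs to $\sqrt[\infty]{\mathfrak{a}\times B}$ exactly when
$$
\bigl((A\times B)/(\mathfrak{a}\times B)\bigr)\{\,((a_{0},b_{0})+\mathfrak{a}\times B)^{-1}\,\}\;\cong\;0 .
$$
Under the obvious isomorphism $(A\times B)/(\mathfrak{a}\times B)\cong A/\mathfrak{a}$, the class of $(a_{0},b_{0})$ is sent to $a_{0}+\mathfrak{a}$, so—invoking the fact (\textbf{Corollary \ref{Jeq}}) that forming $\mathcal{C}^{\infty}$-rings of fractions commutes with isomorphisms and with taking quotients—the displayed $\mathcal{C}^{\infty}$-ring of fractions is isomorphic to $(A/\mathfrak{a})\{(a_{0}+\mathfrak{a})^{-1}\}$. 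This depends only on $a_{0}$, hence
$$
\sqrt[\infty]{\mathfrak{a}\times B}\;=\;\sqrt[\infty]{\mathfrak{a}}\times B ,
$$
and analogously $\sqrt[\infty]{A\times \mathfrak{b}}=A\times\sqrt[\infty]{\mathfrak{b}}$. Combining with the prime analysis gives the claimed biconditional.

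The only genuinely technical point is the identification of the ``quotient-then-localise'' $\mathcal{C}^{\infty}$-ring of the product with the localisation of the surviving factor, and this is handled cleanly by \textbf{Corollary \ref{Jeq}} together with the elementary isomorphism of quotients; the rest is bookkeeping. I note in passing that the hypothesis of $\mathcal{C}^{\infty}$-reducedness on $A$ and $B$ is not invoked anywhere in the argument above, so the statement in fact holds for arbitrary $\mathcal{C}^{\infty}$-rings $A$ and $B$.
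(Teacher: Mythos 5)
Your proof is correct, and at its core it follows the same route as the paper: pass to the isomorphism $(A\times B)/(\mathfrak{a}\times \mathfrak{b})\cong (A/\mathfrak{a})\times(B/\mathfrak{b})$ and use the fact that a product of two nonzero rings has zero divisors (equivalently, nontrivial idempotents), so that primality forces one factor of the quotient to vanish. Where you genuinely improve on the paper is in the treatment of the $\mathcal{C}^{\infty}$-radical condition: the paper reduces membership in ${\rm Spec}^{\infty}(A\times B)$ to ``the quotient is a $\mathcal{C}^{\infty}$-domain'' and never explicitly checks $\mathcal{C}^{\infty}$-radicality (strictly it needs the quotient to be a $\mathcal{C}^{\infty}$-\emph{reduced} $\mathcal{C}^{\infty}$-domain), whereas your direct computation $\sqrt[\infty]{\mathfrak{a}\times B}=\sqrt[\infty]{\mathfrak{a}}\times B$ from \textbf{Definition \ref{defrad}} settles this cleanly; note only that the step really uses just the invariance of the ring of fractions under isomorphism (its universal property), not the full strength of \textbf{Corollary \ref{Jeq}}. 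Your closing observation is also right: neither your argument nor the paper's ever uses the hypothesis that $A$ and $B$ are $\mathcal{C}^{\infty}$-reduced, so the proposition holds for arbitrary $\mathcal{C}^{\infty}$-rings.
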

\begin{proof}

One can easily see that the map:
$$\begin{array}{cccc}
    \varphi: & A \times B & \twoheadrightarrow & \dfrac{A}{\mathfrak{a}} \times \dfrac{B}{\mathfrak{b}} \\
     & (a,b) & \mapsto & (a + \mathfrak{a}, b + \mathfrak{b})
  \end{array}$$
is a $\mathcal{C}^{\infty}-$rings homomorphism such that $\ker \varphi = \mathfrak{a}\times \mathfrak{b}$. By the \textbf{Theorem of the Isomorphism}, there is a unique $\widetilde{\varphi}: \dfrac{A \times B}{\mathfrak{a} \times \mathfrak{b}} \to \dfrac{A}{\mathfrak{a}}\times \dfrac{B}{\mathfrak{b}}$ such that:
$$\xymatrixcolsep{5pc}\xymatrix{
A \times B \ar[d]_{q_{\mathfrak{a}\times \mathfrak{b}}} \ar@{->>}[r]^{\varphi} & \dfrac{A}{\mathfrak{a}}\times \dfrac{B}{\mathfrak{b}}\\
\dfrac{A \times B}{\mathfrak{a} \times \mathfrak{b}} \ar[ur]_{\widetilde{\varphi}}&
}$$
hence $\widetilde{\varphi}$ is a $\mathcal{C}^{\infty}-$isomorphism:
$$\dfrac{A \times B}{\mathfrak{a}\times \mathfrak{b}} \stackrel{\widetilde{\varphi}}{\cong} \dfrac{A}{\mathfrak{a}}\times \dfrac{B}{\mathfrak{b}}$$

It follows that $\mathfrak{a}\times \mathfrak{b} \in {\rm Spec}^{\infty}\,(A\times B)$ if, and only if, $\dfrac{A \times B}{\mathfrak{a}\times \mathfrak{b}}$ is a $\mathcal{C}^{\infty}-$domain. Since $\dfrac{A \times B}{\mathfrak{a}\times \mathfrak{b}} \stackrel{\widetilde{\varphi}}{\cong} \dfrac{A}{\mathfrak{a}}\times \dfrac{B}{\mathfrak{b}}$, it follows that $\dfrac{A \times B}{\mathfrak{a}\times \mathfrak{b}}$ is a $\mathcal{C}^{\infty}-$domain if,  and only if, its isomorphic image, $\dfrac{A}{\mathfrak{a}} \times \dfrac{B}{\mathfrak{b}}$ is a $\mathcal{C}^{\infty}-$domain.\\

\textbf{Claim:} $\dfrac{A}{\mathfrak{a}} \times \dfrac{B}{\mathfrak{b}}$ is a $\mathcal{C}^{\infty}-$domain if, and only if, one of these factors is $0$ and the other is a $\mathcal{C}^{\infty}-$domain.\\

We prove that $\dfrac{A}{\mathfrak{a}} \times \dfrac{B}{\mathfrak{b}}$ is a $\mathcal{C}^{\infty}-$domain $\Rightarrow$ (($\dfrac{A}{\mathfrak{a}}$ is a $\mathcal{C}^{\infty}-$domain and $\mathfrak{b}=B$)$\vee$ ($\mathfrak{a} = A$ and $\dfrac{B}{\mathfrak{b}}$ is a $\mathcal{C}^{\infty}-$domain)) via \textit{modus tollens}.\\

Suppose we have both $\mathfrak{a} \neq A$ and $\mathfrak{b} \neq B$, so $\dfrac{A}{\mathfrak{a}} \neq \{ 0\}$ and $\dfrac{B}{\mathfrak{b}} \neq \{0\}$, and the $\mathcal{C}^{\infty}-$ring $\dfrac{A}{\mathfrak{a}}\times \dfrac{B}{\mathfrak{b}}$ has two non-trivial idempotent elements, namely $(1_A,0_B)$ and $(0_A, 1_B)$, hence $\dfrac{A \times B}{\mathfrak{a} \times \mathfrak{b}} \cong \dfrac{A}{\mathfrak{a}}\times \dfrac{B}{\mathfrak{b}}$ is not a $\mathcal{C}^{\infty}-$domain.\\

Now we prove that (($\dfrac{A}{\mathfrak{a}}$ is a $\mathcal{C}^{\infty}-$domain and $\mathfrak{b}=B$)$\vee$ ($\mathfrak{a} = A$ and $\dfrac{B}{\mathfrak{b}}$ is a $\mathcal{C}^{\infty}-$domain)) $\Rightarrow$ $\dfrac{A}{\mathfrak{a}}\times \dfrac{B}{\mathfrak{b}}$ is a $\mathcal{C}^{\infty}-$domain.\\

Suppose that $\dfrac{A}{\mathfrak{a}}$ is a reduced $\mathcal{C}^{\infty}-$domain and $\mathfrak{b}=B$, then $\dfrac{A \times B}{\mathfrak{a} \times \mathfrak{b}} \cong \dfrac{A}{\mathfrak{a}} \times \{ 0\} \cong \dfrac{A}{\mathfrak{a}}$ is a reduced $\mathcal{C}^{\infty}-$domain. Now, if $\dfrac{B}{\mathfrak{b}}$ is a $\mathcal{C}^{\infty}-$domain and $\mathfrak{a}=A$, then $\dfrac{A \times B}{\mathfrak{a} \times \mathfrak{b}} \cong \{ 0\} \times \dfrac{B}{\mathfrak{b}} \cong \dfrac{B}{\mathfrak{b}}$ is again a $\mathcal{C}^{\infty}-$domain. \\

We have that $\dfrac{A \times B}{\mathfrak{a} \times \mathfrak{b}}$ is a $\mathcal{C}^{\infty}-$domain if, and only if, $\dfrac{A}{\mathfrak{a}}\times \dfrac{B}{\mathfrak{b}}$ is a $\mathcal{C}^{\infty}-$domain, and that $\dfrac{A}{\mathfrak{a}}\times \dfrac{B}{\mathfrak{b}}$ is a $\mathcal{C}^{\infty}-$domain if, and only if, one (and only one) of the following conditions holds:

\begin{itemize}
\item[(i)]{$\dfrac{A}{\mathfrak{a}} \cong \{ 0\}$ and $\dfrac{B}{\mathfrak{b}}$ is a $\mathcal{C}^{\infty}-$domain;}
\item[(ii)]{$\dfrac{A}{\mathfrak{a}}$ is a $\mathcal{C}^{\infty}-$domain and $\dfrac{B}{\mathfrak{b}} \cong \{ 0\}$;}
\end{itemize}

If (i) is the case, then $\mathfrak{a} = A$ and $\mathfrak{b} \in {\rm Spec}^{\infty}\,(B)$, otherwise $\mathfrak{a} \in {\rm Spec}^{\infty}\,(A)$ and $\mathfrak{b}=B$. Since these two cases are the only possibilities, we have established that:

$$(\mathfrak{a}\times \mathfrak{b} \in {\rm Spec}^{\infty}\,(A\times B)) \iff ((\mathfrak{a} \in {\rm Spec}^{\infty}\,(A)) \& (\mathfrak{b}=B)) \vee ((\mathfrak{a} = A) \& (\mathfrak{b} \in {\rm Spec}^{\infty}\,(B)))$$
\end{proof}

\begin{remark}The topology of ${\rm Spec}^{\infty}(A_i)\sqcup {\rm Spec}^{\infty}\,(A_j)$ is, by definition, the finest one such that:

$$\begin{array}{cccc}
    \imath_k: & {\rm Spec}^{\infty}\,(A_k) & \rightarrow & {\rm Spec}^{\infty}(A_i)\sqcup {\rm Spec}^{\infty}\,(A_j) \\
     & \mathfrak{p}_k & \mapsto & (\mathfrak{p}_k,k)
  \end{array}$$

is continuous for $k=i,j$. Moreover:

$$\xymatrixcolsep{5pc}\xymatrix{
{\rm Spec}^{\infty}\,(A_i) \ar[dr]^{\imath_i} & \\
  & {\rm Spec}^{\infty}\,(A_i)\sqcup {\rm Spec}^{\infty}\,(A_j)\\
{\rm Spec}^{\infty}\,(A_j) \ar[ur]_{\imath_j} &
}$$

is the coproduct of ${\rm Spec}^{\infty}\,(A_i)$ and ${\rm Spec}^{\infty}\,(A_j)$ in the category ${\bf Top}$.
\end{remark}

\begin{proposition}\label{ast}Let $A_i,A_j$ be any two $\mathcal{C}^{\infty}-$rings. \\

The maps:
$$\begin{array}{cccc}
    \varphi_i: & {\rm Spec}^{\infty}\,(A_i) & \rightarrow & {\rm Spec}^{\infty}\,(A_i \times A_j) \\
     & \mathfrak{p}_i & \mapsto & \mathfrak{p}_i \times A_j
  \end{array}$$

and

$$\begin{array}{cccc}
    \varphi_j: & {\rm Spec}^{\infty}\,(A_j) & \rightarrow & {\rm Spec}^{\infty}\,(A_i \times A_j) \\
     & \mathfrak{p}_j & \mapsto & A_i \times \mathfrak{p}_j
  \end{array}$$

are both spectral maps (in particular, they are continuous).\\
\end{proposition}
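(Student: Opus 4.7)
The plan is to recognize both $\varphi_i$ and $\varphi_j$ as the spectral maps already induced by the canonical projection $\mathcal{C}^{\infty}$-homomorphisms, and then invoke the previously established fact that $h^{*}$ is spectral whenever $h$ is a $\mathcal{C}^{\infty}$-homomorphism. So almost no new work is required; the content of the proposition is really a bookkeeping identification.

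First I would consider the projection $\pi_i : A_i \times A_j \twoheadrightarrow A_i$ onto the first coordinate (and similarly $\pi_j$), which is clearly a $\mathcal{C}^{\infty}$-homomorphism. A direct computation shows that for any $\mathfrak{p}_i \in {\rm Spec}^{\infty}(A_i)$ one has
\[
\pi_i^{\dashv}[\mathfrak{p}_i] = \{(a,b) \in A_i \times A_j : a \in \mathfrak{p}_i\} = \mathfrak{p}_i \times A_j,
\]
so the set-theoretic definition of $\varphi_i$ agrees pointwise with the induced map $\pi_i^{*}: {\rm Spec}^{\infty}(A_i) \to {\rm Spec}^{\infty}(A_i \times A_j)$. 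To make this identification legitimate, I would verify that $\pi_i^{\dashv}[\mathfrak{p}_i]$ really lands in ${\rm Spec}^{\infty}(A_i \times A_j)$: it is a prime ideal because preimages of primes under ring homomorphisms are prime, and it is $\mathcal{C}^{\infty}$-radical by \textbf{Theorem \ref{preim}}. (This is, of course, already consistent with the classification of prime $\mathcal{C}^{\infty}$-radical ideals of $A_i \times A_j$ provided by \textbf{Proposition \ref{infi}}.) The same reasoning applied to $\pi_j$ identifies $\varphi_j$ with $\pi_j^{*}$.

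Having identified $\varphi_i = \pi_i^{*}$ and $\varphi_j = \pi_j^{*}$, the conclusion is immediate from the proposition proved just before \textbf{Proposition \ref{Gabi}}, where we showed that for any $\mathcal{C}^{\infty}$-homomorphism $h: A \to A'$ the induced map $h^{*}$ is spectral; indeed the preimage of a basic compact open $D^{\infty}(a) \subseteq {\rm Spec}^{\infty}(A_i \times A_j)$ under $\varphi_i$ is $D^{\infty}(\pi_i(a)) = D^{\infty}(a_i)$ where $a = (a_i, a_j)$, which is basic compact open in ${\rm Spec}^{\infty}(A_i)$. In particular both $\varphi_i$ and $\varphi_j$ are continuous.

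There is no real obstacle here: the only thing to be slightly careful about is confirming that the set-theoretic formula $\mathfrak{p}_i \mapsto \mathfrak{p}_i \times A_j$ coincides with the functorial $\pi_i^{*}$ and produces a $\mathcal{C}^{\infty}$-radical prime, which I would do as the single explicit verification above. Once that is dispatched, the spectrality and continuity follow from the functorial machinery already in place.
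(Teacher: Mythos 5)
Your proposal is correct, and it takes a somewhat different route from the paper's. The paper treats $\varphi_i$ as a new map and verifies spectrality from scratch: using the description of ${\rm Spec}^{\infty}(A_i\times A_j)$ from \textbf{Proposition \ref{infi}} it decomposes the basic open $D^{\infty}(a_i,a_j)$ into the two pieces coming from each factor and then computes $\varphi_i^{\dashv}[D^{\infty}(a_i,a_j)]=D^{\infty}(a_i)$ directly. You instead observe that $\varphi_i$ is literally $\pi_i^{*}$ for the projection $\pi_i: A_i\times A_j\to A_i$, since $\pi_i^{\dashv}[\mathfrak{p}_i]=\mathfrak{p}_i\times A_j$, and then invoke the general (unnumbered) proposition preceding \textbf{Proposition \ref{Gabi}} that $h^{*}$ is spectral for any $\mathcal{C}^{\infty}$-homomorphism $h$, whose proof already establishes ${h^{*}}^{\dashv}[D^{\infty}(a)]=D^{\infty}(h(a))$. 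The underlying computation is the same in both cases; what your version buys is economy (no re-derivation) plus an explicit well-definedness check --- that $\mathfrak{p}_i\times A_j$ is prime as a preimage of a prime under a (unital) homomorphism and $\mathcal{C}^{\infty}$-radical by \textbf{Theorem \ref{preim}} --- a point the paper leaves implicit in both places. What the paper's hands-on version buys is that it does not depend on the reader accepting the well-definedness of $h^{*}$ in the general statement, and it displays concretely how $D^{\infty}(a_i,a_j)$ sits inside the disjoint-union description of the product spectrum, which is then reused in the proof of \textbf{Theorem \ref{rafiki}}.
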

\begin{proof}
Without loss of generality we prove that $\varphi_i$ is continuous.\\

Let $(a_i, a_j) \in A_i \times A_j$ and let $D^{\infty}(a_i,a_j) := \{ \mathfrak{q} \in {\rm Spec}^{\infty}(A_i \times A_j) | (a_i,a_j) \notin \mathfrak{q}\} $ be any basic open subset of $${\rm Spec}^{\infty}\,(A_i\times A_j) = \{ \mathfrak{p}_i \times A_j | \mathfrak{p}_i \in {\rm Spec}^{\infty}(A_i)\}\stackrel{\cdot}{\cup} \{ A_i \times \mathfrak{p}_j | \mathfrak{p}_j \in {\rm Spec}^{\infty}(A_j)\}$$
We have

$$D^{\infty}(a_i,a_j) = \{ \mathfrak{p}_i \times A_j | \mathfrak{p}_i \in D^{\infty}(a_i)\}\stackrel{\cdot}{\cup} \{ A_i \times \mathfrak{p}_j | \mathfrak{p}_j \in D^{\infty}(a_j)\}$$

Thus,

$$\varphi_i^{\dashv}[D^{\infty}(a_i,a_j)] = \{ \mathfrak{p}_i \in {\rm Spec}^{\infty}\,(A_i) | (a_i,a_j) \notin \mathfrak{p}_i \times A_j \}$$

Since $\{ \mathfrak{p}_i \in {\rm Spec}^{\infty}\,(A_i) | (a_i,a_j) \notin \mathfrak{p}_i \times A_j \} = \{ \mathfrak{p}_i \in {\rm Spec}^{\infty}\,(A_i) | a_i \notin \mathfrak{p}_i \} = D^{\infty}\,(a_i)$, it follows that:

$${\varphi_i}^{\dashv}[D^{\infty}(a_i,a_j)] = D^{\infty}\,(a_i),$$

which is a basic open subset of ${\rm Spec}^{\infty}\,(A_i)$. Hence $\varphi_i$ is an spectral map, and in particular it is a continuous function. An analogous reasoning shows us that $\varphi_j$ is also a continuous map.
\end{proof}

\begin{remark}
By the universal property of ${\rm Spec}^{\infty}\,(A_i)\sqcup {\rm Spec}^{\infty}\,(A_j)$, given the continuous maps $\varphi_i, \varphi_j$ of \textbf{Proposition \ref{ast}}, there is a unique continuous function $\widetilde{\varphi}: {\rm Spec}^{\infty}\,(A_i) \sqcup {\rm Spec}^{\infty}\,(A_j) \to {\rm Spec}^{\infty}\,(A_i \times A_j)$ such that:

$$\xymatrixcolsep{5pc}\xymatrix{
{\rm Spec}^{\infty}\,(A_i)\ar[dr]^{\imath_{A_i}} \ar@/^2pc/[rrd]^{\varphi_i} & & \\
 & {\rm Spec}^{\infty}\,(A_i)\sqcup {\rm Spec}^{\infty}\,(A_j) \ar@{.>}[r]^{\exists ! \widetilde{\varphi}} & {\rm Spec}^{\infty}\,(A_i \times A_j)\\
{\rm Spec}^{\infty}\,(A_j) \ar[ur]_{\imath_{A_j}} \ar@/_2pc/[rru]_{\varphi_{A_j}} & &
}$$

commutes, and since the function:

$$\begin{array}{cccc}
    \varphi: & {\rm Spec}^{\infty}\,(A_i) \sqcup {\rm Spec}^{\infty}\,(A_j) & \rightarrow & {\rm Spec}^{\infty}\,(A_i \times A_j) \\
     & (\mathfrak{p}_i,i) & \mapsto & \mathfrak{p}_i\times A_j\\
     & (\mathfrak{p}_j,j) & \mapsto & A_i \times \mathfrak{p}_j
\end{array}$$

is such that $\varphi \circ \imath_{A_i} = \varphi_{A_i}$ and $\varphi \circ \imath_{A_j} = \varphi_{A_j}$, it follows that $\varphi = \widetilde{\varphi}$, and $\varphi$ is a continuous map.

In virtue of the \textbf{Propositions} \ref{Gabi} and \ref{infi}, it is clear that $\varphi$ is a bijection whose inverse is given by:

$$\begin{array}{cccc}
    \psi: & {\rm Spec}^{\infty}\,(A_i \times A_j) & \rightarrow & {\rm Spec}^{\infty}\,(A_i) \sqcup {\rm Spec}^{\infty}\,(A_j) \\
     & \mathfrak{p}_i\times A_j & \mapsto & (\mathfrak{p}_i, i)\\
     & A_i \times \mathfrak{p}_j & \mapsto & (\mathfrak{p}_j, j)
\end{array}$$

\end{remark}

\begin{theorem}\label{rafiki}Let $A_i,A_j$ be two $\mathcal{C}^{\infty}-$rings. The function:

$$\begin{array}{cccc}
    \varphi: & {\rm Spec}^{\infty}\,(A_i) \sqcup {\rm Spec}^{\infty}\,(A_j) & \rightarrow & {\rm Spec}^{\infty}\,(A_i \times A_j) \\
     & (\mathfrak{p}_i,i) & \mapsto & \mathfrak{p}_i\times A_j\\
     & (\mathfrak{p}_j,j) & \mapsto & A_i \times \mathfrak{p}_j
\end{array}$$

is an homeomorphism. 

\end{theorem}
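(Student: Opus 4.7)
The plan is to verify that $\varphi$ is a homeomorphism by showing that the already-identified continuous bijection is an open map. Since we have established in \textbf{Propositions \ref{Gabi}} and \textbf{\ref{infi}} that the prime spectrum of the product decomposes exactly as $\{\mathfrak{p}_i \times A_j\} \stackrel{\cdot}{\cup} \{A_i \times \mathfrak{p}_j\}$, and since the preceding remark already produced the continuous bijection $\varphi$ via the universal property of the coproduct in ${\rm \bf Top}$, it remains only to show that $\varphi^{-1} = \psi$ is continuous. Equivalently, I will show that $\varphi$ sends a basis of the disjoint union topology into the topology of ${\rm Spec}^{\infty}(A_i \times A_j)$.

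A basis for the coproduct ${\rm Spec}^{\infty}(A_i) \sqcup {\rm Spec}^{\infty}(A_j)$ consists of sets of the form $\imath_{A_i}[D^{\infty}(a_i)]$ (for $a_i \in A_i$) and $\imath_{A_j}[D^{\infty}(a_j)]$ (for $a_j \in A_j$). The key computation is to identify the image of such a basic open under $\varphi$. I claim
\[
\varphi[\imath_{A_i}[D^{\infty}(a_i)]] = D^{\infty}((a_i, 0_{A_j})) \subseteq {\rm Spec}^{\infty}(A_i \times A_j),
\]
and symmetrically $\varphi[\imath_{A_j}[D^{\infty}(a_j)]] = D^{\infty}((0_{A_i}, a_j))$. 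To see this, take an arbitrary $\mathfrak{q} \in {\rm Spec}^{\infty}(A_i \times A_j)$; by \textbf{Proposition \ref{infi}} either $\mathfrak{q} = \mathfrak{p}_i \times A_j$ or $\mathfrak{q} = A_i \times \mathfrak{p}_j$. In the first case $(a_i, 0_{A_j}) \notin \mathfrak{q}$ iff $a_i \notin \mathfrak{p}_i$; in the second case $(a_i, 0_{A_j}) \in A_i \times \mathfrak{p}_j$ always, since $0_{A_j} \in \mathfrak{p}_j$. Thus $D^{\infty}((a_i, 0_{A_j}))$ consists precisely of those primes of the form $\mathfrak{p}_i \times A_j$ with $a_i \notin \mathfrak{p}_i$, which is exactly $\varphi[\imath_{A_i}[D^{\infty}(a_i)]]$.

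With this in hand, $\varphi$ carries each basic open in the coproduct to a basic open in ${\rm Spec}^{\infty}(A_i \times A_j)$, so $\varphi$ is an open map on a basis, hence an open map. Being a continuous open bijection, $\varphi$ is a homeomorphism. I do not expect any serious obstacle: the decomposition of primes via \textbf{Proposition \ref{infi}} does all the heavy lifting, because it rules out any ``mixed'' prime ideal, so the test functions $(a_i, 0_{A_j})$ and $(0_{A_i}, a_j)$ detect exactly the two summands separately. The only subtlety to double-check is the case $a_i \in A_i^{\times}$ or $a_i = 0$, which are both handled uniformly by the argument above (if $a_i = 0$ both sides are empty; if $a_i$ is a unit both sides equal the whole component), so no separate case analysis is needed.
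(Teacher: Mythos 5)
Your proposal is correct and follows essentially the same route as the paper: both rely on the continuous bijection already produced from the coproduct's universal property, and both establish openness by computing $\varphi[\imath_{A_i}[D^{\infty}(a_i)]] = D^{\infty}((a_i,0_{A_j}))$ via the decomposition of primes in \textbf{Proposition \ref{infi}} and the observation that $0_{A_j}$ lies in every prime of $A_j$. No gaps.
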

\begin{proof}

We saw above that $\varphi$ is a spectral/continuous bijection. To show that $\psi = \varphi^{-1} $ is a continuous map is equivalent to prove that $\varphi$ is an open map.\\

Let $D^{\infty}\,(a_k)\times \{ k\}$, $k \in \{ i,j\}$, be any basic open subset of ${\rm Spec}^{\infty}\,(A_i) \sqcup {\rm Spec}^{\infty}\,(A_j)$. We have, for $k=i$ (the case when $k=j$ is analogous):

$$\varphi[D^{\infty}\,(a_i)\times \{ i\}]= \{ \mathfrak{q}_i \times A_j | (\mathfrak{q}_i,i) \in D^{\infty}(a_i) \times \{ i\}\} =\{ \mathfrak{q}_i \times A_j | a_i \notin \mathfrak{q}_i \}$$

and we have:

$$D^{\infty}(a_i,0_j) = \{  \mathfrak{q}_i\times A_j |a_i \notin \mathfrak{q}_i \} \stackrel{\cdot}{\cup} \{ A_i \times \mathfrak{q}_j | 0_j \notin \mathfrak{q}_j \} = \{  \mathfrak{q}_i\times A_j |a_i \notin \mathfrak{q}_i \}$$

Thus $\varphi[D^{\infty}(a_i)\times \{ i\}] = \{  \mathfrak{q}_i\times A_j |a_i \notin \mathfrak{q}_i \} = D^{\infty}(a_i,0_j)$, and $\varphi$ maps (basic) open sets to (basic) open sets. Since $\varphi$ is an open spectral/continuous bijection, it is an homeomorphism.
\end{proof}

By induction, we obtain the following:

\begin{theorem}\label{esteaca}
Let $\{ A_i | i \in \{1,2, \cdots,n \} \}$ be any finite family of $\mathcal{C}^{\infty}-$rings. We have:
$${\rm Spec}^{\infty}\,\left( \prod_{i=1}^{n} A_i\right) \approx \coprod_{i=1}^{n}{\rm Spec}^{\infty}\,(A_i)$$
\end{theorem}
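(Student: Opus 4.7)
The plan is to proceed by induction on $n$, leveraging the binary case already established in \textbf{Theorem \ref{rafiki}} together with the functoriality of ${\rm Spec}^{\infty}$ and the natural associativity of finite products in $\mathcal{C}^{\infty}{\rm \bf Rng}$ and finite coproducts in ${\rm \bf Top}$.

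For the base cases, when $n = 1$ the statement is trivial, since both sides are (canonically homeomorphic to) ${\rm Spec}^{\infty}\,(A_1)$. When $n = 2$, the statement is precisely \textbf{Theorem \ref{rafiki}}. For the inductive step, assume the result holds for some $n \geq 2$, and consider a family $\{A_1, \dots, A_{n+1}\}$ of $\mathcal{C}^{\infty}-$rings. There is a canonical $\mathcal{C}^{\infty}-$ring isomorphism
$$\prod_{i=1}^{n+1} A_i \;\cong\; \left( \prod_{i=1}^{n} A_i \right) \times A_{n+1},$$
arising from the universal property of finite products. Applying the functor ${\rm Spec}^{\infty}: \mathcal{C}^{\infty}{\rm \bf Rng} \to {\rm \bf Top}$ (which sends $\mathcal{C}^{\infty}-$isomorphisms to homeomorphisms), and then using \textbf{Theorem \ref{rafiki}} on the two-factor decomposition, yields
$${\rm Spec}^{\infty}\left( \prod_{i=1}^{n+1} A_i \right) \;\approx\; {\rm Spec}^{\infty}\left( \prod_{i=1}^{n} A_i \right) \sqcup {\rm Spec}^{\infty}(A_{n+1}).$$

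Invoking the inductive hypothesis on the first factor, ${\rm Spec}^{\infty}\left( \prod_{i=1}^{n} A_i \right) \approx \coprod_{i=1}^{n} {\rm Spec}^{\infty}(A_i)$, and then using the associativity of the coproduct in ${\rm \bf Top}$, we obtain
$${\rm Spec}^{\infty}\left( \prod_{i=1}^{n+1} A_i \right) \;\approx\; \left( \coprod_{i=1}^{n} {\rm Spec}^{\infty}(A_i) \right) \sqcup {\rm Spec}^{\infty}(A_{n+1}) \;\approx\; \coprod_{i=1}^{n+1} {\rm Spec}^{\infty}(A_i),$$
which closes the induction.

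No serious obstacle is expected here: the essential content is already packaged in \textbf{Theorem \ref{rafiki}}, and the remaining work is purely formal, combining the universal properties of finite products in $\mathcal{C}^{\infty}{\rm \bf Rng}$ and finite coproducts in ${\rm \bf Top}$. The only minor point of care is verifying that the composite homeomorphism is in fact the one sending a prime $\mathfrak{p} = A_1 \times \cdots \times \mathfrak{p}_k \times \cdots \times A_{n+1}$ (cf.\ \textbf{Proposition \ref{infi}}, extended inductively) to the pair $(\mathfrak{p}_k, k)$ in the disjoint union; this falls out automatically from how the binary homeomorphism of \textbf{Theorem \ref{rafiki}} acts on the inclusion maps of each factor.
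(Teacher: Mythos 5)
Your proof is correct and is exactly the argument the paper intends: the paper derives Theorem \ref{esteaca} from the binary case of Theorem \ref{rafiki} with the single phrase ``by induction,'' and your write-up simply makes that induction explicit (base case, associativity of products/coproducts, functoriality of ${\rm Spec}^{\infty}$ on isomorphisms). No gap; this matches the paper's approach.
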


We also register the following consequence of the above theorem:

\begin{theorem}\label{welito}Let $\mathbb{K}$ be any $\mathcal{C}^{\infty}-$field, let $I$ be a finite set and let ${\rm Discr}.\,(I) = (I, \wp(I))$ be the corresponding discrete topological space. We have:
$${\rm Spec}^{\infty}\,(\mathbb{K}^{I}) \approx {\rm Discr}.\,(I)$$
\end{theorem}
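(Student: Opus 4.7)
The plan is to combine Theorem \ref{esteaca} with an explicit computation of ${\rm Spec}^{\infty}(\mathbb{K})$ for a $\mathcal{C}^{\infty}$-field $\mathbb{K}$, and then identify the resulting finite coproduct of points with the discrete space on $I$.

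First I would observe that, writing $I = \{1, \ldots, n\}$, the product $\mathbb{K}^{I}$ is the finite product $\prod_{i=1}^{n}\mathbb{K}$, so by Theorem \ref{esteaca} there is a homeomorphism
\[
{\rm Spec}^{\infty}\!\left(\mathbb{K}^{I}\right) \;\approx\; \coprod_{i \in I}{\rm Spec}^{\infty}(\mathbb{K}).
\]
Hence the problem reduces to identifying ${\rm Spec}^{\infty}(\mathbb{K})$ with a one-point space.

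Next, I would compute ${\rm Spec}^{\infty}(\mathbb{K})$ directly. By the definition of a $\mathcal{C}^{\infty}$-field (Definition \ref{distintos}), the underlying commutative ring $\widetilde{U}(\mathbb{K})$ is a field, so its only ideals are $(0)$ and $\mathbb{K}$, and the unique prime ideal is $(0)$. It remains to verify that $(0)$ is $\mathcal{C}^{\infty}$-radical, i.e.\ that $(0) \in {\rm Spec}^{\infty}(\mathbb{K})$. This is precisely the content of the proposition stating that every $\mathcal{C}^{\infty}$-field is $\mathcal{C}^{\infty}$-reduced: $\sqrt[\infty]{(0)} = (0)$. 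Therefore ${\rm Spec}^{\infty}(\mathbb{K}) = \{(0)\}$ is a singleton, and (being a $T_0$ space with one point) carries the unique topology on a one-point set.

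Finally, I would conclude by observing that the coproduct in $\mathbf{Top}$ of $n$ copies of the one-point space is precisely the discrete space on $n$ elements. Indexing the coproduct by $I$ rather than by $\{1, \ldots, n\}$, this gives the homeomorphism
\[
\coprod_{i \in I}{\rm Spec}^{\infty}(\mathbb{K}) \;\approx\; \coprod_{i \in I}\{*\} \;\approx\; (I,\wp(I)) \;=\; {\rm Discr.}(I),
\]
and composing with the homeomorphism from Theorem \ref{esteaca} yields ${\rm Spec}^{\infty}(\mathbb{K}^{I}) \approx {\rm Discr.}(I)$. There is no real obstacle here: the only nontrivial input is that a $\mathcal{C}^{\infty}$-field is $\mathcal{C}^{\infty}$-reduced (so that the unique prime ideal $(0)$ lies in ${\rm Spec}^{\infty}$ and not merely in ${\rm Spec}$), and Theorem \ref{esteaca} does all the topological work.
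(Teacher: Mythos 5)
Your proof is correct, but it follows a genuinely different route from the one written in the paper. You reduce everything to \textbf{Theorem \ref{esteaca}}: writing $\mathbb{K}^{I}$ as a finite product, the spectrum becomes the coproduct $\coprod_{i \in I}{\rm Spec}^{\infty}(\mathbb{K})$, and each factor is a singleton because $(0)$ is the unique prime ideal of a field and is $\mathcal{C}^{\infty}-$radical, by the proposition that every $\mathcal{C}^{\infty}-$field is $\mathcal{C}^{\infty}-$reduced. The paper, despite introducing the theorem as a consequence of \textbf{Theorem \ref{esteaca}}, actually argues directly on $\mathbb{K}^{I}$: it invokes the fact that $\mathbb{K}^{I}$ is a von Neumann regular $\mathcal{C}^{\infty}-$ring, so that ${\rm Spec}^{\infty}(\mathbb{K}^{I})$ consists only of maximal ideals, identifies these as the ideals $\mathfrak{m}_i = \mathbb{K}\times\cdots\times\{0\}\times\cdots\times\mathbb{K}$, and exhibits an explicit continuous bijection from ${\rm Discr}.\,(I)$. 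Your route has two advantages: it relies only on results actually proved in the paper (the identity ${\rm Spec}^{\infty} = {\rm Specm}^{\infty}$ for von Neumann regular $\mathcal{C}^{\infty}-$rings is used there but not established in this paper), and it obtains the topology for free from the universal property of the coproduct in ${\rm \bf Top}$, whereas the explicit bijection of the paper still requires checking that its inverse is continuous (equivalently, that ${\rm Spec}^{\infty}(\mathbb{K}^{I})$ is discrete), a point the written proof leaves implicit in its opening claim that the space is a ``finite Boolean space''. What the paper's approach buys in exchange is a concrete description of the points of ${\rm Spec}^{\infty}(\mathbb{K}^{I})$, which your argument only yields after unwinding the homeomorphism of \textbf{Theorem \ref{rafiki}}.
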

\begin{proof}
First note that ${\rm Spec}^{\infty}(\mathbb{K}^I)$ is a finite Boolean space.\\

Indeed, since $\mathbb{K}$ is a $\mathcal{C}^{\infty}-$field, $\mathbb{K}^I$ is a von Neumann regular $\mathcal{C}^{\infty}-$ring, so ${\rm Spec}^{\infty}\,(\mathbb{K}^I) = {\rm Specm}^{\infty}\,(\mathbb{K}^I)$. Since $I$ is a finite set, we have:

$${\rm Spec}^{\infty}\,(\mathbb{K}^I) = \{ \mathfrak{m}_i = \widehat{(\mathbb{K}^I)} | i \in I\}\cup \{ 0\} \cup \{ \mathbb{K}^I\}.$$

Since $\mathbb{K}$ is a $\mathcal{C}^{\infty}-$field, the only $\mathcal{C}^{\infty}-$radical prime ideals of $\mathbb{K}$ are the maximal ones. Consider:
$$\begin{array}{cccc}
    \varphi : & {\rm Discr}.\,(I) & \rightarrow & {\rm Spec}^{\infty}\,(\mathbb{K}^{I}) \\
     & i & \mapsto & \mathfrak{m}_i = \widehat{(\mathbb{K}^{I})_i}:= \mathbb{K}\times \cdots \times \mathbb{K} \times \{ 0\} \times \mathbb{K} \times \cdots \times \mathbb{K}
  \end{array},$$
which is obviously continuous since ${\rm Discr}.\,(I)$ is a discrete space.
\end{proof}

\begin{theorem}\label{tadeu}Let $(I, \preceq)$ be a filtered partially ordered set and let  $\{(A_i,\Phi_i) | i \in I \}$ be a direct filtered system of $\mathcal{C}^{\infty}-$rings, so we have the following colimit diagram:

$$\xymatrixcolsep{5pc}\xymatrix{
 & \varinjlim_{i \in I} (A_i,\Phi_i) & \\
(A_i,\Phi_i) \ar@/^/[ur]^{\alpha_i} \ar[rr]_{\alpha_{ij}} & & (A_j,\Phi_j) \ar@/_/[ul]_{\alpha_j}
}$$

The following map is an homeomorphism
$$\begin{array}{cccc}
    \kappa : & {\rm Spec}^{\infty}\, \left( \varinjlim_{i \in I} A_i\right) & \stackrel{\approx}{\rightarrow} & \varprojlim_{i \in I} {\rm Spec}^{\infty}\,(A_i)\\
       & \mathfrak{p} & \mapsto & (\alpha_i^{\dashv}[\mathfrak{p}])_{i \in I}
\end{array}$$
whose inverse is given by:
$$\begin{array}{cccc}
    \kappa' : & \varprojlim_{i \in I} {\rm Spec}^{\infty}\,(A_i) & \stackrel{\approx}{\rightarrow} & {\rm Spec}^{\infty}\, \left( \varinjlim_{i \in I} A_i\right)\\
       & (\mathfrak{p}_i)_{i \in I} & \mapsto & \varinjlim_{i \in I}\mathfrak{p}_i= \bigcup_{i \in I} \alpha_i[\mathfrak{p}_i]
\end{array}$$
\end{theorem}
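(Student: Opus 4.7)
The plan is to verify that $\kappa$ and $\kappa'$ are well-defined mutually inverse continuous maps between the topological spaces in question, making essential use of the filteredness of $(I, \preceq)$ and results already established in the paper (in particular Theorem \ref{preim}, Proposition \ref{Quico}, and the explicit description of the filtered colimit of $\mathcal{C}^{\infty}$-rings).

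For well-definedness of $\kappa$, I would first observe that for any $\mathfrak{p} \in {\rm Spec}^{\infty}(\varinjlim_{i \in I} A_i)$ and any $i$, the preimage $\alpha_i^{\dashv}[\mathfrak{p}]$ is a prime ideal of $A_i$ (trivially) which is $\mathcal{C}^{\infty}$-radical by Theorem \ref{preim}, and the identity $\alpha_j \circ \alpha_{ij} = \alpha_i$ gives $\alpha_{ij}^{\dashv}[\alpha_j^{\dashv}[\mathfrak{p}]] = \alpha_i^{\dashv}[\mathfrak{p}]$, so $(\alpha_i^{\dashv}[\mathfrak{p}])_{i \in I} \in \varprojlim_{i \in I} {\rm Spec}^{\infty}(A_i)$. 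Well-definedness of $\kappa'$ is precisely the content of Proposition \ref{Quico}.

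For mutual inverseness, $\kappa' \circ \kappa = {\rm id}$ reduces to showing $\bigcup_i \alpha_i[\alpha_i^{\dashv}[\mathfrak{p}]] = \mathfrak{p}$; the inclusion $\subseteq$ is immediate, and for $\supseteq$ any $x \in \mathfrak{p}$ lies in the image of some $\alpha_i$ (as every element of the filtered colimit comes from a stage), so $x = \alpha_i(a)$ with $a \in \alpha_i^{\dashv}[\mathfrak{p}]$. The composite $\kappa \circ \kappa' = {\rm id}$ requires showing $\alpha_i^{\dashv}\bigl[\bigcup_j \alpha_j[\mathfrak{p}_j]\bigr] = \mathfrak{p}_i$ for any compatible family $(\mathfrak{p}_j)_j$; given $a \in A_i$ with $\alpha_i(a) = \alpha_j(b)$ for some $b \in \mathfrak{p}_j$, filteredness of $I$ yields some $k \geq i, j$ with $\alpha_{ik}(a) = \alpha_{jk}(b) \in \alpha_{jk}[\mathfrak{p}_j] \subseteq \mathfrak{p}_k$ (the last inclusion by compatibility), and then $a \in \alpha_{ik}^{\dashv}[\mathfrak{p}_k] = \mathfrak{p}_i$.

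For continuity of $\kappa$, I would use the universal property of the inverse limit topology: it suffices to check that each composition $\pi_i \circ \kappa$ is continuous, where $\pi_i: \varprojlim {\rm Spec}^{\infty}(A_j) \to {\rm Spec}^{\infty}(A_i)$ is the projection. For a basic open $D^{\infty}(a) \subseteq {\rm Spec}^{\infty}(A_i)$, one computes directly that $(\pi_i \circ \kappa)^{\dashv}[D^{\infty}(a)] = \{\mathfrak{p} : \alpha_i(a) \notin \mathfrak{p}\} = D^{\infty}(\alpha_i(a))$, which is basic open in ${\rm Spec}^{\infty}(\varinjlim A_i)$. For openness, a basic open in ${\rm Spec}^{\infty}(\varinjlim A_i)$ has the form $D^{\infty}(x)$ with $x = \alpha_i(a)$ for some $i$ and $a \in A_i$; the computation just made together with bijectivity of $\kappa$ gives $\kappa[D^{\infty}(\alpha_i(a))] = \pi_i^{\dashv}[D^{\infty}(a)] \cap \varprojlim {\rm Spec}^{\infty}(A_j)$, an open set in the subspace topology.

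The main obstacle is the identity $\kappa \circ \kappa' = {\rm id}$, where one must carefully exploit the explicit description of equality in a filtered colimit of $\mathcal{C}^{\infty}$-rings to descend the relation $\alpha_i(a) = \alpha_j(b)$ to an equality in a common $A_k$; all other steps are essentially formal manipulations with the universal properties and Theorem \ref{preim}/Proposition \ref{Quico}.
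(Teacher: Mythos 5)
Your proposal is correct and follows the same overall strategy as the paper (define $\kappa$ and $\kappa'$, check well-definedness via \textbf{Theorem \ref{preim}} and \textbf{Proposition \ref{Quico}}, check they are mutually inverse, and handle the topology through the basic opens $D^{\infty}(\alpha_i(a))$). It is, however, more complete than the paper's own argument in two respects. First, the paper reduces the mutual-inverseness of $\kappa$ and $\kappa'$ to the claim that they are restrictions of inverse bijections between $\mathfrak{I}(\varinjlim_i A_i)$ and $\varprojlim_i \mathfrak{I}(A_i)$ established in an auxiliary proposition that is cited but does not actually appear in the text; your direct filtered-colimit computation of $\kappa'\circ\kappa={\rm id}$ and $\kappa\circ\kappa'={\rm id}$ replaces that dangling reference with a self-contained argument. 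Second, the paper only verifies that $\kappa$ is a continuous (spectral) bijection, which by itself does not give a homeomorphism; your explicit check that $\kappa[D^{\infty}(\alpha_i(a))]=\pi_i^{\dashv}[D^{\infty}(a)]$ shows $\kappa$ is open on a basis and closes that gap.
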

\begin{proof}
Since for every $i \in I$, ${\rm Spec}^{\infty}(A_i)$ is a spectral space and the category of spectral spaces and spectral maps is closed under projective limits, it follows that:

$$\varprojlim_{i \in I}{\rm Spec}^{\infty}(A_i)$$

is a spectral space, and for every $i \in I$,

$$\begin{array}{cccc}
    {\alpha_i}^{*}: & \varprojlim_{i \in I}{\rm Spec}^{\infty}(A_i) & \rightarrow & {\rm Spec}^{\infty}(A_i) \\
     & (\mathfrak{p}_i)_{i \in I} & \mapsto & \mathfrak{p}_i
  \end{array}$$

is a spectral map.\\

Also, since each

$$\begin{array}{cccc}
    \alpha_i: & (A_i,\Phi_i) & \rightarrow & \varinjlim_{i \in I}(A_i,\Phi_i) \\
     & a_i & \mapsto & [(a_i,i)]
  \end{array}$$

is a $\mathcal{C}^{\infty}-$homomorphism, it follows that given any $\mathfrak{p} \in {\rm Spec}^{\infty}\left( \varinjlim_{i \in I} (A_i,\Phi_i)\right)$, $\alpha_i^{*}(\mathfrak{p}) = \alpha_i^{\dashv}[\mathfrak{p}] \in {\rm Spec}^{\infty}(A_i)$.\\

We note, thus, that $\kappa$ is a spectral map, since each of its coordinates is spectral and the category of the spectral spaces is closed under projective limits.\\

Since the colimit diagram commutes, we have $\alpha_{ij}^{\dashv}[\mathfrak{p}_j] = \mathfrak{p_i}$, so:

$$\mathfrak{p}_i \subseteq \alpha_{ij}^{\dashv}[\mathfrak{p}_j] \iff \alpha_{ij}[\mathfrak{p}_i] \subseteq \mathfrak{p}_j$$

thus we have the following commutative square:

$$\xymatrixcolsep{5pc}\xymatrix{
\mathfrak{p}_i \ar[d]_{\alpha_{ij}\upharpoonright_{\mathfrak{p}_i}} \ar[r]^{\iota_{\mathfrak{p}_i}^{A_i}} & A_i \ar[d]^{\alpha_{ij}}\\
\mathfrak{p}_j \ar[r]_{\iota_{\mathfrak{p}_j}^{A_j}} & A_j
}$$

Given $(\mathfrak{p}_i)_{i \in I} \in \varprojlim_{i \in I} {\rm Spec}^{\infty}\,(A_i)$, by \textbf{Corollary \ref{Quico}}, $\varinjlim_{i \in I} \mathfrak{p}_i$ is a prime $\mathcal{C}^{\infty}-$radical ideal of $\varinjlim_{i \in I} A_i$, so

$$\begin{array}{cccc}
\kappa': & \varprojlim_{i \in I} {\rm Spec}^{\infty}\,(A_i) & \rightarrow & {\rm Spec}^{\infty}\,\left( \varinjlim_{i \in I} A_i \right)\\
 & (\mathfrak{p}_i)_{i \in I} & \mapsto & \bigcup_{i \in I} \alpha_i[\mathfrak{p}_i]
\end{array}$$

is indeed a function.\\

We have, thus:

$$\kappa' \left[ \varprojlim_{i \in I} {\rm Spec}^{\infty}(A_i,\Phi_i)\right] \subseteq  {\rm Spec}^{\infty}\left(\varinjlim_{i \in I}(A_i,\Phi_i)\right)$$

so

$$\kappa = \alpha \upharpoonright_{{\rm Spec}^{\infty}\left(\varinjlim_{i \in I}(A_i,\Phi_i)\right)}$$

and

$$\kappa' = \alpha' \upharpoonright_{\varprojlim_{i \in I} {\rm Spec}^{\infty}(A_i,\Phi_i)}$$

where $\alpha$ is given in \textbf{Proposition 14} of \cite{cerqueira2019universal} and $\alpha'$ is given in its proof.\\

Since:

$$\alpha\left[ {\rm Spec}^{\infty}\left( \varinjlim_{i \in I} A_i\right) \right] = \varprojlim_{i \in I} {\rm Spec}^{\infty}(A_i)$$

we have:

$$\xymatrixcolsep{7pc}\xymatrix{
\mathfrak{I}\left( \varinjlim_{i \in I} A_i\right) \ar[r]^{\alpha} & \varprojlim_{i \in I} \mathfrak{I}(A_i)\\
{\rm Spec}^{\infty}\left( \varinjlim_{i \in I}A_i\right) \ar@{^{(}->}[u] \ar[r]_{\alpha \upharpoonright_{{\rm Spec}^{\infty}\left( \varinjlim_{i \in I}A_i\right)}}& \varprojlim_{i \in I} {\rm Spec}^{\infty}\,(A_i) \ar@{^{(}->}[u]}$$

Since $\kappa$ and $\kappa'$ are restrictions of inverse bijections, it follows that $\kappa$ and $\kappa'$ are inverse bijections.

\end{proof}

\begin{definition}Given a $\mathcal{C}^{\infty}-$ring $A$, $({\rm Spec}^{\infty}\,(A), {\rm Zar}^{\infty})$ is a spectral space (see \textbf{Theorem \ref{diabolin}}). The \index{constructible topology}\textbf{constructible topology} on ${\rm Spec}^{\infty}\,(A)$, denoted by ${\rm Spec}^{\infty- {\rm const}}\,(A)$  is the smalest boolean topology in this set such that the identity function is continuous/spectral $id : {\rm Spec}^{\infty-{\rm const}}\,(A) \to {\rm Spec}^{\infty}\,(A)$. It can be  constructed by taking the sub-basis

$$\{ D^{\infty}(a) \cap Z^{\infty}(b) | a,b \in A \}.$$

\end{definition}

It is easy to see that, given any $\mathcal{C}^{\infty}-$ring $A$, ${\rm Spec}^{\infty-{\rm const}}(A)$ is indeed a Boolean space.\\

\begin{remark}\label{contraste}Contrary to what occurs in Commutative Algebra, the sub-basis of the constructible topology on ${\rm Spec}^{\infty}\,(A)$ given by $\{ D^{\infty}(a)\cap Z^{\infty}(b) ; a,b \in A\}$ is closed under finite intersections (thus it is a basis), since

$$D^{\infty}(a_1)\cap Z^{\infty}(b_1)\cap \cdots \cap D^{\infty}(a_n)\cap Z^{\infty}(b_n) = D^{\infty}(a_1. \cdots .a_n) \cap Z^\infty(b_1^2 + \cdots + b_n^2).$$

\end{remark}



\subsection{$C^\infty$-rings and $C^\infty$-locally ringed spaces}


\hspace{0.5cm} Just like in Algebraic Geometry, there is a structural sheaf for each $C^\infty$-ring, such that the global section of this sheaf is canonically isomorphic to the original $C^\infty$-ring. The main reference for this section is  \cite{rings2}.\\

\begin{definition}A \index{$\mathcal{C}^{\infty}-$ringed space}\textbf{$\mathcal{C}^{\infty}-$ringed space} is an ordered pair, $(X, \mathcal{O}_X)$, where $X$ is a topological space and:

$$\mathcal{O}_X : {\rm Open}\,(X)^{\rm op} \rightarrow \mathcal{C}^{\infty}{\rm \bf Rng}$$

is a sheaf of $\mathcal{C}^{\infty}-$rings on $X$ such that all the stalks are local $\mathcal{C}^{\infty}-$rings, that is, for every $x \in X$, $\mathcal{O}_{X,x}$ is a local $\mathcal{C}^{\infty}-$ring.
\end{definition}

\begin{remark}Another way of considering these spaces was pointed out by E. J. Dubuc in \cite{dubuc1981c}, as follows: a $\mathcal{C}^{\infty}-$ringed space, $(X, \mathcal{O}_X)$ is a $\mathcal{C}^{\infty}-$ring object in ${\rm Sh}\,(X)$.\end{remark}

\begin{definition}Let $(X, \mathcal{O}_X)$ and $(Y, \mathcal{O}_Y)$ be two $\mathcal{C}^{\infty}-$ringed spaces. A \index{$\mathcal{C}^{\infty}-$ringed space!morphism of $\mathcal{C}^{\infty}-$ringed spaces}\textbf{morphism of $\mathcal{C}^{\infty}-$ringed spaces} is a pair:

$$(f,f^{\sharp}): (X, \mathcal{O}_X) \rightarrow (Y, \mathcal{O}_Y)$$

where $f: X \to Y$ is a continuous map and $f^{\sharp}: f^{\dashv}[\mathcal{O}_Y] \to \mathcal{O}_X$ is a morphism of sheaves of $\mathcal{C}^{\infty}-$rings on $X$ which induces a local $\mathcal{C}^{\infty}-$homomorphism of local $\mathcal{C}^{\infty}-$rings at each stalk,

$$f^{\sharp}_x: f^{\dashv}[\mathcal{O}_{Y,f(x)}] \to \mathcal{O}_{X,x}$$
\end{definition}

The category of all $\mathcal{C}^{\infty}-$locally ringed spaces, together with its morphisms, is going to be denoted by $\mathcal{C}^{\infty}{\rm \bf RngSp}$.\\

Given a $\mathcal{C}^{\infty}-$ring $A$, we define a sheaf on ${\rm Spec}^{\infty}\,(A)$, $\Sigma_A: {\rm Open}\,({\rm Spec}^{\infty}\,(A))^{\rm op} \rightarrow \mathcal{C}^{\infty}{\rm \bf Rng}$, such that for very $a \in A$,

$$\Sigma_A(D^{\infty}(a)) = A\{ a^{-1}\}.$$

In order to define the action of $\Sigma_A$ on the arrows between basic open subsets of ${\rm Spec}^{\infty}\,(A)$, we need some results.\\

\begin{theorem}\label{Maya}Let $A$ be a $\mathcal{C}^{\infty}-$ring and $a,b \in A$. The following conditions are equivalent:
\begin{itemize}
  \item[(1)]{$\eta_a(b) \in (A\{ a^{-1}\})^{\times}$;}
  \item[(2)]{There is a unique $\mathcal{C}^{\infty}-$homomorphism $\sigma_{a,b}: A\{ b^{-1}\} \to A\{ a^{-1}\}$ such that the following triangle commutes:
      $$\xymatrix{
      A \ar[r]^{\eta_b} \ar[rd]_{\eta_a} & A\{ b^{-1}\} \ar@{.>}[d]^{\sigma_{a,b}}\\
         & A\{ a^{-1}\}
      }$$}
\end{itemize}
\end{theorem}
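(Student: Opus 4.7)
The plan is to prove both implications directly from the universal property of the $\mathcal{C}^{\infty}-$ring of fractions given in \textbf{Definition \ref{Alem}}, since each direction reduces to a one-line application of that property.

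First I would establish (1) $\Rightarrow$ (2). Assuming $\eta_a(b) \in (A\{a^{-1}\})^{\times}$, the $\mathcal{C}^{\infty}-$homomorphism $\eta_a: A \to A\{a^{-1}\}$ satisfies $\eta_a[\{b\}] \subseteq (A\{a^{-1}\})^{\times}$. Therefore, by the universal property of the $\mathcal{C}^{\infty}-$ring of fractions $\eta_b: A \to A\{b^{-1}\}$ (\textbf{Definition \ref{Alem}}, applied to the singleton $S = \{b\}$), there exists a unique $\mathcal{C}^{\infty}-$homomorphism $\sigma_{a,b}: A\{b^{-1}\} \to A\{a^{-1}\}$ making the required triangle commute, namely $\sigma_{a,b} \circ \eta_b = \eta_a$.

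For (2) $\Rightarrow$ (1), I would simply exploit the fact that $\mathcal{C}^{\infty}-$homomorphisms preserve invertible elements. By the very construction of $\eta_b: A \to A\{b^{-1}\}$ in \textbf{Theorem \ref{conc}}, we have $\eta_b(b) \in (A\{b^{-1}\})^{\times}$. Applying $\sigma_{a,b}$ to this invertible element and using commutativity of the triangle, one obtains
$$\eta_a(b) = (\sigma_{a,b} \circ \eta_b)(b) = \sigma_{a,b}(\eta_b(b)) \in (A\{a^{-1}\})^{\times},$$
which is the desired conclusion.

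There is essentially no obstacle here: the statement is a direct reformulation of the universal property, and both directions amount to recognizing which role each $\eta$ plays. The only point worth noting explicitly is that the uniqueness of $\sigma_{a,b}$ in (2) comes for free from the universal property in the first direction, so no separate uniqueness argument is needed.
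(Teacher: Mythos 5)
Your proof is correct and follows exactly the same route as the paper: the implication $(1)\Rightarrow(2)$ is the universal property of $\eta_b$ applied to $S=\{b\}$, and $(2)\Rightarrow(1)$ evaluates the commuting triangle at $b$ and uses the fact that $\mathcal{C}^{\infty}-$homomorphisms preserve invertible elements. Nothing to add.
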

\begin{proof}
Ad $(1) \to (2)$. By the universal property of $\eta_b: A \to A\{ b^{-1}\}$, since $\eta_a(b) \in (A\{ a^{-1}\})^{\times}$ there is a unique $\mathcal{C}^{\infty}-$homomotphism $\sigma_{a,b}: A\{ b^{-1}\} \to A\{ a^{-1}\}$ such that the triangle commutes.\\

Ad $(2) \to (1)$. Suppose there is a unique $\mathcal{C}^{\infty}-$homomorphism $\sigma_{a,b}: A\{ b^{-1}\} \to A\{ a^{-1}\}$ such that $\sigma_{a,b} \circ \eta_b = \eta_a$. Then $(\sigma_{a,b} \circ \eta_b)(b) = \eta_a(b)$, i.e., $\sigma_{a,b}(\eta_b(b)) = \eta_a(b)$. Since $\eta_b(b) \in (A\{ b^{-1}\})^{\times}$, it follows that $\eta_a(b) = \sigma_{a,b}(\eta_b(b)) \in (A\{ a^{-1}\})^{\times}$.
\end{proof}

\begin{theorem}\label{Opash}Let $A$ be a $\mathcal{C}^{\infty}-$ring. Suppose that the equivalent conditions in \textbf{Theorem \ref{Raj}} hold. Then the equivalent conditions of \textbf{Theorem \ref{Maya}} hold.
\end{theorem}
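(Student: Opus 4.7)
The plan is to derive condition (1) of \textbf{Theorem \ref{Maya}} directly from condition (3) of \textbf{Theorem \ref{Raj}}, namely $a \in \sqrt[\infty]{(b)}$. The characterization proved in \textbf{Proposition \ref{alba}} is the crucial ingredient: it tells us that
\[
\sqrt[\infty]{(b)} = \{a \in A \mid (b) \cap \{a\}^{\infty-\mathrm{sat}} \neq \varnothing\},
\]
so $a \in \sqrt[\infty]{(b)}$ yields some element $c$ lying simultaneously in the (ring-theoretic) principal ideal $(b) = A \cdot b$ and in $\{a\}^{\infty-\mathrm{sat}}$, i.e. $\eta_a(c) \in (A\{a^{-1}\})^\times$.

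First, I would write $c = r \cdot b$ for a suitable $r \in A$, which is allowed since ideals of $\mathcal{C}^{\infty}-$rings coincide with their ring-theoretic ideals (cf.\ the remark following \textbf{Definition \ref{CravoeCanela}}). Applying the $\mathcal{C}^{\infty}-$homomorphism $\eta_a : A \to A\{a^{-1}\}$ gives
\[
\eta_a(c) = \eta_a(r) \cdot \eta_a(b) \in (A\{a^{-1}\})^\times.
\]
Next, I would invoke the standard algebraic fact that in any commutative unital ring a product $x \cdot y$ is invertible if and only if both factors $x$ and $y$ are invertible: if $(xy)z = 1$ then $x \cdot (yz) = 1$, so $x$ has a right (hence two-sided) inverse, and similarly for $y$. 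Applied to $\eta_a(r) \cdot \eta_a(b)$, this forces $\eta_a(b) \in (A\{a^{-1}\})^\times$, which is exactly condition (1) of \textbf{Theorem \ref{Maya}}.

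The main obstacle is essentially bookkeeping rather than substance: one must be careful that the ``ideal generated by $b$'' in a $\mathcal{C}^{\infty}-$ring really is the set $A \cdot b$ (so that a witness $c \in (b)$ admits the factorization $c = r b$ used above), and one must justify the transition from ``some element of $(b)$ becomes invertible after applying $\eta_a$'' to ``$\eta_a(b)$ itself is invertible'' by the divisibility argument. Once these two points are settled, the implication is immediate, and by \textbf{Theorem \ref{Maya}} one obtains the induced restriction map $\sigma_{a,b} : A\{b^{-1}\} \to A\{a^{-1}\}$, which is precisely the structural datum needed later to define the presheaf $\Sigma_A$ on basic opens of ${\rm Spec}^{\infty}(A)$.
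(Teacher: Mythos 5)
Your proposal is correct and follows essentially the same route as the paper: both start from $a \in \sqrt[\infty]{(b)}$, use the characterization of the $\mathcal{C}^{\infty}-$radical from \textbf{Proposition \ref{alba}} to produce a witness $c = \lambda \cdot b \in (b)$ with $\eta_a(c)$ invertible, and then conclude that the factor $\eta_a(b)$ is itself invertible. The two bookkeeping points you flag ($(b) = A \cdot b$ and the divisibility argument for invertibility of factors) are handled in the paper exactly as you propose.
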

\begin{proof}
We are going to show that for any $a,b \in A$, $D^{\infty}(a) = D^{\infty}(b)$ implies $\eta_a(b) \in (A\{ a^{-1}\})^{\times}$.\\

We know that $\sqrt[\infty]{(b)} = \{ x \in A | (\exists c \in (b))(\eta_x(c) \in (A\{ a^{-1}\})^{\times})\}$, so $a \in \sqrt[\infty]{(b)} \iff (\exists c \in (b))(\eta_a(c) \in (A\{ a^{-1}\})^{\times})$. Now, $c \in (b)$ occurs if, and only if, there is some $\lambda \in A$ such that $c = \lambda \cdot b$. Hence,
$$\eta_a(c) = \eta_a(\lambda \cdot b) = \eta_a(\lambda) \cdot \eta_a(b),$$
and since $\eta_a(c) \in (A\{ a^{-1}\})^{\times}$ it follows that both $\eta_a(\lambda) \in (A\{ a^{-1}\})^{\times}$ and $\eta_a(b) \in (A\{ a^{-1}\})^{\times}$.
\end{proof}

The following two remarks will prove that $\{ \sigma_{a,b} : A\{ a^{-1}\} \to A\{ b^{-1}\}\}_{a,b \in A}$ is a directed system of $\mathcal{C}^{\infty}-$homomorphisms.


\begin{remark}\label{Anusha}Note that whenever we have $D^{\infty}(a) = D^{\infty}(b)$, by \textbf{Theorem \ref{Opash}} we have a unique $\mathcal{C}^{\infty}-$isomorphism between $A\{ a^{-1}\}$ and $A\{ b^{-1}\}$, that follows from the uniqueness of the $\mathcal{C}^{\infty}-$homo\-morphisms $\sigma_{a,b}$ and $\sigma_{b,a}$ described in \textbf{Theorem \ref{Maya}}. In particular, when $a = b$ there exists a unique $\mathcal{C}^{\infty}-$homo\-morphism $\sigma_{a,a}: A\{ a^{-1}\} \to A\{ a^{-1}\}$ such that the following diagram commutes:
$$\xymatrix{
A \ar[r]^{\eta_a} \ar[dr]_{\eta_a} & A\{ a^{-1}\} \ar[d]^{\sigma_{a,a}}\\
  & A\{ a^{-1}\}
}$$
and since ${\rm id}_{A\{ a^{-1}\}}$ also has this property, it follows that $\sigma_{a,a} = {\rm id}_{A\{ a^{-1}\}}$.
\end{remark}

\begin{remark}Let $a,b,c \in A$ be such that $D^{\infty}(a) \subseteq D^{\infty}(b) \subseteq D^{\infty}(c)$. Then the following diagram commutes:
$$\xymatrix{
A\{ a^{-1}\} \ar[r]^{\sigma_{a,b}} \ar[dr]_{\sigma_{a,c}} & A\{ b^{-1}\} \ar[d]^{\sigma_{b,c}}\\
   & A\{ c^{-1}\}
}$$
\end{remark}
\begin{proof}Since $D^{\infty}(a) \subseteq D^{\infty}(b) \subseteq D^{\infty}(c)$, we have that $D^{\infty}(a) \subseteq D^{\infty}(c)$, so by \textbf{Theorem \ref{Maya}} there exists a unique $\mathcal{C}^{\infty}-$homomorphism $\sigma_{a,c}: A\{ a^{-1}\} \to A\{ c^{-1}\}$ such that the following triangle commutes:
$$\xymatrix{
A \ar[r]^{\eta_a} \ar[dr]_{\eta_c} & A\{ a^{-1}\} \ar[d]^{\sigma_{a,c}} \\
  & A\{ c^{-1}\}
}$$
Since:
$$\xymatrix{
 & A\{ a^{-1}\} \ar[d]^{\sigma_{a,b}}\\
 A \ar[ur]^{\eta_a} \ar[r]^{\eta_b} \ar[dr]_{\eta_c} & A\{ b^{-1}\} \ar[d]^{\sigma_{b,c}}\\
   & A\{ c^{-1}\}
}$$
commutes, it follows that $\sigma_{b,c} \circ \sigma_{a,b}$ has the same property that defines $\sigma_{a,c}$, and by its uniqueness, $\sigma_{a,c} = \sigma_{b,c} \circ \sigma_{a,b}$.
\end{proof}

Now we build a pre-sheaf over the basic open subsets of ${\rm Spec}^{\infty}\,(A)$, \textit{i.e.}, the compact open subsets of ${\rm Spec}^{\infty}\,(A)$. Any basic open subset $U$ of ${\rm Spec}^{\infty}\,(A)$ is $U = D^{\infty}(a)$ for some $a \in A$. Given any basic open subset $U$ of ${\rm Spec}^{\infty}\,(A)$, define:
$$\Sigma_A(U):= \varprojlim_{U = D^{\infty}(a)} A\{ a^{-1}\}$$

that is, $\Sigma_A(U)$ is the colimit of the system $\{ A\{ a^{-1}\} \to^{\sigma_{a',a}} A\{ {a'}^{-1}\} | a,a' \in A \,\, {\rm such that}\,\, U = D^{\infty}(a) = D^{\infty}(a') \}$, where $\sigma_{a',a}$ is the $\mathcal{C}^{\infty}-$isomorphism described by \textbf{Remark \ref{Anusha}}.\\

We note that given any basic open subset of ${\rm Spec}^{\infty}\,(A)$, $U$, $\Sigma_A(U)$ has an $A-$algebra structure that is induced by the universal property of $\Sigma_A(U):= \varprojlim_{D^{\infty}(a)=U} A\{ a^{-1}\}$. Indeed, given any $a,a' \in A$ such that $D^{\infty}(a)=U=D^{\infty}(a')$, we have a unique $\mathcal{C}^{\infty}-$homomorphism $\nu_A : A \to \Sigma_A(U)$ such that the following diagram commutes:
$$\xymatrix{
    & A \ar@/_2pc/[ldd]_{\eta_a} \ar@{.>}[d]_{\nu_A} \ar@/^2pc/[rdd]^{\eta_{a'}}&  \\
    & \Sigma_A(U) \ar[ld] \ar[rd] &    \\
A\{ a^{-1}\} \ar[rr]^{\sigma_{a',a}} &   & A\{ {a'}^{-1}\}
}$$

Let $U,V$ be two open subsets of ${\rm Spec}^{\infty}\,(A)$ such that $\imath_{U}^{V}: U \hookrightarrow V$ is the inclusion map, \textit{i.e.}, $U = D^{\infty}(a) \subseteq D^{\infty}(c)=V$, and let $c,a \in A$ be two elements such that $V = D^{\infty}\,(c)$ and $U=D^{\infty}\,(a)$. Since $D^{\infty}(a) \subseteq D^{\infty}(c)$, by \textbf{Theorem \ref{Opash}} there is a unique $\mathcal{C}^{\infty}-$homomorphism $\sigma_{c,a}: A\{ a^{-1}\} \to A\{ c^{-1}\}$, that we shall denote by $\rho^{V}_{U} := \sigma_{c,a}$ such that the following triangle commutes:
$$\xymatrix{
A \ar[r]^{\eta_a} \ar[rd]_{\eta_c} & A\{ a^{-1}\}\\
   & A\{ c^{-1}\} \ar[u]^{\rho^{V}_{U}}
}$$

Let $p^{V}_{c}: \varprojlim_{D^{\infty}(c')=V} A\{ {c'}^{-1}\} \to A\{ c^{-1}\}$ and $p^{U}_{a}: \varprojlim_{D^{\infty}(a')=U} A\{ {a'}^{-1}\} \to A\{ a^{-1}\}$ be the canonical colimit arrows, that are also $\mathcal{C}^{\infty}-$isomorphisms. We have the following diagram:

$$\xymatrixcolsep{5pc}\xymatrix{
\Sigma_A(V) = \varprojlim_{D^{\infty}(c) = V} A\{ c^{-1}\} \ar[r]^{p^{V}_{c}} & A\{ c^{-1}\}\\
\Sigma_A(U) = \varprojlim_{D^{\infty}(a) = U} A\{ a^{-1}\} \ar[r]^{p^{U}_{a}} & A\{ a^{-1 }\} \ar[u]^{\rho^{V}_{U}}
}$$

Since $\{ \sigma_{c',a'} \circ p^{U}_{a'} : \Sigma_A(U) \to A\{ {c'}^{-1}\} | c' \in A \,\, {\rm such \,\, that}\,\, D^{\infty}(c')=V \}$ is a cone, by the universal property of $\Sigma_A(V)$ there exists a unique $\mathcal{C}^{\infty}-$homomorphism such that the following prism commutes for every $a,a',c,c' \in A$ such that $U= D^{\infty}(a) = D^{\infty}(a')$ and $V = D^{\infty}(c) = D^{\infty}(c')$:

$$ \xymatrixrowsep{5pc}\xymatrixcolsep{5pc}\xymatrix @!0 @R=4pc @C=6pc {
    \Sigma_A(U) \ar[rr]^{p^{U}_{a}} \ar[rd]^{p^{U}_{a'}} \ar@{.>}[dd]_{\exists ! \rho^{V}_{U}} && A\{ a^{-1}\} \ar[dd]^{\sigma_{c,a}} \\
    & A\{ {a'}^{-1} \} \ar[ru]^{\sigma_{a',a}} \ar[dd]^{\sigma_{c',a'}} \\
    \Sigma_A(V) \ar[rr] |!{[ur];[dr]}\hole \ar[rd]^{p^{V}_{c'}} && A\{ c^{-1}\} \\
    & A\{ {c'}^{-1}\} \ar[ru]^{\sigma_{c',c}} }$$

By the uniqueness of the arrow $\rho^{V}_{U}$, it follows that given basic open subsets $U,V,W$ of ${\rm Spec}^{\infty}\,(A)$ such that $U \subset V \subset W$,

$$\Sigma_A(\imath^{W}_{V} \circ \imath^{V}_{U}) = \Sigma_A(\imath^{W}_{U}) = \Sigma_A(\imath^{W}_{V}) \circ \Sigma_A(\imath^{V}_{U})$$

and

$\Sigma_A(\imath^{U}_{U}) = {\rm id}^{\Sigma_A(U)}_{\Sigma_A(U)}$.\\

Thus we have defined a functor:
$$\begin{array}{cccc}
    \Sigma_A: & {\rm Open}\,(\mathcal{B}({\rm Spec}^{\infty}\,(A)))^{{\rm op}} & \rightarrow & \mathcal{C}^{\infty}{\rm \bf Rng} \\
     & U & \mapsto & \Sigma_A(U) = \varprojlim_{U=D^{\infty}(a)} A\{ a^{-1}\} \\
     & \imath^{V}_{U}: U \hookrightarrow V & \mapsto & \rho^{V}_{U}: \Sigma_{A}(U) \to \Sigma_A(V)
  \end{array}$$

hence a pre-sheaf of $\mathcal{C}^{\infty}-$rings on the basic open sets of ${\rm Spec}^{\infty}\,(A)$.\\

We extend this definition from the basic open sets to any open set $U$ of ${\rm Spec}^{\infty}(A))$:

$$\Sigma_A(U):= \varprojlim_{D^{\infty}(a) \subseteq U} \Sigma_A(D^{\infty}(a)).$$

Given $U,V \in {\rm Open}\,({\rm Spec}^{\infty}\,(A))$ such that $U \subseteq V$, we denote:

$$\Sigma_A(V) \stackrel{\sigma^{V}_{U}}{\rightarrow} \Sigma_A(U)$$

the $\mathcal{C}^{\infty}-$homomorphism induced by the inclusion $\iota^{V}_{U}: U \hookrightarrow V$, via the universal property of the limit.\\

In \cite{rings2}, we can find a proof of the following:

\begin{proposition}[\textbf{Proposition 1.6} of \cite{rings2}]Let $A$ be a $\mathcal{C}^{\infty}-$ring and let ${\rm Spec}^{\infty}\,(A)$ be its smooth Zariski spectrum. Using the notations of the above considerations,
$$\Sigma_A: {\rm Open}\,({\rm Spec}^{\infty}\,(A))^{\rm op} \rightarrow \mathcal{C}^{\infty}{\rm \bf Rng}$$
is a sheaf and all its stalks are local $\mathcal{C}^{\infty}-$rings. This means that, given any $\mathfrak{p} \in {\rm Spec}^{\infty}(A)$, $A_{\mathfrak{p}} \cong A\{ {A\setminus \mathfrak{p}}^{-1}\}$ is a local $\mathcal{C}^{\infty}-$ring.
\end{proposition}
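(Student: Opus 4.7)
The plan is to prove the two assertions separately: first that $\Sigma_A$ satisfies the sheaf axioms, and then that each stalk is a local $\mathcal{C}^{\infty}$-ring. Since $\{D^{\infty}(a)\}_{a \in A}$ forms a basis of compact opens closed under finite intersections (with $D^{\infty}(a)\cap D^{\infty}(b) = D^{\infty}(ab)$), and since our definition $\Sigma_A(U) := \varprojlim_{D^{\infty}(a) \subseteq U} \Sigma_A(D^{\infty}(a))$ is exactly the right Kan extension from this basis, it suffices to verify the sheaf condition on basic opens covered by basic opens; standard abstract nonsense then extends $\Sigma_A$ uniquely to a sheaf on all of ${\rm Open}({\rm Spec}^{\infty}(A))$.

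So I fix $a \in A$ together with a cover $D^{\infty}(a) = \bigcup_{i \in I} D^{\infty}(a_i)$, and aim to show that
\begin{equation*}
\Sigma_A(D^{\infty}(a)) \longrightarrow \prod_{i \in I} \Sigma_A(D^{\infty}(a_i)) \rightrightarrows \prod_{i,j \in I} \Sigma_A(D^{\infty}(a_i a_j))
\end{equation*}
is an equalizer in $\mathcal{C}^{\infty}{\rm\bf Rng}$. By the compactness statement in \textbf{Theorem \ref{3111}}, the cover reduces to a finite subcover, and after replacing $A$ by $A\{a^{-1}\}$ (using \textbf{Proposition \ref{imp}} to recognize $(A\{a^{-1}\})\{\eta_a(a_i)^{-1}\} \cong A\{(a\,a_i)^{-1}\}$ and \textbf{Corollary \ref{papel}} to recognize $A\{(a_i a_j)^{-1}\}$ as the appropriate pushout) I may assume $a = 1$, so that the problem becomes a finite cover of the whole spectrum. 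By \textbf{Proposition \ref{equacaocobertura}}, this covering hypothesis is equivalent to $\langle a_1,\ldots, a_n\rangle = A$, i.e., to the existence of a ``smooth partition of unity'' $1 = \sum_k \lambda_k a_k$ in $A$. Separation (injectivity of the restriction map) is then handled via \textbf{Theorem \ref{38}(ii)}: if $b \in A$ vanishes in each $A\{a_i^{-1}\}$ there are $c_i \in \{a_i\}^{\infty\text{-}{\rm sat}}$ with $c_i b = 0$, and combining this with the partition of unity and the \textbf{Separation Theorem \ref{TS}(a),(b)} forces $b$ into $\sqrt[\infty]{(0)}$ relative to each localization, yielding $b = 0$. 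Gluing is the main obstacle: given a compatible family $(s_i) \in \prod_i A\{a_i^{-1}\}$ agreeing in every $A\{(a_i a_j)^{-1}\}$, I would write $s_i = \eta_{a_i}(b_i)/\eta_{a_i}(c_i)$ with $c_i \in \{a_i\}^{\infty\text{-}{\rm sat}}$ using \textbf{Theorem \ref{38}(i)}, clear denominators via the agreement on overlaps (invoking \textbf{Theorem \ref{38}(ii)} to produce annihilators living in the smooth saturations), and then assemble a global lift as a suitable combination $s = \sum_k \lambda_k \widetilde{b}_k$ whose image in each $A\{a_i^{-1}\}$ recovers $s_i$. The delicate part will be controlling the interplay between ordinary products $a_i a_j$ and smooth saturations $\{a_i\}^{\infty\text{-}{\rm sat}}$, which replaces the classical trick of multiplying through by high enough powers $a_i^N$; \textbf{Theorem \ref{38}} and the \textbf{Separation Theorem \ref{TS}} together with \textbf{Proposition \ref{equacaocobertura}} are precisely the substitutes that make this work.

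For the stalks, the computation is nearly formal: the neighborhoods of $\mathfrak{p}$ of the form $D^{\infty}(a)$ correspond bijectively to elements $a \in A \setminus \mathfrak{p}$, and the collection of finite subsets of $A \setminus \mathfrak{p}$ is directed with union $A \setminus \mathfrak{p}$. Applying \textbf{Theorem \ref{2137}} together with \textbf{Proposition \ref{marina}} gives
\begin{equation*}
\Sigma_{A,\mathfrak{p}} \;=\; \varinjlim_{a \notin \mathfrak{p}} A\{a^{-1}\} \;\cong\; A\bigl\{(A\setminus \mathfrak{p})^{-1}\bigr\} \;=\; A_{\{\mathfrak{p}\}}.
\end{equation*}
Since $\mathfrak{p} \in {\rm Spec}^{\infty}(A)$ is by definition a $\mathcal{C}^{\infty}$-radical prime ideal, \textbf{Theorem \ref{Cher}} (with \textbf{Proposition \ref{quetiro}} providing an explicit description of the unique maximal ideal $\mathfrak{m}_{\mathfrak{p}} = \langle \eta_{A\setminus \mathfrak{p}}[\mathfrak{p}]\rangle$) ensures that $A_{\{\mathfrak{p}\}}$ is a local $\mathcal{C}^{\infty}$-ring, which concludes the proof.
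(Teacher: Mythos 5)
First, a point of comparison: the paper does not prove this proposition at all — it is quoted as \textbf{Proposition 1.6} of \cite{rings2} and the text explicitly defers to that reference for the proof. So your proposal has to stand on its own, and as written it does not close the argument. The outline is sensible (reduce to basic opens, reduce by compactness and \textbf{Proposition \ref{imp}} to a finite cover of the whole spectrum, invoke \textbf{Proposition \ref{equacaocobertura}} to get $1=\sum_k \lambda_k a_k$), but the gluing half of the sheaf condition — which is the entire content of the first assertion — is left at ``the delicate part will be controlling the interplay between ordinary products $a_ia_j$ and smooth saturations.'' That is exactly the step that must be carried out, and you only restate it. The missing idea is that the condition $c_i\in\{a_i\}^{\infty-{\rm sat}}$ says precisely that $D^{\infty}(a_i)\subseteq D^{\infty}(c_i)$ (\textbf{Theorem \ref{3111}}(i)), so the $D^{\infty}(c_i)$ also cover ${\rm Spec}^{\infty}(A)$ and \textbf{Proposition \ref{equacaocobertura}} produces a partition of unity $1=\sum_k\mu_k c_k$ \emph{in the denominators}; the candidate global section is $s=\sum_k\mu_k b_k$, and verifying $\eta_{a_i}(s)\cdot\eta_{a_i}(c_i)=\eta_{a_i}(b_i)$ requires first converting the compatibility in $A\{(a_ia_j)^{-1}\}$ into honest equations $d_{ij}\cdot(c_jb_i-c_ib_j)=0$ with $d_{ij}\in\{a_ia_j\}^{\infty-{\rm sat}}$ via \textbf{Theorem \ref{38}}(ii), and then absorbing the finitely many $d_{ij}$ into a refinement of the $c_i$. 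None of this appears in your text.

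There is also a logical slip in the separation step: from $c_ib=0$ with $c_i\in\{a_i\}^{\infty-{\rm sat}}$ you conclude that $b$ lands in $\sqrt[\infty]{(0)}$ and hence $b=0$, but $b\in\sqrt[\infty]{(0)}$ implies $b=0$ only when $A$ is $\mathcal{C}^{\infty}$-reduced, which is not assumed. The correct conclusion is direct and does not pass through the radical: the sets $D^{\infty}(c_i)$ cover the spectrum, so $1=\sum_i\mu_ic_i$ and $b=\sum_i\mu_i c_i b=0$. The stalk computation in your last paragraph, by contrast, is correct: cofinality of the $D^{\infty}(a)$ with $a\notin\mathfrak{p}$, \textbf{Theorem \ref{2137}} and \textbf{Proposition \ref{marina}} give $\Sigma_{A,\mathfrak{p}}\cong A\{(A\setminus\mathfrak{p})^{-1}\}$, and \textbf{Theorem \ref{Cher}} (which genuinely uses $\mathfrak{p}=\sqrt[\infty]{\mathfrak{p}}$) gives locality.
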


Summarizing the previous results, the $\mathcal{C}^{\infty}-$Zariski spectrum of a $\mathcal{C}^{\infty}-$ring $A$ can be regarded as a $\mathcal{C}^{\infty}-$ringed space, $({\rm Spec}^{\infty}\,(A), \Sigma_A)$. In \cite{rings2}, we find the following:

\begin{theorem}[\textbf{Theorem 1.7} of \cite{rings2}]\label{cascata}Let $A$ be any $\mathcal{C}^{\infty}-$ring.\\

$$\begin{array}{cccc}
    {\rm Spec}^{\infty}: & \mathcal{C}^{\infty}{\rm \bf Rng} & \rightarrow & \mathcal{C}^{\infty}{\rm \bf RngdSp} \\
     & A & \mapsto & ({\rm Spec}^{\infty}\,(A), \Sigma_A)
  \end{array}$$
is a contravariant functor from the category of $\mathcal{C}^{\infty}-$rings and the category of the $\mathcal{C}^{\infty}-$ringed spaces, which is adjoint to the global sections functor, i.e., for every $\mathcal{C}^{\infty}-$ring $A$ and for every $\mathcal{C}^{\infty}-$ringed space $(X,\mathcal{O}_X)$  there is a natural bijection:

$$\dfrac{A \rightarrow \Gamma(X,\mathcal{O}_X) }{(X,\mathcal{O}_X) \rightarrow ({\rm Spec}^{\infty}\,(A), \Sigma_A)}$$

Moreover, $\Gamma({\rm Spec}^{\infty}(A),\Sigma_A) \cong A$, so ${\rm Spec}^{\infty}$ is a full and faithful functor.
\end{theorem}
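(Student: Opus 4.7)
The plan is to assemble the adjunction in four stages: functoriality of $\mathrm{Spec}^\infty$, computation of global sections, construction of the natural bijection, and the formal deduction of fully faithfulness from the unit being an isomorphism.

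First I would check that $\mathrm{Spec}^\infty$ takes values in $\mathcal{C}^\infty\mathbf{RngdSp}$ and is contravariantly functorial. Given $f:A\to B$, the map $f^{*}:\mathrm{Spec}^\infty(B)\to\mathrm{Spec}^\infty(A)$, $\mathfrak{p}\mapsto f^{\dashv}[\mathfrak{p}]$, is well defined by \textbf{Theorem \ref{preim}} and is spectral (hence continuous). To produce the sheaf map $f^{\sharp}:(f^{*})^{\dashv}[\Sigma_A]\to\Sigma_B$, it suffices to work on the base of basic opens: for $a\in A$, one has $(f^{*})^{-1}[D^\infty(a)]=D^\infty(f(a))$, and $\eta^B_{f(a)}\circ f:A\to B\{f(a)^{-1}\}$ sends $a$ to a unit, so the universal property of $\eta^A_a:A\to A\{a^{-1}\}$ yields a unique $f^{\sharp}_{D^\infty(a)}:A\{a^{-1}\}\to B\{f(a)^{-1}\}$ making the square commute. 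Compatibility with the restriction maps $\rho^V_U$ follows again by uniqueness, and the induced stalk maps $f^{\sharp}_x:\mathcal{O}_{\mathrm{Spec}^\infty A, f^*(x)}\to\mathcal{O}_{\mathrm{Spec}^\infty B, x}$ are local because $(f^*)$ is defined by preimage of the maximal ideal of each localization, invoking \textbf{Proposition \ref{quetiro}}.

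Next I would compute $\Gamma(\mathrm{Spec}^\infty(A),\Sigma_A)\cong A$. Since $\mathrm{Spec}^\infty(A)=D^\infty(1_A)$ and $\eta^A_1:A\to A\{1^{-1}\}$ is clearly an isomorphism, the defining colimit $\Sigma_A(D^\infty(1))=\varinjlim_{D^\infty(a)=\mathrm{Spec}^\infty(A)}A\{a^{-1}\}$ contains $A\{1^{-1}\}\cong A$ as an object, and the canonical map $\nu_A:A\to\Sigma_A(\mathrm{Spec}^\infty(A))$ is a $\mathcal{C}^\infty$-isomorphism. This immediately makes the unit $\eta_A:A\to\Gamma(\mathrm{Spec}^\infty(A),\Sigma_A)$ of the prospective adjunction an isomorphism, which once the adjunction is established gives fully faithfulness of $\mathrm{Spec}^\infty$ (a right adjoint is fully faithful iff its unit is invertible; here $\mathrm{Spec}^\infty$ plays the role of a contravariant right adjoint to $\Gamma$).

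The heart of the proof, and the main obstacle, is constructing the bijection
\[
\Phi_{A,(X,\mathcal{O}_X)}\colon\mathcal{C}^\infty\mathbf{RngdSp}\bigl((X,\mathcal{O}_X),(\mathrm{Spec}^\infty(A),\Sigma_A)\bigr)\;\xrightarrow{\ \cong\ }\;\mathcal{C}^\infty\mathbf{Rng}\bigl(A,\Gamma(X,\mathcal{O}_X)\bigr)
\]
and verifying its naturality. In one direction, given a morphism $(f,f^{\sharp})$, take global sections and precompose with $\nu_A$. For the inverse direction, start with $\varphi:A\to\Gamma(X,\mathcal{O}_X)$ and, for each $x\in X$, form the composite $A\xrightarrow{\varphi}\Gamma(X,\mathcal{O}_X)\xrightarrow{\mathrm{germ}_x}\mathcal{O}_{X,x}$; since $\mathcal{O}_{X,x}$ is a local $\mathcal{C}^\infty$-ring with maximal ideal $\mathfrak{m}_x$, \textbf{Theorem \ref{preim}} shows $\varphi_x^{\dashv}[\mathfrak{m}_x]$ is a $\mathcal{C}^\infty$-radical prime, defining $f(x)\in\mathrm{Spec}^\infty(A)$. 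Continuity of $f$ reduces to checking that $f^{-1}[D^\infty(a)]=\{x\in X:\mathrm{germ}_x(\varphi(a))\notin\mathfrak{m}_x\}$ is open, which holds because this is exactly the cozero set of the section $\varphi(a)\in\mathcal{O}_X(X)$ (the locus where $\varphi(a)$ is a unit in the stalk, which is open in any $\mathcal{C}^\infty$-ringed space). For the sheaf morphism $f^{\sharp}$, on the basis one needs a map $A\{a^{-1}\}\to\mathcal{O}_X(f^{-1}[D^\infty(a)])$; restricting $\varphi(a)$ to $f^{-1}[D^\infty(a)]$ yields a section invertible at every stalk, hence a unit in $\mathcal{O}_X(f^{-1}[D^\infty(a)])$ (this uses that a section of a sheaf of local $\mathcal{C}^\infty$-rings is invertible on an open set iff it is a unit in every stalk over that open set), so the universal property of $\eta^A_a$ yields the required factorization. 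One then extends from basic opens using the colimit definition of $\Sigma_A$, checks compatibility with restrictions by the uniqueness in the universal properties, and verifies that $f^{\sharp}_x$ is local by tracing through the definition of $f(x)$.

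Finally I would verify that $\Phi_{A,(X,\mathcal{O}_X)}$ and its inverse are mutually inverse: composing ``take global sections'' with the construction above gives back the original data because $\nu_A$ is an isomorphism and each $f^{\sharp}$ is determined on basic opens by its action on $A$ via $\nu_A$, by uniqueness in the localization's universal property; conversely, starting from $\varphi$, taking global sections recovers $\varphi$ up to the isomorphism $\nu_A$. Naturality in $A$ and in $(X,\mathcal{O}_X)$ is a diagram chase using the uniqueness statements in \textbf{Definition \ref{Alem}}. With the adjunction established, fully faithfulness of $\mathrm{Spec}^\infty$ follows from the fact that $\nu_A:A\xrightarrow{\cong}\Gamma(\mathrm{Spec}^\infty(A),\Sigma_A)$ is the unit and is an isomorphism for every $A$. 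The main technical obstacle is the stalk-local analysis required to show that $f$ is continuous and that the constructed $f^{\sharp}$ is local at each stalk, since unlike the commutative-algebra case the available localizations involve $\mathcal{C}^\infty$-saturations rather than mere powers, so at each step one must invoke \textbf{Theorem \ref{38}} and \textbf{Theorem \ref{TS}} to translate between ``$\varphi(a)$ is a unit in a stalk'' and ``$a\notin f(x)$''.
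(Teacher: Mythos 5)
The paper does not actually prove this statement: it is quoted verbatim as \textbf{Theorem 1.7} of \cite{rings2}, so there is no internal proof to compare against. Your proposal is the standard adjunction argument (the exact analogue of the scheme-theoretic $\Gamma \dashv \mathrm{Spec}$ adjunction), and its overall architecture — functoriality via $f^{\dashv}$, identification of global sections, point map $x \mapsto \varphi_x^{\dashv}[\mathfrak{m}_x]$, openness of the unit locus of a section, factorization through $A\{a^{-1}\}$ by the universal property of $\eta_a$, and fully faithfulness from invertibility of the (co)unit — is correct. Two places deserve to be made explicit. First, for $f(x) := \varphi_x^{\dashv}[\mathfrak{m}_x]$ to lie in $\mathrm{Spec}^{\infty}(A)$ you invoke \textbf{Theorem \ref{preim}}, which only applies when $\mathfrak{m}_x$ is itself $\mathcal{C}^{\infty}$-radical; the paper establishes this only for maximal ideals of $\mathcal{C}^{\infty}(\mathbb{R}^n)$, so you should supply the one-line argument for an arbitrary local $\mathcal{C}^{\infty}$-ring: $\sqrt[\infty]{\mathfrak{m}_x}$ is an ideal containing $\mathfrak{m}_x$ (\textbf{Theorem \ref{little}}), and it is proper since $1 \in \sqrt[\infty]{\mathfrak{m}_x}$ would force $\mathcal{O}_{X,x}/\mathfrak{m}_x \cong 0$; hence $\sqrt[\infty]{\mathfrak{m}_x} = \mathfrak{m}_x$ by maximality. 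Second, your round-trip verification tacitly uses that a morphism of sheaves out of $\Sigma_A$ is determined by its global-sections component; this is true precisely because each $\eta_a : A \to A\{a^{-1}\} = \Sigma_A(D^{\infty}(a))$ is an epimorphism (\textbf{Proposition \ref{fact}}), and it is worth saying so, since it is also what forces the reconstructed point map to agree with the original $f$ (here the locality of $f^{\sharp}_x$ is essential, not optional). With those two points filled in, the proposal is a complete and correct proof strategy.
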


\section{Order Theory of $\mathcal{C}^{\infty}-$rings}\label{ot}

The class of $\mathcal{C}^{\infty}-$rings carries good notions of order theory for rings. As pointed out by Moerdijk and Reyes in \cite{rings2}, every $\mathcal{C}^{\infty}-$ring $A$ has a canonical \index{pre-order}pre-order. We describe it in the following:\\

\begin{definition}\label{villa}Given a $\mathcal{C}^{\infty}-$ring $(A,\Phi)$, we write:

$$(\forall a \in A)(\forall b \in A)(a \prec b \iff (\exists c \in A^{\times})(b-a = c^2))$$
Note that if $A \neq \{ 0\}$, $\prec$ is an irreflexive relation, i.e., $(\forall a \in A)(\neg (a \prec a))$.
\end{definition}

According to \cite{moerdijk1986rings}, we have the following facts about the order $\prec$ defined above.\\

\begin{fact}Let $A=\dfrac{\mathcal{C}^{\infty}(\mathbb{R}^{E})}{I}$ for some set $E$. Then, given any $f+I,g+I \in A$, with respect to the relation $\prec$, given in \textbf{Definition \ref{villa}}, we have:
$$f \prec g \iff (\exists \varphi \in I)((\forall x \in Z(\varphi))(f(x)<g(x)))$$
so $\prec$ is compatible with the ring structure which underlies $A$, \textit{i.e.}:
\begin{itemize}
  \item[(i)]{$0 \prec f+I,g+I \Rightarrow 0 \prec (f+I) \cdot (g+I)  $;}
  \item[(ii)]{$0 \prec f+I,g+I \Rightarrow 0 \prec f+g + I $}
\end{itemize}
\end{fact}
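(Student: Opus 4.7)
The statement has two independent parts: the characterization of $\prec$ in terms of zero sets of elements of $I$, and the compatibility of $\prec$ with the ring operations. The second part follows very cleanly from the first, so the work is concentrated on the equivalence. I will first reduce to the case where $f,g,\varphi$ lie in some $\mathcal{C}^{\infty}(\mathbb{R}^{E'})$ with $E' \subseteq E$ finite, using $\mathcal{C}^{\infty}(\mathbb{R}^{E}) = \varinjlim_{E' \subseteq_{\rm fin} E} \mathcal{C}^{\infty}(\mathbb{R}^{E'})$, so that standard partition-of-unity techniques on $\mathbb{R}^{E'}$ are available.

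For the forward direction ($\Rightarrow$), suppose $f+I \prec g+I$. By Definition \ref{villa} there exists a representative $h \in \mathcal{C}^{\infty}(\mathbb{R}^{E})$ with $h+I \in A^{\times}$ and $(g-f)-h^{2} \in I$. Invertibility of $h+I$ in $A$ furnishes $k \in \mathcal{C}^{\infty}(\mathbb{R}^{E})$ with $hk-1 \in I$. Setting $\psi_{1} := (g-f)-h^{2}$, $\psi_{2} := hk-1$, both elements of $I$, define $\varphi := \psi_{1}^{2} + \psi_{2}^{2} \in I$. Then $Z(\varphi) = Z(\psi_{1}) \cap Z(\psi_{2})$, and for $x \in Z(\varphi)$ we have $h(x)k(x) = 1$, forcing $h(x) \neq 0$, whence $g(x)-f(x) = h(x)^{2} > 0$.

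For the reverse direction ($\Leftarrow$), given $\varphi \in I$ with $g > f$ on $Z(\varphi)$, the plan is to construct $h \in \mathcal{C}^{\infty}(\mathbb{R}^{E})$ that is strictly positive everywhere (hence a unit, with smooth inverse $1/h$) and satisfies $h^{2} - (g-f) = \lambda \varphi^{2}$ for some $\lambda \in \mathcal{C}^{\infty}(\mathbb{R}^{E})$. Since $\varphi \in I$ implies $\lambda \varphi^{2} \in I$, this yields $f+I \prec g+I$. The key is to find a smooth $\lambda > 0$ such that $(g-f) + \lambda \varphi^{2} > 0$ everywhere; then $h := \sqrt{(g-f) + \lambda \varphi^{2}}$ is smooth and positive. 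The inequality is automatic on $Z(\varphi)$ (where $g-f > 0$ by hypothesis) and on the open set $\{g > f\}$. Only the closed set $C := \{f \geq g\}$ is delicate: $C \cap Z(\varphi) = \emptyset$ (so $\varphi^{2} > 0$ on $C$), and the obstruction is that $\varphi^{2}$ may decay near $Z(\varphi)$ while $f-g$ does not. Using a smooth cutoff $\chi \in \mathcal{C}^{\infty}(\mathbb{R}^{E'})$ equal to $1$ on $C$ and supported away from $Z(\varphi)$, the function $\chi \cdot (1+(f-g)^{2})/\varphi^{2}$ extends smoothly by zero across $Z(\varphi)$, and adding a positive constant produces the required $\lambda$.

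Once the equivalence is established, both compatibility properties become one-line consequences. Given $0 \prec f+I$ witnessed by $\varphi_{f} \in I$ and $0 \prec g+I$ witnessed by $\varphi_{g} \in I$, set $\chi := \varphi_{f}^{2} + \varphi_{g}^{2} \in I$; then $Z(\chi) = Z(\varphi_{f}) \cap Z(\varphi_{g})$, and for each $x \in Z(\chi)$ one has $f(x) > 0$ and $g(x) > 0$, whence both $f(x)g(x) > 0$ and $f(x)+g(x) > 0$. The main obstacle in the whole argument is the smooth construction of $\lambda$ in the reverse direction: the delicate point is arranging smoothness across $Z(\varphi)$ while dominating a smooth function on $C$ that is a priori unbounded in its local behavior toward $Z(\varphi)$.
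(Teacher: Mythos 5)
Your proof is correct. Note that the paper itself gives no argument for this Fact (it is imported from \cite{rings1}), so there is nothing internal to compare against; your argument is the standard one and is sound. The forward direction via $\varphi := \psi_1^2+\psi_2^2$ with $\psi_1=(g-f)-h^2$ and $\psi_2=hk-1$ is exactly right, and in the reverse direction the one genuinely delicate point --- producing a smooth $\lambda\ge 1$ with $\lambda\varphi^2>f-g$ on $C=\{f\ge g\}$ by extending $\chi\,(1+(f-g)^2)/\varphi^2$ by zero across $Z(\varphi)$ and observing $1+t^2>t$ --- is handled correctly, since $C$ and $Z(\varphi)$ are disjoint closed sets and the two open sets $\mathbb{R}^{E'}\setminus Z(\varphi)$ and $\mathbb{R}^{E'}\setminus\operatorname{supp}(\chi)$ cover $\mathbb{R}^{E'}$. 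The reduction to a finite $E'\subseteq E$ and the derivation of (i) and (ii) from the equivalence via $\varphi_f^2+\varphi_g^2\in I$ are likewise correct.
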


\begin{fact}Let $A=\dfrac{\mathcal{C}^{\infty}(\mathbb{R}^{E})}{I}$  for some set $E$ be a $\mathcal{C}^{\infty}-$field, so $I = \sqrt[\infty]{I}$. The relation $\prec$, given in \textbf{Definition \ref{villa}}, is such that:
 $$(\forall f+I \in A)(f+I \neq 0+I \rightarrow (f + I \prec 0)\vee (0 \prec f+I))$$
\end{fact}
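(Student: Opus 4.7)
I would exploit the characterization of $\prec$ supplied by the preceding \textbf{Fact}: for $A = \mathcal{C}^{\infty}(\mathbb{R}^E)/I$,
$$f + I \prec g + I \iff (\exists \varphi \in I)(\forall x \in Z(\varphi))(f(x) < g(x)).$$
The key structural input is that, since $A$ is a $\mathcal{C}^{\infty}$-field, $I$ is a maximal (hence prime) $\mathcal{C}^{\infty}$-radical ideal. By the bijection of \textbf{Proposition \ref{Garland}} (items (b) and (c); passing to the colimit over finite subsets $E' \subseteq E$ to handle the case $|E| = \infty$), the associated set
$$\widehat{I} = \{ Z(\psi) : \psi \in I \}$$
is a prime maximal filter on the lattice of closed subsets of $\mathbb{R}^E$.

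Given $f+I \neq 0+I$, i.e.\ $f \notin I$, the field hypothesis supplies some $g \in \mathcal{C}^{\infty}(\mathbb{R}^E)$ with $fg - 1 \in I$; in particular $Z(fg-1) \in \widehat{I}$. Next I would introduce the closed subsets
$$A_+ := \{ x \in \mathbb{R}^E : f(x) \geq 0\}, \qquad A_- := \{ x \in \mathbb{R}^E : f(x) \leq 0\},$$
whose union is $\mathbb{R}^E = Z(0) \in \widehat{I}$. Primality of $\widehat{I}$ then forces $A_+ \in \widehat{I}$ or $A_- \in \widehat{I}$.

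Suppose, without loss of generality, $A_+ \in \widehat{I}$, so $A_+ = Z(h)$ for some $h \in I$. Setting
$$\varphi := h^2 + (fg - 1)^2 \in I,$$
one has $Z(\varphi) = Z(h) \cap Z(fg - 1) = A_+ \cap Z(fg-1)$; on this set $f(x) \geq 0$ and $f(x)g(x) = 1$, forcing $f(x) > 0$. Invoking the preceding \textbf{Fact} with the pair $(0, f)$ yields $0 \prec f + I$. A symmetric argument with $A_- \in \widehat{I}$ produces $\varphi' \in I$ with $f(x) < 0$ on $Z(\varphi')$, i.e.\ $f+I \prec 0$.

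\paragraph{Main obstacle.} The only non-routine step is pinning down primality of $\widehat{I}$ in the generality of an arbitrary index set $E$: \textbf{Proposition \ref{Garland}} is formulated for $\mathcal{C}^{\infty}(\mathbb{R}^n)$ with finite $n$, and one must check the correspondence between maximal $\mathcal{C}^{\infty}$-radical ideals and prime (ultra)filters of closed sets persists when $E$ is infinite, using either directed colimits of the finitely generated case or a direct verification that every closed $F \subseteq \mathbb{R}^E$ is a zero set of a smooth function (via a smooth characteristic function). Once this is secured, the dichotomy $A_+ \in \widehat{I}$ or $A_- \in \widehat{I}$ together with the trick $\varphi = h^2 + (fg-1)^2$ closes the argument cleanly.
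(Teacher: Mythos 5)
Your argument is correct, and it is worth noting at the outset that the paper itself offers no proof to compare against: this Fact is stated with only a citation to \cite{rings1}. Your derivation is nonetheless entirely in the spirit of the paper's own machinery. The decomposition $\mathbb{R}^{E} = \{f \geq 0\} \cup \{f \leq 0\}$ combined with primality of the zero-set filter $\widehat{I}$ (item (c) of \textbf{Proposition \ref{Garland}}) is exactly the ideal--filter translation developed in \textbf{Section \ref{dccr}}, and the witness $\varphi = h^{2} + (fg-1)^{2}$ correctly packages the two constraints into a single element of $I$, since a sum of squares of reals vanishes iff each summand does; on $Z(\varphi)$ one has $f(x)g(x)=1$ and $f(x)\geq 0$, hence $f(x)>0$, and the preceding Fact then yields $0 \prec f+I$. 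The one issue you flag, that \textbf{Proposition \ref{Garland}} is stated only for finite $n$, is genuine but harmless: any $f \in \mathcal{C}^{\infty}(\mathbb{R}^{E})$ factors through a finite projection $\mathbb{R}^{E} \to \mathbb{R}^{E'}$, the trace $I' = I \cap \mathcal{C}^{\infty}(\mathbb{R}^{E'})$ is a proper prime ideal of $\mathcal{C}^{\infty}(\mathbb{R}^{E'})$ to which the finite-dimensional statement applies, and both $h$ and $fg-1$ may be taken in $I'$, so $\varphi \in I' \subseteq I$ and its zero set is a cylinder on which the sign condition is checked in the $E'$-coordinates. Two small remarks: properness of $I$ guarantees $Z(\varphi) \neq \varnothing$, so the conclusion is not obtained vacuously; and your appeal to the zero-set characterization of $\prec$ is legitimate as a use of a preceding result, but that characterization is itself only cited, not proved, in the paper, so your argument is conditional on it.
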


We have the following:

\begin{proposition}\label{doria}For any  $\mathcal{C}^{\infty}-$ring $A$, we have:
$$1+ \sum A^2 \subseteq A^{\times},$$

where $\sum A^2 = \{ \sum_{i=1}^{n}a_i^2 | n \in \mathbb{N}, a_i \in A\}$. In particular, every $\mathcal{C}^{\infty}$ ring $A$ is such that its underlying commutative unital ring, $\widetilde{U}(A)$ is a semi-real ring.
\end{proposition}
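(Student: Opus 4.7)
The plan is to exploit the $\mathcal{C}^{\infty}$-ring structure directly: division by a strictly positive smooth function is smooth, so the required inverses already live in the language of smooth functions, and the axioms of a $\mathcal{C}^{\infty}$-structure (Definition \ref{CravoeCanela}) transport the identity from $\mathcal{C}^{\infty}(\mathbb{R}^n)$ to $A$.

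First I would fix $n \in \mathbb{N}$ and consider the smooth function
$$g_n : \mathbb{R}^n \to \mathbb{R}, \qquad g_n(x_1, \ldots, x_n) = \frac{1}{1 + x_1^2 + \cdots + x_n^2},$$
which is well-defined and smooth because its denominator is strictly positive on $\mathbb{R}^n$. I would also consider the polynomial function $p_n(x_1, \ldots, x_n) = 1 + x_1^2 + \cdots + x_n^2$, also a smooth function $\mathbb{R}^n \to \mathbb{R}$. The key elementary observation is that, as smooth functions, $g_n \cdot p_n \equiv 1$, i.e.\ $g_n(\vec x) \cdot p_n(\vec x) = 1$ for every $\vec x \in \mathbb{R}^n$; equivalently, $m \circ (g_n, p_n) = 1$ where $m : \mathbb{R}^2 \to \mathbb{R}$ is multiplication and $1$ denotes the constant smooth function.

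Next I would apply the defining property of a $\mathcal{C}^{\infty}$-ring (Definition \ref{CravoeCanela}): since $A = (A,\Phi)$ preserves every equation between smooth functions, the equality above yields
$$\Phi(m) \circ (\Phi(g_n), \Phi(p_n)) = \Phi(1),$$
so that, evaluated at any tuple $(a_1, \ldots, a_n) \in A^n$,
$$g_n^A(a_1, \ldots, a_n) \cdot \bigl( 1_A + a_1^2 + \cdots + a_n^2 \bigr) = 1_A.$$
Hence $1_A + a_1^2 + \cdots + a_n^2 \in A^{\times}$ with explicit inverse $g_n^A(a_1, \ldots, a_n)$. Since $n$ and $(a_i)$ were arbitrary, this gives $1 + \sum A^2 \subseteq A^{\times}$, as desired.

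For the semi-realness claim, suppose $\widetilde U(A)$ were not semi-real, so $-1_A = a_1^2 + \cdots + a_n^2$ for some $a_i \in A$. Then $1_A + a_1^2 + \cdots + a_n^2 = 0_A$, but by the first part this element is invertible, forcing $A = \{0\}$. Thus every non-trivial $\mathcal{C}^{\infty}$-ring is semi-real. There is no real obstacle here; the only subtle point is to make sure the appeal to Definition \ref{CravoeCanela} is clean, which is why I would write out the decomposition $m \circ (g_n, p_n) = 1$ explicitly before invoking preservation of equations.
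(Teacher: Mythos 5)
Your proof is correct, but it takes a different route from the paper's. You invoke the $\mathcal{C}^{\infty}$-structure axiom directly: since $g_n(\vec x)=\bigl(1+\sum x_i^2\bigr)^{-1}$ is itself a smooth function on all of $\mathbb{R}^n$ and satisfies the identity $m\circ(g_n,p_n)=1$ among smooth functions, preservation of equations (Definition \ref{CravoeCanela}) hands you an explicit inverse $g_n^A(a_1,\ldots,a_n)$ in \emph{any} $\mathcal{C}^{\infty}$-ring in one stroke. The paper instead runs a three-stage reduction: it first verifies the claim for the free rings $\mathcal{C}^{\infty}(\mathbb{R}^n)$ via the criterion $Z(1+\sum f_i^2)=\varnothing$, then passes to finitely generated rings $\mathcal{C}^{\infty}(\mathbb{R}^n)/I$ using that quotient maps preserve invertibles, and finally to arbitrary $\mathcal{C}^{\infty}$-rings as directed colimits of their finitely generated subrings. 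The two arguments rest on the same analytic fact (reciprocals of strictly positive smooth functions are smooth), but yours is more uniform and avoids the colimit bookkeeping, while the paper's presentation-theoretic scheme is the template it reuses for several other properties (e.g.\ admissibility in Theorem \ref{little}), which is presumably why it is phrased that way. Your treatment of the semi-realness corollary, with the explicit caveat about the trivial ring, is also slightly more careful than the paper's.
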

\begin{proof}
First suppose $A = \mathcal{C}^{\infty}(\mathbb{R}^n)$ for some $n \in \mathbb{N}$. Given any $k \in \mathbb{N}$ and any $k-$tuple, $f_1, \cdots, f_k \in \mathcal{C}^{\infty}(\mathbb{R}^n)$, we have $Z\left( 1 + \sum_{i=1}^{k}f_i^2\right) = \varnothing$, so $1 + \sum_{i=1}^{k}f_i^2 \in \mathcal{C}^{\infty}(\mathbb{R}^n)^{\times}$.\\

Suppose, now, that $A$ is a finitely generated $\mathcal{C}^{\infty}-$ring, that is, that there are $n \in \mathbb{N}$ and an ideal $I \subseteq \mathcal{C}^{\infty}(\mathbb{R}^n)$ such that:
$$A = \dfrac{\mathcal{C}^{\infty}(\mathbb{R}^n)}{I}$$

We have, for every $k \in \mathbb{N}$ and for every $k-$tuple $f_1+I, \cdots, f_k +I \in \dfrac{\mathcal{C}^{\infty}(\mathbb{R}^n)}{I}$:

$$(1+I) + \sum_{i=1}^{k}(f_i+I)^2 = \left( 1 + \sum_{i=1}^{k}f_i^2\right) + I$$

Since $1 + \sum_{i=1}^{k}f_i^2 \in \mathcal{C}^{\infty}(\mathbb{R}^n)^{\times}$, it follows that $\left( 1 + \sum_{i=1}^{k}f_i^2\right) + I \in \left( \dfrac{\mathcal{C}^{\infty}(\mathbb{R}^n)}{I}\right)^{\times}$.\\

Finally, for an arbitrary $\mathcal{C}^{\infty}-$ring $A$, we can always write:

$$A = \varinjlim_{i \in I} A_i$$

for the directed family $\{ A_i | i \in I\}$ consisting of its finitely generated $\mathcal{C}^{\infty}-$subrings. We are going to denote the canonical colimit arrow by $\alpha_i: A_i \to \varinjlim_{i \in I}A_i$, for every $i \in I$.\\

For every $k \in \mathbb{N}$ and for every $k-$tuple, $[a_{i_1}], \cdots, [a_{i_k}] \in \varinjlim_{i \in I}A_i$, there is some $\ell \geq i_1, \cdots, i_k$ such that for every $j \in \{ 1, \cdots, k\}$ $[a_{i_j}] = [\alpha_{\ell}(\alpha_{i_j\ell}(a_{i_j}))]$. Since $A_{\ell}$ is finitely generated, it follows that $1 + \sum_{j=1}^{k} \alpha_{i_j\ell}(a_{i_j})^2 \in A_{\ell}^{\times}$, so
$$1 + \sum_{j=1}^{k}[a_{i_j}]^2 = \alpha_{\ell}\left( 1 + \sum_{j=1}^{k} \alpha_{i_j\ell}(a_{i_j})^2\right) \in \left( \varinjlim_{i \in I}A_i\right)^{\times}.$$
\end{proof}

\begin{fact}\label{famosa}Let $n \in \mathbb{N}$. We have:

$$h + I \in \left( \dfrac{\mathcal{C}^{\infty}(\mathbb{R}^n)}{I}\right)^{\times} \iff (\exists \varphi \in I)(\forall x \in Z(\varphi))(h(x) \neq 0)$$
\end{fact}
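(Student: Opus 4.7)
\textbf{Proof proposal for Fact \ref{famosa}.} The plan is to prove both implications directly by constructing explicit witnesses, without invoking the heavier machinery of $\mathcal{C}^{\infty}$-rings of fractions (though it could also be derived from \textbf{Proposition \ref{pani}} and \textbf{Corollary \ref{Jeq}}).

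For the forward direction $(\Rightarrow)$: Suppose $h + I$ is invertible in $\mathcal{C}^{\infty}(\mathbb{R}^n)/I$, with inverse $g + I$ for some $g \in \mathcal{C}^{\infty}(\mathbb{R}^n)$. Then $(h+I)(g+I) = 1 + I$, i.e., $hg - 1 \in I$. Take $\varphi := hg - 1 \in I$. Any $x \in Z(\varphi)$ satisfies $h(x) \cdot g(x) = 1$, so in particular $h(x) \neq 0$, as required.

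For the reverse direction $(\Leftarrow)$: Suppose $\varphi \in I$ is such that $h$ does not vanish on $Z(\varphi)$. The key observation is that the smooth function $h^2 + \varphi^2 \in \mathcal{C}^{\infty}(\mathbb{R}^n)$ is strictly positive everywhere. Indeed, for $x \in Z(\varphi)$ one has $h(x)^2 + \varphi(x)^2 = h(x)^2 > 0$ by hypothesis, whereas for $x \notin Z(\varphi)$ one has $\varphi(x)^2 > 0$. Hence $h^2 + \varphi^2 \in \mathcal{C}^{\infty}(\mathbb{R}^n)^{\times}$, and we may set
\[
k := (h^2 + \varphi^2)^{-1} \in \mathcal{C}^{\infty}(\mathbb{R}^n), \qquad g := k\cdot h \in \mathcal{C}^{\infty}(\mathbb{R}^n).
\]
Then $h\cdot g = k\cdot h^2 = 1 - k\cdot \varphi^2$, so $h\cdot g - 1 = -k\varphi^2 = -(k\varphi)\cdot\varphi \in I$, since $\varphi \in I$ and $I$ is an ideal. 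Thus $(h+I)(g+I) = 1 + I$, proving that $h+I$ is invertible in $\mathcal{C}^{\infty}(\mathbb{R}^n)/I$.

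Neither direction presents any real obstacle: the forward direction is an essentially formal unpacking of the invertibility condition, while the reverse direction relies only on the elementary trick of bundling $h$ and $\varphi$ into the nowhere-vanishing sum of squares $h^2 + \varphi^2$, a device already exploited repeatedly in this paper (e.g., in \textbf{Proposition \ref{griebel}} and \textbf{Proposition \ref{Garland}}). The only mild point worth flagging is that the construction crucially uses the ordered local ring structure of $\mathbb{R}$ (to guarantee $h^2 + \varphi^2 > 0$), reflecting the same phenomenon underlying \textbf{Proposition \ref{doria}}.
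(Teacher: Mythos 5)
Your proof is correct and complete: the forward direction's choice $\varphi = hg-1$ and the reverse direction's use of the nowhere-vanishing function $h^2+\varphi^2$ (so that $g=(h^2+\varphi^2)^{-1}h$ inverts $h$ modulo $I$, since $hg-1=-(h^2+\varphi^2)^{-1}\varphi\cdot\varphi\in I$) both check out, including in the degenerate case $I=\mathcal{C}^{\infty}(\mathbb{R}^n)$. The paper itself states this as a \textbf{Fact} imported from the literature and offers no proof to compare against; your argument is the standard elementary one and fills that gap correctly.
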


Recall that a totally ordered field $(F, \leq)$ is \index{real closed field}\textbf{real closed} if it satisfies:
\begin{itemize}
  \item[(a)]{$(\forall x \in F)(0 < x \rightarrow (\exists y \in F)(x=y^2))$;}
  \item[(b)]{every polynomial of odd degree has, at least, one root;}
\end{itemize}

\begin{remark}A \index{$\mathcal{C}^{\infty}-$polynomial}\textbf{$\mathcal{C}^{\infty}-$polynomial in one variable} is an element of $\mathcal{C}^{\infty}(\mathbb{R}^n)\{ t\}$. More generally, a \textbf{$\mathcal{C}^{\infty}-$polynomial in set $S$ of variables} is an element of $A\{ S\}$.
\end{remark}

As pointed out in \textbf{Theorem 2.10} of \cite{moerdijk1986rings}, we have the following:

\begin{fact}Every $\mathcal{C}^{\infty}-$field, $F$, together with its canonical preorder $\prec$ given in \textbf{Definition \ref{villa}}, is such that $\widetilde{U}(F)$ is a real closed field.
\end{fact}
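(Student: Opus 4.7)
The plan is to verify the three defining properties of a (totally ordered) real closed field for $\widetilde{U}(F)$ equipped with $\prec$: (a) $\prec$ is a strict linear order compatible with the ring operations; (b) every strictly positive element admits a square root; (c) every polynomial of odd degree in $F[x]$ has a root in $F$. Items (a) and (b) are largely packaged in the two preceding \textbf{Facts} and in \textbf{Definition \ref{villa}}, while (c) will require the $\mathcal{C}^{\infty}$-structure of $F$ in an essential way.

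For (a), since $F$ is a $\mathcal{C}^{\infty}$-field, $F \cong \mathcal{C}^{\infty}(\mathbb{R}^{E})/I$ with $\sqrt[\infty]{I} = I$, so the first \textbf{Fact} gives compatibility of $\prec$ with sums and products, and the second \textbf{Fact} gives trichotomy on nonzero elements. Irreflexivity follows from $F \neq \{0\}$, and transitivity and antisymmetry are routine; combined with field invertibility (which replaces ``$c \in A^{\times}$'' by ``$c \neq 0$''), this makes $(\widetilde{U}(F), \prec)$ a linearly ordered field. For (b), if $0 \prec a$ then by \textbf{Definition \ref{villa}} there is already $c \in F^{\times}$ with $a = c^{2}$, so $c$ is the desired square root.

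The main obstacle is (c). Given a monic polynomial $p(x) = x^{2n+1} + \sum_{i=0}^{2n} a_i x^i \in F[x]$ (one first divides by the leading coefficient, which is invertible), the strategy is to transfer a smooth root-selector from $\mathcal{C}^{\infty}(\mathbb{R}^{2n+1})$ into $F$ via the universal property of free $\mathcal{C}^{\infty}$-rings. Concretely, I would seek $\rho \in \mathcal{C}^{\infty}(\mathbb{R}^{2n+1})$ such that
\[
\rho(\vec y)^{2n+1} + \sum_{i=0}^{2n} y_i\, \rho(\vec y)^i \; = \; 0 \quad \text{in } \mathcal{C}^{\infty}(\mathbb{R}^{2n+1}).
\]
Given such $\rho$, the unique $\mathcal{C}^{\infty}$-homomorphism $\phi : \mathcal{C}^{\infty}(\mathbb{R}^{2n+1}) \to F$ sending the coordinate $y_i$ to $a_i$ then yields $\phi(\rho) \in F$ with $p(\phi(\rho)) = 0$, proving (c).

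The delicate point is therefore the construction of a global smooth selector $\rho$. Classical choices (e.g.\ the largest real root) are only continuous in the coefficients, with smoothness failing on the discriminant locus where real roots collide. Following the strategy underlying Theorem 2.10 of \cite{rings1}, I would stratify $\mathbb{R}^{2n+1}$ by the discriminant and by the signs of $p$ at controlled bounds (a smooth a priori bound $|x| \leq 1 + \sum |y_i|$ is available, cut off smoothly); on each open stratum of simple real roots, the implicit function theorem produces smooth local root-selectors, and these are patched across discriminant strata by a smooth partition of unity, exploiting that an odd-degree real polynomial always retains a simple real root in a neighborhood of any coalescence. Once $\rho$ is produced, the transfer argument in the previous paragraph completes the proof. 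The hard part is thus isolated in the purely real-analytic construction of $\rho$; everything specifically $\mathcal{C}^{\infty}$-ring-theoretic reduces to invoking the universal property of $\mathcal{C}^{\infty}(\mathbb{R}^{2n+1})$.
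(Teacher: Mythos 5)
Your steps (a) and (b) are fine: compatibility and trichotomy are exactly the content of the two Facts preceding the statement, and a strictly positive element is by \textbf{Definition \ref{villa}} literally the square of a unit. (Note that the paper offers no proof of this Fact at all --- it is quoted from \textbf{Theorem 2.10} of \cite{rings1} --- so the comparison is really with that argument.) The gap is in step (c), and it is fatal as written: for $n\geq 1$ there is no smooth, indeed no continuous, global root selector $\rho$ on the whole coefficient space $\mathbb{R}^{2n+1}$. Consider the slice $q_c(x)=x^{3}-3x+c$. For $c>2$ the unique real root lies in $(-\infty,-1)$ and tends to $-2$ as $c\to 2^{+}$; for $c<-2$ the unique real root lies in $(1,\infty)$ and tends to $2$ as $c\to -2^{-}$; for $-2<c<2$ the three real roots form disjoint continuous branches contained in $(-\infty,-1)$, $(-1,1)$ and $(1,\infty)$ respectively. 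A continuous selector must follow a single branch over the connected interval $(-2,2)$; matching at $c=2$ forces it onto the smallest branch, whose limit at $c=-2$ is $-1$, whereas the limit from $c<-2$ is $2$. So no continuous section exists even on this line in $\mathbb{R}^{3}$, hence none on $\mathbb{R}^{2n+1}$. Your proposed repair does not work either: a partition-of-unity combination of local root selectors is a convex combination of roots, which is not a root of $p$.

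The proof must use that $F$ is a $\mathcal{C}^{\infty}$-\emph{field}, not just transfer a universal identity from a free $\mathcal{C}^{\infty}$-ring. Writing $F\cong \mathcal{C}^{\infty}(\mathbb{R}^{E})/I$ with $I$ a $\mathcal{C}^{\infty}$-radical prime (indeed maximal) ideal, membership in $I$ is detected on a single zero set $Z(\varphi)$ with $\varphi\in I$ (\textbf{Fact \ref{famosa}}, \textbf{Proposition \ref{49}}), and the associated filter $\widehat{I}$ of zero sets is prime. Given the coefficients $a_{i}=f_{i}+I$, the finitely many sign and discriminant conditions governing the number and ordering of the real roots of $p_{z}(x)=x^{2n+1}+\sum_{i} f_{i}(z)x^{i}$ cut $Z(\varphi)$ into finitely many pieces; primality of $\widehat{I}$ places one piece in the filter, and on a neighbourhood of that piece a fixed \emph{simple} real root can be followed smoothly by the implicit function theorem. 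The resulting germ, extended to $\mathbb{R}^{E}$, represents the required root of $p$ in $F$. This reduction modulo the prime filter $\widehat{I}$ is the essential ingredient of the Moerdijk--Reyes argument that your proposal is missing; without it, the global selector you try to build simply does not exist.
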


As a consequence of the above fact, given any polynomial $p \in F[x]$ and any $f,g \in F$ such that $f \prec g$, if $p(f)<p(g)$ then there is some $h \in ]f,g[$ such that $p(h)=0$.\\

We have the $\mathcal{C}^{\infty}-$analog of the notion of ``real closedness'':\\

\begin{definition}Let $(F,\prec)$ be a $\mathcal{C}^{\infty}-$field. We say that $(F,\prec)$ is \index{$\mathcal{C}^{\infty}-$real closed}\textbf{$\mathcal{C}^{\infty}-$real closed} if, and only if:

$$(\forall f \in F\{ x\})((f(0)\cdot f(1)<0)\&(1 \in \langle \{ f, f'\}\rangle \subseteq F\{ x\} ) \rightarrow (\exists \alpha \in ]0,1[ \subseteq F)(f(\alpha)=0))$$
\end{definition}

\begin{fact}As proved in \textbf{Theorem 2.10'} of \cite{moerdijk1986rings}, every $\mathcal{C}^{\infty}-$field is $\mathcal{C}^{\infty}-$real closed.
\end{fact}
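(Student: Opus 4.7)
The plan is to combine the classical Implicit Function Theorem with the $\mathcal{C}^{\infty}$-structure of $F$, using the real-closedness of $\widetilde{U}(F)$ (Theorem 2.10 of \cite{rings1}) to pin down the required root. First, I would unpack the hypotheses in concrete smooth terms: since $F\{x\} = F \otimes_{\infty} \mathcal{C}^{\infty}(\mathbb{R})$, any $f \in F\{x\}$ has the form $f = \Phi(\alpha_1,\ldots,\alpha_n, x)$ for some $\Phi \in \mathcal{C}^{\infty}(\mathbb{R}^{n+1})$ and a finite tuple $\vec\alpha = (\alpha_1, \ldots, \alpha_n) \in F^n$; similarly, the witnesses $g, h \in F\{x\}$ of $gf + hf' = 1$ arise from smooth functions $G, H \in \mathcal{C}^{\infty}(\mathbb{R}^{n+1})$ (enlarging $n$ if necessary), and $f'$ corresponds to $\partial_{n+1}\Phi$. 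The hypotheses then translate to: $\Phi^F(\vec\alpha, 0) \cdot \Phi^F(\vec\alpha, 1) \prec 0$ in $F$, and $G\cdot \Phi + H \cdot \partial_{n+1}\Phi - 1 \in \ker(\mathcal{C}^{\infty}(\mathbb{R}^{n+1}) \to F\{x\})$.

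Next, I would construct a smooth ``root function'' at the level of $\mathbb{R}$. Consider the open set $U = \{\vec u \in \mathbb{R}^n : \Phi(\vec u, 0) \cdot \Phi(\vec u, 1) < 0 \text{ and } 1 \in \langle \Phi(\vec u, -), \partial_{n+1}\Phi(\vec u,-)\rangle\}$. On each fibre $\{\vec u\} \times [0,1]$, the classical IVT in $\mathbb{R}$ produces a root $t \in (0,1)$ of $\Phi(\vec u, -)$; the transversality condition guarantees that this root is simple (hence locally unique and smoothly parametrized by $\vec u$ via the Implicit Function Theorem). A standard connectedness/monotonicity argument using the sign-change hypothesis and the non-vanishing of $\partial_{n+1}\Phi$ on the zero set shows this root is globally unique on each fibre, yielding a well-defined smooth $\gamma: U \to (0,1)$ with $\Phi(\vec u, \gamma(\vec u)) \equiv 0$. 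After multiplying by a smooth cut-off, one may regard $\gamma$ as an element of $\mathcal{C}^{\infty}(\mathbb{R}^n)$ valued in $(0,1)$ on $U$.

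Finally, I would set $\alpha := \gamma^F(\vec\alpha) \in F$ using the $\mathcal{C}^{\infty}$-structure of $F$. Functoriality of the $\mathcal{C}^{\infty}$-operations turns the identity $\Phi(\vec u, \gamma(\vec u)) = 0$ in $\mathcal{C}^{\infty}(\mathbb{R}^n)$ into $\Phi^F(\vec\alpha, \alpha) = f(\alpha) = 0$ in $F$, and $0 < \gamma < 1$ in $\mathcal{C}^{\infty}(\mathbb{R}^n)$ transports to $0 \prec \alpha \prec 1$ in $F$. The main obstacle is the globalization step in the middle paragraph: the IFT is intrinsically local, and to obtain a $\gamma$ in $\mathcal{C}^{\infty}(\mathbb{R}^n)$ one must verify that the collection of local solutions glues to a single smooth function whose domain of definition contains the specialization of $\vec\alpha$ in the smooth-spectral sense (not merely the honest point $\vec\alpha \in \mathbb{R}^n$, which might not even exist). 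The resolution is to work directly with the Nullstellensatz-type reformulation of $1 \in \langle f, f'\rangle$ (Proposition \ref{Nullstellensatz} and the Separation Theorems, Theorem \ref{TS}), which forces the relevant common zero-set to be empty in every $\mathcal{C}^{\infty}$-specialization, and thus allows the locally defined roots to be patched via a partition-of-unity argument yielding a globally smooth $\gamma$.
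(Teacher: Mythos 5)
The paper itself offers no proof of this Fact --- it is quoted from \textbf{Theorem 2.10'} of \cite{rings1} --- and your overall strategy (pass to a finitely generated $\mathcal{C}^{\infty}$-subring $A \cong \mathcal{C}^{\infty}(\mathbb{R}^n)/I$ containing all parameters, build a smooth root function over an open $U \subseteq \mathbb{R}^n$, and push it into $F$ through the $\mathcal{C}^{\infty}$-structure) is indeed the standard one. But your middle step contains two genuine errors. First, the claimed fibrewise uniqueness of the root is false: the hypotheses $f(0)f(1)\prec 0$ and $1 \in \langle f, f'\rangle$ only force every root in $[0,1]$ to be \emph{simple}, not unique; for instance $\Phi(\vec u, t)=\cos(3\pi t)$ satisfies both and has three simple roots in $(0,1)$, so no ``connectedness/monotonicity'' argument can rescue uniqueness. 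Second, and consequently, the proposed partition-of-unity patching of the local IFT branches would fail: a convex combination of two distinct roots is not a root, so blending branches destroys the identity $\Phi(\vec u,\gamma(\vec u))=0$. The repair is to define $\gamma(\vec u)$ as the \emph{least} root of $\Phi(\vec u,-)$ in $(0,1)$; since all roots in $[0,1]$ are simple and the endpoints are not roots, the number of roots is locally constant in $\vec u$ and the zero set is locally a disjoint union of IFT graphs, so the least branch is automatically smooth on $U$ and no gluing is needed.

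The passage from $\gamma \in \mathcal{C}^{\infty}(U)$ to an element of $F$ also needs more care than ``multiplying by a smooth cut-off'' (which changes the function) or an appeal to the zero set $Z(I)=\bigcap_{f\in I}Z(f)$ (an open neighbourhood of this intersection need not contain any single $Z(\varphi)$ with $\varphi\in I$). What actually works is that your hypotheses are \emph{finitely many} conditions in $A$: an invertibility of the form of \textbf{Fact \ref{famosa}}, and a membership of $G\Phi + H\,\partial_{n+1}\Phi - 1$ in the ideal of $\mathcal{C}^{\infty}(\mathbb{R}^{n+1})$ generated by $I$, which is a finite sum $\sum_j c_j i_j$ with $i_j \in I$. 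Together these produce a single $\varphi \in I$ with $Z(\varphi)\subseteq U$, so a characteristic function $\chi_U$ of $U$ is invertible in $A$ (hence in $F$) by \textbf{Fact \ref{famosa}}, the homomorphism $\mathcal{C}^{\infty}(\mathbb{R}^n)\to F$ factors through $\mathcal{C}^{\infty}(\mathbb{R}^n)\{\chi_U^{-1}\}\cong\mathcal{C}^{\infty}(U)$ (\textbf{Proposition \ref{pani}} with \textbf{Remark \ref{raca}}), and $\alpha$ is the image of $\gamma$ under this factorization; the conclusion $0\prec\alpha\prec 1$ then follows by writing $\gamma$ and $1-\gamma$ as squares of units of $\mathcal{C}^{\infty}(U)$. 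With these two corrections your argument goes through.
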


From the preceding proposition, we conclude that $\overline{f}\prec \overline{g}$ occurs if, and only if, there is a ``witness'' $\varphi \in I$ such that $(\forall x \in Z(\varphi))(f(x)<g(x))$.

Given any $\mathcal{C}^{\infty}-$ring $A$ and any $\mathfrak{p} \in {\rm Spec}^{\infty}\,(A)$, let:

$$k_{\p}:= \left( \dfrac{A}{\p} \right)\left\{ {\dfrac{A}{\p} \setminus \{ 0 + \p\}}^{-1}\right\},$$

that is, $k_{\p}(A)$ is the $\mathcal{C}^{\infty}-$field obtained by taking the quotient $\dfrac{A}{\p}$:

$$q_{\p}: A \to \dfrac{A}{\p}$$

and then taking its $\mathcal{C}^{\infty}-$ring of fractions with respect to $\dfrac{A}{\p}\setminus \{ 0 + \p\}$,

$$\eta_{\dfrac{A}{\p} \setminus \{ 0 + \p\}}: \dfrac{A}{\p} \rightarrow \left( \dfrac{A}{\p}\right)\left\{ {\dfrac{A}{\p} \setminus \{ 0 + \p\}}^{-1}\right\}.$$

The family of $\mathcal{C}^{\infty}-$fields  $\{ k_{\p}(A)| \p \in {\rm Spec}^{\infty}\,(A)\}$ has the following multi-universal property:

``Given any $\mathcal{C}^{\infty}-$homomorphism $f: A \to \mathbb{K}$, where $\mathbb{K}$ is a $\mathcal{C}^{\infty}-$field, there is a unique $\mathcal{C}^{\infty}-$radical prime ideal $\mathfrak{p}$ and a unique  $\mathcal{C}^{\infty}-$homomorphism $\widetilde{f}: k_{\p}(A) \to \mathbb{K}$ such that the following diagram commutes:
$$\xymatrixcolsep{5pc}\xymatrix{
A \ar[r]^{\alpha_{\p}} \ar[dr]^{f} & k_{\p}(A) \ar@{.>}[d]^{\widetilde{f}}\\
  & K},$$

where $\alpha_{\p} = \eta_{\dfrac{A}{\p} \setminus \{ 0 + \p\}} \circ q_{\p}: A \rightarrow k_{\p}(A)$.''

Thus, given $f: A \to K$, take $\p = \ker(f)$, so $\overline{f}$ is injective, $\dfrac{A}{\p}$ is a $\mathcal{C}^{\infty}-$reduced $\mathcal{C}^{\infty}-$ring. By the universal property of the smooth fraction field $k_{\p}(A)$, there is a unique arrow $\widetilde{f}: k_{\p}(A) \to \mathbb{K}$ such that the following diagram commutes:

$$\xymatrixcolsep{5pc}\xymatrix{
\dfrac{A}{\p} \ar[r]^{\alpha_{\p}} \ar[dr]_{f} & \dfrac{A}{\p} \ar@{-->}[d]^{\widetilde{f}}\\
 & \mathbb{K}
}$$

\begin{definition} \label{rel}
Let $\mathcal{F}$ be the (proper) class of all the $\mathcal{C}^{\infty}-$homomorphisms of  $A$ to some $\mathcal{C}^{\infty}-$field. We define the following relation $\mathcal{R}$: given $h_1: A \to F_1$ and $h_2: A \to F_2$, we say that $h_1$ is related with $h_2$ if, and only if, there is some $\mathcal{C}^{\infty}-$field $\widetilde{F}$ and some $\mathcal{C}^{\infty}-$fields homomorphisms $\mathcal{C}^{\infty}$ $f_1: F_1 \to \widetilde{F}$ and $f_2: F_2 \to \widetilde{F}$ such that the following diagram commutes:

$$\xymatrixcolsep{5pc}\xymatrix{
  & F_1 \ar[dr]^{f_1} & \\
A \ar[ur]^{h_1} \ar[dr]_{h_2} & & \widetilde{F} \\
  & F_2 \ar[ur]_{h_2}
}$$

The relation $\mathcal{R}$ defined above is symmetric and reflexive.
\end{definition}

Let $h_1: A \to F_1$ and $h_2: A \to F_2$ be two $\mathcal{C}^{\infty}-$homomorphisms of  $A$ to the $\mathcal{C}^{\infty}-$fields $F_1,F_2$ such that $(h_1,h_2) \in \mathcal{R}$, and let $f_1: F_1 \to \widetilde{F}$ and $f_2: F_2 \to \widetilde{F}$ be two $\mathcal{C}^{\infty}-$homomorphisms to the $\mathcal{C}^{\infty}-$field $\widetilde{F}$ such that $f_1 \circ h_1= f_2\circ h_2$, so:
$$(f_1 \circ h_1)^{\dashv}[\{ 0\}] = (f_2 \circ h_2)^{\dashv}[\{ 0\}]$$
Then
$$\ker(h_1) = h_1^{\dashv}[\{ 0\}] = h_1^{\dashv}[f_1^{\dashv}[\{ 0\}]] =  h_2^{\dashv}[f_2^{\dashv}[\{ 0\}]] = h_2^{\dashv}[\{ 0\}] = \ker(h_2).$$

The above considerations prove the following:

\begin{proposition}If $h_1: A \to F_1$ and $h_2: A \to F_2$ be two $\mathcal{C}^{\infty}-$homomorphisms from the $\mathcal{C}^{\infty}-$ring  $A$ to the $\mathcal{C}^{\infty}-$fields $F_1,F_2$ such that $(h_1,h_2) \in \mathcal{R}$, then $\ker(h_1)=\ker(h_2)$.
\end{proposition}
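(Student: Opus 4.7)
The plan is to follow the very computation sketched immediately before the statement, supplying the one missing justification: that the $\mathcal{C}^{\infty}$-homomorphisms $f_1\colon F_1 \to \widetilde{F}$ and $f_2\colon F_2 \to \widetilde{F}$ guaranteed by the definition of $\mathcal{R}$ must be injective. Once this is in hand, the equality of kernels follows by a purely set-theoretic unfolding of preimages.

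First I would unpack the relation: by \textbf{Definition \ref{rel}}, the hypothesis $(h_1,h_2)\in\mathcal{R}$ provides a $\mathcal{C}^{\infty}$-field $\widetilde{F}$ together with $\mathcal{C}^{\infty}$-homomorphisms $f_1\colon F_1\to\widetilde{F}$ and $f_2\colon F_2\to\widetilde{F}$ such that $f_1\circ h_1 = f_2\circ h_2$. The next step, which is the only one with actual content, is to observe that a $\mathcal{C}^{\infty}$-homomorphism $f\colon F\to\widetilde{F}$ from a $\mathcal{C}^{\infty}$-field into a nonzero $\mathcal{C}^{\infty}$-ring is always injective: indeed $\ker f$ is an ideal of $F$, and since $F$ is a $\mathcal{C}^{\infty}$-field (its underlying ring is a field by \textbf{Definition \ref{distintos}}) its only ideals are $(0)$ and $F$ itself; the latter is excluded because $f(1_F)=1_{\widetilde{F}}\neq 0_{\widetilde{F}}$ (as $\widetilde{F}$ is itself a $\mathcal{C}^{\infty}$-field, hence nontrivial). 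Thus $f_1^{\dashv}[\{0\}]=\{0_{F_1}\}$ and $f_2^{\dashv}[\{0\}]=\{0_{F_2}\}$.

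With injectivity of $f_1$ and $f_2$ established, I would chain the preimage identities exactly as in the preamble to the statement:
\begin{equation*}
\ker(h_1) \;=\; h_1^{\dashv}[\{0_{F_1}\}] \;=\; h_1^{\dashv}[f_1^{\dashv}[\{0_{\widetilde{F}}\}]] \;=\; (f_1\circ h_1)^{\dashv}[\{0_{\widetilde{F}}\}],
\end{equation*}
and symmetrically $\ker(h_2) = (f_2\circ h_2)^{\dashv}[\{0_{\widetilde{F}}\}]$; the commutativity $f_1\circ h_1 = f_2\circ h_2$ then identifies the two preimages, giving $\ker(h_1)=\ker(h_2)$.

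There is essentially no obstacle here: the only nontrivial point is the injectivity of morphisms out of a $\mathcal{C}^{\infty}$-field, and this is immediate from the field-theoretic characterization of $\mathcal{C}^{\infty}$-fields via the forgetful functor $\widetilde{U}$. The proof is thus a short one-paragraph verification, and no appeal to heavier machinery (rings of fractions, $\mathcal{C}^{\infty}$-radicals, or smooth saturation) is needed.
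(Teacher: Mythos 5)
Your proof is correct and follows essentially the same route as the paper: the paper's own argument is precisely the chain of preimage identities $\ker(h_1)=h_1^{\dashv}[f_1^{\dashv}[\{0\}]]=(f_1\circ h_1)^{\dashv}[\{0\}]=(f_2\circ h_2)^{\dashv}[\{0\}]=\ker(h_2)$ laid out in the paragraph preceding the statement, with the injectivity of $\mathcal{C}^{\infty}$-field homomorphisms only remarked in passing afterwards. You merely make that injectivity step explicit, which is a welcome clarification but not a different proof.
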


The above proposition  has the following immediate consequence:

\begin{corollary}Keeping the same notations of the above result, let $\mathcal{R}^t$ be the transitive closure of $\mathcal{R}$. Then $(h_1,h_2) \in \mathcal{R}^t$ implies $\ker(h_1) = \ker(h_2)$. Thus, $\mathcal{R}^t$ is an equivalence relation on $\mathcal{F}$. We are going to denote the quotient set $\dfrac{\mathcal{F}}{\mathcal{R}^t}$ by $\widetilde{\mathcal{F}}$.
\end{corollary}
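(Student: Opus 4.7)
The plan is to dispatch both assertions in tandem: first verify the three axioms of an equivalence relation for $\mathcal{R}^t$, then track kernels along chains witnessing membership in $\mathcal{R}^t$.

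First I would verify that $\mathcal{R}^t$ is an equivalence relation on $\mathcal{F}$. Reflexivity is inherited directly from $\mathcal{R}$ itself, which the paper already records as reflexive: for any $h \colon A \to F$ in $\mathcal{F}$, the witnessing data $\widetilde{F} = F$ with $f_1 = f_2 = \mathrm{id}_F$ yields $f_1 \circ h = f_2 \circ h$, so $(h,h) \in \mathcal{R} \subseteq \mathcal{R}^t$. Transitivity holds by the very definition of the transitive closure. For symmetry, observe first that $\mathcal{R}$ is symmetric because its defining diagram is symmetric in the pairs $(h_1, f_1)$ and $(h_2, f_2)$; the transitive closure of a symmetric relation is again symmetric, since given a chain $h_1 = g_0 \,\mathcal{R}\, g_1 \,\mathcal{R}\, \cdots \,\mathcal{R}\, g_n = h_2$ witnessing $(h_1,h_2) \in \mathcal{R}^t$, the reversed chain $h_2 = g_n \,\mathcal{R}\, g_{n-1} \,\mathcal{R}\, \cdots \,\mathcal{R}\, g_0 = h_1$ witnesses $(h_2,h_1) \in \mathcal{R}^t$.

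Second, to establish that $(h_1, h_2) \in \mathcal{R}^t$ implies $\ker(h_1) = \ker(h_2)$, I would proceed by induction on the length of a witnessing chain. Unfolding the definition of the transitive closure, such a pair is accompanied by a finite sequence $h_1 = g_0, g_1, \ldots, g_n = h_2$ in $\mathcal{F}$ with $(g_i, g_{i+1}) \in \mathcal{R}$ for every $0 \le i < n$. The immediately preceding proposition applied at each successive step yields $\ker(g_i) = \ker(g_{i+1})$, and concatenating these $n$ equalities produces $\ker(h_1) = \ker(g_0) = \ker(g_1) = \cdots = \ker(g_n) = \ker(h_2)$.

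No genuine obstacle arises: the statement is essentially a formal consequence of the preceding proposition together with the standard observation that the transitive closure of a reflexive, symmetric relation is automatically an equivalence relation. The only subtle bookkeeping point worth flagging is that $\mathcal{F}$ is a proper class rather than a set, so the equivalence classes $[h]_{\mathcal{R}^t}$ that assemble into $\widetilde{\mathcal{F}}$ are proper classes too; however, the well-defined map $[h]_{\mathcal{R}^t} \mapsto \ker(h)$ into the genuine set ${\rm Spec}^{\infty}(A)$ confirms that $\widetilde{\mathcal{F}}$ behaves as a set up to the level of granularity needed for the subsequent development.
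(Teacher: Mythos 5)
Your proof is correct and is exactly the routine argument the paper leaves implicit: the corollary is stated without proof as an immediate consequence of the preceding proposition, and your chain-induction for the kernel equality together with the standard fact that the transitive closure of a reflexive symmetric relation is an equivalence relation is precisely what is needed. Your aside about $\mathcal{F}$ being a proper class is a fair observation that the paper also glosses over, and your resolution via the kernel map into ${\rm Spec}^{\infty}(A)$ is the right way to justify treating $\widetilde{\mathcal{F}}$ as a set.
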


Let $F_1, F_2, \widetilde{F}$ be $\mathcal{C}^{\infty}-$fields  and $f_1: F_1 \to \widetilde{F}$ and $f_2: F_2 \to \widetilde{F}$ be two following $\mathcal{C}^{\infty}-$fields homomorphisms. Since $\mathcal{C}^{\infty}-$fields homomorphisms must be injective maps, we have the following:\\

\begin{proposition}The following relation
$$\beta = \{ ([h: A \to F], \ker (h)) | F \, \mbox{is a}\, \mathcal{C}^{\infty}-\mbox{field} \} \subseteq \widetilde{\mathcal{F}} \times {\rm Spec}^{\infty}(A)$$

is a functional relation whose domain is $\widetilde{\mathcal{F}}$.
\end{proposition}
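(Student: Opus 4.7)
\medskip

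\noindent\textbf{Proof proposal.} To show that $\beta$ is a functional relation with $\mathrm{dom}(\beta) = \widetilde{\mathcal{F}}$, I need to check three things: (i) $\beta$ is well-defined on equivalence classes, (ii) it is total on $\widetilde{\mathcal{F}}$, and (iii) every value $\ker(h)$ actually lies in ${\rm Spec}^\infty(A)$. Item (i) is immediate from the preceding corollary, which asserts that $(h_1,h_2)\in \mathcal{R}^t$ implies $\ker(h_1)=\ker(h_2)$; item (ii) is trivial, since each class $[h]\in \widetilde{\mathcal{F}}$ has at least one representative $h$ and $([h],\ker(h)) \in \beta$ by definition.

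\medskip

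The substantive step is (iii): given any $\mathcal{C}^\infty$-homomorphism $h: A \to F$ into a $\mathcal{C}^\infty$-field $F$, I must show $\ker(h) \in {\rm Spec}^\infty(A)$, i.e.\ that $\ker(h)$ is a proper, prime, $\mathcal{C}^\infty$-radical ideal of $A$. The plan is to identify $\ker(h) = h^{\dashv}[\{0_F\}]$ and transfer the three required properties from the ideal $\{0_F\} \subseteq F$.

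\medskip

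First, I would verify that $\{0_F\}$ is a $\mathcal{C}^\infty$-radical prime ideal of $F$. Since $F$ is a $\mathcal{C}^\infty$-field, its underlying ring is a field, so $\{0_F\}$ is a proper prime ideal by elementary commutative algebra. Moreover, by the earlier proposition that every $\mathcal{C}^\infty$-field is a $\mathcal{C}^\infty$-reduced $\mathcal{C}^\infty$-ring, we have $\sqrt[\infty]{(0_F)} = (0_F)$, so $\{0_F\}$ is $\mathcal{C}^\infty$-radical. Hence $\{0_F\} \in {\rm Spec}^\infty(F)$.

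\medskip

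Second, I would apply \textbf{Theorem \ref{preim}} to $h:A\to F$ and $J = \{0_F\}$: since $J$ is $\mathcal{C}^\infty$-radical in $F$, $h^{\dashv}[J] = \ker(h)$ is a $\mathcal{C}^\infty$-radical ideal in $A$. Primeness of $\ker(h)$ follows from the standard fact that the preimage of a prime ideal under a ring homomorphism is prime (applied to $\widetilde{U}(h)$), and properness follows from $h(1_A) = 1_F \neq 0_F$. Combining these facts gives $\ker(h) \in {\rm Spec}^\infty(A)$, completing the verification that $\beta$ is a function $\widetilde{\mathcal{F}} \to {\rm Spec}^\infty(A)$. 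No single step is genuinely hard here; the only mildly subtle point is making sure that we can legitimately invoke \textbf{Theorem \ref{preim}}, which requires knowing that $\{0_F\}$ is $\mathcal{C}^\infty$-radical, and this is exactly where the already-established fact that $\mathcal{C}^\infty$-fields are $\mathcal{C}^\infty$-reduced plays its role.
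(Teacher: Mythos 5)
Your proposal is correct, and it is in fact more complete than the proof given in the paper. The paper's proof addresses only univalence: it takes two representatives $h:A\to F_1$ and $g:A\to F_2$ with $[h]=[g]$, uses the commuting cone over a common $\mathcal{C}^{\infty}$-field $\widetilde{F}$ to compute $\ker(h)=\ker(f_1\circ h)=\ker(f_2\circ g)=\ker(g)$, and stops there; it silently assumes that $\ker(h)$ lies in ${\rm Spec}^{\infty}(A)$. You handle univalence the same way (by citing the corollary that $(h_1,h_2)\in\mathcal{R}^t$ implies $\ker(h_1)=\ker(h_2)$, which is exactly where the paper's diagram-chase lives), and you additionally supply the codomain verification: $\{0_F\}$ is a proper prime ideal because $F$ is a field, it is $\mathcal{C}^{\infty}$-radical because $\mathcal{C}^{\infty}$-fields are $\mathcal{C}^{\infty}$-reduced, and then \textbf{Theorem \ref{preim}} transfers $\mathcal{C}^{\infty}$-radicality along $h^{\dashv}$ while the usual commutative-algebra fact transfers primeness. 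This extra step is genuinely needed for the statement as written (the relation is claimed to land in $\widetilde{\mathcal{F}}\times{\rm Spec}^{\infty}(A)$, and the subsequent \textbf{Theorem \ref{2}} relies on $\beta$ being a map into ${\rm Spec}^{\infty}(A)$), so your version closes a small gap rather than merely duplicating the paper.
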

\begin{proof}
Suppose $[h: A \to F_1] = [g: A \to F_2]$, so there are maps $f_1: F_1 \to \widetilde{F}$ and $f_2: F_2 \to \widetilde{F}$ for some $\mathcal{C}^{\infty}-$field $\widetilde{F}$, such that the following diagram commutes:

$$\xymatrix{
    & F \ar[rd]^{f_1} &    \\
A \ar[ur]^{h} \ar[dr]^{g} &  & \widetilde{F}\\
        & F_2 \ar[ur]^{f_2} & \\
}.$$

Now, if $([h: A \to F_1], \ker (h)), ([g: A \to F_2], \ker (g)) \in \beta$ are such that $[h: A \to F_1] = [g: A \to F_2]$, then:

$$f_1 \circ h = f_2 \circ g$$
so
$$\ker(h) =  h^{\dashv}[\{ 0 \}] = h^{\dashv}[f_1^{\dashv}[\{0\}]] = \ker(f_1 \circ h) = \ker (f_2 \circ g) = g^{\dashv}[f_2^{\dashv}[\{ 0\}]] = g^{\dashv}[\{ 0 \}] = \ker(g)
$$
\end{proof}

\begin{definition}Let $A$ be a $\mathcal{C}^{\infty}-$ring. A \index{$\mathcal{C}^{\infty}-$ordering}$\mathcal{C}^{\infty}-$\textbf{ordering} in $A$ is a subset $P \subseteq A$ such that:
\begin{itemize}
  \item[(O1)]{$P + P \subseteq P$;}
  \item[(O2)]{$P \cdot P \subseteq P$;}
  \item[(O3)]{$P \cup (-P) = A$}
  \item[(O4)]{$P \cap (-P) = \mathfrak{p} \in {\rm Spec}^{\infty}\,(A)$}
\end{itemize}
\end{definition}

\begin{definition}Let $A$ be a $\mathcal{C}^{\infty}-$ring. Given a $\mathcal{C}^{\infty}-$ordering $P$ in $A$, the \index{$\mathcal{C}^{\infty}-$support}$\mathcal{C}^{\infty}-$\textbf{support} of $A$ is given by:

$${\rm supp}^{\infty}(P):= P \cap (-P)$$
\end{definition}

\begin{definition}\label{Santo}Let $A$ be a $\mathcal{C}^{\infty}-$ring. The \index{$\mathcal{C}^{\infty}-$real spectrum of  $A$}\textbf{$\mathcal{C}^{\infty}-$real spectrum of  $A$} is given by:

$${\rm Sper}^{\infty}\,(A)=\{ P \subset A | P \, \mbox{is an ordering of the elements of}\,\, A\}$$
together with the (spectral) topology generated by the sets:

$$H^{\infty}(a) = \{ P \in {\rm Sper}^{\infty}\,(A) | a \in P \setminus {\rm supp}^{\infty}\,(P)\}$$

for every $a \in A$. The topology generated by these sets will be called ``\index{smooth Harrison topology}smooth Harrison topology'', and will be denoted by ${\rm Har}^{\infty}$.
\end{definition}

\begin{remark}\label{avio}Given a $\mathcal{C}^{\infty}-$ring $A$, we have a  function given by:

$$\begin{array}{cccc}
    {\rm supp}^{\infty}: & ({\rm Sper}^{\infty}(A), {\rm Har}^{\infty}) & \rightarrow & ({\rm Spec}^{\infty}\,(A), {\rm Zar}^{\infty}) \\
     & P & \mapsto & P \cap (-P)
  \end{array}$$

which is spectral, and thus continuous, since given any $a \in A$, ${{\rm supp}^{\infty}}^{\dashv}[D^{\infty}(a)] = H^{\infty}(a)\cup H^{\infty}(-a)$.
\end{remark}

Contrary to what happens to a general commutative ring $R$, for which the mapping:

$$\begin{array}{cccc}
    {\rm supp} : & {\rm Sper}\,(R) & \rightarrow & {\rm Spec}\,(R) \\
     & P & \mapsto & P \cap (-P)
  \end{array}$$

is seldom surjective or injective, within the category of $\mathcal{C}^{\infty}-$rings ${\rm supp}^{\infty}$ is, as matter of fact, a bijection.\\

In order to prove this fact, we are going to need some preliminary results, given below.\\

\begin{lemma}\label{90}Let $A$ be a $\mathcal{C}^{\infty}-$ring and $\mathfrak{p}$ any  $\mathcal{C}^{\infty}-$radical prime ideal, and let $\hat{\mathfrak{p}}$ be the maximal ideal of $A_{\{ \mathfrak{p}\}}$. Then:
$${\rm Can}_{\mathfrak{p}}^{\dashv}[\hat{\mathfrak{p}}] = \mathfrak{p}.$$
\end{lemma}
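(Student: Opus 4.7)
The plan is to exploit the explicit description of the maximal ideal $\hat{\mathfrak{p}}$ already established in \textbf{Proposition \ref{quetiro}}, namely
$$\hat{\mathfrak{p}} = \left\{ \frac{\eta_{A\setminus \mathfrak{p}}(x)}{\eta_{A\setminus \mathfrak{p}}(y)} : x \in \mathfrak{p},\ y \in A \setminus \mathfrak{p}\right\},$$
and prove the set equality by double inclusion.

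For the inclusion $\mathfrak{p} \subseteq {\rm Can}_{\mathfrak{p}}^{\dashv}[\hat{\mathfrak{p}}]$, I would take $a \in \mathfrak{p}$ and simply observe that ${\rm Can}_{\mathfrak{p}}(a) = \eta_{A \setminus \mathfrak{p}}(a) = \frac{\eta_{A\setminus \mathfrak{p}}(a)}{\eta_{A\setminus \mathfrak{p}}(1)}$, which is an element of $\hat{\mathfrak{p}}$ because $a \in \mathfrak{p}$ and $1 \in A \setminus \mathfrak{p}$ (note that $1 \in A \setminus \mathfrak{p}$ because $\mathfrak{p}$ is a proper prime ideal). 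This step is essentially immediate.

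For the reverse inclusion, I would take $a \in A$ with ${\rm Can}_{\mathfrak{p}}(a) \in \hat{\mathfrak{p}}$. By the description of $\hat{\mathfrak{p}}$, there exist $x \in \mathfrak{p}$ and $y \in A \setminus \mathfrak{p}$ with
$$\eta_{A\setminus\mathfrak{p}}(a) = \frac{\eta_{A\setminus\mathfrak{p}}(x)}{\eta_{A\setminus\mathfrak{p}}(y)},$$
so that $\eta_{A\setminus\mathfrak{p}}(a \cdot y - x) = 0$ in $A\{(A\setminus \mathfrak{p})^{-1}\}$. By \textbf{Theorem \ref{38}}(ii), this equality produces some $s \in (A\setminus \mathfrak{p})^{\infty-{\rm sat}}$ with $s\cdot(a \cdot y - x) = 0$ in $A$. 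Now the essential point: since $\mathfrak{p}$ is a $\mathcal{C}^{\infty}$-radical prime ideal, \textbf{Theorem \ref{TS}}(c) yields $(A\setminus \mathfrak{p})^{\infty-{\rm sat}} = A\setminus \mathfrak{p}$, so $s \in A\setminus \mathfrak{p}$. Therefore $s \cdot a \cdot y = s \cdot x \in \mathfrak{p}$, and since $s \cdot y \notin \mathfrak{p}$ (as $\mathfrak{p}$ is prime and neither $s$ nor $y$ lie in $\mathfrak{p}$), primality of $\mathfrak{p}$ forces $a \in \mathfrak{p}$.

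The only subtle point, which I would emphasize, is the invocation of \textbf{Theorem \ref{TS}}(c) to guarantee $(A\setminus \mathfrak{p})^{\infty-{\rm sat}} = A \setminus \mathfrak{p}$; without the $\mathcal{C}^{\infty}$-radical hypothesis on $\mathfrak{p}$, the witness $s$ produced by \textbf{Theorem \ref{38}}(ii) might escape $A\setminus \mathfrak{p}$ and the argument would collapse. Everything else is routine manipulation of fractions and the standard kernel-of-localization analysis. Alternatively, one could appeal directly to \textbf{Lemma \ref{2328}}: the maximal ideal $\hat{\mathfrak{p}}$ is the unique largest element of ${\rm Spec}^{\infty}(A\{(A\setminus\mathfrak{p})^{-1}\})$, and under the poset isomorphism ${\rm Can}_{\mathfrak{p}}^{*}$ it must correspond to the unique largest element of $\{\mathfrak{q} \in {\rm Spec}^{\infty}(A) : \mathfrak{q} \cap (A\setminus \mathfrak{p}) = \varnothing\}$, which is $\mathfrak{p}$ itself; this shortens the proof considerably and mirrors the derivation given for \textbf{Proposition \ref{coisinha}}.
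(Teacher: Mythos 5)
Your proof is correct, but your reverse inclusion takes a different route from the paper's. The paper argues by contraposition using only locality: if $a \in A \setminus \mathfrak{p}$, then ${\rm Can}_{\mathfrak{p}}(a)$ is invertible in $A\{(A\setminus\mathfrak{p})^{-1}\}$ (the canonical map inverts everything in $A\setminus\mathfrak{p}$), and since $A_{\{\mathfrak{p}\}}$ is local with maximal ideal $\hat{\mathfrak{p}}$, the complement of $\hat{\mathfrak{p}}$ is exactly the set of units, so ${\rm Can}_{\mathfrak{p}}(a)\notin\hat{\mathfrak{p}}$ --- a two-line argument. You instead go through the explicit fraction description of $\hat{\mathfrak{p}}$, the kernel characterization of \textbf{Theorem \ref{38}}(ii), and the identity $(A\setminus\mathfrak{p})^{\infty-{\rm sat}}=A\setminus\mathfrak{p}$ from \textbf{Theorem \ref{TS}}(c). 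Both are sound; the paper's version is shorter, while yours has the merit of making explicit where the $\mathcal{C}^{\infty}$-radical hypothesis enters (the paper uses it too, but only implicitly, since the locality of $A_{\{\mathfrak{p}\}}$ and the identification of $\hat{\mathfrak{p}}$ with the non-units already depend on $\mathfrak{p}$ being $\mathcal{C}^{\infty}$-radical via \textbf{Proposition \ref{quetiro}} and \textbf{Theorem \ref{Cher}}). Your alternative closing remark via \textbf{Lemma \ref{2328}} is in fact exactly the argument the paper gives for \textbf{Proposition \ref{coisinha}}, which states the same equality, so that shortcut is indeed available and already present elsewhere in the text.
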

\begin{proof}
Let $a \in \mathfrak{p}$, then ${\rm Can}_{\mathfrak{p}}(a) \in \hat{\mathfrak{p}}$, and $\mathfrak{p} \subseteq {\rm Can}_{\mathfrak{p}}^{\dashv}[\hat{\mathfrak{p}}]$. Now, if $a \in A \setminus \mathfrak{p}$ then ${\rm Can}_{\mathfrak{p}}(a) \in \mathcal{U}(A\{ (A\setminus \mathfrak{p})^{-1} \})$. Since $A_{\{ \mathfrak{p} \}}$ is a local ring, $A_{\{\mathfrak{p} \}} = \hat{\mathfrak{p}}\stackrel{\cdot}{\cup} \mathcal{U}(A\{ (A \setminus \mathfrak{p})^{-1} \})$, so ${\rm Can}_{\mathfrak{p}}(a) \in A\{ (A\setminus \mathfrak{p})^{-1} \}\setminus \hat{\mathfrak{p}}$, and therefore ${\rm Can}_{\mathfrak{p}}^{\dashv}[\hat{\mathfrak{p}}] \subseteq \mathfrak{p}$.
\end{proof}

\begin{theorem}\label{1}Let $A$ be a $\mathcal{C}^{\infty}-$ring and $\mathcal{R}^t$ be the relation defined above. The function:
$$\begin{array}{cccc}
\alpha': & {\rm Sper}^{\infty}\,(A) & \rightarrow & \frac{\mathcal{F}}{\mathcal{R}^t}\\
        & P & \mapsto & [\eta_{P \cap (-P)}]
\end{array}$$
is the inverse function of:
$$\begin{array}{cccc}
\beta': & \frac{\mathcal{F}}{\mathcal{R}^t} & \to & {\rm Sper}^{\infty}\,(A)\\
       & [h: A \to K] & \mapsto & h^{\dashv}[K^2]
\end{array}$$
\end{theorem}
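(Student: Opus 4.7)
\textbf{Proof plan for Theorem \ref{1}.}

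The first step is to verify that both maps are well-defined. For $\beta'$, given any $h \colon A \to K$ with $K$ a $\mathcal{C}^{\infty}$-field, I claim $P := h^{\dashv}[K^2]$ is a $\mathcal{C}^{\infty}$-ordering of $A$. Axioms (O1)--(O3) follow from the fact that, by \textbf{Theorem 2.10} of \cite{rings1}, the underlying field of $K$ is real closed: hence $K^2$ is closed under addition and multiplication and $K = K^2 \cup (-K^2)$. For (O4), the key observation is that in a real closed field $x \in K^2 \cap (-K^2)$ iff $x = 0$, so $P \cap (-P) = h^{\dashv}[\{0\}] = \ker(h)$; since $K$ is a $\mathcal{C}^{\infty}$-field (in particular a $\mathcal{C}^{\infty}$-reduced $\mathcal{C}^{\infty}$-domain, by \textbf{Corollary \ref{medusa}}), the ideal $\ker(h)$ is prime and, by \textbf{Theorem \ref{preim}} applied to $(0) \subseteq K$, is $\mathcal{C}^{\infty}$-radical. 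To check that $\beta'$ factors through $\mathcal{R}^t$, it suffices to verify this for $\mathcal{R}$: given $f_1 \circ h_1 = f_2 \circ h_2$ with $f_i$ a $\mathcal{C}^{\infty}$-field morphism into some $\widetilde{F}$, one has $f_i^{\dashv}[\widetilde{F}^2] = K_i^2$ because $f_i$ preserves and reflects the (unique) order induced by the squares, so $h_i^{\dashv}[K_i^2] = (f_i \circ h_i)^{\dashv}[\widetilde{F}^2]$ is independent of $i$. The well-definedness of $\alpha'$ is immediate, since for $P \in {\rm Sper}^\infty(A)$ the ideal $\mathfrak{p} = P \cap (-P)$ is a $\mathcal{C}^{\infty}$-radical prime ideal and the map $\alpha_{\mathfrak{p}} \colon A \to k_{\mathfrak{p}}(A)$ is determined up to unique isomorphism.

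Next I would check $\alpha' \circ \beta' = {\rm id}_{\mathcal{F}/\mathcal{R}^t}$. Given $[h \colon A \to K]$, set $\mathfrak{p} = \ker(h)$, so that $h$ factors as $A \xrightarrow{q_{\mathfrak{p}}} A/\mathfrak{p} \hookrightarrow K$, and by the universal property of $\alpha_{\mathfrak{p}}$ (which comes from the field of fractions construction for the $\mathcal{C}^{\infty}$-reduced $\mathcal{C}^{\infty}$-domain $A/\mathfrak{p}$, cf.\ \textbf{Proposition \ref{gaviota}} and \textbf{Proposition \ref{advo}}) there is a unique $\mathcal{C}^{\infty}$-field morphism $\tilde h \colon k_{\mathfrak{p}}(A) \to K$ with $\tilde h \circ \alpha_{\mathfrak{p}} = h$. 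Taking $\widetilde{F} = K$, $f_1 = \tilde h$ and $f_2 = {\rm id}_K$ exhibits $(\alpha_{\mathfrak{p}}, h) \in \mathcal{R}$, so $[\alpha_{\mathfrak{p}}] = [h]$ in $\mathcal{F}/\mathcal{R}^t$, as required.

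The harder direction is $\beta' \circ \alpha' = {\rm id}_{{\rm Sper}^\infty(A)}$. Given $P$ with support $\mathfrak{p}$, the quotient $\bar P := q_{\mathfrak{p}}[P] \subseteq A/\mathfrak{p}$ satisfies (O1)--(O4) with $\bar P \cap (-\bar P) = \{0\}$, i.e.\ it is a total ordering of the $\mathcal{C}^{\infty}$-domain $A/\mathfrak{p}$. By standard commutative algebra this ordering extends uniquely to an ordering of the field of fractions $k_{\mathfrak{p}}(A)$; but $k_{\mathfrak{p}}(A)$ is a $\mathcal{C}^{\infty}$-field, which is real closed, so its \emph{only} ordering is $k_{\mathfrak{p}}(A)^2$. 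Pulling back along $\alpha_{\mathfrak{p}}$ and using $\mathfrak{p} \subseteq P$, one concludes $P = \alpha_{\mathfrak{p}}^{\dashv}[k_{\mathfrak{p}}(A)^2] = \beta'(\alpha'(P))$. The crux of this step — and the main technical obstacle — is the uniqueness of the ordering on $k_{\mathfrak{p}}(A)$, which rests essentially on the $\mathcal{C}^{\infty}$-real-closedness of $\mathcal{C}^{\infty}$-fields recalled just before \textbf{Definition \ref{Santo}}; once this is in hand, the claim $\bar P =$ ``restriction of $k_{\mathfrak{p}}(A)^2$ to $A/\mathfrak{p}$'' and hence $P = \alpha_{\mathfrak{p}}^{\dashv}[k_{\mathfrak{p}}(A)^2]$ follows by chasing the commutative triangle $A \twoheadrightarrow A/\mathfrak{p} \hookrightarrow k_{\mathfrak{p}}(A)$.
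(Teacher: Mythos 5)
Your proposal is correct, and for the easy direction ($\alpha'\circ\beta'={\rm id}$, via the universal property of $k_{\mathfrak{p}}(A)$ producing $\widetilde{h}$ with $\widetilde{h}\circ\alpha_{\mathfrak{p}}=h$) it coincides with the paper's argument. Where you genuinely diverge is the hard direction $\beta'\circ\alpha'={\rm id}$. The paper proves $P=\eta_{\mathfrak{p}}^{\dashv}[k_{\mathfrak{p}}(A)^2]$ by two separate \textit{reductio} arguments: it takes an element witnessing the failure of each inclusion, writes the relevant element of $A\{(A\setminus\mathfrak{p})^{-1}\}$ as a fraction ${\rm Can}_{\mathfrak{p}}(a)/{\rm Can}_{\mathfrak{p}}(b)$ with $b\notin\mathfrak{p}$, clears denominators to land an element of the form $x\cdot b^2$ in $P\cap(-P)=\mathfrak{p}$, and contradicts primality; real-closedness enters only to convert ``not a square'' into ``$=-h^2$''. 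You instead pass to $\bar P=q_{\mathfrak{p}}[P]$, invoke the unique extension of an ordering of a domain to its fraction field, and then the Artin--Schreier fact that a real closed field carries exactly one ordering, namely its squares. This is cleaner and isolates the conceptual content, but note that it silently uses the same structural input as the paper's computation: to know that the \emph{unique-extension} lemma applies to $k_{\mathfrak{p}}(A)$ you must know that every element of $k_{\mathfrak{p}}(A)$ is a fraction $\eta(a)/\eta(b)$ with $b+\mathfrak{p}\neq 0$, i.e.\ \textbf{Theorem \ref{38}}(i) combined with $(A\setminus\mathfrak{p})^{\infty-{\rm sat}}=A\setminus\mathfrak{p}$ (\textbf{Theorem \ref{TS}}(c)); otherwise the underlying field of $k_{\mathfrak{p}}(A)$ could a priori be strictly larger than ${\rm Frac}(A/\mathfrak{p})$ and the classical extension argument would not determine the whole ordering. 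You should make that identification explicit. A further point in your favour: you verify that $\beta'$ is well defined (both that $h^{\dashv}[K^2]$ is a $\mathcal{C}^{\infty}$-ordering and that the value is constant on $\mathcal{R}^t$-classes), which the paper leaves partly to the surrounding propositions; your argument for invariance under $\mathcal{R}$ via $f_i^{\dashv}[\widetilde{F}^2]=K_i^2$ is sound.
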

\begin{proof}
Note that:
$$(\alpha' \circ \beta')([h: A \to F]) = \alpha'(h^{\dashv}[F^2]) = [\eta_{{\rm supp}\,(h^{\dashv}[F^2])}],$$
where ${\rm supp}^{\infty}(h^{\dashv}[F^2]) = h^{\dashv}[F^2] \cap(- h^{\dashv}[F]^2) = h^{\dashv}[\{0\}] = \ker (h)$.\\

Thus we have:
$$ (\alpha' \circ \beta')([h: A \to F])= [\eta_{\ker(h)}: A \to k_{\ker(h)}(A)].$$

We claim that $\beta'$ is the left inverse function for $\alpha'$, that is:

$$(\forall P \in \, {\rm Sper}^{\infty}(A))((\beta' \circ \alpha')(P) = P).$$

Thus, it will follow that $\alpha'$ is injective and $\beta'$ is surjective.\\

We have $\beta'(\alpha'(P)) = \eta_{{\rm supp}^{\infty}(P)}^{\dashv}[k_{\p}(A)^{2}]$, so we need to show that:

$$\eta_{{\rm supp}(P)}^{\dashv}[k_{\p}(A)^2] = P$$

Let $\mathfrak{p} = {\rm supp}^{\infty}(P)$.\\

\textit{Ab absurdo}, suppose

$$\eta_{\mathfrak{p}}^{\dashv}[k_{\p}(A)^2] \nsubseteq P$$

There must exist some $x \in A$ such that $x \in \eta_{\mathfrak{p}}^{\dashv}[k_{\p}(A)^2]$ and $x \notin P$. We have:

$$\eta_{\mathfrak{p}}(x) \in \left( \frac{A\{ A \setminus {\mathfrak{p}}^{-1}\}}{\widehat{\mathfrak{p}}}\right)^2 \,\,\, {\rm and} \,\,\, x \notin P.$$

Now, since by \textbf{Theorem \ref{Pedro}} (denoting $\mathfrak{m}_{\p}$ by $\widehat{\p}$ instead)  the following diagram commutes:

$$\xymatrixcolsep{5pc}\xymatrix{
 & \dfrac{A\{ {A\setminus \p}^{-1}\}}{\widehat{\p}} \ar@<1ex>[dd]^{\varphi_{\p}}\\
A \ar[ur]^{\eta_{\p}'} \ar[dr]_{\eta_{\p}} & \\
   & \left( \dfrac{A}{\p}\right)\left\{ {\dfrac{A}{\p} \setminus \{ 0 + \p\}}^{-1}\right\} \ar@<1ex>[uu]^{\psi_{\p}}}$$

where $\eta_{\p} = \eta_{\dfrac{A}{\p} \setminus \{ 0 + \p\}} \circ q_{\p}$ and $\eta_{\p}' = q_{\widehat{\p}} \circ \eta_{A\setminus \p}$ and $\varphi_{\p}$ and $\psi_{\p}$ are the isomorphisms described in that theorem. Thus, we have:

$$\eta_{\mathfrak{p}}(x) \in \left( k_{\p}(A)\right)^2 \Rightarrow \psi_{\p}(\eta_{\p}(x)) = \eta_{\p}'(x) \in \left(\dfrac{A\{ {A\setminus \p}^{-1}\}}{\widehat{\p}}\right)^2$$

and

$$\eta_{\mathfrak{p}}(x) \in \left( k_{\p}(A)\right)^2 \Rightarrow (\exists (g+ \widehat{\mathfrak{p}}) \in \frac{A\{ {A \setminus \mathfrak{p}}^{-1}\}}{\widehat{\mathfrak{p}}})(\eta_{\mathfrak{p}}'(x) = g^2 + \widehat{\mathfrak{p}})$$

Since $q_{\widehat{\mathfrak{p}}}$ is surjective, given this $g + \widehat{\mathfrak{p}} \in  \left( \frac{A\{ A \setminus {\mathfrak{p}}^{-1}\}}{\widehat{\mathfrak{p}}}\right)$, there is some $\theta \in A\{ {A \setminus \mathfrak{p}}^{-1}\}$ such that $q_{\widehat{\mathfrak{p}}}(\theta) = g + \widehat{\mathfrak{p}}$.\\

By \textbf{Theorem 1.4}, item (i) of \cite{moerdijk1986rings}, given this $\theta \in A\{ {A \setminus \mathfrak{p}}^{-1}\}$, there are $a \in A$ and $b \in {A\setminus \mathfrak{p}}^{\infty-{\rm sat}}$, that is,
$${\rm Can}_{\mathfrak{p}}(b) \in (A\{ {A \setminus \mathfrak{p}}^{-1}\})^{\times}$$
such that:
$$\theta = \dfrac{{\rm Can}_{\mathfrak{p}}(a)}{{\rm Can}_{\mathfrak{p}}(b)}$$
or equivalently, since $(A \setminus \mathfrak{p})^{\infty-{\rm sat}} = A \setminus \mathfrak{p}$:
\begin{equation}\label{absu} b \notin \mathfrak{p}
\end{equation}

Hence,

$$\eta_{\mathfrak{p}}'(x) = g^2 + \widehat{\mathfrak{p}} = q_{\widehat{\mathfrak{p}}}\left( \frac{{\rm Can}_{\mathfrak{p}}(a)}{{\rm Can}_{\mathfrak{p}}(b)} \right)^2 = \left( \frac{{\rm Can}_{\mathfrak{p}}(a)}{{\rm Can}_{\mathfrak{p}}(b)} \right)^2 + \widehat{\mathfrak{p}}$$

$$\eta_{\mathfrak{p}}'(x)\cdot ({\rm Can}_{\mathfrak{p}}^2(b) + \widehat{\mathfrak{p}}) = {\rm Can}_{\mathfrak{p}}^2(a) + \widehat{\mathfrak{p}}$$

$${\rm Can}_{\mathfrak{p}}(x\cdot b^2 - a^2) \in \widehat{\mathfrak{p}}.$$

$$(x \cdot b^2 - a^2) \in {\rm Can}_{\mathfrak{p}}^{\dashv}[\widehat{\mathfrak{p}}].$$

By \textbf{Lemma \ref{90}}, $\mathfrak{p} = {\rm Can}_{\mathfrak{p}}^{\dashv}[\widehat{\mathfrak{p}}]$, so
$$x \cdot b^2 - a^2 \in \mathfrak{p} \subseteq P$$

Let $y = x \cdot b^2 + (-a^2) \in \mathfrak{p} \subseteq P$. Note that since $x \notin P$, $x \in (-P)\setminus \mathfrak{p}$ and
\begin{equation}\label{londonbeat}
x \cdot b^2 \in (-P)
\end{equation}

Since $y \in P$,
$$x \cdot b^2 = \underbrace{y}_{\in P} + \overbrace{a^2}^{\in P} \in P,$$

\begin{equation}\label{madonna}
  x \cdot b^2 \in P
\end{equation}

By \eqref{londonbeat} and \eqref{madonna}, it follows that $x \cdot b^2 \in P \cap (-P) = \mathfrak{p}$. Since $\mathfrak{p}$ is prime, either $x \in \mathfrak{p}$ or $b^2 \in \mathfrak{p}$. However, since $x \notin P$, \textit{a fortiori}, $x \notin \mathfrak{p}$, so we must have $b^2 \in \mathfrak{p}$. Once again, since $\mathfrak{p}$ is prime, it follows that $b \in \mathfrak{p}$ which contradicts \eqref{absu}. Hence,

$${\eta_{{\rm supp}^{\infty}(P)}'}^{\dashv}\left[\left( \frac{A\{ A \setminus {\rm supp}^{\infty}(P)^{-1}\}}{\widehat{\mathfrak{p}}}\right)^2\right] \subseteq P$$.\\

Now we claim that:\\

$$P \subseteq {\eta_{{\rm supp}(P)}'}^{\dashv}\left[\left( \frac{A\{ A \setminus {\rm supp}^{\infty}(P)^{-1}\}}{\widehat{\mathfrak{p}}}\right)^2\right] $$

Conversely, suppose, \textit{ab absurdo} that

\begin{equation}\label{Adma}
P \nsubseteq {\eta_{{\rm supp}(P)}'}^{\dashv}\left[\left( \frac{A\{ A \setminus {\rm supp}^{\infty}(P)^{-1}\}}{\widehat{\mathfrak{p}}}\right)^2\right]
\end{equation}

so there must exist some $x \in P$ such that

\begin{equation}\label{sarue}
 (\forall (g + \widehat{\mathfrak{p}}) \in \dfrac{A\{{A \setminus \mathfrak{p}}^{-1}\}}{\widehat{\mathfrak{p}}})(\eta_{\mathfrak{p}}'(x) \neq g^2 + \widehat{\mathfrak{p}})
\end{equation}

Equivalently, there must exist some $(h+\widehat{\mathfrak{p}}) \in \frac{A\{ {A \setminus \mathfrak{p}}^{-1}\}}{\widetilde{\mathfrak{p}}}$ such that $\eta_{\mathfrak{p}}(x) = - h^2 + \widehat{\mathfrak{p}}$.\\

Thus, since $q_{\widehat{\mathfrak{p}}}$ is surjective, given such an $h + \widehat{\mathfrak{p}} \in \dfrac{A\{{A \setminus \mathfrak{p}}^{-1}\}}{\widehat{\mathfrak{p}}}$ there must be some $\zeta \in A\{ {A \setminus \mathfrak{p}}^{-1}\}$ such that $q_{\widetilde{\mathfrak{p}}}(\zeta) = h + \widehat{\mathfrak{p}}$. By item (i) of \textbf{Theorem \ref{340}}, there are $c \in A$ and $d \in A$ with:
$${\rm Can}_{\mathfrak{p}}(d) \in A\{ {A \setminus \mathfrak{p}}^{-1}\}^{\times},$$
equivalently
$$d \in (A \setminus \mathfrak{p})^{\infty-{\rm sat}},$$
and since $(A \setminus \mathfrak{p})^{\infty-{\rm sat}} = A \setminus \mathfrak{p}$,
\begin{equation}\label{criat}
d \notin \mathfrak{p}
\end{equation}

such that:
$$\zeta = \dfrac{{\rm Can}_{\mathfrak{p}}(c)}{{\rm Can}_{\mathfrak{p}}(d)}.$$

Hence,

$$\eta_{\mathfrak{p}}(x) = - \dfrac{{\rm Can}_{\mathfrak{p}}^2(c)}{{\rm Can}_{\mathfrak{p}}^2(d)} + \widehat{\mathfrak{p}}$$

$${\rm Can}_{\mathfrak{p}}^2(c) + {\rm Can}_{\mathfrak{p}}(x) \cdot {\rm Can}_{\mathfrak{p}}^2(d) \in \widehat{\mathfrak{p}}$$

$${\rm Can}_{\mathfrak{p}}(c^2 + x \cdot d^2) \in \widehat{\mathfrak{p}}$$

so

$$\underbrace{c^2}_{\in P} + \overbrace{x \cdot d^2}^{\in P} = z = {\rm Can}_{\mathfrak{p}}^{\dashv}[\widehat{\mathfrak{p}}] = \mathfrak{p} \subseteq (-P)$$

$$x\cdot d^2 = z - c^2 \in (-P)$$

Since $x \in P$, we also have $x \cdot d^2 \in P$, hence $x \cdot d^2 \in \mathfrak{p}$ .  Since $\mathfrak{p}$ is prime, either $x \in \mathfrak{p}$ or $d^2 \in \mathfrak{p}$. Now, if $x \in \mathfrak{p}$ then ${\rm Can}_{\mathfrak{p}}(x) = 0^2+\mathfrak{p}$, which contradicts our hypothesis \eqref{sarue}. On the other hand, if $d^2 \in \mathfrak{p}$, then $d \in \mathfrak{p}$, and this contradicts \eqref{criat}. Hence $x \notin \mathfrak{p}$ and $d^2 \notin \mathfrak{p}$, so $x \cdot d^2 \notin \mathfrak{p}$. Thus we achieved an absurdity: $(x \cdot d^2 \in \mathfrak{p}) \& (x \cdot d^2 \notin \mathfrak{p})$. It follows that our premise \eqref{Adma} must be false, so:

$$P \subseteq {\eta_{{\rm supp}(P)}'}^{\dashv}\left[\left( \frac{A\{ A \setminus {\rm supp}^{\infty}(P)^{-1}\}}{\widehat{\mathfrak{p}}}\right)^2\right].$$

Hence $P = {\eta_{{\rm supp}(P)}'}^{\dashv}\left[\left( \frac{A\{ A \setminus {\rm supp}^{\infty}(P)^{-1}\}}{\widehat{\mathfrak{p}}}\right)^2\right]$.\\










Now we need only to show that $\alpha' \circ \beta' = {\rm id}_{\widetilde{\mathcal{F}}}$. \\

Let $[h: A \to F] \in \widetilde{\mathcal{F}}$. We have:

$$(\alpha' \circ \beta')([h: A \to F]) = \alpha'(h^{\dashv}[F^2]) = [\eta_{{\rm supp}^{\infty}(h^{\dashv}[F^2])}: A \to k_{{\rm supp}^{\infty}(h^{\dashv}[F^2])}(A)].$$

It suffices to show that $[h]=[\eta_{{\rm supp}^{\infty}(h^{\dashv}[F^2])}]$. Note that ${\rm supp}^{\infty}(h^{\dashv}[F^2]) = h^{\dashv}[F^2 \cap (-F^2)] = \ker(h)$.\\

By the universal property of the $\mathcal{C}^{\infty}-$field of fractions of $\left( \frac{A}{\ker(h)}\right)$, $k_{\ker(h)}(A)$, since $h \left[ A^{\times}\right] \subseteq F^{\times}$ (for $\mathcal{C}^{\infty}-$homomorphisms preserve invertible elements), there is a unique $\mathcal{C}^{\infty}-$homomorphism $\widetilde{h}: k_{\ker(h)}(A) \to F$ such that the following diagram commutes:

$$\xymatrixcolsep{5pc}\xymatrix{
A \ar[r]^{\eta_{\p}} \ar[rd]^{h} & k_{\ker(h)}(A) \ar[d]^{\widetilde{h}}\\
  & F
}$$

We have, then, the following commutative diagram:

$$\xymatrixcolsep{5pc}\xymatrix{
  & F \ar[dr]^{{\rm id}_F} &  \\
  A \ar[ur]^{h} \ar[dr]_{\eta_{\mathfrak{p}}} & & F\\
    & k_{\ker(h)} \ar@{.>}[ur]_{\widetilde{h}}
}$$

so $[h: A \to F] = [\eta_{\ker(h)}: A \to k_{\ker(h)}]$ and

$$(\alpha' \circ \beta')([h: A \to F]) = [h: A \to F].$$

Hence it follows that $\alpha'$ and $\beta'$ are inverse bijections of each other.
\end{proof}

\begin{theorem}\label{2}The map:
$$\begin{array}{cccc}
\alpha :& {\rm Spec}^{\infty}\,(A) & \rightarrow & \widetilde{\mathcal{F}}\\
        & \mathfrak{p}  & \mapsto & [\eta_{\mathfrak{p}}]= [q \circ {\rm Can}_{\mathfrak{p}}]
\end{array}$$
is a bijection whose inverse is given by:
$$\begin{array}{cccc}
\beta: & \widetilde{\mathcal{F}} & \rightarrow & {\rm Spec}^{\infty}\, (A)\\
       & [h: A  \to F] & \mapsto & \ker(h)
\end{array}$$
\end{theorem}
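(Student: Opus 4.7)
The plan is to verify that $\alpha$ and $\beta$ are both well-defined set-theoretic maps and then to check directly that they are mutually inverse, invoking the universal property of the $\mathcal{C}^{\infty}$-field of fractions (Proposition \ref{advo}) at the one substantive step. Well-definedness of $\beta$ was essentially already established just before the statement: representatives related by $\mathcal{R}$ have the same kernel (since $\mathcal{C}^{\infty}$-fields morphisms are injective), and this propagates to the transitive closure $\mathcal{R}^t$; moreover $\ker(h) \in {\rm Spec}^{\infty}(A)$ because $F$ is a $\mathcal{C}^{\infty}$-field, hence a $\mathcal{C}^{\infty}$-reduced $\mathcal{C}^{\infty}$-domain, so $(0_F)$ is a $\mathcal{C}^{\infty}$-radical prime ideal and its preimage under $h$ is again such by Theorem \ref{preim}. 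For $\alpha$, one only needs that $k_{\mathfrak{p}}(A)$ is a $\mathcal{C}^{\infty}$-field whenever $\mathfrak{p} \in {\rm Spec}^{\infty}(A)$; this follows from Proposition \ref{advo}(i) applied to the $\mathcal{C}^{\infty}$-reduced $\mathcal{C}^{\infty}$-domain $A/\mathfrak{p}$.

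For $\beta \circ \alpha = {\rm id}_{{\rm Spec}^{\infty}(A)}$, I would use the factorization
$$\eta_{\mathfrak{p}} \;=\; \eta_{(A/\mathfrak{p}) \setminus \{0+\mathfrak{p}\}} \circ q_{\mathfrak{p}}.$$
Since $A/\mathfrak{p}$ is a $\mathcal{C}^{\infty}$-reduced $\mathcal{C}^{\infty}$-domain, Proposition \ref{advo}(ii) asserts that the left-hand factor is injective, so
$$\ker(\eta_{\mathfrak{p}}) \;=\; q_{\mathfrak{p}}^{\dashv}\bigl[\ker\bigl(\eta_{(A/\mathfrak{p}) \setminus \{0+\mathfrak{p}\}}\bigr)\bigr] \;=\; q_{\mathfrak{p}}^{\dashv}[\{0+\mathfrak{p}\}] \;=\; \mathfrak{p}.$$
Thus $\beta(\alpha(\mathfrak{p})) = \ker(\eta_{\mathfrak{p}}) = \mathfrak{p}$.

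For $\alpha \circ \beta = {\rm id}_{\widetilde{\mathcal{F}}}$, let $h \colon A \to F$ be a representative and set $\mathfrak{p} = \ker(h)$. The factor map $\bar{h} \colon A/\mathfrak{p} \to F$ is an injection into a $\mathcal{C}^{\infty}$-field, so by Proposition \ref{advo}(iii) there is a unique $\mathcal{C}^{\infty}$-homomorphism $\widetilde{h} \colon k_{\mathfrak{p}}(A) \to F$ with $\widetilde{h} \circ \eta_{\mathfrak{p}} = h$. Combined with ${\rm id}_F \colon F \to F$ satisfying ${\rm id}_F \circ h = h$, this exhibits the cospan
$$\xymatrix{
   & F \ar[dr]^{{\rm id}_F} &  \\
A \ar[ur]^{h} \ar[dr]_{\eta_{\mathfrak{p}}} & & F \\
   & k_{\mathfrak{p}}(A) \ar[ur]_{\widetilde{h}} &
}$$
which witnesses $(h, \eta_{\mathfrak{p}}) \in \mathcal{R} \subseteq \mathcal{R}^t$, whence $[h] = [\eta_{\mathfrak{p}}] = \alpha(\mathfrak{p}) = \alpha(\beta([h]))$.

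The main obstacle — which is actually minor — is the careful handling of the factorization $h = \bar{h} \circ q_{\mathfrak{p}}$ and the verification that Proposition \ref{advo}(iii) is applicable; this in turn reduces to observing that $A/\mathfrak{p}$, being embedded via $\bar{h}$ into the $\mathcal{C}^{\infty}$-reduced $\mathcal{C}^{\infty}$-domain $F$, inherits the structure of a $\mathcal{C}^{\infty}$-reduced $\mathcal{C}^{\infty}$-domain (Proposition \ref{redi}). Apart from this, the proof is essentially a direct unpacking of the universal properties already assembled, and it can also be derived a posteriori by composing the bijection of Theorem \ref{1} with the spectral map ${\rm supp}^{\infty}$ of Remark \ref{avio}, yielding as corollary that ${\rm supp}^{\infty} \colon {\rm Sper}^{\infty}(A) \to {\rm Spec}^{\infty}(A)$ is a bijection (Theorem \ref{exv}).
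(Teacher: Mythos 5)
Your proposal is correct and follows essentially the same route as the paper: both directions are checked by direct computation, with the identity $\alpha\circ\beta={\rm id}_{\widetilde{\mathcal{F}}}$ obtained exactly as in the text by invoking the universal property of $k_{\ker(h)}(A)$ to produce $\widetilde{h}$ and exhibiting the cospan with ${\rm id}_F$. The only cosmetic difference is in computing $\ker(\eta_{\mathfrak{p}})=\mathfrak{p}$: you factor $\eta_{\mathfrak{p}}$ through $q_{\mathfrak{p}}$ and use injectivity of $\eta_{(A/\mathfrak{p})\setminus\{0+\mathfrak{p}\}}$ (Proposition \ref{advo}(ii)), while the paper cites Lemma \ref{90} for ${\rm Can}_{\mathfrak{p}}^{\dashv}[\widehat{\mathfrak{p}}]=\mathfrak{p}$; these are equivalent via the isomorphism of Theorem \ref{Pedro}.
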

\begin{proof}
First we are going to show that $\alpha \circ \beta = {\rm id}_{\widetilde{\mathcal{F}}}$, so $\beta$ is an injective map and $\alpha$ is its left inverse, hence it is surjective.\\

Let $[h: A \to F] \in \widetilde{F}$. We have:

$$(\alpha \circ \beta)([h: A \to F]) = \alpha(\ker(h)) = \left[ \eta_{\ker(h)}': A \to \left( \frac{A\{ {A \setminus \ker(h)}^{-1} \}}{\widehat{\ker(h)}}\right)\right]$$

It suffices to show that $[h: A \to F] = \left[ \eta_{\ker(h)}': A \to \left( \frac{A\{ {A \setminus \ker(h)}^{-1} \}}{\widehat{\ker(h)}}\right)\right]$.\\

Since $\left( \frac{A\{ {A \setminus \ker(h)}^{-1} \}}{\widehat{\ker(h)}}\right)$ is (up to $\mathcal{C}^{\infty}-$isomorphism) the $\mathcal{C}^{\infty}-$field of fractions of $\left( \frac{A}{\ker(h)}\right)$, $k_{\ker(h)}(A)$, there is a unique $\mathcal{C}^{\infty}-$homomorphism $\widetilde{h}: \left( \frac{A\{ {A \setminus \ker(h)}^{-1} \}}{\widehat{\ker(h)}}\right) \to F$ such that the following diagram commutes:

$$\xymatrixcolsep{5pc}\xymatrix{
  & \left( \frac{A\{ {A \setminus \ker(h)}^{-1} \}}{\widehat{\ker(h)}}\right) \ar[dr]^{\widetilde{h}} &   \\
A \ar[ur]^{\eta_{{\rm supp}^{\infty}(h^{\dashv}[F^2])'}} \ar[dr]_{h} & & F\\
  & F \ar@{.>}[ur]_{{\rm id}_F} &}$$
and the equality holds, \textit{i.e.},

$$[h: A \to F] = \left[ \eta_{\ker(h)}: A \to \left( \frac{A\{ {A \setminus \ker(h)}^{-1} \}}{\widehat{\ker(h)}}\right)\right]$$

It follows that $\alpha \circ \beta = {\rm id}_{\widetilde{F}}$, $\alpha$ is a surjective map and $\beta$ is an injective map.\\

On the other hand, given $\mathfrak{p} \in {\rm Spec}^{\infty}(A)$, we have:

$$(\beta \circ \alpha)(P) = \beta([\eta_{\mathfrak{p}}: A \to k_{\mathfrak{p}}]) = \ker(\eta_{\mathfrak{p}}) = {\rm Can}_{\mathfrak{p}}^{\dashv}[\widehat{\mathfrak{p}}] = \mathfrak{p}$$

so:

$$(\beta \circ \alpha) = {\rm id}_{{\rm Spec}^{\infty}\,(A)}$$
\end{proof}

as a Corollary of the theorems \textbf{\ref{1}} and \textbf{\ref{2}}, we have:

\begin{lemma}Let $A$ be a $\mathcal{C}^{\infty}-$ring, and define:

$$\begin{array}{cccc}
{\rm supp}^{\infty} : & {\rm Sper}^{\infty}\,(A) & \rightarrow & {\rm Spec}^{\infty}\,(A)\\
      & P & \mapsto & P \cap(-P)
\end{array}$$

The following diagram commutes:

$$\xymatrixcolsep{5pc}\xymatrix{
   & \,\, {\rm Sper}^{\infty}\,(A) \ar@<1ex>[dl]^{\alpha'} \ar[dd]_{{\rm supp}^{\infty}}\\
   \widetilde{\mathcal{F}} \ar@<1ex>[ur]^{\beta'} \ar@<1ex>[dr]^{\beta} &  \\
      & {\rm Spec}^{\infty}\,(A) \ar@<1ex>[ul]^{\alpha}
}$$

that is to say that:
$$\alpha \circ {\rm supp}^{\infty} = \alpha'$$
and
$${\rm supp}^{\infty}\circ \beta' = \beta$$
\end{lemma}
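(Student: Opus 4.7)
\medskip

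\noindent\textbf{Proof proposal.} The plan is to verify the two asserted equalities by unwinding the definitions of $\alpha, \alpha', \beta, \beta'$ and ${\rm supp}^{\infty}$, and then to isolate the one non-tautological ingredient, namely the characterization $F^{2}\cap(-F^{2})=\{0\}$ in any $\mathcal{C}^{\infty}$-field $F$.

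\medskip

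\noindent For the first identity $\alpha\circ {\rm supp}^{\infty}=\alpha'$, I would simply compute, for $P\in{\rm Sper}^{\infty}(A)$,
\[
(\alpha\circ {\rm supp}^{\infty})(P)=\alpha(P\cap(-P))=[\eta_{P\cap(-P)}: A\to k_{P\cap(-P)}(A)],
\]
which is exactly $\alpha'(P)$ by \textbf{Definition of $\alpha'$} in \textbf{Theorem \ref{1}}. Note that this uses in an essential way the axiom (O4) of a $\mathcal{C}^{\infty}$-ordering, which guarantees that $P\cap(-P)\in{\rm Spec}^{\infty}(A)$ so that $\alpha$ is applicable. No further argument is needed here.

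\medskip

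\noindent For the second identity ${\rm supp}^{\infty}\circ\beta'=\beta$, given a representative $h:A\to F$ of a class in $\widetilde{\mathcal{F}}$, I would compute
\[
({\rm supp}^{\infty}\circ\beta')([h:A\to F]) = h^{\dashv}[F^{2}]\cap(-h^{\dashv}[F^{2}]) = h^{\dashv}\!\left[F^{2}\cap(-F^{2})\right].
\]
The main step is therefore the lemma $F^{2}\cap(-F^{2})=\{0\}$ for any $\mathcal{C}^{\infty}$-field $F$. I would derive this from the fact, recalled above, that the underlying ring $\widetilde{U}(F)$ of a $\mathcal{C}^{\infty}$-field is a real closed field: if $a=b^{2}=-c^{2}$ in $F$, then $0\preceq a$ and $0\preceq -a$ in the canonical real-closed ordering of $\widetilde{U}(F)$, forcing $a=0$. (Alternatively, by \textbf{Proposition \ref{doria}} one has $1+\sum F^{2}\subseteq F^{\times}$, so $-1$ is not a sum of squares in $F$, whence $F$ is formally real and $F^{2}\cap(-F^{2})=\{0\}$.) Granted this, $h^{\dashv}[\{0\}]=\ker(h)=\beta([h:A\to F])$.

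\medskip

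\noindent I expect no serious obstacle: both triangles are essentially bookkeeping once Theorems \ref{1} and \ref{2} are in hand, and the only nontrivial input is the formally-real nature of $\mathcal{C}^{\infty}$-fields, which has already been established. Thus the argument will be just a few lines, combining two direct calculations with the one-line remark that $F^{2}\cap(-F^{2})=\{0\}$.
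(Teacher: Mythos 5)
Your proof is correct. The first identity $\alpha\circ{\rm supp}^{\infty}=\alpha'$ is handled exactly as in the paper: a one-line unwinding of the definitions (the paper does not even remark on the role of (O4), which you rightly flag as what makes $\alpha$ applicable). For the second identity you diverge from the paper: you prove ${\rm supp}^{\infty}\circ\beta'=\beta$ directly, via $h^{\dashv}[F^{2}]\cap(-h^{\dashv}[F^{2}])=h^{\dashv}[F^{2}\cap(-F^{2})]=h^{\dashv}[\{0\}]=\ker(h)$, with the formal reality of $\mathcal{C}^{\infty}$-fields (via \textbf{Proposition \ref{doria}} or real-closedness) supplying $F^{2}\cap(-F^{2})=\{0\}$. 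The paper instead deduces the second identity formally from the first: it composes $\alpha\circ{\rm supp}^{\infty}=\alpha'$ on the right with $\beta'$, invokes $\alpha'\circ\beta'={\rm id}_{\widetilde{\mathcal{F}}}$ from \textbf{Theorem \ref{1}}, and concludes by uniqueness of the inverse of $\alpha$ from \textbf{Theorem \ref{2}}. The paper's route is shorter given that Theorems \ref{1} and \ref{2} are already in hand and avoids re-deriving the field-theoretic fact; your route is self-contained at that step and makes explicit the identity ${\rm supp}^{\infty}(h^{\dashv}[F^{2}])=\ker(h)$, which the paper itself uses without comment inside the proof of \textbf{Theorem \ref{1}}. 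Both are valid; no gap.
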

\begin{proof}
Note that if we prove that $\alpha \circ {\rm supp}^{\infty} = \alpha'$, then composing both sides with $\beta'$ yields:
$$(\alpha \circ {\rm supp}^{\infty})\circ \beta' = \alpha' \circ \beta' = {\rm id}_{\widetilde{\mathcal{F}}}$$
so
$$\alpha \circ ({\rm supp}^{\infty} \circ \beta') = {\rm id}_{\widetilde{\mathcal{F}}}$$
and by the uniqueness of the inverse of $\alpha$, it follows that:
$${\rm supp}^{\infty} \circ \beta' = \beta.$$

Now we are going to prove that $\alpha \circ {\rm supp}^{\infty} = \alpha'$.\\

Given $P \in {\rm Sper}^{\infty}\,(A)$ we have:

$$(\alpha \circ {\rm supp}^{\infty})(P) = \alpha({\rm supp}^{\infty}(P)) = [\eta_{{\rm supp}^{\infty}\,(P)}: A \to k_{{\rm supp}^{\infty}\,(P)}(A)] =: \alpha'(P),$$

so the result holds.
\end{proof}

As an important result of the theory of $\mathcal{C}^{\infty}-$rings which distinguishes it from the theory of the rings, we have the following:

\begin{theorem}\label{exv}Let $A$ be a $\mathcal{C}^{\infty}-$ring. The following map:
$$\begin{array}{cccc}
{\rm supp}^{\infty} : & {\rm Sper}^{\infty}\,(A) & \rightarrow & {\rm Spec}^{\infty}\,(A)\\
      & P & \mapsto & P \cap(-P)
\end{array}$$
is a spectral \underline{bijection}.
\end{theorem}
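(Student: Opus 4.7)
The plan is to exploit the machinery built up in Theorems \ref{1} and \ref{2} together with the preceding compatibility lemma. Recall that Theorem \ref{2} gives inverse bijections $\alpha : {\rm Spec}^{\infty}(A) \to \widetilde{\mathcal{F}}$ and $\beta : \widetilde{\mathcal{F}} \to {\rm Spec}^{\infty}(A)$, Theorem \ref{1} gives inverse bijections $\alpha' : {\rm Sper}^{\infty}(A) \to \widetilde{\mathcal{F}}$ and $\beta' : \widetilde{\mathcal{F}} \to {\rm Sper}^{\infty}(A)$, and the lemma just before this theorem yields the identities $\alpha \circ {\rm supp}^{\infty} = \alpha'$ and ${\rm supp}^{\infty} \circ \beta' = \beta$.

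First I would establish bijectivity by defining the candidate inverse $\Psi := \beta' \circ \alpha : {\rm Spec}^{\infty}(A) \to {\rm Sper}^{\infty}(A)$ and computing directly:
\[
{\rm supp}^{\infty} \circ \Psi = ({\rm supp}^{\infty} \circ \beta') \circ \alpha = \beta \circ \alpha = {\rm id}_{{\rm Spec}^{\infty}(A)},
\]
\[
\Psi \circ {\rm supp}^{\infty} = \beta' \circ (\alpha \circ {\rm supp}^{\infty}) = \beta' \circ \alpha' = {\rm id}_{{\rm Sper}^{\infty}(A)}.
\]
Concretely, $\Psi$ sends a $\mathcal{C}^{\infty}$-radical prime ideal $\mathfrak{p}$ to $\eta_{\mathfrak{p}}^{\dashv}[k_{\mathfrak{p}}(A)^{2}]$, which by Proposition \ref{doria} is an ordering with support $\mathfrak{p}$.

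Next I would verify spectrality. Remark \ref{avio} already records that ${\rm supp}^{\infty}$ is continuous via the identity ${\rm supp}^{\infty\,\dashv}[D^{\infty}(a)] = H^{\infty}(a) \cup H^{\infty}(-a)$; to upgrade continuity to a spectral map, one notes that the family $\{D^{\infty}(a) : a \in A\}$ is a basis of compact opens of ${\rm Spec}^{\infty}(A)$ closed under finite intersections (Theorem \ref{3111}), so it suffices to show that each $H^{\infty}(a)$ is a compact open in ${\rm Sper}^{\infty}(A)$. This is done in the usual fashion for real spectra: $H^{\infty}(a) \subseteq \bigcup_{i} H^{\infty}(a_i)$ forces, via a standard positivstellensatz-style trick together with Proposition \ref{doria} (semi-realness of every $\mathcal{C}^{\infty}$-ring), a finite sub-covering, exactly paralleling the proof of compactness of $D^{\infty}(a)$ in Theorem \ref{3111}.

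The main technical obstacle will be this last verification: showing that the basic Harrison opens $H^{\infty}(a)$ are quasi-compact in ${\rm Sper}^{\infty}(A)$. The argument should mirror item (ii) of Theorem \ref{3111}, replacing the sum-of-squares calculation at the Zariski level by the sign-determination argument available in ordered $\mathcal{C}^{\infty}$-fields (every non-zero element of a $\mathcal{C}^{\infty}$-field is either a square or the negative of a square, since every $\mathcal{C}^{\infty}$-field is real closed). Once this compactness is in hand, $H^{\infty}(a) \cup H^{\infty}(-a)$ is a finite union of compact opens, hence compact open, and ${\rm supp}^{\infty}$ is a spectral bijection as claimed.
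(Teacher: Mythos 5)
Your proposal is correct and follows essentially the same route as the paper: the paper also obtains bijectivity by factoring ${\rm supp}^{\infty} = \beta \circ \alpha'$ through $\widetilde{\mathcal{F}}$ using the preceding lemma and Theorems \ref{1} and \ref{2}, and cites Remark \ref{avio} for spectrality. Your only addition is the explicit observation that spectrality requires the quasi-compactness of the Harrison basic opens $H^{\infty}(a)$, a point the paper leaves implicit in Remark \ref{avio} and in the definition of the smooth Harrison topology.
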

\begin{proof}
In \textbf{Remark \ref{avio}}, we have already seen that ${\rm supp}^{\infty}$ is a spectral function, so we need only to show that is is a bijection.\\

Just note  that ${\rm supp}^{\infty} = \beta \circ \alpha' = \alpha \circ \beta'$, and since ${\rm supp}^{\infty}$ is a composition of bijections, it is a bijection.
\end{proof}

\section{Algebraic Concepts versus Smooth Algebraic Concepts}\label{versus}

In this section we comment some differences between some concepts and results of Commutative Algebra and Smooth Commutative Algebra. Some notions of Smooth Commutative Algebra, such as local $\mathcal{C}^{\infty}-$rings, $\mathcal{C}^{\infty}-$fields, $\mathcal{C}^{\infty}-$domains and von Neumann regular $\mathcal{C}^{\infty}-$rings are obtained via the adjunctions:

$$\xymatrix{ \mathcal{C}^{\infty}{\rm \bf Rng} \ar@<1ex>[r]^{\widetilde{U}}& \ar@<1ex>[l]^{\widetilde{L}} {\rm \bf CRing}}$$

and

$$\xymatrix{ \mathcal{C}^{\infty}{\rm \bf Rng} \ar@<1ex>[r]^{\mathcal{U}}& \ar@<1ex>[l]^{\mathcal{L}} \mathbb{R}-{\rm \bf Alg}}$$

so many results about these concepts are very similar to those of Algebra. As an example, we can cite the fact that $\widetilde{L}$ ``preserves'' the concept of ``object of fractions'', as we see in the following:\\

\begin{remark}Let $A$ be a commutative unital ring and let $a \in A$. The functor $\widetilde{L}: {\rm \bf CRing} \rightarrow \mathcal{C}^{\infty}{\rm \bf Rng}$, which is left adjoint to $\widetilde{U}: \mathcal{C}^{\infty}{\rm \bf Rng} \rightarrow {\rm \bf CRing}$, preserves all colimits, so it takes the ring of fractions $A[a^{-1}]$ to the $\mathcal{C}^{\infty}-$ring of fractions, $B\{ b^{-1}\}$, that is:

$$\widetilde{L}(A[a^{-1}]) \cong \widetilde{L}\left( \dfrac{A\otimes \mathbb{Z}[x]}{\langle \{\iota_A(a)\cdot x - 1 \}\rangle}\right) \cong \dfrac{\widetilde{L}(A)\otimes_{\infty}\mathcal{C}^{\infty}(\mathbb{R})}{\langle \{ \gamma_A(a)\cdot x - 1\}\rangle}$$

For a general $S \subseteq A$, we have:

$$\widetilde{L}(A[S^{-1}]) \cong \widetilde{L}(A)\{\gamma_A[S]^{-1}\}$$
\end{remark}

The ring $A$ to be a domain does not entail that $\widetilde{L}(A)$ is a $\mathcal{C}^{\infty}$-domain. For example, despite $\mathbb{Z}[x]$ is a domain, $\widetilde{L}(\mathbb{Z}[x]) \cong C^\infty(\mathbb{R})$ is not a $\mathcal{C}^{\infty}-$domain - since its underlying ring is not a domain. However, some domains can be taken to $\mathcal{C}^{\infty}-$fields, \textit{e.g.}, $\widetilde{L}(\mathbb{Z}) \cong \mathbb{R}$ and $\widetilde{L}(\mathbb{Q}) \cong \mathbb{R}$.\\

Fields can be taken by $\widetilde{L}$ to the trivial $\mathcal{C}^{\infty}$-ring. Since every $\mathcal{C}^{\infty}-$ring $A$ is such that $1 + \sum A^2 \subseteq A^\times$, any field with non zero characteristic is mapped into the trivial $\mathcal{C}^{\infty}-$ring. More generally, every ring with non zero characteristic is taken to the trivial $\mathcal{C}^{\infty}-$ring.\\

The left adjoint $\widetilde{L}$ also maps free rings to free $\mathcal{C}^{\infty}-$rings, that is, given any set $E$ we have $\widetilde{L}(\mathbb{Z}[E])\cong \mathcal{C}^{\infty}(\mathbb{R}^{E})$. Note that $\mathbb{Z}[E]$ is a reduced ring and that $\mathcal{C}^{\infty}(\mathbb{R}^{E})$ is a $\mathcal{C}^{\infty}-$reduced $\mathcal{C}^{\infty}-$ring.\\


As we have already commented, every $\mathcal{C}^{\infty}-$ring can be regarded as an $\mathbb{R}-$algebra. However, the converse of this claim is not true: $\mathbb{C} \cong \dfrac{\mathbb{R}[x]}{\langle x^2 + 1\rangle}$ is an $\mathbb{R}-$algebra but it is not a $\mathcal{C}^{\infty}-$ring. In fact, every $\mathcal{C}^{\infty}-$ring is semi-real, so the sum of 1 with any sum of squares must be invertible. Thus, since we have $-1 = x^2$ in $\dfrac{\mathbb{R}[x]}{\langle x^2 + 1\rangle}$, we would have $0 + \langle x^2 + 1\rangle$ invertible in $\widetilde{L}\left( \dfrac{\mathbb{R}[x]}{\langle x^2 + 1\rangle}\right) \cong \widetilde{L}(\mathbb{C})$ - hence it must be the trivial $\mathcal{C}^{\infty}-$ring (cf. \textbf{Proposition \ref{doria}}).\\

However, both theories have  ``ad hoc'' notions of radical ideals, rings of fractions and polynomials. Contrary to what occurs in Commutative Algebra, in which every prime ideal is radical, in the smooth case we have seen that not every  prime ideal of a $\mathcal{C}^{\infty}-$ring is $\mathcal{C}^{\infty}-$radical. Due to this fact, we have different notions in these theories, such as ``saturation'', ``Zariski spectrum'', ``reducedness'', ``real spectrum'' and others.\\

In both theories one has very similar results, such as the separation theorems. Perhaps the most ``striking'' difference between Commutative and Smooth Commutative Algebra is the closer relation that the latter holds with Real Algebraic Geometry than the former, in virtue of the spectral bijection between the $\mathcal{C}^{\infty}-$real spectrum and the smooth Zariski spectrum of a $\mathcal{C}^{\infty}-$ring, given in \textbf{Theorem \ref{exv}}.

\bibliography{references}
\end{document}